\theoremstyle{definition}
\theoremstyle{remark}
\numberwithin{section}{chapter}
\numberwithin{equation}{chapter}
\numberwithin{figure}{chapter}
\newtheorem{thm}{Theorem}[section]
\newtheorem{cor}[thm]{Corollary}
\newtheorem{prob}[thm]{Problem}
\newtheorem{lem}[thm]{Lemma}
\newtheorem{prop}[thm]{Proposition}
\theoremstyle{definition}
\newtheorem{defn}[thm]{Definition}
\newtheorem{rem}[thm]{Remark}
\newcommand{\R}{\mathbb{R}}
\newcommand{\A}{\mathcal{A}}
\newcommand{\C}{\mathbb{C}}
\newcommand{\RR}{\mathfrak{R}}
\newcommand{\uc}{\mathbb{S}^1}
\newcommand{\set}[1]{\left\{#1\right\}}
\newcommand{\win}{\mathrm{win}}
\newcommand{\0}{\emptyset}
\newcommand{\val}{\mathrm{val}}
\newcommand{\Int}{\mathrm{int}}
\newcommand{\dia}{\mathrm{diam}}
\newcommand{\sh}{\mathrm{Sh}}
\newcommand{\ch}{\mathrm{Ch}}
\newcommand{\iu}{U^\iy}
\newcommand{\e}{\varepsilon}
\newcommand{\al}{\alpha}
\newcommand{\ba}{\beta}
\newcommand{\be}{\beta}
\newcommand{\ga}{\gamma}
\newcommand{\Ga}{\Gamma}
\newcommand{\si}{\sigma}
\newcommand{\da}{\delta}
\newcommand{\vp}{\varphi}
\newcommand{\nin}{\not\in}
\newcommand{\imp}{\mathrm{Imp}}
\newcommand{\re}{\mathrm{Re}}
\newcommand{\disk}{\mathbb{D}}
\newcommand{\degree}{\text{degree}}
\newcommand{\dg}{\mathrm{degree}}
\newcommand{\iy}{\infty}
\newcommand{\g}{\mathbf{g}}
\newcommand{\lam}{\mathcal{L}}
\newcommand{\ECH}{\mathrm{\mathop{conv_{\mathcal{E}}}}}
\newcommand{\HCH}{\mathrm{\mathop{conv_{\mathcal H}}}}
\newcommand{\mc}{\mathcal}
\newcommand{\mrm}{\mathrm}
\newcommand{\ol}{\overline}
\newcommand{\cl}{\overline}
\newcommand{\KP}{\mathcal{KP}}
\newcommand{\kp}{\mathcal{KP}}
\newcommand{\KPP}{\mathcal{KPP}}
\newcommand{\complex}{\mathbb{C}}
\newcommand{\Complex}{\mathbb{C}}
\newcommand{\rsphere}{\mathbb{C}^\infty}
\newcommand{\sphere}{\Complex^\infty}
\newcommand{\ucirc}{\mathbb{S}^1}
\newcommand{\B}{\mathfrak{B}}
\newcommand{\real}{\mathbb{R}}
\newcommand{\reals}{\mathbb{R}}
\newcommand{\zed}{\mathbb{Z}}
\newcommand{\Disk}{\mathbb{D}}
\newcommand{\wh}{\widehat}
\newcommand{\var}{\mathrm{var}}
\newcommand{\ind}{\mathrm{ind}}
\newcommand{\bd}{\partial}
\newcommand{\im}{\mathrm{Im}}
\newcommand{\pr}{\mathrm{Pr}}
\newcommand{\dm}{\mathrm{diam}}
\newcommand{\sm}{\setminus}
\newcommand{\hX}{\hat{X}}
\newcommand{\Sh}{\mathrm{Sh}}
\newcommand{\tf}{\tilde{f}}
\newcommand{\tU}{U^\infty}
\newcommand{\rg}{\mathrm{\mathbf{g}}}
\newcommand{\rh}{\mathrm{\mathbf{h}}}
\newcommand{\fg}{\mathfrak{g}}
\newcommand{\rG}{\mathrm{G}}
\newcommand{\rH}{\mathrm{H}}
\newcommand{\hx}{\hat{x}}
\newcommand{\fS}{\mathfrak{S}}
\newcommand{\tb}{\tilde{b}}
\newcommand{\tB}{\tilde{B}}
\newcommand{\Le}{\mathrm{L}}
\begin{document}

\frontmatter

\title{FIXED POINT THEOREMS FOR PLANE CONTINUA WITH APPLICATIONS}


\author[Blokh]{Alexander~M.~Blokh}
\address[Alexander~M.~Blokh]{Department of Mathematics\\ University of Alabama at Birmingham\\
Birmingham, AL 35294-1170}
\email[Alexander~M.~Blokh]{ablokh@math.uab.edu}
\thanks{The first named author was partially
supported by grant NSF-DMS-0901038}

\author[Fokkink]{Robbert~J.~Fokkink }
\address[Robbert~J.~Fokkink]{Delft University,
Faculty of Information and Systems, P.O. Box 5031, 2600 GA Delft,
Netherlands}
\email[Robbert~J.~Fokkink]{R.J.Fokkink@its.tudelft.nl}

\author[Mayer]{John~C.~Mayer}
\address[John~C.~Mayer]
{Department of Mathematics\\ University of Alabama at Birmingham\\
Birmingham, AL 35294-1170}
\email[John~C.~Mayer]{mayer@math.uab.edu}

\author[Oversteegen]{Lex~G.~Oversteegen}
\address[Lex~G.~Oversteegen]
{Department of Mathematics\\ University of Alabama at Birmingham\\
Birmingham, AL 35294-1170}
\email[Lex~G.~Oversteegen]{overstee@math.uab.edu}
\thanks{The fourth named author was supported in part by grant
NSF-DMS-0906316}

\author[Tymchatyn]{E.~D.~Tymchatyn}
\address[E.~D.~Tymchatyn]{Department of Mathematics and Statistics\\
University of Saskatchewan\\
Saskatoon, Saskatchewan, Canada S7N 0W0}
\email[E.~D.~Tymchatyn]{tymchat@math.usask.ca}
\thanks{The fifth named author was supported in part by NSERC 0GP0005616.}

\author{}
\address{}
\curraddr{}
\email{}

\author{}
\address{}
\curraddr{}
\email{}
\thanks{}

\date{April 6, 2010, revised December 20, 2011}

\subjclass[2010]{Primary: 37C25, 54H25;
Secondary: 37F10, 37F50, 37B45, 37F10, 54C10}

\keywords{Plane fixed point problem, crosscuts, variation, index,
outchannel, dense channel, prime end, positively oriented map,
plane continua, oriented maps, complex dynamics, Julia set}

\begin{abstract}
In this memoir we present  proofs of basic results, including those
developed so far  by Harold Bell, for the plane fixed point problem:
does every map of a non-separating plane continuum have a fixed
point? Some of these results had been announced much earlier by Bell
but without accessible  proofs. We define the concept of the
variation of a map on a simple closed curve and relate it to the
index of the map on that curve: Index = Variation + 1. A  prime end
theory is developed through hyperbolic chords in maximal round balls
contained in the complement of a non-separating plane continuum $X$.
We define the concept of an {\em outchannel} for a fixed point free
map which carries the boundary of $X$ minimally  into itself and
prove  that such a map has a \emph{unique} outchannel, and that
outchannel must have variation $-1$. Also Bell's Linchpin Theorem for
a foliation of a simply connected domain, by closed convex subsets,
is extended to arbitrary domains in the sphere.

We introduce the notion of an oriented map of the plane and show that
the perfect oriented maps of the plane coincide with confluent (that
is composition of monotone and open) perfect maps of the plane. A
fixed point theorem for  positively oriented, perfect maps of the
plane is obtained. This generalizes results announced by Bell in
1982.

A continuous map of an interval $I\subset \mathbb{R}$ to
$\mathbb{R}$ which sends the endpoints of $I$ in opposite directions
has a fixed point. We generalize this to maps on non-invariant
continua in the plane under positively oriented maps of the plane
(with appropriate boundary conditions). Similar methods imply that in
some cases non-invariant continua in the plane are degenerate. This
has important applications in complex dynamics. E.g., a special case
of our results shows that if $X$ is a non-separating invariant
subcontinuum of the Julia set of a polynomial $P$ containing no fixed
Cremer points and exhibiting no local rotation at all fixed points,
then $X$ must be a point. It follows that impressions of some
external rays to polynomial Julia sets are degenerate.

\end{abstract}

\maketitle
\dedicatory{Dedicated to Harold Bell}

\setcounter{page}{4}

\tableofcontents

\listoffigures


\chapter*{Preface}
By a \emph{continuum} we mean a compact and connected metric space
and by a \emph{non-separating} \index{non-separating} continuum $X$
in the plane $\C$ we mean a continuum $X\subset \C$ such that $\C\sm
X$ is connected. Our work is motivated by the following
long-standing problem \cite{ster35} in topology.

{\bf Plane Fixed Point Problem}: {\em ``Does a continuous function taking a
non-separating plane continuum into itself always have a fixed point?"}

To give the reader perspective we would like to make a few brief historical remarks
(see \cite{kw91,bing69,bing81} for much more information).

Borsuk \cite{bors35}  showed in 1932 that the answer to the above
question is yes if $X$ is also locally connected. Cartwright and
Littlewood \cite{cartlitt51} showed in 1951 that a map of a
non-separating plane continuum $X$ to itself has a fixed point if the
map can be extended to an \emph{orientation-preserving} homeomorphism
of the plane. It was 27 years before Harold Bell \cite{bell78}
extended this result to the class of \emph{all} homeomorphisms of the
plane.  Then Bell announced in 1982 (see also Akis \cite{akis99})
that the Cartwright-Littlewood Theorem can be extended to the class
of all holomorphic maps of the plane. For other partial results in
this direction see, e.g., \cite{hami51, hago71, bell79, minc90,
hago96, minc99}.

In this memoir the Plane Fixed Point Problem is addressed. We develop
and further generalize tools, first introduced by Bell, to elucidate
the action of a fixed point free map (should one exist). We are
indebted to Bell for sharing his insights  with us. Some of the
results in this memoir were first obtained by him. Unfortunately,
many of the proofs were not accessible. Since there are now multiple
papers which rely heavily upon these tools (e.g., \cite{overtymc07,
blokoverto1, bclos08}) we believe that they deserve to be developed
in a coherent fashion. We also hope that by making these tools
available to the mathematical community, other applications of these
results will be found. In fact, we include in Part 2 of this text new
applications which illustrate their usefulness.

Part 1 contains the basic theory, the main ideas of which are due to
Bell. We introduce Bell's notion of variation and prove his theorem
that index equals variation increased by 1 (see Theorem~\ref{I=V+1}).
Bell's Linchpin Theorem~\ref{Hypmain} for simply connected domains is
extended to arbitrary domains in the sphere and proved using an
elegant argument due to Kulkarni and Pinkall \cite{kulkpink94}. Our
version of this  theorem (Theorem~\ref{KPthrm}) is essential for the
results later in the paper.

Building upon these ideas, we will introduce in Part 1 the class of
oriented maps of the plane and show that it decomposes into two
classes, one of which preserves and the other of which reverses local
orientation. The extension from holomorphic to positively oriented maps is
important since it allows for simple local perturbations of the map (see
Lemma~\ref{pararepel}) and significantly simplifies further usage of the
developed tools.

In Part 2 new applications of these results are considered. A Zorn's
Lemma argument shows, that if one assumes a negative solution to the
Plane Fixed Point Problem, then there is a subcontinuum $X$ which is
minimal invariant. It follows from Theorem~\ref{densechannel} that
for such a minimal continuum, $f(X)=X$. We recover Bell's result
\cite{bell67} (see also Sieklucki \cite{siek68}, and Iliadis
\cite{ilia70}) that the boundary of $X$ is indecomposable with a
dense channel (i.e., there exists a prime end $\mc{E}_t$ such that
the principal set of the external ray $R_t$ is all of $\partial X$).

As the first application we show in Chapter~\ref{outcha} that $X$ has
a \emph{unique outchannel} (i.e., a channel in which points basically
map farther and farther away from $X$) and this outchannel must have
variation $-1$ (i.e., as the above mentioned points map farther and
farther away from $X$, they are ``flipped with respect to the center
line of the channel'').

The next application of the tools developed in Part 1 directly
relates to the Plane Fixed Point Problem. We introduce the class of
oriented maps of the plane (i.e., all perfect maps of the plane onto
itself which are the compositions of monotone and branched covering
maps of the plane). The class of oriented maps consists of two
subclasses: positively oriented and negatively oriented maps. In
Theorem~\ref{fixpoint} we show that the Cartwright-Littlewood Theorem
can be extended to positively oriented maps of the plane.

These results are  used in \cite{blokoverto1}. There we consider a
branched covering map $f$ of the plane. It follows from the above
that if $f$ has an invariant and fixed point free continuum $Z$,
then $f$ must be negatively oriented. We show in \cite{blokoverto1}
that if, moreover, $f$ is an oriented map of degree $2$, then $Z$
must contain a continuum $X$ such that $X$ is fully invariant (so
that $X$ contains the critical point and $f|_X$ is not one-to-one).
Thus, $X$ bears a strong resemblance to a connected filled in Julia
set of a quadratic polynomial.

The rest of Part 2 is devoted to extending the existence of a fixed
point in planar continua under positively oriented maps established
in Theorem~\ref{fixpoint}. We extend this result to non-invariant
planar continua. First the result is generalized to dendrites;
moreover, it is strengthened by showing that in certain cases the map
must have infinitely many periodic cutpoints.

The above results on dendrites have applications in complex dynamics.
For example, they are used in \cite{bco08} to give a criterion for
the connected Julia set of a complex polynomial to have a
non-degenerate locally connected model. That is, given a connected
Julia set $J$ of a complex polynomial $P$, it is shown in
\cite{bco08} that there exists a locally connected topological Julia
set $J_{top}$ and a monotone map $m:J\to J_{top}$ such that for every
monotone map $g:J\to X$ from $J$ onto a locally connected continuum
$X$, there exists a monotone map $f:J_{top}\to X$ such that $g=f\circ
m$. Moreover, the map $m$ has a dynamical meaning.  It
semi-conjugates the map $P|_J$ to a topological polynomial
$P_{top}:J_{top}\to J_{top}$. In general, $J_{top}$ can be a single
point. In \cite{bco08}  a necessary and sufficient condition for the
non-degeneracy of $J_{top}$ is obtained. These results extend Kiwi's
fundamental result \cite{kiwi97} on the semi-conjugacy of polynomials
without Cremer or Siegel points to all polynomials with connected
Julia set.

Finally  the results on the existence of fixed points  in invariant
planar continua under positively oriented maps are extended to
non-invariant planar continua. We introduce the notion of
``scrambling of the boundary'' of a plane continuum $X$ under a
positively oriented map and extend the fixed point results to
non-invariant continua on which the map scrambles the boundary. These
conclusions are strengthened  by showing that, under additional
assumptions, a non-degenerate continuum must either contain a fixed
point in its interior, or must contain a fixed point near which the
map ``locally rotates''. Hence, if neither of these is the case, then
the continuum in question must be a point. This latter result is used
to show that in certain cases impressions of external rays to
connected Julia sets are degenerate.

These last named results have had other applications in complex
dynamics. In \cite{bclos08} these results were used to generalize the
well-known Fatou-Shishikura inequality in the case of a polynomial
$P$ (in general, the Fatou-Shishikura inequality holds for rational
functions, see \cite{fa, Mitsu}). For polynomials this inequality
limits the number of attracting and irrationally neutral periodic
cycles by the number of critical points of $P$. The improved count
involves classes of (weakly recurrent) critical points and wandering
subcontinua in the Julia set.

The results in Part 1 of this memoir were mostly obtained in the
late 1990's. Most of the applications in Part 2, including the
results on non-invariant plane continua and the applications in
dynamics, have been obtained  during 2006--2009. Finally the authors
are indebted to a careful reading by the referee which resulted in
numerous changes and improvements.

\aufm{Alexander M.~Blokh}

\aufm{Robbert J.~Fokkink}

\aufm{John C.~Mayer}

\aufm{Lex G.~Oversteegen}

\aufm{E.~D.~Tymchatyn}

\mainmatter

\chapter{Introduction}\label{intro}

\subsection{Notation and the main problem} We denote the plane  by $\complex$, \index{complex@$\complex$} the Riemann
sphere by  $\rsphere=\complex\cup\{\infty\}$, \index{complex@$\rsphere$} the
real line by $\real$ \index{real@$\real$} and the unit circle by
$\uc=\real/\zed$. Let $X$ be a plane compactum. Since $\complex$ is locally
connected and $X$ is closed, complementary domains of $X$ are open. By $T(X)$
\index{TX@$T(X)$} we denote the {\em topological hull} \index{topological hull}
\index{hull!topological}
of $X$ consisting of $X$ union all of its bounded complementary domains.  Thus,
$U^\infty=U^\infty(X)=\rsphere\sm T(X)$ \index{U@$U^\infty$} is the unbounded
complementary component of $X$ containing infinity. Observe that if $X$ is a
continuum, then $U^\infty(X)$ is simply connected. The Plane Fixed Point
Problem, attributed to \cite{ster35}, is one of the central long-standing
problems in plane topology. It serves as a motivation for our work and can be
formulated as follows.

\begin{prob}[Plane Fixed Point Problem]\label{fpt} Does a continuous function
taking a non-separating plane continuum into itself always have a
fixed point?
\end{prob}

\subsection{Historical remarks}

To give the reader perspective we would like to make a few historical remarks
concerning the Plane Fixed Point Problem (here we cover only major steps towards solving
the problem).

In 1912 Brouwer \cite{brou12a} proved that any orientation preserving homeomorphism of
the plane, which keeps a bounded set invariant, must have a fixed point (though not necessarily in
that set). This fundamental result has found many important applications.
It was recognized early on that the location of a fixed point  should be determined
if the invariant set is a non-separating continuum (in that case a fixed point should be located in the invariant
continuum) and many papers have been devoted to
obtaining partial solutions to the Plane Fixed Point Problem.

Borsuk \cite{bors35}  showed in 1932 that the answer is yes if $X$
is also locally connected. Cartwright and Littlewood
\cite{cartlitt51} showed in 1951 that a continuous map of a
non-separating continuum $X$ to itself has a fixed point in $X$ if
the map can be extended to an \emph{orientation-preserving}
homeomorphism of the plane. (See Brown \cite{brow77} for a very
short proof of this theorem based on the above mentioned result by
Brouwer). The proof by Cartwright-Littlewood Theorem made use of the
\emph{index of a map on a simple closed curve} and this idea has
remained the basic approach in many partial solutions.

The most general result was obtained by Bell \cite{bell67} in the
early 1960's. He showed that any counterexample must contain an
invariant indecomposable subcontinuum. Hence the Plane Fixed Point
Problem has a positive solution for hereditarily decomposable plane
continua (i.e., for continua $X$ which do not contain indecomposable
subcontinua). Bell's result was also based on the notion of the index
of a map, but he  introduced new ideas to determine the index of a
simple closed curve which runs tightly around a possible
counterexample. Unfortunately, these ideas were not transparent and
were never fully developed. Alternative proofs of Bell's result
appeared soon after Bell's announcement (see \cite{siek68,ilia70}).
Regrettably these results did not develop Bell's ideas.

In 1978 Bell \cite{bell78} used his earlier result   to extend the
result by  Cartwright and Littlewood to the class of \emph{all}
homeomorphisms of the plane.  Then Bell announced in 1982 (see also
Akis \cite{akis99} where a wider class of differentiable functions
was used) that the Cartwright-Littlewood Theorem can be extended to
the class of all holomorphic maps of the plane. The existence of
fixed points for orientation preserving homeomorphisms of the
\emph{entire plane} under various conditions was also considered in
\cite{brow84a,fath87,fran92,guil94}, and the existence of a point of
period two for orientation reversing homeomorphisms in \cite{boni04}.

As indicated above, positive results require an additional hypothesis
either on the continuum $X$ (as in Borsuk's result where the
assumption is that $X$ is locally connected) or on the map (as in
Bell's case where the assumption is that $f$ is a homeomorphism of
the plane). Other positive results of the first type include results
by Hamilton \cite{hami51} ($X$ is chainable), Hagopian \cite{hago71}
($X$ is arcwise connected) Minc \cite{minc90} ($X$ is the continuous
image of the pseudo arc) and \cite{hago96} ($X$ is simply connected).
Positive results of the second type require the map to be either a
homeomorphism \cite{cartlitt51,bell78}, holomorphic (as announced by
Bell) or smooth  with non-negative Jacobian and isolated singularities
\cite{akis99}.

David Bellamy \cite{bell79} produced an important related counterexample. He showed that
there exists a tree-like continuum $X$, whose every proper subcontinuum  is an arc and
which admits a fixed point free homeomorphism. It is not known if  examples of this type can be embedded in the plane.
Minc \cite{minc99}  constructed a  tree-like continuum which is the continuous image of the pseudo arc and
admits a fixed point free map.

\subsection{Major tools}

In this subsection we describe the major tools developed in Part 1.

\subsubsection{Finding fixed points with index and variation}

It is easy to see that a map of a plane continuum to itself can be extended to
a perfect map of the plane. We study the slightly more general question, ``Is
there a plane continuum $Z$ and a  perfect continuous function
$f:\complex\to\complex$ taking $Z$ into $T(Z)$ with no fixed points in $T(Z)$?"
A Zorn's Lemma argument shows that if one assumes that the answer is ``yes,"
then there is a subcontinuum $X\subset Z$, minimal with respect to these
properties. It will follow from Theorem~\ref{densechannel} that for such a
minimal continuum, $f(X)=X=\bd T(X)$ (though it may not be the case that
$f(T(X))\subset T(X)$).  Here $\partial T(X)$ denotes the boundary of
$T(X)$\index{del@$\partial$ boundary operator}.

Many fixed point results make use of the notion of the index $\ind(f,S)$, which
counts the number of revolutions of the vector 
connecting $z$ with $f(z)$ for $z\in S$ running along a simple closed curve $S$
in the plane. As is well-known, if $f:\complex\to \complex$ is a map and
$\ind(f,S)\ne 0$, then $f$ must have a fixed point in $T(S)$ (for completeness
we prove this in Theorem~\ref{fpthm}). In order to establish fixed points in
invariant plane continua $X$, one often approximates $X$ by a simple closed
curve $S$ such that $X\subset T(S)$. If $\ind(f,S)\ne 0$ and $S$ is
sufficiently tight around $X$, one can conclude that $f$ must have a fixed
point in $T(X)$. Hence the main work is in showing that $\ind(f,S)\ne 0$ for a
suitable simple closed curve around $X$.

Bell's fundamental idea was to replace the count of the number of rotations of
the vector $\overrightarrow{zf(z)}$ with respect to a fixed axis (say, the $x$-axis) by a count which involves
the moving frame of external rays. Consider, for example, the unit circle $\uc$ and a fixed point
free map $f:\uc\to\complex$. For each $z=e^{2\pi i \theta}\in \uc$ let $R_z$ be the
external ray $\{re^{2\pi i\theta}\mid r>1\}$. Now count the number of times the point
$f(z)$, $z\in\uc$, crosses the external ray $R_z$, taking into account the direction
of the crossing. Call this count the variation $\var(f,\uc)$. It is easy to
see that in this case $\ind(f,\uc)=\var(f,\uc)+1$.

Another useful idea is to consider a similar count not on the entire unit
circle (or, in general, not on the entire simple
closed curve $S$ containing $X$ in its topological hull $T(S)$), but on subarcs
of $\uc$, which map off themselves and  whose endpoints map inside $T(\uc)$. By
doing so, one obtains Bell's notion of variation $\var(f,A)$ on arcs (see
Definition~\ref{vararc}).
If one can write $\uc$ as a finite union $A_i$ of arcs such that any two meet at most in a common endpoint
and, for all $i$,  $f(A_i)\cap A_i=\0$ and both endpoints map in $T(\uc)$, one can define $\var(f,\uc)=\sum \var(f,A_i)$.
Then, as above, one can use it to compute the index and
prove that index equals variation increased by 1 (see Theorem~\ref{I=V+1}).

The relation between index and variation immediately implies a few
classic results, in particular that of  Cartwright and Littlewood.
To see this one only needs to show that, if $h$ is an orientation
preserving homeomorphism of the plane, $X$ is an invariant plane
continuum and $A_i\subset S$ are subarcs of a tight simple closed
curve around $X$, with endpoints in $X$ as above and such that
$f(A_i)\cap A_i=\0$, then $\var(f,A_i)\ge 0$ (see
Corollary~\ref{posvaro}). Then $\ind(f,S)=\sum\var(f,A_i)+1\ge 1$
and $T(X)$ contains a fixed point as desired. Observe that the
connection between index and variation is essential for all the
applications in Part 2.

\subsubsection{Other tools, such as foliations and oriented maps}

Let $X$ be a non-separating plane continuum and let $f:\C\to\C$ be a map such that
$f(X)\subset T(X)$ and $f$ has no fixed points in $T(X)$.
In general we need more control of the simple closed curve $S$ around
$X$ (and of the action of the map on $S\sm X$).  Bell originally
accomplished this by partitioning the complement of $T(X)$ in the Euclidean convex hull $\ECH(X)$ of $X$ by
Euclidean convex sets. Suppose that $B$ is a maximal round closed ball (or a
half plane) such that $\Int(B)\cap T(X)=\0$ and $|B\cap X|\ge 2$, and consider the
set $\ECH(B\cap X)$. For any two such balls $B_1,B_2$ either
$K_{1,2}=\ECH(B_1\cap X)\cap\ECH(B_2\cap X)$ is empty, or $K_{1,2}$ is a single point in $X$,
or this intersection is a common chord contained
in both of their boundaries. Bell's Linchpin Theorem (see Theorem~\ref{Hypmain} and the remark following it)
states that the collection  $\ECH(B\cap X)$ over all such maximal balls
covers all of $\ECH(X)\sm T(X)$.

Hence the collection $\ECH(B\cap X)$ over  all such balls provides a
partition of $\ECH(X)\sm X$ into Euclidean convex sets contained in
maximal round balls. The collection of  chords in the boundaries of
the sets $\ECH(B\cap X)$ for all such balls have the property that
any two distinct chords meet in at most a common endpoint in $X$. In
other words, this set of chords is a lamination  in the sense of
Thurston \cite{thur85} even though in Thurston's paper laminations
appear in a very different, namely complex dynamical, context. This
Linchpin Theorem can be used to extend the map $f|_{T(X)}$ over
$\ECH(X)\sm T(X)$ (first linearly over all the chords in the
lamination and then over all remaining components of the
complement). We will illustrate the usefulness of Bell's partition
by showing
 that the well-known Schoenflies Theorem follows
immediately.
It can also be used to obtain a particular simple closed curve $S$ around $X$ so that every component
of $S\sm X$ is a chord in the lamination.

In our version of Bell's Linchpin Theorem we consider arbitrary open and connected subsets of the sphere $U$ and
 we use round balls in the spherical metric on the
sphere. Moreover,  the Euclidean geodesics are replaced by
hyperbolic geodesics (in either the hyperbolic metric in each ball
or, if $U$ is simply connected, in the hyperbolic metric on $U$).
This way we get a lamination of all of $U^\infty(X)=\sphere\sm T(X)$
(and not just $\ECH(X)\sm T(X)$) and the resulting lamination is
easier to apply in other settings. We give a proof of this theorem
using an elegant argument due to Kulkarni and Pinkall
\cite{kulkpink94} which also allows for the extension over arbitrary
open and connected subsets of the sphere (see Theorem~\ref{KPthrm};
this later theorem is used in Chapter~\ref{outcha}). Bell's Linchpin
Theorem follows as a corollary.

 This new partition of a complementary domain of a continuum can also be used in other settings  to
extend a homeomorphism on the boundary of a planar domain over the entire domain. In \cite{overtymc07} this is used to show that
an isotopy of a planar continuum, starting at the identity, extends to an isotopy of the  plane.
This extends a well-known result regarding the extension of a holomorphic motion \cite{sullthur86,slod91}.
In \cite{ov09} this partition is used to give necessary and sufficient conditions to extend a homeomorphism,
of an arbitrary planar continuum, over the  plane.

The development of the necessary tools in Part 1  is completed by introducing the
notion of (positively or negatively) oriented maps and studying their
properties. Holomorphic maps are
prototypes of positively oriented maps but in general positively oriented  maps do not have to be differentiable, light,
open or monotone. Locally at non-critical points, positively oriented maps behave  like
orientation-preserving homeomorphisms in the sense that they preserve local
orientation. Compositions of open, perfect and of monotone, perfect surjections
of the plane are \emph{confluent} (i.e., such that components of the preimage of any
continuum map onto the continuum) and naturally decompose into two classes, one
of which preserves and the other of which reverses local orientation. We show
that any confluent map of the plane is itself a composition of a monotone and a
light-open map of the plane. It is  shown that an oriented map of the plane
induces a map from the circle of prime ends of a component of the pre-image
of an acyclic plane continuum to the circle of prime ends of that continuum.

\subsection{Main applications}

Part 2 contains applications of the tools developed in Part 1. Directly or
indirectly, these applications deal with the Plane Fixed Point Problem. We describe them
below.

\subsubsection{Outchannel and hypothetical minimal continua without fixed points}

The first application is in Chapter~\ref{outcha} where we establish the
existence of a unique outchannel. Let us consider this in more detail. If there
exists a counterexample to the Plane Fixed Point Problem, then there exists a continuum
$X$ which is minimal with respect to $f(X)\subset T(X)$ and $f$ has no fixed
point in $T(X)$. Bell has shown \cite{bell67} (see also \cite{siek68,ilia70}) that such a continuum has at
least one dense outchannel of negative variation. Since $X$ is
minimal with a dense outchannel, $X$ is an indecomposable continuum and
$f(X)=X$.

A dense outchannel is a prime end so that its principal set is all of $X$ and if $\{C_i\}$ is a defining sequence of
crosscuts, then $f$ maps these crosscuts essentially ``out of the channel''
(i.e., closer to infinity) for $i$ sufficiently large. The latter statement is accurately reflected by the fact
that $\var(f,C_i)\ne 0$.
In case that the
complement of $X$ is invariant, this can be described by saying that the
crosscut $f(C_i)$ separates $C_i$ from infinity in $U^\infty(X)$ (and hence, in
this case, crosscuts do really map out of the channel). As a new result,
the main steps of the proof of which were outlined by Bell,  we show that there always exists \emph{exactly one
outchannel} and that its variation is $-1$, while all other prime ends must
have variation $0$. Using these results it is shown in \cite{blokoverto1} that
if $f$ is a negatively oriented branched covering map of degree $2$ which has
a non-separating invariant continuum $Z$ without fixed points, then the minimal
subcontinuum $X\subset Z$ has the following additional properties:

\begin{enumerate}

\item  $X$ is indecomposable,

\item the unique critical point $c$ of $f$ belongs to $X$, $f(X)=X$, and
$f(\complex\sm X)=\complex\sm X$ (so that $X$ is \emph{fully invariant}),

\item  $f$ induces a covering map
$F:\uc\to\uc$ from the circle of prime ends of $X$ to itself of degree $-2$.

\item $F$ has three fixed points one of which corresponds to the unique dense outchannel
whereas the remaining two fixed points correspond to dense inchannels (i.e.,
for a defining sequence of crosscuts $\{C_i\}$, $C_i$ separates $f(C_i)$ from
infinity in $U^\infty$)

\end{enumerate}
 Moreover, as part of the argument, the map $f$ is modified in $\complex\sm X$ so that
 the new map $g$ keeps the tail of the external ray, which runs down the outchannel, invariant
 and maps the points on them closer to infinity.

\subsubsection{Fixed points in invariant continua for positively oriented maps}

Other applications of the tools developed in Part 1 are obtained in
Chapter~\ref{ch:fxpt}. These are also related to the Plane Fixed Point Problem. As
we will see below, the corresponding results can be in turn further applied in
complex dynamics, leading to some structural results in the field, such as
constructing finest locally connected models for connected Julia sets or
studying wandering continua inside Julia sets and an extension of the  Fatou-Shishikura
inequality so that it includes counting wandering branch-continua (see Section~\ref{complappl}).

The first application in Chapter~\ref{ch:fxpt} is the most straightforward of
them all: in Theorem~\ref{fixpoint} from Section~\ref{sec:fxpt} we prove that a
positively oriented map $f$ which takes a continuum $X$ into the topological
hull $T(X)$ of $X$ must have a fixed point in $T(X)$. In other words, in
Theorem~\ref{fixpoint} the Plane Fixed Point Problem is solved in the affirmative for
positively oriented maps. As we will see, the extension from holomorphic maps to positively oriented maps
is important since the latter class allows for easy local perturbations. This will allow us
to deal with parabolic points in a Julia set (see Lemma~\ref{pararepel}, Theorem~\ref{pointdyn}
and Corollary~\ref{degimpr}).

The idea of the proof is as follows. First we prove in Corollary~\ref{posvaro}
that if a crosscut $C$ of $X$ is mapped off itself by $f$ then the variation on
$C$ is non-negative. This is done by completing the crosscut $C$ to a very
tight simple closed curve $S$ around $X$ and observing that in fact the
variation in question can be computed by computing the winding number of $f$ on
$S$. Notice that versions of this idea are used later on when we prove the
existence of fixed points in non-invariant continua satisfying certain
additional conditions.

To prove Theorem~\ref{fixpoint}, we first assume by way of contradiction that
$T(X)$ contains no fixed points. In this case there are no fixed points in the
closure $\ol{U}$ of a sufficiently small neighborhood $U$ of $T(X)$. Using this,
we construct a simple closed curve which goes around $X$ inside such a
neighborhood $U$ and ``touches'' $X$ at a sufficiently dense set of points so
that  arcs between consecutive points of $S\cap X$ are very small. Since there are no fixed
points in $\ol{U}$, we can guarantee that the images of these arcs are disjoint
from  themselves. Hence by the above described Corollary~\ref{posvaro} the
variations of all these arcs are non-negative. By Theorem~\ref{I=V+1} this
implies that the index of $f$ on $S$ is not equal to zero and hence, by
Theorem~\ref{fpthm} there must exist a fixed point inside $T(S)$, a
contradiction.

\subsubsection{Fixed points in  non-invariant continua: the case of dendrites}

Our generalizations of Theorem~\ref{fixpoint} are inspired by a simple observation. The
most well-known particular case for which the Plane Fixed Point Problem is solved is that
of a map of a closed interval $I=[a, b]$, $a<b$ into itself in which case there
must exist a fixed point in $I$. However, in this case a more general result
can easily be proven, of which the existence of a fixed point in an invariant interval
is a consequence.

Namely, instead of considering a map $f:I\to I$ consider a map $f:I\to \R$ such
that either (a) $f(a)\ge a$ and $f(b)\le b$, or (b) $f(a)\le a$ and $f(b)\ge
b$. Then still there must exist a fixed point in $I$ which is an easy corollary
of the Intermediate Value Theorem applied to the function $f(x)-x$. Observe
that in this case $I$ need not be invariant under $f$. Observe also that
without the assumptions on the endpoints, the conclusion on the existence of a
fixed point inside $I$ cannot be made because, e.g., a shift map on $I$ does
not have fixed points at all. The conditions (a) and (b) above can be thought
of as boundary conditions imposing restrictions on where $f$ maps the boundary
points of $I$ in $\reals$.

Our main aim in the remaining part of Chapter~\ref{ch:fxpt} is to consider some
other cases for which the Plane Fixed Point Problem can be solved in the affirmative (i.e.,
the existence of a fixed point in a continuum can be established) despite the
fact that the continuum $X$ in question is not  invariant. We proceed with
our studies in two directions. Considering $X$, we replace the invariantness of
the continuum by boundary conditions in the spirit of the above ``interval
version'' of the Plane Fixed Point Problem. We also show that there must exist a fixed
point of ``rotational type'' in the continuum (and hence, if it is known that
such a point does not exist, then the continuum in question is a point).

Since we now deal with continua significantly more complicated than an interval,
inevitably the boundary conditions become rather intricate. Thus we postpone
the precise technical statement of the results until Chapter~\ref{ch:fxpt} and
use here a more descriptive approach. Observe that particular cases for which
the Plane Fixed Point Problem is solved so far can be divided into two categories: either
$X$ has additional properties, or $f$ has additional properties. In the first
category the above considered ``interval case'' is the most well-known. A
direct extension of it is the following well-known theorem (which follows from
Borsuk's theorem \cite{bors35},
see  \cite{nadl92} for a direct proof); recall that a \emph{dendrite}\index{dendrite} is a
locally connected continuum containing no simple closed curves.

\begin{thm}\label{th:dendr}
If $f:D\to D$ is a continuous map of a dendrite into itself then it has a fixed
point.
\end{thm}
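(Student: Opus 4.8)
\emph{Proof strategy.} The first plan is to deduce Theorem~\ref{th:dendr} from Borsuk's theorem \cite{bors35}, quoted in the historical remarks above, according to which every continuous self-map of a locally connected non-separating plane continuum has a fixed point. A dendrite is by definition a locally connected continuum, so all that is needed is that $D$ can be realized as a subcontinuum of $\complex$ and that $\complex\sm D$ is then connected. Both are classical facts of the structure theory of dendrites: every dendrite embeds in the plane, and an embedded dendrite does not separate the plane (see, e.g., \cite{nadl92}). Granting this, $f\colon D\to D$ is a continuous self-map of a locally connected non-separating plane continuum, and Borsuk's theorem furnishes a fixed point.

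\emph{A self-contained route.} If one prefers to stay inside the dendrite, I would argue directly as follows. Represent $D$ as an increasing union $D=\overline{\bigcup_{n}T_{n}}$ of finite subtrees $T_{n}\subseteq T_{n+1}$, and for each $n$ let $r_{n}\colon D\to T_{n}$ be the first-point retraction: for a subdendrite $E\subseteq D$ and $x\in D$, the point $r_{E}(x)$ is the unique point of $E$ lying on every arc from $x$ to $E$, and $r_{E}$ is a continuous retraction onto $E$. Every finite tree has the fixed point property, an elementary fact provable by induction on the number of edges using these first-point retractions together with the Intermediate Value Theorem on individual edges. Hence $g_{n}:=r_{n}\circ f|_{T_{n}}\colon T_{n}\to T_{n}$ has a fixed point $x_{n}$, that is, $r_{n}(f(x_{n}))=x_{n}$. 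If $f(x_{n})=x_{n}$ for some $n$ we are done; otherwise $f(x_{n})\notin T_{n}$, so the arc $[x_{n},f(x_{n})]$ meets $T_{n}$ only in $x_{n}$, and $x_{n}$ lies on every arc joining $f(x_{n})$ to $T_{n}$. Passing to a convergent subsequence $x_{n}\to x^{\ast}$, one then shows $\dist(x_{n},f(x_{n}))\to 0$, whence $f(x_{n})\to x^{\ast}$ and, by continuity of $f$, $f(x^{\ast})=x^{\ast}$, contradicting the assumption that $f$ is fixed-point free.

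\emph{The main obstacle.} The crux of the self-contained argument is the estimate $\dist(x_{n},f(x_{n}))\to 0$; a priori the retracted fixed points $x_{n}$ need not converge to a genuine fixed point. I would secure it by equipping $D$ with a convex metric --- possible for every Peano continuum by the Bing--Moise theorem --- in which every arc is a geodesic. Since $T_{n}$ becomes $\e_{n}$-dense in $D$ with $\e_{n}\to 0$, and since $x_{n}$ lies on the arc from $f(x_{n})$ to any point of $T_{n}$, convexity of the metric gives $\dist(x_{n},f(x_{n}))\le \dist(f(x_{n}),T_{n})\le \e_{n}\to 0$. In the Borsuk route, the corresponding soft spot is instead the two external facts used --- planar embeddability of dendrites and their non-separation of the plane --- which are standard but lie outside the machinery developed in this memoir.
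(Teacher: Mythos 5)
Your proposal is correct, and it is worth noting that the memoir itself contains no proof of Theorem~\ref{th:dendr}: the theorem is quoted as a well-known result, with precisely your first route indicated (it ``follows from Borsuk's theorem'', with \cite{nadl92} cited for a direct proof). So your Borsuk argument coincides with the paper's intended justification, modulo the two classical facts you correctly flag as external: every dendrite embeds in the plane, and an embedded dendrite does not separate it (a locally connected plane continuum separates the plane only if it contains a simple closed curve). Your second, self-contained argument is a genuinely different and sound route: the first-point retractions you use are exactly the ``natural retraction'' the paper later introduces in Definition~\ref{retr}, finite trees do have the fixed point property, and the Bing--Moise convex metric correctly closes the one real gap, since in a dendrite the unique arc between two points must coincide with a metric segment, so that $x_n\in[f(x_n),t]$ for $t\in T_n$ gives $d(x_n,f(x_n))\le d(f(x_n),T_n)\to 0$ and the limit of (a subsequence of) the $x_n$ is a fixed point. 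What the direct route buys is independence from plane topology and from Borsuk's theorem, in the spirit of the dendrite machinery of Section~\ref{sec:dendr}; what the Borsuk route buys is brevity and agreement with how the memoir itself disposes of the statement.
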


Here $f$ is just a continuous map but the continuum $D$ is very nice. In
Section~\ref{sec:dendr} Theorem~\ref{th:dendr} is generalized to the case when
$f:D_1\to D_2$ maps a dendrite $D_1$ into a dendrite $D_2\supset D_1$ and
certain conditions on the behavior of the points of the set $E=\ol{D_2\sm
D_1}\cap D_1$ under the map $f$ are fulfilled (observe that $E$ may be
infinite). This presents a ``non-invariant'' version of Plane Fixed Point Problem for
dendrites and can be done in the spirit of the interval case described earlier.
Moreover, with some additional conditions it has consequences related to the
number of periodic points of $f$.

More precisely, we introduce the notion of \emph{boundary scrambling} for
dendrites in the situation above. It simply means that for each
\emph{non-fixed} point $e\in E$, $f(e)$ is contained in a component of
$D_2\sm\{e\}$ which intersects $D_1$ (see Definition~\ref{bouscr}). Observe
that if $D_1$ \emph{is} invariant then $f$ automatically scrambles the
boundary. We prove the following theorem.

\setcounter{chapter}{7}\setcounter{section}{2}\setcounter{thm}{1}
\begin{thm} Suppose that $f:D_1\to D_2$ is a map between  dendrites,
where $D_1\subset D_2$, which scrambles the boundary. Then $f$ has a fixed point.
\end{thm}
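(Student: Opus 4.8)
The plan is to imitate the ``interval version'' of the fixed point problem described above. There, one composes $f\colon I\to\R$ with the retraction $\R\to I$, $x\mapsto\max(a,\min(b,x))$, to land in the invariant case. Here I would use the dendrite analogue of that retraction: the \emph{first-point map} $r\colon D_2\to D_1$, which assigns to $y\in D_2$ the unique point $r(y)\in D_1$ for which the (unique) arc $[y,r(y)]$ meets $D_1$ only in $r(y)$ (so $r(y)=y$ for $y\in D_1$). It is standard that, for a subdendrite $D_1\subset D_2$, this $r$ is a well-defined continuous (monotone) retraction. Then $g=r\circ f\colon D_1\to D_1$ is a continuous self-map of the dendrite $D_1$, so Theorem~\ref{th:dendr} gives a point $p\in D_1$ with $r(f(p))=p$.

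Next I would show that such a $p$ is in fact a fixed point of $f$. Suppose not, i.e. $f(p)\neq p$. Then $f(p)\notin D_1$, since otherwise $r(f(p))=f(p)\neq p$; hence, by the defining property of $r$ applied to $y=f(p)$, the arc $[f(p),p]$ meets $D_1$ exactly in $p$. Consequently the non-degenerate half-open arc $[f(p),p)$ is contained in $D_2\setminus D_1$ and has $p$ as a limit point, so $p\in\overline{D_2\setminus D_1}\cap D_1=E$. This is the crux of the reduction: the fixed point of $r\circ f$ produced by Theorem~\ref{th:dendr} is automatically forced into $E$, which is precisely where the boundary-scrambling hypothesis has content.

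Now I would invoke boundary scrambling at the non-fixed point $p\in E$: the point $f(p)$ lies in a component $U$ of $D_2\setminus\{p\}$ with $U\cap D_1\neq\0$; fix $q\in U\cap D_1$. Since $D_2$ is a locally connected continuum, the connected open set $U$ is arcwise connected, so $[f(p),q]\subset U$; in particular $p\notin[f(p),q]$. On the other hand, let $m$ be the median of $\{p,q,f(p)\}$ in $D_2$, so that $[p,m]\subset[p,q]$ and $[p,m]\subset[p,f(p)]$. Because $p,q\in D_1$ and $D_1$ is a subdendrite, $[p,q]\subset D_1$, whence $[p,m]\subset[p,f(p)]\cap D_1=\{p\}$ and therefore $m=p$. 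But the median always lies on $[q,f(p)]$, so $p\in[f(p),q]$, contradicting the previous sentence. Hence $f(p)=p$, as required.

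I do not expect a serious obstacle, since the argument is short once the right objects are assembled. The points needing care are: (i) the existence and continuity of the first-point retraction $r$ (standard for dendrites, e.g. via \cite{nadl92}); (ii) the observation that a fixed point of $r\circ f$ not fixed by $f$ must land in $E$; and (iii) the small piece of dendrite geometry that, because $[p,f(p)]$ meets $D_1$ only at $p$, the median of $p$, $q$, $f(p)$ collapses to $p$ — which is exactly what is incompatible with $f(p)$ and a point of $D_1$ sharing a component of $D_2\setminus\{p\}$.
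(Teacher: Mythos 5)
Your proposal is correct and follows essentially the same route as the paper: compose $f$ with the natural first-point retraction $r\colon D_2\to D_1$ (Definition~\ref{retr}), apply Theorem~\ref{th:dendr} to $g=r\circ f$, and use the boundary scrambling hypothesis to conclude that the $g$-fixed point is actually $f$-fixed. Your median/tripod argument simply makes explicit the step the paper leaves implicit (that a $g$-fixed point not fixed by $f$ would lie in $E$ and contradict scrambling), so there is nothing to correct.
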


\setcounter{chapter}{1}\setcounter{section}{0}\setcounter{thm}{2}

Next in Section~\ref{sec:dendr} we define \emph{weakly repelling periodic points}.
Basically, a  point $a\in D_1$ is a \emph{weakly repelling periodic point
(for $f^n$)} if there exists $n\ge 1$ and
a component $B$ of $D_1\sm \{a\}$ such that $f^n(a)=a$ and arbitrarily close to $a$ in $B$
there exist cutpoints of $D_1$ fixed under $f^n$ or points $x$ separating $a$
from $f^n(x)$. Note that a fixed point $a$ of $f$ can be a weakly repelling periodic point
for $f^n$ while it is not weakly repelling for $f$.  We
use this notion to prove Theorem~\ref{infprpt} where we show that if $D$ is a dendrite and $f:D\to D$
is continuous and all its periodic points are weakly repelling, then $f$ has
infinitely many periodic cutpoints. Then we rely upon Theorem~\ref{infprpt} in
Theorem~\ref{lamwkrp} where it is shown that if $g:J\to J$ is a
\emph{topological polynomial} on its dendritic Julia set (e.g., if $g$ is a
complex polynomial with a dendritic Julia set) then it has infinitely many
periodic cutpoints.

\subsubsection{Fixed points in non-invariant continua: the planar case}

In Sections~\ref{sec:fxpt-noni} and \ref{sec:fxpt-noniso} we draw a
parallel with the interval case for planar maps and extend
Theorem~\ref{fixpoint} to non-invariant continua under positively
oriented maps such that certain ``boundary'' conditions are
satisfied. Namely, suppose that $f:\C\to \C$ is a positively
oriented map and $X\subset \C$ is a non-separating continuum. Since
we are interested in fixed points of $f|_X$, it makes sense to
assume that at least $f(X)\cap X\ne \0$. Thus, we can think of
$f(X)$ as a new continuum which ``grows'' from $X$ at some places.
We assume that the  ``pieces'' of $f(X)$ which grow outside $X$ are
contained in disjoint non-separating continua $Z_i$ so that $f(X)\sm
X\subset \cup_i Z_i$.

We also assume that places at which the growth takes places - i.e., sets $Z_i\cap
X=K_i$ - are non-separating continua for all $i$. Finally, the main assumption
here is the following restriction upon where the continua $K_i$ map under $f$:
we assume that for all $i$, $f(K_i)\cap [Z_i\sm K_i]=\0$. If this is all that is
satisfied, then the map $f$ is said to \emph{scramble the boundary (of $X$}). A
stronger version of that is when for all $i$, either $f(K_i)\subset K_i$, or $f(K_i)\cap
Z_i=\0$; then we say that $f$  \emph{strongly scrambles the boundary (of $X$)}
(see Definition~\ref{scracon}). The continua $K_i$ are called \emph{exit
continua (of $X$)}. The main result of Section~\ref{sec:fxpt-noni} is:

\setcounter{chapter}{7}\setcounter{section}{3}\setcounter{thm}{2}
\begin{thm}
Suppose that   $f$ is positively oriented
and strongly scrambles the boundary of $X$, then $f$ has a fixed point in $X$.
\end{thm}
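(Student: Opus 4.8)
The plan is to argue by contradiction and reduce the statement to the index–variation machinery of Part 1, exactly along the lines sketched for Theorem~\ref{fixpoint} but keeping track of the exit continua. So suppose $f$ is positively oriented, strongly scrambles the boundary of $X$ via exit continua $K_i\subset Z_i$, and $f$ has no fixed point in $X$. Since $X$ is compact and $f$ continuous, there is a closed neighborhood $\ol U$ of $X$ on which $f$ has no fixed point; shrinking $U$ we may also assume that each $Z_i$ meeting $\ol U$ does so in a controlled way and that the continua $Z_i$ with $Z_i\cap\ol U\neq\0$ are still pairwise disjoint. The first task is to build a tight simple closed curve $S$ around $X$, lying in $U$, touching $X$ along a finite set of points, and adapted to the decomposition of $\bd X$ into the ``exit'' parts $K_i$ and the ``stable'' part of $\bd X$ where $f$ points back into $X$. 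On arcs $A$ of $S$ cut off by consecutive touching points that lie over the stable part of the boundary, $f(A)\cap A=\0$ automatically (no fixed points in $\ol U$, arcs small), so Corollary~\ref{posvaro} gives $\var(f,A)\ge 0$. The delicate arcs are those sitting ``over'' an exit continuum $K_i$: there $f(X)$ genuinely grows outward into $Z_i$, so one cannot expect $f(A)\cap A=\0$ for a single small arc.

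The key step is therefore to handle a whole exit continuum $K_i$ at once. Because $K_i$ is a non-separating continuum and $f(K_i)\cap[Z_i\sm K_i]=\0$ (indeed, in the strong version, $f(K_i)\subset K_i$ or $f(K_i)\cap Z_i=\0$), I want to choose the portion of $S$ running around $K_i$ — call it a single arc $A_i$ with endpoints $p_i,q_i\in X\sm K_i$ very close to $K_i$ — so that $A_i$ surrounds $K_i$ tightly, $f(p_i),f(q_i)\in T(S)$, and $f(A_i)\cap A_i=\0$. The point is that $f(K_i)$ either stays inside $K_i$ (hence well inside $T(S)$, away from $A_i$) or escapes $Z_i$ entirely (hence, for $A_i$ chosen inside a small neighborhood of $K_i$ contained in the interior of $Z_i$ relative to the rest of $\bd X$, again disjoint from $A_i$); and the part of $A_i$ not close to $K_i$ lies over the stable boundary where the no-fixed-point argument applies. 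Granting this, Corollary~\ref{posvaro} yields $\var(f,A_i)\ge 0$ for these arcs as well. Then $S=\bigcup A\cup\bigcup_i A_i$ is a decomposition into arcs, each mapped off itself with endpoints in $T(S)$ and each of non-negative variation, so $\var(f,S)=\sum\var(f,A)+\sum_i\var(f,A_i)\ge 0$. By Theorem~\ref{I=V+1}, $\ind(f,S)=\var(f,S)+1\ge 1\neq 0$, and Theorem~\ref{fpthm} produces a fixed point of $f$ in $T(S)$. Finally, since $S$ can be taken arbitrarily tight around $X$ and $X$ is non-separating so $T(X)=X$, a compactness/limiting argument places the fixed point in $X$ itself, contradicting our assumption.

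The main obstacle is precisely the construction of the arcs $A_i$ around the exit continua together with the verification that the ``strongly scrambles'' hypothesis guarantees $f(A_i)\cap A_i=\0$. One must be careful that the simple closed curve $S$ can simultaneously be made tight around all of $X$, touch $X$ in only finitely many points, route around each $K_i$ inside a neighborhood small enough that the dichotomy $f(K_i)\subset K_i$ or $f(K_i)\cap Z_i=\0$ translates into disjointness from $A_i$, and have all touching points map into $T(S)$ — the last of these using that the stable part of $\bd X$ maps back toward $X$. Some care is also needed because $f(X)$ may grow outward near a $K_i$ in a way that is not literally ``outside $S$'': one exploits that $A_i$ hugs $K_i$ from the $T(S)$ side, so $f(A_i)$, whether it follows $f(K_i)$ back into $K_i$ or shoots past $Z_i$, never recrosses $A_i$. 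Once this geometric setup is in place, everything else is a direct appeal to Corollary~\ref{posvaro}, Theorem~\ref{I=V+1}, and Theorem~\ref{fpthm}, so the heart of the proof is the boundary-scrambling bookkeeping rather than any new analytic input.
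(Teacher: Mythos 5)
Your global strategy (contradiction, tight bumping simple closed curve, partition into links with endpoints mapping into $T(S)$ and images off themselves, then $\ind(f,S)=\var(f,S)+1\ge 1$ against Theorem~\ref{fpthm}) is the right one and is the skeleton of the paper's proof of this theorem. But the heart of the matter — producing the partition — is where your proposal goes wrong, in two concrete ways. First, the case $f(K_i)\subset K_i$ should not be handled geometrically at all: since $K_i$ is a non-separating continuum, Theorem~\ref{fixpoint} applied to $K_i$ immediately gives a fixed point in $K_i\subset X$ and ends the proof; your alternative claim that in this case $f(A_i)$ lies ``well inside $T(S)$, away from $A_i$'' is unjustified and generally false, because $A_i$ hugs $K_i$ and $f(A_i)$ then hugs $f(K_i)\subset K_i$, i.e.\ exactly the region $A_i$ surrounds. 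After disposing of that case one may assume $f(K_i)\cap Z_i=\emptyset$ for every $i$. Second, and more seriously, your localization of the ``delicate'' arcs is inverted. The links that violate the hypotheses of the variation lemma are those whose endpoints are mapped into some $Z_i\sm X$, and such points are necessarily \emph{far from} $K_i$: since $f(K_i)\cap Z_i=\emptyset$, points of $X$ near $K_i$ have images far from $Z_i$, so the outward growth of $f(X)$ into $Z_i$ comes from parts of $\partial X$ remote from the exit continua. Hence routing a single arc $A_i$ tightly around each $K_i$ does not touch the problematic links at all. The actual fix (as in the paper) is to concatenate each maximal string of consecutive small links whose endpoints map into $Z_i\sm X$ into one long link $Q'$ whose endpoints do map into $X$, and then to prove $f(Q')\cap Q')=\emptyset$ quantitatively: $f(Q')$ stays within roughly $2\eta$ of $Z_i$, while $Q'$ itself stays more than $2\eta$ away from $Z_i$, precisely because its points map near $Z_i$ and therefore cannot lie near $K_i$. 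This dynamical use of $K_i$ is the real content of the ``strong scrambling'' hypothesis, and it is missing from your construction.

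Two smaller gaps: Corollary~\ref{posvaro} is stated for invariant continua ($f(X)\subset T(X)$) and its proof needs a junction missing $f(X)$; here $f(X)$ protrudes into the sets $Z_i$, so you must instead use the non-invariant version (Lemma~\ref{posvar}/\ref{posvar+}), choosing the junction to avoid $X\cup\bigcup Z_i$ (possible since this union is non-separating), and, for links contained in some $Z_i$, invoking $f(K_i)\cap[Z_i\sm K_i]=\emptyset$. Also, your endpoints $p_i,q_i$ chosen in $X\sm K_i$ near $K_i$ need not map into $T(S)$: nothing in the hypotheses controls where a neighborhood of $K_i$ in $X$ maps (it could map into some other $Z_j\sm X$), so the existence of suitable partition points must be argued, as the paper does by first locating one point $a_0$ with $f(a_0)\in X$ away from all $Z_i$ and then performing the concatenation procedure along $S$ starting from it.
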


\setcounter{chapter}{1}\setcounter{section}{0}\setcounter{thm}{2}
As an illustration, consider the case when $X\cup (\cup_i Z_i)$ is a dendrite
and all sets $K_i$ are singletons. Then it is easy to see that both scrambling
and strong scrambling of the boundary in the sense of dendrites mean the same
as in the sense of the planar definition. Of course, in the planar case we deal
with a much more narrow class of maps, namely positively oriented maps, and
with a much wider variety of continua, namely all  non-separating planar continua.
This fits into the ``philosophy'' of our approach: whenever we obtain a result
for a wider class of continua, we have to consider a more specific class of maps.

For the family of positively oriented maps with isolated fixed points we
specify this result as follows. We introduce the notion of the map $f$
\emph{repelling outside $X$} at a fixed point $p$ (see Definition~\ref{repout};
basically, it means that there exists an invariant external ray of $X$ which
lands at $p$ and along which the points are repelled away from $p$. Then in
Theorem~\ref{locrot} we show that \emph{if $f$ is a positively oriented map with
isolated fixed points and $X\subset\C$ is a non-separating continuum or a point
such that $f$ scrambles the boundary of $X$ and for every fixed point $a$ the
winding number at $a$ equals 1 and $f$ repels at $a$, then $X$ must be a
point}.

\subsubsection{Fixed points in non-invariant continua for polynomials}

These theorems apply to polynomials $P$, allowing us to obtain a few
corollaries dealing with the existence of periodic points in certain
parts of the Julia set of a polynomial and degeneracy of certain
impressions. To discuss this we assume  knowledge of the standard
definitions such as \emph{Julia sets $J_P$, filled-in Julia sets
$K_P=T(J_P)$, Fatou domains, parabolic periodic points} etc which
are formally introduced in Section~\ref{polycase} and further
discussed in Section~\ref{complappl} (see also \cite{miln00}).
Recall that the set $\iu(J_P)$ (called in this context the
\emph{basin of attraction of infinity}) is partitioned by the
\emph{(conformal) external rays} $R_\al$ with arguments $\al\in
\uc$. If $J_P$ is connected, all rays $R_\al$ are smooth and
pairwise disjoint while if $J_P$ is not connected limits of smooth
external rays must be added. Still, given an external ray $R_\al$ of
$K$, its principal set $\ol{R_\al}\sm R_\al$ can be introduced as
usual.

We then define a \emph{general puzzle-piece} of a filled-in Julia set
$K_P$ as a continuum $X$ which is cut from $K_P$ by means of choosing
a few \emph{exit continua} $E_i\subset X$ each of which contains the
principal sets of more than one external ray. We then assume that
there exists a component $C_X$ of the complement in $\C$ to the union
of all such exit continua $E_i$ and their external rays such that
$X\subset (C_X\cap K_P)\cup (\bigcup E_i)$. The external rays
accumulating inside an exit continuum $E_i$ cut the plane into wedges
one of which, denoted by $W_i$, contains points of $X$. The
``degenerate'' case when there are no exit continua is also included
and simply means that $X$ is an invariant subcontinuum of $K_P$.

The main assumption on the dynamics of a general puzzle piece $X$
which we make is that  $P(X)\cap C_X\subset X$ and for each exit
continuum $E_i$ we have $P(E_i)\subset W_i$. It is easy to see that
this essentially means that $P$ \emph{scrambles the boundary of $X$}
(where the role of the ``boundary'' is played by the union of exit
continua).

The conclusion, obtained in Theorem~\ref{pointdyn}, is based upon the above
described results, in particular on Theorem~\ref{locrot}. It states that for
a general puzzle-piece either $X$ contains an invariant parabolic Fatou domain,
or $X$ contains a fixed point which is neither repelling nor parabolic, or $X$
contains a repelling or parabolic fixed point $a$ at which the local rotation
number is not $0$. Let us now list the main dynamical applications of this result.

\subsubsection{Further dynamical applications}

There are a few ways Theorem~\ref{pointdyn} applies in complex (polynomial)
dynamics. First, it is instrumental in studying \emph{wandering cut-continua}
for polynomials with connected Julia sets. A continuum/point $L\subset J_P$ is a
\emph{cut-continuum (of valence $\val(L)$)} if the cardinality $\val(L)$ of the
set of components of $J_P\sm L$ is greater than $1$. A collection of disjoint cut-continua
(it might, in particular, consist of one continuum) is said to be
\emph{wandering} if their forward images form a family of pairwise disjoint
sets. The main result of \cite{bclos08} in the case of polynomials with
connected Julia sets is the following generalization of the Fatou-Shishikura
inequality.

\begin{thm}\label{fdhs}
Let $P$ be a polynomial with connected Julia set, let $N$ be the sum of the
number of distinct cycles of its bounded Fatou domains and the number of cycles of its
Cremer points, and let $\Ga\ne\0$ be a wandering collection of cut-continua
$Q_i$ with valences greater than $2$ which contain no preimages of critical
points of $P$. Then $\sum_\Ga (\val(Q_i)-2)+N\le d-2.$
\end{thm}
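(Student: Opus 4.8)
\emph{Overall strategy.} The plan is a conservation argument for critical points. A degree-$d$ polynomial has exactly $d-1$ critical points in $\C$ counted with multiplicity, so it suffices to produce pairwise disjoint finite sets of critical points, all lying in $\C$: a set $\mathcal C_{\mathcal O}$ of size $1$ for each cycle $\mathcal O$ counted by $N$, a set $\mathcal C_{Q}$ of size $\val(Q)-2$ for each $Q\in\Ga$, and one extra ``reserved'' critical point $c_\ast$ disjoint from all of these; then $N+\sum_{\Ga}(\val(Q)-2)+1\le d-1$. Thus I would first reduce the theorem to four tasks: (i) charge one critical point to each bounded-Fatou cycle and each Cremer cycle; (ii) charge $\val(Q)-2$ critical points to each wandering cut-continuum $Q$; (iii) extract the reserved $c_\ast$ from the mere nonemptiness of $\Ga$; (iv) check that all these charges are disjoint. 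Each of (i)--(iii) is carried out by building suitable \emph{general puzzle-pieces}, applying Theorems~\ref{pointdyn} and~\ref{locrot}, and converting the resulting ``bad fixed point'' dichotomy into the presence of a critical point; (iv) is a bookkeeping step organized by the grand orbits involved.

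\emph{Cycles.} Fix a non-repelling or parabolic cycle of period $k$ and a point $a$ in it, and set $g=P^k$. Around $a$ choose a small general puzzle-piece $X\ni a$ for $g$, cut out along finitely many external rays, small enough that $g$ scrambles the boundary of $X$ and $X$ meets no other non-repelling cycle. Then $X$ is non-degenerate (it contains a neighborhood of $a$ intersected with $K_P$), so Theorem~\ref{pointdyn} supplies a $g$-fixed point in $X$ that either bounds an invariant parabolic Fatou domain, or is attracting/Siegel/Cremer, or is repelling/parabolic with nonzero local rotation number. In the first two cases the classical local theory (a critical point in an attracting or parabolic basin; a critical point governing a Siegel or Cremer cycle via the Fatou--Shishikura mechanism) yields a critical point whose orbit stays near the cycle, so it may be assigned to the cycle; the rotational case is reduced to a parabolic case or to a smaller puzzle-piece, lowering the local rotation number, via the mechanism behind Theorem~\ref{locrot}. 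A standard argument keeps the critical points obtained for distinct cycles distinct. This step recovers the classical polynomial Fatou--Shishikura bound inside the present framework.

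\emph{Wandering branch continua -- the main part.} Fix $Q=Q_i$ with $v=\val(Q)>2$. Since $J_P$ is connected, there are $v$ external rays of $K_P$ whose principal sets lie in distinct complementary components of $Q$; they cut $\C$ into $v$ wedges. Because $\Ga$ is wandering, the forward orbit $Q,P(Q),P^2(Q),\dots$ is a sequence of pairwise disjoint cut-continua. Positive orientation of $P$, hence confluence (Part 1), forces each complementary component of $P^n(Q)$ to map \emph{onto} a complementary component of $P^{n+1}(Q)$, yielding a branched self-covering of the tree of complementary components of the orbit; a Riemann--Hurwitz count for it shows that the valence drop $\val(P^n(Q))-\val(P^{n+1}(Q))$ is paid for, step by step, by critical points lying in the wedges of $P^n(Q)$ -- and it is here that the index--variation identity (Theorem~\ref{I=V+1}) together with the non-invariant fixed point results (Theorems~\ref{fixpoint} and~\ref{locrot}) are needed, to rule out a wedge collapsing ``for free.'' A wandering collection of cut-continua cannot carry arbitrarily large valences indefinitely, so $\val(P^n(Q))\le 2$ for large $n$ (this is proved together with the main estimate, by induction on $d$, or by a separate no-wandering-high-branching argument built on Theorem~\ref{pointdyn}); summing the drops over $n$ then produces at least $\val(Q)-2$ critical points along the orbit of $Q$. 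Disjointness of the orbits of distinct $Q_i$, and of those orbits from the cycle neighbourhoods of the previous step, makes all these charges distinct; nonemptiness of $\Ga$ furnishes the reserved $c_\ast$, since sustaining any wandering branch point forces one further critical point to be non-(pre)periodic. Counting everything against $d-1$ finishes the proof.

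\emph{Main obstacle.} The crux is the simultaneous \emph{tightness} and \emph{disjointness} of the count for wandering branch continua: one must show the valence can drop only at the cost of an honest critical point (no ``free'' collapse -- this uses positive orientation and Theorem~\ref{I=V+1}), that the valences along a wandering orbit must eventually fall to $\le 2$ (otherwise the telescoping sum yields nothing), and that the critical points extracted for the various $Q_i$ and for the cycles are genuinely different and all finite. Theorem~\ref{pointdyn} is the linchpin throughout: it is precisely the statement that a non-invariant continuum with the scrambling boundary behaviour cannot be non-degenerate without harbouring a dynamically costly fixed point, and it is by feeding carefully constructed puzzle-pieces into it that each unit of $N$ and each unit of $\val(Q_i)-2$ gets pinned to its own critical point.
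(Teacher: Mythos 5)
You should first be aware that this memoir does not prove Theorem~\ref{fdhs} at all: it is quoted as the main result of \cite{bclos08}, and the only contribution of the present paper to its proof is Theorem~\ref{pointdyn}, which in \cite{bclos08} is applied inside a partition of the plane by external rays with rational arguments landing at periodic cutpoints of $J_P$ (and their preimages) to show that wandering cut-continua never enter the pieces containing Cremer or Siegel periodic points. So the architecture you propose --- directly charging one critical point to each cycle and $\val(Q_i)-2$ critical points to valence drops along the orbit of each $Q_i$ --- is not the argument behind the theorem, and, more importantly, as written it has genuine gaps rather than being an alternative proof.

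Concretely: (a) the cycle step is circular --- attaching a distinct critical point to each Cremer and Siegel cycle is essentially the classical Fatou--Shishikura inequality itself, and Theorem~\ref{pointdyn} does not deliver it: its trichotomy (invariant parabolic domain / fixed point neither repelling nor parabolic / rotational fixed point) says nothing about which cycle is responsible, and invoking ``the Fatou--Shishikura mechanism'' for Cremer/Siegel cycles assumes what is to be proved. (b) In the wandering step the central quantitative claims are asserted, not proved: the statement that confluence forces each complementary component of $P^n(Q)$ to map onto a complementary component of $P^{n+1}(Q)$ is false (confluence concerns components of \emph{preimages} of continua; a component of $J_P\sm P^n(Q)$ need not map into, let alone onto, a single component of $J_P\sm P^{n+1}(Q)$), so the ``branched self-covering of the tree of complementary components'' and the Riemann--Hurwitz count built on it do not exist as described; moreover, since the $Q_i$ contain no preimages of critical points, no critical point ever lies on the orbit of $Q_i$, so the accounting must locate critical points in ray-bounded pieces away from the continua --- exactly where the partition of \cite{bclos08} and Theorem~\ref{pointdyn} are needed --- and neither Theorem~\ref{I=V+1} nor Theorem~\ref{locrot} by itself rules out a ``free'' valence collapse. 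The auxiliary claims that $\val(P^n(Q))\le 2$ for large $n$ and that the resulting charges for distinct $Q_i$ and for the cycles are pairwise disjoint are also left as assertions. (c) The reserved critical point $c_\ast$, which is what sharpens $d-1$ to $d-2$, is extracted in a single unsupported sentence, although this is one of the delicate points of \cite{bclos08}. In short, the proposal is a plausible-sounding outline whose every load-bearing step is missing.
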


In \cite{bclos08} a partition of the plane into pieces by
rays with rational arguments landing at periodic cutpoints of $J_P$ and their
preimages is used. Theorem~\ref{pointdyn} plays a significant role in the proof of
the fact that wandering cut-continua do not enter the pieces containing
Cremer or Siegel periodic points which is an important ingredient of the
arguments in \cite{bclos08} proving Theorem~\ref{fdhs}.

Another application of Theorem~\ref{pointdyn} can be found in \cite{bco08}
where  Kiwi's fundamental result \cite{kiwi97} on the semiconjugacy
of polynomials on their Julia sets without Cremer or Siegel points is extended to all
polynomials with connected Julia sets; in both cases topological polynomials on
their topological Julia sets serve as locally connected models. Denote the monotone semiconjugacy
in question by $\varphi$. In showing in \cite{bco08} that if $x$ is a
(pre)periodic point and $\varphi(x)$ is not equal to a $\varphi$-image of a
Cremer or Siegel point or its preimage then $J_P$ is locally connected at $x$,
Theorem~\ref{pointdyn} plays a crucial role.

Finally, our results concerning dendrites (such as Theorem~\ref{infprpt} and
Theorem~\ref{lamwkrp}) are used 
in \cite{bco08} where  a criterion for the connected Julia set to have a
non-degenerate locally connected model is obtained.  We also rely on Theorem~\ref{infprpt}
and Theorem~\ref{lamwkrp} to show in \cite{bco08} that if such model exists,
and is a dendrite, then the polynomial must have infinitely many bi-accessible
periodic points in its Julia set.

\subsection{Concluding remarks and acknowledgments}

All of the positive  results on the existence of fixed points in this memoir are either for simple continua
(i.e., those which do not contain indecomposable subcontinua) or for positively
oriented maps of the plane. Hence the following special case of the
Plane Fixed Point Problem is a major remaining open problem:

\begin{prob}\label{negprob}
Suppose that $f:\complex\to\complex$ is a negatively oriented branched covering
map,  $|f^{-1}(y)|\le 2$ for all $y\in\C$ and $Z$ is a non-separating plane continuum such that $f(Z)\subset Z$. Must
$f$ have a fixed point in $Z$?
\end{prob}

Suppose that $c$ is the unique critical point of $f$ and that $X\subset Z$ is a minimal continuum such that
$f(X)\subset T(X)$. Then, as was mentioned above, the answer
is yes if  there exists $y\in X\sm \{f(c)\}$ such that $|f^{-1}(y)\cap X|<2$. In particular the answer
to Problem~\ref{negprob} is yes if $f|_Z$ is one-to-one.

Finally let us express, once again, our gratitude to Harold Bell for sharing his insights with us.
His notion of variation of an arc, his index equals variation plus one theorem and his linchpin
theorem of  partitioning a complementary domain of a planar continuum into convex subsets
are essential for the results we obtain here.
Theorem~\ref{outchannel}
(Unique Outchannel) is a new result the main steps of which were outlined by Bell.
  Complete proofs of the following results by Bell:
Theorems~\ref{I=V+1}, \ref{lollipop}, \ref{Hypmain} and \ref{outchannel},  appear
in print for the first time.
For the convenience of the reader we have included an index at the end of the paper.



\part{Basic Theory}

\chapter{Preliminaries and outline  of Part 1}\label{descr1}
In this chapter we give the formal definitions and describe the results of part 1 in more detail.
By a \emph{map} \index{map} $f:X\to Y$ we will always mean a continuous function.

Let $p:\real\to \uc$ denote the covering map $p(x)=e^{2\pi ix}$. Let $g:\uc\to
\uc$ be a map. By the \emph{degree} \index{degree} of the map $g$,
\index{degree@$\dg(g)$} denoted by $\dg(g)$, we mean the number
$\hat{g}(1)-\hat{g}(0)$, where $\hat{g}:\real\to\real$ is a lift of the map $g$
to the universal covering space $\real$ of $\uc$ (i.e., $p\circ\hat{g}=g\circ
p$). It is well-known that $\dg(g)$ is independent of the choice of the lift.

\section{Index}\label{defindex}

Let $g:\uc\to\complex$ be a map and $f:g(\uc)\to\complex$ a fixed point free
map.   Define the map $v:\uc\to \uc$ by
$$v(t)=\frac{f(g(t))-g(t)}{|f(g(t))-g(t)|}.$$

Then the map $v:\uc\to \uc$  lifts to a map $\wh v:\real\to\real$. Define the
{\em index of $f$ \index{index} with respect to $g$}, denoted
\index{index1@$\ind(f,g)$} $\ind(f,g)$ by
$$\ind(f,g)=\wh v(1)-\wh v(0)=\dg(v).$$

Note that $\ind(f,g)$ measures the net number of revolutions of the
vector\linebreak $f(g(t))-g(t)$ as $t$ travels through the unit circle one
revolution in the positive direction.

\begin{rem}\label{remark}The following basic facts hold.
\begin{enumerate}
\item[(a)] If $g:\uc \to\complex$
is a constant map with $g(\uc)=c$ and $f(c)\ne c$, then $\ind(f,g)=0$. \\
\item[(b)] If $f$ is a constant map and $f(\complex)=w$ with $w\not\in g(\uc)$, then
$\ind(f,g)=\win(g, \uc,w)$, the \emph{ winding number of $g$ about
$w$}\index{win@$\win(g,\uc,w)$}. In particular, if $f:\uc\to T(\uc)\setminus
\uc$ is a constant map, then $\ind(f,id|_{\uc})= 1$, where $id|_{\uc}$ is the
identity map on $\uc$\index{id@$id$ identity map}.
\end{enumerate}
\end{rem}

Note also, that for a simple closed curve $S'$ and a  point $w\nin T(f(S'))$ we have $\win(f,S',w)=0$.
Suppose $S\subset\Complex$ is a simple closed curve and $A\subset S$ is a
subarc \index{order!on subarc of simple closed curve} of $S$ with endpoints $a$
and $b$. Then we write $A=[a,b]$ if $A$ is the arc obtained by traveling in the
counter-clockwise direction from the point $a$ to the point $b$ along $S$. In
this case we denote by $<$ the linear order on the arc $A$ such that $a<b$.
\index{counterclockwise order!on an arc in a simple closed curve} We  will call
the order $<$ the \emph{counterclockwise order on}   $A$. Note that
$[a,b]\ne[b,a]$.


More generally, for any arc $A=[a,b]\subset \uc$, with $a<b$ in the
counterclockwise order, define the {\em fractional index}
\index{index!fractional} \cite{brow90} \index{index2@$\ind(f,g|_{[a,b]})$} of
$f$ on the sub-path $g|_{[a,b]}$  by $$\ind(f,g|_{[a,b]})=\wh v(b)-\wh v(a).$$
While, necessarily, the index of $f$ with respect to $g$ is an integer, the
fractional index of $f$ on $g|_{[a,b]}$ need not be. We shall have occasion to
use fractional index in the proof of Theorem~\ref{I=V+1}.

\begin{prop} \label{fracindex}
Let $g:\uc\to\complex$ be a map with $g(\uc)=S$, and suppose $f:S\to\complex$
has no fixed points on $S$. Let $a\not=b\in \uc$ with $[a,b]$ denoting the
counterclockwise subarc on $\uc$ from $a$ to $b$  (so $[a,b]$ and $(b,a)$ are complementary arcs and
$\uc=[a,b]\cup[b,a]$).
Then $\ind(f,g)=\ind(f,g|_{[a,b]})+\ind(f,g|_{[b,a]})$.
\end{prop}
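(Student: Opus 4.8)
The plan is to prove this additivity property of the fractional index directly from the definitions, using the fact that the lift $\wh v$ of $v:\uc\to\uc$ is a genuine function on $\real$ (or, more precisely, descends to a function whose increments over arcs are well-defined). First I would fix a lift $\wh v:\real\to\real$ of the map $v$ defined by $v(t) = \frac{f(g(t))-g(t)}{|f(g(t))-g(t)|}$; this is legitimate precisely because $f$ has no fixed points on $S=g(\uc)$, so $v$ is a well-defined continuous map into $\uc$, and continuous maps from an interval (or, via the covering $p:\real\to\uc$, from $\uc$ lifted appropriately) into $\uc$ lift to $\real$. The quantities in play are then $\ind(f,g) = \wh v(1) - \wh v(0)$, and for an arc $[a,b]\subset\uc$ with $a<b$ in the counterclockwise order, $\ind(f,g|_{[a,b]}) = \wh v(b) - \wh v(a)$, where here $a,b$ are understood as chosen lifts in $\real$ with $0 \le a < b \le 1$ (or the appropriate representatives).

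The key step is simply to observe that the increment of $\wh v$ telescopes. Writing $a, b \in [0,1)$ for representatives of the two points with, say, $a < b$, the counterclockwise arc $[a,b]$ corresponds to the interval $[a,b] \subset \real$, and the counterclockwise arc $[b,a]$ (which wraps around through $1 \equiv 0$) corresponds to the interval $[b, a+1] \subset \real$. Then
$$\ind(f,g|_{[a,b]}) + \ind(f,g|_{[b,a]}) = \bigl(\wh v(b) - \wh v(a)\bigr) + \bigl(\wh v(a+1) - \wh v(b)\bigr) = \wh v(a+1) - \wh v(a).$$
Since $v$ has a well-defined degree, $\wh v(t+1) - \wh v(t) = \dg(v) = \ind(f,g)$ for every $t \in \real$; in particular this holds at $t = a$, giving $\wh v(a+1) - \wh v(a) = \ind(f,g)$. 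This completes the computation.

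The one point requiring a little care — and the only place I see a potential obstacle — is the bookkeeping of which representatives in $\real$ one picks for the endpoints $a$ and $b$ and for the two complementary arcs, so that "$\ind(f,g|_{[a,b]})$" as defined in the paper matches $\wh v(b) - \wh v(a)$ with the right lifts. I would handle this by invoking the standard fact that $\wh v(t+1) - \wh v(t)$ is independent of $t$ (it equals $\dg(v)$) and that the fractional index $\ind(f,g|_{[a,b]})$, defined via $\wh v(b)-\wh v(a)$, is independent of the choice of lift $\wh v$ — both because any two lifts differ by an integer constant, which cancels in the difference. With that normalization fixed once and for all, the telescoping identity above is immediate. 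So in the end this is a short argument: establish the lift exists, record that degree equals the increment over any fundamental domain, and telescope.
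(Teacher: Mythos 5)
Your argument is correct: lifting $v$, noting that $\wh v(t+1)-\wh v(t)=\dg(v)$ is constant in $t$ and that the two complementary arcs correspond to the intervals $[a,b]$ and $[b,a+1]$, the telescoping of increments gives exactly the claimed additivity. The paper states Proposition~\ref{fracindex} without proof, treating it as immediate from the definition of fractional index via the lift $\wh v$, and your telescoping computation is precisely the implicit argument, so there is nothing to add.
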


\section{Variation} \label{compofvar}

In this section we introduce the notion
of variation of a map on an arc and relate it to winding number.

\begin{defn} [Junctions]\label{junction}\index{junction}
The {\em standard junction} $J_O$ is the union of the three rays
$J^i_O=\{z\in\complex\mid z=re^{i\pi/2},\ r\in[0,\infty)\}$,
$J^+_O=\{z\in\complex\mid z=r,\ r\in[0,\infty)\}$, $J^-_O=\{z\in\complex\mid
z=re^{i\pi},\ r\in[0,\infty)\}$, having the origin $O$ in common.   A {\em
junction (at $v$)} $J_v$ is the image of $J_O$ under any orientation-preserving
homeomorphism $h:\complex\to\complex$ where $v=h(O)$. We will often suppress
$h$ and refer to $h(J^i_O)$ as $J^i_v$, and similarly for the remaining rays in
$J_v$.  Moreover, we require that for each bounded neighborhood $W$ of $v$, $d(J^+_v\sm
W, J^i_v\sm W)>0$.
\end{defn}

\begin{defn}[Variation on an arc] \label{vararc}
Let $S\subset\Complex$ be a simple closed curve, $f:S\to\complex$ a map and
$A=[a,b]$ a subarc of $S$ such that $f(a),f(b)\in T(S)$ and $f(A)\cap A=\0$.
We define the {\em variation of $f$ on $A$ with respect to
$S$}\index{variation!on an arc}, denoted $\var(f,A,S)$, by the following
algorithm:

\begin{enumerate}
\item Let $v\in A$ and let $J_v$ be a junction with $J_v\cap S=\{v\}$.

\item\label{crossings}
{\em Counting crossings:} Consider the set $M=f^{-1}(J_v)\cap [a,b]$. Each time
a point of $f^{-1}(J^+_v)\cap [a,b]$ is immediately followed in $M$, in the
counterclockwise order $<$ on $[a,b]\subset S$, by a point of $f^{-1}(J^i_v)$,
count $+1$ and each time a point of $f^{-1}(J^i_v)\cap [a,b]$ is immediately
followed in $M$  by a point of $f^{-1}(J^+_v)$, count $-1$. Count no other
crossings.

\item
The sum of the crossings found above is the variation $\var(f,A,S)$.
\index{variation@$\var(f,A,S)$}
\end{enumerate}

\end{defn}

Note that $f^{-1}(J^+_v)\cap [a,b]$ and $f^{-1}(J^i_v)\cap [a,b]$ are disjoint
closed sets in $[a,b]$. Hence, in (\ref{crossings}) in the above  definition,
we count only a finite number of crossings and $\var(f,A,S)$ is an integer. Of
course, if $f(A)$ does not meet both $J^+_v$ and $J^i_v$, then $\var(f,A,S)=0$.

If $\al:S\to\complex$ is any map such that $\al|_A=f|_A$ and $\al(S\setminus
(a,b))\cap J_v=\0$, then $\var(f,A,S)=\win(\al,S,v)$. In particular, this
condition is satisfied if $\al(S\sm (a,b))\subset T(S)\setminus\{v\}$. The
invariance of winding number under suitable homotopies implies that the
variation $\var(f,A,S)$ also remains invariant under such homotopies. That is,
even though the specific crossings in (\ref{crossings}) in the algorithm may
change, the sum remains invariant. We will state the  required results about
variation below without proof. Proofs can  be obtained directly by using
the fact that $\var(f,A,S)$ is integer-valued and continuous under suitable
homotopies.

\begin{prop} [Junction Straightening] \label{straightjunction} \label{int}
Let $S\subset\Complex$ be a simple closed curve, $f:S\to\complex$ a map and
$A=[a,b]$ a subarc of $S$ such that $f(a),f(b)\in T(S)$ and $f(A)\cap A=\0$.
Any two junctions $J_v$ and $J_u$  with $u,v\in A$ and $J_w\cap S=\{w\}$ for
$w\in\{u,v\}$ give the same value for  $\var(f,A,S)$.  Hence $\var(f,A,S)$ is
independent of the particular junction used  in Definition~\ref{vararc}.
\end{prop}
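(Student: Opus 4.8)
The plan is to reduce the statement to the homotopy-invariance of winding number, which was announced in the paragraph preceding the proposition. First I would fix the two junctions $J_v$ and $J_u$ with $u,v\in A=[a,b]$ and $J_w\cap S=\{w\}$ for $w\in\{u,v\}$. By the remark just before the proposition, for each junction one can choose an auxiliary map $\al_v:S\to\complex$ (resp.\ $\al_u$) with $\al_v|_A=f|_A=\al_u|_A$ and $\al_v(S\sm(a,b))\subset T(S)\sm\{v\}$, $\al_u(S\sm(a,b))\subset T(S)\sm\{u\}$; then $\var(f,A,S)$ computed with $J_v$ equals $\win(\al_v,S,v)$ and with $J_u$ equals $\win(\al_u,S,u)$. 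So it suffices to show $\win(\al_v,S,v)=\win(\al_u,S,u)$.

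To do this I would build a single homotopy interpolating between the two situations. Since $u$ and $v$ both lie on the arc $A$ and $f(A)\cap A=\0$, the entire segment of plausible "junction vertices'' can be taken along $A$ itself; more robustly, since $T(S)$ is a full (non-separating) compact set and $f(a),f(b),f(A)$ all avoid the relevant points, one can choose a path $t\mapsto w_t$ in $T(S)\sm f(A)$ (for instance a subarc of $S$ from $v$ to $u$, pushed slightly into the interior away from $f(A)$, which is possible because $f(A)$ is a compact set disjoint from $A$) together with a homotopy $\al_t:S\to\complex$ rel $A$ from $\al_v$ to $\al_u$ keeping $\al_t(S\sm(a,b))\subset T(S)\sm\{w_t\}$ for every $t$. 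Because $w_t\notin \al_t(S)$ for all $t$ (on $A$ we have $\al_t=f$ and $f(A)$ avoids the chosen path; off $A$ we arranged it directly), the winding number $\win(\al_t,S,w_t)$ is a continuous integer-valued function of $t$, hence constant, giving $\win(\al_v,S,v)=\win(\al_u,S,u)$ as required. The final sentence of the proposition ("independence of the particular junction'') is then immediate, since the junction $J_v$ was only ever used through the count in step~(\ref{crossings}), which we have just shown agrees with $\win(\al_v,S,v)$ irrespective of the choice of $J_v$ subject to $J_v\cap S=\{v\}$.

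The main obstacle I anticipate is the careful construction of the path $w_t$ together with the compatible homotopy $\al_t$: one must simultaneously keep $w_t$ off the image $\al_t(S)$ and keep the "tail'' $\al_t(S\sm(a,b))$ inside $T(S)$ and away from $w_t$. The key facts that make this go through are that $f(A)$ is compact and disjoint from $A\subset S\subset \bd T(S)$, that $T(S)$ is a non-separating plane continuum (so it is contractible and one has ample room to move basepoints inside it), and that $f(a),f(b)\in T(S)$ so the endpoints cause no trouble. Once the homotopy is in place, the rest is the standard continuity/integrality argument for winding numbers; alternatively, since the paper explicitly licenses proving such statements "directly by using the fact that $\var(f,A,S)$ is integer-valued and continuous under suitable homotopies,'' I would simply invoke that principle with the homotopy just described rather than spelling out the winding-number bookkeeping.
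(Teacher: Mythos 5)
Your proposal is correct and is essentially the argument the paper intends: the proposition is stated there without proof, with the preceding remark identifying $\var(f,A,S)$ with $\win(\al,S,v)$ and the observation that such statements follow from the integer-valuedness and homotopy-continuity of the winding number, which is exactly the reduction you carry out by moving the vertex from $v$ to $u$ along a path together with a compatible homotopy of the auxiliary maps. One small caution: keep the vertex path $w_t$ on the arc $A$ itself (your first suggestion) rather than pushing it into the interior of $T(S)$, since for $w_t\in A\subset\partial T(S)$ each set $T(S)\sm\{w_t\}$ is contractible and the required homotopy $\al_t$ rel $A$ exists by a direct extension-over-the-square argument (after a Schoenflies identification and an ambient isotopy carrying $w_0$ to $w_t$), whereas for an interior vertex the punctured hull is an annulus and the extension step would need additional justification.
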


The computation of $\var(f,A,S)$ depends only upon the crossings of
the junction $J_v$  coming from a proper compact subarc of the open
arc $(a,b)$. Consequently, $\var(f,A,S)$ remains invariant under
homotopies $h_t$ of $f|_{[a,b]}$ in the complement of $\{v\}$ such
that  $h_t(a),h_t(b) \not\in J_v$ for all $t$. Moreover, the
computation is stable under an isotopy $h_t:J_v\to A\cup
[\complex\sm T(S)]$ that moves the entire junction $J_v$ (even off
$A$), provided that during the isotopy $h_t(v)\not\in f(A)$ and
$f(a),f(b)\not\in h_t(J_v)$ for all $t$.

In case $A$ is an open arc $(a,b)\subset S$ such that $\var(f,\cl{A},S)$ is
defined, it will be convenient to denote $\var(f,\cl{A},S)$ by $\var(f,A,S)$

The following lemma follows immediately from the definition.

\begin{lem} \label{summ} Let $S\subset\Complex$ be a simple closed curve.
Suppose that $a<c<b$ are three points in $S$ such that
$\{f(a),f(b),f(c)\}\subset T(S)$ and $f([a,b])\cap [a,b]=\0$. Then
$\var(f,[a,b],S)=\var(f,[a,c],S)+\var(f,[c,b],S)$.
\end{lem}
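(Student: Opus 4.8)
The plan is to reduce the claim to the additivity of winding number, using the reformulation of variation in terms of winding number that was recorded just before the lemma. First I would fix the data: a simple closed curve $S$, points $a<c<b$ on $S$ with $\{f(a),f(b),f(c)\}\subset T(S)$ and $f([a,b])\cap[a,b]=\0$; note that the last hypothesis forces $f([a,c])\cap[a,c]=\0$ and $f([c,b])\cap[c,b]=\0$, so that all three instances of variation appearing in the statement are in fact defined. Next I would choose the junctions used to compute each of the three quantities. By the Junction Straightening Proposition \ref{straightjunction}, I may compute $\var(f,[a,b],S)$ using a junction $J_v$ with $v$ an interior point of $(a,c)$, and I may likewise compute $\var(f,[a,c],S)$ with the \emph{same} junction $J_v$ (since $v\in(a,c)$ and $J_v\cap S=\{v\}$). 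The key point is that the computation of $\var(f,[a,b],S)$ counts only crossings coming from a proper compact subarc of the open arc $(a,b)$, and in particular none of the counted crossings involves the endpoint $c$; moreover $f(c)\in T(S)$, so a junction can be chosen with $f(c)\notin J_v$.

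With a single junction $J_v$ fixed (with $v\in(a,c)$, $J_v\cap S=\{v\}$, and $f(c)\notin J_v$), the set $M=f^{-1}(J_v)\cap[a,b]$ that computes $\var(f,[a,b],S)$ decomposes along the cutpoint $c$ as $M=(f^{-1}(J_v)\cap[a,c])\cup(f^{-1}(J_v)\cap[c,b])$, and these two pieces overlap at most at $c$, which (by the choice $f(c)\notin J_v$) does not lie in $M$. The crossings counted in Definition~\ref{vararc} are determined by consecutive pairs of points of $M$ in the counterclockwise order; since no counted crossing straddles $c$ (the relevant points occur in a proper compact subarc of $(a,b)$, hence a crossing straddling $c$ would require a point of $M$ arbitrarily close to $c$ on both sides, which can be excluded because $f^{-1}(J^+_v)$ and $f^{-1}(J^i_v)$ are disjoint closed sets and $f(c)\in T(S)\setminus\{v\}$ admits a neighborhood mapped off $J_v$), the sum of the crossings over $M$ equals the sum over the first piece plus the sum over the second piece. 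The sum over the first piece is, by definition, $\var(f,[a,c],S)$ computed with the junction $J_v$; for the second piece I would invoke Junction Straightening again to identify it with $\var(f,[c,b],S)$ (the junction $J_v$ has its vertex in $(a,c)$, not in $(c,b)$, so strictly I should first use the homotopy/isotopy invariance stated after Proposition~\ref{straightjunction} to slide the junction's vertex into the interior of $(c,b)$ without crossing $f(c)$, or simply observe that the same crossing count is obtained). Summing gives $\var(f,[a,b],S)=\var(f,[a,c],S)+\var(f,[c,b],S)$.

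The main obstacle is the bookkeeping at the cutpoint $c$: one must be sure that no crossing counted for $[a,b]$ is ``lost'' or ``double-counted'' by splitting $M$ at $c$, i.e.\ that there is no counted crossing consisting of a point of $M$ just before $c$ paired with a point of $M$ just after $c$. This is exactly where the hypothesis $f(c)\in T(S)$ (equivalently $f(c)\notin J_v$, arranged via straightening) is used, together with the closedness and disjointness of $f^{-1}(J^+_v)\cap[a,b]$ and $f^{-1}(J^i_v)\cap[a,b]$: these guarantee a whole neighborhood of $c$ in $S$ is disjoint from $M$, so $M$ genuinely splits as a disjoint union across $c$ and the counted crossings are partitioned accordingly. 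Everything else is immediate from the definition, which is precisely why the paper states that this lemma ``follows immediately from the definition.''
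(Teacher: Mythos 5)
Your overall strategy (fix one junction $J_v$ with $v\in(a,c)$, split $M=f^{-1}(J_v)\cap[a,b]$ at $c$, and add the counts) is the natural one, but the justification of the one step that actually carries the lemma is wrong. You claim that a counted crossing straddling $c$ ``would require a point of $M$ arbitrarily close to $c$ on both sides,'' and you rule this out by observing that a neighborhood of $c$ maps off $J_v$. That reasoning is backwards: a straddling crossing is a pair $x<c<y$ of points of $M$ that are \emph{consecutive in $M$} (so $(x,y)\cap M=\0$), with one in $f^{-1}(J^+_v)$ and the other in $f^{-1}(J^i_v)$; consecutiveness means there are \emph{no} points of $M$ between them, so $x$ and $y$ may be far from $c$, and the fact that $M$ misses a neighborhood of $c$ does nothing to exclude such a pair. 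If such a pair existed, it would contribute $\pm1$ to $\var(f,[a,b],S)$ while contributing to neither $\var(f,[a,c],S)$ nor $\var(f,[c,b],S)$ (for each subarc the relevant point is the last, respectively first, point of $M$ in that subarc and so is not followed, respectively preceded, by anything), and your additivity would fail. So the heart of the lemma is exactly the exclusion of this configuration, and your argument does not supply it.

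The correct reason uses $f(c)\in T(S)$ in a different way. If $x<c<y$ were consecutive in $M$ with $f(x)\in J^+_v$ and $f(y)\in J^i_v$ (or vice versa), then $f((x,y))$ is a connected set disjoint from $J_v$, hence contained in a single component of $\complex\sm J_v$, and since its closure meets both $J^+_v\sm\{v\}$ and $J^i_v\sm\{v\}$ that component must be the one bounded by $J^+_v\cup J^i_v$. But $c\in(x,y)$ and $f(c)\in T(S)\sm\{v\}$, and $T(S)\sm\{v\}$ is connected and disjoint from $J_v$, hence lies in the component of $\complex\sm J_v$ determined by the position of the junction relative to $S$ (in the intended arrangement, as in the figure, the component bounded by $J^-_v\cup J^+_v$), not in the $(+,i)$ component; this contradiction kills the straddling pair, and only then does $M$-splitting give additivity of the counts. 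Separately, your identification of the $[c,b]$ piece is also looser than you acknowledge: Proposition~\ref{straightjunction} only covers junctions whose vertex lies in the arc being measured, and the isotopy-invariance remark requires the junction to stay in $A\cup[\complex\sm T(S)]$, which a junction with vertex in $(a,c)$ does not satisfy when $A=[c,b]$; the sector argument above (or the reformulation $\var(f,[p,q],S)=\win(f|_{[p,q]}\cdot\beta,\,w)$ with a closing path $\beta\subset T(S)\sm[a,b]$ and base point $w=c$ common to all three arcs, where additivity is just additivity of the change of argument under concatenation) is what actually closes both gaps.
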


\begin{defn}[Variation on a finite union of arcs] \label{partition}
\index{variation!on finite union of arcs} Let $S\subset\Complex$ be a simple
closed curve and $A=[a,b]$ a subcontinuum of $S$  partitioned by  a finite set
$F=\{a=a_0<a_1<\dots<a_n=b\}$ into subarcs. For each $i$ let $A_i=[a_i,a_{i+1}]$. Suppose
that $f$ satisfies $f(a_i)\in T(S)$ and $f(A_i)\cap A_i=\0$ for each $i$. We
define the \emph{variation of $f$ on $A$ with respect to  $S$}, denoted
$\var(f,A,S)$, by
$$\var(f,A,S)=\sum_{i=0}^{n-1} \var(f,[a_i,a_{i+1}],S).$$  In
particular, we include the possibility that  $a_{n}=a_0$ in which
case $A=S$.
\end{defn}

By considering a common refinement of two partitions $F_1$ and
$F_2$ of an arc $A\subset S$ such that $f(F_1)\cup f(F_2)\subset
T(S)$ and satisfying the conditions in Definition~\ref{partition},
it follows from Lemma~\ref{summ} that we get the same value for
$\var(f,A,S)$ whether we use the partition $F_1$ or the partition
$F_2$. Hence, $\var(f,A,S)$ is well-defined. If $A=S$ we denote
\index{variation@$\var(f,S)$}
$\var(f,S,S)$ simply by $\var(f,S)$\index{variation!of a simple closed curve}.

The first main result in Part 1, Theorem~\ref{I=V+1} is that given  a map $f:\C\to\C$, a simple closed curve
$S\subset\C$ and a partition of $S$ into subarcs $A_i$ such that any two meet at most in a common endpoint,
for each $i$ $f(A_i)\cap A_i=\0$ and both endpoints map into $T(S)$,
\[\ind(f,S)=\sum\var(f,A_i)+1.\]

In the first version of this theorem we partition $S$ into finitely many subarcs $A_i$.  We extend this in Section~\ref{Cloops}
 by allowing partitions of $S$ which consist of, possibly countably infinitely many subarcs.
Since in our applications we often assume that we have an invariant
continuum $X$ such that $f$ has no fixed point in $T(X)$ it follows
from Theorem~\ref{fpthm} that, for a sufficiently tight simple
closed curve $S$ around $X$ with $X\subset T(S)$, we must have
$\ind(f,S)=0$. It follows from the above theorem relating index and
variation that for some subarc $A$
 (which is the closure of a component of $S\sm X$ and, hence a crosscut
of $X$), $\var(f,A)<0$. In order to locate this crosscut of negative variation we establish
 Bell's Lollipop
Theorem in Section~\ref{locate}.

\section{Classes of maps}

Cartwright and Littlewood solved the Plane Fixed Point Problem for orientation preserving \emph{homeomorphism} of the plane.
In Section~\ref{soriented} we introduce and study (positively) oriented \emph{maps} of the plane.
We will show in Part 2 that the Plane Fixed Point Problem has a positive solution for the class of positively oriented
maps.
We  show in Section~\ref{soriented} that the class of oriented maps  consists of all compositions of monotone and open perfect maps
of the plane and that all such maps are confluent. In particular, analytic maps are confluent.

Let us begin by listing a few well-known definitions.

\begin{defn}\label{basic-top}
A {\em perfect map} \index{map!perfect}is a closed continuous
function each of whose point inverses is compact. \emph{We will
assume in the remaining sections that all maps of the plane
considered in this memoir are perfect.} Let $X$ and $Y$ be
topological spaces. A map $f:X\to Y$ is {\em monotone}
\index{map!monotone}provided for each continuum $K\subset Y$,
$f^{-1}(K)$ is connected and $f$ is {\em light} \index{map!light}
provided for each point $y\in Y$, $f^{-1}(y)$ is totally
disconnected. A map $f:X\to Y$ is {\em confluent}
\index{map!confluent} provided for each continuum $K\subset Y$ and
each component $C$ of $f^{-1}(K)$, $f(C)=K$ . Every map $f:X\to Y$
between compacta is  the composition $f=l\circ m$ of a a monotone
map $m:X\to Z$ and a light map $l:Z\to Y$ for some compactum $Z$
\cite[Theorem 13.3]{nadl92}. This representation is called the
\emph{monotone-light decomposition of $f$.} \index{monotone-light
decomposition of a map}
\end{defn}

Observe that any confluent map $f$  is onto. It is well-known that
each homeomorphism of the plane is either orientation-preserving or
orientation-reversing. We will establish an appropriate extension of
this result for confluent perfect mappings of the plane
(Theorem~\ref{orient}) by showing that such maps  either preserve or
reverse local orientation. As a consequence it follows that all
perfect and confluent maps of the plane satisfy the Maximum Modulus
Theorem. We will call such maps \emph{positively-} or
\emph{negatively oriented} maps, respectively.

Complex polynomials  $P:\C\to\C$ are prototypes of positively
oriented maps, but positively oriented maps, unlike polynomials, do
not have to be light or open. Observe that even though in some
applications our maps are holomorphic (see Section~\ref{complappl}),
the notion of a positively oriented map is essential  in
Section~\ref{complappl} since it allows for easy local perturbations
(see Lemma~\ref{pararepel}).

\begin{defn}[Degree of $f_p$]
Let $f:U \to \Complex$ be a map from a simply connected domain
$U\subset\complex$ into the plane.  Let $S\subset\complex$ be a positively
oriented simple closed curve in $U$, and $p\in U\setminus f^{-1}(f(S))$ a
point.  Define $f_p:S\to\ucirc$ by \[ f_p(x)=\frac{f(x)-f(p)}{|f(x)-f(p)|}.\]
Then $f_p$ has a well-defined {\em degree}, denoted $\degree(f_p)$.
\index{degreea@$\dg(f_p)$}Note that $\degree(f_p)$ is  the  winding number
$\win(f,S,f(p))$ of $f|_S$ about $f(p)$.
\end{defn}

\begin{defn}
A map $f:U \to \Complex$ from a simply connected domain $U$ is
\emph{positively oriented} (respectively, {\em negatively oriented})
provided for each simple closed curve $S$ in $U$ and each point
$p\in T(S)\setminus f^{-1}(f(S))$, we have that $\degree(f_p)> 0$
($\degree(f_p)< 0$, respectively). \end{defn} \index{map!positively
oriented}\index{map!negatively oriented}
\begin{defn} A perfect surjection $f:\Complex\to\Complex$ is
\emph{oriented} provided for each simple closed curve $S$ and each
$x\in T(S)$, $f(x)\in T(f(S))$.  \index{map!oriented}
\end{defn}

Clearly every  positively oriented and  each negatively oriented map
is oriented. It will follow that all oriented maps satisfy the
Maximum Modulus Theorem~\ref{orient}  (i.e., for every
non-separating continuum $X$, $\partial f(X)\subset f(\partial X)$).
In particular, every positively or negatively oriented map is
oriented.

It is well-known that both open maps and monotone maps (and hence
compositions of such maps) of continua are confluent. It will
follow (Lemma~\ref{cacyclic}) from a result of Lelek and Read
\cite{leleread74} that each perfect, oriented surjection of the plane is the
composition of a monotone map and a light open map.

\section{Partitioning domains}

In Chapter~\ref{partKP} we consider partitions of an open and connected subset $U$ of the sphere
into convex subsets which are contained in round balls. Bell originally did this, using the Euclidean metric
on the plane, for the complement of $X$ in its convex hull in the plane.
 Following Kulkarni and Pinkall \cite{kulkpink94}, we will consider
 $U$ as a subset of the sphere
and we will work with maximal round balls $B\subset U$ in the spherical metric (such balls correspond to either
round balls in the plane
or to half planes). We first specify Kulkarni and Pinkall's result for our situation (see Theorem~\ref{KPthrm}).
 It leads to a partition
of $U$ into pairwise disjoint closed subsets $F_\al$ such for each
$\al$ there exists a unique maximal closed round ball $B_\al$ with
$\Int(B_\al)\cap \partial U=\0$, $|B_\al\cap \partial U|\ge 2$ and
$F_\al\subset B_\al$. In fact, $F_\al$ is the intersection of $U$
with the hyperbolic convex hull of $B_\al\cap \partial U$ in
\emph{the hyperbolic metric on the ball $B_\al$.} Note that  every
chord in the boundary of any partition element $F_\al$ is part of a
round circle. This is the partition of $U$ which is used in Part 2,
Chapter~\ref{outcha}. We show in Section~\ref{kpchords} that the
collection of all chords in the boundaries of all the sets $F_\al$,
called  \emph{$\kp$-chords},
 is sufficiently rich for a satisfactory prime end theory. (Basically most prime ends can be defined through equivalence
 classes of crosscuts which are all $\kp$-chords.)

However, even though in the above version of the Linchpin Theorem
elements of the partition are closed in $U$ and pairwise disjoint and
$U$ is an arbitrary connected open subset of the sphere, it has the
disadvantage that chords in the boundary of the sets $F_\al$ are not
naturally depending on $U$ (they depend only on $B_\al$,  $B_\al\cap
\partial U$ and the hyperbolic metric in $B_\al$). Moreover there may
well be uncountably many distinct elements $F_\al$ which join the
same two accessible points in $\partial U$. In order to avoid this
problem we replace, when $U$ is simply connected, any chord in the
boundary of any set $F_\al$ by the  hyperbolic geodesic (\emph{in
the hyperbolic metric on $U$}) joining the same pair of points (see
Theorem~\ref{Hypmain}).  We will show that the resulting set of
hyperbolic geodesics is a closed lamination of $U$ in the sense of
Thurston \cite{thur85}. This  version of the Linchpin Theorem, which
states that every point in $U$ is either contained in a unique
hyperbolic geodesic $\fg$ in $U$, or in the interior of an unique
hyperbolically convex  gap $\mathfrak{G}$, both of which are
contained in a maximal round ball, is used in \cite{overtymc07,ov09}
to extend a homeomorphism on the boundary of a simply connected
domain over the entire domain. To illustrate the usefulness of these
partitions  we include a simple proof of the Schoenflies Theorem in
Section~\ref{Schoenflies}. However, we will assume the Schoenflies
Theorem throughout this paper.

\chapter{Tools}\label{c:tools}

\section{Stability of Index} \label{compofind}
Let $f:\C\to\C$ be a map.
All basic definitions of index of $f$ on a simple closed curve and variation of $f$ on an arc are contained in
Chapter~\ref{descr1}.
The following standard theorems and observations about the stability of index
under a fixed point free homotopy are consequences of the fact that index is
continuous and integer-valued.

\begin{thm}\label{fpfhomotopy}  Let $h_t:\uc\to\complex$ be a homotopy.
If $f:\cup_{t\in[0,1]} h_t(\uc)\to \complex$ is fixed point free, then
 $\ind(f,h_0)=\ind(f,h_1)$.
\end{thm}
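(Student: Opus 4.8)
The plan is to establish Theorem~\ref{fpfhomotopy} as a routine consequence of the definition of index together with the standard fact that a continuous integer-valued function on a connected space is constant. Recall that for a map $g:\uc\to\complex$ and a fixed-point-free map $f$ on its image, $\ind(f,g)$ is defined as the degree of the map $v:\uc\to\uc$ given by $v(t)=(f(g(t))-g(t))/|f(g(t))-g(t)|$, which is well-defined precisely because $f(g(t))\ne g(t)$ for all $t$.

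First I would set up the two-parameter map. Define $V:\uc\times[0,1]\to\uc$ by
\[
V(t,s)=\frac{f(h_s(t))-h_s(t)}{|f(h_s(t))-h_s(t)|}.
\]
This is well-defined and continuous: the hypothesis that $f$ is fixed point free on $\bigcup_{t\in[0,1]} h_t(\uc)$ guarantees $f(h_s(t))\ne h_s(t)$ for every $(t,s)$, so the denominator never vanishes, and $h$ is continuous jointly in $(t,s)$ (a homotopy), while $f$ and the normalization map $z\mapsto z/|z|$ on $\complex\setminus\{0\}$ are continuous. Thus $V$ is a homotopy in $\uc$ between $v_0=V(\cdot,0)$ and $v_1=V(\cdot,1)$, where $v_s$ is exactly the map whose degree computes $\ind(f,h_s)$.

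Next I would invoke homotopy invariance of degree for maps $\uc\to\uc$. Concretely, for each $s$ let $\wh v_s:\real\to\real$ be a lift of $v_s\circ p$ (where $p(x)=e^{2\pi i x}$), chosen to depend continuously on $s$ by lifting $V\circ(p\times\id):\real\times[0,1]\to\uc$; then $s\mapsto \wh v_s(1)-\wh v_s(0)=\dg(v_s)=\ind(f,h_s)$ is a continuous function of $s\in[0,1]$ taking integer values, hence constant on the connected interval $[0,1]$. In particular $\ind(f,h_0)=\ind(f,h_1)$, as claimed. Alternatively, one can simply cite the standard homotopy-invariance of the degree of circle maps, which is exactly this argument packaged.

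I do not expect a genuine obstacle here; the only point requiring minor care is the joint continuity of $V$, which rests on the fact that a homotopy $h_t$ is by definition continuous as a map of $\uc\times[0,1]$, and the observation that the fixed-point-free hypothesis is imposed on the whole swept-out set $\bigcup_{t} h_t(\uc)$ precisely so that the normalized vector field $V$ is defined and continuous on all of $\uc\times[0,1]$. The remainder is the textbook fact that degree is a homotopy invariant, which the paper is entitled to use as one of the "standard theorems and observations" it refers to.
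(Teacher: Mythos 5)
Your proof is correct and is essentially the argument the paper has in mind: the paper states Theorem~\ref{fpfhomotopy} without detailed proof, noting only that it is a standard consequence of the index being continuous and integer-valued under a fixed point free homotopy, which is precisely the lifting-and-connectedness argument you wrote out.
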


An embedding $g:\uc\to S\subset\complex$ is \emph{orientation
preserving} \index{embedding!orientation preserving}
\index{orientation preserving!embedding} if $g$ is isotopic to the
identity map $id|_{\uc}$. It follows from Theorem~\ref{fpfhomotopy}
that if $g_1,g_2:\uc\to S$ are orientation preserving homeomorphisms
and $f:S\to\complex$ is a fixed point free map, then
$\ind(f,g_1)=\ind(f,g_2)$. Hence we can denote $\ind(f,g_1)$ by
$\ind(f,S)$ \index{index3@$\ind(f,S)$} and if $[a,b]$ is a
positively oriented subarc of $\uc$ we denote the fractional index
$\ind(f,g_1|_{[a,b]})$ by $\ind(f,g_1([a,b]))$,
\index{index0@$\ind(f,A)$} by some abuse of notation when the
extension of $g_1$  over $\uc$ is understood.

\begin{thm} \label{fpfhomotopy2}
Suppose $g:\uc\to\complex$ is a map with $g(\uc)=S$, and $f_1,f_2:S\to\complex$
are homotopic maps such that each level of the homotopy is fixed point free on
$S$. Then $\ind(f_1,g)=\ind(f_2,g)$.
\end{thm}

In particular, if $S$ is a simple closed curve and $f_1,f_2:S\to\complex$ are
maps such that there is a homotopy $h_t:S\to\complex$ from $f_1$ to $f_2$ with
$h_t$ fixed point free on $S$ for each $t\in[0,1]$, then
$\ind(f_1,S)=\ind(f_2,S)$.

\begin{cor} \label{mapinhull}
Suppose $g:\uc\to\complex$ is an orientation preserving embedding with
$g(\uc)=S$, and $f:S\to T(S)$ is a fixed point free map.  Then
$\ind(f,g)=\ind(f,S)=1$.
\end{cor}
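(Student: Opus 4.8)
The plan is to deduce Corollary~\ref{mapinhull} from the two stability results just proved, Theorem~\ref{fpfhomotopy} and Theorem~\ref{fpfhomotopy2}, together with the elementary computation in Remark~\ref{remark}(b). The target quantity is $\ind(f,g)$ for an orientation-preserving embedding $g:\uc\to S$ and a fixed point free map $f:S\to T(S)$; since $g$ is orientation preserving we already know $\ind(f,g)=\ind(f,S)$, so it suffices to show this common value equals $1$.

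First I would reduce to the case $S=\uc$ and $g=\id|_{\uc}$. Because $g$ is orientation preserving it is isotopic to the identity on $\uc$ (through embeddings into $\C$); composing $f$ with such an isotopy gives a homotopy of maps whose images all lie in $T(S)$ — or, more carefully, one transports the whole picture by an ambient orientation-preserving homeomorphism of $\C$ carrying $S$ to $\uc$, which does not change the index and carries $T(S)$ to $T(\uc)=\ol{\disk}$. So without loss of generality $f:\uc\to T(\uc)$ is fixed point free. Next, the key move is to build a fixed point free homotopy inside $T(\uc)$ from $f$ to a constant map. The natural candidate is the straight-line homotopy $h_t(z)=(1-t)f(z)+t w$ for a suitably chosen basepoint $w\in T(\uc)$; since $T(\uc)$ is convex, $h_t(z)\in T(\uc)$ for all $t$. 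The point to check is that this homotopy can be kept fixed point free on $\uc$, i.e. $h_t(z)\ne z$ for all $z\in\uc$ and all $t$; here one uses that $f(z)$ lies in the closed unit disk while $z$ lies on the unit circle, so the segment from $f(z)$ to an interior point $w$ — say $w$ a point of $\Int\disk\setminus f(\uc)$, or even $w=0$ after a preliminary contraction — stays off $\uc$ except possibly at its $f(z)$ endpoint, and that endpoint is never $z$ by hypothesis. A little care is needed because $f(z)$ itself may lie on $\uc$; I would first push $f$ slightly inward via $z\mapsto (1-\e)f(z)$, which is a fixed point free homotopy in $T(\uc)$ for small $\e$ since $|(1-\e)f(z)|\le 1-\e<1=|z|$, and then run the straight-line homotopy to $0$. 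By Theorem~\ref{fpfhomotopy2} (applied to the homotopy of maps with fixed target circle $\uc=g(\uc)$), $\ind(f,\id)=\ind(c_0,\id)$ where $c_0$ is the constant map at $0$.

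Finally, Remark~\ref{remark}(b) gives $\ind(c_0,\id|_{\uc})=\win(\id|_{\uc},\uc,0)=1$, which completes the computation and hence the corollary. I do not expect a serious obstacle here; the only delicate point is making the homotopies manifestly fixed point free on $\uc$, and that is handled by the two-stage deformation (contract inward, then straight-line to the center) described above. One should also double-check that the reduction $S=\uc$ is legitimate — the cleanest route is the ambient-homeomorphism argument, invoking invariance of index under orientation-preserving homeomorphisms of the plane, which follows from Theorem~\ref{fpfhomotopy} applied to the isotopy connecting that homeomorphism to the identity.
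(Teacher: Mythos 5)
Your core argument is exactly the paper's: produce a fixed point free homotopy of $f$ inside $T(S)$ to a constant map at an interior point, then apply Theorem~\ref{fpfhomotopy2} and Remark~\ref{remark}(b) to get $\ind(f,g)=\ind(c,g)=1$. The paper carries this out directly on $S$ (the contraction lives in the topological disk $T(S)$), so no change of curve is ever made.

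The step I would push back on is your preliminary reduction to $S=\uc$, $g=\id$. The invariance you need there, namely $\ind(f,g)=\ind(\Phi\circ f\circ\Phi^{-1},\Phi\circ g)$ for an ambient orientation preserving homeomorphism $\Phi$, does not follow from Theorem~\ref{fpfhomotopy} as you claim: in that theorem the map $f$ is held fixed on $\bigcup_t h_t(\uc)$ while only the parameterization moves, whereas under conjugation both the map and the parameterization vary with the isotopy parameter, so neither Theorem~\ref{fpfhomotopy} nor Theorem~\ref{fpfhomotopy2} applies literally; you would also be using, without proof, that $\Phi$ is isotopic to the identity. The invariance is true (for an isotopy $\Phi_s$ the unit vector in the direction of $\Phi_s(f(g(t)))-\Phi_s(g(t))$ is jointly continuous and well defined, since $\Phi_s$ is injective and $f(g(t))\ne g(t)$, so its degree is constant in $s$), but the cleaner repair is to skip the reduction altogether: take a Schoenflies homeomorphism $h:\ol{\disk}\to T(S)$ (the Schoenflies Theorem is assumed throughout the paper) and set $f_t(z)=h\bigl((1-t)\,h^{-1}(f(z))\bigr)$. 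For $t>0$ the values of $f_t$ avoid $S$, and $f_0=f$ is fixed point free by hypothesis, so each level is fixed point free on $S$; the end map is constant at $h(0)\in T(S)\sm S$. Theorem~\ref{fpfhomotopy2} with the original $g$, together with Remark~\ref{remark}(b) and $\win(g,\uc,h(0))=1$ (as $g$ is an orientation preserving embedding and $h(0)$ lies in the bounded complementary domain of $S$), then finishes the proof — which is precisely the paper's argument, with your two-stage contraction transported through $h$.
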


\begin{proof}
Since $f(S)\subset T(S)$ which is a disk with boundary $S$ and $f$ has no fixed
point on $S$, there is a  fixed point free homotopy of $f|_S$ to a constant map
$c:S\to \complex$ taking $S$ to a point in $T(S)\setminus S$. By
Theorem~\ref{fpfhomotopy2}, $\ind(f,g)=\ind(c,g)$.  Since $g$ is orientation
preserving it follows from Remark~\ref{remark} (b) that $\ind(c,g)= 1$.
\end{proof}

\begin{thm}  \label{fpthm}
Suppose $g:\uc\to\complex$ is a map with $g(\uc)=S$, and $f:T(S)\to\complex$ is
a map such that $\ind(f,g)\not= 0$, then $f$ has a fixed point in $T(S)$.
\end{thm}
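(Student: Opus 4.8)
The plan is to argue by contradiction. Suppose $f:T(S)\to\complex$ is fixed point free. Then in particular $f|_S$ is fixed point free, so $\ind(f,g)$ is defined, and the hypothesis says it is nonzero. The strategy is to produce a fixed-point-free homotopy from $g$ (regarded as a loop in $\complex$) to a constant loop, while keeping $f$ fixed point free on the entire image of the homotopy; Theorem~\ref{fpfhomotopy} then forces $\ind(f,g)$ to equal the index of $f$ on a constant map, which is $0$ by Remark~\ref{remark}(a), a contradiction.

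First I would set up the homotopy. Since $S=g(\uc)$ bounds the disk $T(S)$, choose a homeomorphism (or just a continuous surjection) $\Phi:\overline{\disk}\to T(S)$ with $\Phi|_{\uc}$ a reparametrization of $g$; concretely one can take $h_t:\uc\to T(S)$ defined by $h_t(z)=\Phi((1-t)z)$, so that $h_0$ traces $S$ (up to the orientation-preserving reparametrization, which does not affect index by the discussion preceding Theorem~\ref{fpfhomotopy2}) and $h_1$ is the constant map at $\Phi(0)\in T(S)$. The key point is that $h_t(\uc)\subset T(S)$ for every $t\in[0,1]$, so $\bigcup_{t\in[0,1]}h_t(\uc)\subset T(S)$, and by assumption $f$ has no fixed point anywhere in $T(S)$. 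Hence $f$ is fixed point free on $\bigcup_t h_t(\uc)$, and Theorem~\ref{fpfhomotopy} applies to give $\ind(f,h_0)=\ind(f,h_1)$.

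To finish, I would identify the two ends. On the $t=1$ side, $h_1$ is a constant map onto a point $c=\Phi(0)\in T(S)$ with $f(c)\ne c$, so $\ind(f,h_1)=0$ by Remark~\ref{remark}(a). On the $t=0$ side, $h_0$ is an orientation-preserving reparametrization of $g$ onto $S$; invoking Theorem~\ref{fpfhomotopy} once more (or the remark that index is invariant under orientation-preserving reparametrization of the domain circle, since such a reparametrization is isotopic to the identity) gives $\ind(f,h_0)=\ind(f,g)$. Combining, $\ind(f,g)=0$, contradicting the hypothesis $\ind(f,g)\ne 0$. Therefore $f$ must have a fixed point in $T(S)$.

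The only delicate point is the bookkeeping in the first paragraph of the argument: one must make sure the collapsing homotopy genuinely stays inside $T(S)$ (so that the no-fixed-point hypothesis on all of $T(S)$, not just on $S$, is what is being used) and that the chosen parametrization of $S$ matches $g$ closely enough that Theorem~\ref{fpfhomotopy} and Remark~\ref{remark} can be quoted verbatim. When $g$ is an embedding this is immediate from $T(S)\cong\overline{\disk}$; in general $g$ need only be a map onto $S$, but one can first homotope within $S$ to an embedded parametrization and then collapse, and both stages lie in $T(S)$. No estimates are required beyond this.
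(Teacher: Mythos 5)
Your overall strategy---collapse the loop to a constant inside $T(S)$, observe that the collapsing homotopy is fixed point free because $f$ has no fixed point anywhere in $T(S)$, then quote Theorem~\ref{fpfhomotopy} together with Remark~\ref{remark}(a)---is exactly the argument the paper gives. The gap is in how you produce the collapsing homotopy. You build it from a homeomorphism $\Phi:\overline{\disk}\to T(S)$ with $\Phi|_{\uc}$ a reparametrization of $g$, but the theorem only assumes that $g:\uc\to\complex$ is a map with image $S$; $S$ need not be a simple closed curve (it can be an arc, a dendrite, or any Peano continuum, and in the paper the theorem is later invoked for Carath\'eodory loops, which are typically not embeddings). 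In that generality no such $\Phi$ exists, and your proposed repair---``first homotope within $S$ to an embedded parametrization''---does not work: $S$ may admit no embedding of $\uc$ onto it at all (for instance when $g$ is non-injective and $S$ is an arc), and even when it does, the loop $g$ need not be homotopic inside $S$ to an embedding.

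The correct repair is to drop the disk model entirely and use the fact the paper uses: $T(S)$ is a locally connected, non-separating plane continuum, hence contractible. Contractibility directly supplies a homotopy $H$ with image in $T(S)$ from $g$ to the constant map at any chosen $q\in T(S)$, with no reference to how $g$ parametrizes $S$. Since $f$ is assumed fixed point free on all of $T(S)$, $H$ misses the fixed point set of $f$, and your concluding steps (Theorem~\ref{fpfhomotopy} plus Remark~\ref{remark}(a), noting $f(q)\ne q$) then give $\ind(f,g)=0$, the desired contradiction. With that substitution your argument coincides with the paper's proof.
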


\begin{proof}
Notice that $T(S)$ is a locally connected, non-separating, plane continuum and,
hence, contractible.  Suppose $f$ has no fixed point in $T(S)$. Choose point
$q\in T(S)$.  Let $c:\uc\to \complex$ be the constant map $c(\uc)=\{q\}$.  Let
$H$ be a homotopy from $g$ to $c$ with image in $T(S)$. Since $H$ misses the
fixed point set of $f$, Theorem~\ref{fpfhomotopy} and Remark~\ref{remark} (a)
imply  $\ind(f,g)=\ind(f,c)=0$.
\end{proof}

\section{Index and variation for finite
partitions}\label{indvar}

What links Theorem~\ref{fpthm} with variation is Theorem~\ref{I=V+1} below,
first announced by Bell in the mid 1980's (see also Akis \cite{akis99}).  Our
proof is a modification of Bell's unpublished proof.  We first need a variant
of Proposition~\ref{straightjunction}. Let $r:\complex\to T(\uc)$ be radial
retraction: $r(z)=\frac{z}{|z|}$ when $|z|\geq 1$ and
$r|_{T(\uc)}=id|_{T(\uc)}$.

\begin{lem} [Curve Straightening] \label{cs}
Suppose $f:\uc\to\complex$ is a map with no fixed points on $\uc$.
If $[a,b]\subset \uc$ is a proper subarc with $f([a,b])\cap
[a,b]=\0$, $f((a,b))\subset \complex\sm T(\uc)$ and
$f(\{a,b\})\subset \uc$, then there exists a map
$\tf:\uc\to\complex$ such that $\tf|_{\uc\setminus
(a,b)}=f|_{\uc\setminus (a,b)}$, $\tf|_{[a,b]}:[a,b]\to
(\complex\setminus T(\uc))\cup\{f(a),f(b)\}$ and $\tf|_{[a,b]}$ is
homotopic to $f|_{[a,b]}$ in $\{a,b\}\cup \complex\sm T(S)$ relative
to $\{a,b\}$, so that either  $r|_{\tf([a,b])}$ is   locally
one-to-one or a constant map. Moreover,
$\var(f,[a,b],\uc)=\var(\tf,[a,b],\uc)$.
\end{lem}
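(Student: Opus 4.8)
The plan is to construct $\tf$ by pushing the image arc $f([a,b])$ radially outward until it becomes a path along which the radial retraction $r$ is well-behaved, while keeping the two endpoint values $f(a),f(b)\in\uc$ fixed and never touching the arc $[a,b]$ itself. Since $f((a,b))\subset\complex\sm T(\uc)$, the compact set $f([a,b])$ lies in the closed set $\cl{\complex\sm T(\uc)}$, and its only possible contact with $\uc$ is at the endpoints $f(a),f(b)$. First I would choose $R>1$ so large that $f([a,b])\subset\{|z|\le R\}$, and also so large that $R$-scaled copies of $f(a),f(b)$ are far from $[a,b]\subset\uc$; more precisely, pick $R$ with $R>\max_{t\in[a,b]}|f(t)|$ and such that the circle $|z|=R$ misses $[a,b]$ (automatic since $R>1$). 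Define a homotopy $H_s$, $s\in[0,1]$, on $[a,b]$ by radially interpolating: for $t$ in the interior, $H_s(t)$ moves $f(t)$ along its ray toward the point of modulus $\max(|f(t)|, 1+s\cdot c)$ for a suitable bump, and at the endpoints we keep $H_s(a)=f(a)$, $H_s(b)=f(b)$ for all $s$. The point is that this homotopy takes place inside $(\complex\sm T(\uc))\cup\{f(a),f(b)\}$, it is relative to $\{a,b\}$, and the junction-straightening stability of variation (the invariance of $\var(f,A,S)$ under homotopies of $f|_{[a,b]}$ in the complement of $\{v\}$ keeping endpoint images off the junction, as recorded after Proposition~\ref{straightjunction}) gives $\var(f,[a,b],\uc)=\var(H_1,[a,b],\uc)$.

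Next I would arrange that $r\circ H_1|_{[a,b]}$ is either locally injective or constant. After the outward push, $H_1|_{(a,b)}$ is a path in $\complex\sm\cl{T(\uc)}$ and $r$ restricted there is $z\mapsto z/|z|$, a smooth submersion onto $\uc$. The composition $r\circ H_1|_{[a,b]}$ is then a loop-free path in $\uc$ from $r(f(a))=f(a)$ to $r(f(b))=f(b)$. A path in $\uc$ can always be homotoped rel endpoints to one that is either constant (if $f(a)=f(b)$ and the path is null-homotopic in $\uc$ — but $\uc$ has nontrivial $\pi_1$, so I should instead say: to one that wraps monotonically, hence is locally injective, or is constant when $f(a)=f(b)$ and we choose the constant path) — I would do this by a final homotopy $G_s$ of $H_1|_{[a,b]}$, performed in $\complex\sm T(\uc)$ by first homotoping the composed path $r\circ H_1$ in $\uc$ to a locally monotone representative and then lifting that homotopy radially back out to modulus $>1$. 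Throughout this final stage $G_s(a)=f(a)$, $G_s(b)=f(b)$, and the images stay in $(\complex\sm T(\uc))\cup\{f(a),f(b)\}$, so again variation is unchanged. Set $\tf=G_1$ on $[a,b]$ and $\tf=f$ on $\uc\sm(a,b)$; the two definitions agree at $a,b$, so $\tf$ is continuous.

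The main obstacle will be handling the endpoints carefully: the endpoint images $f(a),f(b)$ lie on $\uc=\bd T(\uc)$, which is exactly where $r$ stops being locally injective, and the statement requires $\tf|_{[a,b]}$ to map into $(\complex\sm T(\uc))\cup\{f(a),f(b)\}$ — so the homotopies must be rel $\{a,b\}$ and must never drag an interior point onto $\uc$. I would address this by ensuring that all the interpolating homotopies strictly increase (or preserve) modulus on $(a,b)$ and only equal modulus $1$ at the two endpoints, so "locally one-to-one" for $r\circ\tf$ is asserted only on the open arc (or one checks directly that the one-sided behavior at $a,b$ causes no problem, since a single endpoint contributes no crossing count). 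The other point requiring care is verifying the hypotheses of the junction-straightening stability statement at each stage — namely that $h_s(a),h_s(b)\notin J_v$ — which holds because we may choose the junction $J_v$ at an interior point $v\in(a,b)$ with $J_v\cap\uc=\{v\}$ and $f(a),f(b)$ are on $\uc\sm\{v\}$, hence (by shrinking $J_v$ near $\uc$, or by the isotopy-of-junction stability also recorded after Proposition~\ref{straightjunction}) off $J_v$; since $f(a),f(b)$ never move, they stay off $J_v$ for all $s$. Assembling these three homotopy stages and invoking variation-invariance at each gives $\var(f,[a,b],\uc)=\var(\tf,[a,b],\uc)$, completing the proof.
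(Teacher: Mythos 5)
Your construction is essentially the argument the paper intends: Lemma~\ref{cs} is stated without a written proof, resting precisely on the stability of $\var(f,[a,b],\uc)$ under homotopies of $f|_{[a,b]}$ rel $\{a,b\}$ that avoid $v$ and keep the (fixed) endpoint images off the junction, and your radial push-out followed by angular straightening in $\uc$ is exactly that. The one detail to tighten is the final lift: choose the modulus to vary monotonically with the winding (a spiral) rather than retaining the profile $|H_1(t)|$, so the straightened image has no self-crossings and $r$ restricted to the \emph{image set} $\tf([a,b])$ --- not merely the parametrized map $r\circ\tf$ --- is locally one-to-one, which is the form invoked later in the proof of Theorem~\ref{I=V+1}.
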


Note that if $\var(f,[a,b],\uc)=0$, then $r$ carries $\tf([a,b])$ one-to-one
onto the arc (or point) in $\uc\setminus (a,b)$ from $f(a)$ to $f(b)$. If the
$\var(f,[a,b],\uc)=m>0$, then $r\circ \tf$ wraps the arc $[a,b]$
counterclockwise about $\uc$ so that $\tf([a,b])$ meets each ray in $J_v$ $m$
times. A similar statement holds for negative variation. Note also that it is
possible for index to be defined yet variation not to be defined on a simple
closed curve  $S$. For example, consider the map $z\to 2z$ with $S$ the unit
circle since there is no partition of $S$ satisfying the conditions in
Definition~\ref{vararc}.

\begin{thm}[Index = Variation + 1, Bell] \label{I=V+1} \index{index!Index=Variation+1 Theorem}
Suppose $g:\uc\to\complex$ is an orientation preserving embedding onto a simple
closed curve $S$ and $f:S\to\complex$ is a  fixed point free map.  If
$F=\{a_0<a_1<\dots<a_n\}$ is a partition of $S$ and $A_i=[a_i,a_{i+1}]$ for
$i=0,1,\dots,n$ with $a_{n+1}=a_0$ such that $f(F)\subset T(S)$ and $f(A_i)\cap
A_i=\0$ for each $i$, then
$$\ind(f,S)=\ind(f,g)=\sum_{i=0}^n\var(f,A_i,S)+1=\var(f,S)+1.$$
\end{thm}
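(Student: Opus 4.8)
The plan is to reduce the statement to the case of a single arc, i.e. a partition with $n=0$ (so $A_0 = S$), and handle that base case by the Curve Straightening Lemma (Lemma~\ref{cs}) together with the winding-number interpretation of variation. The reduction proceeds by induction on the number of partition elements using the additivity of fractional index (Proposition~\ref{fracindex}) and the additivity of variation (Lemma~\ref{summ}, Definition~\ref{partition}). Concretely, for each $i$ I would show the \emph{fractional} identity
\[
\ind(f,g|_{A_i}) = \var(f,A_i,S) + \text{(boundary contribution at $a_i,a_{i+1}$)},
\]
where the boundary contribution records the net turning of the vector $f(g(t))-g(t)$ as $t$ passes through the endpoints, where $f(g(t))\in T(S)$ while $g(t)\in S$. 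Summing over $i$, the fractional indices add up to $\ind(f,g)=\ind(f,S)$ by Proposition~\ref{fracindex}, the variations add up to $\var(f,S)$ by definition, and the boundary contributions — each endpoint $a_i$ being shared by two consecutive arcs, with $f(a_i)\in T(S)$ — telescope to exactly $+1$ (this $+1$ is the winding number of a constant map in $T(S)$ against $\mathrm{id}|_{\uc}$, as in Remark~\ref{remark}(b) and Corollary~\ref{mapinhull}).

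For the heart of the argument on a single arc $A_i=[a,b]$: first reduce, via a fixed-point-free homotopy rel endpoints that pushes $f(A_i)$ off $T(S)$ (keeping it off $A_i$), to the situation of Lemma~\ref{cs}, using Theorem~\ref{fpfhomotopy2} to see that $\ind(f,g|_{[a,b]})$ and $\var(f,[a,b],S)$ are both unchanged — here I would normalize $g$ to be the standard embedding onto $\uc$ so that $S=\uc$ and $r:\C\to T(\uc)$ is the radial retraction. Apply Lemma~\ref{cs} to get $\tf$ with $r\circ\tf|_{[a,b]}$ either constant or locally injective, wrapping $[a,b]$ around $\uc$ a net $\var(f,[a,b],\uc)=:m$ times. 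Now build an explicit comparison map $\al:\uc\to\C$ agreeing with $\tf$ on $[a,b]$ and sending $\uc\setminus(a,b)$ into $T(\uc)\setminus\{v\}$: by the remark following Definition~\ref{vararc}, $\var(\tf,[a,b],\uc)=\win(\al,\uc,v)=m$, while a direct computation of $\ind(f,g)$ for such a "straightened" configuration — $\tf$ goes around $\uc$ essentially $m$ times on $[a,b]$ and then closes up through the hull $T(\uc)$ — gives $\ind(\tf,g)=m+1$. Combining, $\ind(f,g)=\var(f,S)+1$ in the one-arc case.

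The main obstacle I anticipate is making the bookkeeping of the "boundary contributions" rigorous and checking that they genuinely telescope to $+1$ rather than to $0$ or some other integer — equivalently, pinning down exactly where the extra $+1$ enters. The cleanest way around this is probably to avoid splitting index arc-by-arc into fractional pieces with ambiguous endpoint terms, and instead to \emph{globally} straighten: apply Lemma~\ref{cs} simultaneously on every arc $A_i$ (the arcs only meet at endpoints, and on each $f(A_i)\cap A_i=\0$, so the straightenings do not interfere), obtaining a single map $\tf:\uc\to\C$ with $\tf(F)\subset T(\uc)$, $\var(f,A_i,S)=\var(\tf,A_i,S)$ for all $i$, and $\ind(f,S)=\ind(\tf,S)$. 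For $\tf$ the index can be computed directly as a winding number: $\tf$ traverses $\uc$ making, on the union of the arcs, a total net rotation of $\sum_i \var(\tf,A_i,S)$ full turns relative to the moving frame, plus one extra full turn coming from the fact that the identity embedding itself has index $1$ against a target point inside $T(\uc)$ (Corollary~\ref{mapinhull}). Secondary technical points — that variation is well-defined independent of junctions (Proposition~\ref{straightjunction}), that the homotopies stay fixed-point-free, and that one may assume $f((a_i,a_{i+1}))\subset\C\setminus T(\uc)$ after a preliminary homotopy — are all covered by the stability results in Section~\ref{compofind} and the remarks following Definition~\ref{vararc}.
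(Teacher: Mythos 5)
There is a genuine gap, and it sits exactly where you suspected one might. First, a structural misstep: Lemma~\ref{cs} cannot be applied to the arcs $A_i$ themselves, since its hypotheses require $f((a,b))\subset\complex\sm T(\uc)$ and $f(\{a,b\})\subset\uc$, whereas in the theorem one only knows $f(a_i)\in T(S)$ (possibly interior) and $f(A_i)$ may repeatedly enter $T(S)$. Your suggested fix --- a preliminary homotopy rel endpoints pushing $f((a_i,a_{i+1}))$ off $T(\uc)$ --- is impossible whenever $f(a_i)$ lies in the interior of $T(S)$, because the homotopy fixes the endpoints. The straightening has to be performed instead on the closures $K$ of the components of $f^{-1}(f(S)\sm T(S))$, which do satisfy the hypotheses of Lemma~\ref{cs}; this finer decomposition (only finitely many such $K$ carry nonzero variation) is the correct setting, and it is what the paper uses.

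Second, and more fundamentally, your ``cleanest way around'' does not close the argument: the assertion that the straightened map $\tf$ has $\ind(\tf,S)=\sum_i\var(\tf,A_i,S)+1$ because it rotates $\sum\var$ times ``relative to the moving frame'' plus one turn of the frame itself is precisely the statement of Theorem~\ref{I=V+1}, restated as a heuristic rather than proved. Corollary~\ref{mapinhull} only yields the $+1$ when the image can be pushed into $T(S)$ by a fixed-point-free homotopy, which works exactly when every straightened $K$ has variation zero (then $r\circ\tf(K)$ is a short arc missing $K$). As soon as some $K$ has $\var\ne 0$, the retracted image $r\circ\tf(K)$ wraps around all of $\uc$ and meets $K$, so no fixed-point-free retraction into $T(S)$ exists and none of the cited stability results computes the index. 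The paper bridges this by induction on the number $m$ of nonzero-variation subarcs: it replaces $f$ on one such $K=[x,y]$ by the map onto the complementary short arc from $f(x)$ to $f(y)$, and verifies by an explicit, configuration-dependent computation (the cases $f(x)<x<y<f(y)$, $f(y)<x<y<f(x)$, $f(x)=f(y)$, and negative winding) that the fractional index and the variation change by exactly the same integer, so a minimal counterexample with $m$ nonzero-variation subarcs yields one with $m-1$, a contradiction; the base case $m=0$ is the only place Corollary~\ref{mapinhull} enters. Your Plan~A telescoping of endpoint contributions runs into this same case analysis, so some version of that bookkeeping (or the paper's replacement trick, which cleverly compares differences instead of computing either quantity absolutely) cannot be avoided.
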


\begin{proof}
By an appropriate conjugation of $f$ and $g$, we may assume
without loss of generality that $S=\uc$ and $g=id$.  Let $F$ and
$A_i=[a_i,a_{i+1}]$ be  as in the hypothesis. Consider the
collection of arcs
$$\mc{K}=\{K\subset S\mid \text{$K$ is the
closure of a component of $f^{-1}(f(S)\sm T(S))$}\}.$$ For each
$K\in\mc{K}$, there is an $i$ such that $K\subset A_i$.  Since
$f(A_i)\cap A_i=\0$, it follows from the remark after
Definition~\ref{vararc} that $\var(f,A_i,S)=\sum_{K\subset A_{i},
K\in\mc{K}} \var(f,K,S)$. By the remark following
Proposition~\ref{straightjunction}, we can compute $\var(f,K,S)$
using one fixed junction for $A_i$. It is now clear that there are
at most finitely many  $K\in\mc{K}$ with $\var(f,K,S)\not=0$.
Moreover, the images of the endpoints of each $K$ lie on $S$.

Let $m$ be the cardinality of the set $\mc{K}_f=\{K \in \mc{K} \mid
\var(f,K,S)\not= 0\}$. By the above remarks, $m<\infty$ and  $\mc{K}_f$ is
independent of the partition $F$. We prove the theorem by induction on $m$.

Suppose for a given $f$  we have $m=0$.  Observe that from the
definition of variation and the fact that the computation of
variation is independent of the choice of an appropriate
partition, it follows that,
$$\var(f,S)=\sum_{K\in\mc{K}} \var(f,K,S)=0.$$

We claim that there is a map $f_1:S\to\complex$ with $f_1(S)\subset T(S)$ and a
homotopy $H$ from $f|_S$ to $f_1$ such that each level $H_t$ of the homotopy is
fixed point free and $\ind(f_1,id|_S)=1$.

To see the claim, first apply the Curve Straightening Lemma~\ref{cs}
to each $K\in\mc{K}$ (if there are infinitely many, they form a null
sequence) to obtain a fixed point free homotopy of $f|_S$ to a map
$\tf:S\to\complex$ such that $r|_{\tf(K)}$ is locally one-to-one (or
the constant map) on each $K\in\mc{K}$, where $r$ is radial
retraction of $\complex$ to $T(S)$, and $\var(\tf,K,S)=0$ for each
$K\in\mc{K}$. Let $K$ be in $\mc{K}$ with endpoints $x,y$. Since
$\tf(K)\cap K=\0$ and $\var(\tf,K,S)=0$, $r|_{\tf(K)}$ is
one-to-one, and $r\circ \tf(K)\cap K=\0$.

Define $f_1|_K=r\circ \tf|_K$. Then $f_1|_K$ is fixed point free homotopic to
$f|_K$ (with endpoints of $K$ fixed). Hence, if $K\in\mc{K}$ has endpoints $x$
and $y$, then $f_1$ maps $K$ to the subarc of $S$ with endpoints $f(x)$ and
$f(y)$ such that $K\cap f_1(K)=\0$. Since $\mc{K}$ is a null family, we can do
this for each $K\in\mc{K}$ and set $f_1|_{\uc\setminus
\cup\mc{K}}=f|_{\uc\setminus \cup\mc{K}}$ so that  we obtain the desired
$f_1:S\to\complex$ as the end map of a fixed point free homotopy from $f$ to
$f_1$.  Since $f_1$ carries $S$ into $T(S)$, Corollary~\ref{mapinhull} implies
$\ind(f_1,id|_S)=1$.

Since the homotopy $f\simeq f_1$ is fixed point free, it follows
from Theorem~\ref{fpfhomotopy2} that $\ind(f,id|_S)=1$. Hence, the
theorem holds if $m=0$ for any $f$ and any appropriate partition
$F$.

By way of contradiction suppose the collection $\mathcal{F}$ of all maps $f$ on
$\uc$ which satisfy the hypotheses of the theorem, but not the conclusion is
non-empty. By the above $0<|\mc{K}_f|<\infty$ for each. Let $f\in\mathcal{F}$
be a counterexample for which $m=|\mc{K}_f|$ is minimal.  By modifying $f$, we
will show there exists  $f_1\in\mathcal{F}$ with $|\mc{K}_{f_1}|<m$, a
contradiction.

Choose $K\in\mc{K}$ such that $\var(f,K,S)\not=0$.  Then $K=[x,y]\subset
A_i=[a_i,a_{i+1}]$ for some $i$.  By the Curve Straightening Lemma~\ref{cs} and
Theorem~\ref{fpfhomotopy2}, we may suppose $r|_{f(K)}$ is locally one-to-one on
$K$.  Define a new map $f_1:S\to\complex$ by setting $f_1|_{\ol{S\sm
K}}=f|_{\ol{S\sm K}}$ and setting $f_1|_K$ equal to the linear map taking
$[x,y]$ to the subarc $f(x)$ to $f(y)$ on $S$ missing $[x,y]$.
Figure~\ref{varfig} (left) shows an example of a (straightened) $f$ restricted
to $K$ and the corresponding $f_1$ restricted to $K$ for a case where
$\var(f,K,S)=1$, while Figure~\ref{varfig} (right) shows a case where
$\var(f,K,S)=-2$.

\begin{figure}

\includegraphics{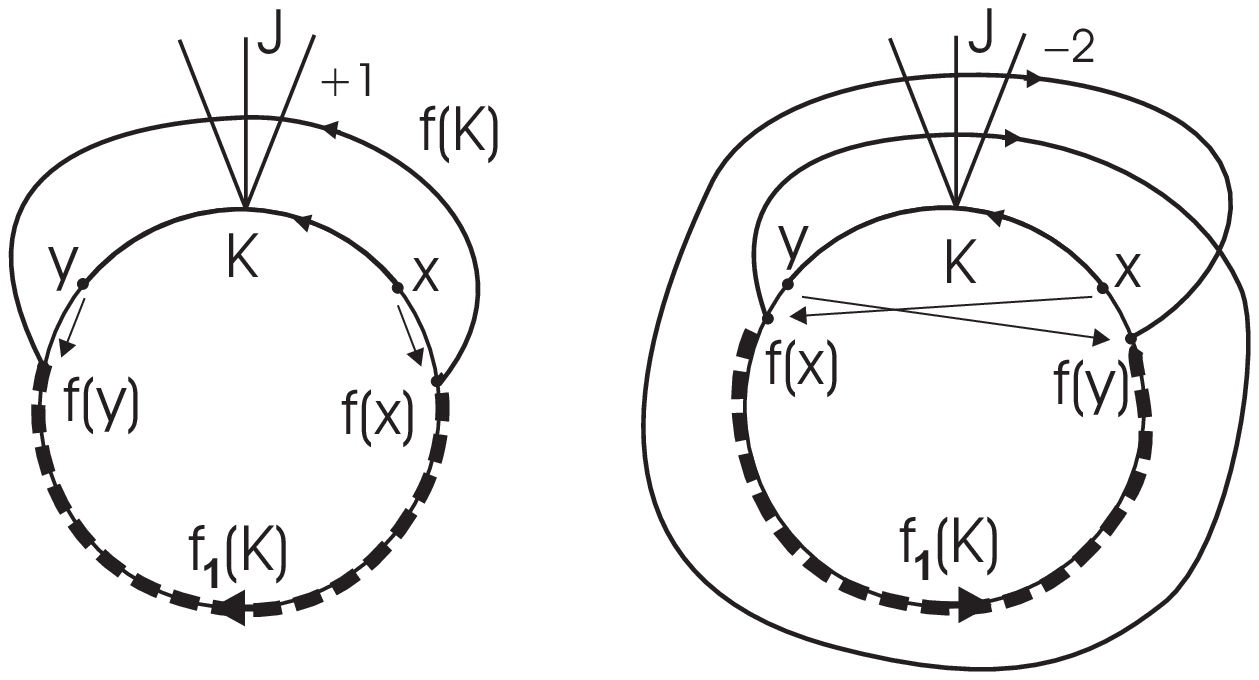}

\caption{Replacing $f:S\to\complex$ by $f_1:S\to\complex$ with one
less subarc of nonzero variation.} \label{varfig}

\end{figure}

Since on $\ol{S\sm K}$, $f$ and $f_1$ are the same map, we have
$$\var(f,S\sm K,S)=\var(f_1,S\sm K,S).$$   Likewise for the
fractional index, $$\ind(f,S\sm K)=\ind(f_1,S\sm K).$$ By definition (refer to
the observation we made in the case $m=0$),
$$\var(f,S)=\var(f,S\sm K,S)+\var(f,K,S)$$
$$\var(f_1,S)=\var(f_1,S\sm K,S)+\var(f_1,K,S)$$
and by Proposition~\ref{fracindex},
$$\ind(f,S)=\ind(f,S\sm K)+\ind(f,K)$$
$$\ind(f_1,S)=\ind(f_1,S\sm K)+\ind(f_1,K).$$ Consequently,
$$\var(f,S)-\var(f_1,S)=\var(f,K,S)-\var(f_1,K,S)$$ and
$$\ind(f,S)-\ind(f_1,S)=\ind(f,K)-\ind(f_1,K).$$

We will now show that the changes in index and variation, going from
$f$ to $f_1$ are the same (i.e., we will show that
$\var(f,K,S)-\var(f_1,K,S)=\ind(f,K)-\ind(f_1,K)$). We suppose first
that $\ind(f,K)=n+\alpha$ for some nonnegative $n\in\mathbb{N}$ and
$0\le\alpha<1$. That is, the vector $f(z)-z$ turns through $n$ full
revolutions counterclockwise and $\alpha$ part of a revolution
counterclockwise as $z$ goes from $x$ to $y$ counterclockwise along
$S$. (See Figure~\ref{varfig} (left) for the case $n=0$ and $\alpha$
about $0.8$.)

 Assume first that $f(x)<x<y<f(y)$ in the circular
order as illustrated in Figure~\ref{varfig} on the left. Then as $z$
goes from $x$ to $y$ counterclockwise along $S$, $f_1(z)$ goes along
$S$ from $f(x)$ to $f(y)$ in the clockwise direction, so $f_1(z)-z$
turns through  $-(1-\alpha)=\alpha-1$ part of a revolution. Hence,
$\ind(f_1,K)=\alpha-1$.  It is easy to see that $\var(f,K,S)=n+1$
and $\var(f_1,K,S)=0$.  Consequently,
$$\var(f,K,S)-\var(f_1,K,S)=n+1-0=n+1$$ and
$$\ind(f,K)-\ind(f_1,K)=n+\alpha-(\alpha-1)=n+1.$$

We assumed that $f(x)<x<y<f(y)$. The cases where $f(y)<x<y<f(x)$ and
$f(x)=f(y)$ (and, hence, $\alpha=0$) are treated similarly. In this
case $f_1$ still wraps around in the positive direction, but the
computations are slightly different: $\var(f,K)=n$,
$\ind(f,K)=n+\al$, $\var(f_1,K)=0$ and $\ind(f_1,K)=\al$.

Thus when $n\geq 0$, in going from $f$ to $f_1$, the change in variation and
the change in index are the same.  However, in obtaining $f_1$ we have removed
one $K\in\mc{K}_f$, reducing the minimal $m=|\mc{K}_f|$ for $f$ by one,
producing a counterexample $f_1$ with $|\mc{K}_{f_1}|=m-1$, a contradiction.

The cases where $\ind(f,K)=n+\alpha$ for negative $n$ and
$0<\alpha<1$ are handled similarly, and illustrated for $n=-2$,
$\alpha$ about $0.4$ and $f(y)<x<y<f(x)$ in Figure~\ref{varfig}
(right).
\end{proof}

\section{Locating arcs of negative variation}\label{locate}
The principal tool in proving  Theorem~\ref{outchannel} (unique outchannel)  is
the following theorem first obtained by Bell (unpublished).  It provides a
method for locating arcs of negative variation on a curve of index zero.

\begin{thm} [Lollipop Lemma, Bell]  \label{lollipop} \index{Lollipop Lemma}
Let $S\subset\complex$ be a simple closed curve  and $f:T(S)\to\complex$ a
fixed point free map. Suppose $F=\{a_0<\dots<a_n<a_{n+1}<\dots<a_m\}$ is a
partition of $S$, $a_{m+1}=a_0$ and $A_i=[a_i,a_{i+1}]$   such that
$f(F)\subset T(S)$ and $f(A_i)\cap A_i=\0$ for $i=0,\dots, m$. Suppose $I$ is
an arc in $T(S)$ meeting $S$ only at its endpoints $a_0$ and $a_{n+1}$. Let
$J_{a_0}$ be a junction in $(\complex\sm T(S))\cup\{a_0\}$ and suppose that
$f(I)\cap (I\cup J_{a_0})=\0$. Let $R=T([a_0,a_{n+1}]\cup I)$ and
$L=T([a_{n+1},a_{m+1}]\cup I)$. Then  one of the following holds:

\begin{enumerate}
\item \label{first}
If $f(a_{n+1})\in R$, then
$$\sum_{i\leq n}\var(f,A_i,S)+1=\ind(f,I\cup [a_0,a_{n+1}]) .$$

\item \label{second}
If $f(a_{n+1})\in L$, then
$$\sum_{i>n}\var(f,A_i,S)+1=\ind(f,I\cup [a_{n+1},a_{m+1}]).$$

\end{enumerate}
\end{thm}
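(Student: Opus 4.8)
The plan is to reduce the statement to a direct application of the Index = Variation + 1 Theorem~\ref{I=V+1}, applied not to $S$ itself but to the simple closed curve $S_R = I \cup [a_0, a_{n+1}]$ bounding $R$ (and symmetrically $S_L = I \cup [a_{n+1}, a_{m+1}]$ bounding $L$). The arc $I$ splits $T(S)$ into the two disks $R$ and $L$, and the partition points $a_0, \dots, a_{n+1}$ of $[a_0,a_{n+1}]$ together with $I$ (subdivided if desired) furnish a partition of $S_R$ into arcs. The arcs $A_0, \dots, A_n$ already satisfy $f(A_i) \cap A_i = \emptyset$ and $f$ of their endpoints lands in $T(S)$; the hypothesis $f(I) \cap (I \cup J_{a_0}) = \emptyset$ handles the new arc $I$. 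The one subtlety is that Theorem~\ref{I=V+1} requires the endpoint images to lie in $T(S_R) = R$, not merely in $T(S)$, and that variation be computed with respect to $S_R$ rather than $S$; resolving this compatibility is where the junction $J_{a_0}$ enters.

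First I would record the case dichotomy: since $f$ is fixed point free on $T(S)$ and $a_{n+1} \in S \subset T(S)$, the point $f(a_{n+1})$ lies in $T(S) = R \cup L$, so exactly one of the two alternatives' hypotheses holds (up to the harmless overlap $f(a_{n+1}) \in I$). Assume $f(a_{n+1}) \in R$. Then all of $a_0, \dots, a_{n+1}$ map into $R = T(S_R)$ under $f$ — here one uses that $f(a_i) \in T(S)$ for $i \le n$ together with an argument, via the junction $J_{a_0}$ lying in $(\C \setminus T(S)) \cup \{a_0\}$, that the relevant crossings and endpoint positions are unaffected by replacing $S$ with $S_R$; concretely, the junction $J_{a_0}$ can be chosen inside $(\C \setminus R) \cup \{a_0\}$ as well, so that $\var(f, A_i, S) = \var(f, A_i, S_R)$ for each $i \le n$. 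This uses the homotopy/isotopy invariance of variation stated in the remarks following Proposition~\ref{straightjunction}, in particular the stability of the computation under moving the junction through $A \cup (\C \setminus T(S))$.

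Next, I would apply Theorem~\ref{I=V+1} to $f$ restricted to $S_R$ with the partition $\{a_0 < a_1 < \dots < a_{n+1}\} \cup (\text{subdivision of } I)$: this gives
$$\ind(f, S_R) = \sum_{i \le n} \var(f, A_i, S_R) + \var(f, I, S_R) + 1.$$
The condition $f(I) \cap (I \cup J_{a_0}) = \emptyset$ forces $\var(f, I, S_R) = 0$ (choosing the junction for $I$ along $J_{a_0}$, the image $f(I)$ misses it entirely). Combining with the previous step, $\ind(f, S_R) = \sum_{i \le n} \var(f, A_i, S) + 1$, which is exactly conclusion~(\ref{first}) once one identifies $\ind(f, S_R)$ with $\ind(f, I \cup [a_0, a_{n+1}])$. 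The case $f(a_{n+1}) \in L$ is symmetric, using $S_L$ and the arcs $A_{n+1}, \dots, A_m$.

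The main obstacle I anticipate is the bookkeeping in the first step: justifying rigorously that variation computed with respect to $S$ equals variation computed with respect to the smaller curve $S_R$, and that $f(I)$ can be kept off a single junction $J_{a_0}$ that simultaneously works as the junction for the arc $I$ in the partition of $S_R$. This is a matter of carefully invoking the isotopy-invariance of variation (the junction may be pushed off $A$ into $\C \setminus T(S)$, and $R \subset T(S)$), but it is the delicate point where the hypothesis $f(I) \cap (I \cup J_{a_0}) = \emptyset$ is essential and must be used with care; everything else is a clean application of Theorem~\ref{I=V+1} and Proposition~\ref{fracindex}.
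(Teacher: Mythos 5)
Your overall strategy (reduce to Theorem~\ref{I=V+1} applied to the boundary of $R$, resp.\ $L$, with the arc $I$ contributing variation $0$ thanks to $f(I)\cap(I\cup J_{a_0})=\0$) is the same reduction the paper uses, but there is a genuine gap at the step you describe as "bookkeeping." You assert that when $f(a_{n+1})\in R$, \emph{all} of $f(a_0),\dots,f(a_{n+1})$ lie in $R=T(S_R)$, and you propose to justify this via the junction $J_{a_0}$ and the isotopy-invariance of variation. This is false in general: the hypothesis only gives $f(a_i)\in T(S)=R\cup L$, and the condition $f(a_{n+1})\in R$ constrains nothing about the other partition points, which may perfectly well map into $L\sm I$. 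The junction-straightening and homotopy-stability remarks after Proposition~\ref{straightjunction} control how crossings of a junction are counted; they say nothing about where the images of the partition points lie, so they cannot upgrade $f(a_i)\in T(S)$ to $f(a_i)\in T(S_R)$. Without that, $\var(f,A_i,S_R)$ is not even defined (Definition~\ref{vararc} requires the endpoint images to lie in the topological hull of the curve with respect to which variation is computed), and Theorem~\ref{I=V+1} cannot be applied to $f$ restricted to $S_R$. The paper flags exactly this issue in the parenthetical remark following the statement of the Lollipop Lemma.

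What is missing is the actual content of the paper's proof: one must replace $f$ on $C=\partial L$ (or $\partial R$) by a map $f'$ that is \emph{fixed point free homotopic} to $f|_C$, obtained by choosing, for each partition point whose image lands in the wrong half, an arc $I_i$ joining $f(a_i)$ to the correct half (inside $R\sm\{a_0,a_{n+1}\}$, and for $a_0$ with controlled intersection near $A_{n+1}$), and then redefining $f'$ on small subarcs adjacent to $a_i$ so that $f'(a_i)$ lies in the correct half while $f'$ still traverses $f(A_i)$ on the remaining portion of $A_i$. One then has to check three things: that $f'(A_i)\cap A_i=\0$ still holds, that $\var(f',A_i,C)=\var(f,A_i,S)$ (because the added arcs $I_i$ avoid the junctions used to compute variation), and that the homotopy from $f|_C$ to $f'$ is fixed point free so that $\ind(f',C)=\ind(f,C)$. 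Only after this modification does Theorem~\ref{I=V+1} apply, to $f'$ rather than to $f$. Your proposal skips this construction entirely, and the step it is meant to replace does not follow from the invariance properties you cite.
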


(Note that in (\ref{first}) in effect we compute $\var(f,\bd R)$ but
technically, we have not defined $\var(f,A_i,\bd R)$  since the endpoints of
$A_i$ do not have to map inside $R$ but they do map into  $T(S)$. Similarly in
Case (\ref{second}).)

\begin{proof}

Suppose $f(a_{n+1})\in L$ (the case when $f(a_{n+1})\in R$ can be
treated similarly). Consider the set $C=[a_{n+1},a_{m+1}]\cup I$ (so
$T(C)=L$). We want to construct a map $f':C\to\complex$, fixed point
free homotopic to $f|_C$, that does not change variation on any arc
$A_i$ in $C$ and has the properties listed below.
\begin{enumerate}
\item \label{inL}
$f'(a_i)\in L$,  $f'(A_i)\cap A_i=\0$ for all $n+1\le i\le m$ and
$f'(a_0)\in L$. Hence $\var(f',A_i,C)$ is defined for each $i>n$.

\item \label{varf=g}
$\var(f',A_i,C)=\var(f,A_i,S)$ for all $n+1\le i\le m$.

\item \label{varI}
$f'(I)\cap I=\0$ and $\var(f',I,C)=0$.
\end{enumerate}

Having such a map, it then follows from Theorem~\ref{I=V+1}, that
$$\ind(f',C)=\sum_{i=n+1}^m \var(f',A_i,C)+\var(f',I,C)+1.$$

By Theorem~\ref{fpfhomotopy2} $\ind(f',C)=\ind(f,C)$.  By (\ref{varf=g}) and
(\ref{varI}), $\sum_{i> n} \var(f',A_i,C)+\var(f',I,C)=\sum_{i> n}
\var(f,A_i,S)$ and  the Theorem would follow.

\begin{figure}

\includegraphics{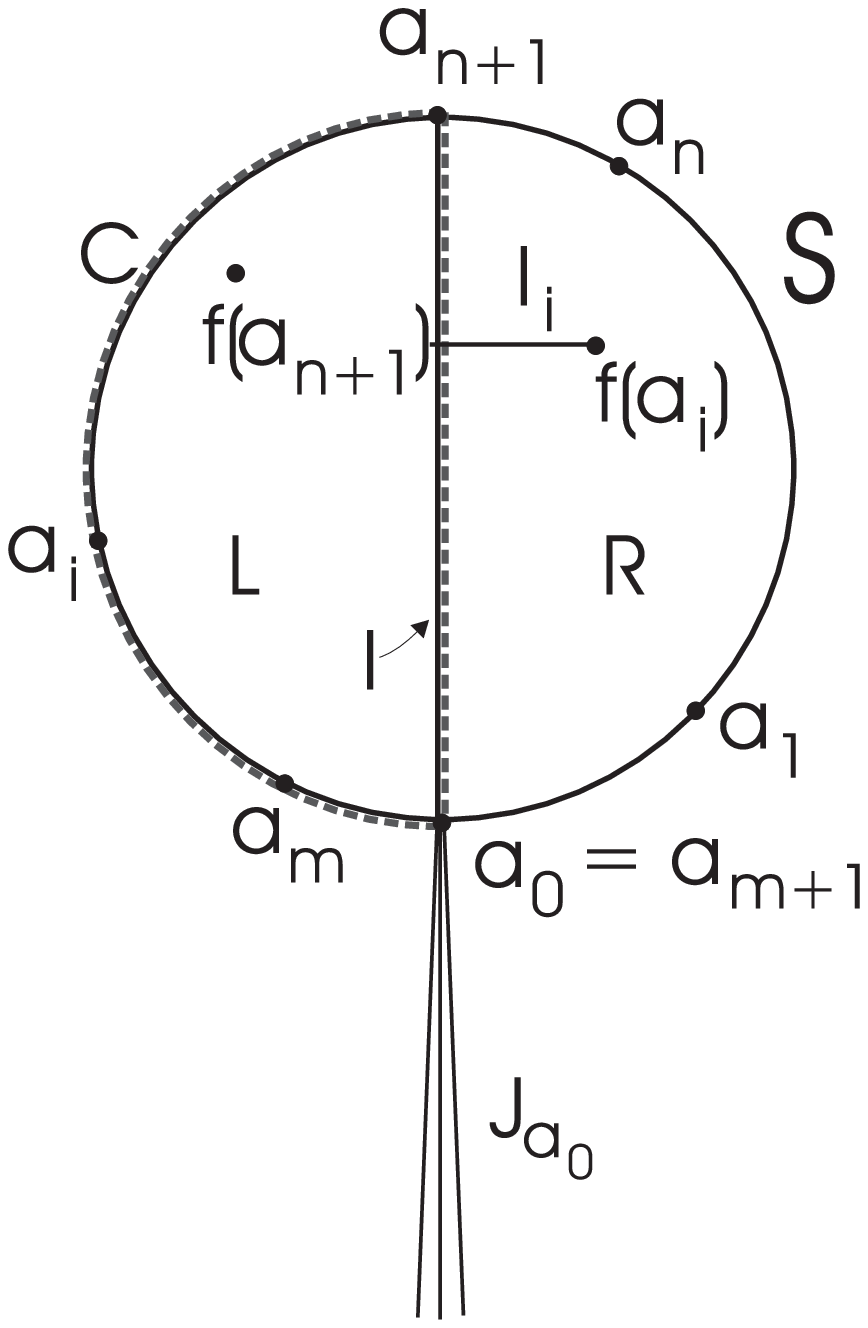}

\caption{Bell's Lollipop.} \label{lollypic}

\end{figure}

It remains to define the map $f':C\to\complex$ with the above properties.  For
each $i$ such that $n+1\le i \le m+1$, chose an arc $I_i$ joining $f(a_i)$ to
$L$ as follows:

\begin{enumerate}
\item[(a)]
If $f(a_i)\in L$, let $I_i$ be the degenerate arc $\{f(a_i)\}$.

\item[(b)]
If $f(a_i)\in R$ and $n+1<i<m+1$, let $I_i$ be an arc in
$R\setminus\{a_0,a_{n+1}\}$ joining $f(a_i)$ to $I$.

\item[(c)]
If $f(a_0)\in R$, let $I_0$ be an arc joining $f(a_0)$ to $L$ such that
$I_0\cap(L\cup J_{a_0})\subset A_{n+1}\setminus \{a_{n+1}\}$.
\end{enumerate}

Let $x_{n+1}=y_{n+1}=a_{n+1}$, $y_0=y_{m+1}\in
I\setminus\{a_0,a_{n+1}\}$ and $x_0=x_{m+1}\in
A_m\setminus\{a_m,a_{m+1}\}$. For $n+1<i<m+1$, let $x_i\in A_{i-1}$
and $y_i\in A_i$ such that $y_{i-1}<x_i<a_i<y_i<x_{i+1}$. For
$n+1<i<m+1$ let $f'(a_i)$ be the endpoint of $I_i$ in $L$,
$f'(x_i)=f'(y_i)=f(a_i)$ and extend $f'$ continuously from
$[x_i,a_i]\cup [a_i,y_i]$ onto $I_i$ and define $f'$ from
$[y_i,x_{i+1}]\subset A_{i}$ onto $f(A_i)$ by
$f'|_{[y_i,x_{i+1}]}=f\circ h_i$, where $h_i:[y_i,x_{i+1}]\to A_i$
is a homeomorphism such that $h_i(y_i)=a_i$ and
$h_i(x_{i+1})=a_{i+1}$. Similarly, define $f'$ on
$[y_0,a_{n+1}]\subset I$ to $f(I)$ by $f'|_{[y_0,a_{n+1}]}=f\circ
h_0$, where $h_0:[y_0,a_{n+1}]\to I$ is an onto homeomorphism such
that $h(a_{n+1})=a_{n+1}$ and extend  $f'$ from
$[x_{m+1},a_0]\subset A_m$ and $[a_0,y_0]\subset I$ onto $I_0$ such
that $f'(x_{m+1})=f'(y_0)=f(a_0)$ and $f'(a_0)$ is the endpoint of
$I_0$ in $L$. To define $f'|_{[a_{n+1},x_{n+2}]}$ let
$h_{n+1}:[y_{n+1},x_{n+2}]\to [a_{n+1},a_{n+2}]$ be a homeomorphism
such that $h_{n+1}(y_{n+1})=a_{n+1}$. Then define $f'(x)$  as
$f\circ h_{n+1}(x)$ for $x\in [y_{n+1},x_{n+2}]$ and
$f'(x)=f(a_{n+1})$ if $x\in [a_{n+1},y_{n+1}]$.

Note that $f'(A_i)\cap A_i=\0$ for $i=n+1,\dots,m$ and $f'(I)\cap
[I\cup J_{a_{0}}]=\0$. To compute the variation of $f'$ on each of
$A_m$ and $I$ we can use the junction $J_{a_{0}}$. Hence
$\var(f',I,C)=0$ and, by the definition of $f'$ on $A_m$,
$\var(f',A_m,C)=\var(f,A_m,S)$. For $i=n+1,\dots,m-1$ we can use the
same junction $J_{v_{i}}$ to compute $\var(f',A_i,C)$ as we did to
compute $\var(f,A_i,S)$. Since $I_i\cup I_{i+1}\subset T(S)\sm A_i$
we have that $f'([a_i,y_i])\cup f'([x_{i+1},a_{i+1}])\subset I_i\cup
I_{i+1}$ misses that junction and, hence, make no contribution to
variation $\var(f',A_i,C)$. Since
$f'^{-1}(J_{v_{i}})\cap[y_i,x_{i+1}]$ is isomorphic to
$f^{-1}(J_{v_{i}})\cap A_i$, $\var(f',A_i,C)=\var(f,A_i,S)$ for
$i=n+1,\dots,m$.

To see that $f'$ is fixed point free homotopic to $f|_C$, note that
we can pull the image of $A_i$ back along the arcs $I_i$ and
$I_{i+1}$ in $R$ without fixing a point of $A_i$ at any level of the
homotopy.
\end{proof}

Note that if $f$ is fixed point free on $T(S)$, then $\ind(f,C)=0$
and the next Corollary follows.

\begin{cor} \label{corlol}
Assume the hypotheses of Theorem~\ref{lollipop}.  Then if
$f(a_{n+1})\in R$ there exists $i\leq n$ such that
$\var(f,A_i,S)<0$. If $f(a_{n+1})\in L$ there exists $i> n$ such
that $\var(f,A_i,S)<0$.
\end{cor}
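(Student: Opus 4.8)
The plan is to deduce the corollary in one line from the Lollipop Lemma (Theorem~\ref{lollipop}) together with the fixed-point criterion (Theorem~\ref{fpthm}), exactly as indicated in the remark preceding the statement. The point is that the two alternatives in Theorem~\ref{lollipop} each express a sum of variations, increased by $1$, as the index of $f$ on a simple closed curve that bounds a subdisk of $T(S)$; since $f$ is fixed point free on \emph{all} of $T(S)$, that index is forced to be $0$, so the corresponding variation sum equals $-1$ and in particular has a strictly negative term.

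Concretely, I would argue as follows. Suppose first $f(a_{n+1})\in R$. By hypothesis $I$ meets $S$ exactly in its two endpoints $a_0\ne a_{n+1}$, so $I\cup[a_0,a_{n+1}]$ is a simple closed curve, and by definition $R=T(I\cup[a_0,a_{n+1}])\subseteq T(S)$. Since $f$ has no fixed point in $T(S)$, it has none in $R$; hence Theorem~\ref{fpthm}, in contrapositive form, gives $\ind(f,I\cup[a_0,a_{n+1}])=0$. Substituting this into Theorem~\ref{lollipop}(\ref{first}) yields $\sum_{i\le n}\var(f,A_i,S)=-1$, so at least one summand is negative, i.e.\ there is $i\le n$ with $\var(f,A_i,S)<0$. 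The case $f(a_{n+1})\in L$ is entirely symmetric: now $I\cup[a_{n+1},a_{m+1}]$ is a simple closed curve with $T(I\cup[a_{n+1},a_{m+1}])=L\subseteq T(S)$, so again $f$ is fixed point free on this hull and $\ind(f,I\cup[a_{n+1},a_{m+1}])=0$; Theorem~\ref{lollipop}(\ref{second}) then gives $\sum_{i>n}\var(f,A_i,S)=-1$, hence some $i>n$ has $\var(f,A_i,S)<0$.

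There is essentially no obstacle here, since all the substantive work is already contained in Theorem~\ref{lollipop}. The only points that need a moment's care are the (routine) verification that $I\cup[a_0,a_{n+1}]$ and $I\cup[a_{n+1},a_{m+1}]$ are genuine simple closed curves with topological hulls $R$ and $L$ respectively, and the bookkeeping that $f$ is defined and fixed point free on these hulls so that Theorem~\ref{fpthm} applies — both immediate from the standing hypotheses of Theorem~\ref{lollipop}.
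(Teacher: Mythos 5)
Your proposal is correct and is essentially the paper's own argument: the paper dispatches the corollary with the remark that, since $f$ is fixed point free on $T(S)$ (hence on $R$ and $L$), Theorem~\ref{fpthm} forces the index on the relevant lollipop curve to vanish, so the corresponding variation sum in Theorem~\ref{lollipop} equals $-1$ and must contain a negative term. Your added care about $I\cup[a_0,a_{n+1}]$ and $I\cup[a_{n+1},a_{m+1}]$ being simple closed curves with hulls $R,L\subseteq T(S)$ is exactly the routine bookkeeping the paper leaves implicit.
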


\section{Crosscuts and bumping arcs}\label{infinitepartitions}\index{variation!for crosscuts}

For the remainder of Chapter 3, we assume that
$f:\complex\to\complex$ takes the continuum $X$ into $T(X)$ with no
fixed points in $T(X)$, and $X$ is minimal with respect to these
properties.

\begin{defn} [Bumping Simple Closed Curve] \label{bumping}\index{bumping!simple closed curve}
A simple closed curve $S$ in $\complex$ which has the property that $S\cap X$
is nondegenerate and $T(X)\subset T(S)$ is said to be a {\em bumping simple
closed curve for $X$}. A subarc $A$ of a bumping simple closed curve, whose
endpoints lie in $X$, is said to be a {\em bumping (sub)arc for $X$}
\index{bumping!arc} or a \emph{link of $S$}\index{link}. Moreover, if $S'$ is any bumping
simple closed curve for $X$ which contains $A$, then $S'$ is said to
\emph{complete} $A$. \index{completing a bumping arc} In fact, an arc $A$ with
endpoints in $X$ which can be completed will be called a \emph{bumping arc of
$X$}.
\end{defn}

Given a positively oriented simple closed curve $S$, we can consider
its positively oriented subarcs denoted by $[a, b]_S$, where $a, b$
are the endpoints of the arc; if the curve is fixed, we simply write
$[a, b]$. Similar notation is used for half-open or open subarcs of
bumping simple closed curves. Often we will fix the choice of links
into which we divide $S$. In general a bumping arc of $X$ may have
points other than its endpoints which belong to $X$ (e.g., if $X$ is
the closed unit disk and the unit circle is divided into several
subarcs then each of them can be considered as a bumping arc of
$X$). By definition, any bumping arc $A$ of $X$ can be extended to a
bumping simple closed curve $S$ of $X$. Hence, every bumping arc has
a well defined natural order $<$ inherited from the positive
circular order of a bumping simple closed curve $S$ containing $A$.
If $a<b$ are the endpoints of $A$, then we will often write
$A=[a,b]$.

A {\em crosscut} \index{crosscut} of $\tU=\rsphere\sm T(X)$ is an
open arc $Q$ lying in $\tU\sm\{\infty\}$ such that $\ol Q$ is an arc
with endpoints $a\not=b\in T(X)$. In this case we will often write
$Q=(a,b)$. (As seems to be traditional, we use ``crosscut of $T(X)$"
interchangeably with ``crosscut of $\tU$.") Evidently, a crosscut of
$\tU$ separates $\tU$ into two disjoint domains, exactly one of
which is unbounded. If $S$ is a bumping simple closed curve so that
$X\cap S$ is nondegenerate, then each component of $S\sm X$ is a
crosscut of $T(X)$. A similar statement holds for a bumping arc $A$.
Given a non-separating continuum $T(X)$, let $A\subset\complex$ be a
crosscut of $U^\infty(X)=\rsphere\sm T(X)$. Given a crosscut $A$ of
$U^\infty(X)$  denote by $\Sh(A)$, the \emph{shadow of}
\index{crosscut!shadow} \index{shadow} \index{shadow@$\Sh(A)$} $A$,
the bounded component of $\complex\setminus [T(X)\cup A]$. Moreover,
suppose that $A$ is a bumping arc of $X$. Then by the \emph{shadow
$\sh(A)$ of $A$}, we mean the union of all bounded components of
$\C\sm (X\cup A)$ (since there may be more than endpoints of $A$ in
$X\cap A$, we should talk about the union of all bounded components
of $\C\sm (X\cup A)$ here).

A variety of tools (such as index, variation, junction) have been
described in previous sections. So far they have been applied to the
properties of maps of the plane restricted to simple closed curves.
Another application can be found in Theorem~\ref{fpthm} where the
existence of a fixed point in the topological hull of a simple
closed curve is established. However we are mostly interested in
studying continua $X$ as described in our Standing Hypotheses. The
following construction shows how the above described tools apply in
this situation.

Since $f$ has no fixed points in $T(X)$ and $X$ is compact, we can choose a
bumping simple closed curve $S$ in a small neighborhood of $T(X)$ such that all
crosscuts in $S\sm X$ are small, have positive distance to their image and so
that $f$ has no fixed points in $T(S)$. Thus, we obtain the following corollary
to Theorem~\ref{fpthm}.

\begin{cor} \label{bumpingscc} Let $f:\C\to\C$ be a map and $X\subset\C$ a subcontinuum with
$f(X)\subset T(X)$ and so that $f|_{T(X)}$ is fixed point free. Then
there is a bumping simple closed curve $S$ for $X$ such that
$f|_{T(S)}$ is fixed point free; hence, by \ref{fpthm},
$\ind(f,S)=0$. Moreover, any bumping simple closed curve $S'$ for
$X$ such that $S'\subset T(S)$ has $\ind(f,S')=0$. Furthermore, any
bumping arc $A$ of $T(X)$ for which $f$ has no fixed points in
$T(X\cup A)$ can be completed to a bumping simple closed curve $S$
for $X$ for which $\ind(f,S)=0$. \end{cor}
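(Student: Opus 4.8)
The plan is to produce the bumping simple closed curve $S$ by a standard compactness argument and then invoke Theorem~\ref{fpthm} three times, once for each of the three assertions. First I would observe that since $f$ is continuous, $T(X)$ is compact, and $f|_{T(X)}$ is fixed point free, the quantity $\inf_{z\in T(X)} d(z,f(z))$ is a positive number $\delta$. By uniform continuity there is an $\eta>0$ (which we may assume smaller than $\delta/3$) so that $d(z,w)<\eta$ implies $d(f(z),f(w))<\delta/3$; then on the closed $\eta$-neighborhood $\overline{N}$ of $T(X)$ the map $f$ is still fixed point free, because for $z\in\overline{N}$ there is $z'\in T(X)$ with $d(z,z')<\eta$, and $d(z,f(z))\ge d(z',f(z'))-d(z,z')-d(f(z'),f(z))>\delta-\eta-\delta/3>0$. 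Now choose $S$ to be a bumping simple closed curve for $X$ lying in $\overline{N}$ and with all complementary crosscuts of diameter less than $\eta$ (such a curve exists by the assumed Schoenflies-type machinery and the remarks on bumping curves in Section~\ref{infinitepartitions}); shrinking further if necessary we also arrange $T(S)\subset\overline{N}$, using that $\overline{N}$ is a neighborhood of $T(X)$ and $T(S)$ is squeezed between $T(X)$ and $S$. Then $f|_{T(S)}$ is fixed point free, so $\ind(f,S)$ is defined and, since $f$ carries $T(S)$ into the plane with no fixed point there, Theorem~\ref{fpthm} (in contrapositive form: a nonzero index would force a fixed point in $T(S)$) yields $\ind(f,S)=0$.

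For the second assertion, let $S'$ be any bumping simple closed curve for $X$ with $S'\subset T(S)$. Then $T(S')\subset T(S)\subset\overline{N}$, so $f|_{T(S')}$ is again fixed point free; exactly the same application of Theorem~\ref{fpthm} gives $\ind(f,S')=0$. For the third assertion, let $A$ be a bumping arc of $T(X)$ with no fixed points of $f$ in $T(X\cup A)$. The set $T(X\cup A)$ is a non-separating plane continuum on which $f$ is fixed point free, so by the same compactness argument as above there is a closed neighborhood $\overline{N}'$ of $T(X\cup A)$ on which $f$ remains fixed point free. I would then complete $A$ to a bumping simple closed curve $S$ for $X$ chosen inside $\overline{N}'$ with small complementary crosscuts and with $T(S)\subset\overline{N}'$ (possible since $A\subset S$ is required, and the completion can be pushed arbitrarily close to $X\cup A$ off of $A$); then $f|_{T(S)}$ is fixed point free and Theorem~\ref{fpthm} gives $\ind(f,S)=0$.

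The only genuinely nontrivial point is the existence, in each case, of a bumping simple closed curve that simultaneously lies in a prescribed small neighborhood, has short complementary crosscuts, and (in the third case) contains the given bumping arc $A$; everything else is the elementary ``thickening'' argument for fixed point free maps together with a single citation of Theorem~\ref{fpthm}. That existence is essentially the content of the Schoenflies Theorem applied near $\partial T(X)$ (respectively near $\partial T(X\cup A)$), which the paper explicitly reserves the right to assume throughout, together with the discussion of completing bumping arcs in Definition~\ref{bumping}. I expect the main obstacle to be stating this construction cleanly enough that the reader is convinced the completed curve can be kept within the neighborhood while still containing $A$; in the write-up I would handle it by noting that $A$ itself has positive distance from $f(A)$ and from $X$ off its endpoints, so a sufficiently tight completion stays in the relevant neighborhood, and then the rest is the verbatim argument above.
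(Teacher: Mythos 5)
Your argument is correct and is essentially the paper's own justification: the paper proves this corollary only by the remark immediately preceding it, namely that since $f$ is fixed point free on the compact set $T(X)$ one can choose a tight bumping simple closed curve (or completion of the given bumping arc) inside a small fixed-point-free neighborhood and then apply Theorem~\ref{fpthm}. Your thickening estimate and the three invocations of Theorem~\ref{fpthm} fill in exactly that sketch, so no further comparison is needed.
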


The idea of the proofs of a few forthcoming results is that in some cases we
can use the developed tools (e.g., variation) in order to compute out index and
show, relying upon the properties of our maps, that index is \emph{not} equal
to zero thus contradicting Corollary~\ref{bumpingscc}. To implement such a plan
we need to further study properties of variation in the setting described
before Corollary~\ref{bumpingscc}.

\begin{prop} \label{varcross}
 Let $f:\C\to\C$ be a map and $X\subset\C$ a subcontinuum with
$f(X)\subset T(X)$ and so that $f|_{T(X)}$ is fixed point free. In
the situation of Corollary~\ref{bumpingscc}, suppose $A$ is a
bumping subarc for $X$. If $\var(f,A,S)$ is defined for some bumping
simple closed curve $S$ completing $A$, then for any bumping simple
closed curve $S'$ completing $A$, $\var(f,A,S)=\var(f,A,S')$.
\end{prop}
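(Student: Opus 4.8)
The plan is to exploit the fact that the variation algorithm of Definition~\ref{vararc} produces a value that involves only the arc $A$, the restriction $f|_A$, and the junction used — never the rest of the completing curve — and to run it with a single junction that is simultaneously admissible for $S$ and for $S'$.

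I would first dispose of the case $f(A)\cap A=\0$, in which $\var(f,A,S)$ and $\var(f,A,S')$ are each computed directly from Definition~\ref{vararc}; note that both are defined, since the endpoints $a,b$ of $A$ lie in $X\subset T(X)\subset T(S)\cap T(S')$ while $f(A)\cap A=\0$ is a condition on $f$ and $A$ only. Orient $S$ and $S'$ so that each induces the same order on $A=[a,b]$, choose a point $v$ in the open arc $(a,b)$, and choose a junction $J_v$ at $v$ with $J_v\cap(S\cup S')=\{v\}$. Because $J_v\cap S=\{v\}$, the junction $J_v$ is admissible for computing $\var(f,A,S)$, and by Proposition~\ref{straightjunction} it gives the correct value; because $J_v\cap S'=\{v\}$, the same $J_v$ gives $\var(f,A,S')$. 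But the two computations are the same signed count: the set $M=f^{-1}(J_v)\cap[a,b]$ and the tally of immediate successions in $M$ of points of $f^{-1}(J^+_v)$ by points of $f^{-1}(J^i_v)$ and vice versa involve only $A$, $f|_A$, and $J_v$. Hence $\var(f,A,S)=\var(f,A,S')$. (Alternatively one may pass to winding numbers: by the remark following Definition~\ref{vararc}, $\var(f,A,S)=\win(\alpha,S,v)$ for any $\alpha\colon S\to\C$ with $\alpha|_A=f|_A$ and $\alpha(\overline{S\sm A})\subset T(S)\sm\{v\}$, and similarly for $S'$; the two resulting loops agree with $f$ on $A$ and otherwise differ only by a path from $f(b)$ to $f(a)$ lying in $T(S)\cup T(S')$ and avoiding $v$, so they are freely homotopic in $\C\sm\{v\}$ provided there is an arc from $v$ to $\infty$ meeting $T(S)\cup T(S')$ only at $v$.)

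The one genuinely topological point — and the step I expect to be the main obstacle — is the existence of such a $v\in(a,b)$ and junction $J_v$ (equivalently, escape arc) meeting $T(S)\cup T(S')$ only at $v$. Near an interior point of $A$ the curves $S$ and $S'$ coincide with $A$, and $T(X)$, hence both $T(S)$ and $T(S')$, lie on one fixed side of $A$ (each of $T(S),T(S')$ is locally a half-disk, by Schoenflies, on the side containing $T(X)$); so the opposite side near such a point lies in $U^{\infty}(S)\cap U^{\infty}(S')$, and one must run the three rays of the junction from there out to $\infty$ inside the component of $\C^\infty\sm(T(S)\cup T(S'))$ containing $\infty$. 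Producing such a $v$ is where the plane topology is used: one takes $v$ in the (nonempty) subset of $(a,b)$ accessible from that region, e.g.\ via the Schoenflies theorem applied to the outer boundary of $T(S)\cup T(S')$, or — when $A$ is a crosscut of $U^{\infty}(X)$ — by pushing just outside $A$ on the side away from $X$ and using connectedness of $U^{\infty}(X)$. Finally, for a general bumping subarc $A$ with $\var(f,A,S)$ defined via a partition $a=a_0<\dots<a_n=b$, $f(a_i)\in T(S)$, $f(A_i)\cap A_i=\0$, I would apply the preceding argument to each link $A_i=[a_i,a_{i+1}]$ — after checking that this partition is admissible for $S'$ as well — and sum, getting $\var(f,A,S)=\sum_i\var(f,A_i,S)=\sum_i\var(f,A_i,S')=\var(f,A,S')$.
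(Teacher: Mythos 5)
Your core mechanism is the right one (the crossing count in Definition~\ref{vararc} depends only on $f|_A$ and the junction, so a junction admissible for two completing curves transfers the value between them, and Proposition~\ref{straightjunction} handles the choice of junction within one curve), but the step you yourself flagged as the main obstacle is a genuine gap, and your proposed remedies do not close it. There need not exist \emph{any} point $v$ of the open arc $(a,b)$ admitting a junction that meets both $S$ and $S'$ only at $v$: since a junction must reach $\infty$ without crossing $S$ (resp.\ $S'$), it must avoid $T(S)\cup T(S')$ except at $v$, so you need $v$ to be accessible from the \emph{unbounded} component of $\C\sm (T(S)\cup T(S'))$; but $T(S)\cup T(S')$ can have a bounded complementary component that swallows the entire free side of $A$. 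For example, let $X$ be a straight segment with endpoints $a,b$, let $A$ be a semicircular bumping arc below it, let $S=A\cup B$ be the round circle, and let $S'$ be a completing curve whose hull consists of the lens between $X$ and $A$ together with a long thin tongue that leaves the disk $T(S)$ near $a$, wraps around the outside of $T(S)$ strictly below $A$ (never touching $A$), and comes back up to touch $B$ near $b$. Then $S'$ is a legitimate bumping simple closed curve completing $A$ (nothing in the statement forces $S'$ to be tight), yet every point on the non-shadow side of $A$ lies in a bounded component of $\C\sm(T(S)\cup T(S'))$, so no common junction exists at any $v\in (a,b)$; the same obstruction kills your winding-number variant, whose homotopy step needs exactly the same arc from $v$ to $\infty$ missing $T(S)\cup T(S')$. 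The "outer boundary of $T(S)\cup T(S')$" need not meet the interior of $A$ at all, and connectedness of $U^\infty(X)$ is irrelevant because the junction must avoid $S'$, not just $X$.

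The paper's proof avoids demanding a common junction for $S$ and $S'$. It reduces to a single link as you do, takes the two junctions $J$ and $J'$ actually used to compute $\var(f,A,S)$ and $\var(f,A,S')$, and observes that each of them meets $T(X\cup A)$ only in its vertex. If both junctions already avoid $T(S)\cup T(S')$, Proposition~\ref{straightjunction} finishes the argument. Otherwise one interposes a \emph{third} bumping simple closed curve $S''$ completing $A$, chosen (via Corollary~\ref{bumpingscc}) so tight around $T(X\cup A)$ that $T(S'')$ misses both $J$ and $J'$ except at their vertices. Then $J$ is admissible for both $S$ and $S''$ and $J'$ is admissible for both $S'$ and $S''$, the count with a fixed junction depends only on $f|_A$, and Junction Straightening applied to $S''$ compares $J$ with $J'$; hence $\var(f,A,S)=\var(f,A,S'')=\var(f,A,S')$. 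In short: instead of one junction good for both curves (which may not exist), use one sufficiently tight curve good for both junctions (which always exists). Your write-up needs this intermediate curve, or some equivalent device, to be a proof.
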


\begin{proof}
Since $\var(f,A,S)$ is defined, $A=\cup_{i=1}^n A_i$, where each
$A_i$ is a bumping arc with $A_i\cap f(A_i)=\0$ and $|A_i\cap
A_j|\le 1$ if $i\ne j$. By the remark following
Definition~\ref{partition}, it suffices to establish the desired
result for each $A_i=A$.   Let $S$ and $S'$ be two bumping simple
closed curves completing $A$ for which variation is defined. Let
$J_a$ and $J_{a'}$ be junctions whereby $\var(f,A,S)$ and
$\var(f,A,S')$ are respectively computed. Suppose first that both
junctions lie (except for $\{a,a'\}$) in $\complex\sm (T(S)\cup
T(S'))$.  By the Junction Straightening
Proposition~\ref{straightjunction}, either junction can be used to
compute either variation on $A$, so the result follows. Otherwise,
at least one junction is not in $\complex\sm (T(S)\cup T(S'))$.  But
both junctions are in $\complex\sm T(X\cup A)$. Hence, we can find
another bumping simple closed curve $S''$ such that $S''$ completes
$A$, and both junctions lie in $(\complex\sm T(S''))\cup\{a,a'\}$.
Then by the Propositions~\ref{straightjunction} and the definition
of variation, $\var(f,A,S)=\var(f,A,S'')=\var(f,A,S')$.
\end{proof}

It follows from Proposition~\ref{varcross} that  variation on a
crosscut $Q$, with $\ol{Q}\cap f(\ol{Q})=\0$, of $T(X)$ is
independent of the bumping simple closed curve $S$ for $T(X)$ of
which $Q$ is a subarc and is such that $\var(f,Q,S)$ is defined.
Hence, given a bumping arc $A$ of $X$,  we can denote $\var(f,A,S)$
 by $\var(f,A,X)$ or simply by $\var(f,A)$ \index{varfA@$\var(f,A)$} when $X$ is understood.
The figure illustrates how variation is computed.

\begin{figure}\label{varill}
\begin{picture}(307,231)
\put(0,0){\scalebox{0.7}{\includegraphics{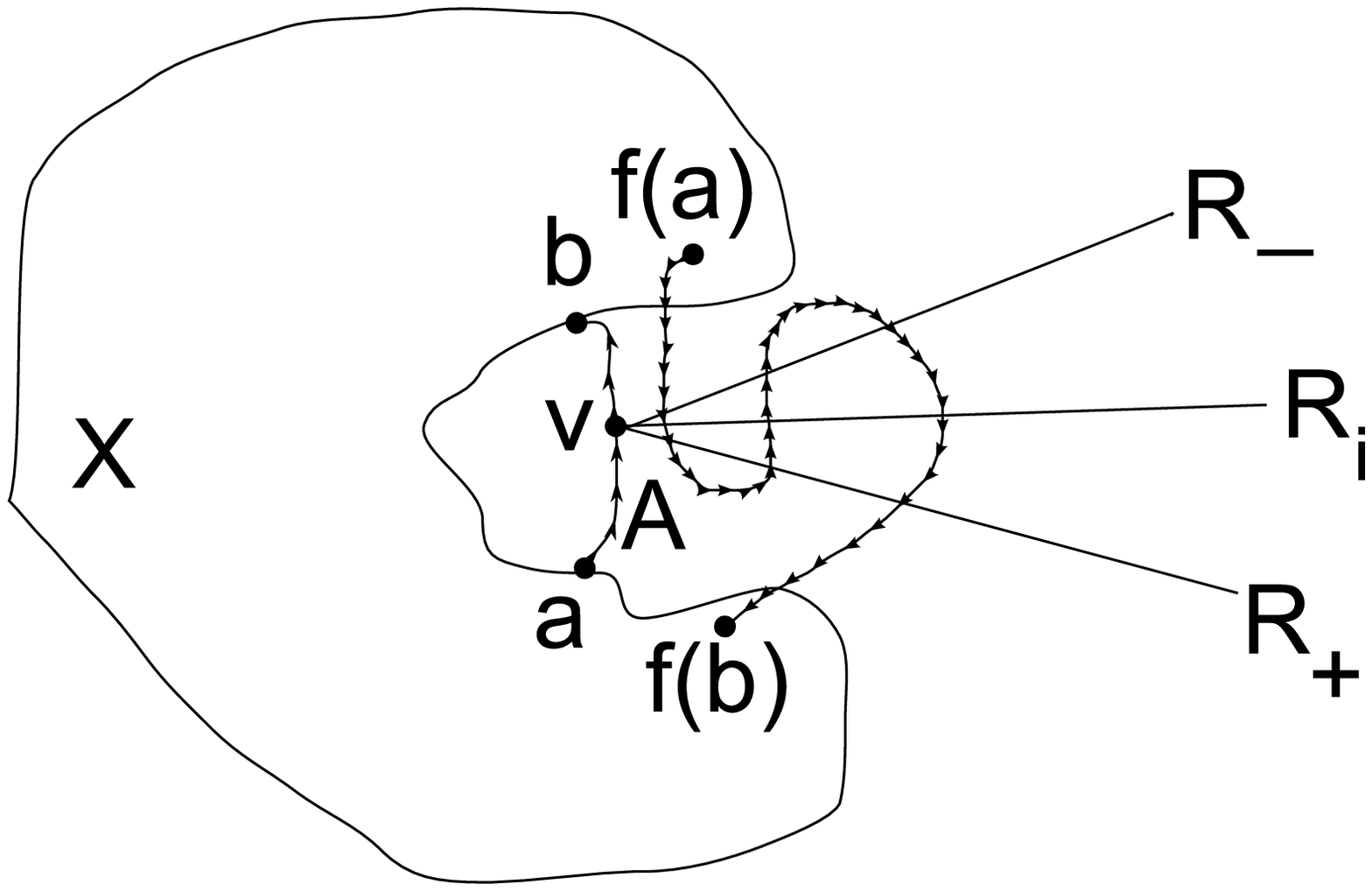}}}
\end{picture}
\caption{$\mathrm{var}(f, A)=-1+1-1=-1$.}
\end{figure}

The following proposition follows from Corollary~\ref{bumpingscc},
Proposition~\ref{varcross} and Theorem~\ref{I=V+1}.

\begin{prop} \label{crossunder}Let $f:\C\to\C$ be a map and $X$ a subcontinuum of $\C$
so that $f(X)\subset T(X)$ and $f$ has no fixed points in $T(X)$.
Suppose $Q$ is a crosscut of $T(X)$ such that $f$ is fixed point free on
$T(X\cup Q)$ and $f(\ol{Q})\cap \ol{Q}=\0$. Suppose $Q$ is replaced by a
bumping subarc $A$ with the same endpoints such that $Q\cup T(X)$ separates
$A\sm X$ from $\infty$ and each component $Q_i$ of $A\sm X$ is a crosscut such
that $f(\cl{Q_i})\cap \cl{Q_i}=\0$.  Then
$$\var(f,Q,X)=\sum_{i}
\var(f,Q_i,X)=\var(f,A,X).$$
\end{prop}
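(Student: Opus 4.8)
The plan is to compute all three quantities as variations along two bumping simple closed curves that differ only by replacing the crosscut $Q$ with the bumping arc $A$, and then to extract the equalities from Theorem~\ref{I=V+1} together with the curve-independence of variation (Proposition~\ref{varcross}). First I would complete $Q$: since $f$ is fixed point free on the compact set $T(X\cup Q)$, Corollary~\ref{bumpingscc} (together with the standard construction of a tight bumping simple closed curve through a prescribed crosscut) supplies a bumping simple closed curve $S=\cl Q\cup E$ for $X$ with $\cl Q\subset T(S)$, with $f$ fixed point free on $T(S)$ (so $\ind(f,S)=0$), with $E\cap X=\{a,b\}$ ($a,b$ the endpoints of $Q$) subdivided into links so small that $\var(f,S)$ is defined, and with $f(a),f(b)\in f(X)\subset T(X)\subset T(S)$. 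Since $f(\cl Q)\cap\cl Q=\0$, the arc $\cl Q$ is itself an admissible link, so Theorem~\ref{I=V+1} gives
\[\var(f,Q,S)+\var(f,E,S)=\ind(f,S)-1=-1.\]

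Next I would substitute $A$ for $Q$, setting $S'=A\cup E$. The hypothesis that $Q\cup T(X)$ separates $A\sm X$ from $\iy$ says exactly that $A\sm X\subset\Sh(Q)\subset\Int T(S)$, whereas $E\sm\{a,b\}\subset S\sm X$; consequently $A\cap E=\{a,b\}$, $S'$ is a simple closed curve with $S'\subset T(S)$, and so $S'$ is a bumping simple closed curve for $X$ that completes $A$ and each component $Q_i$ of $A\sm X$, with $f$ fixed point free on $T(S')\subset T(S)$ and $\ind(f,S')=0$ by the ``moreover'' part of Corollary~\ref{bumpingscc}. Partition $S'$ into the arcs $\cl{Q_i}$, the links of $E$, and finitely many subarcs of $A\cap X$ chosen short enough to miss their own images --- possible because $\inf\{d(x,f(x)):x\in T(X\cup Q)\}>0$. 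A short subarc $B\subset A\cap X$ contributes $\var(f,B,S')=0$: place the junction with vertex in $B$ and rays running off into $\tU(X)$, so that $f(B)\subset T(X)$ misses those rays. Theorem~\ref{I=V+1} --- in its countable form (Section~\ref{Cloops}) if there are infinitely many $Q_i$, which then form a null sequence with only finitely many of nonzero variation --- now yields
\[\sum_i\var(f,Q_i,S')+\var(f,E,S')=\ind(f,S')-1=-1.\]

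Finally I would compare the two displays. Since both $S$ and $S'$ are bumping simple closed curves completing the bumping arc $E$, Proposition~\ref{varcross} gives $\var(f,E,S)=\var(f,E,S')$; subtracting the two identities yields $\var(f,Q,S)=\sum_i\var(f,Q_i,S')$. By Proposition~\ref{varcross} again, $\var(f,Q,S)=\var(f,Q,X)$ and $\var(f,Q_i,S')=\var(f,Q_i,X)$ for each $i$, and by Definition~\ref{partition} (with Proposition~\ref{varcross}) $\sum_i\var(f,Q_i,X)=\var(f,A,X)$, which is the asserted chain of equalities. I expect the only real difficulty to be the topological bookkeeping of the second step: verifying that $S'=A\cup E$ is genuinely a bumping simple closed curve with $T(X)\subset T(S')\subset T(S)$ --- this is precisely where the separation hypothesis on $A\sm X$ enters, forcing $A\sm X$ into $\Int T(S)$ and off the outer arc $E$ --- and confirming that the portions of $A$ lying in $X$ make no contribution to variation. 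The possibility of infinitely many $Q_i$ is a minor point, handled exactly as in the proof of Theorem~\ref{I=V+1} via its countable refinement.
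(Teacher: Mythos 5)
Your skeleton is exactly the route the paper indicates (it derives the proposition from Corollary~\ref{bumpingscc}, Proposition~\ref{varcross} and Theorem~\ref{I=V+1} with no further details), but the key step is not carried out correctly. You stipulate a completion $S=\cl{Q}\cup E$ with $E\cap X=\{a,b\}$ and then subdivide $E$ into ``links so small that $\var(f,S)$ is defined.'' Smallness only yields $f(A_i)\cap A_i=\0$; Definition~\ref{vararc} and Theorem~\ref{I=V+1} also require the images of the partition points to lie in $T(S)$ (and, in your second application, in $T(S')$), and for subdivision points of $E$ that do not lie in $X$ there is no reason their $f$-images lie in $T(S)$ --- the paper's bumping-arc formalism places partition points on $X$ precisely so that their images land in $T(X)\subset T(S)$. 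For the same reason Proposition~\ref{varcross} does not apply to $E$: its hypothesis ``$\var(f,E,S)$ is defined'' means (see its proof) that $E$ decomposes into bumping subarcs with endpoints in $X$, each mapping off itself; with $E\cap X=\{a,b\}$ this forces $f(\cl{E})\cap\cl{E}=\0$, which you have no way to guarantee for an arc that must travel around essentially all of $X$. So neither of your two displayed identities, nor the cancellation $\var(f,E,S)=\var(f,E,S')$, is justified as written.

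If instead you use the completion actually furnished by Corollary~\ref{bumpingscc}, whose links are small crosscuts with endpoints on $X$, then $E$ meets $X$ in many points and your justification of $A\cap E=\{a,b\}$ (namely ``$E\sm\{a,b\}\subset S\sm X$'') fails; you would have to show the touch points of $E$ can be chosen to miss the compact set $(A\cap X)\sm\{a,b\}$ while keeping all links small enough to map off themselves, and that is precisely where the content of the proposition lies ($A\cap X$ may be large, e.g.\ $A$ may run along accessible boundary points of $X$ that any tight completion wants to touch). Moreover, when $A\cap X\ne\{a,b\}$ the curve $S'=A\cup E$ need not satisfy $T(X)\subset T(S')$: part of $X$ can pass through a touch point of $A$ into the region bounded by $\cl{Q}\cup A$, in which case $S'$ is not a bumping simple closed curve completing the $Q_i$, the hypothesis $f(F)\subset T(S')$ of Theorem~\ref{I=V+1} can fail (endpoints of the $\cl{Q_i}$ map only into $T(X)$), and Proposition~\ref{varcross} no longer identifies $\var(f,Q_i,S')$ with $\var(f,Q_i,X)$. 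In short, the plan (two applications of Theorem~\ref{I=V+1} to completions sharing $E$, then cancel $E$ by Proposition~\ref{varcross}) is the intended one, but the step you dismiss as topological bookkeeping --- producing one arc $E$ that is simultaneously admissible for $S$ and for $S'$ in the sense required by those results --- is the actual proof, and it is missing.
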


\section{Index and Variation for Carath\'eodory Loops}\label{Cloops}

We extend the definitions of index and variation
 to {\em Carath\'eodory loops}. In particular, if $g:\uc\to g(\uc)=S$ is a continuous extension of a Riemann
map $\psi:\disk^\infty\to\rsphere\sm T(g(\uc))$, then $g$ is a
Carath\'eodory loop, where $\disk^\infty=\{z\in\rsphere\mid |z|>1\}$
\index{deltai@$\disk^\infty$} is the ``unit disk'' about $\infty$.

\begin{defn}[Carath\'eodory Loop]\index{Carath\'eodory Loop}
Let $g:\uc\to\complex$ such that $g$ is continuous and has a
continuous extension $\psi:\ol{\rsphere\sm T(\uc)}\to\ol{\rsphere\sm
T(g(\uc))}$ such that $\psi|_{\complex\setminus T(\uc)}$ is an
orientation preserving homeomorphism from $\complex\setminus T(\uc)$
onto $\complex\setminus T(g(\uc))$. We call $g$ (and loosely,
$S=g(\uc)$), a {\em Carath\'eodory loop}.
\end{defn}

Let $g:\uc\to\complex$ be a Carath\'eodory loop and let $f:g(\uc)\to\complex$
be a fixed point free map.  In order to define variation of $f$ on $g(\uc)$, we
do the partitioning in $\uc$ and transport it to the Carath\'eodory loop
$S=g(\uc)$. An {\em allowable} partition \index{allowable partition} of $\uc$
is a set $\{a_0<a_1<\dots<a_n\}$ in $\uc$ ordered counterclockwise, where
$a_0=a_n$ and $A_i$ denotes the counterclockwise interval $[a_{i},a_{i+1}]$,
such that for each $i$, $f(g(a_i))\in T(g(\uc))$ and $f(g(A_i))\cap g(A_i)=\0$.
Variation $\var(f,A_i,g(\uc))=\var(f,A_i)$ on each path $g(A_i)$ is then
defined exactly as in Definition~\ref{vararc}, except that the junction (see
Definition~\ref{junction}) is chosen so that the vertex $v\in g(A_i)$ and
$J_v\cap T(g(\uc))\subset \{v\}$, and the crossings of the junction $J_v$  by
$f(g(A_i))$ are counted (see Definition~\ref{vararc}). Variation on the whole
loop, or an allowable subarc thereof, is defined just as in
Definition~\ref{partition}, by adding the variations on the partition elements.
At this point in the development, variation is defined only relative to the
given allowable partition $F$ of $\uc$ and the parameterization $g$ of $S$:
$\var(f,F,g(\uc))$.

Index on a Carath\'eodory loop $S$ is defined exactly as in
Section~\ref{defindex} with $S=g(\uc)$ providing the
parameterization of $S$. Likewise, the definition of fractional
index and Proposition~\ref{fracindex} apply to Carath\'eodory
loops.

Theorems~\ref{fpfhomotopy}, \ref{fpfhomotopy2}, Corollary~\ref{mapinhull}, and
Theorem~\ref{fpthm} (if $f$ is also defined on $T(S)$) apply to Carath\'eodory
loops.  It follows that index on a Carath\'eodory loop $S$ is independent of
the choice of parameterization $g$. The Carath\'eodory loop $S$ is
approximated, under small homotopies, by simple closed curves $S_i$. Allowable
partitions of $S$ can be made to correspond to allowable partitions of $S_i$
under small homotopies. Since variation and index are invariant under suitable
homotopies (see the comments after Proposition~\ref{straightjunction}) we have
the following theorem.

\begin{thm} \label{CaraVI} \index{index!I=V+1 for Carath\'eodory Loops}
Suppose $S=g(\uc)$ is a parameterized Carath\'eodory loop in $\complex$ and
$f:S\to\complex$ is a fixed point free map. Suppose variation of $f$ on
$\uc=A_0\cup\dots\cup A_n$ with respect to $g$ is defined for some partition
$A_0\cup\dots\cup A_n$ of $\uc$. Then
$$\ind(f,g)=\sum_{i=0}^n\var(f,A_i,g(\uc))+1.$$
\end{thm}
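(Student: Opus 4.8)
Theorem~\ref{CaraVI} asserts the equality $\ind(f,g)=\sum_{i=0}^n\var(f,A_i,g(\uc))+1$ for a Carathéodory loop $S=g(\uc)$ and a fixed point free map $f\colon S\to\C$, given an allowable partition. I need to prove this by reduction to the already-established Theorem~\ref{I=V+1} (which is the same statement for honest simple closed curves), using an approximation argument. Let me think about the structure.

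The core idea: a Carathéodory loop $S=g(\uc)$ is a uniform limit of simple closed curves $S_j=g_j(\uc)$ (e.g., take the Riemann map $\psi$ extending to $\overline{\disk^\infty}$, and let $g_j$ be the image of the circle of radius $1+1/j$, suitably renormalized to the unit circle — these are analytic simple closed curves converging to $S$). Both $\ind$ and $\var$ are locally constant under fixed-point-free homotopies (Theorems~\ref{fpfhomotopy}, \ref{fpfhomotopy2}, and the homotopy-invariance remarks following Proposition~\ref{straightjunction}). Since $f$ is fixed point free on the compact set $S$, it is fixed point free on a neighborhood, so for $j$ large the straight-line homotopy from $g$ to $g_j$ misses the fixed point set of $f$, giving $\ind(f,g)=\ind(f,g_j)$. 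Similarly, for $j$ large each $\var(f,A_i,g(\uc))$ equals $\var(f,g_j(A_i),S_j)$, because the junction used to compute variation can be chosen stably and the crossing count is homotopy-invariant under small perturbations of the curve; I must be a little careful that $g_j(A_i)$ still forms an allowable partition of $S_j$, i.e. $f(g_j(a_i))\in T(S_j)$ and $f(g_j(A_i))\cap g_j(A_i)=\0$, which holds for large $j$ since $f(g(a_i))\in \Int T(S)\subset T(S_j)$ (the endpoints map strictly inside by the allowability hypothesis — if $f(g(a_i))\in\partial T(X)$ one perturbs slightly) and since $f(g(A_i))\cap g(A_i)=\0$ is an open condition on compact sets.

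Then I apply Theorem~\ref{I=V+1} to the genuine simple closed curve $S_j$ with its partition $\{g_j(a_i)\}$: $\ind(f,S_j)=\ind(f,g_j)=\sum_i\var(f,g_j(A_i),S_j)+1$. Combining with the two convergence facts above yields $\ind(f,g)=\sum_i\var(f,A_i,g(\uc))+1$, which is the claim. The passage through $S_j$ also confirms that the left side does not depend on the parameterization $g$, as noted in the text.

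The main obstacle I anticipate is the bookkeeping around variation under the approximation: precisely verifying that the junction $J_v$ chosen for computing $\var(f,A_i,g(\uc))$ (with vertex $v=g(t_i)\in g(A_i)$ and $J_v\cap T(S)\subset\{v\}$) can be used, after a small perturbation, to compute $\var(f,g_j(A_i),S_j)$ with the same crossing count — one needs that $J_v$ still meets $T(S_j)$ only near $v$ and that the finitely many crossings of $f(g(A_i))$ with $J^+_v$ and $J^i_v$ persist. This is exactly the type of stability asserted in the paragraph following Proposition~\ref{straightjunction} (stability of variation under isotopies $h_t\colon J_v\to A\cup[\C\setminus T(S)]$ with $h_t(v)\notin f(A)$ and the endpoint conditions), so the argument is available; the work is just to package these homotopy-invariance statements carefully and to handle the finitely many partition points simultaneously by taking $j$ large enough for all of them at once. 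The edge case where some $f(g(a_i))$ lies on $\partial T(S)$ rather than in its interior is dispatched by an initial small homotopy of $f$ pushing those images inward, which changes neither $\ind$ nor the relevant variations.
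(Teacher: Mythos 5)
Your argument is essentially the paper's own: the text proves Theorem~\ref{CaraVI} by exactly this route, approximating the Carath\'eodory loop by simple closed curves under small homotopies, matching allowable partitions, invoking the homotopy/isotopy stability of index and variation (Theorems~\ref{fpfhomotopy}, \ref{fpfhomotopy2} and the remarks after Proposition~\ref{straightjunction}), and then applying Theorem~\ref{I=V+1}. Your write-up is in fact more detailed than the paper's sketch (e.g.\ checking that the transported partitions remain allowable for large $j$), so it is correct and takes the same approach.
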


\section{Prime Ends}

Prime ends provide a way of studying the approaches to the boundary of a
simply-connected plane domain with non-degenerate boundary.  See
\cite{colllohw66} or \cite{miln00} for an analytic summary of the topic and
\cite{urseyoun51} for a more topological approach. We will be interested in the
prime ends of $\tU=\rsphere\sm T(X)$.  Recall that
$\disk^\infty=\{z\in\rsphere\mid |z|>1\}$ is the ``unit disk about $\infty$."
The Riemann Mapping Theorem guarantees the existence of a conformal map
$\phi:\disk^\infty\to \tU$ taking $\infty\to\infty$, unique up to the argument
of the derivative at $\infty$. Fix such a map $\phi$. We identify $\uc=
\bd\disk^\infty$ with $\real/\zed$ and identify points $e^{2\pi i t}$ in
$\bd\disk^\infty$ by their argument $t \pmod{1}$. Crosscut and shadow were
defined in Section~\ref{infinitepartitions}.

\begin{defn}[Prime End]
A {\em chain of crosscuts}\index{chain of crosscuts} \index{prime
end}is a sequence $\{Q_i\}_{i=1}^\infty$ of crosscuts of $\tU$ such
that for $i\not= j$, $\ol{Q}_i\cap \ol{Q}_j=\0$, $\mrm{diam}(Q_i)\to
0$, and for all $j>i$, $Q_i$ separates $Q_j$ from $\infty$ in $\tU$.
Hence, for all $j>i$, $Q_j\subset \Sh(Q_i)$. Two chains of crosscuts
are said to be {\em equivalent} \index{chain of
crosscuts!equivalent} if and only if it is possible to form a
sequence of crosscuts by selecting alternately a crosscut from each
chain so that the resulting sequence of crosscuts is again a chain.
A {\em prime end} $\mc{E}$ is an equivalence class of chains of
crosscuts.\index{Et@$\mc{E}_t$}\end{defn}

If $\{Q_i\}$ and $\{Q'_i\}$  are equivalent  chains of crosscuts of $\tU$, it
can be shown that $\{\phi^{-1}(Q_i)\}$ and $\{\phi^{-1}(Q'_i)\}$ are equivalent
chains of crosscuts of $\disk^\infty$  each of which converges to the same
unique point $e^{2\pi it}\in \uc= \bd\disk^\infty$, $t\in[0,1)$, independent of
the representative chain. Hence, we denote by $\mc{E}_t$ the prime end of $\tU$
defined by $\{Q_i\}$.

\begin{defn}[Impression and Principal Continuum] \index{prime end!impression}\index{prime end!principal continuum}
\index{impression} \index{principal continuum}
Let $\mc{E}_t$ be a prime end of
$\tU$ with defining chain of crosscuts $\{Q_i\}$. The set
\index{imet@$\im(\mc{E}_t)$}
$$\im(\mc{E}_t)=\bigcap_{i=1}^\infty \cl{\Sh(Q_i)}$$ is a subcontinuum
of $\bd \tU$ called the {\em impression} of $\mc{E}_t$.  The
set \index{pret@$\pr(\mc{E}_t)$} $$\pr(\mc{E}_t)=\{z\in\bd \tU\mid \text{for some chain
$\{Q'_i\}$ defining $\mc{E}_t$, $Q'_i\to z$}\}$$ is a continuum
called the {\em principal continuum} of $\mc{E}_t$.
\end{defn}

For a prime end $\mc{E}_t$, $\pr(\mc{E}_t)\subset \im(\mc{E}_t)$,
possibly properly.    We will be interested in the existence of
prime ends $\mc{E}_t$ for which $\pr(\mc{E}_t)=\im(\mc{E}_t)=\bd
\tU$.

\begin{defn} [External Rays] \index{external ray}  \index{Rt@$R_t$}
Let $t\in [0,1)$ and define
$$R_t=\{z\in\complex\mid z=\phi(re^{2\pi it}),1<r<\infty\}.$$ We
call $R_t$ the {\em external ray (with argument $t$)}. If $x\in R_t$ then the
$(X,x)$-\emph{end of} $R_t$ is the  bounded component $K_x$ of
\index{external ray!end of}
$R_t\sm\{x\}$.\end{defn}

In this case $X$ is a continuum, $U^\infty(X)$ is simply connected,
the external rays $R_t$ are all smooth and pairwise disjoint.
Moreover, for each $x\in U^\infty(X)$ there exists a unique $t$ such that $x\in R_t$.

\begin{defn}[Essential crossing]\label{essential}\index{external ray!essential crossing}
\index{essential crossing}
An external ray $R_t$ is said to \emph{cross} a crosscut $Q$ \emph{essentially}
if and only if there exists $x\in R_t$ such that the $(T(X),x)$-end of $R_t$ is
contained in the bounded complementary domain of $T(X)\cup Q$. In this case we
will also say that $Q$ crosses $R_t$ essentially.
\end{defn}

The results listed below are known.

\begin{prop}[\mbox{\cite{colllohw66}}] \label{trans}
Let $\mc{E}_t$ be a prime end of $\tU$. Then $\pr(\mc{E}_t)=\cl{R_t}\sm R_t$.
Moreover, for each $1<r<\infty$ there is a  crosscut $Q_r$ of $\tU$ with
$\{\phi(re^{2\pi it})\}=R_t\cap Q_r$ and $\dm(Q_r)\to 0$ as $r\to 1$ and such
that $R_t$ crosses $Q_r$ essentially.
\end{prop}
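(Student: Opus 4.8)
The plan is to transport the whole question to the reference disk via the uniformizing map $\phi:\disk^\infty\to\tU$ fixed before the statement, normalized by $\phi(\infty)=\infty$, and to write $\zeta_0=e^{2\pi it}\in\uc$. Under $\phi^{-1}$ a chain of crosscuts of $\tU$ representing $\mc{E}_t$ becomes a chain of crosscuts of $\disk^\infty$ shrinking to the single boundary point $\zeta_0$, while $R_t$ is the $\phi$-image of the open radial segment $\{\rho\zeta_0:1<\rho<\infty\}$; since $\phi(\rho\zeta_0)\to\infty\in\tU$ as $\rho\to\infty$, the set $\ol{R_t}\sm R_t$ is exactly the radial cluster set of $\phi$ at $\zeta_0$, and $\pr(\mc{E}_t)$ is exactly the set of sub-limits of $\phi$ along crosscuts of $\disk^\infty$ that shrink to $\zeta_0$. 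Thus the first assertion is the classical identification of the principal point set of a prime end with the radial cluster set of the uniformizing map, and the ``moreover'' clause is the classical construction of small crosscuts across the prime-end channel meeting the radius; both are part of standard prime-end theory (see \cite{colllohw66}), and below I indicate how I would reprove them.

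The inclusion $\pr(\mc{E}_t)\subseteq\ol{R_t}\sm R_t$ is elementary: given $z\in\pr(\mc{E}_t)$, pick a chain $\{Q_i\}$ representing $\mc{E}_t$ with $Q_i\to z$; each $Q_i$ separates the inner part of $R_t$ (the part approaching $\mc{E}_t$) from its unbounded part, so $R_t$ meets $Q_i$ in some point $\phi(\rho_i\zeta_0)$, and then $\rho_i\to1$ (because $\phi^{-1}(Q_i)\to\zeta_0$) while $\phi(\rho_i\zeta_0)\to z$ (because $\dm(Q_i)\to0$ and $Q_i\to z$), so $z\in\ol{R_t}\sm R_t$. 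The reverse inclusion, and the crosscut construction in the ``moreover'' clause, rest on a length--area (Wolff) estimate rather than on topology: for the concentric circular arcs $\gamma_\rho=\{|\zeta-\zeta_0|=\rho\}\cap\disk^\infty$ of $\disk^\infty$, Cauchy--Schwarz gives $(\mathrm{length}\,\phi(\gamma_\rho))^2\le 2\pi\rho\int_{\gamma_\rho}|\phi'|^2\,ds$, hence $\int_0^\delta\rho^{-1}(\mathrm{length}\,\phi(\gamma_\rho))^2\,d\rho\le 2\pi\,\mathrm{area}\,\phi(\{|\zeta-\zeta_0|<\delta\}\cap\disk^\infty)<\infty$, so there are arbitrarily small $\rho$ for which $\dm\phi(\gamma_\rho)\le\mathrm{length}\,\phi(\gamma_\rho)$ is arbitrarily small. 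Each such $\phi(\gamma_\rho)$ is a crosscut of $\tU$ that separates $\mc{E}_t$ from $\infty$ and meets $R_t$ in the single point $\phi((1+\rho)\zeta_0)$. Running the same argument with circular arcs centred at the points $\phi(\rho_n\zeta_0)$ of a sequence with $\phi(\rho_n\zeta_0)\to z$, and using finiteness of the area of the relevant piece of $\disk^\infty$ under $\phi^{-1}$, one gets crosscuts of $\tU$ of vanishing diameter near those points; extracting a chain whose $\phi^{-1}$-images shrink to $\zeta_0$ then gives $z\in\pr(\mc{E}_t)$, and the first assertion follows.

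For the ``moreover'' clause one wants, for \emph{each} $r$, a crosscut $Q_r$ through $\phi(re^{2\pi it})$. When $\dm\phi(\gamma_{r-1})$ is small one simply takes $Q_r=\phi(\gamma_{r-1})$; it meets $R_t$ only at $\phi(re^{2\pi it})$, and it is crossed essentially because the sub-arc of $R_t$ from that point to $\mc{E}_t$ is the image of $\{s\zeta_0:1<s<r\}$, which lies in $\Sh(Q_r)$. For the remaining $r$ one squeezes $r-1$ between two consecutive ``good'' radii produced above and routes a crosscut through $\phi(re^{2\pi it})$ inside the narrow region they cut off, invoking the separation properties of crosscuts recorded in Section~\ref{infinitepartitions}. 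The step I expect to be the genuine obstacle is exactly this last point: to obtain, for \emph{every} $r$ near $1$ rather than just for a cofinal sequence, a crosscut through $\phi(re^{2\pi it})$ that is at once of diameter tending to $0$ and crossed essentially by $R_t$. Reconciling the two demands --- small size versus cutting across the prime-end channel --- is precisely where the conformal geometry (the length--area inequality above), not merely the topology of $\tU$, is needed; the detailed bookkeeping is that of \cite{colllohw66}.
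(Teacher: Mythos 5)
There is nothing in the memoir to compare your argument against line by line: Proposition~\ref{trans} is stated as a known fact and imported wholesale from \cite{colllohw66}, with no proof given. Your reconstruction of the set equality $\pr(\mc{E}_t)=\cl{R_t}\sm R_t$ is the standard one and is essentially correct: the inclusion $\pr(\mc{E}_t)\subset\cl{R_t}\sm R_t$ by pulling a defining chain back to $\disk^\infty$ (where it shrinks to $e^{2\pi it}$, so it must meet the radius for large $i$), and the reverse inclusion, together with small crosscuts along a cofinal sequence of radii, by the length--area estimate; the one unstated point there, that the relevant piece of $\tU$ has finite area, holds because near $e^{2\pi it}$ the image avoids a neighborhood of $\infty=\phi(\infty)$ and is therefore a bounded subset of $\complex$.

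The genuine gap is exactly the one you flag, and your proposed repair does not work as stated. If $1+\rho_{k+1}<r<1+\rho_k$ are consecutive ``good'' radii, the part of $\tU$ cut off between the two crosscuts $\phi(\gamma_{\rho_{k+1}})$ and $\phi(\gamma_{\rho_k})$ is a chunk of the prime-end channel, and its diameter is not controlled by the diameters of those crosscuts: a small crosscut can have a huge shadow, and for the dense channels this memoir is actually concerned with the closure of that intermediate region can be all of $\bd\tU$. So ``routing a crosscut through $\phi(re^{2\pi it})$ inside the narrow region they cut off'' begs the question --- the region need not be narrow, and a crosscut drawn inside it through $\phi(re^{2\pi it})$ need not be small, nor do the separation facts of Section~\ref{infinitepartitions} produce one. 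Smallness for \emph{every} $r$ has to come from the geometry of the image at the point itself: since $\phi$ is a homeomorphism onto $\tU$ fixing $\infty$, every limit point of $\phi(re^{2\pi it})$ as $r\to1$ lies in $\bd\tU$, so $d_r=d\bigl(\phi(re^{2\pi it}),T(X)\bigr)\to0$, and the classical construction cuts at $\phi(re^{2\pi it})$ using an arc of a circle centred at a nearest point of $T(X)$, which has diameter at most $2d_r$; the remaining (nontrivial) work is to arrange that this cut meets $R_t$ in the single point $\phi(re^{2\pi it})$ and crosses it essentially, and that bookkeeping is precisely what \cite{colllohw66} supplies. In short: your first half is fine, but the ``moreover'' clause is not recovered by your squeezing argument, and as written your proof of it reduces to the same citation the paper itself makes.
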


\begin{defn}[Landing Points and Accessible Points]  \index{accessible point}\index{external ray!landing point}
If $\pr(\mc{E}_t)=\{x\}$, then we say $R_t$ {\em lands} on $x\in T(X)$ and $x$
is the {\em landing point} \index{landing point} of $R_t$.  A point $x\in \bd
T(X)$ is said to be {\em accessible} (from $\tU$) if and only if there is an arc in
$\tU\cup\{x\}$ with $x$ as one of its endpoints. \end{defn}

\begin{prop}
A point $x\in \bd T(X)$ is accessible if and only if $x$ is the landing point of some
external ray $R_t$. \end{prop}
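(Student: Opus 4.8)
That a landing point is accessible is immediate. If $R_t$ lands on $x$, then by definition $\pr(\mc{E}_t)=\{x\}$, so Proposition~\ref{trans} gives $\ol{R_t}\sm R_t=\{x\}$; since $R_t=\{\phi(re^{2\pi it}):1<r<\infty\}\subset\tU$ is a smooth ray whose points tend to $x$ as $r\to1^+$, for any fixed $r_0\in(1,\infty)$ the set $\{\phi(re^{2\pi it}):1<r\le r_0\}\cup\{x\}$ is an arc in $\tU\cup\{x\}$ having $x$ as one endpoint. Hence $x$ is accessible.

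For the converse, assume $x\in\bd T(X)$ is accessible and fix an arc $\alpha:[0,1]\to\tU\cup\{x\}$ with $\alpha([0,1))\subset\tU$ and $\alpha(1)=x$. The plan is to locate the required ray by transporting $\alpha$ to $\disk^\infty$ through the Riemann map $\phi$. Put $\beta=\phi^{-1}\circ\alpha|_{[0,1)}:[0,1)\to\disk^\infty$. Since $\phi$ is a homeomorphism of $\disk^\infty$ onto $\tU$ and $\alpha(s)\to x\nin\tU$, the image $\phi(K)$ of any compact $K\subset\disk^\infty$ is a compact subset of $\tU$ that $\alpha$ eventually avoids; hence $\beta$ eventually leaves every compact subset of $\disk^\infty$, which forces $\mathrm{dist}(\beta(s),\uc)\to0$ as $s\to1$. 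The key step is then to show that $\beta(s)$ converges, as $s\to1$, to a single point $e^{2\pi it_0}\in\uc$. This is a classical feature of the boundary behavior of conformal maps (see \cite{colllohw66}): were the cluster set $\Lambda\subset\uc$ of $\beta$ a nondegenerate subarc, then for each $\zeta$ interior to $\Lambda$ the curve $\beta$ would meet the radial segment terminating at $\zeta$ at points tending to $\zeta$, so $x$ would be a radial cluster value of $\phi$ at every such $\zeta$; since a conformal map has radial limits almost everywhere, $\phi$ would have radial limit $x$ on the positive-measure set of such $\zeta$, forcing $\phi$ to be constant --- absurd. Thus $\beta(s)\to e^{2\pi it_0}$ for a unique $t_0\in[0,1)$.

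It then remains to identify $R_{t_0}$ as a ray landing at $x$. Since $\beta$ lands at $e^{2\pi it_0}$ and $\phi(\beta(s))=\alpha(s)\to x$, Lindel\"of's theorem (a bounded --- or, more generally, normal --- function which has a limit along a curve ending at a boundary point has the same radial limit there; again \cite{colllohw66}) gives $\phi(re^{2\pi it_0})\to x$ as $r\to1^+$. Consequently $\ol{R_{t_0}}\sm R_{t_0}=\{x\}$, and by Proposition~\ref{trans} this says $\pr(\mc{E}_{t_0})=\{x\}$; that is, $R_{t_0}$ lands on $x$, as required.

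I expect the middle paragraph to be the only real obstacle: passing from ``the image of $\beta$ shrinks onto $\uc$'' to ``$\beta$ converges to a point of $\uc$.'' Accessibility is genuinely needed here --- on the ``bad'' side of a Warsaw-circle-type continuum the prime end corresponding to the non--locally-connected arc has a nondegenerate principal set, precisely because no point of that arc can be reached by an arc in the complement. One could instead argue entirely within the crosscut picture, building a null chain of crosscuts of $\tU$ near $x$ anchored along $\alpha$ and showing that its impression reduces to $\{x\}$; this is an equivalent but equally delicate piece of bookkeeping, and it rests on the same classical input.
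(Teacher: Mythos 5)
Your proof is correct; note that the paper itself offers no argument for this proposition, stating it as a classical fact of prime end theory with the surrounding references (\cite{colllohw66}, \cite{urseyoun51}, \cite{miln00}) standing in for a proof. What you have written is essentially that classical argument: the easy direction from Proposition~\ref{trans}, and for the converse the transport of the accessing arc to $\disk^\infty$, the observation that the cluster set of the transported curve on $\uc$ is a subcontinuum, the exclusion of a nondegenerate arc via radial cluster values (here you are implicitly invoking Fatou's theorem for the a.e.\ existence of radial limits and the F.~and~M.~Riesz uniqueness theorem to get the contradiction with constancy --- both are in \cite{colllohw66}, so the citation carries the weight), and finally Lindel\"of's theorem to convert landing of the transported curve into landing of the radial ray, hence of $R_{t_0}$, at $x$. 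The only informal spot is the claim that, were the cluster set a nondegenerate arc, the curve would cross the radius at each interior point $\zeta$ arbitrarily close to $\zeta$; this is the standard crossing argument (the curve accumulates on both sides of $\zeta$ while staying near $\uc$) and is fine as stated, but it is the step one would expand if writing the proof out in full.
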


\begin{defn}[Channels]\index{channel}\index{prime end!channel}
A prime end $\mc{E}_t$ of $\tU$ for which $\pr(\mc{E}_t)$ is nondegenerate is
said to be a {\em channel in} $\bd \tU$ (or in $T(X)$). If moreover
$\pr(\mc{E}_t)=\bd \tU=\bd T(X)$, we say $\mc{E}_t$ is a {\em dense} channel.
\index{channel!dense} A crosscut $Q$ of $\tU$ is said to {\em cross} the
channel $\mc{E}_t$ if and only if  $R_t$ crosses $Q$ essentially.
\end{defn}

When $X$ is locally connected, there are no channels, as the
following classical theorem proves.  In this case, every prime end
has degenerate principal set and degenerate impression.

\begin{thm} [Carath\'eodory] \label{carath} $X$ is locally connected if and only if the
Riemann map $\phi:\disk^\infty\to \tU=\rsphere\sm T(X)$
taking $\infty\to\infty$ extends continuously to
$\uc=\bd\disk^\infty$.
\end{thm}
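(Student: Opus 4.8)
The statement is an equivalence, and throughout I would use that, since $X$ is a non-separating continuum, $T(X)=X$ and hence $\partial T(X)=\partial X=\partial\tU$; moreover it is a standard fact about plane continua (Torhorst's theorem together with the fact that filling in bounded complementary domains of a Peano continuum preserves local connectivity) that $X$ is locally connected if and only if $\partial T(X)$ is. So it is enough to prove that $\phi$ extends continuously to $\uc=\bd\disk^\infty$ if and only if $\partial\tU=\partial T(X)$ is locally connected.

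\emph{The ``only if'' direction.} Suppose $\phi$ extends to a continuous map $\overline\phi\colon\overline{\disk^\infty}\to\overline{\tU}$. Since $\phi\colon\disk^\infty\to\tU$ is a conformal homeomorphism, $\phi^{-1}$ is continuous and $\phi$ is proper; a routine argument then shows $\overline\phi(\uc)\subset\partial\tU$ (a boundary point could not have an interior image, else preimages of a compact neighbourhood of that image would accumulate inside $\disk^\infty$ and on $\uc$ simultaneously), and since $\overline\phi(\overline{\disk^\infty})=\overline{\tU}$ while $\overline\phi(\disk^\infty)=\tU$ we get $\overline\phi(\uc)=\partial\tU=\partial T(X)$. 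Thus $\partial T(X)$ is the continuous image of the circle $\uc$, hence a Peano continuum by the Hahn--Mazurkiewicz theorem, i.e. locally connected.

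\emph{The ``if'' direction,} which carries the content, I would run through the prime-end apparatus already set up above. Assume $\partial\tU$ is locally connected. Fix $t$ and, using Proposition~\ref{trans}, choose crosscuts $Q_r$ of $\tU$ with $R_t\cap Q_r=\{\phi(re^{2\pi it})\}$ and $\dm(Q_r)\to0$ as $r\to1$, each crossed essentially by $R_t$. The key geometric point is that local connectivity upgrades this to $\dm(\Sh(Q_r))\to0$: if the endpoints $a_r,b_r\in\partial\tU$ of $Q_r$ are close together, then, $\partial\tU$ being a Peano continuum, they are joined inside $\partial\tU$ by a subcontinuum of small diameter, which together with $Q_r$ bounds $\Sh(Q_r)$ and so forces $\dm(\Sh(Q_r))$ small. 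Consequently $\bigcap_r\overline{\Sh(Q_r)}=\im(\mc E_t)$ is a single point $x_t$, and since $\phi(re^{2\pi it})\in Q_r\subset\overline{\Sh(Q_r)}$ we may set $\overline\phi(e^{2\pi it})=x_t$ and conclude $\phi(z)\to x_t$ as $z\to e^{2\pi it}$ within $\disk^\infty$. Finally, the diameter estimates for $Q_r$ and $\Sh(Q_r)$ can be made uniform in $t$ (uniform local connectivity of $\partial\tU$, plus the standard uniform version of Proposition~\ref{trans}); this yields that $\overline\phi\colon\overline{\disk^\infty}\to\overline{\tU}$ is continuous — on $\disk^\infty$ it equals $\phi$, and near a boundary point a single small-shadow crosscut serves as the required small neighbourhood of its image.

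The step I expect to be the main obstacle is precisely the passage from ``small crosscut'' to ``small shadow,'' made \emph{uniformly} in $t$, and the consequent upgrade of the pointwise boundary limits to genuine continuity of $\overline\phi$; everything else is bookkeeping. If one prefers a self-contained route not invoking Proposition~\ref{trans}, the crosscuts $Q_r$ and their vanishing diameters are produced by Wolff's length--area lemma: writing the Riemann map in a bounded chart (or working with the spherical metric, as $\tU$ has finite spherical area) and integrating, one gets $\int_0^1\ell(\gamma_\rho)^2\,\rho^{-1}\,d\rho<\infty$ for the images $\gamma_\rho$ of the circular crosscuts $\{|z-e^{2\pi it}|=\rho\}$, so $\liminf_{\rho\to0}\ell(\gamma_\rho)=0$ and one extracts crosscuts of diameter tending to $0$; the remainder of the argument is unchanged.
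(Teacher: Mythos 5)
The paper gives no proof of Theorem~\ref{carath} to compare you against: it is quoted as Carath\'eodory's classical continuity theorem, with the prime-end references \cite{colllohw66,miln00,pomm92} standing in for an argument. Judged on its own terms, your sketch follows the standard route and its architecture is sound: Hahn--Mazurkiewicz (plus the identification $\overline{\phi}(\uc)=\partial\tU$) for the ``only if'' direction, and, for the ``if'' direction, small crosscuts $Q_r$ from Proposition~\ref{trans} (or from the length--area lemma), small connecting continua $K\subset\partial\tU$ supplied by local connectedness, small shadows, hence degenerate impressions and continuity.

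The one step whose justification as written would not survive scrutiny is exactly the one you flag as the crux. It is not true that $K$ together with $Q_r$ ``bounds'' $\Sh(Q_r)$, i.e.\ that $\partial \Sh(Q_r)\subset \overline{Q}_r\cup K$: for a small crosscut near the tip of a spike of $T(X)$, the boundary of the shadow contains the whole piece of $\partial T(X)$ beyond the connector. The correct argument is: $\Sh(Q_r)$ is connected and disjoint from the continuum $N=\overline{Q}_r\cup K$, hence lies in a single component of $\C\sm N$; every component other than the unbounded one lies in the convex hull of $N$ and so has diameter at most $\dm(N)$; and $\Sh(Q_r)$ cannot lie in the unbounded component. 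This last point needs a genuine (if standard) separation argument: the two sides of $Q_r$ lie in different components of $\C\sm N$, since a path joining them in the complement of $N$, closed up across a small disk about an interior point $p$ of $Q_r$, would yield a Jordan curve meeting $N$ only at $p$, separating the endpoints $a_r$ from $b_r$ and hence disconnecting the continuum $K$ (here one uses $\overline{Q}_r\cap K=\{a_r,b_r\}$, a Janiszewski-type step); since the component of $\tU\sm Q_r$ containing $\infty$ lies on one side of $Q_r$, the shadow lies on the other and is therefore small. With this repair your proof goes through; the uniformity in $t$ you worry about is not actually needed, since smallness of shadows at each boundary point already gives continuity of $\overline{\phi}$ on the closed disk. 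A final caveat on your opening reduction: in the standing hypotheses of this chapter $X$ is merely a continuum with $f(X)\subset T(X)$, not assumed non-separating, so $T(X)=X$ is not automatic; what your argument really establishes is the equivalence of the continuous extension with local connectedness of $\partial\tU=\partial T(X)$ (equivalently of $T(X)$), which coincides with local connectedness of $X$ when $X=T(X)$ or $X=\partial T(X)$ --- as in the intended applications --- but not for an arbitrary continuum with the same hull.
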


\section{Oriented maps}\label{soriented}
Basic notions of (positively) oriented and confluent maps are defined in Chapter~\ref{descr1}.
 In this section we study (positively) oriented maps and we will establish that it is a natural class of plane maps
 which are the proper generalization of an orientation preserving homeomorphism
  of the plane.
 The following
lemmas are in preparation for the proof of Theorem~\ref{orient}.

\begin{lem}\label{lorient}
Suppose $f:\Complex\to\Complex$ is a perfect surjection. Then $f$ is confluent
if and only if $f$ is oriented.
\end{lem}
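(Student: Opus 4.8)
The statement to prove is Lemma~\ref{lorient}: a perfect surjection $f:\C\to\C$ is confluent if and only if it is oriented. The plan is to handle the two implications separately, using the characterization of confluence through continua and components of their preimages on one side, and the definition of ``oriented'' (that $x\in T(S)$ implies $f(x)\in T(f(S))$ for every simple closed curve $S$) on the other. Throughout I will use that $\C$ is homeomorphic to a plane, that topological hulls $T(S)$ of simple closed curves are non-separating locally connected continua, and that preimages under a perfect map of compact sets are compact.

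\emph{Confluent $\Rightarrow$ oriented.} Suppose $f$ is confluent and let $S$ be a simple closed curve; I must show $f(T(S))\subseteq T(f(S))$. Equivalently, no point of $T(S)$ maps into $U^\infty(f(S))=\rsphere\sm T(f(S))$. Suppose for contradiction that $x\in T(S)$ with $f(x)\notin T(f(S))$. The key step is to produce a continuum $K$ containing $f(S)$ and $f(x)$ but missing some point, chosen so that the component of $f^{-1}(K)$ through $x$ cannot map onto all of $K$, contradicting confluence. Concretely: take a point $w\in U^\infty(f(S))$ in the same complementary component of $f(S)$ as $f(x)$, and let $\gamma$ be an arc in $U^\infty(f(S))$ from $f(x)$ to a point near $\infty$; then $K=f(S)\cup\gamma$ is a continuum. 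Let $C$ be the component of $f^{-1}(K)$ containing $x$. Since $f$ is confluent, $f(C)=K$, so $C$ meets $f^{-1}(f(S))$; but I will argue, using that $x\in T(S)$ and that $C$ is connected, that $C$ must cross $S$, and then trace how the hull structure forces $C$ to separate the plane in a way incompatible with $f(C)=K$ being the whole of $K$ — more cleanly, I will choose $\gamma$ to run all the way out so that $K$ separates $\rsphere$ and pick the missing point inside a bounded complementary domain of $K$ that $f(C)$ cannot reach because $C\subseteq f^{-1}(K)$ lies on one side. The cleanest route is probably: if $f(x)\notin T(f(S))$, pick $w\in\rsphere\sm K$ and note $f^{-1}(w)$ is a compact set disjoint from $f^{-1}(K)\supseteq C\cup f^{-1}(f(S))$; then the component of $f^{-1}(K)$ containing $x$ maps onto $K$ by confluence, while simultaneously $x\in T(S)$ and connectivity of that component force it to be contained in $T(S)$ or to cross $S$, and crossing $S$ means it meets $f^{-1}(f(S))$, which is fine — so the real contradiction must come from a separation/linking argument. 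I expect this is where the argument needs care, and I would organize it by first reducing to: \emph{the component $C$ of $f^{-1}(f(S))$ nearest to $x$ must have $x\in T(C)$}, and then invoking that $f(C)$ must contain $f(x)\notin T(f(S))$ while $f$ of a component of $f^{-1}(f(S))$ equals $f(S)$ by confluence — contradiction, since then $f(x)\in f(C)=f(S)\subseteq T(f(S))$.

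\emph{Oriented $\Rightarrow$ confluent.} Suppose $f$ is oriented; let $K\subseteq\C$ be a continuum and $C$ a component of $f^{-1}(K)$. Since $f$ is perfect, $f^{-1}(K)$ is compact and $f(C)$ is a subcontinuum of $K$; I must show $f(C)=K$. Suppose not; pick $q\in K\sm f(C)$. Since $C$ is a component of the compact set $f^{-1}(K)$, standard continuum theory (the component equals the quasicomponent in a compact Hausdorff space) gives a relatively clopen subset of $f^{-1}(K)$ containing $C$ and missing $f^{-1}(q)$; separating that clopen piece from the rest by an open set $V\subseteq\C$ with $\overline V$ compact, $C\subseteq V$, $\overline V\cap f^{-1}(q)=\0$, I can find a simple closed curve $S$ in $\C\sm f^{-1}(K)$ (wiggled to avoid the compact set $\overline V\cap f^{-1}(K)$... ) actually enclosing $C$: choose $S\subseteq V\sm f^{-1}(K)$ with $C\subseteq T(S)$. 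Now apply the oriented hypothesis: $f(T(S))\subseteq T(f(S))$. But $C\subseteq T(S)$, so $q\notin f(C)$ while we also need to see $q$ escapes $T(f(S))$. Here the key step: $f(S)$ misses $K$ (as $S\subseteq\C\sm f^{-1}(K)$) and $f(S)$ is a continuum, so $K$ lies in a single complementary component of $f(S)$; since $f(C)\subseteq K$ and $f(C)\subseteq T(f(S))$ by orientedness, that component is the bounded-side one, i.e.\ $K\subseteq T(f(S))$ — wait, then $q\in T(f(S))$ too and there is no contradiction yet. The fix is to choose $S$ so that, in addition, $q\in f(T(S))$ is forced: this requires $S$ to enclose a point of $f^{-1}(q)$, which contradicts $\overline V\cap f^{-1}(q)=\0$. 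So instead I will run the argument the other way — choose $S$ enclosing $C$ and a point $y$ with $f(y)=q$ being \emph{impossible} inside, and derive that $q\in T(f(S))\sm f(T(S))$ forces, via a connectedness/degree argument on the complement, that some component of $f^{-1}(K)$ other than $C$ must link with $C$; the precise mechanism is that $f^{-1}(q)\cap T(S)=\0$ yet $q\in T(f(S))$ together with $f$ oriented and perfect implies (via a winding-number/degree computation, as in the index material of Section~\ref{defindex}) that $f$ maps $T(S)$ \emph{onto} $T(f(S))$, whence $q\in f(T(S))$, contradicting $q\notin f(C)$ once we know $f^{-1}(q)\cap T(S)\subseteq C$. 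Arranging that last containment — that the only part of $f^{-1}(K)$ inside $T(S)$ is $C$ — is exactly what the clopen separation buys us.

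\textbf{Main obstacle.} I expect the crux in both directions is the same topological fact: for an oriented perfect map, $f(T(S))=T(f(S))$ (not merely $\subseteq$), proved by a degree or winding-number argument showing $f|_S$ has nonzero winding number about every point of $T(f(S))\sm f(S)$ and then quoting a result in the spirit of Theorem~\ref{fpthm} (with $f(x)-x$ replaced by $f(x)-w$). Getting that surjectivity onto the hull cleanly, and combining it with the clopen-separation of the component $C$ inside a suitable $T(S)$, is the technical heart; the confluence bookkeeping around it is routine. I would therefore first isolate and prove ``$f$ oriented $\Rightarrow$ $f(T(S))=T(f(S))$ for all simple closed curves $S$'' as a sublemma, and then deduce both implications of Lemma~\ref{lorient} from it together with the component/quasicomponent separation in the compact set $f^{-1}(K)$.
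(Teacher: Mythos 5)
Both directions of your plan have genuine gaps. In the direction \emph{confluent $\Rightarrow$ oriented}, your concrete construction attaches $f(S)$ to the arc $\gamma$, i.e.\ you take $K=f(S)\cup\gamma$; but then the component of $f^{-1}(K)$ through $x$ may contain $S$ and escape $T(S)$, so nothing traps it, and you never extract a contradiction from confluence. Your fallback reduction is a non sequitur: from ``the component $C$ of $f^{-1}(f(S))$ nearest to $x$ has $x\in T(C)$'' you conclude that $f(C)$ must contain $f(x)$ — but $x\in T(C)$ does not put $x$ in $C$, so $f(x)\in f(C)$ does not follow. The missing idea (which is how the paper argues, in contrapositive form) is a diameter obstruction: since $f(x)$ lies in the \emph{unbounded} complementary component of $f(S)$, choose a half-line $L$ from $f(x)$ to infinity in $\C\sm f(S)$ and an arc $L^*\subset L$ with endpoint $f(x)$ of diameter larger than $\mathrm{diam}\, f(T(S))$. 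Because $L^*\cap f(S)=\0$, the component of $f^{-1}(L^*)$ containing $x$ misses $S$ and hence stays inside $T(S)$, so its image has diameter at most $\mathrm{diam}\, f(T(S))<\mathrm{diam}\, L^*$ and cannot be all of $L^*$, contradicting confluence. No separation or linking argument is needed.

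In the direction \emph{oriented $\Rightarrow$ confluent}, the sublemma you propose as the engine — that an oriented perfect map satisfies $f(T(S))=T(f(S))$, proved by showing $f|_S$ has nonzero winding number about every point of $T(f(S))\sm f(S)$ — is false, even for polynomials. Take $f(z)=z^2$ and let $S$ be the boundary of a thickened circular arc of angular width about $350^\circ$ at radius $1$ around the origin (a ``C''-shaped region not containing $0$). Then $f(T(S))$ is contained in the annulus $\{0.8\le|w|\le 1.3\}$, while $f(S)$ contains the full circle $|w|=0.81$, so the disk around $0$ is a bounded complementary component of $f(S)$ and lies in $T(f(S))\sm f(T(S))$; moreover the winding number of $f|_S$ about such points is $0$. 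So the degree argument fails at its first step and the contradiction you want ($q\in T(f(S))\Rightarrow q\in f(T(S))$) is unavailable. The paper avoids this entirely: it first proves confluence for \emph{arcs} $A$. If $C$ is a component of $f^{-1}(A)$ with $f(C)\ne A$, pick $a\in A\sm f(C)$ and choose the simple closed curve $S$ with $C\subset T(S)$, $S\cap f^{-1}(A)=\0$, and with $f(S)$ so close to $f(C)$ that $a\notin T(f(S))$; orientedness gives $f(C)\subset T(f(S))$, and since the connected set $A$ meets both $T(f(S))$ and its complement it must meet $f(S)$, contradicting $S\cap f^{-1}(A)=\0$. Only the containment direction of orientedness is used, and the freedom to shrink $S$ (so that the chosen point $a$ is outside $T(f(S))$) replaces your false surjectivity claim. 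The general case then follows by approximating a continuum $K$ by arcs $A_i\ni f(x)$ with $A_i\to K$, taking the components $M_i\ni x$ of $f^{-1}(A_i)$, and using perfectness to pass to $M=\limsup M_i$ with $f(M)=K$ — an approximation step absent from your outline.
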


\begin{proof}
Suppose that $f$ is oriented. Let $A$ be an arc in $\Complex$ and let $C$ be a
component of $f^{-1}(A)$. Suppose that $f(C)\not=A$. Let $a\in A\setminus
f(C)$. Since $f(C)$ does not separate $a$ from infinity, we can choose a simple
closed curve $S$ with $C\subset T(S)$, $S\cap f^{-1}(A)=\0$ and $f(S)$  so
close to $f(C)$ that $f(S)$ does not separate $a$ from $\infty$.  Then
$a\not\in T(f(S))$. Since $f$ is oriented, $f(C)\subset T(f(S))$. Hence there
exists a $y\in A\cap f(S)$. This contradicts the fact that $A\cap f(S)=\0$.
Thus $f(C)=A$.

Now suppose that $K$ is an arbitrary continuum in $\Complex$ and let $L$ be a
component of $f^{-1}(K)$. Let $x\in L$ and let $A_i$ be a sequence of arcs in
$\Complex$ such that $\lim A_i=K$ and $f(x)\in A_i$ for each $i$. Let $M_i$ be
the component of $f^{-1}(A_i)$ containing the point $x$. By the previous
paragraph $f(M_i)=A_i$. Since $f$ is perfect, $M=\limsup M_i\subset L$ is a
continuum and $f(M)=K$. Hence $f$ is confluent.

Suppose next that $f:\Complex\to\Complex$ is not oriented. Then
there exists a simple closed curve $S$ in $\Complex$ and $p\in T(S)$
such that  $f(p)\not\in T(f(S))$. Let $L$ be a half-line with
endpoint  $f(p)$ running to infinity in $\Complex\setminus f(S)$.
Let $L^*$ be an arc in $L$ with endpoint $f(p)$ and diameter greater
than the diameter of the continuum $f(T(S))$. Let $K$ be the
component of $f^{-1}(L^*)$ which contains $p$. Then $K\subset T(S)$,
since $p\in T(S)$ and $L\cap f(S)=\0$. Hence, $f(K)\not=L^*$, and so
$f$ is not confluent.   \end{proof}

\begin{lem}\label{finite}
Let $f:\Complex\to\Complex$ be a light, open, perfect surjection. Then there
exists an integer $k$ and a finite subset $B\subset\Complex$ such that $f$ is a
local homeomorphism at each point of $\Complex\setminus B$, and for each point
$y\in\Complex\setminus f(B)$, $|f^{-1}(y)|=k$.
\end{lem}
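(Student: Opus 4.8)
The plan is to reduce the statement to the classical structure theory of light open maps of $2$-manifolds by compactifying to the sphere, and then exploit compactness.

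First I would extend $f$ to a self-map of the sphere. Since $f$ is perfect, $f^{-1}(K)$ is compact for every compact $K\subset\Complex$, so there is no obstruction to setting $\hat f(\infty)=\infty$; the resulting map $\hat f:\rsphere\to\rsphere$ is continuous, and, being continuous on a compact space, perfect. Using only properness together with the openness and lightness of $f$, one checks routinely that $\hat f$ is again light and open: continuity and openness at $\infty$ follow from the observation that $\Complex\sm f(\Complex\sm K)$ is bounded for each compact $K$ (otherwise a sequence in $K$ would have image tending to $\infty$, contradicting continuity), while $\hat f^{-1}(\infty)=\{\infty\}$ because $f$ takes values in $\Complex$.

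Next I would invoke Sto\"ilow's theorem on light open maps of orientable surfaces (alternatively, Whyburn's theory of interior maps): at each point $p\in\rsphere$ there is a local chart in which $\hat f$ has the normal form $z\mapsto z^{n(p)}$ for some integer $n(p)\ge 1$. Let $B_{\hat f}=\{p: n(p)>1\}$ be the branch set. From the normal form, $B_{\hat f}$ is discrete (in a punctured neighbourhood of a branch point the power map is a local homeomorphism), and its complement is open (being a local homeomorphism is an open condition), so $B_{\hat f}$ is a closed discrete subset of the compact space $\rsphere$, hence \emph{finite}. The normal form also shows every fibre $\hat f^{-1}(w)$ is discrete, and it is compact, hence finite. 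Now put $V=\rsphere\sm f(B_{\hat f})$, a connected set (a sphere minus finitely many points), and $E=\hat f^{-1}(f(B_{\hat f}))$, a finite set; the restriction $\hat f:\rsphere\sm E\to V$ is a proper local homeomorphism onto a connected base, hence a covering map with some finite number of sheets $k$. Finally I set $B=B_{\hat f}\cap\Complex$; this is finite and contains exactly the points of $\Complex$ at which $f$ fails to be a local homeomorphism, and if $y\in\Complex\sm f(B)$ then $y\notin f(B_{\hat f})$ (a preimage of $y$ lying in $B_{\hat f}$ would lie in $\Complex$, since $\hat f(\infty)=\infty\ne y$, hence in $B$), so $y\in V$ and $f^{-1}(y)=\hat f^{-1}(y)$ has exactly $k$ points, as required.

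The main obstacle is the appeal to the local normal form for light open maps of surfaces (Sto\"ilow / Whyburn); once that is granted — and with it the discreteness of the branch set and of the fibres — the remainder is the standard compactification-plus-covering-space argument sketched above, and the only care needed is in checking that the extension $\hat f$ really is light, open and proper at $\infty$. Some such input about the local structure of light open planar maps seems unavoidable, since without it one cannot even rule out a discrete but infinite branch set.
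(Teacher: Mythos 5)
Your proof is correct and follows essentially the same route as the paper: extend $f$ over the one-point compactification (using perfectness) to a light open self-map of the sphere and then invoke the classical structure theory of light open (interior) maps on a compact $2$-manifold. The only difference is cosmetic: the paper simply cites Whyburn \cite[X.6.3]{whyb42} at that point, whereas you rederive that conclusion from the Sto\"ilow local normal form together with the proper-local-homeomorphism/covering argument.
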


\begin{proof}
Let $\sphere$ be the one point compactification of $\Complex$.  Since $f$ is
perfect, we can extend $f$ to a map of $\sphere$ onto $\sphere$  so that
$f^{-1}(\infty)=\infty$. By abuse of notation we also denote the extended map
by $f$. Then $f$ is a light open mapping of the compact $2$-manifold $\sphere$.
The result now follows from a theorem of Whyburn \cite[X.6.3]{whyb42}.
\end{proof}

The following is a special case, for oriented perfect maps, of the
monotone-light factorization theorem. A non-separating plane
continuum is said to be {\em acyclic}. \index{acyclic}

\begin{lem}\label{cacyclic}
Suppose that $f:\Complex\to\Complex$ is an oriented, perfect map. It follows
that $f=g\circ h$, where $h:\Complex\to\Complex$ is a monotone perfect
surjection with acyclic fibers and $g:\Complex\to\Complex$ is a light, open
perfect surjection.
\end{lem}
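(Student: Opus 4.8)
The plan is to obtain the desired factorization $f = g \circ h$ by applying the monotone–light decomposition of $f$ and then identifying the two factors with the classes claimed. First I would invoke the general monotone–light decomposition (cited in Definition~\ref{basic-top} from \cite[Theorem 13.3]{nadl92}): since $f:\Complex\to\Complex$ is a perfect map between (locally compact, locally connected) spaces, write $f = g\circ h$ with $h:\Complex\to Z$ monotone and perfect and $g:Z\to\Complex$ light and perfect, where $Z$ is some space. The first task is to show $Z$ is homeomorphic to $\Complex$. Here is where the hypothesis that $f$ is \emph{oriented} enters: by Lemma~\ref{lorient}, $f$ is confluent, and confluence passes to $h$ in the sense that the fibers $h^{-1}(z) = $ components of $f^{-1}(g(z))$-type sets are the components of point preimages of $f$, each of which is a continuum; I would argue each fiber of $h$ is a non-separating plane continuum (acyclic). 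Indeed, if some component $C$ of $f^{-1}(y)$ separated the plane, one of the bounded complementary domains $W$ of $C$ would have $f(\partial W)=\{y\}$ while $f(W)$ is open containing $y$ — contradicting orientedness (the image of $T(\partial W)$ would have to lie in $T(f(\partial W)) = \{y\}$, forcing $f$ constant on $\overline W$, which light-ness of $g$ combined with monotonicity rules out once we know $f$ is not locally constant — and a confluent map is onto, hence nowhere locally constant on a nondegenerate continuum). Thus $h$ is a monotone perfect surjection with acyclic fibers.

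Next, I would show that a monotone perfect surjection of $\Complex$ with acyclic point-preimages has image homeomorphic to $\Complex$. This is a standard decomposition-space fact: collapsing a continuous family of acyclic (non-separating) continua in $S^2$ yields $S^2$ again (Moore's theorem, since the decomposition is upper semicontinuous with non-separating elements and the quotient is a compact metric space which, being a cell-like / acyclic-fiber image, is $S^2$); passing back to the plane by removing the point over $\infty$ gives $Z\approx\Complex$. Here I would use the one-point compactification trick exactly as in the proof of Lemma~\ref{finite}: extend $h$ to $\sphere\to\sphere$ so that $h^{-1}(\infty)=\infty$, apply Moore, restrict. So after identifying $Z$ with $\Complex$, we have $h:\Complex\to\Complex$ monotone, perfect, with acyclic fibers.

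It remains to see that the light perfect factor $g:\Complex\to\Complex$ is \emph{open}. The idea is that $g$ inherits confluence from $f$: if $K\subset\Complex$ is a continuum and $D$ a component of $g^{-1}(K)$, then $h^{-1}(D)$ is a union of components of $f^{-1}(K)$ (monotonicity of $h$ makes $h^{-1}$ of a connected set connected, so in fact $h^{-1}(D)$ is connected and is a component of $f^{-1}(K)$), and confluence of $f$ gives $f(h^{-1}(D)) = K$, i.e. $g(D)=K$. So $g$ is a light confluent perfect surjection of the plane. Then I would quote the theorem of Lelek and Read \cite{leleread74}, invoked in the excerpt just before Lemma~\ref{cacyclic}, to the effect that a light confluent map of the plane (or surface) is open — alternatively, a light confluent map between $2$-manifolds is open by a classical Whyburn-type argument since confluence prevents local separation of the image. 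This yields $g$ light, open, perfect, and surjective, completing the factorization.

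The main obstacle I expect is the middle step: rigorously showing the quotient $Z$ is again the plane, i.e. that a monotone perfect map of $\Complex$ with non-separating (acyclic) fibers does not destroy the planar structure. This requires either Moore's decomposition theorem together with a verification that the decomposition into fibers is upper semicontinuous (which follows from $h$ being a perfect, hence closed, map) and that each element is cell-like in $S^2$ (acyclic plane continua are cell-like), or a direct appeal to a cited structure theorem. The surrounding paper seems to treat this as known background, so I would cite it rather than reprove it; the genuinely paper-specific content is the extraction of acyclicity of the fibers from orientedness via Lemma~\ref{lorient}, and that argument is short.
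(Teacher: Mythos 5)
Your overall route is the one the paper takes: monotone--light factorization, acyclicity of the fibers from orientedness (equivalently confluence, Lemma~\ref{lorient}), Moore's decomposition theorem via the one-point compactification to identify the middle space with $\Complex$, confluence of the light factor plus the Lelek--Read theorem for openness, and perfectness of the two factors. The only place where you genuinely diverge is the proof that each component $C$ of $f^{-1}(y)$ is acyclic, and as written that step has a gap.

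Two minor points first: the claim that $f(W)$ is open is unjustified (oriented maps need not be open), though it is not actually used; and the containment $f\bigl(T(\partial W)\bigr)\subset T\bigl(f(\partial W)\bigr)$ does not follow directly from the definition of oriented, which is stated only for simple closed curves --- you need to route it through confluence, exactly as in the second half of the proof of Lemma~\ref{lorient}, which gives this containment for arbitrary continua. The substantive problem is your concluding contradiction. Granting $f\bigl(T(\partial W)\bigr)\subset T\bigl(f(\partial W)\bigr)=\{y\}$, you deduce that $f$ is constant on $\overline{W}$ and then rule this out on the grounds that a confluent map is onto and hence ``nowhere locally constant on a nondegenerate continuum.'' That is false: the monotone surjection of $\Complex$ collapsing a closed disk to a point is confluent and is locally constant on the interior of that disk, and lightness of $g$ is no obstruction either, since constancy of $f$ on $\overline{W}$ would merely mean that $h$ collapses $\overline{W}$. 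The correct, and immediate, contradiction is different: if $f\equiv y$ on $W$, then $C\cup W$ is a connected subset of $f^{-1}(y)$ properly containing $C$, contradicting the fact that $C$ is a \emph{component} of $f^{-1}(y)$. With that repair your acyclicity argument goes through; for comparison, the paper argues directly from confluence, choosing an arc $A\subset\Complex\setminus\{y\}$ meeting both $f(T(C))$ and its complement and observing that the component of $f^{-1}(A)$ through a point of $T(C)\setminus C$ is trapped in $T(C)$ and therefore cannot map onto $A$. The remaining steps of your proposal coincide with the paper's proof.
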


\begin{proof} As above, $f$ extends to a map of the sphere such that
$f(\infty)=f^{-1}(\infty)=\infty$.  By the monotone-light
factorization theorem \cite[Theorem 13.3]{nadl92}, $f=g\circ h$,
where $h:\Complex\to X$ is monotone, $g:X\to\Complex$ is light, and
$X$ is the quotient space obtained from $\Complex$ by identifying
each component of $f^{-1}(y)$ to a point for each $y\in\Complex$.
Let $y\in \Complex$ and let $C$ be a component of $f^{-1}(y)$. If
$C$ were to separate $\Complex$, then $f(C)=y$ would be a point
while $f(T(C))$ would be a non-degenerate continuum. Choose an arc
$A\subset \Complex\setminus \{y\}$ which meets both $f(T(C))$ and
its complement and let $x\in T(C)\setminus C$ such that $f(x)\in A$.
If $K$ is the component of $f^{-1}(A)$ which contains $x$, then
$K\subset T(C)$. Hence $f(K)$ cannot map onto $A$ contradicting the
fact that $f$ is confluent. Thus, for each $y\in\Complex$, each
component of $f^{-1}(y)$ is acyclic.

By Moore's Plane Decomposition Theorem \cite{dave86}, $X$ is homeomorphic to
$\Complex$. Since $f$ is confluent, it is easy to see that $g$ is confluent. By
a theorem of Lelek and Read~\cite{leleread74} $g$ is open since it is confluent
and light (also see \cite[Theorem 13.26]{nadl92}).  Since $h$ and $g$ factor
the perfect map $f$ through a Hausdorff space $\Complex$, both $h$ and $g$ are
perfect \cite[3.7.5]{enge89}.
\end{proof}

Below $\bd Z$ means the boundary of the set $Z$.

\begin{thm}[Maximum Modulus Theorem] \label{orient}\index{Maximum Modulus Theorem}
Suppose that $f:\Complex\to\Complex$ is a perfect surjection. Then the following are
equivalent:

\begin{enumerate}

\item\label{pnorient}
$f$ is either  positively or negatively oriented.

\item \label{iorient}$f$ is oriented.

\item\label{conf}  $f$ is confluent.
\end{enumerate}
Moreover, if $f$ is oriented, then for any non-separating continuum $X$,
$\bd f(X)\subset f(\bd X)$.
\end{thm}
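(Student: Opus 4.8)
The statement comprises the cycle of equivalences $(\ref{pnorient})\Leftrightarrow(\ref{iorient})\Leftrightarrow(\ref{conf})$ together with the maximum–modulus inclusion $\bd f(X)\subseteq f(\bd X)$. Of the equivalences, $(\ref{pnorient})\Rightarrow(\ref{iorient})$ is immediate from the definitions (and was already noted above), and $(\ref{iorient})\Leftrightarrow(\ref{conf})$ is exactly Lemma~\ref{lorient}. So the real content is the implication $(\ref{iorient})\Rightarrow(\ref{pnorient})$: an oriented perfect surjection $f$ is either positively or negatively oriented. My plan is to use the factorization $f=g\circ h$ of Lemma~\ref{cacyclic}, with $h$ monotone, perfect, with acyclic fibers and $g$ light, open and perfect, and to show that the sign of $\win(f,S,f(p))$ is the same for \emph{every} admissible pair $(S,p)$, i.e.\ every simple closed curve $S$ and every $p\in T(S)\setminus f^{-1}(f(S))$.

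For the light open factor I would invoke Lemma~\ref{finite}: there is a finite set $B$ off which $g$ is a local homeomorphism, and an integer $k$ with $|g^{-1}(y)|=k$ for $y\notin g(B)$. A local homeomorphism of the plane is either locally orientation-preserving or locally orientation-reversing, and which alternative holds is locally constant, hence — since $\complex\setminus B$ is connected — globally constant on $\complex\setminus B$; call its value $\e\in\{+1,-1\}$. Then for any simple closed curve $S$ and $p\in T(S)\setminus g^{-1}(g(S))$ the set $g^{-1}(g(p))\cap T(S)$ is finite (lightness of $g$), contained in $\Int T(S)$ (since $g(p)\notin g(S)$), and contains $p$; so by the classical degree theory for branched covers $\win(g,S,g(p))=\e\sum_q\mathrm{mult}_q(g)$, a sum of positive integers over $q\in g^{-1}(g(p))\cap T(S)$. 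Thus $\win(g,S,g(p))$ is $\e$ times a positive integer, and $g$ is positively oriented if $\e=1$, negatively oriented if $\e=-1$.

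For the monotone factor $h$ I would show it is ``transparent'' for such computations: being confluent it is oriented (Lemma~\ref{lorient}), and for any $S$ and $p\in T(S)\setminus h^{-1}(h(S))$ one has $\win(h,S,h(p))=\eta$ for a fixed $\eta\in\{+1,-1\}$ (in fact $\eta=+1$, since $h$ is, up to an orientation-fixing homeomorphism of the quotient, the quotient map of a Moore decomposition of the plane into acyclic continua, hence has degree one). The point is that $h^{-1}(h(p))$ is a \emph{continuum} — by monotonicity — lying in $\Int T(S)$ and missing $S$; surrounding it by a simple closed curve $S'$ with $T(S')\subseteq\Int T(S)$ and $S'\cap h^{-1}(h(p))=\0$, the closed annulus between $S$ and $S'$ is mapped by $h$ into $\complex\setminus\{h(p)\}$, so $\win(h,S,h(p))=\win(h,S',h(p))$; shrinking $S'$ onto a small neighborhood of the acyclic continuum $h^{-1}(h(p))$ reduces the computation to the local degree of $h$ there. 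Combining the two factors via the multiplicativity of winding number under composition (note that for admissible $(S,p)$ for $f$ one automatically has $h(p)\notin h(S)$), one gets $\win(f,S,f(p))=\e\eta\cdot(\text{positive integer})$ for all admissible $(S,p)$, which proves $(\ref{iorient})\Rightarrow(\ref{pnorient})$.

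For the maximum–modulus inclusion, suppose (having established it) that $f$ is, say, positively oriented; let $X$ be a non-separating continuum and $y\in\bd f(X)$, and suppose for contradiction that $y\notin f(\bd X)$. Since $\bd f(X)\subseteq f(X)$, $y=f(x)$ for some $x$, and $f^{-1}(y)\cap X\subseteq\Int X$. Pick such an $x$ not interior to $f^{-1}(y)$ (the alternative forces $f$ to be constant near $x$, so $f(X)$ degenerates and the claim is trivial) and a small Jordan domain $O$ with $x\in O$, $\ol O\subseteq\Int X$ and $\bd O\cap f^{-1}(y)=\0$. Then $x\in T(\bd O)\setminus f^{-1}(f(\bd O))$, so $\win(f,\bd O,y)=\deg(f_x)>0$, whence $\win(f,\bd O,\cdot)$ is nonzero throughout the component $W$ of $\complex\setminus f(\bd O)$ containing $y$. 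If some $y'\in W$ were not in $f(\ol O)=f(T(\bd O))$, then $z\mapsto (f(z)-y')/|f(z)-y'|$ would extend continuously over $T(\bd O)$ and give $\win(f,\bd O,y')=0$, contradicting its constancy on $W$; so $W\subseteq f(\ol O)\subseteq f(X)$, hence $y\in\Int f(X)$, contradicting $y\in\bd f(X)$. I expect the main obstacle to be the analysis of the monotone factor — verifying that it contributes winding number $\pm1$ and that winding numbers multiply correctly under $g\circ h$ even though $h(S)$ need not be a simple closed curve — which is precisely where the acyclicity of the fibers (via Moore-type results) does the essential work.
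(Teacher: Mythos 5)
Your proposal is correct in substance, but it takes a genuinely different route from the paper on the key implication (oriented $\Rightarrow$ positively or negatively oriented) and on the boundary inclusion. The paper, after the same factorization $f=g\circ h$ from Lemma~\ref{cacyclic}, never computes degrees factorwise: it applies Stoilow's theorem to get a homeomorphism $j$ with $g\circ j$ analytic, observes that $k=j^{-1}\circ h$ is monotone with acyclic fibers and hence a near homeomorphism (Daverman), approximates $k$ by homeomorphisms $k_i$ of a fixed orientation, and concludes from the stability $\deg(f_p)=\deg(((g\circ j)\circ k_i)_p)$ for large $i$; the sign comes from analyticity of $g\circ j$ and the common orientation of the $k_i$. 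This sidesteps exactly the point you flag as the main obstacle: composing winding numbers through the loop $h|_S$, which need not be a simple closed curve. Your factorwise computation does work, but to complete it you must (i) prove that the monotone factor has local degree a constant $\pm1$ (the near-homeomorphism approximation, or a Vietoris-type argument, is the cleanest way; your normalization ``$\eta=+1$'' is neither needed nor automatic for the given $h$), and (ii) replace ``multiplicativity'' by the homological formulation: since $h(S)\cap g^{-1}(f(p))=\0$ and $g^{-1}(f(p))$ is finite, the degree about $f(p)$ is a homomorphism on $H_1(\complex\sm g^{-1}(f(p)))$, giving $\deg(f_p)=\e\sum_q \mathrm{mult}_q(g)\,\win(h|_S,q)$, where each $\win(h|_S,q)$ equals $\eta$ or $0$ according to whether $h^{-1}(q)\subset \Int T(S)$ or $h^{-1}(q)\cap T(S)=\0$ (in the latter case $h|_S$ contracts inside $h(T(S))\subset\complex\sm\{q\}$). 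What your route buys is a sharper formula for $\deg(f_p)$ as a signed sum of multiplicities over the preimages ``inside'' $S$; what the paper's route buys is brevity, since no composition formula through $h(S)$ is ever needed.

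For the ``moreover'' inclusion $\bd f(X)\subset f(\bd X)$ your argument is again different from the paper's, which uses confluence directly (accessible points $y_i\in\bd f(X)$ with rays $R_i$ satisfying $R_i\cap f(X)=\{y_i\}$, components of preimages reaching $\bd X$, then a limit), and your degree argument is valid modulo one small repair: the parenthetical about $x$ not being interior to $f^{-1}(y)$ is beside the point. What you actually need is the existence of the Jordan domain $O$, and this follows because the component $K$ of $f^{-1}(y)$ containing $x$ misses $\bd X$ (as $y\notin f(\bd X)$) and is connected, hence lies in $\Int X$; one can then enclose $K$ by a simple closed curve in $\Int X$ avoiding the compact set $f^{-1}(y)$. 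With that, the step ``nonzero winding number on $\bd O$ forces the whole component $W$ of $\complex\sm f(\bd O)$ containing $y$ to lie in $f(\ol O)\subset f(X)$'' is standard and correct, and yields the contradiction with $y\in\bd f(X)$.
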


\begin{proof}
It is clear that (\ref{pnorient}) implies (\ref{iorient}). By
Lemma~\ref{lorient} every oriented map is confluent. Hence suppose that
$f:\Complex\to\Complex$ is a perfect confluent map. By Lemma~\ref{cacyclic},
$f=g\circ h$, where $h:\Complex\to\Complex$ is a monotone perfect  surjection
with acyclic fibers and $g:\Complex\to\Complex$ is a light, open perfect
surjection. By Stoilow's Theorem \cite{whyb64} there exists a homeomorphism
$j:\Complex\to\Complex$ such that $g\circ j$ is an analytic surjection. Then
$f=g\circ h=(g\circ j)\circ (j^{-1}\circ h)$. Since $k=j^{-1}\circ h$ is a
monotone surjection of $\Complex$ with acyclic fibers, it is a near
homeomorphism \cite[Theorem 25.1]{dave86}. That is, there exists a sequence
$k_i$ of homeomorphisms of $\Complex$ such that $\lim k_i=k$. We may assume
that all of the $k_i$ have the same orientation.

Let $f_i=(g\circ j)\circ k_i$, $S$ a simple closed curve in the domain of $f$
and $p\in T(S)\setminus f^{-1}(f(S))$. Note that $\lim f^{-1}_i(f_i(S))\subset
f^{-1}(f(S))$. Hence $p\in T(S)\setminus f^{-1}_i(f_i(S))$ for $i$ sufficiently
large. Moreover, since $f_i$ converges to $f$, $f_i|_S$ is homotopic to $f|_S$
in the complement of $f(p)$ for $i$ large. Thus for large $i$,
$\degree((f_i)_p)=\degree(f_p)$, where
\[(f_i)_p(x)=\frac{f_i(x)-f_i(p)}{|f_i(x)-f_i(p)|}
\text{ and } f_p(x)=\frac{f(x)-f(p)}{|f(x)-f(p)|}.\] Since $g\circ
j$ is an analytic map, it is positively oriented and we conclude
that $\degree((f_i)_p)=\degree(f_p)>0$ if $k_i$ is orientation
preserving and $\degree((f_i)_p)=\degree(f_p)<0$ if $k_i$ is
orientation reversing. Thus, $f$ is positively oriented if each
$k_i$ is orientation-preserving and $f$ is negatively oriented if
each $k_i$ is orientation-reversing.

Suppose that $X$ is a non-separating continuum and $f$ is oriented.
Let $y\in\bd f(X)$. Choose $y_i\in\bd f(X)$ and rays $R_i$, joining
$y_i$ to $\infty$ such that $R_i\cap f(X)=\{y_i\}$ and $\lim y_i=y$.
Choose $x_i\in X$ such that $f(x_i)=y_i$. Since $f$ is confluent,
there exist closed and connected sets $C_i$, joining $x_i$ to
$\infty$ such that $C_i\cap X\subset f^{-1}(y_i)$. Hence there exist
$x'_i\in f^{-1}(y_i)\cap \bd X$. We may assume that $\lim
x'_i=x_\infty\in\bd X$ and $f(x_\infty)=y$ as desired.
\end{proof}

We shall need the following three results in the
next section.

\begin{lem}\label{acyclic}
Let $X$ be a plane continuum and $f:\Complex\to\Complex$ a perfect,
surjective map such that $f^{-1}(T(X))= T(X)$ (i.e., $T(X)$ is fully
invariant) and $f|_{\Complex\setminus T(X)}$ is confluent. Then for
each $y\in\Complex\setminus T(X)$,  each component of $f^{-1}(y)$ is
acyclic.
\end{lem}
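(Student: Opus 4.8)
The plan is to argue by contradiction and adapt the ``confluence traps a preimage inside a hull'' trick used in the proof of Lemma~\ref{cacyclic}. Set $U=\Complex\setminus T(X)$. Since $f^{-1}(T(X))=T(X)$ we get $f^{-1}(U)=U$; in particular $f(U)\subset U$, and for any continuum $A\subset U$ the components of $f^{-1}(A)$ all lie in $U$ and coincide with the components of $(f|_U)^{-1}(A)$, so confluence of $f|_U$ applies to them. Now fix $y\in U$ and suppose, for contradiction, that some component $C$ of $f^{-1}(y)$ separates the plane. Since $f$ is perfect, $f^{-1}(y)$ is compact, so $C$ is a compact connected subset of $U$; as $C$ separates the plane, $C$ is nondegenerate and $T(C)\setminus C\ne\0$. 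First I would observe that $f(T(C))$ is a nondegenerate continuum: if $f$ were constant (equal to $y$) on $T(C)$, then $T(C)\subset f^{-1}(y)$ would be a connected set properly containing $C$, contradicting that $C$ is a component.

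Next I would locate a suitable auxiliary arc. Because $T(X)$ is closed, $f(T(C))\cap U$ is a nonempty relatively open subset of the nondegenerate continuum $f(T(C))$ (it contains $y$), hence it cannot be the singleton $\{y\}$; choose $p\in f(T(C))\cap U$ with $p\ne y$. Because $f(T(C))$ is compact while $U$ contains a neighborhood of $\infty$, choose $q\in U\setminus f(T(C))$. Since $U\setminus\{y\}$ is an open connected subset of the plane, it is arcwise connected, so there is an arc $A\subset U\setminus\{y\}$ from $p$ to $q$; by construction $A$ meets $f(T(C))$ and also meets $\Complex\setminus f(T(C))$. Pick $x\in T(C)\setminus C$ with $f(x)=p$ (possible because $f(C)=\{y\}$, so $p\in f(T(C))=\{y\}\cup f(T(C)\setminus C)$ and $p\ne y$), and let $K$ be the component of $f^{-1}(A)$ containing $x$. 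As $f(C)=\{y\}$ and $y\notin A$, $K$ misses $C$; being connected and meeting the bounded complementary component of $C$ that contains $x$, $K$ is contained in that component, so $K\subset T(C)$. Then $f(K)\subset f(T(C))$, hence $q\notin f(K)$ and $f(K)\ne A$. But $A$ is a continuum in $U$ and $K$ is a component of $(f|_U)^{-1}(A)$, so confluence of $f|_U$ forces $f(K)=A$, a contradiction. Therefore every component of $f^{-1}(y)$ is acyclic.

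The step I expect to require the most care is the one where $T(X)$ enters. Unlike in Lemma~\ref{cacyclic}, where $f$ was confluent on the whole plane, here $T(C)$ need not be disjoint from $T(X)$: the bounded complementary components of $C$ may swallow $T(X)$, so $f(T(C))$ can meet $T(X)$, and one cannot simply run the auxiliary arc near $y$ as in that proof. The remedy is to exploit that $f(T(C))$ is a nondegenerate continuum, hence has no isolated points, in order to find $p\in f(T(C))\cap U$ distinct from $y$, and to build the entire arc $A$ inside $U\setminus\{y\}$ so that confluence of $f|_U$ is legitimately applicable; the trapped preimage component $K$ then automatically lies in $U$ by the full invariance $f^{-1}(T(X))=T(X)$, even though it is squeezed inside $T(C)$.
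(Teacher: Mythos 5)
Your proposal is correct. It runs on the same engine as the paper's own proof: apply confluence of $f|_{\C\sm T(X)}$ to a continuum lying in $(\C\sm T(X))\sm\{y\}$ and observe that the relevant preimage component, being disjoint from $f^{-1}(y)\supset C$, is trapped in a bounded complementary domain of $C$ and hence in $T(C)$. Where you differ is the finishing move. The paper picks $z\in T(C)\sm[f^{-1}(y)\cup T(X)]$ (existence asserted without proof, though it can be justified), joins $f(z)$ to infinity by a ray $L\subset\C\sm[T(X)\cup\{y\}]$, exhausts $L$ by arcs $L_i$, and derives the contradiction from unboundedness: the union of the trapped components of the $f^{-1}(L_i)$ is a connected set from $z$ to infinity missing $f^{-1}(y)$, which is impossible since it would have to cross $C$. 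You keep everything compact: you justify the choice of a value $p\in f(T(C))\cap(\C\sm T(X))$ with $p\ne y$ via nondegeneracy and connectedness of $f(T(C))$, run an arc $A\subset(\C\sm T(X))\sm\{y\}$ from $p$ to a point $q\notin f(T(C))$, and contradict confluence because the trapped component $K\subset T(C)$ satisfies $f(K)\subset f(T(C))$, so $f(K)$ misses $q$ and cannot equal $A$. In effect you transplant the argument the paper uses for Lemma~\ref{cacyclic}, correctly adapted so that the auxiliary arc stays in the complement of $T(X)$ and avoids $y$, which is exactly what makes confluence of the restricted map applicable; the payoff of your variant is that the starting point is fully justified and the unboundedness/exhaustion step is avoided, while the paper's ray argument avoids having to produce the exterior point $q$ and the value $p$.
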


\begin{proof}
Suppose there exists $y\in\Complex\setminus T(X)$ such that some
component $C$ of $f^{-1}(y)$ is not acyclic. Then there exists $z\in
T(C)\setminus [f^{-1}(y)\cup T(X)]$. Then $T(X)\cup \{y\}$ does not
separate $f(z)$ from infinity in $\Complex$. Let $L$ be a ray in
$\Complex\setminus[T(X)\cup \{y\}]$ from $f(z)$ to infinity. Then
$L=\cup L_i$, where each $L_i\subset L$ is an arc with endpoint
$f(z)$. For each $i$ the component $M_i$ of $f^{-1}(L_i)$ containing
$z$ maps onto $L_i$. Then $M=\cup M_i$ is a connected closed subset
in $\Complex\setminus f^{-1}(y)$ from $z$ to infinity. This is a
contradiction since $z$ is contained in a bounded complementary
component of $f^{-1}(y)$.
\end{proof}

\begin{thm}\label{confeq}
Let $X$ be a plane continuum and $f:\Complex\to\Complex$ a perfect,
surjective map such that $f^{-1}(T(X))=T(X)$ and
$f|_{\Complex\setminus T(X)}$ is confluent. If $A$ and $B$ are
crosscuts of $T(X)$ such that $B\cup X$ separates $A$ from $\infty$
in $\Complex$, then $f(B)\cup T(X)$ separates $f(A)\setminus f(B)$
from $\infty$.
\end{thm}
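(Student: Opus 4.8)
I would prove Theorem~\ref{confeq} by a confluence argument paired with a careful analysis of how a crosscut and its separating companion interact with the map. Recall the hypotheses: $f^{-1}(T(X))=T(X)$ and $f|_{\Complex\setminus T(X)}$ is confluent; $A$ and $B$ are crosscuts of $T(X)$ with $B\cup X$ separating $A$ from $\infty$. Write $V=\Sh(B)$, the bounded component of $\Complex\setminus[T(X)\cup B]$, so that $A\subset V$ (together with its endpoints in $X$). The goal is to show that $f(B)\cup T(X)$ separates $f(A)\setminus f(B)$ from $\infty$; equivalently, every point of $f(A)\setminus f(B)$ lies in the bounded complementary domain of $T(X)\cup f(B)$.

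First I would fix a point $w\in f(A)\setminus f(B)$ and suppose, for contradiction, that $w$ is \emph{not} separated from $\infty$ by $T(X)\cup f(B)$. Then I can choose a ray (closed half-line) $L$ from $w$ to $\infty$ inside $\Complex\setminus[T(X)\cup f(B)]$; truncating it at a large arc $L^*$ of diameter exceeding $\dm(f(T(V)))+\dm(T(X))$ (or some similar comparison quantity), $L^*$ meets both $f(T(V))$ and its complement and avoids both $T(X)$ and $f(B)$. Pick $a\in A$ with $f(a)=w$ and let $K$ be the component of $f^{-1}(L^*)$ containing $a$. The key structural point is that $K$ must be \emph{trapped}: since $a\in V\subset\Complex\setminus T(X)$ and $L^*\cap T(X)=\0$, we have $K\subset \Complex\setminus T(X)$ (using $f^{-1}(T(X))=T(X)$); and since $L^*\cap f(B)=\0$, $K$ cannot cross $B$, so $K\subset \overline V$. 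Because $f|_{\Complex\setminus T(X)}$ is confluent, $f(K)=L^*$. But then $f(K)=L^*$ has diameter larger than $\dm(f(T(V)))\ge \dm(f(K))$ — a contradiction, since $K\subset T(V)$ forces $f(K)\subset f(T(V))$.

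The main obstacle I anticipate is making the ``$K$ is trapped in $\overline V$'' step fully rigorous when the crosscut $B$ and the continuum $T(X)$ interact with $L^*$; one must be careful that $K$, being connected and disjoint from $f^{-1}(f(B)\cup T(X)) \supseteq B\cup T(X)$ along the relevant portion, genuinely stays in the closure of the shadow $\Sh(B)$ rather than escaping through the endpoints of $B$ on $X$. This is handled by noting that any path in $K$ from $a$ to a point outside $\overline V$ would have to cross $B\cup X$, but $K$ meets $T(X)$ nowhere (as $f(K)=L^*$ misses $T(X)$ and $f^{-1}(T(X))=T(X)$) and $K$ meets $B$ nowhere (as $L^*$ misses $f(B)$); hence $K\subset \overline V\setminus T(X)=V$, so $K\subset T(V)$ as needed. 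The diameter comparison then closes the argument. This is essentially the same device used in the proof of Lemma~\ref{lorient} (the ``not confluent $\Rightarrow$ not oriented'' direction) and in Lemma~\ref{acyclic}, so I would model the write-up on those proofs, adapting the choice of $L^*$ so its size dominates $\dm(f(T(V)))$ rather than $\dm(f(T(S)))$.
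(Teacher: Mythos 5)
Your proposal is correct and is essentially the paper's argument: the paper likewise assumes the conclusion fails, takes a half-line $L$ joining $f(A)$ to $\infty$ in $\Complex\setminus(f(B)\cup T(X))$ and, as in the proof of Lemma~\ref{acyclic}, pulls it back through the confluent map $f|_{\Complex\setminus T(X)}$, using $f^{-1}(T(X))=T(X)$ to obtain a closed connected set in $\Complex\setminus(B\cup X)$ joining $A$ to infinity, contradicting the hypothesis that $B\cup X$ separates $A$ from $\infty$. Your only deviation is cosmetic: you truncate $L$ to a long arc and close with a diameter comparison against the compact set $f(\overline{\Sh(B)})$ (the device of Lemma~\ref{lorient}), rather than letting the pulled-back connected set run to infinity.
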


\begin{proof}
Suppose not. Then there exists a half-line  $L$ joining $f(A)$ to infinity in
$\Complex\setminus (f(B)\cup T(X))$. As in the proof of Lemma~\ref{acyclic},
there exists a closed and connected set $M\subset \Complex\setminus (B\cup X)$
joining $A$ to infinity, a contradiction.
\end{proof}

\begin{prop}\label{ray} Under the conditions of
Theorem~\ref{confeq}, if $L$ is a ray irreducible from $T(X)$ to
infinity, then each component of $f^{-1}(L)$  is closed in $\complex\sm X$
and is a  connected set from $X$ to
infinity.
\end{prop}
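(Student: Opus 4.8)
The plan is to verify separately the three things the proposition asserts about a component $C$ of $f^{-1}(L)$ — that it is (i) closed in $\complex\sm X$, (ii) unbounded, and (iii) has closure meeting $X$ — and to obtain (ii), the only substantive point, by the same exhaustion-and-confluence device already used in the proofs of Lemma~\ref{acyclic} and Theorem~\ref{confeq}. I will use two elementary properties of the hull: the bounded complementary domains of $X$ are open (so $T(X)\sm X$ is open), and $\partial T(X)\subseteq X$, since each boundary point of $T(X)$ is a limit of points of $U^\infty(X)=\rsphere\sm T(X)$ and hence cannot sit inside an open bounded complementary domain. Since $f^{-1}(T(X))=T(X)$ we also get $f^{-1}(U^\infty(X))=U^\infty(X)$; and because $L$ is a ray running to $\infty$ with $L\subseteq U^\infty(X)$ and principal set $P:=\overline L\sm L\subseteq\partial T(X)\subseteq X$, it follows that $f^{-1}(L)\subseteq U^\infty(X)\subseteq\complex\sm X$.

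For (i): $L$ is a properly embedded ray, hence closed in $U^\infty(X)$, so $f^{-1}(L)$ is closed in $U^\infty(X)$ by continuity; and any point of $\overline{f^{-1}(L)}$ lying in $(\complex\sm X)\sm U^\infty(X)$ would lie in a bounded complementary domain of $X$, an open set disjoint from $f^{-1}(L)$ — impossible. Hence $f^{-1}(L)$, and therefore each component $C$ of it, is closed in $\complex\sm X$.

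For (ii): parametrize $L$ by $\ell\colon(0,\infty)\to U^\infty(X)$ so that $\ell(t)\to\infty$ as $t\to\infty$ and $\ell(t)$ approaches $P$ as $t\to 0$, and set $L_i=\ell([1/i,i])$, a compact arc in $U^\infty(X)$, with $L_i\subseteq L_{i+1}$, $\bigcup_i L_i=L$, and $q_i:=\ell(i)\to\infty$. Fix $x_0\in C$; then $f(x_0)\in L_i$ for all large $i$, so let $M_i$ be the component of $f^{-1}(L_i)$ containing $x_0$. As $L_i$ is a continuum in $U^\infty(X)$ and $f|_{U^\infty(X)}$ is confluent, $f(M_i)=L_i$; the nesting $L_i\subseteq L_{i+1}$ forces $M_i\subseteq M_{i+1}$, so $M:=\bigcup_i M_i$ is connected, contains $x_0$, and lies in $f^{-1}(L)$. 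Hence $M\subseteq C$ and $f(C)=f(M)=\bigcup_i L_i=L$. Choosing $z_i\in M_i$ with $f(z_i)=q_i$, the sequence $(z_i)$ can have no bounded subsequence, since a subsequential limit would be mapped by $f$ to $\lim q_i=\infty$; so $C\supseteq M$ is unbounded.

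For (iii): using $f(C)=L$, pick $y\in P\subseteq X$ and $t_n\downarrow 0$ with $\ell(t_n)\to y$, together with $z_n\in C$ such that $f(z_n)=\ell(t_n)$. Since $\{y\}\cup\{\ell(t_n)\mid n\ge 1\}$ is compact and $f$ is perfect, $f^{-1}$ of this set is compact and contains every $z_n$; passing to a subsequence, $z_n\to z^*\in\overline C$ with $f(z^*)=y\in X\subseteq T(X)$, so $z^*\in f^{-1}(T(X))=T(X)$, and as $C\subseteq U^\infty(X)$ the point $z^*$ lies in no bounded complementary domain of $X$, whence $z^*\in X$. Thus $\overline C\cap X\ne\emptyset$; combined with (ii), $\overline C\cup\{\infty\}$ is a continuum in $\rsphere$ joining $X$ to $\infty$, which is exactly the assertion. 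The only place requiring genuine work is (ii), and there the work is precisely the confluence/exhaustion argument already carried out for Lemma~\ref{acyclic} and Theorem~\ref{confeq}; everything else is point-set bookkeeping separating $X$, $T(X)$ and $U^\infty(X)$.
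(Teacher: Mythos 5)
Your argument is correct, and it is exactly the argument the paper intends: Proposition~\ref{ray} is stated without proof precisely because it follows by the same exhaustion-by-compact-arcs plus confluence device used in Lemma~\ref{acyclic} and Theorem~\ref{confeq} (write $L=\bigcup L_i$, take the components $M_i$ of $f^{-1}(L_i)$ through a fixed point, use $f(M_i)=L_i$ and nest them), which is the core of your step (ii). Your steps (i) and (iii) are the routine point-set bookkeeping separating $X$, $T(X)$ and $U^\infty(X)$ that the authors leave implicit, and they are carried out correctly.
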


\section{Induced maps of prime ends}

Suppose that $f:\Complex\to\Complex$ is an oriented perfect
surjection and $f^{-1}(Y)=X$, where $X$ and $Y$ are acyclic continua
and $Y$ has no cutpoints.    We will show that in this case the map
$f$ induces a confluent map $F$ of the circle of prime ends of $X$
to the circle of prime ends of $Y$. This result was announced by
Mayer in the early 1980's but never appeared in print. It was also
used (for homeomorphisms) by Cartwright and Littlewood in
\cite{cartlitt51}. There are easy counterexamples that show if $f$
is not confluent then it may not induce a continuous function
between the circles of prime ends. For example, if $Y=\disk$, $X$ is
the union of the unit disk and a copy of a half ray $R$ which
spirals to the unit circle and $f$ is radial projection of $R$ onto
the unit circle, then $f$ can be extended to a perfect map $F$ of
the plane so that $F^{-1}(Y)=X$ but $F$ does not induce a continuous
function from  the circle of prime ends of $X$ to the circle of
prime ends of $Y$.

\begin{thm}\label{tinduced}\index{map!on circle of prime ends}
Let $X$ and $Y$ be non-degenerate acyclic plane continua and
$f:\Complex\to\Complex$ a perfect map such that:

\begin{enumerate}
\item \label{cut} $Y$ has no cutpoint,

\item \label{complement}
$f^{-1}(Y)=X$ and
\item \label{sep}
$f|_{\Complex\setminus X}$ is confluent.
\end{enumerate}

Let $\varphi: \disk^\infty\to \sphere\setminus X$ and $\psi:\disk^\infty\to
\sphere\setminus Y $ be conformal mappings. Define
$\hat{f}:\disk^\infty\to\disk^\infty$ by $\hat{f}=\psi^{-1} \circ f \circ
\varphi$.

Then $\hat{f}$ extends to a map
$\bar{f}:\ol{\disk^\infty}\to\ol{\disk^\infty}$. Moreover,
$\bar{f}^{-1}(\uc)=\uc$ and $F=\bar{f}|_{\uc}$ is a confluent map.
\end{thm}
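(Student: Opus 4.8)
The plan is to reduce \ref{tinduced} to a statement about a perfect confluent self-map of the disk about infinity and then to run a chain-of-crosscuts argument, using Theorem~\ref{confeq} and Proposition~\ref{ray} in the form where the ``source'' continuum and the ``target'' continuum are allowed to differ; this is legitimate here precisely because $f^{-1}(Y)=X$ and $f|_{\Complex\sm X}$ is confluent, and the proofs of those two results use nothing beyond these facts. \textbf{Step 1 (reduction).} Since $f^{-1}(Y)=X$ we get $f^{-1}(\sphere\sm Y)=\sphere\sm X$, so $f$ restricts to a map $\sphere\sm X\to\sphere\sm Y$ that is perfect (a perfect map restricted to the full preimage of an open set) and confluent (confluence is inherited by restriction to a full preimage, and near the added point one uses $\bar f^{-1}(\infty)=\{\infty\}$), carrying $\infty$ to $\infty$. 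Hence $\hat f=\psi^{-1}\circ f\circ\varphi$ is a well-defined perfect confluent self-map of $\disk^\infty$ fixing $\infty$, and it is onto (we take $f$ to be a surjection, as is implicit: a confluent $F$ must itself be onto). Once we produce a continuous $\bar f:\ol{\disk^\infty}\to\ol{\disk^\infty}$ extending $\hat f$ with $\bar f(\uc)\subseteq\uc$, the equality $\bar f^{-1}(\uc)=\uc$ is automatic: $\hat f(\disk^\infty)\subseteq\disk^\infty$ is clear, and properness of $\hat f$ sends any sequence tending to $\uc$ to a sequence tending to $\uc$, so no point of $\uc$ maps into $\disk^\infty$. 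Thus the content is (a) the continuous extension and (b) confluence of $F=\bar f|_\uc$.

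\textbf{Step 2 (the extension --- the main step).} Fix $t\in\uc$ and let $R_t$ be the external ray of $X$ of argument $t$. By Proposition~\ref{trans} pick crosscuts $Q_n$ of $\sphere\sm X$ meeting $R_t$ in the single point $\varphi(r_ne^{2\pi it})$ with $r_n\downarrow1$, $\dm Q_n\to0$, each crossed essentially by $R_t$; these form a chain defining $\mc{E}_t$, with $Q_m\subset\Sh(Q_n)$ for $m\ge n$. Each $f(\ol{Q_n})$ is then a continuum whose diameter tends to $0$ (eventually the $Q_n$ lie in the compactum $\ol{\Sh(Q_1)}$, on which $f$ is uniformly continuous), meeting $Y$ exactly in the images of the two endpoints of $Q_n$ and otherwise lying in $\Complex\sm Y$. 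Applying Theorem~\ref{confeq} with $B=Q_n$, $A=Q_m$, $m>n$ (so $Q_n\cup X$ separates $Q_m$ from $\infty$, hence $f(Q_n)\cup Y$ separates $f(Q_m)\sm f(Q_n)$ from $\infty$), one passes to a subsequence and replaces each small continuum $f(\ol{Q_n})$ by a genuine crosscut $C_n$ of $Y$ of small diameter whose shadow traps $f(Q_m)$ for all $m\ge n$, with $\Sh(C_{n+1})\subset\Sh(C_n)$. Then $\{C_n\}$ is a chain of crosscuts of $Y$; it defines a prime end $\mc{E}^Y_s$ of $\sphere\sm Y$, and we set $F(t):=s$, letting $\bar f$ agree with $\hat f$ on $\disk^\infty$ and with $F$ on $\uc$. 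That $F$ is well defined follows because a second defining chain for $\mc{E}_t$ interleaves with $\{Q_n\}$, and by the same monotonicity of shadows under $f$ (Theorem~\ref{confeq}) its push-forward interleaves with $\{C_n\}$, giving the same $s$; continuity of $\bar f$ at $t\in\uc$ follows similarly, using one crosscut near $t$ whose shadow contains all sufficiently close boundary points and whose image-crosscut has small shadow.

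\textbf{Step 3 (confluence of $F$).} Let $K\subseteq\uc$ be a subcontinuum and $C$ a component of $F^{-1}(K)$. If $K=\uc$, then $F$ is onto (since $\hat f$ is onto and $\bar f$ is then a closed surjection of $\ol{\disk^\infty}$), so $C=\uc$ and $F(C)=K$. If $K=[p,q]\subsetneq\uc$, then $K$ corresponds under $\psi$ to a closed ``sector'' $\Sigma\subset\ol{\disk^\infty}$ bounded by the external rays $R_p,R_q$ of $Y$, their principal continua, and $K$; by Proposition~\ref{ray} the components of $\hat f^{-1}$ of these two rays are rays of $X$ from $\uc$ to $\infty$, and one sees that $C=\uc\cap\ol{\widetilde\Sigma}$ for a suitable component $\widetilde\Sigma$ of $\hat f^{-1}(\mrm{int}\,\Sigma)$; confluence of $\hat f$, applied to continua exhausting $\mrm{int}\,\Sigma$ together with $R_p\cup R_q$ and the principal sets, gives $\hat f(\widetilde\Sigma)=\mrm{int}\,\Sigma$, and passing to boundary arcs yields $F(C)=K$.

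The crux is Step~2: turning the push-forwards $f(\ol{Q_n})$, which are merely small \emph{continua} joining two points of $Y$, into an honest chain of crosscuts of $Y$ with nested, shrinking shadows, and verifying via Theorem~\ref{confeq} that no ``unwinding'' occurs, so that these shadows genuinely contract onto a prime end of $Y$ rather than onto a proper subcontinuum of $\bd Y$ that fails to be an impression. This is the classical Carath\'eodory-type difficulty, handled here by confluence in place of conformality; Steps~1 and~3 are then routine bookkeeping on top of it.
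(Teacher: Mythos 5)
There is a genuine gap, and it sits exactly at the point you yourself call the crux. In Step 2 you assume that each small continuum $f(\ol{Q_n})$ meets $Y$ in the images of the \emph{two} endpoints of $Q_n$ and can therefore be promoted to a genuine crosscut $C_n$ of $Y$ with nested shrinking shadows. But nothing in the hypotheses prevents $f$ from identifying the endpoints of $Q_n$: in that case $f(\ol{Q_n})$ meets $Y$ in a \emph{single} point, it is a return cut rather than a crosscut, and no chain of crosscuts of $Y$ with nested shadows can be extracted from the pushed-forward chain. This is precisely the case the paper's proof is organized around. It first proves, by a unicoherence argument comparing $\dm(f(A))$ with $\dm(Y)$, that there is a $\delta>0$ such that if a crosscut $A$ of $X$ of diameter less than $\delta$ has its endpoints identified by $f$, then those endpoints lie in the same component of $f^{-1}(f(x))$; consequently the entire shadow of $A$ maps into a set meeting $Y$ only at the accessible point $f(x)$, so \emph{all} later images are return cuts based at $f(x)$. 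The extension at such a prime end is then defined to be the unique circle point corresponding to that accessible point, and the uniqueness is exactly where hypothesis (\ref{cut}) ($Y$ has no cutpoint) is used. Your proposal never invokes hypothesis (\ref{cut}) at all, which is the telltale sign that the identification case has been lost: without it the target prime end in that case is not even well defined, and your well-definedness/continuity argument (which compares shadows of crosscuts of $Y$) has nothing to work with.

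Two smaller points. Even when the endpoint images are distinct, $f(\ol{Q_n})$ is merely a continuum joining two points of $Y$, and the paper never manufactures genuine crosscuts from it; instead it works directly with generalized crosscuts and return cuts and gets continuity of $\bar f$ from the uniform continuity of $\psi^{-1}$ with respect to \emph{connected} subsets of $\complex\sm Y$ (Ursell--Young), which is cleaner than your replacement-by-crosscuts step and avoids having to prove that the replacements have nested shadows. Also, your Step 3 differs from the paper's and is not yet a proof: the identification of a component $C$ of $F^{-1}(K)$ with $\uc\cap\ol{\widetilde\Sigma}$ for a component $\widetilde\Sigma$ of $\hat f^{-1}(\mathrm{int}\,\Sigma)$ needs an argument (components of preimages of rays given by Proposition~\ref{ray} are only closed connected sets from $X$ to infinity, not rays), whereas the paper argues more directly: choose crosscuts $A_i$ of $X$ with $\varphi^{-1}(A_i)\cap\bar f^{-1}(K)=\0$ converging to the component $H$, and use Theorem~\ref{confeq} to see that $\hat f$ of these crosscuts separates $K$ from $\infty$, hence meets $\uc$ on both sides of $K$, giving $F(H)=K$.
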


\begin{proof}
Note that $f$ takes accessible points of $X$ to accessible points of $Y$. For
if $P$ is a path in $[\Complex\setminus X]\cup \{p\}$ with endpoint $p\in X$,
then by (\ref{complement}), $f(P)$ is a path in $[\Complex\setminus Y]\cup
\{f(p)\}$ with endpoint $f(p)\in Y$.

Let $A$ be a crosscut of $X$ such that the diameter of $f(A)$ is
less than half of the diameter of $Y$ and let $U$ be the bounded
component of $\Complex\setminus (X\cup A)$. Let the  endpoints of
$A$ be $x,y\in X$ and suppose that $f(x)=f(y)$. If  $x$ and $y$ lie
in the same component of $f^{-1}(f(x))$   then each crosscut
$B\subset U$ of $X$ is mapped to a generalized return cut of $Y$
based at $f(x)$ (i.e., by (1) and (3) $f(\ol{U})\cap Y=f(x)$ and the
endpoints of $B$ map to $f(x)$). Note that in this case by (1),
$\bd{f(U)}\subset f(A)\cup \set{f(x)}$.

Now suppose that $f(x)=f(y)$ and $x$ and $y$ lie in distinct components of
$f^{-1}(f(x))$. Then by unicoherence of $\Complex$, $\bd{U}\subset A\cup X$ is
a connected set and $\bd{U}\not\subset \bar{A}\cup f^{-1}(f(x))$. Now
$\bd{U}\setminus (\bar{A}\cup f^{-1}(f(x)))=\bd{U}\setminus f^{-1}(f(\bar{A}))$
is an open non-empty set in $\bd{U}$ by(2).  Thus there is a crosscut $B\subset
U\setminus f^{-1}(f(\bar{A}))$ of $X$ with $\bar{B}\setminus
B\subset\bd{U}\setminus f^{-1}(f(\bar{A}))$. Now $f(B)$ is contained in a
bounded component of $\Complex\setminus (Y\cup f(A))=\Complex\setminus (Y\cup
f(\bar{A}))$ by Theorem~\ref{confeq}. Since $Y\cap f(\bar{A})=\{f(x)\}$ is
connected and $Y$ does not separate $\Complex$, it follows by unicoherence that
$f(B)$ lies in a bounded component of $\Complex\setminus f(\bar{A})$. Since
$Y\setminus \{f(x)\}$ meets $f(\bar{B})$ and misses $f(\bar{A})$ and
$Y\setminus \{f(x)\}$ is connected, $Y\setminus\{f(x)\}$ lies in a bounded
complementary component of $f(\bar{A})$. This is impossible as the diameter of
$f(A)$ is smaller than the diameter of $Y$.  It follows that there exists a
$\delta>0$ such that if the diameter of $A$ is less than $\delta$ and
$f(x)=f(y)$, then $x$ and $y$ must lie in the same component of $f^{-1}(f(x))$.

In order to define the extension $\bar{f}$ of $f$ over the boundary
$\uc$ of $\disk^\infty$, let $C_i$ be a chain  of crosscuts of
$\disk^\infty$ which converge to a point $p\in \uc$ such that
$A_i=\varphi(C_i)$ is a null sequence of crosscuts or return cuts of
$X$ with endpoints $a_i$ and $b_i$ which converge to a point $x\in
X$. There are three cases to consider:

Case 1. $f$ identifies the endpoints of $A_i$ for some $A_i$ with
diameter less than $\delta$. In this case the chain of crosscuts is
mapped by $f$ to a sequence of generalized return cuts based at
$f(a_i)=f(b_i)=f(x)$. Hence $f(a_i)$ is an accessible point of $Y$
which corresponds (under $\psi^{-1}$) to a unique point $q\in \uc$
(since $Y$ has no cutpoints). Define $\bar{f}(p)=q$.

Case 2. Case 1 does not apply and there exists an infinite
subsequence $A_{i_{j}}$ of crosscuts such that
$f(\bar{A}_{i_{j}})\cap f(\bar{A}_{i_{k}})=\0$ for $j\not=k$. In
this case $f(A_{i{_j}})$ is  a chain of generalized crosscuts which
converges to the point $f(x)\in Y$. The chain $\psi^{-1}\circ
f(A_i)$ corresponds to a unique point $q\in \uc$. Define
$\bar{f}(p)=q$.

Case 3. Cases 1 and 2 do not apply. Without loss of generality
suppose there exists an $i$ such that for $j>i$ $f(\bar{A}_i)\cap
f(\bar{A}_j)$ contains $f(a_i)=f(x)$. In this case $f(A_j)$ is a
sequence of generalized crosscuts based at the accessible point
$f(x)$ which corresponds to a unique point $q$ on $\uc$ as above.
Define $\bar{f}(p)=q$.

It remains to be shown that $\bar{f}$ is a continuous extension of
$\hat{f}$ and  $F$ is confluent. For continuity it suffices to show
continuity at $\uc$. Let $p\in \uc$ and let $C$ be a small crosscut
of $\disk^{\infty}$ whose endpoints are on opposite sides of $p$ in
$\uc$ such that $A=\varphi(C)$ has diameter less than $\delta$
\cite{miln00} and such that the endpoints of $A$ are two accessible
points of $X$. Since $f$ is uniformly continuous near $X$, the
diameter of $f(A)$ is small and since $\psi^{-1}$ is uniformly
continuous with respect to connected sets in the complement of $Y$
(\cite{urseyoun51}), the diameter of $B=\psi^{-1} \circ f\circ
\varphi(C)$ is small. Also $B$ is either a generalized crosscut or
generalized return cut. Since $\hat{f}$ preserves separation of
crosscuts, it follows that the image of the domain $U$ bounded by
$C$ which does not contain $\infty$ is small. This implies
continuity of $\bar{f}$ at $p$.

To see that $F$ is confluent let $K\subset \uc$ be a subcontinuum and let $H$
be a component of $\bar{f}^{-1}(K)$. Choose a chain  of crosscuts $C_i$ such
that $\varphi(C_i)=A_i$ is a crosscut of $X$ meeting $X$ in two accessible
points $a_i$ and $b_i$, $C_i\cap \bar{f}^{-1}(K)=\0$ and $\lim C_i=H$. It
follows from the preservation of crosscuts (see Theorem~\ref{confeq}) that
$\hat{f}(C_i)$ separates $K$ from $\infty$. Hence $\hat{f}(C_i)$ must meet
$\uc$ on both sides of $K$ and $\lim \bar{f}(C_i)=K$. Hence $F(H)=\lim
\bar{f}(C_i)=K$ as required.
\end{proof}

\begin{cor}
Suppose that $f:\Complex\to\Complex$ is a perfect, oriented  map of the plane,
$X\subset\Complex$ is  a subcontinuum without cut points and $f(X)=X$. Let
$\hX$ be the component of $f^{-1}(f(X))$ containing $X$. Let $\varphi:
\disk^\infty \to\sphere\setminus T(\hX) $ and
$\psi:\disk^\infty\to\sphere\setminus T(X)$ be conformal mappings. Define
$\hat{f}:\disk^\infty\sm \varphi^{-1}(f^{-1}(X))\to\disk^\infty$ by
$\hat{f}=\psi^{-1} \circ f \circ \varphi$. Put $\uc=\partial\disk^\infty$.

Then $\hat{f}$ extends over $\uc$ to a map
$\bar{f}:\ol{\disk^\infty}\to\ol{\disk^\infty}$. Moreover
$\bar{f}^{-1}(\uc)=\uc$ and $F=\bar{f}|_{\uc}$ is a confluent map.
\end{cor}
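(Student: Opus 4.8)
The plan is to obtain this corollary by rerunning the proof of Theorem~\ref{tinduced} with $\hX$ in the role of the source continuum and $X$ in the role of $Y$; the one genuinely new feature is that $\hX$ need not be fully invariant, so $f^{-1}(X)$ may have components besides $\hX$, which is exactly why $\varphi^{-1}(f^{-1}(X))$ has to be removed from the domain of $\hat f$.

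First I would record the structural facts. Since $f$ is oriented it is confluent by Theorem~\ref{orient}, so the component $\hX$ of $f^{-1}(f(X))=f^{-1}(X)$ containing $X$ satisfies $f(\hX)=X$. Because $\bd T(\hX)\subset\hX$ and $f$ is oriented, approximating $T(\hX)$ from the outside by simple closed curves and using that $\bd f(Z)\subset f(\bd Z)$ for non-separating $Z$ (Theorem~\ref{orient}) gives $f(T(\hX))\subset T(X)$; hence $f$ carries $\sphere\sm T(\hX)$, off of $f^{-1}(X)$, into $\sphere\sm X$, and (since $\bd T(\hX)\subset\hX$ and $f(\hX)=X$) a crosscut of $T(\hX)$ close to $\bd T(\hX)$ and missing $f^{-1}(X)$ is mapped by $f$ to a small generalized crosscut or return cut of $X$, exactly the situation of the opening paragraphs of the proof of Theorem~\ref{tinduced}. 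I would also note that $X$, having no cutpoint, has $T(X)$ with no cutpoint, which is the hypothesis used throughout that proof.

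With this in place the proof of Theorem~\ref{tinduced} goes through essentially verbatim. One first produces the constant $\delta>0$ such that if a crosscut $A$ of $T(\hX)$ with $\dm(A)<\delta$, lying off $f^{-1}(X)$, has its two endpoints identified by $f$, then those endpoints lie in a single fiber; the argument is unchanged, using only confluence of $f$ near $\hX$, the crosscut-preservation statement of Theorem~\ref{confeq} (applied to the restriction of $f$ to a neighborhood of $T(\hX)$, whose needed properties were established above), and that $T(X)$ has no cutpoint. Then, for $p\in\uc$, one picks a null chain of crosscuts $C_i$ of $\disk^\infty$ converging to $p$ with each $\varphi(C_i)$ a crosscut of $T(\hX)$ missing $f^{-1}(X)$ and with the $\varphi(C_i)$ converging to a point $x\in\hX$, and defines $\bar f(p)$ by the same three-case analysis; continuity of $\bar f$ along $\uc$, the equality $\bar f^{-1}(\uc)=\uc$, and confluence of $F=\bar f|_{\uc}$ then follow as there.

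The main obstacle is the bookkeeping forced by the failure of full invariance: components of $f^{-1}(X)$ other than $\hX$, and preimages of bounded complementary domains of $X$, may meet $\sphere\sm T(\hX)$, so $\varphi^{-1}(f^{-1}(X))$ must be deleted and may a priori accumulate on $\uc$. What must be checked is (i) that off $f^{-1}(X)$ the composition $\psi^{-1}\circ f\circ\varphi$ is genuinely defined into $\disk^\infty$, i.e. no point of $\sphere\sm T(\hX)$ off $f^{-1}(X)$ is carried into $T(X)\sm X$; and (ii) that \emph{every} prime end of $T(\hX)$ is represented by a chain of crosscuts missing $f^{-1}(X)$, so that the deletion does not obstruct the extension. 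For (ii) I would use that each component of $f^{-1}(X)$ distinct from $\hX$ lies entirely in the open region $\sphere\sm T(\hX)$ (because $\bd T(\hX)\subset\hX$) and is a compactum disjoint from the compactum $\hX$, so the part of $f^{-1}(X)$ approaching $\hX$ is nowhere dense in $\sphere\sm T(\hX)$; combined with uniform continuity of $f$ near $\hX$ this lets one thread arbitrarily small crosscuts of $T(\hX)$ through the complement of $f^{-1}(X)$ at any prescribed boundary point, after which the argument reduces to that of Theorem~\ref{tinduced}.
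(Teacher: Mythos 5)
Your reduction to Theorem~\ref{tinduced} is the right instinct, but the way you handle the extra components of $f^{-1}(X)$ has a genuine gap, and it is precisely the point where the paper's proof does something you do not. The paper first factors $f=g\circ h$ with $h$ monotone (acyclic fibers) and $g$ light and open (Lemma~\ref{cacyclic}) and then invokes Lemma~\ref{finite} to conclude that $f^{-1}(X)$ has only \emph{finitely many} components; this allows one to pick a simply connected $V\supset T(X)$ so that the component $U$ of $f^{-1}(V)$ containing $\hX$ meets no other component of $f^{-1}(X)$, and then Theorem~\ref{tinduced} is applied verbatim to the confluent map $f|_U:U\to V$ between copies of $\Complex$. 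With that localization, your worries (i) and (ii) simply do not arise. Without it, they are real obstacles, and your proposal does not overcome them.

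Concretely: for (i) you only observe that points of $\sphere\sm T(\hX)$ off $f^{-1}(X)$ map into $\sphere\sm X$, which is trivially true but not what is needed; the needed statement is that they avoid $T(X)\sm X$, and this can fail globally --- a point lying in the topological hull of another component of $f^{-1}(X)$ (or trapped jointly by several components), while outside $T(\hX)$ and outside $f^{-1}(X)$, may well be carried into a bounded complementary domain of $X$. The ray-lifting argument in the spirit of Lemma~\ref{lorient}/Theorem~\ref{confeq} only rules this out for points in the \emph{unbounded} component of $\Complex\sm f^{-1}(X)$, so (i) needs the localization, not a remark. For (ii), ``the part of $f^{-1}(X)$ approaching $\hX$ is nowhere dense'' is not a valid reason that one can thread arbitrarily small crosscuts through its complement at every prime end: nowhere density does not permit threading arcs, and here the situation is worse, since by confluence every component of $f^{-1}(X)$ maps \emph{onto} the nondegenerate $X$, so these components have diameters bounded away from zero and could a priori accumulate along $\bd T(\hX)$ and shield boundary points from small crosscuts. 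The clean way to exclude this is exactly the finiteness statement above (finitely many components, each disjoint from $\hX\supset\bd T(\hX)$, hence at positive distance from $T(\hX)$ when they lie in $\sphere\sm T(\hX)$), i.e.\ the step your proposal is missing.
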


\begin{proof}
By Lemma~\ref{cacyclic} $f=g\circ m$ where $m$ is a monotone perfect and onto
mapping of the plane with acyclic point inverses, and $g$ is an open and
perfect surjection of the plane to itself. By Lemma~\ref{finite}, $f^{-1}(X)$
has finitely many components. It follows that there exist a simply connected
open set $V$, containing $T(X)$, such that if $U$ is the component of
$f^{-1}(V)$ containing $\hX$, then $U$ contains no other components of
$f^{-1}(X)$. It is easy to see that $f(U)=V$ and that $U$ is simply connected.
Hence $U$ and $V$ are homeomorphic to $\Complex$. Then  $f|_U:U\to V$ is a
confluent map. The result now follows from Theorem~\ref{tinduced} applied to
$f$ restricted to $U$.
\end{proof}

\chapter{Partitions of domains in the sphere}\label{partKP}

\section{Kulkarni-Pinkall Partitions}\label{secKP}

Throughout this section let $K$ be a compact subset of the plane whose
complement $U=\complex\sm K$ is connected. In the interest of completeness we
define the Kulkarni-Pinkall partition of $U$ and prove the basic properties of
this partition that are essential for our work in Section~\ref{sechyp}.
Kulkarni-Pinkall \cite{kulkpink94} worked in closed $n$-manifolds. We will
follow their approach and adapt it to our situation in the plane.

We think of $K$ as a closed subset of the Riemann sphere $\sphere$, with the
spherical metric and set $\tU=\sphere\sm K= U\cup\{\infty\}$. Let $\B^\infty$
\index{ball@$\B^\infty$} be the family of closed, round  balls $B$ in $\sphere$
such that $Int(B)\subset\tU$ and $|\partial B\cap K|\ge 2$. Then $\B^\infty$ is
in one-to-one correspondence with the family $\B$ \index{ball@$\B$} of closed
subsets $B$ of $\complex$ which are the closure of a complementary component of
a straight line or a round circle in $\complex$ such that $Int(B)\subset U$ and
$|\partial B\cap K|\ge 2$.\index{maximal ball}

\begin{prop}\label{lense}
If $B_1$ and $B_2$ are two closed round balls in $\complex$ such that $B_1\cap
B_2\ne \0$ but does not contain a diameter of either $B_1$ or $B_2$, then
$B_1\cap B_2$ is contained in a ball of diameter strictly less than the
diameters of both  $B_1$ and $B_2$.
\end{prop}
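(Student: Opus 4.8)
The plan is to put the configuration in coordinates, translate the two ``no diameter'' hypotheses into explicit inequalities on the radii and the distance between the centers, and then exhibit the enclosing ball explicitly: it will be the ball whose diameter is the common chord of the two bounding circles. So first I would normalize so that $B_1$ is centered at $0$ with radius $r_1>0$ and $B_2$ is centered at $(d,0)$ with radius $r_2>0$, where $d=|c_1-c_2|$. If $d=0$ then one ball contains the other and hence contains a diameter of the smaller one, contradicting the hypothesis; so $d>0$, and division by $2d$ below is legitimate.

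Next I would record what the hypotheses mean quantitatively. Among the diameters of $B_1$, the one orthogonal to the line of centers has the smallest maximal distance to $c_2$, equal to $\sqrt{r_1^2+d^2}$ (the distance from an exterior point to points of a segment is convex, so it is maximized at an endpoint, and one then minimizes over the direction). Hence $B_1\cap B_2$ contains a diameter of $B_1$ exactly when $\sqrt{r_1^2+d^2}\le r_2$, and symmetrically for $B_2$. Therefore the hypothesis gives the two strict inequalities $r_1^2+d^2>r_2^2$ and $r_2^2+d^2>r_1^2$, while $B_1\cap B_2\ne\0$ gives $d\le r_1+r_2$.

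Now set $x_0=(d^2+r_1^2-r_2^2)/(2d)$, the abscissa of the radical axis of the two circles, and put $C=(x_0,0)$ and $R=\sqrt{r_1^2-x_0^2}$. A one-line check from the formula for $x_0$ gives $R^2=r_2^2-(d-x_0)^2$ as well, the two strict inequalities above give $0<x_0<d$, and $x_0\le r_1$ (which follows from $d\le r_1+r_2$ together with $r_2^2+d^2>r_1^2$), so $R$ is well defined; geometrically $\overline{B}(C,R)$ is the ball whose diameter is the common chord of $\partial B_1$ and $\partial B_2$. The argument then consists of two short computational steps, each a single algebraic expansion using the definition of $x_0$. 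Step one: $B_1\cap\{x\ge x_0\}\subseteq B_2$ and $B_2\cap\{x\le x_0\}\subseteq B_1$, which immediately yields the decomposition $B_1\cap B_2=\bigl(B_1\cap\{x\ge x_0\}\bigr)\cup\bigl(B_2\cap\{x\le x_0\}\bigr)$. Step two: every point of $B_1\cap\{x\ge x_0\}$ lies in $\overline{B}(C,R)$ (this verification uses only $x_0\ge 0$), and every point of $B_2\cap\{x\le x_0\}$ lies in $\overline{B}(C,R)$ (this verification uses only $x_0\le d$). Combining, $B_1\cap B_2\subseteq\overline{B}(C,R)$, and this ball has diameter $2R=2\sqrt{r_1^2-x_0^2}<2r_1$ because $x_0>0$, and $2R=2\sqrt{r_2^2-(d-x_0)^2}<2r_2$ because $d-x_0>0$, which is the desired conclusion. (In the degenerate case $d=r_1+r_2$ one has $R=0$ and $B_1\cap B_2$ is a single point, trivially contained in arbitrarily small balls.)

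The only genuinely delicate point is the choice of the ball. The naive candidate --- the ball having as its diameter the segment that the lens $B_1\cap B_2$ cuts out of the line of centers --- does not work, because the two corner points of the lens protrude from it (a short computation with the formula for those corner points shows they lie strictly outside it). One is forced instead to the ball built on the common chord, and this is where the hypothesis earns its keep: ``$B_1\cap B_2$ contains no diameter of $B_i$'' is precisely equivalent to the centers $c_1,c_2$ lying strictly on opposite sides of the common chord, i.e.\ to $0<x_0<d$, and that is exactly what makes both containments in Step two go through and makes the two final diameter inequalities strict.
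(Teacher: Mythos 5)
Your proof is correct and is essentially the paper's own argument: the paper's one-line proof uses exactly the same enclosing ball, namely the closed ball with center $(s_1+s_2)/2$ and radius $|s_1-s_2|/2$ where $\{s_1,s_2\}=\partial B_1\cap\partial B_2$, which is precisely your ball $\overline{B}(C,R)$ built on the common chord. You merely supply the coordinate verification (the inequalities $0<x_0<d$ extracted from the no-diameter hypothesis, the two containments, and the strictness of the diameter bounds) that the paper leaves to the reader.
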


\begin{proof}
Let $\partial B_1\cap \partial B_2=\{s_1,s_2\}$. Then the closed ball with
center $(s_1+s_2)/2$ and radius $|s_1-s_2|/2$ contains  $B_1\cap B_2$.
\end{proof}

If $B$ is the closed ball of minimum diameter that contains $K$, then we say
that $B$ is the \textit{smallest ball} \index{smallest ball} containing $K$. It
is unique by Proposition~\ref{lense}. It exists, since any sequence of balls of
decreasing diameters that contain $K$ has a convergent subsequence. \medbreak

We denote the \emph{Euclidean convex hull of $K$} \index{convex hull!Euclidean}
by $\ECH(K)$ \index{conve@$\ECH(K)$}. It is the intersection of all closed
half-planes (a closed half-plane is the closure of a component of the
complement of a straight line) which contain $K$. Hence $p\in\ECH(K)$ if $p$
cannot be separated from $K$ by a straight line.

Given a closed ball $B\in\B^\infty$, $\Int(B)$ is conformally  equivalent to
the unit disk in $\complex$. Hence its interior can be naturally equipped with
the hyperbolic metric. Geodesics $\rg$ \index{g@$\rg$}  in this metric are
intersections of $\Int(B)$ with round circles $C\subset\rsphere$  which
perpendicularly cross the boundary $\partial B$. For every hyperbolic geodesic
$\rg$, \index{hyperbolic!geodesic} $B\setminus \ol{\rg}$ has exactly two
components. We call the closure of such components \emph{hyperbolic half-planes
of $B$.}\index{hyperbolic!halfplane} Given $B\in\B^\infty$, the
\emph{hyperbolic convex hull of $K$ in $B$}\index{convex hull!hyperbolic} is
the intersection of all (closed) hyperbolic half-planes of $B$ which contain
$K\cap B$ and we denote it by $\HCH(B\cap K)$. \index{convea@$\HCH(B\cap K)$}

\begin{lem}\label{nonsep}
Suppose that $B$ is the smallest ball containing $K\subset\complex$
and let $c\in B$ be its center. Then $c\in\HCH (K\cap\partial B)$.
\end{lem}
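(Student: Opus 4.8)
The plan is to argue by contradiction: suppose $c \notin \HCH(K \cap \partial B)$. Since $\HCH(K\cap \partial B)$ is the intersection of all closed hyperbolic half-planes of $B$ containing $K \cap \partial B$, there is a hyperbolic geodesic $\rg$ of $B$ — the intersection of $\Int(B)$ with a round circle $C$ meeting $\partial B$ perpendicularly — such that $K \cap \partial B$ lies in one of the two closed hyperbolic half-planes $H$ cut off by $\rg$, while $c$ lies in the interior of the complementary half-plane $H'$. I would like to convert this separation into a genuinely smaller ball containing $K$, contradicting minimality of $B$.

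First I would set up the geometry concretely. Let $\partial B \cap C = \{s_1, s_2\}$; these are the two points where the geodesic $\rg$ hits $\partial B$, and by the perpendicularity condition $s_1, s_2$ are the endpoints of a circular arc $\alpha \subset \partial B$ bounding the half-plane $H$ that contains $K \cap \partial B$. Because $c$ is in the \emph{interior} of $H'$, the closed arc $\alpha$ is strictly contained in an open half-circle of $\partial B$ (the geodesic through the two endpoints of a diameter passes through $c$, so if $\alpha$ were at least a half-circle, $c$ could not be strictly on the far side). Consequently the chord $[s_1, s_2]$ does not pass through $c$, and the points of $K \cap \partial B$ all lie on the minor arc $\alpha$, hence in the closed round disk $B'$ of diameter $|s_1 - s_2| < \operatorname{diam} B$ centered at $(s_1+s_2)/2$ — this is exactly the elementary fact isolated in Proposition~\ref{lense} (or rather its one-ball version: a set of points lying on a sub-half-circular arc is contained in the ball on that chord).

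The remaining step is to control $K$ itself, not just $K \cap \partial B$. Here I would use minimality of $B$: since $B$ is the smallest ball containing $K$, the set $K \cap \partial B$ cannot be contained in an open half-circle of $\partial B$ — for if it were, one could push $\partial B$ slightly toward the complementary side and shrink the radius while still enclosing $K$ (the points of $K$ in the interior of $B$ stay inside after a small enough perturbation, by compactness, and the boundary points move inward). This directly contradicts the conclusion of the previous paragraph that $K \cap \partial B \subset \alpha$ with $\alpha$ strictly inside an open half-circle. Therefore no such separating geodesic $\rg$ exists, i.e. $c \in \HCH(K \cap \partial B)$.

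I expect the main obstacle to be the "perturbation" claim in the last paragraph — making precise that if $K \cap \partial B$ lies in an open half-circle then $B$ is not minimal. This requires a short compactness argument: $K \setminus \partial B$ is a compact subset of $\Int B$, so it stays in $B$ under translating the center by a small vector $\epsilon v$ (where $v$ points away from the half-circle containing $K \cap \partial B$) while simultaneously decreasing the radius by an amount comparable to $\epsilon$; the finitely-spread points of $K \cap \partial B$, all on the far side, are moved strictly inward for $\epsilon$ small. One must check the arithmetic that the radius genuinely decreases — this is where the "open half-circle" hypothesis is used, guaranteeing a uniform angular gap. Everything else is routine planar geometry, and the hyperbolic-geometry content is confined to unwinding the definition of $\HCH$ in terms of half-planes bounded by circular arcs orthogonal to $\partial B$.
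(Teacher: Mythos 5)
Your strategy is the same as the paper's: assuming $c\notin\HCH(K\cap\partial B)$, you produce a geodesic of $B$ (a circle crossing $\partial B$ perpendicularly) separating $c$ from $K\cap\partial B$, conclude that $K\cap\partial B$ lies in a closed arc strictly contained in an open half-circle of $\partial B$, and then aim to contradict minimality by translating $B$ slightly and shrinking it; this is precisely the published argument (the excursion through the Thales disk on the chord $s_1s_2$ is harmless but unused). So the only question is whether your perturbation step works as stated.

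It does not, because you have the direction of the translation backwards, and consistently so (both ``push $\partial B$ toward the complementary side'' and ``$v$ points away from the half-circle containing $K\cap\partial B$''). Concretely, let $B$ be the unit disk centered at $c=0$ and suppose $K\cap\partial B\subset\{z\in\partial B:\ \re z\ge\delta\}$ with $\delta>0$. Moving the center to $-\e$ (away from that arc) gives, for every $p\in K\cap\partial B$, $|p+\e|^2=1+2\e\,\re p+\e^2>1$, so $p$ lies \emph{outside} the translated ball of the same radius, and a fortiori outside every smaller ball with that center: nothing can be shrunk. The translation must go \emph{toward} the half-plane containing $K\cap\partial B$, exactly as in the paper (``directed into this halfplane''): with the new center at $+\e$ one has $|p-\e|^2\le 1-\e\delta+\e^2<1$ for every $p\in K$ with $\re p\ge\delta/2$ (using $|p|\le 1$ and $\e<\delta$), while the points of $K$ with $\re p\le\delta/2$ form a compact set disjoint from $\partial B$ (all of $K\cap\partial B$ has $\re\ge\delta$), hence lie in a ball of radius $1-\rho$ about $0$ and remain interior once $\e<\rho$; so a slightly smaller ball centered at $+\e$ still contains $K$, contradicting minimality. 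This splitting also repairs a minor imprecision in your wording: $K\cap\Int(B)$ need not be compact, since points of $K$ may accumulate on $\partial B$, so ``interior points stay inside by compactness'' requires exactly this kind of decomposition (a detail the paper suppresses as well). With the direction of $v$ reversed, your proof is correct and coincides with the paper's.
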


\begin{proof}
By contradiction. Suppose that there exists a circle that separates the center
$c$ from $K\cap\partial B$ and crosses $\partial B$ perpendicularly. Then there
exists a line $\ell$ through $c$ such that a half-plane bounded by $\ell$
contains $K\cap \partial B$ in its interior. Let $B'=B+v$ be a translation of
$B$ by a vector $v$ that is orthogonal to $\ell$ and directed into this
halfplane. If $v$ is sufficiently small, then $B'$  contains $K$ in its
interior. Hence, it can be shrunk to a strictly smaller ball that also contains
$K$, contradicting that $B$ has smallest diameter.
\end{proof}

\begin{figure}
\includegraphics{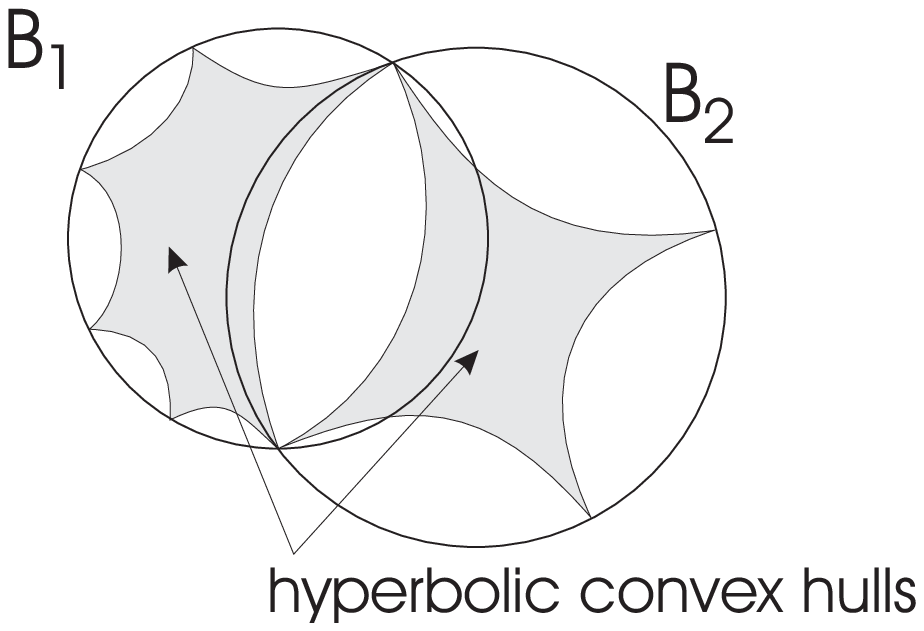}
\caption{Maximal balls have disjoint hulls.} \label{hulls}
\end{figure}

\begin{lem}\label{disjointCH}
Suppose that $B_1,B_2\in\B^\infty$ with $B_1\ne B_2$. Then
$$\HCH(B_1\cap\partial U)\cap\HCH(B_2\cap \partial
U)\subset\partial U.$$ In particular,
$\HCH(B_1\cap\partial U)\cap\HCH(B_2\cap \partial
U)$ contains at
most two points.
\end{lem}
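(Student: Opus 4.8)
The plan is to trap each hyperbolic convex hull inside a single (possibly large) hyperbolic half-plane and to show that these two half-planes can meet in at most two points, which must lie in $\partial U$.

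First, since $\HCH(B_i\cap\partial U)\subseteq B_i$, the desired intersection is contained in $B_1\cap B_2$. I would next pin down what $B_1\cap B_2$ can look like. Note that for $B\in\B^\infty$ every point of $\partial B\cap K$ is a limit of points of $\Int(B)\subseteq U$, hence lies in $\overline U\cap K=\partial U$; together with $\Int(B)\cap K=\0$ this gives $B\cap K=\partial B\cap K=B\cap\partial U$ and $|\partial B\cap\partial U|\ge 2$. Using $\Int(B_i)\cap K=\0$ and $|\partial B_i\cap K|\ge 2$, neither of $B_1,B_2$ contains the other, and neither boundary circle lies in the interior of the other ball (either would force $\partial B_j\cap K=\0$). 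Since two distinct round circles in $\sphere$ meet in at most two points, it follows that $B_1\cap B_2$ is empty, a single point $p\in\partial B_1\cap\partial B_2$, or a closed lune $L$ with $\partial B_1\cap\partial B_2=\{s_1,s_2\}$. (This reduction is the place to be careful in the sphere: the ``annulus'' configuration of two disjoint circles with $B_1\cup B_2=\sphere$ is precisely the one ruled out by the previous sentence.)

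I would then record the elementary fact that $\HCH(B_i\cap\partial U)\cap\partial B_i\subseteq B_i\cap\partial U$: if $q\in\partial B_i\setminus K$, a small arc of $\partial B_i$ around $q$ misses the closed set $K$, and the hyperbolic geodesic of $B_i$ joining the endpoints of that arc cuts off a closed half-plane that contains $B_i\cap K=\partial B_i\cap K$ but not $q$; hence $q\notin\HCH(B_i\cap\partial U)$. This settles the ``empty'' and ``single point'' cases immediately: the intersection is contained in $\{p\}$, and if $p\in\HCH(B_1\cap\partial U)$ then $p\in\partial B_1\cap\HCH(B_1\cap\partial U)\subseteq\partial U$. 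In the lune case, write $\partial B_i=\alpha_i\cup\{s_1,s_2\}\cup\alpha_i'$ with $\alpha_i=\partial B_i\cap\Int(B_{3-i})$ and $\alpha_i'=\partial B_i\setminus B_{3-i}$, so $\overline{\alpha_i'}$ is a closed subarc of $\partial B_i$ with endpoints $s_1,s_2$. Since $K\cap\Int(B_{3-i})=\0$, we get $B_i\cap\partial U=\partial B_i\cap K\subseteq\overline{\alpha_i'}$, whence by monotonicity of the hull $\HCH(B_i\cap\partial U)\subseteq\HCH(\overline{\alpha_i'})=H_i$, where $H_i$ is the closed hyperbolic half-plane of $B_i$ bounded by the geodesic $g_i$ of $B_i$ with ideal endpoints $s_1,s_2$ and lying on the side of $\alpha_i'$ (the hull of a boundary arc is the region between the arc and the geodesic joining its endpoints). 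Thus it remains to prove the purely geometric statement $H_1\cap H_2=\{s_1,s_2\}$; granting it, $\HCH(B_1\cap\partial U)\cap\HCH(B_2\cap\partial U)\subseteq\{s_1,s_2\}$, and as above each $s_j$ in this intersection lies in $\partial U$, finishing the proof with at most two points.

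To prove $H_1\cap H_2=\{s_1,s_2\}$ I would apply a Möbius transformation of $\sphere$ taking $s_1\mapsto 0$ and $s_2\mapsto\infty$. Then $\partial B_1,\partial B_2$ and the circles carrying $g_1,g_2$ all become straight lines through the origin, the balls $B_i$ become half-planes, and by conformality $g_i$ becomes the ray from $0$ perpendicular to the line $\partial B_i$ pointing into $B_i$, while $\alpha_i'$ becomes the ray of $\partial B_i$ not contained in $B_{3-i}$; hence each $H_i$ is a right-angled sector at the origin. Normalizing $\partial B_1$ to be the real axis, a direct inspection of the four concurrent lines shows that $H_1$ and $H_2$ share only the vertex $0$ in $\complex$, i.e. $H_1\cap H_2=\{0,\infty\}=\{s_1,s_2\}$. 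I expect this last lemma to be the only genuine difficulty: $H_i$ can be strictly larger than the lune $L$ — indeed larger than half of $B_i$ — so knowing merely $H_i\subseteq B_i$ does not suffice, and one really needs that the two geodesics $g_1,g_2$, though both joining $s_1$ to $s_2$, bound half-planes lying on opposite sides; straightening both boundary circles into lines through a common point is the cleanest way I see to make this transparent.
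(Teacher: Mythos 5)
Your argument is correct and is essentially the paper's own proof written out in full: the paper likewise reduces to the (at most two) common boundary points $s_1,s_2$ of $\partial B_1$ and $\partial B_2$ and then observes that the two hulls lie on opposite sides of the respective chords joining these points, leaving that last geometric fact to a figure. Your only additions are the explicit trapping of each hull in the half-plane $H_i$ bounded by the chord of $B_i$ through $s_1,s_2$ and the M\"obius normalization $s_1\mapsto 0$, $s_2\mapsto\infty$ showing $H_1\cap H_2=\{s_1,s_2\}$ (and, like the paper, you implicitly assume $\partial B_1\neq\partial B_2$, i.e.\ you tacitly exclude the degenerate configuration of two complementary caps with a common boundary circle, which is harmless for the way the lemma is used).
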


\begin{proof}
A picture easily explains this, see Figure~\ref{hulls}. Note that $\partial
U\cap [B_1\cup B_2] \subset \partial (B_1\cup B_1)$. Therefore $B_1\cap\partial
U$ and $B_2\cap\partial U$ share at most two points. The open hyperbolic chords
between these points in the respective balls are disjoint.
\end{proof}

It follows that any point in $U^\infty(X)$ can be contained in at most one
hyperbolic convex hull. In the next lemma we see that each point of $\tU$ is
indeed contained in $\HCH(B\cap K)$ for some $B\in\B^\infty$. So $\{\tU\cap
\HCH(B\cap K) \mid B\in\B^\infty\}$ is a partition of $\tU$.

Since hyperbolic convex hulls are preserved by M\"{o}bius transformations, they
are more easy to manipulate than the Euclidean convex hulls used by Bell
(which are preserved only by M\"{o}bius transformations that fix $\infty$).
This is illustrated by the proof of the following lemma.

\begin{lem}[Kulkarni-Pinkall inversion lemma]\label{kplem} \index{Kulkarni-Pinkall!Lemma}
For any $p\in \sphere\setminus K$ there exists  $B\in\B^\infty$ such that
$p\in\HCH(B\cap K)$.
\end{lem}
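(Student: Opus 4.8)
The plan is to reduce the general case to the situation of Lemma~\ref{nonsep}, where the relevant point is the center of the \emph{smallest} ball containing the compact set, by applying a suitable M\"obius transformation that moves $p$ to the ``center'' position. Concretely, given $p\in\sphere\setminus K$, I would first choose a M\"obius transformation $\mu$ that sends $p$ to $\infty$; then $\mu(K)$ is a compact subset of the plane $\complex=\sphere\setminus\{\infty\}$, and I want to find a round ball $B'\in\B^\infty$ (with respect to $\mu(K)$) whose hyperbolic convex hull $\HCH(B'\cap\mu(K))$ contains $\infty$. Pulling back by $\mu^{-1}$ and using that M\"obius transformations preserve round balls, hyperbolic metrics on them, geodesics, and hence hyperbolic convex hulls (the point emphasized in the paragraph before the lemma), this will give the desired $B=\mu^{-1}(B')\in\B^\infty$ with $p=\mu^{-1}(\infty)\in\HCH(B\cap K)$.

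So the core of the argument is the affine/planar statement: \emph{if $L\subset\complex$ is compact, then $\infty\in\HCH(B\cap L)$ for some $B\in\B^\infty$}, where now ``round balls'' in $\sphere$ containing $\infty$ in their interior are exactly the closed complements of round disks in $\complex$ (together with half-planes). Here is where I would invoke the ``smallest ball'' idea, but \emph{in the sphere}: among all closed round balls $B\subset\sphere$ with $\Int(B)\cap L=\0$ — equivalently, round disks in $\complex$ containing $L$, together with the degenerate half-plane case — take one, call it $D$, that is ``largest'' in the sense that its complement $B=\sphere\setminus\Int(D)$ is smallest; a compactness argument on the sphere (as in the remark after Proposition~\ref{lense}, using that the spherical diameters are bounded) produces such an extremal $D$, and Proposition~\ref{lense} gives uniqueness. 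By extremality, $\partial D\cap L$ must contain at least two points and in fact $L$ cannot be pushed off $\partial D$ to one side — this is precisely the spherical analogue of the translation argument in Lemma~\ref{nonsep}. I would then argue, exactly as in Lemma~\ref{nonsep} but transported to $B$ via a conformal identification of $\Int(B)$ with the disk, that the ``center'' of $B$ (the image of $0$ under such an identification, which here is $\infty$ after our normalization, or can be arranged to be) lies in $\HCH(B\cap L)=\HCH(B\cap\partial B\cap L)$: if it did not, a separating geodesic would yield a slightly enlarged round disk still containing $L$ and avoiding the center, contradicting extremality of $D$.

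The main obstacle I anticipate is making the ``smallest ball / extremal disk'' argument genuinely work on the sphere and handling the degenerate cases cleanly: a priori $L$ might lie in a half-plane (so the extremal ``disk'' is a half-plane, i.e. $\partial D$ is a line and $B$ is the closure of the other half-plane), and one must check $|\partial B\cap K|\ge 2$ so that $B\in\B^\infty$ as defined, rather than $\ge 1$. The normalization $\mu(p)=\infty$ helps, but one should be careful that the extremal ball one selects actually has $p$ in its \emph{interior} and that its ``center'' (hyperbolic barycenter, or the image of the origin under a uniformizing M\"obius map of $\Int(B)$) is exactly $p$; this may require composing $\mu$ with a further M\"obius self-map of $\sphere\setminus\Int(D)$ fixing the configuration, after which Lemma~\ref{nonsep}'s proof applies almost verbatim. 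Once the extremal ball is correctly set up, the verification that its center lies in the hyperbolic convex hull is a short separation-by-a-geodesic argument identical in spirit to Lemma~\ref{nonsep}, so I expect the write-up to be brief modulo the care needed in the case analysis.
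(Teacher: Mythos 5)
Your outer skeleton is the same as the paper's: move $p$ to $\infty$ by a M\"obius transformation $\mu$ and use that M\"obius maps preserve round balls, circles perpendicular to their boundaries, and hence hyperbolic hulls, so that it suffices to produce a ball $B$ with $\infty\in\HCH(B\cap L)$ for the compact set $L=\mu(K)\subset\complex$. The gap is in how you handle this core case. First, the extremal object you propose is not well defined: among round disks $D\subset\complex$ containing $L$ there is no ``largest'' one, and the complementary balls $\sphere\sm\Int(D)$ have no minimal element (they shrink to the point $\infty$ as $D$ grows), so the compactness argument and the uniqueness via Proposition~\ref{lense} do not apply to the extremality you state. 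The object you need --- and the one the paper uses --- is the \emph{smallest} round disk $D$ containing $L$, with $B$ the closure of its complement; minimality gives $|\partial D\cap L|\ge 2$, and since $L$ is compact in the plane, $D$ is bounded and $\infty\in\Int(B)$, so $B$ is a legitimate member of $\B^\infty$ for $L$.

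Second, and more importantly, your mechanism for ruling out a separating geodesic does not work as written. There is no canonical ``center'' of $B$ that you can ``arrange'' to be $\infty$ (any interior point is the image of $0$ under some uniformization of $\Int(B)$, and renormalizing by a M\"obius self-map of $B$ would move $L$), and ``a slightly enlarged round disk still containing $L$ and avoiding the center'' contradicts nothing about the minimality of $D$. The missing idea is the inversion through $\partial D$: the M\"obius involution $F$ which fixes $\partial D$ pointwise and interchanges $\infty$ with the Euclidean center $c'$ of $D$ carries every circle perpendicular to $\partial D$ onto itself. Hence if some geodesic of $B$ separated $L\cap\partial D$ from $\infty$ in $B$, the very same circle would separate $L\cap\partial D$ from $c'$ in $D$, contradicting Lemma~\ref{nonsep}. (Equivalently, one can check directly that such a perpendicular circle confines $L\cap\partial D$ to an arc of $\partial D$ lying strictly inside an open half-plane bounded by a line through $c'$, and then the translation argument proving Lemma~\ref{nonsep} contradicts the minimality of $D$.) With this correction your outline becomes exactly the paper's proof; without it the case $p=\infty$, which carries all the content, is not established.
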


\begin{proof}
We prove first that there exists $B^*\in\B^\infty$ such that no
circle which crosses $\partial B^*$ perpendicularly separates $K\cap
\partial B^*$ from $\infty$.

Let $B'$ be the smallest round ball which contains $K$ and let
$B=\ol{\complex\sm B'}$. Then $B^*=B\cup \{\infty\}\in \B^\infty$.
If $L$ is a circle which crosses $\partial B^*=\partial B'$
perpendicularly and separates $K\cap \partial B'$ from $\infty$,
then it also separates $K\cap\partial B'$ from the center $c'$  of
$B'$, contrary to Lemma~\ref{nonsep}. [To see this note that if $F$
is the M\"{o}bius transformation which fixes points in the boundary
 of $B'$ and interchanges the points $\infty$ and $c'$, then
 $F(L)=L$.  Hence it would follow that $L$ separates $c'$ from
 $K\cap\partial B'$, a contradiction with Lemma~\ref{nonsep}.]  Hence,
$\infty \in\HCH(B^*\cap K)$.

Now let $p\in \complex^\infty\sm K$. Let $M:\sphere\to\sphere$ be a M\"{o}bius
transformation such that $M(p)=\infty$. By the above argument there exists a
ball  $B^*\in\B^\infty$ such that $\infty\in \HCH(B^*\cap M(K))$. Then
$B=M^{-1}(B^*)\in\B^\infty$ and, since $M$ preserves perpendicular circles,
$p\in\HCH(B\cap K)$ as desired.
\end{proof}

From Lemmas~\ref{disjointCH} and \ref{kplem}, we obtain the following Theorem
which is a special case of a Theorem of  Kulkarni and Pinkall
\cite{kulkpink94}.

\begin{thm}\label{KPthrm}\index{Kulkarni-Pinkall!Partition} \index{KPP@$\KPP$}
Suppose that $K\subset\complex$ is a nondegenerate compact set such
that its complement $\tU$ in the Riemann sphere is  connected. Then
$\tU$ is partitioned by the family
$$\KPP=\left\{\tU\cap \HCH(B\cap K)\colon B\in\B^\infty\right\}.$$
\end{thm}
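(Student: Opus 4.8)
The plan is to deduce the theorem directly from Lemmas~\ref{disjointCH} and \ref{kplem}, which between them supply exactly the two properties defining a partition: that the members of $\KPP$ cover $\tU$, and that they are pairwise disjoint. Before combining them I would record a harmless bookkeeping identity, namely $B\cap K=\partial B\cap K=B\cap\partial U$ for every $B\in\B^\infty$. Indeed, $\Int(B)\subset U=\complex\sm K$ contains no point of $K$, so $B\cap K=\partial B\cap K$; and a point of $\partial B$ lies in $K$ if and only if it lies in $\partial U$, since $\partial U\subset K$ (as $K$ is closed) while any point of $\partial B\cap K$ is a limit of points of $\Int(B)\subset U$ and hence lies in $\ol U\sm U=\partial U$. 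This lets me pass freely between the form $\HCH(B\cap K)$ used in the statement and the form $\HCH(B\cap\partial U)$ used in Lemma~\ref{disjointCH}.

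Next I would check that each member of $\KPP$ is a nonempty subset of $\tU$. If $B\in\B^\infty$ then $|\partial B\cap K|\ge 2$, so choosing $s_1\ne s_2$ in $\partial B\cap K$, the hyperbolic geodesic of $\Int(B)$ with endpoints $s_1,s_2$ lies in $\HCH(B\cap K)$ and inside $\Int(B)\subset\tU$; hence $\tU\cap\HCH(B\cap K)\ne\0$. For pairwise disjointness, note first that $\tU=\sphere\sm K$ is open and disjoint from $K$, hence disjoint from $\partial U\subset K$. Now if $B_1\ne B_2$ in $\B^\infty$, Lemma~\ref{disjointCH} (rewritten via the identity above) gives $\HCH(B_1\cap K)\cap\HCH(B_2\cap K)\subset\partial U$, and intersecting with $\tU$ yields
\[\big(\tU\cap\HCH(B_1\cap K)\big)\cap\big(\tU\cap\HCH(B_2\cap K)\big)=\0.\]

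Finally, for covering, given any $p\in\tU=\sphere\sm K$, Lemma~\ref{kplem} produces $B\in\B^\infty$ with $p\in\HCH(B\cap K)$, so $p\in\tU\cap\HCH(B\cap K)$. Thus $\KPP$ is a family of nonempty, pairwise disjoint subsets of $\tU$ whose union is all of $\tU$, i.e.\ it is a partition of $\tU$, as claimed. Since all the substance is already contained in the two cited lemmas, there is no genuine obstacle in this proof; the only point requiring a little care is the identification $B\cap K=B\cap\partial U$ together with the observation $\tU\cap\partial U=\0$, which is precisely what reconciles the slightly different notation of the statement with that of Lemma~\ref{disjointCH}.
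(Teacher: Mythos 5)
Your proposal is correct and follows exactly the route the paper takes: the paper derives Theorem~\ref{KPthrm} directly from Lemma~\ref{disjointCH} (disjointness of the hulls off $\partial U$) and Lemma~\ref{kplem} (every point of $\tU$ lies in some hull), which is precisely your argument. Your extra bookkeeping identifying $B\cap K=B\cap\partial U$ and noting $\tU\cap\partial U=\0$ is a harmless and accurate elaboration of what the paper leaves implicit.
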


Theorem~\ref{KPthrm} is the linchpin of the theory of geometric crosscuts. An
analogue of it  was known to Harold Bell and used by him implicitly since the
early 1970's. Bell considered non-separating plane continua $K$ and he used the
equivalent notion of Euclidean convex hull of the sets $B\cap\partial U$ for
all maximal balls $B\in\B$ (see the comment following Theorem~\ref{Hypmain}).

\smallskip
 Let $B\in
\B^\infty$. If $B\cap\partial U^\infty(X)$ consists of two points $a$ and $b$,
then its (hyperbolic) hull \index{hull!hyperbolic} is an open circular segment
$\rg$ with endpoints $a$ and $b$ and perpendicular to $\partial B$. We will
call the crosscut $\rg$  a \textit{$\kp$ crosscut} or simply a
\textit{$\kp$-chord}\index{KPchord@$\kp$-chord}. If $B\cap\partial U$ contains three or more
points, then we say that the hull  $\HCH(B\cap\partial U)$ is a \textit{gap}.
\index{gap} A gap has nonempty interior. Its boundary in $\Int(B)$ is a union
of open circular segments (with endpoints in $K$), which we also call
\emph{$\kp$ crosscuts or $\kp$-chords}. We denote by $\KP$ \index{KP@$\KP$}
the collection of all open  chords  obtained as above using all
$B\in\B^\infty$. \medbreak

The following example may serve to illustrate
Theorem~\ref{KPthrm}.\\
{\bf Example}.  Let $K$ be the unit square $\{x+yi\colon
-1\leq x,y\leq 1\}$.
There are five obvious members of $\B$. These are the sets
$$\im z\geq 1,\ \im z\leq -1,\ \re z\geq 1,\ \re z\leq -1,\ |z|\geq \sqrt
2,$$ four of which are half-planes. These are the only members of
$\B$ whose hyperbolic convex hulls have non-empty interiors.
However, for this example the family $\B$ defined in the
introduction of Section~\ref{secKP} is infinite. The hyperbolic hull
of the half-plane $\im z\ge 1$ is the semi-disk $\{z \mid |z-i|\le
1, \ \im z >1\}$. The hyperbolic hulls of the other three
half-planes given above are also semi-disks. The hyperbolic hull of
$|z|\ge \sqrt{2}$ is the unbounded region whose boundary consists of
parts of  four circles lying (except for their endpoints) outside
$K$ and contained in the circles of radius $\sqrt{2}$ and having
centers at $-2,2,-2i$ and $2i$, respectively. These hulls do not
cover $U$ as there are spaces between the hulls of the half-planes
and the hull of $|z|\geq \sqrt 2$.

If $C$ is a circle that circumscribes $K$ and contains exactly two of its
vertices, such as $1\pm i$, then the exterior ball $B$ bounded by
$C$ is maximal. Now $\HCH(B\cap K)$ is a single chord and the union
of all such chords foliates the remaining spaces in $\complex\sm K$.

\begin{lem}\label{contKP}
If $\rg_i$ is a sequence of $\kp$-chords with endpoints $a_i$ and
$b_i$, and $\lim a_i=a\ne b=\lim b_i$, then $\{\ol{\rg_i}\}$ has a
convergent subsequence   and $\lim \ol{\rg_{i_j}}=C$, where
$\rg=C\sm \{a,b\}\in\KP$ is also a $\kp$-chord.
\end{lem}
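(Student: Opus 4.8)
The plan is to realize each chord as the hyperbolic geodesic of a maximal round ball, pass to a limit of these balls using compactness of the space of spherical balls, and then verify that the limiting arc is again a \emph{side} of a Kulkarni--Pinkall hull, and not an interior chord of a gap.

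First I would record a reformulation of membership in $\KP$ which follows at once from the description of gaps and their boundary chords: an open circular arc $\rg$ with distinct endpoints $a,b$ is a $\kp$-chord if and only if there is a ball $B\in\B^\infty$ with $a,b\in B\cap K$ such that $\rg$ is the hyperbolic geodesic of $\Int(B)$ joining $a$ and $b$ (equivalently, the sub-arc lying in $\Int(B)$ of the round circle through $a$ and $b$ orthogonal to $\partial B$), and such that at least one of the two open arcs of $\partial B$ with endpoints $a$ and $b$ is disjoint from $K$. When $|B\cap K|=2$ both such arcs miss $K$; when $|B\cap K|\ge 3$ this condition picks out exactly the sides of the gap $\HCH(B\cap K)$ and excludes its diagonals.

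Now fix, for each $i$, a ball $B_i\in\B^\infty$ of which $\rg_i$ is a $\kp$-chord, and let $\alpha_i$ be an open arc of $\partial B_i$ from $a_i$ to $b_i$ with $\alpha_i\cap K=\0$. The space of closed round balls of $\sphere$ (spherical metric) is compact, as is the hyperspace of closed subsets of $\sphere$ under the Hausdorff metric; so after passing to a subsequence I may assume $B_i\to B$, $\ol{\rg_i}\to C$ and $\ol{\alpha_i}\to\ol\alpha$. Since $a_i,b_i\in\partial B_i$ with $a_i\to a$, $b_i\to b$ and $a\ne b$, the circle $\partial B$ contains the two distinct points $a,b$, so $B$ is neither a point nor all of $\sphere$, hence $\partial B$ is a genuine round circle or line. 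As $K$ is closed, $a,b\in\partial B\cap K$, so $|\partial B\cap K|\ge 2$; and any point of $\Int(B)$ lies in $\Int(B_i)\subset\tU$ for large $i$, so $\Int(B)\cap K=\0$. Thus $B\in\B^\infty$. By continuity of the construction of the orthogonal circle and of the selection of the sub-arc inside the ball (no degeneration occurs because $a\ne b$), $C=\ol{\rg}$ where $\rg$ is the hyperbolic geodesic of $\Int(B)$ joining $a$ and $b$; since $\rg\subset\Int(B)\subset\tU$, we get $\rg=C\setminus\{a,b\}$.

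It remains to show $\rg\in\KP$, for which, by the reformulation, I must exhibit one of the two open arcs of $\partial B$ between $a$ and $b$ that misses $K$; the candidate is $\alpha:=\ol\alpha\setminus\{a,b\}$. The crux --- and the step I expect to be the main obstacle --- is to rule out $\ol\alpha=\partial B$, i.e.\ that the $K$-free arcs $\alpha_i$ fill up the limit circle. Here the hypothesis $a\ne b$ is used quantitatively: writing $\delta>0$ for the spherical distance from $a$ to $b$, for large $i$ the complementary arc $\beta_i:=\partial B_i\setminus\ol{\alpha_i}$ is an arc of $\partial B_i$ joining $a_i$ and $b_i$, hence of spherical length at least $\delta/2$, so its midpoint $m_i\in\partial B_i$ has arc-distance $\ge\delta/4$ from $\ol{\alpha_i}$ along $\partial B_i$; since the spherical radii of the circles $\partial B_i$ stay bounded away from $0$ (they converge to the positive radius of $\partial B$), elementary spherical geometry gives a uniform positive lower bound on the ambient spherical distance from $m_i$ to $\ol{\alpha_i}$, whence $\ol{\alpha_i}$ --- and therefore its Hausdorff limit $\ol\alpha$ --- is not dense in $\partial B$. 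So $\ol\alpha$ is a proper closed sub-arc of $\partial B$ with endpoints $a,b$, and $\alpha$ is an honest open arc. Finally $\alpha\cap K=\0$: a point $x\in\alpha$ lies in the relative interior of $\ol\alpha$, bounded away from $a$ and $b$, so using $\ol{\alpha_i}\to\ol\alpha$, $\partial B_i\to\partial B$, $a_i\to a$ and $b_i\to b$ there are $x_i\in\alpha_i$ with $x_i\to x$; since $x_i\notin K$ and $K$ is closed, $x\notin K$. Hence $a,b\in B\cap K$, $a\ne b$, and $\alpha\cap K=\0$, so $\rg\in\KP$ and $C=\ol{\rg}$ with $\rg=C\setminus\{a,b\}$, as required. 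Apart from this non-density estimate, the argument is just compactness together with the continuity of elementary M\"obius-geometric constructions.
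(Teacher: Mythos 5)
Your overall skeleton (pass to a convergent subsequence of the balls $B_i\in\B^\infty$, check that the limit ball $B$ is again in $\B^\infty$, and identify $C$ as the closure of the hyperbolic geodesic of $B$ joining $a$ and $b$) is sound and is essentially the route the paper takes. The genuine gap is in your final step, where you claim $\alpha\cap K=\0$ for $\alpha=\ol\alpha\sm\{a,b\}$, the limit of your chosen $K$-free arcs $\alpha_i$. The inference ``$x_i\in\alpha_i$, $x_i\to x$, $x_i\notin K$ and $K$ is closed, hence $x\notin K$'' is invalid: closedness of $K$ controls limits of points \emph{of} $K$, not limits of points of its (open) complement. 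Worse, the statement you are trying to prove there is simply false for an arbitrary choice of the free arcs. Take $K=\{-1,0,1\}$ and let $B_i$ be the closed spherical ball complementary to the round disk bounded by the circle through $\pm1$ centered at $ic_i$, with $c_i\to\infty$. Then $B_i\in\B^\infty$, $B_i\cap K=\{-1,1\}$, and $\rg_i$ is the $\kp$-chord of $B_i$ joining $-1$ and $1$; both arcs of $\partial B_i$ between $-1$ and $1$ are $K$-free, but if you choose $\alpha_i$ to be the lower arc (the one dipping just below the origin), then $\ol{\alpha_i}\to[-1,1]$, whose interior contains $0\in K$. The lemma still holds here because the \emph{other} arc of $\partial B$ (through $\infty$) misses $K$ --- but your argument, which only looks at the limit of the arcs you picked, cannot see this, so it does not establish that the limit geodesic is a side of $\HCH(B\cap K)$ rather than a diagonal.

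To close the gap you need a different argument for ``side, not diagonal.'' One way: suppose both open arcs of $\partial B$ between $a$ and $b$ contained points $p,q$ of $K$. Then the open chord $\rg$ lies in the interior of the gap $\HCH(B\cap K)$ (it is a diagonal of the ideal quadrilateral $a,p,b,q$), and these interior points lie in $\tU$. Points $x_i\in\rg_i\subset\HCH(B_i\cap K)$ converge into this interior, so for large $i$ either $B_i\ne B$, contradicting Lemma~\ref{disjointCH} (distinct hulls meet only in $\partial U\subset K$), or $B_i=B$, in which case the free arcs $\alpha_i$ are arcs of the \emph{fixed} circle $\partial B$ with endpoints tending to $a,b$, and for large $i$ such an arc must contain $p$ or $q$, contradicting $\alpha_i\cap K=\0$. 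Hence one of the two arcs of $\partial B$ between $a$ and $b$ is disjoint from $K$ and your reformulation then gives $\rg\in\KP$. For comparison, the paper's proof is exactly your compactness step: it takes $B_{i_j}\to B\in\B^\infty$, observes that $\ol{\rg_{i_j}}$ converges to a circular arc in $B$ with endpoints $a,b$ perpendicular to $\partial B$, and concludes $\rg\subset\HCH(B\cap K)$, so $\rg\in\KP$; it does not attempt your arc-limit analysis, and the extra point you (correctly) identified as needing proof is precisely the one your argument fails to deliver. Your sub-argument ruling out $\ol\alpha=\partial B$ is fine but is not where the real difficulty lies.
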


\begin{proof}
For each $i$ let $B_i\in\B^\infty$ such that
$\rg_i\subset\HCH(B_i\cap K)$. Then a subsequence  $B_{i_j}$
converges to some $B\in\B^\infty$ and $\ol{\rg_{i_j}}$ converges to
a closed circular arc $C$ in $B$ with endpoints $a$ and $b$, and $C$
is perpendicular to $\partial B$. Hence $\rg=C\sm\{a,b\}\subset
\HCH(B\cap K)$. So $\rg\in\KP$.
\end{proof}

By Lemma~\ref{contKP}, the family $\KP$ of  chords has continuity
properties similar to a foliation.

\begin{lem}\label{lenseunion}
For $a,b\in \partial U^\infty$, define $C(a,b)$\index{Cab@$C(a,b)$}
as the union of all $\kp$-chords  with endpoints $a$ and $b$. Then
if $C(a,b)\ne\0$, $C(a,b)$ is either  a single chord, or
$C(a,b)\cup\{a,b\}$ is a closed disk whose boundary consists of two
$\kp$-chords contained in $C(a,b)$ together with $\{a,b\}$.
\end{lem}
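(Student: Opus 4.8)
The plan is to push the two endpoints $a,b$ to $0$ and $\infty$ by a M\"obius transformation, after which $C(a,b)$ becomes a union of straight rays emanating from the origin and the two alternatives of the lemma become transparent.

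First I would fix a M\"obius map $M$ of $\sphere$ with $M(a)=0$, $M(b)=\infty$, and set $K'=M(K)$, $\tU'=\sphere\sm K'$; since $a,b\in\partial\tU\subset K$ we have $0,\infty\in K'$. As $M$ is conformal it carries $\B^\infty$, hyperbolic convex hulls, and hence the family $\KP$ of $\kp$-chords of $(K,\tU)$ bijectively onto the corresponding objects for $(K',\tU')$; in particular $M(C(a,b))$ is the union of all $\kp$-chords of $\tU'$ with endpoints $0$ and $\infty$. The key geometric remark is that a circle through $a$ and $b$ is sent by $M$ to a straight line through the origin, so any $B\in\B^\infty$ with $a,b\in\partial B$ becomes a closed half-plane $B_u=\{x:\langle x,u\rangle\ge 0\}$ for a unit vector $u$ (with $\partial B_u=\ell_u\cup\{\infty\}$, $\ell_u=\{\langle x,u\rangle=0\}$), and the hyperbolic geodesic of $B$ from $a$ to $b$ becomes the open ray $\rho_u=\{tu:t>0\}$. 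Since $0,\infty\in\partial B_u\cap K'$ automatically, $B_u\in\B^\infty$ iff $\Int(B_u)\cap K'=\0$, i.e.\ iff $K'\subset B_u^-:=\overline{\{x:\langle x,u\rangle\le 0\}}$. Hence, putting
\[
D=\{\,u\in\uc:\ K'\subset B_u^-\ \text{and $\rho_u$ is a boundary chord of }\HCH(B_u\cap K')\,\},
\]
one gets $M(C(a,b))=\bigcup_{u\in D}\rho_u$, and $D\ne\0$ because $C(a,b)\ne\0$. It then suffices to show $D$ is a single point or a closed arc of width in $(0,\pi]$.

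Next I would establish two properties of $D$. (Closedness.) If $u_i\to u$ with $u_i\in D$, the $\rho_{u_i}$ are $\kp$-chords with the fixed distinct endpoints $0,\infty$, so by Lemma~\ref{contKP} a subsequence of their closures converges to the closure of a $\kp$-chord with endpoints $0,\infty$; but $\overline{\rho_{u_i}}\to\overline{\rho_u}$ in the Hausdorff metric since $u_i\to u$, so $\rho_u$ itself is such a $\kp$-chord, and since the half-plane $B_u$ is recovered from the ray $\rho_u$ this forces $u\in D$. (Connectedness.) Given distinct non-antipodal $u_1,u_2\in D$, from $B_{u_1},B_{u_2}\in\B^\infty$ we get $K'\cap\complex\subset B_{u_1}^-\cap B_{u_2}^-$, the closure of a proper sector $S$; writing $u_i=(\cos\alpha_i,\sin\alpha_i)$ with $0<\alpha_2-\alpha_1<\pi$, a direct check shows that along $\ell_u\sm\{0\}$ the functionals $\langle\cdot,u_1\rangle$ and $\langle\cdot,u_2\rangle$ have strictly opposite signs for every $u=(\cos\alpha,\sin\alpha)$ with $\alpha_1<\alpha<\alpha_2$, whence $\ell_u\cap\overline S=\{0\}$, so $B_u\cap K'=\partial B_u\cap K'=\{0,\infty\}$, so $\HCH(B_u\cap K')=\rho_u\cup\{0,\infty\}$ is a single $\kp$-chord and $u\in D$; thus the short arc joining $u_1$ to $u_2$ lies in $D$. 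The one remaining case $u_1=-u_2\in D$ is disposed of by the same computation after noting that then $K'\cap\complex$ must lie in one closed half-line of $\ell_{u_1}$ issuing from $0$ (forced by the boundary-chord condition at $u_1$ together with $K'\cap\complex\subset B_{u_1}^-\cap B_{-u_1}^-=\ell_{u_1}$), so that every $u$ strictly between $u_1$ and $u_2$ again has $B_u\cap K'=\{0,\infty\}$ and lies in $D$; hence the whole semicircle joining $u_1$ to $u_2$ lies in $D$. So $D$ is connected, and being closed it is a closed arc (possibly a point); its width is $\le\pi$ because $D\subset N:=\{u:K'\subset B_u^-\}$, which, $K$ being nondegenerate, is contained in a closed half-circle.

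Finally I would conclude. If $D=\{u_0\}$, then $M(C(a,b))=\rho_{u_0}$ is a single $\kp$-chord, and $M^{-1}$ gives the first alternative. If $D=[u_1,u_2]$ is a nondegenerate closed arc of width $w\in(0,\pi]$, then $\bigcup_{u\in D}(\rho_u\cup\{0,\infty\})$ is a closed planar sector of opening $w$ (a closed half-plane when $w=\pi$) together with $\infty$, which is a closed topological disk whose boundary Jordan curve is $\overline{\rho_{u_1}}\cup\overline{\rho_{u_2}}$, i.e.\ the two $\kp$-chords $\rho_{u_1},\rho_{u_2}$ (both contained in $C(a,b)$ since $u_1,u_2\in D$) together with $\{0,\infty\}$; applying $M^{-1}$ yields the second alternative. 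The hard part is the interpolation step: identifying $B_u\cap K'$ for directions strictly between two $\kp$-chord directions and recognizing $\rho_u$ as a genuine $\kp$-chord; the treatment of the degenerate (antipodal, $K$-in-a-line) configurations is routine but needs a little care, and everything else is bookkeeping with the transport through $M$ and an appeal to the continuity Lemma~\ref{contKP}.
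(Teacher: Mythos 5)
Your proof is correct, but it takes a genuinely different route from the paper's. The paper's argument is very short and leans on machinery already established: given two distinct chords $\rg,\rh\subset C(a,b)$, the set $S=\rg\cup\rh\cup\{a,b\}$ is a simple closed curve, and any point $z$ of the complementary domain of $S$ lying in $\tU$ belongs to some hull $\HCH(B\cap K)$ by Theorem~\ref{KPthrm}; by Lemma~\ref{disjointCH} that hull can meet $K$ only in $\{a,b\}$, so it is a single chord from $a$ to $b$ through $z$, whence the whole region between $\rg$ and $\rh$ lies in $C(a,b)$, and Lemma~\ref{contKP} then yields the two extreme boundary chords. You instead normalize by a M\"obius map sending $a,b$ to $0,\infty$, so that candidate chords become rays $\rho_u$ from the origin parametrized by inward normals $u\in\uc$, and you show directly that the set $D$ of admissible directions is closed (via Lemma~\ref{contKP}) and arc-convex, by the explicit half-plane/sector computation proving that every intermediate $B_u$ satisfies $B_u\cap K'=\{0,\infty\}$; this gives the single-chord/closed-sector dichotomy without ever invoking Theorem~\ref{KPthrm} or Lemma~\ref{disjointCH}. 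Your approach buys a self-contained, quantitative picture of $C(a,b)$ (a closed sector of opening at most $\pi$ in normalized coordinates), at the cost of the case analysis (antipodal directions, $K'$ contained in a line) that the partition argument avoids. One small caveat: your step ``$D$ lies in a closed half-circle because $K$ is nondegenerate'' actually needs $K\ne\{a,b\}$, i.e.\ $K'\cap\complex\ne\{0\}$; if $K$ is exactly the two-point set $\{a,b\}$ then $D=\uc$ and $C(a,b)\cup\{a,b\}$ is the whole sphere, so the lemma as stated fails in that degenerate situation anyway --- it is implicitly excluded both in your argument and in the paper's intended setting ($K=T(X)$ a nondegenerate continuum), so this is a wording issue rather than a genuine gap.
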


\begin{proof}
Suppose $\rg$ and $\rh$ are two distinct $\kp$-chords between $a$
and $b$. Then $S=\rg\cup\rh\cup\{a,b\}$ is a simple closed curve.
Choose a point $z$ in the complementary domain $V$ of $S$ contained
in $\tU$. Since the hyperbolic hulls partition $\tU$, there exists
$B\in \B^\infty$ such that $z\in \HCH(B\cap K)$. By
Lemma~\ref{disjointCH},  $\HCH(B\cap K)$ can only intersect $S\cap
K$ in $\{a,b\}$. So $\HCH(B\cap K)\cap K=\{a,b\}$ and it follows
that $V$ is contained in $C(a,b)$.

The rest of the Lemma follows from \ref{contKP}.
\end{proof}

\section{Hyperbolic foliation of simply connected domains}\label{sechyp}

In this section we will apply the results from Section~\ref{secKP} to the case
that $K$ is a non-separating plane continuum (or, equivalently, that
$\tU=\sphere\sm K$ is simply connected). The results in this section are
essential to \cite{overtymc07,ov09} but are not used in this paper. The reader who
is only interested in the fixed point question can skip this section.

Let $\Disk$ \index{da@$\Disk$} be the open unit disk in the plane. In this
section we let $\phi:\Disk\to \sphere\sm K=\tU$ be a Riemann map onto $\tU$. We
endow $\mathbb D$ with the hyperbolic metric, which is carried to $\tU$ by the
Riemann map. We use $\phi$ and  the Kulkarni-Pinkall hulls to induce a closed
collection  $\Gamma$ of chords in $\mathbb D$ that is a hyperbolic geodesic
lamination in $\Disk$ (see \cite{thur85}).

Let $\rg\in\KP$ be a chord with endpoints $a$ and $b$. Then $a$ and
$b$ are accessible points in $K$ and $\ol{\phi^{-1}(\rg)}$ is an arc
in $\Disk$ with endpoints $z,w\in\partial\Disk$. Let $\rG$
\index{G@$\rG$}  be the hyperbolic geodesic
\index{geodesic!hyperbolic} in $\Disk$ joining $z$ and $w$. Then $G$
is an open circular arc which meets $\partial\disk$ perpendicularly.
Let $\Gamma$ \index{Gamma@$\Gamma$} be the collection of all $\rG$
such that $\rg\in\KP$. We will prove that $\Gamma$ inherits the
properties of the family  $\KP$ as described in Theorem~\ref{KPthrm}
and Lemma~\ref{contKP} (see Lemma~\ref{compactfol},
Theorem~\ref{Hypmain} and the remark following ~\ref{Hypmain}).

Since members of $\KP$  do not intersect (though their closures are arcs which
may have common endpoints)  the same is true for distinct members of $\Gamma$.
We will refer to the members of $\Gamma$ (and their images under $\phi$) as
\emph{hyperbolic chords} or \emph{hyperbolic geodesics} \index{hyperbolic
chord}. Given $\rg\in\KP$ we denote the corresponding element of $\Gamma$ by
$\rG$ and its image  $\phi(\rG)$ in $\tU$ by $\fg$. \index{g@$\fg$} Note that
$\Gamma$ is a lamination of $\mathbb{D}$ in the sense of Thurston\cite{thur85}.
By a \emph{gap} of $\Gamma$ (or of $\phi(\Gamma)$), we mean the closure of a
component of $\mathbb{D}\sm\bigcup \Gamma$ in $\mathbb{D}$ (or its image under
$\phi$ in $\tU$, respectively).

\begin{lem}[J{\o}rgensen~{\cite[p.91 and 93r]{pomm92}}]\label{jorg}\index{J{\o}rgensen Lemma}
Let $B$ be a closed round ball such that its interior is in $\tU$.
Let $\gamma\subset\mathbb D$ be a hyperbolic geodesic. Then
$\phi(\gamma)\cap B$ is connected. In particular, if $R_t$ is an
external ray in $U^\infty$ and $B\in\B^\infty$, then $R_t\cap B$ is
connected.
\end{lem}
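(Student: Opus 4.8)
The plan is to reduce, by a M\"obius normalization, to a standard configuration and then to establish the convexity statement that is the real content of J{\o}rgensen's lemma. M\"obius transformations of $\sphere$ carry round balls to round balls (or half-planes), preserve the class of simply connected domains of $\sphere$, and are isometries between the corresponding hyperbolic metrics (hence carry geodesics to geodesics); so after such a transformation I may assume $B=\overline{\Disk}$ is the closed unit disk, whence $\Int(B)=\Disk\subset\tU$ and the complement $K:=\sphere\sm\tU$ meets $\partial\Disk$. Write $\Omega=\tU$ and let $\rho_\Omega$ be its hyperbolic metric. Since $\phi$ is an isometry of $\Disk$ (with its hyperbolic metric) onto $\Omega$, and since the sub-arc of a geodesic line joining two of its points is the geodesic segment between them, the assertion ``$\phi(\gamma)\cap B$ is connected for every hyperbolic geodesic $\gamma\subset\Disk$'' is equivalent to the statement that $\overline{\Disk}$ is convex for $\rho_\Omega$: for all $p,q\in\overline{\Disk}$ the $\Omega$-geodesic segment $[p,q]_\Omega$ lies in $\overline{\Disk}$.

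First I would run a reflection argument. Let $\si(w)=1/\bar w$ be the inversion in $\partial\Disk$ — an anti-conformal involution of $\sphere$ that fixes $\partial\Disk$ pointwise and interchanges $\Disk$ with $\sphere\sm\overline{\Disk}$. Suppose, for contradiction, that for some $p,q\in\overline{\Disk}$ the segment $\Ga=[p,q]_\Omega$ escapes $\overline{\Disk}$. On each maximal sub-arc $c$ of $\Ga$ contained in $\{|w|>1\}$ (there are at most countably many, forming a null sequence, each with both endpoints on $\partial\Disk$) replace $c$ by its reflection $\si(c)\subset\Disk\subset\Omega$; the resulting path $\Ga^\flat$ joins $p$ to $q$ inside $\Omega$. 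I claim each such replacement strictly decreases $\rho_\Omega$-length, so $\Ga^\flat$ is a path in $\Omega$ from $p$ to $q$ strictly shorter than the geodesic $\Ga$ — a contradiction.

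The hard part will be this length estimate, and it is precisely where the roundness of $B$ and the hypothesis $\Int(B)\subset\tU$ enter. Put $\Omega'=\si(\Omega)$ and $W=\Omega\cap\Omega'$; since $\si^\ast\rho_\Omega=\rho_{\Omega'}$, reflecting a sub-arc as above rescales its length element by the factor $\rho_{\Omega'}/\rho_\Omega$, so it suffices to prove $\rho_{\Omega'}<\rho_\Omega$ on $W\cap\{|w|>1\}$. Consider $v=\log\rho_{\Omega'}-\log\rho_\Omega$ there. Because $\Disk\subset\Omega$, the complement $K$ lies in $\{|w|\ge 1\}$, so $\partial\Omega\subset\{|w|\ge 1\}$ and $\partial\Omega'=\si(\partial\Omega)\subset\overline{\Disk}$; hence the boundary of the region $W\cap\{|w|>1\}$ lies in $\partial\Disk\cup\partial\Omega$, on the first part of which $\si=\mathrm{id}$ with $|\si'|=1$ so that $v=0$, and near the second part $\rho_\Omega\to\infty$ while $\rho_{\Omega'}$ stays bounded so that $v\to-\infty$; the point $\infty$ is harmless, being either interior to $W$ or a point of $\partial\Omega$. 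Finally, $\log\rho_\Omega$ and $\log\rho_{\Omega'}$ both satisfy the curvature $-1$ equation $\Delta\log\rho=\rho^2$, whence $\Delta v=\rho_{\Omega'}^2-\rho_\Omega^2=q\,v$ with $q\ge 0$; the maximum principle for the operator $\Delta-q$ then forces $v\le 0$ on $W\cap\{|w|>1\}$, and the strong maximum principle forces $v<0$ in the interior unless $v\equiv 0$, which would give $\Omega=\Omega'$, i.e. $K\subset\partial\Disk$ — a degeneracy that does not arise for a non-degenerate continuum $K=T(X)$ and is in any case trivial to treat directly. This yields the estimate, hence the convexity of $\overline{\Disk}$ for $\rho_\Omega$, hence the first assertion of the lemma. (Equivalently, one may quote this convexity directly from Pommerenke \cite[pp.~91, 93]{pomm92} after the normalization above.)

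For the ``in particular'' it remains to observe that an external ray $R_t$ of $U^\infty(=\tU)$ is a sub-arc of $\phi(\gamma)$ for a suitable radial hyperbolic geodesic $\gamma$ of $\Disk$; since the intersection of two sub-arcs of an arc is again a sub-arc, $R_t\cap B=R_t\cap(\phi(\gamma)\cap B)$ is connected by the first part. The step I expect to be most delicate is the last one in the length estimate — the maximum-principle argument, and in particular controlling $v$ near the (possibly infinitely many) corner points $\partial\Disk\cap K$ and near $\infty$; the remaining ingredients are routine.
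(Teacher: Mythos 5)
Since the paper offers no argument for this lemma (it is quoted from Pommerenke), your proposal has to stand on its own, and its overall architecture is sound: the reduction to hyperbolic convexity of $\overline{\Disk}\cap\Omega$ in $\Omega=\tU$, the reflection--shortening argument, and the final ``in particular'' step are all correct, and the density comparison you isolate, $\rho_{\si(\Omega)}\le\rho_\Omega$ on $\Omega\cap\{|w|>1\}$, is a true statement (it is exactly Minda's reflection principle for the hyperbolic metric, from which J{\o}rgensen's theorem is classically deduced). The genuine gap is in the one step you yourself flag: the maximum-principle argument as written rests on the boundary dichotomy ``on $\partial\Disk$ we have $v=0$; near $\partial\Omega$ we have $v\to-\infty$,'' and this dichotomy is simply false at the points of $\partial\Disk\cap\partial\Omega=\partial B\cap K$. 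At such a corner point both $\rho_\Omega$ and $\rho_{\si(\Omega)}$ blow up (the point lies on $\partial\Omega$ and on $\partial\si(\Omega)=\si(\partial\Omega)$ simultaneously), so neither ``$v=0$ by continuity'' nor ``$\rho_{\si(\Omega)}$ stays bounded, hence $v\to-\infty$'' applies, and the Koebe-type two-sided estimates only control the ratio up to a multiplicative constant, which is useless for a $\limsup\le 0$ statement. These corners are not an exotic degeneracy you can wave away: for $B\in\B^\infty$ one has $|\partial B\cap K|\ge 2$ by definition, so in the very application the lemma is used for they are always present. Without controlling $v$ there, the conclusion ``$v\le 0$'' is not justified, and this is essentially the whole analytic content of the lemma.

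The gap is reparable without new ideas, and it would be worth saying how. One clean fix: run your comparison for the reflection $\si_r$ in the concentric circle $\{|w|=r\}$ with $r<1$. Then $\si_r(\Omega\cap\{|w|>r\})\subset\{|w|<r\}\subset\Disk\subset\Omega$ still holds, but now the two pieces of the boundary of $\Omega\cap\{|w|>r\}$ --- the circle $\{|w|=r\}$ and $\partial\Omega\subset\{|w|\ge 1\}$ --- are disjoint compacta: on the circle $v_r$ extends continuously with value $0$, and near $\partial\Omega$ one has $\rho_\Omega\to\infty$ while $\rho_{\si_r(\Omega)}$ is locally bounded because $\si_r(\Omega)\supset\{|w|>r^2\}\supset\partial\Omega$. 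So your dichotomy is literally valid, the subharmonicity of $v_r$ on $\{v_r>0\}$ gives $\rho_{\si_r(\Omega)}\le\rho_\Omega$ on $\Omega\cap\{|w|>r\}$, and letting $r\to 1$ (using only continuity of $\rho_\Omega$ on $\Disk$, since $\rho_{\si_r(\Omega)}(w)=\rho_\Omega(\si_r(w))\,|\si_r'(w)|$) recovers the inequality for the unit circle; the strictness you want is then unnecessary, because a reflected path of length $\le$ that of the geodesic would itself be a minimizer, forcing a simple arc to contain two distinct subarcs with the same endpoints. Alternatively, cite Minda's reflection principle, or simply quote Pommerenke as the memoir does.
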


If $a,b\in\partial\tU$, recall that $C(a,b)$ is the union of all $\kp$-chords
with endpoints $a$ and $b$. From the viewpoint of prime ends, all chords in
$C(a,b)$ are the same. That is why all the chords in $C(a,b)$ are replaced  by
a single hyperbolic chord  $\fg\in\phi(\Gamma)$. The following lemma follows.

\begin{lem}\label{sameb}
Suppose $\rg\in\kp$ and $\rg\subset \HCH(B\cap \partial\tU)$ joins
the points $a,b\in\partial\tU$ for some $B\in \B^\infty$. If
$G\in\Gamma$ is the corresponding hyperbolic geodesic,  then
$\fg=\phi(\rG)\subset B$.
\end{lem}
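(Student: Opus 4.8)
The plan is to deduce $\fg\subset B$ from J{\o}rgensen's Lemma~\ref{jorg}, applied not to the whole geodesic $\rG$ but to a sequence of geodesic \emph{segments} that approximate it and whose endpoints can be forced into $\Int B$.

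First I would record the elementary inclusions. Since $\rg$ is a $\kp$-chord lying in $\HCH(B\cap\partial\tU)$, and that hull is hyperbolically convex inside $\Int B$ with every bounding chord a geodesic of $\Int B$, we have $\rg\subset\Int B$, while the endpoints $a,b$ of $\rg$ lie in $B\cap\partial\tU\subset\partial B$ (using $\Int B\subset\tU$, which is disjoint from $\partial\tU$). Parametrize $\rg$ so that $\rg(t)\to a$ as $t\to 0^+$ and $\rg(t)\to b$ as $t\to 1^-$, and choose $t_n\downarrow 0$, $s_n\uparrow 1$; put $x_n=\rg(t_n)$ and $y_n=\rg(s_n)$. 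Then $x_n,y_n\in\Int B\subset\tU$, $x_n\to a$, $y_n\to b$, and $x_n\ne y_n$ for large $n$. I take the $x_n,y_n$ \emph{on the chord} $\rg$ precisely so that they are guaranteed to sit in $\Int B$, rather than merely approaching $a,b$ inside $\tU$.

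Next, for large $n$ let $\ell_n\subset\Disk$ be the hyperbolic geodesic line through $\phi^{-1}(x_n)$ and $\phi^{-1}(y_n)$, let $G_n\subset\ell_n$ be the closed segment between them, and set $\fg_n=\phi(G_n)$, the $\tU$-geodesic segment from $x_n$ to $y_n$ (here $\phi$ is a hyperbolic isometry $\Disk\to\tU$). By Lemma~\ref{jorg}, $\phi(\ell_n)\cap B$ is connected; since $\phi(\ell_n)$ is an arc and $x_n,y_n\in\phi(\ell_n)\cap\Int B$, this connected set must contain the whole sub-arc of $\phi(\ell_n)$ from $x_n$ to $y_n$, i.e. $\fg_n\subset B$, and likewise $\phi(\overline{G_n})\subset B$. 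Now pass to the limit: by the construction of $\rG$ recalled just before Lemma~\ref{jorg}, $\overline{\phi^{-1}(\rg)}$ is an arc in $\overline{\Disk}$ with endpoints $z,w\in\partial\Disk$ and $\rG$ is the geodesic joining $z$ and $w$; taking $z$ to be the $a$-end we get $\phi^{-1}(x_n)\to z$ and $\phi^{-1}(y_n)\to w$. Since the (extended) geodesic through two distinct points of $\overline{\Disk}$ depends continuously on the pair and $z\ne w$, the closed segments $\overline{G_n}$ converge, as compact subsets of $\overline{\Disk}$, to $\overline{\rG}=\rG\cup\{z,w\}$; hence every $q\in\rG$ is a limit $q=\lim q_n$ with $q_n\in\overline{G_n}$, so $\phi(q_n)\in\phi(\overline{G_n})\subset B$, and, $B$ being closed and $\phi$ continuous, $\phi(q)\in B$. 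As $q\in\rG$ was arbitrary, $\fg=\phi(\rG)\subset B$.

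The only real subtlety — and the reason this is not a one-liner — is that Lemma~\ref{jorg} gives \emph{connectedness} of $\phi(\gamma)\cap B$, not containment: applied to $\rG$ itself it would leave open whether $\fg\cap B$ actually reaches the ideal endpoints $a,b$. Approximating $\rG$ by segments $G_n$ with honest interior endpoints $x_n,y_n\in\Int B$ is exactly what promotes ``connected'' to ``all of $\fg_n$'', after which the Hausdorff-limit argument transfers the conclusion to $\rG$. The remaining ingredients (continuity of geodesics in their endpoints, $\phi$ a hyperbolic isometry, $\phi^{-1}(x_n)\to z$) are standard and I would only mention them.
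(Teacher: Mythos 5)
Your proof is correct, and it reaches the conclusion by a route whose logical shape differs from the paper's, although it rests on the same key ingredient, J{\o}rgensen's Lemma~\ref{jorg}. The paper argues by contradiction: assuming some $x\in\rG\sm\tB$ (where $\tB=\phi^{-1}(B)$), it chooses circle points $a_i\to\hat a$, $b_i\to\tb$ in the component of $\ol{\Disk}\sm\ol{\phi^{-1}(\rg)}$ \emph{not} containing $x$, takes the geodesics $H_i$ joining them, and notes that for large $i$ the set $H_i\cap\tB$ is disconnected (the geodesic comes close to $x\nin\tB$ yet must meet $\phi^{-1}(\rg)\subset\tB$ on either side of that excursion), contradicting Lemma~\ref{jorg}. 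You instead argue directly: you anchor the approximating geodesics at interior points $\phi^{-1}(x_n),\phi^{-1}(y_n)$ with $x_n,y_n\in\rg\subset\Int(B)$, use Lemma~\ref{jorg} in the form ``a connected subset of the arc $\phi(\ell_n)$ containing two points of $B$ contains the subarc between them,'' so each segment $\phi(G_n)$ lies in $B$, and then pass to the limit using continuity of geodesic segments in their (possibly ideal) endpoints, $\phi^{-1}(x_n)\to z$, $\phi^{-1}(y_n)\to w$, together with closedness of $B$ and continuity of $\phi$ at interior points. Both versions need the same background facts, namely that $\ol{\phi^{-1}(\rg)}$ is an arc with distinct endpoints $z,w\in\uc$ and that geodesics vary continuously with their endpoints; what your direct version buys is the elimination of the contradiction and of the crossing/separation bookkeeping needed to see that $H_i\cap\tB$ is disconnected, at the modest price of the routine verification that $\ol{G_n}\to\ol{\rG}$ in the Hausdorff sense so that every point of $\rG$ is approximated from within the segments. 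Either way the argument is sound and complete.
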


\begin{proof}We may
assume that the Riemann map $\phi:\Disk\to\tU$ is extended over all
points $x\in \uc$ so that $\phi(x)$ is an accessible point of $\tU$.
Let $\phi^{-1}(a)=\hat a$, $\phi^{-1}(b)=\tb$ and
$\phi^{-1}(B)=\tB$, and let $\rG$ be the hyperbolic geodesic joining
the points $\hat a$ and $\tb$ in $\Disk$. (Note that this extended
map is not necessarily continuous at points of $\ol{\disk}$
corresponding to accessible points of $K$.) Suppose, by way of
contradiction, that $x\in\rG\sm \tB$. Let $C$ be the component of
$\ol{\Disk}\sm \ol{\phi^{-1}(\rg)}$ which does not contain $x$.
Choose $a_i\to\hat a$ and $b_i\to \tb$ in $\uc\cap C$ and let $H_i$
be the hyperbolic geodesic in $\Disk$ joining the points $a_i$ and
$b_i$. Then $\lim H_i=\rG$ and $H_i\cap \tB$ is not connected for
$i$ large. Hence $\phi(H_i)\cap B$ is not connected for $i$ large.
This contradiction with Lemma~\ref{jorg} completes the proof.
\end{proof}

\begin{lem}\label{compactfol}
Suppose that $\{\rG_i\}$ is a sequence of hyperbolic chords in $\Gamma$ and
suppose that $x_i\in \rG_i$ such that $\{x_i\}$ converges to $x\in \mathbb D$.
Then there is a  unique hyperbolic chord $\rG\in\Gamma$ that contains $x$.
Furthermore, $\lim \rG_i=\ol{\rG}$.
\end{lem}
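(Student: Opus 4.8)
The plan is to pass to subsequences, track the endpoints of the $\rG_i$ on $\uc$, identify the limiting geodesic, and then upgrade subsequential convergence to convergence of the whole sequence. First I would write $z_i,w_i\in\uc=\partial\mathbb{D}$ for the two endpoints of $\rG_i$ and, using compactness of $\uc$, pass to a subsequence with $z_i\to z$ and $w_i\to w$. I claim $z\neq w$: if $z=w$, then the Euclidean diameters of the geodesic arcs $\ol{\rG_i}$ tend to $0$ and $\ol{\rG_i}\to\{z\}$ in the Hausdorff metric, whence $x=\lim x_i=z\in\uc$, contradicting $x\in\mathbb{D}$. With $z\neq w$ in hand, the continuous dependence of a hyperbolic geodesic of $\mathbb{D}$ on its (distinct) boundary endpoints gives $\ol{\rG_i}\to\ol{\rG}$ in the Hausdorff metric, where $\rG$ is the hyperbolic geodesic of $\mathbb{D}$ joining $z$ and $w$; since $x\in\mathbb{D}$, it follows that $x\in\rG$.

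The substantive step is to show $\rG\in\Gamma$. For each $i$, let $\rg_i\in\KP$ be the $\kp$-chord producing $\rG_i$, say $\rg_i\subset\HCH(B_i\cap K)$ with $B_i\in\B^\infty$ and endpoints the accessible points $a_i,b_i\in K\cap\partial B_i$, so that $z_i,w_i$ are exactly the $\uc$-endpoints of $\ol{\phi^{-1}(\rg_i)}$. I would pass to a further subsequence so that $B_i\to B\in\B^\infty$ (the relevant family of round balls is compact, exactly as in the proof of Lemma~\ref{contKP}) and $a_i\to a$, $b_i\to b$ with $a,b\in K\cap\partial B$. By Lemma~\ref{sameb}, $\fg_i:=\phi(\rG_i)\subset B_i$, and taking limits of points gives $\fg:=\phi(\rG)\subset B$ and $y:=\phi(x)\in B\cap\tU$. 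If I can establish that $a\neq b$, then Lemma~\ref{contKP} applies to $\{\rg_i\}$: after a last subsequence, $\ol{\rg_i}\to C$ with $\rg:=C\setminus\{a,b\}\in\KP$ a genuine $\kp$-chord joining $a$ and $b$; since the crosscuts $\rg_i$ converge to $\rg$ with endpoints converging and $a\neq b$, the preimage crosscuts $\phi^{-1}(\rg_i)$ converge with $\uc$-endpoints converging, forcing $\{z,w\}$ to be the pair of endpoints of $\ol{\phi^{-1}(\rg)}$; hence $\rG$ is precisely the hyperbolic geodesic of $\mathbb{D}$ associated to $\rg$, i.e.\ $\rG\in\Gamma$.

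The hard part will be ruling out $a=b$. If $a=b$, then, $\rg_i$ being the hyperbolic geodesic of the non-degenerating ball $B_i$ between two Euclidean-close boundary points, $\dm(\rg_i)\to 0$ and $\ol{\rg_i}\to\{a\}$; combining this with Lemma~\ref{sameb}, J{\o}rgensen's Lemma~\ref{jorg}, and the fact that $x=\lim x_i$ is an \emph{interior} point of $\mathbb{D}$ with $z\neq w$, I would derive that the images $\fg_i=\phi(\rG_i)$ (all lying in $B_i$ with endpoints collapsing to $a$) cannot carry points converging to the interior point $y=\phi(x)\in\tU$, a contradiction. Making this last implication precise is the delicate point, because a null sequence of crosscuts of $\tU$ converging to a single boundary point need not have preimages forming a null sequence in $\mathbb{D}$; one has to exploit that $\phi^{-1}(\rg_i)$ is pinned down by $z_i\to z\neq w\leftarrow w_i$ and that $\fg_i$ is confined to $B_i$ by Lemma~\ref{sameb}.

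Finally, uniqueness and convergence of the full sequence come essentially for free. If $\rG,\rG'\in\Gamma$ both contain the interior point $x$, they are distinct geodesics of $\mathbb{D}$ sharing an interior point, hence cross there; but distinct members of $\Gamma$ are disjoint, so $\rG=\rG'$. And every subsequence of $\{\rG_i\}$ has, by the argument above, a further subsequence whose closures converge in the Hausdorff metric to $\ol{\rG''}$ for some $\rG''\in\Gamma$ with $x\in\rG''$; by uniqueness $\rG''=\rG$, so $\ol{\rG_i}\to\ol{\rG}$.
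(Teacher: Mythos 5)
Your proposal is correct and follows essentially the same route as the paper's (very terse) proof: pass to a subsequence, pull back to the $\kp$-chords, apply Lemma~\ref{contKP} to obtain a limit chord $\rg\in\KP$, identify $\rG$ as the hyperbolic geodesic associated to it (hence $\rG\in\Gamma$), and deduce uniqueness and convergence of the whole sequence from the fact that distinct members of $\Gamma$ are disjoint. The one step you flag as delicate---ruling out that the planar endpoints collapse ($a=b$) while $z\ne w$---is not addressed in the paper's proof either (it silently uses the hypothesis of Lemma~\ref{contKP}); it can be closed by the standard boundary-behaviour fact that a crosscut of $\tU$ of small spherical diameter pulls back under $\phi$ to a crosscut cutting off a component of $\disk$ of small diameter, so $\dm(\rg_i)\to 0$ would force $|z_i-w_i|\to 0$ and push the geodesics $\rG_i$ toward $\uc$, contradicting $x_i\to x\in\disk$.
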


\begin{proof}
We may suppose that a subsequence sequence $\{\rG_{i_j}\}$ converges
to a hyperbolic chord $\rG$ which contains $x$. Let $\rg_i\in\KP$ so
that $\phi^{-1}(\rg_i)$ is an open arc which joins the endpoints of
$\rG_i$. By Lemma~\ref{contKP},there ia another  subsequence so that
$\lim \rg_{i_{j(t)}}=\rg\in\kp$. It follows that $\rG$ is the
hyperbolic chord joining the endpoints of $\phi^{-1}(\rg)$. Hence
$\rG\in\Gamma$. Since the above argument applies to all
subsequences, the sequence $G_i$ must converge to $G$. \end{proof}

So we have used the  family  of $\kp$-chords in $\tU$ to stratify
$\mathbb{D}$ to the family $\Gamma$ of hyperbolic chords. In
particular gaps of $\Gamma$ are no longer necessarily disjoint but
they can meet at most in a common boundary chord. By
Lemma~\ref{sameb} for each $\kp$-chord $\rg\subset \HCH(B\cap
\partial\tU)$ its associated hyperbolic chord $\fg=\phi(\rG)\subset
B$. Hence, there is a continuous deformation of $\tU$ that maps
$\bigcup \KP$ onto $\bigcup\phi(\Gamma)$, which suggests that
components of $\tU\sm \bigcup \phi(\Gamma)$ naturally correspond to
the interiors of the gaps of the Kulkarni-Pinkall partition. That
this is indeed the case is the substance of the next lemma.

\begin{lem}
There is a $1-1$ correspondence between complementary domains $Z\subset\mathbb
D\setminus \bigcup \Gamma$ and the interiors of  Kulkarni-Pinkall gaps $\HCH(
B\cap K)$. Moreover, for each gap $Z$ of $\Gamma$ there exists a unique
$B\in\B^\infty$ such that $Z$ corresponds to the interior of the $\KP$ gap
$\HCH(B\cap K)\cap \tU$ in that $\partial Z\cap \Disk=\bigcup\{G\in\Gamma\mid
\rg\in\KP \text{ and } \rg\subset \partial \HCH(B\cap K)\}$ and $\phi(Z)\subset
B$.
\end{lem}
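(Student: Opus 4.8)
The plan is to describe the correspondence explicitly in both directions and then check that the two constructions are mutually inverse. \emph{From balls to gaps:} fix $B\in\B^\infty$ with $|B\cap\partial U|\ge 3$, so that $\HCH(B\cap K)$ is a genuine Kulkarni--Pinkall gap, with nonempty interior, whose relative boundary in $\tU$ is a null sequence of pairwise disjoint $\kp$-chords $\rg_\alpha$ having all their endpoints in $K\cap\partial B$ and meeting consecutively in those endpoints (the incidence pattern of a ``convex polygon'' inscribed in $\partial B$). Replace each $\rg_\alpha$ by its hyperbolic geodesic $\rG_\alpha\in\Gamma$; since $\rG_\alpha$ has the same pair of endpoints on $\uc$ as $\ol{\phi^{-1}(\rg_\alpha)}$, the $\rG_\alpha$ are pairwise disjoint and inherit the same combinatorial pattern. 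I would then show that they co-bound a single complementary component $Z_B$ of $\bigcup\Gamma$ in $\Disk$, namely the hyperbolically convex region whose ideal boundary $\ol{Z_B}\cap\uc$ consists of the points of $\uc$ corresponding to the accessible points of $K\cap\partial B$ together with the arcs of $\uc$ that the $\rG_\alpha$ cut off on the ``$K$-side''. Two points need verification. First, no further geodesic of $\Gamma$ meets $Z_B$: any such geodesic comes from a $\kp$-chord $\rg'$ distinct from every $\rg_\alpha$, hence disjoint from $\HCH(B\cap K)$ except perhaps at one shared vertex on $\partial B$ (Lemma~\ref{disjointCH}); being a crosscut, $\rg'$ lies in the component of $\tU\setminus\HCH(B\cap K)$ behind one particular chord $\rg_{\alpha_0}$, so both endpoints of $\rg'$, and hence both endpoints of $\rG'$, lie in a single complementary arc of $\ol{Z_B}\cap\uc$, whence $\rG'$ is disjoint from the interior of $Z_B$. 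Second, $\phi(Z_B)\subset B$: by Lemma~\ref{sameb} each $\fg_\alpha=\phi(\rG_\alpha)\subset B$, and $\Int(B)\subset\tU$ forces $K\cap B\subset\partial B$; thus the whole topological frontier of $\phi(Z_B)$ lies in $B$, and since $\sphere\setminus B$ is connected and misses that frontier while $\phi(Z_B)$ cannot contain all of $\sphere\setminus B$ (that complement still carries geodesics of $\phi(\Gamma)$ unless $\Gamma$ is trivial, in which case the claim is vacuous), we conclude $\phi(Z_B)\subset B$.

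\emph{From gaps to balls:} let $Z$ be an arbitrary component of $\Disk\setminus\bigcup\Gamma$. Since $\Gamma$ is a closed lamination (Lemma~\ref{compactfol}), $\partial Z\cap\Disk$ is a union of boundary geodesics $\rG_\alpha\in\Gamma$ and $\ol Z\cap\uc$ is the complement of the open arcs they cut off. Pick $z\in Z$ and let $B$ be the unique ball with $\phi(z)\in\HCH(B\cap K)$ (Theorem~\ref{KPthrm}). I would show $|B\cap\partial U|\ge 3$ and that $B$ is independent of $z$ by checking that the region $Z_B$ produced in the first part --- for the ball attached to one of the boundary chords $\rg_{\alpha_0}$ of $Z$ --- is itself a component of $\Disk\setminus\bigcup\Gamma$ sharing the boundary leaf $\rG_{\alpha_0}$ with $Z$, so that necessarily $Z=Z_B$. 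This single identification simultaneously pins down $B$, gives $\partial Z\cap\Disk=\bigcup\{\rG\mid\rg\in\KP,\ \rg\subset\partial\HCH(B\cap K)\}$, and yields $\phi(Z)=\phi(Z_B)\subset B$.

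The two constructions are mutually inverse: by Lemma~\ref{disjointCH} the ball $B$ is uniquely recovered from the collection of $\kp$-chords on $\partial\HCH(B\cap K)$, hence from $Z$; and by Theorem~\ref{KPthrm} the interior of every Kulkarni--Pinkall gap meets some component $Z$ of $\Disk\setminus\bigcup\Gamma$. This gives the asserted one-to-one correspondence. The hardest part, I expect, is the bookkeeping in the first paragraph: proving carefully that straightening the boundary chords of one hull neither merges two Kulkarni--Pinkall gaps nor lets an exterior leaf leak into $Z_B$ (and, in the reverse direction, selecting the correct ball when the shared leaf $\rG_{\alpha_0}$ is itself an entire hull $\HCH(B_0\cap K)$, so that both its sides are gaps). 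This is exactly where one must combine the disjointness of $\kp$-chords (Lemma~\ref{disjointCH}) with the fact that a $\kp$-chord $\rg$ and its straightening $\fg$ lie in the same ball (Lemma~\ref{sameb}), so that $\ol{\phi^{-1}(\rg)}$ and $\rG$ are nested and the ``bigon'' between them touches $\uc$ only in their two common endpoints. Given this, the remaining ingredients --- null-sequence arguments via Lemma~\ref{contKP}, the elementary fact that a hull with at least three boundary points has nonempty interior, and collapsing each family $C(a,b)$ of Lemma~\ref{lenseunion} to a single representative $\kp$-chord (legitimate since all its members share the same endpoints and hence the same geodesic) --- are routine.
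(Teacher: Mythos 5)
Your overall route is the same as the paper's (the authors also build the gap of $\Gamma$ from the boundary chords of one hull, use Lemma~\ref{disjointCH} to keep foreign leaves out, Lemmas~\ref{contKP} and \ref{lenseunion} for the converse, and Lemma~\ref{sameb} for $\phi(Z)\subset B$), and your argument that no other leaf of $\Gamma$ can enter $Z_B$ --- both endpoints of a foreign $\kp$-chord are accessed through the complementary domain behind a single boundary chord $\rg_{\alpha_0}$, hence land in one complementary arc of $\uc$ --- is correct and is essentially the paper's key step. But two of your steps do not hold up as written. First, the claim that ``the whole topological frontier of $\phi(Z_B)$ lies in $B$'' is not a consequence of the two facts you cite. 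Lemma~\ref{sameb} controls only the part of the frontier lying on the geodesics $\fg_\alpha$; the frontier of $\phi(Z_B)$ also contains points of $K$ (cluster sets/impressions of $\phi$ at points of $\ol{Z_B}\cap\uc$), and putting \emph{those} inside $B$ is essentially the assertion $\phi(Z_B)\subset B$ itself, not a stepping stone to it. The genuine difficulty is exactly the ``bigon'' you mention at the end: a point of $Z_B$ may lie between $\phi^{-1}(\rg_\alpha)$ and $\rG_\alpha$, so its image lies \emph{beyond} the chord $\rg_\alpha$, outside $\HCH(B\cap K)$ --- this really happens (if $K$ is a subarc of the circle $\partial B$, then $\fg$ is the complementary arc $\partial B\sm K$ and $\phi(Z_B)$ is all of $\Int(B)$, much larger than the hull) --- and one must still argue that such image points stay in $B$. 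Observing that the bigon meets $\uc$ only at its two ideal endpoints does not settle this, because the cluster sets of $\phi$ at those endpoints through the bigon are not controlled by anything you quote; so the containment $\phi(Z_B)\subset B$ is asserted, not proved.

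Second, in the converse direction the inference ``$Z_B$ is a component of $\Disk\sm\bigcup\Gamma$ sharing the boundary leaf $\rG_{\alpha_0}$ with $Z$, so that necessarily $Z=Z_B$'' is invalid as stated: two \emph{distinct} gaps of $\Gamma$ may share a boundary leaf (this is pointed out in the text just before the lemma), so equality requires showing that $Z_B$ lies on the \emph{same side} of $\rG_{\alpha_0}$ as $Z$. That in turn forces you to do the ball selection you defer to the ``bookkeeping'' remark: you must show that on the $Z$-side of $\rG_{\alpha_0}$ there actually sits a KP gap (i.e., that leaves of $\Gamma$, or the chords of a lens $C(a,b)$, do not accumulate on $\rG_{\alpha_0}$ from the $Z$-side, and that the extreme chord of $C(a,b)$ on that side bounds a hull with at least three boundary points), and then that the central region attached to that ball is the component adjacent to $\rG_{\alpha_0}$ from the $Z$-side. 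This is precisely where the paper invokes Lemmas~\ref{contKP} and \ref{lenseunion}; in your write-up the conclusion is drawn before that work is done, and the well-definedness of ``the ball attached to $\rg_{\alpha_0}$'' (chord-only hulls, lens case) is left open. So the skeleton matches the paper, but the two places where the lemma actually needs an argument --- $\phi(Z)\subset B$ and the sided identification $Z=Z_B$ --- are gaps in the proposal.
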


\begin{proof}
Let $\rg$ and $\rh$ be two distinct $\kp$-chords in the boundary of
the gap $\HCH(B\cap K)$ for some $B\in\B^\infty$. Let $\{a,b\}$ and
$\{c,d\}$ be the endpoints of $\phi^{-1}(\rg)$ and $\phi^{-1}(\rh)$,
respectively.  Since $\rg$ and $\rh$ are contained in the same gap,
no hyperbolic chord of $\Gamma$ separates $G$ and $H$. Hence there
exists a gap $Z$ of $\Gamma$ whose boundary includes the hyperbolic
chords $\rG$ and $\rH$. It now follows easily that for any
$\rg'\in\KP$ which is contained in the boundary of the same gap
$\HCH(B\cap K)$, $\rG'$ is contained in the boundary of $Z$. Hence
the $\KP$ gap $\HCH(B\cap K)$ corresponds to the gap $Z$ of
$\Gamma$. Conversely, if $Z$ is a gap of $\Gamma$ in $\Disk$ then a
similar argument, together with Lemmas~\ref{contKP} and
\ref{lenseunion}, implies that $Z$ corresponds to a unique gap
$\HCH(B\cap K)$ for some $B\in\B^\infty$. The rest of the Lemma now
follows from Lemma~\ref{sameb}.
\end{proof}

So if $\tU=\sphere\sm K$ is endowed with the hyperbolic metric induced by
$\phi$, then there exists a family of geodesic chords that share the same
endpoints as elements of $\KP$. The complementary domains of $\tU\sm
\bigcup\{\fg\mid \rg\in\KP\}$ corresponds to the Kulkarni-Pinkall gaps. We
summarize the results:

\begin{thm}\label{Hypmain}
Suppose that $K\subset\complex$ is a non-separating continuum and let $\tU$ be
its complementary domain in the Riemann sphere. There exists a family
$\phi(\Gamma)$ of hyperbolic chords  in the hyperbolic metric on $\tU$ such
that for each $\fg\in\phi(\Gamma)$ there exists $B\in\B^\infty$ and $\rg\subset
\HCH(B\cap \partial\tU)$ so that $\fg$ and $\rg$ have the same endpoints and
$\fg\subset B$.  Each domain $Z$ of $\tU\setminus \phi(\Gamma)$  naturally
corresponds to a Kulkarni-Pinkall gap $\HCH(B\cap\partial \tU)$ The bounding
hyperbolic chords of $Z$ in $\tU$ correspond to the $\kp$-chords (i.e., chords
in $\KP$) of $\HCH(B\cap\partial \tU)$.
\end{thm}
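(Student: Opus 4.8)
The plan is to assemble Theorem~\ref{Hypmain} from the structural facts already established for the collection $\Gamma$ of hyperbolic geodesics in $\Disk$ and its image $\phi(\Gamma)$ in $\tU$. Recall that $\Gamma$ was obtained by taking, for each $\kp$-chord $\rg\in\KP$ with endpoints $a,b\in\partial\tU$, the arc $\ol{\phi^{-1}(\rg)}\subset\ol{\Disk}$ with endpoints $z,w\in\partial\Disk$ and replacing it by the hyperbolic geodesic $\rG$ of $\Disk$ joining $z$ to $w$; one then sets $\fg=\phi(\rG)$ and $\phi(\Gamma)=\{\fg\mid\rg\in\KP\}$. First I would record that $\phi(\Gamma)$ is a closed lamination of $\tU$: since distinct members of $\KP$ are disjoint (their closures meeting at most in a common endpoint in $\partial\tU$), the arcs $\ol{\phi^{-1}(\rg)}$ are pairwise disjoint up to common endpoints on $\partial\Disk$, and hence so are the geodesics $\rG\in\Gamma$; closedness, together with the fact that limits of members of $\Gamma$ are again members of $\Gamma$, is exactly Lemma~\ref{compactfol}.

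Next I would dispatch the first assertion of the theorem, that every $\fg\in\phi(\Gamma)$ lies in a ball of $\B^\infty$ sharing its endpoints with a $\kp$-chord there. This is precisely Lemma~\ref{sameb}: if $\rg\subset\HCH(B\cap\partial\tU)$ for some $B\in\B^\infty$ and $\rG$ is the corresponding hyperbolic geodesic, then $\fg=\phi(\rG)\subset B$, while by construction $\fg$ and $\rg$ have the same endpoints $a,b\in\partial\tU$. For the statement about gaps I would invoke the preceding lemma, which establishes the bijection between the complementary domains $Z$ of $\Disk\setminus\bigcup\Gamma$ and the interiors of the Kulkarni--Pinkall gaps $\HCH(B\cap K)$, $B\in\B^\infty$, with $\partial Z\cap\Disk=\bigcup\{\rG\in\Gamma\mid \rg\in\KP,\ \rg\subset\partial\HCH(B\cap K)\}$ and $\phi(Z)\subset B$. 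Applying $\phi$ transports this to a bijection between the domains of $\tU\setminus\phi(\Gamma)$ and the same family of Kulkarni--Pinkall gaps, the bounding hyperbolic chords of such a domain in $\tU$ being exactly the $\fg$ for $\rg$ a $\kp$-chord in $\partial\HCH(B\cap K)$. That $\phi(\Gamma)$ together with these domains genuinely exhausts $\tU$ follows from Theorem~\ref{KPthrm}: every point of $\tU$ lies in $\tU\cap\HCH(B\cap K)$ for some $B\in\B^\infty$, hence its $\phi^{-1}$-image lies on some $\rG$ (when the point is on a $\kp$-chord) or in the interior of the corresponding gap $Z$.

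The lamination and closedness bookkeeping is routine given Lemmas~\ref{contKP} and~\ref{compactfol}. The one place that needs genuine care is the boundary matching in the gap correspondence --- that $\partial(\phi(Z))\cap\tU$ consists of precisely the chords $\fg$ with $\rg\subset\partial\HCH(B\cap K)$ and no others --- which rests on the fact that no chord of $\Gamma$ separates two $\kp$-chords lying in a common Kulkarni--Pinkall gap, together with Lemmas~\ref{contKP} and~\ref{lenseunion}. I expect that boundary-matching step, already carried out in the preceding lemma, rather than anything analytic, to be the main obstacle.
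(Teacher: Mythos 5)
Your proposal is correct and follows essentially the same route as the paper: Theorem~\ref{Hypmain} is there stated as a summary of the preceding development, with the first assertion given by Lemma~\ref{sameb}, the gap correspondence by the lemma immediately preceding the theorem, and the exhaustion of $\tU$ by Theorem~\ref{KPthrm}, exactly as you assemble them. Your supporting citations of Lemmas~\ref{contKP}, \ref{compactfol} and \ref{lenseunion} for the lamination and boundary-matching bookkeeping also match the paper's argument.
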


In order to obtain Bell's Euclidean foliation \cite{bell76} we could have
modified the $\KP$ family as follows. Suppose that $B\in\B$. Instead of
replacing a $\kp$-chord  $\rg\in\HCH(B\cap K)$  by a geodesic in the hyperbolic
metric on $\tU$, we could have replaced it by a straight line segment; i.e, the
geodesic in the Euclidean metric. Then we would have obtained a family of open
straight line segments. In so doing we would have  replaced the gaps
$\HCH(B\cap\partial \tU)$ by $\ECH(B\cap\partial \tU)$, which is the way in
which Bell originally foliated $\ECH(K)\setminus K$. We hope that the above
argument provides a more transparent proof of Bell's result. Note that both in
the hyperbolic and Euclidean case the elements of the foliation are not
necessarily disjoint (hence we use the word \lq\lq foliate'' rather then \lq\lq
partition''). However, in both cases every point of $\tU$ is  contained in
either a unique chord or in the interior of a unique gap.

\section{Schoenflies Theorem}\label{Schoenflies}

In this short section we will show that the Schoenflies Theorem
follows immediately  from Theorem~\ref{Hypmain} (see
\cite{siebenmann2005} for some recent history of this old problem
and \cite{overtymc07,ov09} for more details and extensions of these
ideas). We want to emphasize here that no results of Chapter~\ref{c:tools} are relied upon
in Section~\ref{Schoenflies}.

\begin{thm}[Schoenflies Theorem] Suppose that $h:\uc\to \C$ is an embedding
of the unit circle in the plane and $U$ is a bounded complementary domain of $h(\uc)=S$. Then
there exists an embedding $H:\ol{\disk}\to\C$ which extends $h$.
\end{thm}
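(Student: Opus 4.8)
The plan is to reduce the statement to constructing \emph{some} homeomorphism $H_0\colon\ol{\disk}\to T(S)$, to produce $H_0$ from the hyperbolic foliation of Theorem~\ref{Hypmain} applied to the complementary domain on the side that must be filled, and finally to reparametrize the boundary so the resulting map agrees with $h$. The reduction is elementary: $T(S)=\ol U$ is a closed disk with $\partial T(S)=S$, so if $H_0$ exists then $h_0:=H_0|_{\uc}$ is a homeomorphism of $\uc$ onto $S$ and $\eta:=h_0^{-1}\circ h$ is a homeomorphism of $\uc$ onto itself; coning $\eta$ radially, $\widehat\eta(r\zeta):=r\,\eta(\zeta)$ for $\zeta\in\uc,\ r\in[0,1]$ and $\widehat\eta(0)=0$, is a homeomorphism of $\ol{\disk}$ onto itself, and $H:=H_0\circ\widehat\eta$ is a homeomorphism of $\ol{\disk}$ onto $T(S)\subset\C$ with $H|_{\uc}=h_0\circ\eta=h$, hence an embedding extending $h$. (No Schoenflies is used here; extending a circle homeomorphism over the disk by coning is routine.)

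To build $H_0$ I would apply Theorem~\ref{Hypmain} to the continuum whose complement is the region to be filled. Fix $p\in U$ and a M\"obius transformation $M$ of $\sphere$ with $M(p)=\infty$. Then $K:=M(\sphere\setminus U)$ is a non-separating plane continuum: it is compact (it misses $\infty$), connected (it is $M$ of the closure of the unbounded complementary domain of $S$), and $\widetilde U:=\sphere\setminus K=M(U)$ is connected; moreover $\partial K=M(S)$ is a simple closed curve, hence locally connected and without cut points. Let $\phi\colon\disk\to\widetilde U$ be a Riemann map. By Theorem~\ref{Hypmain} (with $K$ in the role of the continuum there), every point of $\widetilde U$ lies on a unique hyperbolic chord $\fg=\phi(\rG)\in\phi(\Gamma)$ or in the interior of a unique gap; each chord and each gap sits inside a maximal ball $B\in\B^\infty$ whose two (resp.\ three or more) points on $\partial K$ are the endpoints of the corresponding $\kp$-chords; and the family $\phi(\Gamma)$ varies continuously in the sense of Lemmas~\ref{contKP} and~\ref{compactfol}.

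The heart of the argument is then to extend $\phi$ to a homeomorphism $\ol{\phi}\colon\ol{\disk}\to\ol{\widetilde U}$ with $\ol{\phi}(\uc)=\partial K$. For continuity: since $\partial K$ is locally connected, the $\kp$-chords and gap-boundary chords whose endpoints lie in a small subarc of $\partial K$ and which cut off a small region have small diameter, and the gaps form a null family; with the continuity of the foliation this forces the chord or gap of $\phi(\Gamma)$ through $\phi(z)$ to shrink to a point as $z\to\zeta\in\uc$, so $\phi(z)$ converges and $\phi$ extends continuously with $\ol{\phi}(\uc)=\partial K$ (surjectivity onto $\partial K$ being a compactness formality). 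For injectivity of $\ol{\phi}|_{\uc}$ one uses that $\partial K$ has no cut points: if $\ol{\phi}(\zeta_1)=\ol{\phi}(\zeta_2)=q$ with $\zeta_1\ne\zeta_2$, then the $\ol{\phi}$-image of the hyperbolic geodesic of $\disk$ joining $\zeta_1,\zeta_2$, together with $q$, is a Jordan curve $J\subset\widetilde U\cup\{q\}$; as $\partial K\setminus\{q\}$ is connected and disjoint from $J$ it lies in one complementary Jordan domain of $J$, so the closure of the other is a Jordan disk meeting $\partial K$ only in $q$, and, approaching the corresponding boundary arc of $\disk$ where the chords shrink, this contradicts $\ol{\phi}(\uc)=\partial K$. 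A continuous bijection of the compact $\ol{\disk}$ onto the Hausdorff $\ol{\widetilde U}$ is a homeomorphism, so $H_0:=M^{-1}\circ\ol{\phi}\colon\ol{\disk}\to M^{-1}(\ol{\widetilde U})=\ol U=T(S)$ is the desired homeomorphism, and the first paragraph produces $H$.

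I expect the main obstacle to be precisely this boundary extension: extracting from local connectedness of $\partial K$, via the geometric control in Theorem~\ref{Hypmain}, that the foliation chords and gaps shrink near $\uc$, and then making the no-cut-point argument for injectivity fully rigorous. The M\"obius normalization, the coning reduction, and the final compactness step are routine once this is in place.
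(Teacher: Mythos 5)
Your reduction (cone a circle homeomorphism to fix up the boundary parametrization) is fine, but the core of your construction — that the Riemann map $\phi\colon\disk\to\widetilde U$ extends to a homeomorphism of $\ol{\disk}$ onto $\ol{\widetilde U}$ — is exactly Carath\'eodory's theorem for Jordan domains, and this is where there is a genuine gap: Theorem~\ref{Hypmain} foliates the open domain but by itself says nothing about the boundary behavior of $\phi$. Two specific steps do not work as stated. First, ``chords whose endpoints lie in a small subarc of $\partial K$ and which cut off a small region have small diameter'' begs the question of which region is the small one: for a thin horseshoe-shaped Jordan domain a crosscut of tiny diameter joining the two arms splits the domain into two pieces of large diameter, so smallness of a crosscut does not by itself control the piece it bounds; sorting out the correct side is precisely the prime-end/uniform local connectedness argument in Carath\'eodory's proof. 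Second, even granting that $\kp$-chords near $\partial K$ are small (this the paper does supply, via Proposition~\ref{compactness} and Corollary~\ref{small}), you never connect $z\to\zeta\in\uc$ with the element of the foliation through $\phi(z)$ being small or close to $\partial K$: for that you would need that every prime end has a defining chain of $\kp$-chords with degenerate impression (compare Lemma~\ref{chordlimit}), which is the content of the theorem you are trying to prove, not of Theorem~\ref{Hypmain}. Your injectivity argument also ends too quickly: what the Jordan-curve argument gives is that the extension is constant on a boundary arc of $\uc$, and that is not in conflict with $\ol{\phi}(\uc)=\partial K$; the classical contradiction requires an additional conformal ingredient (e.g., Schwarz reflection), not just surjectivity.

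It is also worth noting that your route is structurally different from, and heavier than, the paper's. The paper never introduces a Riemann map of the bounded domain: since every chord of the partition of $U$ has both endpoints on $S=h(\uc)$, the lamination is pulled back through $h^{-1}$ to a lamination of $\disk$ by straight segments, $h$ is extended linearly over each chord, and over each gap by coning from the barycenter onto the (convex) gap of $U$; continuity comes from the convergence property of the chord family (as in Lemma~\ref{contKP}, chords converge to chords or to single boundary points) together with continuity of $h^{-1}$ on the compact set $S$. In other words, the paper's construction is designed precisely to avoid the Carath\'eodory-type boundary extension that your proposal defers, so as written the proposal leaves unproved the hardest step of the theorem.
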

\begin{proof} Let $\B=\{B_\al\}$ be the collection of maximal open balls in $U$ such that $|\partial B_\al\cap \partial U|\ge 2$.
For each $\al$ let $F_\al=\ECH(\partial B_\al\cap\partial U)$. Let $\lam$ be the collection of
all chords in the boundaries of all the sets
$F_\al$ and let $\lam^*$ be the union of all the chords in $\lam$. Let $\mathcal{G}=\{G_\beta\}$ be the collection of all
components of $U\sm \lam^*$. By  Theorem~\ref{Hypmain} there exists
for each $z\in U$ either a unique chord $\ell\in\lam$ such that $z\in\ell$ or a unique $G_\beta\in\mathcal{G}$
such that $z\in G_\beta$. Moreover, in the latter case, there exists a unique $\al$ such that $\ol{G_\beta}=F_\al\subset\ol{B_\al}$.
Note that  all chords in $\lam$ are straight line segments and
all the sets $F_\al$ are Euclidean convex sets. Suppose that  $x_i\in\lam^*$ such that $\lim x_i=x_\infty$ and
 $x_i\in\ell_i\in\lam$. Then either $\lim \ell_i=\ell_\infty$ and $x_\infty\in\ell_\infty\in\lam$, or
 $\lim \ell_i=x_\infty\in\partial U\subset h(\uc)$.
  Now we can pull back the lamination
$\lam$ to a lamination $\mathcal{E}$ of the unit disk $\disk$ by $h$:  if $\ell\in\lam$ is a chord joining the
points $y_1,y_2\in S$, then connect the points $h^{-1}(y_1), h^{-1}(y_2)$  by the straight line segment,
denoted by $h^{-1}(\ell)$ in the unit disk.
Let $\mathcal{E}^*$ be the union of all such line segments. Note that each gap $G_\beta$ of $U$ uniquely corresponds
to a (Euclidean convex) gap $H_\beta$ of $\mathcal{E}$ (i.e., $H_\beta$ is a component of $\disk\sm \mathcal{E}^*$.

Now extend $h$ first over $\mathcal{E}^*$ by mapping each chord in $\mathcal{E}$ linearly onto the corresponding chord in
$\lam$.  For each gap $H_\beta$ ($G_\beta$) of $\mathcal{E}$ ($\lam$) let $h_\beta$ ($g_\beta$, respectively) be its
barycenter. Then it follows easily that we can extend the map $h$ continuously  by defining $H(h_\beta)=g_\beta$
for each $\beta$.
Finally extend $H$ over all of $H_\beta$ by mapping, for each $w\in\partial H_\beta$ the straight line segment
$wh_\beta$ linearly onto the straight line segment joining the points $H(w)$ and $H(h_\beta)=g_\beta$.
Then $H$ is the required extension of $h$.
\end{proof}

\section{Prime ends}\label{kpchords}

We will follow the notation from Section~\ref{secKP} in the case
that $K=T(X)$ where $X$ is a plane continuum. Here we assume, as in
the introduction to this paper,  that $f:\complex\to\complex$ takes
the continuum $X$ into $T(X)$ with no fixed points in $T(X)$, and
$X$ is minimal with respect to these properties. We apply the
Kulkarni-Pinkall partition to $\tU=\sphere\sm T(X)$. Recall that
$\KPP=\{\HCH(B\cap K)\cap \tU\mid B\in\B^\infty\}$ is the Kulkarni
Pinkall partition of $\tU$ as given by Theorem~\ref{KPthrm}.

Let $B^\infty\in\B^\infty$ be the  maximal ball such that
$\infty\in\HCH(B^\infty\cap K)$. As before we use balls on the sphere. In
particular, straight lines in the plane correspond to circles on the sphere
containing the point at infinity. The subfamily of $\KPP$ whose elements are of
diameter $\leq \delta$ in the spherical metric is denoted by $\KPP_\delta$.
\index{KPPD@$\KPP_\delta$} The subfamily of  chords in $\KP$  of diameter $\leq
\delta$ is denoted by $\KP_\delta$. \index{KP chord@$\KP_\delta$}

By Lemma~\ref{contKP} we  know that the families $\KP$ and $\KPP$ have nice continuity
properties. However, $\KP$ and $\KPP$ are not closed in
the hyperspace of compact subsets of $\complex^\infty$: a sequence
of chords or hulls may converge to a point in the boundary
of $\tU$ (in which case it  must be a null sequence).

\begin{prop}[Closedness]\label{compactness}
Let $\{\rg_i\}$ be a convergent sequence of distinct elements in  $\KP_\delta$,
then either $\rg_i$ converges to a chord $\rg$ in $\KP_\delta$ or $\rg_i$
converges to a point of $X$. In the first case, for  large $i$ and $\delta$
sufficiently small, $\var(f,\rg,T(X))=\var(f,\rg_i,T(X))$.
\end{prop}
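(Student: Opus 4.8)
The plan is to combine the continuity of the Kulkarni–Pinkall family $\KP$ (Lemma~\ref{contKP}) with the homotopy-invariance of variation established after Proposition~\ref{straightjunction} and Proposition~\ref{varcross}. First I would prove the dichotomy. Each $\rg_i$ is a $\kp$-chord, so there is $B_i\in\B^\infty$ with $\rg_i\subset\HCH(B_i\cap K)$ and the closure $\ol{\rg_i}$ is a circular arc perpendicular to $\partial B_i$ with endpoints $a_i,b_i\in K=T(X)$. Since the $\rg_i$ are distinct elements of $\KP_\delta$, their diameters are at most $\delta$; passing to a subsequence, $a_i\to a$ and $b_i\to b$ with $a,b\in T(X)$ and $d(a,b)\le\delta$. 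If $a=b$, then since each chord lies within distance $\delta$ of its endpoints, $\operatorname{diam}(\rg_i)\to 0$ and $\rg_i$ converges to the point $a\in X$ (here we use minimality of $X$, i.e. $X=\bd T(X)$, so the limit point lies in $X$). If $a\ne b$, Lemma~\ref{contKP} applies directly: $\{\ol{\rg_i}\}$ has a subsequence converging to $\ol{\rg}$ with $\rg=C\sm\{a,b\}\in\KP$; and since the original sequence was assumed convergent, the whole sequence converges to $\ol{\rg}$, and $\operatorname{diam}(\rg)\le\delta$ so $\rg\in\KP_\delta$.

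Next I would handle the variation equality in the first case. The point is that $\rg$ is a crosscut of $T(X)$ with $\ol{\rg}\cap f(\ol{\rg})=\0$ for $\delta$ small: since $f$ is fixed-point free on the compact set $T(X)$, there is $\eta>0$ with $d(z,f(z))>\eta$ for all $z\in T(X)$, and $f$ is uniformly continuous near $T(X)$, so choosing $\delta$ small enough forces $\ol{\rg}$ and its $f$-image to be disjoint (and likewise $\ol{\rg_i}$ and $f(\ol{\rg_i})$ for all $i$). Hence $\var(f,\rg,T(X))$ and $\var(f,\rg_i,T(X))$ are all defined via Proposition~\ref{varcross} (independent of the completing bumping simple closed curve). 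To compare them, fix a junction $J_v$ with vertex $v\in\rg$ and $J_v\cap T(X)=\{v\}$ used to compute $\var(f,\rg,T(X))$, with $v$ chosen in the interior of $\rg$ away from where $f(\rg)$ meets $J_v$; then for $i$ large, $v$ can be taken close to a point $v_i\in\rg_i$ and, because $\ol{\rg_i}\to\ol{\rg}$ while $f(\ol{\rg_i})\to f(\ol{\rg})$ and the endpoints $f(a_i),f(b_i)$ stay off $J_v$, the stability of variation under homotopies that move the junction (the remarks following Proposition~\ref{straightjunction}) gives that the same junction computes $\var(f,\rg_i,T(X))$ and yields the same integer. Concretely, $f|_{\ol{\rg_i}}$ is homotopic to $f|_{\ol{\rg}}$ through maps whose images avoid $v$ and whose endpoint-images avoid $J_v$ (just follow the convergence $\ol{\rg_i}\to\ol{\rg}$), so the crossing count is unchanged; since variation is integer-valued and locally constant along this homotopy, $\var(f,\rg_i,T(X))=\var(f,\rg,T(X))$ for $i$ large.

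The main obstacle I anticipate is the bookkeeping in that last step: one must ensure that the junction $J_v$, chosen once and for all for $\rg$, genuinely works for every $\rg_i$ with $i$ large — i.e. that $J_v\cap T(X)=\{v\}$ is compatible with completing each $\rg_i$ to a bumping simple closed curve, and that no endpoint $f(a_i),f(b_i)$ ever lands on $J_v$ during the passage to the limit. This is where I would invoke Proposition~\ref{varcross} to switch freely among completing curves, and the uniform-continuity/fixed-point-free estimates above to keep all the relevant sets a definite distance apart once $\delta$ is small. Everything else — the dichotomy and the convergence — is a direct application of Lemma~\ref{contKP} together with the observation that a null sequence of chords in $T(X)=\bd T(X)$ must converge into $X$.
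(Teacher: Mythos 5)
Your treatment of the second assertion (choosing $\delta$ small so that $\ol{\rg}\cap f(\ol{\rg})=\0$, then comparing $\var(f,\rg_i,T(X))$ with $\var(f,\rg,T(X))$ by keeping one junction and using the homotopy/junction-moving stability of variation together with Proposition~\ref{varcross}) is exactly what the paper does, so that part is fine. The gap is in the dichotomy, at the case where the endpoints merge. You write that if $a=b$ then, ``since each chord lies within distance $\delta$ of its endpoints, $\dm(\rg_i)\to 0$.'' That inference fails: every point of $\rg_i$ being within $\delta$ of endpoints that converge to $a$ only yields $\limsup \dm(\rg_i)\le 2\delta$, not $0$, so as written you have not excluded the possibility that the $\rg_i$ converge to a nondegenerate set attached to $a$. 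Ruling that out is precisely the nontrivial content of the closedness statement, and it is where the paper's proof does its work. The needed input is the geometry of $\kp$-chords: each $\rg_i$ is an arc of a round circle $C_i$ crossing $\partial B_i$ perpendicularly, where $B_i\in\B^\infty$ and $C_i\cap\partial B_i=\{a_i,b_i\}$. If $\dm(\rg_i)$ did not tend to $0$ along some subsequence, the balls $B_i$ could not degenerate, so a further subsequence gives a nondegenerate round ball $B$ with $\Int(B)\subset\tU$ and circles $C_i\to C$ with $C$ crossing $\partial B$ perpendicularly. Perpendicular crossings are transverse, so $C\cap\partial B$ consists of two distinct points and $\{a_i,b_i\}$ must converge to that two-point set, contradicting $a_i,b_i\to a$; equivalently (this is the paper's phrasing), the limit arc would have to meet $\Int(B)\subset\tU$, contradicting its being a subset of $T(X)$. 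Alternatively one can prove an explicit estimate that the hyperbolic geodesic of a round ball joining two boundary points has diameter controlled by the distance between those points (plus the trivial case of shrinking balls), but some such maximal-ball/perpendicularity argument must be supplied at exactly the spot you passed over in one clause.

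The rest of your dichotomy (the case $a\ne b$ via Lemma~\ref{contKP}, the observation that the limit has diameter at most $\delta$ and hence lies in $\KP_\delta$, and the location of the limit point in $X$) is correct and parallels the paper's argument.
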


\begin{proof}
By Lemma~\ref{contKP},  we know that the first conclusion holds if
$\rg=\lim \rg_i$ contains a point of $\tU$. Hence we only need to
consider the case when $\lim \rg_i=\rg\subset
\partial \tU\subset T(X)$. If the diameter of $\rg_i$
converged to zero, then $\rg$ is a point as desired. Assume that
this is not the case and let $B_i$ be the maximal ball that contains
$\rg_i$. Under our assumption, the diameters of $\{B_i\}$ do not
decay to zero. Let $B\in\B^\infty$ be the limit of a subsequence
$B_{i_j}$. Then $\lim \rg_i$ is a piece of a round circle which
crosses $\partial B$ perpendicularly. Hence $\lim \rg_i\cap
\Int(B)\ne\0$, contradicting the fact that $\rg\subset
\partial\tU\subset T(X)$. Note that for $\delta$ sufficiently small,
$\ol{\rg}\cap f(\ol{\rg})=\0$. Hence, $\var(f,\rg,T(X))$ and
$\var(f,\rg_i,T(X))$ are defined for all $i$ sufficiently large.
Then last statement in the Lemma follows from stability of variation
(see Section~\ref{compofvar}).
\end{proof}

\begin{cor}\label{small}
For each $\e>0$, there exist $\delta>0$ such that for all $\rg\in\KP$ with
$\rg\subset B(T(X),\delta)$, $\dm(\rg)<\e$.
\end{cor}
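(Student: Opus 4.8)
The plan is to deduce Corollary~\ref{small} from the Closedness Proposition~\ref{compactness} by a short compactness-and-contradiction argument. Suppose the statement fails for some $\e>0$; then for every $i\in\nat$ there is a chord $\rg_i\in\KP$ with $\rg_i\subset B(T(X),1/i)$ but $\dm(\rg_i)\ge\e$. If one chord were repeated infinitely often, say $\rg_i=\rg$ for infinitely many $i$, then $\rg\subset\bigcap_i\ol{B(T(X),1/i)}=T(X)$; but a $\kp$-chord is a nonempty subset of $\tU=\sphere\sm T(X)$, so this cannot happen, and after passing to a subsequence I may assume the $\rg_i$ are pairwise distinct.

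Next I would pass to a further subsequence along which the arcs $\ol{\rg_i}$ converge, in the Hausdorff metric on compact subsets of the sphere, to a compact connected set $C$. Since every chord has diameter at most that of the sphere, the $\rg_i$ all lie in $\KP_\delta$ with $\delta$ the diameter of the sphere, so Proposition~\ref{compactness} applies and gives that either $C=\ol{\rg}$ for some $\rg\in\KP$, or $C$ is a single point. On the other hand, for each fixed $j$ a tail of the sequence is contained in the closed set $\ol{B(T(X),1/j)}$, hence $C\subset\ol{B(T(X),1/j)}$; since this holds for all $j$, $C\subset\bigcap_j\ol{B(T(X),1/j)}=T(X)$. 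As $\dm(C)\ge\e>0$, $C$ is not a point, so $C=\ol{\rg}$ with $\rg\in\KP$; but then the open arc $\rg$ would be a nonempty subset of both $T(X)$ and $\tU=\sphere\sm T(X)$, which is absurd. This proves the corollary.

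I do not expect a genuine obstacle, since the substantive content is already packaged in Proposition~\ref{compactness}; the only points needing a little care are the reduction to a sequence of pairwise distinct chords (to meet the hypothesis of Proposition~\ref{compactness}) and the compatibility of Hausdorff convergence with the nested inclusions $\rg_i\subset B(T(X),1/i)$. Should one wish to avoid citing Proposition~\ref{compactness}, the same contradiction is reachable directly: pick $B_i\in\B^\infty$ with $\rg_i\subset\HCH(B_i\cap T(X))\subset B_i$, so that $\dm(B_i)\ge\dm(\rg_i)\ge\e$; a subsequence of the balls $B_i$ then converges to a ball $B$ with $\Int(B)\ne\0$ and $\Int(B)\subset\tU$, the round circles carrying the $\rg_i$ subconverge to a circle meeting $\partial B$ perpendicularly, and hence $C$ is a nondegenerate subarc of the arc this circle cuts out inside $B$; such an arc meets $\Int(B)$, contradicting $C\subset T(X)$ together with $\Int(B)\cap T(X)=\0$.
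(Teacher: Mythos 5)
Your proposal is correct and takes essentially the same route as the paper: the paper's proof is precisely the one-line contradiction argument via Proposition~\ref{compactness} (a sequence $\rg_i\in\KP$ with $\lim\rg_i\subset X$ and $\dm(\rg_i)\ge\e$ contradicts the Closedness Proposition), which you have simply spelled out with the routine details (distinctness of the $\rg_i$, Hausdorff subconvergence, $C\subset T(X)$) made explicit. Your ``alternative'' argument with maximal balls is not really a different proof either, since it reproduces the argument inside the paper's proof of Proposition~\ref{compactness}.
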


\begin{proof}
Suppose not, then there exist $\e>0$ and  a sequence $\rg_i$ in $\KP$ such that
$\lim \rg_i\subset X$ and $\dm(\rg_i)\geq\e$ a contradiction to
Proposition~\ref{compactness}.
\end{proof}

The proof of the following well-known proposition is omitted.

\begin{prop}\label{smallcutoff}
For each $\e>0$ there exists $\delta>0$ such that for each open arc $A$ with
distinct endpoints $a,b$ such that $\cl{A}\cap T(X)=\{a,b\}$ and
$\dm(A)<\delta$, $T(T(X)\cup A)\subset B(T(X),\e)$.
\end{prop}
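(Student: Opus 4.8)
The plan is a compactness argument by contradiction, resting on the single topological fact that $T(X)$ is non-separating, i.e.\ that $\C\sm T(X)$ is open and connected. First I would reduce the statement to the shadow: since $\cl{A}\cap T(X)=\{a,b\}$, the arc $A$ is a crosscut of $\tU=\sphere\sm T(X)$ in the sense of Section~\ref{infinitepartitions}, so $\C\sm(T(X)\cup A)$ has exactly two components, the bounded one being $\Sh(A)$, and hence $T(T(X)\cup A)=T(X)\cup\cl{A}\cup\Sh(A)$. Every point of $T(X)\cup\cl{A}$ lies within $\dm(A)$ of $T(X)$, so it suffices to produce, for a given $\e>0$, a $\delta>0$ with $\Sh(A)\subset B(T(X),\e)$ whenever $\dm(A)<\delta$.

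Next, suppose no such $\delta$ works for some fixed $\e>0$. Then there are crosscuts $A_i=(a_i,b_i)$ of $T(X)$ with $\cl{A_i}\cap T(X)=\{a_i,b_i\}$ and $\dm(A_i)\to 0$, together with points $z_i\in\Sh(A_i)$ satisfying $d(z_i,T(X))\ge\e$. Using compactness of $T(X)$ and of a closed $\e$-collar of it, pass to a subsequence so that $a_i\to p\in T(X)$ and $z_i\to z$; then $d(z,T(X))\ge\e$, so $z\in\C\sm T(X)$.

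The heart of the argument is to connect $z$ out to infinity while staying far from $T(X)$. Fix $R>0$ with $T(X)\subset B(0,R)$. Because $\C\sm T(X)$ is open and connected, hence arcwise connected, choose an arc $\gamma\subset\C\sm T(X)$ from $z$ to a point $w$ with $|w|>R+1$; as $\gamma$ is compact and disjoint from $T(X)$, $\eta:=d(\gamma,T(X))>0$. Set $\delta_0=\tfrac12\min\{\eta,\e\}$ and take $i$ large enough that $\dm(A_i)<\delta_0$ and $|z_i-z|<\delta_0$. Then every point of $\cl{A_i}$ is within $\dm(A_i)<\delta_0$ of $a_i\in T(X)$, so $\cl{A_i}$ misses $\gamma$; and every point $x$ of the segment $[z,z_i]$ has $d(x,T(X))\ge\e-\delta_0\ge\delta_0>\dm(A_i)$, so $[z,z_i]$ misses $T(X)\cup\cl{A_i}$. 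Thus $\gamma\cup[z,z_i]$ is a connected subset of $\C\sm(T(X)\cup A_i)$ joining $z_i$ to $w$; since $T(X)\cup A_i\subset B(0,R+1)$, the point $w$ — and therefore $z_i$ — lies in the unbounded component of $\C\sm(T(X)\cup A_i)$. But $d(z_i,T(X))\ge\e>\dm(A_i)$ gives $z_i\notin T(X)\cup\cl{A_i}$, whence $z_i\in\Sh(A_i)$, the \emph{bounded} component. This contradiction would prove the proposition.

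I expect the only genuinely delicate point to be arranging the connecting set $\gamma\cup[z,z_i]$ to avoid $T(X)\cup A_i$: this is precisely where both properties of $T(X)$ enter — connectedness of $\C\sm T(X)$ to build $\gamma$, and the fact that a crosscut of small diameter with an endpoint on $T(X)$ is confined to an arbitrarily thin collar of $T(X)$. If one prefers, the collar statement can be replaced by the observation (in the spirit of Proposition~\ref{compactness}) that crosscuts of $T(X)$ whose diameters tend to $0$ can accumulate only on points of $T(X)$. All remaining steps are elementary metric estimates.
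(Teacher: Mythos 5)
Your argument is correct. Note that the paper itself omits the proof of this proposition (it is labelled as well known), so there is nothing to compare against; your reduction to shadow points, the compactness extraction, and the connection of the limit point $z$ to infinity by an arc in $\C\sm T(X)$ that small crosscuts cannot meet constitute a complete and standard way to supply the missing proof. The only spot worth tightening is the extraction of a convergent subsequence of the $z_i$: their boundedness comes not from an ``$\e$-collar'' of $T(X)$ but from the fact that $\Sh(A_i)\subset B(0,R+1)$ once $\dm(A_i)<1$ (equivalently, one may normalize $d(z_i,T(X))=\e$ using connectedness of $T(T(X)\cup A_i)$), which is exactly the observation you already use at the end of the argument.
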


\begin{prop} \label{smallgeometric}
Let $\e$, $\delta$ be as in Proposition~\ref{smallcutoff} above with
$\delta<\e/2$ and let $B\in\B^\infty$. Let $A$ be a crosscut of $T(X)$ such
that $\dm(A)<\delta$. If $x\in T(A\cup T(X))\cap \HCH(B\cap T(X))\sm T(X)$ and
$d(x,A)\geq\e$, then the radius of $B$ is less than $\e$. Hence,
$\dm(\HCH(B\cap T(X)))<2\e$.
\end{prop}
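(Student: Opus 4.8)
The plan is to reduce the statement to one claim about the centre of $B$: that the centre $c$ of $B$ lies in $\ol{\Sh(A)}$. First I would record two easy preliminaries. Since $\tU=\sphere\sm T(X)$ is connected, $Z:=B\cap T(X)=\partial B\cap T(X)$ is a \emph{proper} closed subset of $\partial B$ (otherwise $\partial B\subset T(X)$ would be a separating circle); a short check then shows $\HCH(Z)$ meets $\partial B$ exactly in $Z$, so the point $x\in\HCH(B\cap T(X))\sm T(X)$ in fact lies in $\Int(B)$. Secondly, since $\tU$ is simply connected the crosscut $A$ cuts it into the bounded domain $\Sh(A)$ and an unbounded domain $\mathcal U$, and $x\in T(A\cup T(X))\sm T(X)=A\cup\Sh(A)$ together with $d(x,A)\ge\e>0$ puts $x\in\Sh(A)$. (For simplicity I assume $B$ is a genuine round ball in $\C$; the spherical half-plane case is either vacuous here or handled by the same reasoning.)

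Granting $c\in\ol{\Sh(A)}$ the conclusion is immediate: $\ol{\Sh(A)}\subset T(T(X)\cup A)$, and Proposition~\ref{smallcutoff} applies (the endpoints are the only points of $\ol A\cap T(X)$, and $\dm A<\delta$), so $c\in B(T(X),\e)$, i.e. $d(c,T(X))<\e$; but $\Int(B)$ misses $T(X)$, so every point of $T(X)$ is at distance at least the radius $r$ of $B$ from $c$, whence $r\le d(c,T(X))<\e$. Then $\HCH(B\cap T(X))\subset B$ gives $\dm\HCH(B\cap T(X))\le 2r<2\e$.

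To prove $c\in\ol{\Sh(A)}$ I would argue by contradiction: if not, then $c\in\mathcal U$ (as $c\in\Int(B)\subset\tU$). Then $c$ and $x$ lie in different components of $\tU\sm A$, hence (being connected subsets of $\tU$) in different components of $\Int(B)\sm A$, so $A$ separates $c$ from $x$ inside the round disk $\Int(B)$. Now $A\cap\Int(B)$ is a closed (in $\Int(B)$) union of pairwise disjoint crosscuts of $\Int(B)$ --- the subarcs of $A$ that run through the ball, each with its two ends on $\partial B$ --- so by the standard fact that such a separation is realised by a \emph{single} crosscut I obtain a crosscut $\gamma\subset\ol A$ of $\Int(B)$, with endpoints $\xi_1,\xi_2\in\partial B\cap\ol A$, separating $c$ from $x$; note $\dm\gamma\le\dm\ol A=\dm A<\delta<\e/2$. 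I may assume $r\ge\e$ (else we are already done). Because $|\xi_1-\xi_2|\le\dm\gamma<\e/2\le r/2$, the shorter boundary arc $\beta$ of $\partial B$ between $\xi_1$ and $\xi_2$ is a minor arc, with $\dm\beta=|\xi_1-\xi_2|$, and the component $V$ of $\Int(B)\sm\gamma$ bounded by $\gamma\cup\beta$ lies in the Euclidean convex hull of $\gamma\cup\beta$, hence in a disk about $\xi_1$ of radius $\dm(\gamma\cup\beta)\le\dm\gamma+\dm\beta<\e$. Since $|c-\xi_1|=r\ge\e$ we have $c\notin V$; therefore $x\in V$, so $d(x,A)=d(x,\ol A)\le|x-\xi_1|<\e$, contradicting $d(x,A)\ge\e$. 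This finishes the proof.

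I expect the main obstacle to be the bookkeeping of that last paragraph: (i) the standard-but-fiddly fact that two points of a disk separated by a closed family of pairwise disjoint crosscuts are separated by one of them (this rests on the tree structure of the complementary regions and deserves a clean citation or a few lines), and (ii) the metric estimate showing a small crosscut of a round disk of radius $\ge\e$ cannot separate the centre from a point at distance $\ge\e$ from the crosscut --- where the only thing to observe is the elementary fact that a circular arc of central angle $<\pi$ has diameter equal to its chord, which is what keeps the ``cap'' $V$ inside a disk of radius $<\e$. Everything else --- the two preliminary observations, the reduction, and the invocation of Proposition~\ref{smallcutoff} --- is routine.
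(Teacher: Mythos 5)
Your proof is correct, but the way you extract the contradiction differs from the paper's, so it is worth comparing. Both arguments pivot on the same two observations: since $\Int(B)\cap T(X)=\0$, the centre $c$ satisfies $d(c,T(X))\ge r$, and Proposition~\ref{smallcutoff} therefore forces $c\notin T(T(X)\cup A)$ once $r\ge\e$. At that point the paper finishes in two lines with a segment trick: the straight segment from $x$ to $c$ lies in $\Int(B)$ by convexity, hence misses $T(X)$, hence meets $\partial T(T(X)\cup A)\subset T(X)\cup \ol{A}$ at a point $w\in A$; since $d(x,A)\ge\e$ and $w$ lies on a radius-bound segment, $B(w,\e)\subset \Int(B)$, and as $\dm(\ol{A})<\delta<\e$ this swallows all of $\ol{A}$ --- including its endpoints, which lie in $T(X)$ --- into $\Int(B)$, a contradiction. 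You instead prove $c\in\ol{\Sh(A)}$ by showing that otherwise $A\cap\Int(B)$ separates $c$ from $x$ in the round disk, invoking the classical plane-separation theorem (a closed subset of the plane separating two points has a component separating them; here the components are single subarcs of $A$ because $A$ is an arc, so the separating component is a crosscut of $\Int(B)$), and then a minor-arc ``cap'' estimate showing a crosscut of diameter $<\e/2$ in a disk of radius $\ge\e$ cuts off a region of diameter $<\e$ that cannot contain $c$, so it contains $x$ and puts $x$ within $\e$ of $\ol{A}$. This is sound --- the two items you flag are exactly the right ones, and both hold --- but it costs a genuinely nontrivial citation and an extra reduction where the paper needs only convexity of $B$ and connectedness of a segment; what your route buys is a very visual localization of the obstruction to a single small crosscut and its cap (an estimate that is also what rules out the degenerate configurations in the spherical cases). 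Note finally that your standing simplification that $B$ is a genuine planar ball with a centre is the same one the paper's own proof makes implicitly, so you are on equal footing there.
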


\begin{proof}
Let $z$ be the center of $B$. If  $d(z,T(X))<\e$ then $\dm(B)<2\e$
and we are done. Hence, we may assume that $d(z,T(X))\ge \e$. We
will show that this leads to a contradiction. By
Proposition~\ref{smallcutoff} and our choice of $\delta$, $z\in
\sphere\sm T(A\cup X)$. The straight line segment $\ell$ from $x$ to
$z$ must cross $T(X)\cup A$ at some point $w$. Since the segment
$\ell$ is in the interior of the maximal ball $B$, it is disjoint
from $T(X)$, so $w\in A$. Hence $d(x,w)\geq\e$ and, since $x\in B$,
$B(w,\e)\subset B$. This is  a contradiction since $A\subset
B(w,\delta)$ and $\delta<\e/2$ so $\overline A$ would be contained
in the interior of $B$ which is impossible since $A$ is a crosscut
of $T(X)$.
\end{proof}

\begin{prop}\label{crossing}
Let $C$ be a crosscut of $T(X)$ and let $A$ and $B$ be disjoint closed sets in
$T(X)$ such that $\cl{C}\cap A\not=\0\not= \cl{C}\cap B$. For each $x\in C$,
let $F_x\in\KPP$ so that $x\in F_x$. If each $\ol{F_x}$ intersects $A\cup B$,
then there exists an $F_\infty\in\KPP$ such that $\ol{F_\infty}$ intersects
$A$,  $B$ and $\ol{C}$.
\end{prop}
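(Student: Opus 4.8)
The plan is to exploit the "continuity" of the Kulkarni-Pinkall partition (Lemma~\ref{contKP} and Proposition~\ref{compactness}) together with a connectedness/compactness argument along the crosscut $C$. Parametrize $\ol C$ by an arc $\gamma:[0,1]\to\ol C$ with $\gamma(0)\in A$ and $\gamma(1)\in B$, and for each $t$ let $F_t=F_{\gamma(t)}\in\KPP$ be the unique partition element containing $\gamma(t)$ (for the endpoints, since they lie in $T(X)$, interpret $F_t$ as the hull of the maximal ball whose closed hull contains them, which exists and is unique by Lemma~\ref{kplem} and Lemma~\ref{disjointCH}). Each $\ol{F_t}$, by hypothesis, meets $A\cup B$. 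First I would split $[0,1]$ into the two sets
\[
T_A=\{t\mid \ol{F_t}\cap A\neq\0\},\qquad T_B=\{t\mid \ol{F_t}\cap B\neq\0\},
\]
whose union is $[0,1]$. Note $0\in T_A$ and $1\in T_B$ (using that $A$ and $B$ are disjoint closed sets, so $\gamma(0)\notin B$, hence the element through $\gamma(0)$ cannot meet $B$ unless it also meets $A$ — in any case $0\in T_A$). If some $F_t$ meets both $A$ and $B$ we are done with $F_\infty=F_t$ (it also meets $\ol C$ since it contains $\gamma(t)$); so assume $T_A\cap T_B=\0$, i.e. $\{T_A,T_B\}$ partitions $[0,1]$ into two nonempty sets.

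The key step is then to show that neither $T_A$ nor $T_B$ can be both open and closed, forcing the existence of a "transition" parameter $t_\infty$ at which the partition element $F_\infty=\lim F_{t_n}$ (along a sequence $t_n\to t_\infty$) meets \emph{both} $A$ and $B$. Concretely, take $t_\infty=\sup T_A$ (which lies in $\ol{T_A}$), pick $t_n\uparrow t_\infty$ with $t_n\in T_A$ and also $s_n\downarrow t_\infty$ with $s_n\in T_B$ (possible since $t_\infty<1$ when $t_\infty\in T_A$, or handle the symmetric case $t_\infty\in T_B$). Now the points $\gamma(t_n)\in F_{t_n}$ converge to $\gamma(t_\infty)$, and similarly $\gamma(s_n)\to\gamma(t_\infty)$. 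I would argue that after passing to a subsequence the closures $\ol{F_{t_n}}$ converge (in the Hausdorff metric on compact subsets of $\sphere$) to a compact set $\ol{F_\infty}$, where $F_\infty\in\KPP$ is the partition element containing $\gamma(t_\infty)$: this is exactly the content of Lemma~\ref{compactfol}-type reasoning for $\kp$-chords (Lemma~\ref{contKP}), extended to gaps via the associated maximal balls $B_{t_n}\in\B^\infty$ — a subsequence of the $B_{t_n}$ converges to some $B_\infty\in\B^\infty$ with $\gamma(t_\infty)\in\HCH(B_\infty\cap T(X))$, and by uniqueness (Lemma~\ref{disjointCH}) $\HCH(B_\infty\cap T(X))\cap\tU=F_\infty$. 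Since $A$ is closed and each $\ol{F_{t_n}}$ meets $A$, the Hausdorff limit $\ol{F_\infty}$ meets $A$; since $B$ is closed and each $\ol{F_{s_n}}$ meets $B$, and $\ol{F_{s_n}}\to\ol{F_\infty}$ as well, $\ol{F_\infty}$ meets $B$. Finally $\ol{F_\infty}$ meets $\ol C$ because it contains $\gamma(t_\infty)\in\ol C$. This gives the desired $F_\infty$.

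The main obstacle I anticipate is the convergence statement for \emph{gaps} (not just chords): Lemma~\ref{contKP} is phrased for $\kp$-chords, and a general $F_t$ may be a gap with a complicated boundary consisting of many chords. The clean way around this is not to track the whole hull but only the maximal ball: each $F_t\subset B_t$ for a unique $B_t\in\B^\infty$ (Theorem~\ref{KPthrm}), the family $\B^\infty$ of maximal spherical balls is compact, and $\gamma(t_n)\in B_{t_n}$ with $\gamma(t_n)\to\gamma(t_\infty)$ forces the limit ball $B_\infty$ to contain $\gamma(t_\infty)$ in its \emph{closed} hull, while $\gamma(t_\infty)\in\tU$ together with the partition property pins down $F_\infty=\HCH(B_\infty\cap T(X))\cap\tU$. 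One must also check that "$\ol{F_{t_n}}$ meets $A$" passes to the Hausdorff limit — but for a convergent sequence of compact sets whose limit is compact, any closed set meeting all of them meets the limit, which is immediate. A minor point to dispatch at the start: the reduction "if some $F_t$ meets both $A$ and $B$, take $F_\infty=F_t$" shows we may assume $T_A$ and $T_B$ are disjoint, and then a standard connectedness argument on $[0,1]$ (it is connected, so it is not the union of two disjoint nonempty sets both of which are relatively clopen) shows at least one of them fails to be closed, which is precisely what produces the transition point $t_\infty$ above; I would phrase it via $t_\infty=\sup T_A$ to make the two-sided approximation explicit.
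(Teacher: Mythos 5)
Your overall strategy --- locate a transition parameter along the crosscut between the parameters whose $\KPP$-element meets $A$ and those whose element meets $B$, and pass to a limit of the associated maximal balls there --- is essentially the paper's, and your treatment of an \emph{interior} transition point is sound (it is the paper's case $C_a\cap C_b\ne\0$, reached via a sup argument in place of the closedness of $C_a,C_b$). The genuine gap is at the endpoints of $\ol C$, which is exactly the case the paper must treat separately. First, your extension of the assignment $t\mapsto F_t$ to $t=0,1$ is not legitimate: $\KPP$ partitions only $\tU=\sphere\sm T(X)$, Lemma~\ref{kplem} produces a ball only for points of $\sphere\sm K$, and uniqueness fails outright --- Lemma~\ref{disjointCH} explicitly allows the closed hulls of distinct maximal balls to share points of $\partial \tU$ (many chords may land at the same accessible point), so ``the maximal ball whose closed hull contains $\gamma(0)$'' is neither known to exist nor unique. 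Hence ``$0\in T_A$, $1\in T_B$'' is unjustified, and in the situation where no interior element meets $A$ (the paper's case $C_a=\0$) your $T_A$ is empty or consists only of this ill-defined endpoint datum, so the transition necessarily occurs at a point of $T(X)$.

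Second, at such an endpoint transition the limit argument as you wrote it breaks down. You pin down $F_\infty$ ``by the partition property'' using $\gamma(t_\infty)\in\tU$, which is precisely what fails when $t_\infty\in\{0,1\}$; and the claimed compactness of $\B^\infty$ is false in the form you need: a sequence of maximal balls (equivalently, of $\kp$-chords or hulls) may shrink to a point of $X$ --- this is the dichotomy of Proposition~\ref{compactness} --- so degeneration must be ruled out. In the interior case this is automatic because $\gamma(t_\infty)$ lies at positive distance from $T(X)$; at an endpoint $a=\gamma(0)\in A$ it is not, and the paper supplies the missing ingredient, which you never invoke: since each $\ol{F_{s_n}}$ meets $B$ while $\gamma(s_n)\to a\in A$ and $d(A,B)>0$, the hulls have diameters bounded away from zero, so a subsequence of the balls converges to a nondegenerate $B_\infty\in\B^\infty$ whose closed hull contains $a\in A\cap\ol C$ and meets $B$; one simply takes this limit hull as $F_\infty$, with no uniqueness statement needed. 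If you add this endpoint analysis --- or, as the paper does, work only with interior points, show $C_a$ and $C_b$ are closed in $C$ via Proposition~\ref{compactness}, use connectedness of $C$, and handle $C_a=\0$ (or $C_b=\0$) by the limit-at-the-endpoint argument just described --- your proof becomes complete.
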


\begin{proof}
Let $a\in A,b\in B$ be the endpoints of $\cl C$. Let $C_a,
C_b\subset C$ be the set of points $x\in C$ such that $\ol{F_x}$
intersects $A$ or $B$, respectively. Then $C_a$ and $C_b$ are closed
subsets by Proposition~\ref{compactness}. Note that $d(A,B)>0$. If
$C_a=\0$, choose $x_i\in C$ converging to $a\in A\cap\ol{C}$.
 Let $F_{x_i}=\HCH(B_i\cap T(X))$, where $B_i\in\B$ and assume that $B_\infty=\lim B_i$.
  Then by Lemma~\ref{contKP},
$\ol{F_\infty} \cap B\ne\0 $  and $\lim
F_{x_i}\subset\ol{F_\infty}\subset \HCH(B_\infty\cap K)$. Then
$\ol{F_\infty} \cap B\ne\0 $ and $a\in A\cap \ol{C}\cap
\ol{F_\infty}$. Suppose now $C_a\ne\0\ne C_b$. Then $C_a$ and $C_b$
are closed and, since $C$ is connected, $C_a\cap C_b\ne\0$. Let
$y\in C_a\cap C_b$. Then $\ol{F_y}\cap A\ne\0\ne \ol{F_y}\cap B$ and
$y\in F_y\cap C$.
\end{proof}

Proposition~\ref{crossing} allows us to replace small crosscuts
which essentially cross the external ray $R_t$  with non-trivial
principal continuum by small nearby $\kp$-chords which also
essentially crosses $R_t$. For if $C$ is a small crosscut of $T(X)$
with endpoints $a$ and $b$ which crosses the external ray $R_t$
essentially, let $A$ and $B$ be the closures of the sets in $T(X)$
accessible from $a$ and $b$, respectively by small arcs missing
$R_t$. If the $F_\infty$ of proposition~\ref{crossing} is a gap
$\HCH(B\cap T(X))$, then a $\kp$-chord in its boundary crosses $R_t$
essentially.

Fix a Riemann map $\varphi:\disk^\infty\to \tU=\rsphere\sm T(X)$
with $\varphi(\infty)=\infty$. Recall that an external ray $R_t$ is the image
of the radial line segment with argument $2\pi t i$ under the map $\varphi$.

\begin{prop} \label{var0}
Suppose the external ray $R_t$ lands on $x\in T(X)$, and
$\{\rg_i\}_{i=1}^\infty$ is a sequence of crosscuts of $T(X)$ converging to $x$
such that there exists a null sequence of arcs $A_i\subset \complex\setminus
T(X)$ joining $\rg_i$ to $R_t$. Then for sufficiently large $i$,
$\var(f,\rg_i,T(X))=0$. \end{prop}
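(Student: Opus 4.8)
The plan is to compute $\var(f,\rg_i,T(X))$, for all sufficiently large $i$, from a junction $J_v$ at an interior point $v$ of $\rg_i$ whose ``top'' ray $J^i_v$ leaves $U^\infty=\rsphere\sm T(X)$ along the connecting arc $A_i$ and then runs out to $\infty$ along the external ray $R_t$. The whole argument hinges on one elementary observation: $f(x)\notin\ol{R_t}$. Indeed, $f$ is fixed point free on the compactum $T(X)$, so $\e_0:=\min_{z\in T(X)}|f(z)-z|>0$ and in particular $f(x)\ne x$; moreover $x\in\partial T(X)\subseteq X$, hence $f(x)\in f(X)\subseteq T(X)$, while $R_t\subseteq U^\infty$ is disjoint from $T(X)$, so $f(x)\notin R_t$. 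Since $R_t$ lands on $x$, Proposition~\ref{trans} gives $\ol{R_t}\sm R_t=\pr(\mc{E}_t)=\{x\}$, so $\ol{R_t}=R_t\cup\{x\}$ and therefore $f(x)\notin\ol{R_t}$; put $d_0:=\dist(f(x),\ol{R_t})>0$.

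Next come the estimates. Since $\rg_i\to x$ and $\{A_i\}$ is a null sequence of arcs each meeting $\ol{\rg_i}$, we have $\rg_i\cup A_i\subseteq B(x,r_i)$ with $r_i\to 0$, and by uniform continuity of $f$ near $T(X)$ we have $f(\rg_i)\subseteq B(f(x),s_i)$ with $s_i\to 0$. Fix $i_0$ so that $r_i+s_i<\e_0$ and $s_i<d_0$ for all $i\ge i_0$. Then for $i\ge i_0$ and any $z\in\rg_i$: if $w\in\rg_i$ then $|f(z)-w|\ge\e_0-s_i-r_i>0$, so $f(\ol{\rg_i})\cap\ol{\rg_i}=\0$ (hence $\var(f,\rg_i,T(X))$ is defined and equals the variation over any bumping simple closed curve completing $\rg_i$); if $a\in A_i$ then $|f(z)-a|\ge\e_0-s_i-r_i>0$, so $f(\rg_i)\cap A_i=\0$; and if $p\in\ol{R_t}$ then $|f(z)-p|\ge d_0-s_i>0$, so $f(\rg_i)\cap\ol{R_t}=\0$.

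Now fix $i\ge i_0$. A routine planar-topology argument — using that $R_t$ is unbounded and disjoint from $T(X)$, that $A_i\cup R_t$ joins $\rg_i$ to $\infty$ inside $U^\infty$, and that $\complex\sm T(X\cup\ol{\rg_i})$ is a connected unbounded domain — produces a junction $J_v$ at an interior point $v$ of $\rg_i$ with $J_v\sm\{v\}\subseteq\complex\sm T(X\cup\ol{\rg_i})$ whose top ray satisfies $J^i_v\subseteq A_i\cup R_t$ (concretely: $J^i_v$ is a sub-arc of $A_i$ from $v$ to a point of $R_t$ followed by a tail of $R_t$, and $J^+_v,J^-_v$ are two further disjoint arcs from $v$ to $\infty$ in the complementary domain of $T(X\cup\ol{\rg_i})$). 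By Proposition~\ref{varcross} and its proof, any junction of this kind (vertex on $\rg_i$, otherwise in $\complex\sm T(X\cup\ol{\rg_i})$) computes $\var(f,\rg_i,T(X))$. But by the previous paragraph $f(\rg_i)\cap(A_i\cup\ol{R_t})=\0$, so $f(\rg_i)\cap J^i_v=\0$, i.e.\ $f^{-1}(J^i_v)\cap\ol{\rg_i}=\0$. Hence the set $M=f^{-1}(J_v)\cap\ol{\rg_i}$ of step~(\ref{crossings}) of Definition~\ref{vararc} contains no point of $f^{-1}(J^i_v)$, no crossing is counted, and $\var(f,\rg_i,T(X))=0$. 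As this holds for every $i\ge i_0$, the proposition follows.

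There is no real obstacle here once one notices $f(x)\notin\ol{R_t}$: the image $f(\rg_i)$ is trapped near $f(x)$, away from both $A_i$ (near $x$) and $R_t$ (disjoint from $T(X)$), so it cannot see the top ray of the junction at all. The only step that needs care is the construction of $J_v$ in the third paragraph — verifying that its top ray can genuinely be routed along $A_i$ and a tail of $R_t$ while the entire junction stays (off its vertex) in $\complex\sm T(X\cup\ol{\rg_i})$ — and it is exactly there that the hypothesis of a null sequence of arcs $A_i$ joining the $\rg_i$ to $R_t$ is used.
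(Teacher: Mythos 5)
Your argument is correct and is essentially the paper's own proof: the paper likewise uses that $f$ is fixed point free on $T(X)$ to pick a small ball $W$ about $x$ with $f(\ol{W})\cap(\ol{W}\cup R_t)=\0$, and then routes a junction from a point of $\rg_i$ along $A_i$ and a tail of $R_t$ to $\infty$, concluding that $f(\rg_i)$ misses the junction and hence $\var(f,\rg_i,T(X))=0$. Your only deviations are cosmetic (explicit $\e_0$, $d_0$ estimates in place of the ball $W$, and routing only the leg $J^i_v$ along $A_i\cup R_t$ rather than all three legs), so no further comment is needed.
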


\begin{proof}
Since $f$ is fixed point free on $T(X)$ and $f(x)\in T(X)$, we may
choose  a small ball  $W$ with center  $x$ in $\complex$ such that
$f(\cl{W})\cap(\cl{W}\cup R_t)=\0$. For sufficiently large $i$,
$A_i\cup \rg_i\subset W$. Then for each such $i$ there exists a
junction $J_i$ starting from a point in $\rg_i$, with all of its
legs staying in $W$ close to $A_i$ until it reaches $R_t$, and then
staying close to $R_t$ to $\infty$. By our choice of $W$,
$\var(f,\rg_i,T(X))=0$.
\end{proof}

\begin{prop}\label{pomm92}
Suppose that for an external ray $R_t$ we have $R_t\cap \linebreak[4] \Int(
\ECH(T(X))) \ne \0$. Then there exists $x\in R_t$ such that the $(T(X),x)$-end
of $R_t$ is contained in $\ECH(T(X))$. In particular there exists a chord
$\rg\in\KP$ such that $R_t$ crosses $\rg$ essentially.
\end{prop}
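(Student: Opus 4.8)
The plan is to derive the displayed assertion from the J{\o}rgensen Lemma~\ref{jorg} together with a hyperplane--separation argument, and to obtain the ``in particular'' clause from Proposition~\ref{trans} and the Kulkarni--Pinkall replacement described after Proposition~\ref{crossing}. Throughout, I parametrize $R_t$ by $s\in(1,\iy)$ via $\rho(s)=\varphi(se^{2\pi it})$, so that $\rho(s)\to\iy$ as $s\to\iy$; thus for $y=\rho(s_y)\in R_t$ the $(T(X),y)$-end is $K_y=\{\rho(s):1<s<s_y\}$.

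\emph{The main assertion.} Fix $y\in R_t\cap\Int(\ECH(T(X)))$; I claim $K_y\subset\ECH(T(X))$, so that $x=y$ works. Suppose not, and choose $z=\rho(s_z)\in K_y\sm\ECH(T(X))$ (so $s_z<s_y$). Since $\ECH(T(X))$ is compact and convex and $z\notin\ECH(T(X))$, there is a closed half-plane $\Pi$ with $T(X)\subset\Pi$ and $z\notin\Pi$. Put $\Pi'=\cl{\complex\sm\Pi}\cup\{\iy\}$, a closed round ball on the sphere with interior $\complex\sm\Pi\subset\complex\sm T(X)\subset\tU$; then $z\in\Int(\Pi')$, $\iy\in\Pi'$, and $y\notin\Pi'$, because $y\in\Int(\ECH(T(X)))\subset\Int(\Pi)$ is disjoint from $\cl{\complex\sm\Pi}$ and $y\ne\iy$. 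Since $\varphi$ is a conformal equivalence of $\disk^\infty$ onto $\tU$ and radial rays are hyperbolic geodesics, $R_t\cup\{\iy\}$ lies on a complete hyperbolic geodesic of $\tU$; hence, by the J{\o}rgensen Lemma~\ref{jorg}, $(R_t\cup\{\iy\})\cap\Pi'$ is connected. This set contains $z$ and $\iy$, so it contains the whole subarc of $R_t\cup\{\iy\}$ joining $z$ to $\iy$, namely $\{\rho(s):s\ge s_z\}\cup\{\iy\}$; since $s_z<s_y$ this subarc contains $y$, whence $y\in\Pi'$ --- a contradiction. Therefore $K_y\subset\ECH(T(X))$.

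\emph{The $\kp$-chord.} By the first part there is $x\in R_t$ with $K_x\subset\ECH(T(X))$. By Proposition~\ref{trans}, for each $1<r<\iy$ there is a crosscut $Q_r$ of $T(X)$ with $R_t\cap Q_r=\{\rho(r)\}$, with $\dm(Q_r)\to0$ as $r\to1$, and with $R_t$ crossing $Q_r$ essentially. Choose $r$ close enough to $1$ that $\rho(r)\in K_x$ (so $Q_r$ meets $\ECH(T(X))\sm T(X)\subset\tU$) and that $\dm(Q_r)$ is below the thresholds of Corollary~\ref{small} and Propositions~\ref{smallcutoff} and \ref{smallgeometric}. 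Now apply the Kulkarni--Pinkall replacement from the discussion after Proposition~\ref{crossing}: let $A$ and $B$ be the closures of the subsets of $T(X)$ accessible from the two endpoints of $Q_r$ by small arcs missing $R_t$; these are disjoint because $R_t$ crosses $Q_r$ essentially. By Proposition~\ref{crossing} there is $F_\iy\in\KPP$ whose closure meets $A$, $B$ and $\cl{Q_r}$, and Corollary~\ref{small} together with Proposition~\ref{smallgeometric} bounds the size of $F_\iy$; in particular $F_\iy$ is either a single $\kp$-chord or a gap whose boundary consists of $\kp$-chords. In the first case $F_\iy$ is the desired $\rg$; in the second, a $\kp$-chord in $\partial F_\iy$ is crossed essentially by $R_t$ (as in the cited discussion), and that chord is $\rg$. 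The role of the hypothesis $R_t\cap\Int(\ECH(T(X)))\ne\0$ is exactly to make the first part available: it is having a tail of $R_t$ inside $\ECH(T(X))$ that lets us place $Q_r$ so that its Kulkarni--Pinkall replacement genuinely separates a piece of $R_t$ from $T(X)$.

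The main assertion is a clean consequence of the J{\o}rgensen Lemma once one notices that $R_t$ is a hyperbolic geodesic. The real work --- and the main obstacle --- is in the ``in particular'' clause: one has to verify the hypothesis of Proposition~\ref{crossing} (that $\cl{F_w}$ meets $A\cup B$ for every $w\in Q_r$) for the small crosscut $Q_r$, calibrate the constants in Propositions~\ref{smallcutoff} and \ref{smallgeometric} and Corollary~\ref{small} so that $F_\iy$ cannot be the trivial partition element $\HCH(B^\infty\cap T(X))=\tU$, and check that ``crossed essentially'' is inherited when one passes from $Q_r$ to the nearby $\kp$-chord.
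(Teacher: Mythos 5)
Your treatment of the main assertion is correct and is essentially the paper's own argument: the paper likewise deduces it from J{\o}rgensen's Lemma~\ref{jorg} applied to half-planes (equivalently, to the complementary round balls on the sphere) together with the description of $\ECH(T(X))$ as the intersection of all closed half-planes containing $T(X)$. Your contradiction via the ball $\Pi'$ and the observation that $\infty\in\Pi'$ actually supplies a detail the paper's four-line proof glosses over, namely why the connected set $R_t\cap\Pi$ (and hence $R_t\cap\ECH(T(X))$) is a tail at the $T(X)$-end rather than an arbitrary subarc; so for this part you are on the same route, executed a bit more carefully.

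The genuine gap is in the ``in particular'' clause, which you reduce to Proposition~\ref{trans} plus the crosscut-to-chord replacement following Proposition~\ref{crossing} and then, by your own admission, leave the decisive steps unverified. Concretely: (a) the disjointness of $A$ and $B$ is asserted to follow ``because $R_t$ crosses $Q_r$ essentially,'' which is not an argument and is delicate precisely when $R_t$ lands; (b) the hypothesis of Proposition~\ref{crossing} --- that $\ol{F_w}$ meets $A\cup B$ for \emph{every} $w\in Q_r$ --- is exactly the issue, and Proposition~\ref{smallgeometric} gives no control for points of $Q_r$ itself (its hypothesis $d(x,A)\ge\e$ fails there), so the hulls through points of the crosscut may be large and a priori meet $T(X)$ only far from $a$ and $b$; (c) the remark after Proposition~\ref{crossing} that you invoke is stated in the paper for external rays with non-degenerate principal continuum, an assumption Proposition~\ref{pomm92} does not make. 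Note also that your argument for the chord never actually uses the first part, although that is where the second clause should come from: a point $y\in R_t\cap\Int(\ECH(T(X)))$ lies in a Kulkarni--Pinkall element $\HCH(B\cap T(X))$ whose maximal ball $B$ can neither be a half-plane nor contain $\infty$ (either would put $\HCH(B\cap T(X))$, hence $y$, outside $\Int(\ECH(T(X)))$), and it is the chords bounding this element, together with the fact that the whole $(T(X),y)$-end stays in $\ECH(T(X))$, that should be made to produce the essential crossing. The paper prints no argument for this clause at all, so nothing is lost by taking a different route, but as written your proposal does not prove it.
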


\begin{proof}
External rays in $\tU$ correspond to geodesic half-lines starting at infinity
in the hyperbolic metric on $\sphere\sm T(X)$. Half-planes are conformally
equivalent to disks. Therefore, J{\o}rgensen's lemma applies: the intersection
of $R_t$ with a halfplane is connected, so it is a half-line. Since the
Euclidean convex hull of $T(X)$ is the intersection of all half-planes
containing $T(X)$, $R_t\cap\ECH(T(X))$ is connected.
\end{proof}

\begin{lem}\label{chordlimit}
Let $\mc{E}_t$ be a channel (that is, a prime end such that $\pr(\mc{E}_t)$ is
non-degenerate) in $T(X)$. Then for each $x\in\pr(\mc{E}_t)$, for every
$\delta>0$, there is a chain $\{\rg_i\}_{i=1}^\infty$ of chords defining
$\mc{E}_t$ selected from $\KP_\delta$ with  $\rg_i\to x\in \bd T(X)$.
\end{lem}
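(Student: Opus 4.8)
The plan is to exploit the known structure of a prime end together with Proposition~\ref{crossing} (the ``crossing'' proposition) and Corollary~\ref{small}, which guarantees that $\kp$-chords close to $T(X)$ are small. First I would fix the channel $\mc{E}_t$, a point $x\in\pr(\mc{E}_t)$, and $\delta>0$. By Proposition~\ref{trans}, $\pr(\mc{E}_t)=\cl{R_t}\sm R_t$, and for each $1<r<\infty$ there is a crosscut $Q_r$ with $R_t\cap Q_r=\{\varphi(re^{2\pi it})\}$, $\dm(Q_r)\to 0$ as $r\to 1$, and $R_t$ crossing $Q_r$ essentially. Since $x\in\pr(\mc{E}_t)$, I can moreover choose a chain $\{Q'_j\}$ defining $\mc{E}_t$ with $Q'_j\to x$ (this is precisely the definition of the principal continuum). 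So the starting point is: there is a null chain of crosscuts $\{C_j\}$ defining $\mc{E}_t$ with $C_j\to x$ and each $C_j$ crossed essentially by $R_t$.

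Next I would replace each $C_j$ by a $\kp$-chord. Fix $j$ large. Following the remark after Proposition~\ref{crossing}: let $a_j,b_j$ be the endpoints of $C_j$, and let $A_j,B_j$ be the closures of the subsets of $T(X)$ accessible from $a_j$ and $b_j$ respectively by small arcs missing $R_t$; these are disjoint closed sets meeting $\cl{C_j}$. For each $y\in C_j$ let $F_y\in\KPP$ contain $y$. If some $\ol{F_y}$ fails to meet $A_j\cup B_j$, then $F_y$ is a gap or chord ``between'' $A_j$ and $B_j$; in the gap case a bounding $\kp$-chord crosses $R_t$ essentially, and in the chord case $F_y$ itself is such a $\kp$-chord. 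If instead every $\ol{F_y}$ meets $A_j\cup B_j$, Proposition~\ref{crossing} produces $F_\infty\in\KPP$ whose closure meets $A_j$, $B_j$ and $\ol{C_j}$; if $F_\infty$ is a gap $\HCH(B\cap T(X))$ then, as in the remark after Proposition~\ref{crossing}, a $\kp$-chord in its boundary crosses $R_t$ essentially, and if $F_\infty$ is a single chord it is itself the desired $\kp$-chord. Either way I obtain a $\kp$-chord $\rg_j$ that crosses $R_t$ essentially and lies inside (or very near) $T(C_j\cup T(X))$.

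It remains to control diameters and extract a chain. Since $\{C_j\}$ is a null chain converging to $x$ and each $\rg_j$ lies in $T(T(X)\cup C_j)$, Proposition~\ref{smallcutoff} forces $T(T(X)\cup C_j)\subset B(T(X),\e_j)$ with $\e_j\to 0$; combined with Corollary~\ref{small} (any $\kp$-chord sufficiently near $T(X)$ has diameter $<\delta$) this gives $\dm(\rg_j)<\delta$ for all large $j$, so $\rg_j\in\KP_\delta$, and moreover $\rg_j\to x$ since $\rg_j$ lies within distance $\to 0$ of $x$. Finally I must check $\{\rg_j\}$ (after passing to a subsequence and relabeling) is genuinely a \emph{chain} defining $\mc{E}_t$: the $\rg_j$ have diameters $\to 0$, are eventually pairwise disjoint closures (each lies in a shrinking neighborhood of $x$, so choose a subsequence), and each separates the next from $\infty$ in $\tU$ because each $\rg_j$ crosses $R_t$ essentially and sits ``across'' the channel at parameter level tending to $1$ — i.e.\ for $k>j$ large, $\rg_k$ is on the $x$-side of $\rg_j$. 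That $R_t$ crosses each $\rg_j$ essentially identifies the prime end of this chain with $\mc{E}_t$, by Proposition~\ref{trans} and the definition of essential crossing.

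The main obstacle I anticipate is the bookkeeping in the last step: verifying cleanly that the $\rg_j$ can be arranged into an honest chain in the sense of Definition~[Prime End] — pairwise disjoint closures, nested shadows, separating $\infty$ — and that this chain defines $\mc{E}_t$ rather than some other prime end with the same principal point. The essential-crossing property of $R_t$ is what ties everything to $\mc{E}_t$, so the care is in combining ``essentially crosses $R_t$'' with ``diameter $<\delta$'' and ``converges to $x$'' to get the nesting; the topological input here (a small crosscut essentially crossing $R_t$ and lying near $x\in\cl{R_t}$ must separate all later such crosscuts from $\infty$) is where I would spend the most effort.
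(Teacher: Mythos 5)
Your overall route coincides with the paper's: take a defining chain $\{C_j\}$ with $C_j\to x$, use Proposition~\ref{crossing} together with the remark following it to replace each $C_j$ by a $\kp$-chord $\rg_j$ crossing $R_t$ essentially, and then control size and convergence. The gap is in the control step. From Proposition~\ref{smallcutoff} and Corollary~\ref{small} you obtain (and even this rests on your unproved claim that $\rg_j\subset T(T(X)\cup C_j)$; the chord produced via Proposition~\ref{crossing} lies in a hull that merely \emph{meets} $\ol{C_j}$ and need not be contained in the shadow of $C_j$) only that $\rg_j$ lies in a small neighborhood of the \emph{whole continuum} $T(X)$ and has small diameter. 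Your final assertion that ``$\rg_j\to x$ since $\rg_j$ lies within distance $\to 0$ of $x$'' does not follow from this: because $\mc{E}_t$ is a channel, the shadows $\Sh(C_j)$ do \emph{not} shrink to $x$ --- their closures contain $\im(\mc{E}_t)\supset\pr(\mc{E}_t)$, which is non-degenerate (and in the intended application is all of $X$). A small $\kp$-chord contained in $B(T(X),\e_j)$, or even in $\Sh(C_j)$, could sit near any point of the principal continuum, so nearness to $T(X)$ gives no control of nearness to $x$.

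What is missing is exactly what Proposition~\ref{smallgeometric} supplies, and what the paper invokes at this point: it ties the diameter of the Kulkarni--Pinkall hull containing $\rg_j$ to its distance from the small crosscut $C_j$ (a point of the hull lying in $T(C_j\cup T(X))$ at distance at least $\e$ from $C_j$ forces the hull to have diameter less than $2\e$). Combined with the fact, coming from Proposition~\ref{crossing}, that the hull containing $\rg_j$ meets $\ol{C_j}$, this yields simultaneously $\dm(\rg_j)\to 0$ and $d(\rg_j,C_j)\to 0$, hence $\rg_j\to x$ and, for large $j$, $\rg_j\in\KP_\delta$. Your closing worry about assembling the $\rg_j$ into a chain defining $\mc{E}_t$ is comparatively minor (the paper treats it as routine, and it does follow from the essential crossings of $R_t$ once one has $\rg_j\to x$ with small diameters); the convergence to $x$ is where your argument actually breaks.
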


\begin{proof}
Let $x\in\pr(\mc{E}_t)$ and let $\{C_i\}$ be a defining chain of
crosscuts for $\pr(\mc{E}_t)$ with $\{x\}=\lim C_i$. By
Proposition~\ref{crossing}, in particular by the remark following
the proof of that proposition, there is a sequence $\{\rg_i\}$ of
$\kp$-chords such that $d(\rg_i, C_i)\to 0$ and $R_t$ crosses each
$\rg_i$ essentially. By Proposition~\ref{smallgeometric}, the
sequence $\rg_i$ converges to $\{x\}$.
\end{proof}

\begin{lem} \label{cutoff}
Suppose an external ray $R_t$ lands on $a\in T(X)$ with
$\{a\}=\pr(\mc{E}_t)\not=\im(\mc{E}_t)$.  Suppose
$\{x_i\}_{i=1}^\infty$ is a collection of points in $\tU$ with
$x_i\to x\in\im(\mc{E}_t)\sm \{a\}$ and $\phi^{-1}(x_i)\to t$. Then
there is a sequence of $\kp$-chords $\{\rg_i\}_{i=1}^\infty$ such
that for sufficiently large $i$, $\rg_i$ separates $x_i$ from
$\infty$, $\rg_i\to a$ and $\phi^{-1}(\rg_i)\to t$.
\end{lem}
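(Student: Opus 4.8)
The plan is to realize the required chords as small $\kp$-chords that cross $R_t$ essentially close to the landing point $a$, and then to use the hypothesis $\phi^{-1}(x_i)\to t$ to show that any such chord cuts $x_i$ off from $\infty$ once $i$ is large.

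First I would produce the chords. Since $\pr(\mc E_t)=\{a\}$ and $R_t$ lands at $a$, Proposition~\ref{trans} supplies a chain of crosscuts $\{Q_n\}$ defining $\mc E_t$ with $\dm(Q_n)\to 0$ and with $R_t$ crossing every $Q_n$ essentially; as $R_t\cap Q_n$ is a single point of $R_t$ tending to $a$ and $\dm(Q_n)\to 0$, we get $Q_n\to a$. Applying Proposition~\ref{crossing} together with the remark following its proof---replacing each small crosscut $Q_n$ that crosses $R_t$ essentially by a nearby $\kp$-chord which still crosses $R_t$ essentially---and using Proposition~\ref{smallgeometric} (and Proposition~\ref{smallcutoff}) to keep diameters under control, e.g.\ by noting that a point of $R_t\cap\Sh(Q_n)$ lying at a definite distance from $Q_n$ forces the maximal ball carrying that chord, hence the chord itself, to be small, I would obtain $\kp$-chords $\rg_n$ with $\dm(\rg_n)\to 0$, $\rg_n\to a$, each crossing $R_t$ essentially, and---after a routine perturbation---with $a\notin\ol{\rg_n}$. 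Since $\rg_n$ crosses $R_t$ essentially it meets $R_t$, at a point tending to $a$, so $\phi^{-1}(\rg_n)$ contains a point $\rho_n t$ with $\rho_n\to 1$; and since $\dm(\rg_n)\to 0$ the uniform continuity of $\phi^{-1}$ on connected subsets of $\tU$ near $\bd T(X)$ (\cite{urseyoun51}) gives $\dm(\phi^{-1}(\rg_n))\to 0$; hence $\phi^{-1}(\rg_n)\to t$.

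Next comes the separation step. Fix $n$. Because $R_t$ crosses $\rg_n$ essentially, a tail of the radial segment $\{\rho t:\rho>1\}$ lies in $\phi^{-1}(\Sh(\rg_n))$, so $\phi^{-1}(\Sh(\rg_n))$ is the component of $\disk^\infty\sm\phi^{-1}(\rg_n)$ not containing $\infty$, and the closure of this component meets $\uc$ in a subarc $I_n$ with $t\in I_n$. The endpoints of $I_n$ are the boundary points of $\disk^\infty$ corresponding to the accessible endpoints of $\rg_n$, and $t$ could be one of them only if such an endpoint lay in $\pr(\mc E_t)=\{a\}$, which we have ruled out; hence $t\in\Int(I_n)$. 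Then a sufficiently small disc $D$ about $t$ in $\ol{\disk^\infty}$ is disjoint from $\ol{\phi^{-1}(\rg_n)}$, so $D\cap\disk^\infty$ is a connected subset of $\disk^\infty\sm\phi^{-1}(\rg_n)$ meeting the radial tail, whence $D\cap\disk^\infty\subset\phi^{-1}(\Sh(\rg_n))$. Since $\phi^{-1}(x_i)\to t$ it follows that $\phi^{-1}(x_i)\in\phi^{-1}(\Sh(\rg_n))$, i.e.\ $\rg_n$ separates $x_i$ from $\infty$, for all $i\ge I(n)$, some $I(n)<\infty$ (chosen increasing in $n$). A diagonalization---take $n(i)=\max\{n:I(n)\le i\}$, so $n(i)\to\infty$ and $i\ge I(n(i))$---then gives the required sequence $\rg_i':=\rg_{n(i)}$: it separates $x_i$ from $\infty$ for all large $i$, $\rg_i'\to a$, and $\phi^{-1}(\rg_i')\to t$. (This argument in fact uses only $\phi^{-1}(x_i)\to t$; the hypotheses $x_i\to x\in\im(\mc E_t)\sm\{a\}$ serve to place us in the situation in which the lemma is applied.)

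I expect the main obstacle to be the first step: obtaining $\kp$-chords that genuinely shrink to $a$---rather than merely hugging $R_t$---while still crossing $R_t$ essentially and avoiding $a$ as an endpoint. The remark after Proposition~\ref{crossing} is phrased for external rays with nondegenerate principal continuum, whereas here $\pr(\mc E_t)=\{a\}$, so one must rerun that construction with $\im(\mc E_t)$ playing the role ordinarily taken by the principal set, and lean on Proposition~\ref{smallgeometric} for the diameter bound. Once the chords are in hand the separation step is essentially formal, the one delicate point being the observation that $t\in\Int(I_n)$, which is exactly why we insist that $a\notin\ol{\rg_n}$.
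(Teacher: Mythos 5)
Your route is the paper's own: the printed proof of this lemma consists of the single remark that the chords come from the observation following Proposition~\ref{crossing} and that $\phi^{-1}(\rg_i)\to t$ is ``easy,'' and your first paragraph fills that in essentially correctly (defining crosscuts via Proposition~\ref{trans}, replacement by $\kp$-chords via Proposition~\ref{crossing} together with Proposition~\ref{smallgeometric}, then anchor-plus-small-diameter for $\phi^{-1}(\rg_n)\to t$). Two small points there: the inference ``the crossing point tends to $a$, hence its $\phi$-preimage $\rho_ne^{2\pi it}$ has $\rho_n\to 1$'' needs one line, since points of $R_t$ close to $a$ need not a priori have parameter close to $1$; it does follow, because otherwise a subsequence of crossing points would converge to $\phi(\rho^*e^{2\pi it})$, an interior point of $\tU$, not to $a$. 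Your closing worry about the remark being phrased for nondegenerate principal continua is legitimate but minor; the paper itself invokes the remark in exactly this degenerate situation.

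The genuine gap is the phrase ``after a routine perturbation --- with $a\notin\ol{\rg_n}$.'' The $\kp$-chords are canonical objects, boundary chords of the hulls $\HCH(B\cap T(X))$ in maximal round balls; they cannot be perturbed, only selected, and you give no argument that chords crossing $R_t$ essentially, arbitrarily close to $a$, and omitting $a$ as an endpoint exist. Distinct $\kp$-chords may share endpoints, so a priori all sufficiently small chords crossing $R_t$ essentially could form a fan at $a$ whose $\phi$-preimages land at $e^{2\pi it}$; in that case $t$ is an endpoint of your arc $I_n$, and the separation step collapses, because points $\phi^{-1}(x_i)\to e^{2\pi it}$ may approach $e^{2\pi it}$ from either side of a crosscut of $\disk^\infty$ landing there. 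Note that what you actually need is weaker than $a\notin\ol{\rg_n}$: by the Lindel\"of argument you sketch, $t$ can be an endpoint of $I_n$ only when $R_t$ lands at an endpoint of $\rg_n$ \emph{through} $\rg_n$, i.e.\ when $\phi^{-1}(\rg_n)$ itself lands at $e^{2\pi it}$. So the missing work is either to show that chords avoiding this configuration can always be selected, or to handle that configuration by a separate separation argument (for instance comparing $\Sh(\rg_n)$ with shadows of auxiliary small crosscuts around $a$ and using J{\o}rgensen's Lemma~\ref{jorg} to control $R_t\cap B$ for the maximal ball $B$ carrying $\rg_n$); neither is automatic, and as written the proof does not go through at exactly the point you yourself identify as delicate.
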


\begin{proof} The existence of the chords $\rg_i$ again follows from the remark
following Proposition~\ref{crossing}. It is easy to see that $\lim
\varphi^{-1}(\rg_i)\to t$.
\end{proof}

\subsection{Auxiliary Continua}\label{auxcont}

We use $\kp$-chords to form Carath\'{e}odory loops around the continuum $T(X)$.

\begin{defn}
Fix $\delta>0$.  Define the following collections of chords:\index{KPd
chords@$\KP^{\pm}_\delta$}
$$\KP^+_\delta=\{\rg\in\KP_\delta\mid \var(f,\rg,T(X))\ge 0\}$$
$$\KP^-_\delta=\{\rg\in\KP_\delta\mid \var(f,\rg,T(X))\le 0\}$$
$$\KP_\delta=\KP_\delta^+\cup\KP_\delta^-$$
To
each collection of chords above, there corresponds an auxiliary
continuum defined as follows: $$T(X)_\delta=T(X\cup(\cup
\KP_\delta))$$ \index{TXd@$T(X)_\delta$} $$T(X)^+_\delta=T(X\cup(\cup \KP^+_\delta))$$
$$T(X)^-_\delta=T(X\cup(\cup \KP^-_\delta))$$
\end{defn}

\begin{prop} \label{boundary}
Let $Z\in\{T(X)_\delta,T(X)^+_\delta,T(X)^-_\delta\}$, and correspondingly
$\mc{W}\in\{\KP_\delta,\KP^+_\delta,\KP^-_\delta\}$. Then the following hold:
\begin{enumerate}
\item $Z$ is a non-separating
plane continuum.

\item $\bd Z\subset T(X)\cup(\cup\mc{W})$.

\item Every
accessible point $y$ in $\bd Z$ is either a point of $T(X)$ or a point
interior to a chord $\rg\in\mc{W}$.

\item  If $y\in\partial Z\cap\rg$  with $\rg\in\mc{W}$, then $y$ is accessible,
$\rg\subset \partial Z$ and $\partial Z$ is locally connected at
each point of $\rg$. Hence, if $\varphi:\disk^\infty\to\rsphere\sm
Z$ is the Riemann map and $R_t$ is an external ray landing at $y$,
then $\varphi$ extends continuously to an open interval in $\uc$
containing $t$. Moreover, if $y\in \partial Z\cap [\ol{\rg}\sm\rg]$,
then $\vp$ extends continuously over  a half open $J\subset \uc$
with endpoint $t$ so that $\vp(J)\subset \ol{\rg}$.
\end{enumerate}
\end{prop}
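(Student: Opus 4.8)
Write $Y=\ol{T(X)\cup(\cup\mc W)}$, and note that since $T(T(X)\cup A)=T(X\cup A)$ for any set $A$ we have $Z=T(Y)$. The four assertions are handled in order, the substance being in (4). For (1): each $\rg\in\mc W$ is a $\kp$-chord of spherical diameter $\le\delta$ whose endpoints lie in $T(X)$, so $\ol\rg\cup T(X)$ is connected and lies in the $\delta$-neighborhood of $T(X)$; hence $T(X)\cup(\cup\mc W)$ is connected and bounded, $Y$ is a continuum, and $Z=T(Y)$ is a non-separating plane continuum. For (2) and (3): the bounded complementary components of $Y$ are open, so $\bd Z=\bd T(Y)\subseteq Y$; and by the Closedness Proposition~\ref{compactness} together with the stability of $\var$ under the small homotopies of Section~\ref{compofvar}, a convergent sequence of distinct chords of $\mc W$ converges either to a chord of $\mc W$ or to a point of $X$, so $Y\cap\tU=\cup\mc W$ and therefore $Y=T(X)\cup(\cup\mc W)$, giving $\bd Z\subseteq T(X)\cup(\cup\mc W)$, which is (2). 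Assertion (3) is then immediate: an accessible point of $\bd Z$ not in $T(X)$ lies on some $\rg\in\mc W$ and, the endpoints of $\rg$ being in $T(X)$, must lie in the open arc.

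For (4), fix $\rg\in\mc W$ with $\bd Z\cap\rg\ne\0$ and let $y$ be a point of this set (necessarily interior to the open arc $\rg$); let $B\in\B^\infty$ be the unique maximal ball with $\rg\subseteq\HCH(B\cap T(X))$ (uniqueness by Theorem~\ref{KPthrm}). First I would observe $\Sh(\rg)\subseteq\Int Z$: $\Sh(\rg)$ is open, and every point of it is separated from $\infty$ by $T(X)\cup\ol\rg\subseteq Y$, hence lies in $T(Y)=Z$. Thus on its ``inner'' side $\rg$ is covered by $\Int Z$, and the points of $U^\infty(Y)=\rsphere\sm Z$ accumulating at $y$ lie on the ``outer'' side. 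The key local claim to establish is
\[(\star)\qquad\text{every interior point }z\text{ of }\rg\text{ with }z\in\bd Z\text{ has a neighborhood }B(z,\e)\text{ disjoint from }T(X)\text{ and meeting no chord of }\mc W\text{ other than }\rg.\]

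Granting $(\star)$: inside $B(z,\e)$ the set $Y$ reduces to the sub-arc $\rg\cap B(z,\e)$, so $B(z,\e)\sm\rg$ has exactly two components, the inner one contained in $\Sh(\rg)\subseteq\Int Z$ and the outer one a single complementary component of $Y$. Were that outer component bounded, all of $B(z,\e)$ would lie in $Z$, so $z\in\Int Z$, contrary to $z\in\bd Z$; hence the outer half-neighborhood of $z$ lies in $U^\infty(Y)$ and $z$ is accessible from $U^\infty(Y)$ through it. The set of interior points of $\rg$ at which this holds is open and closed in the open arc $\rg$ and nonempty (it contains $y$), hence is all of $\rg$; so $\rg\subseteq\bd Z$, every point of $\rg$ is accessible from $U^\infty(Y)$, and there is an open neighborhood $N$ of the open arc $\rg$ with $N\sm\rg$ having two components, one in $\Int Z$ and one in $U^\infty(Y)$. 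Thus $\rg$ is a free boundary arc of $Z$ along which $\bd Z$ is locally connected, and the standard description of a free locally connected boundary arc via prime ends (in the spirit of Theorem~\ref{carath}) gives that $\vp\colon\disk^\infty\to\rsphere\sm Z$ extends continuously and injectively over an open subarc of $\uc$ carried onto $\rg$; this subarc contains the argument $t$ of any external ray $R_t$ of $Z$ landing at $y$. For an endpoint $y$ of $\rg$ lying in $\bd Z$, the same neighborhood $N$ exhibits $\rg$ as abutting $y$ from within $U^\infty(Y)$, so $\vp$ extends over a half-open subinterval $J\subseteq\uc$ with endpoint $t$ and $\vp(J)\subseteq\ol\rg$, which is the last assertion.

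The step I expect to be the main obstacle is $(\star)$. The disjointness from $T(X)$ is trivial ($z\notin T(X)$, which is closed); the content is that chords of $\mc W$ cannot accumulate onto an interior point of $\rg$ from the outer side once $z\in\bd Z$. Here one must use the fine Kulkarni--Pinkall structure rather than soft topology: by Lemma~\ref{disjointCH} distinct hulls meet only in $T(X)$, and if such an accumulating family $\rg_j\in\mc W$ existed it would be forced to converge to $\rg$ (Proposition~\ref{compactness}, since distinct $\kp$-chords meet only at endpoints and $z$ lies in the interior of $\rg$). One then argues that the $\rg_j$, together with the bounded complementary components of $Y$ they cut off near $z$ (which, being bounded and bounded only by pieces of $T(X)$ and of the $\rg_j$, lie in $Z$), must fill a full outer half-neighborhood of $z$, so that $z\in\Int Z$ --- contradicting $z\in\bd Z$. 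Making this ``filling up'' precise --- controlling how the $\rg_j$ and the complementary components they cut off sit inside the half-plane of $B$ adjacent to $\rg$, using the diameter estimates of Proposition~\ref{smallgeometric} and the continuity of the family $\KP$ from Lemma~\ref{contKP} --- is the one delicate point; everything else is bookkeeping with topological hulls and the stability of variation.
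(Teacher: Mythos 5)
Your treatment of (1)--(3) is the same as the paper's and is fine, as is the final appeal to Carath\'eodory-type boundary behaviour for the extension of $\varphi$. The problem is in (4), and you have located it yourself: the outer-side half of $(\star)$ \emph{is} the substance of the proposition, and you only sketch it. The paper closes exactly this step using the structure of the Kulkarni--Pinkall partition. With $y\in\partial Z\cap\rg$ and $y_i\in\C\sm Z$, $y_i\to y$, chosen on the side of $\rg$ opposite to $\Sh(\rg)$, that side is either (a) a limit of $\kp$-chords $\rg_j$ belonging to hulls other than the one containing $\rg$, or (b) the interior of the gap $\HCH(B\cap T(X))$ having $\rg$ in its boundary. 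In case (a) one has $\rg\subset\Sh(\rg_j)$; since $\Sh(\rh)\subset Z$ for every $\rh\in\mc{W}$ (its boundary lies in $T(X)\cup\rh$), the presence of the $y_i$ between $\rg$ and $\rg_j$ forces $\rg_j\notin\mc{W}$, hence $\rg_j\subset\C\sm Z$, and these chords, converging to $\rg$, witness simultaneously that every point of $\rg$ is accessible, that $\rg\subset\partial Z$, and that $\partial Z$ is locally connected along $\rg$. In case (b) the gap's interior is connected, disjoint from $T(X)\cup(\cup\mc{W})$, and contains the $y_i$, so it lies in $\C\sm Z$, with the same conclusions. Your closing sketch is the contrapositive of case (a), but the precise ingredients you defer are exactly the inclusion $\rg\subset\Sh(\rg_j)$ together with $\Sh(\rg_j)\subset Z$ for $\rg_j\in\mc{W}$; without them your claim that the regions cut off near $z$ are \emph{bounded} complementary components of $Y$ (rather than escaping to infinity around the endpoints of the chords) is unsupported. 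Since this is the only substantive step of (4), the proof as written is incomplete.

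A secondary point: $(\star)$ as stated is stronger than what is true. For $Z=T(X)_\delta$ the family $\mc{W}=\KP_\delta$ contains \emph{all} sufficiently small chords, and when the hull containing $\rg$ lies on the outer side of $\rg$, the side facing $\Sh(\rg)$ is a limit of boundary chords of other hulls; these converge to $\rg$, eventually have diameter less than $\delta$, and so lie in $\mc{W}$. Thus arbitrarily small balls about $z$ can meet chords of $\mc{W}$ other than $\rg$, and $(\star)$ fails on the inner side. This does not damage your subsequent bookkeeping, because the inner side is already absorbed into $\Int Z$ via $\Sh(\rg)\subset Z$, but $(\star)$ must be restricted to the side opposite $\Sh(\rg)$ --- which again shows that the whole weight of the argument rests on the unproved outer-side exclusion discussed above.
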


\begin{proof}
By Proposition~\ref{compactness}, $T(X)\cup(\cup \mc{W})$ is compact.
Moreover, $T(X)\cup(\cup \mc{W})$  is connected since each crosscut
$A\in\mc{W}$ has endpoints in $T(X)$.  Hence, the topological hull
$T(T(X)\cup(\cup \mc{W}))$ is a non-separating plane continuum, establishing
(1).

Since $Z$ is the topological hull of $T(X)\cup(\cup \mc{W})$, no boundary
points can be in complementary domains of $T(X)\cup(\cup \mc{W})$.  Hence, $\bd
Z\subset T(X)\cup(\cup \mc{W})$, establishing (2). Conclusion (3) follows
immediately.

Suppose $y\in\partial Z\cap\rg$ with $\rg\in\mc{W}$. Then $\Sh(\rg)\subset Z$
and there exists $y_i\in\complex\sm Z$ such that $\lim y_i=y$. We may assume
that all the points $y_i$ are on the ``same side'' of the arc $\rg$  (i.e.,
$y_i\in\complex\sm \Sh(\rg)$).  This side of $\rg$ is either (1) a limit of
$\kp$-chords $\rg_j$, or (2) there exists a gap $\HCH(B\cap X)$ on this side
with $\rg$ in its boundary. In case (1), $\rg\subset \Sh(\rg_j)$ and, since
$y_i\in \complex\sm Z$ for all $i$, $\rg_j\not\in \mc{W}$. Hence each
$\rg_j\subset \complex\sm Z$ for all $j$. It follows that every point of $\rg$
is accessible, $\rg\subset \partial Z$ and $\partial Z$ is locally connected at
each point of $\rg$. In case (2) there exists a chord $ \rg'\ne\rg$ in the
boundary of $\HCH(B\cap X)$ which separates $\rg$ from infinity. Then
$\rg'\not\in\mc{W}$ and  the interior of $\HCH(B\cap X)\subset \complex\sm Z$.
Hence the same conclusion follows.

The last part of (4) follows from  the proof of Carath\'eodory's theorem (see
\cite{pomm92}).
\end{proof}

\begin{prop}\label{LC}
$T(X)_\delta$ is locally connected; hence, $\bd T(X)_\delta$ is a
Ca\-ra\-th\'e\-o\-do\-ry loop.
\end{prop}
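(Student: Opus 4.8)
The plan is to show that the complement $\tU_Z:=\rsphere\sm T(X)_\delta$ has no \emph{channel}, i.e.\ that every prime end of $\tU_Z$ has degenerate impression; by the standard prime end theory this is equivalent to $T(X)_\delta$ being locally connected, and then Carath\'eodory's Theorem~\ref{carath} gives that the Riemann map $\disk^\infty\to\tU_Z$ extends continuously to $\uc$, so that $\bd T(X)_\delta$ is a Carath\'eodory loop. Throughout I would use the elementary observation that each $\rg\in\KP_\delta$ has $\dm(\rg)\le\delta$ with both endpoints in $\bd T(X)$, so $\bigcup\KP_\delta$ lies within $\delta$ of $\bd T(X)$, and hence (Proposition~\ref{boundary}) $\bd T(X)_\delta\subset \bd T(X)\cup\bigcup\KP_\delta$ lies in a $\delta$-neighborhood of $\bd T(X)$.

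So suppose, for a contradiction, that some prime end $\mc E$ of $\tU_Z$ has non-degenerate impression; fix a defining chain $\{Q_i\}$ of crosscuts of $\tU_Z$ with $\dm(Q_i)\to 0$ and $\overline{Q_i}\to p\in\bd T(X)_\delta$, and fix $\e>0$ with $\dm(\Sh_{\tU_Z}(Q_i))\ge\e$ for all $i$. First I would promote each $Q_i$ to a crosscut $\widehat Q_i$ of $\tU=\rsphere\sm T(X)$: by Proposition~\ref{boundary}(3) each endpoint of $\overline{Q_i}$ is either a point of $\bd T(X)$ or an interior point of a chord $\rg\in\KP_\delta$ with $\rg\subset\bd T(X)_\delta$, and in the latter case I adjoin to $Q_i$ the subarc of $\rg$ running to its endpoint in $\bd T(X)$. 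If along a subsequence one of these adjoined chords failed to degenerate, then by Proposition~\ref{compactness} it would converge to a chord of $\KP_\delta$ having $p$ as an endpoint and lying in $\bd T(X)_\delta$, whereupon Proposition~\ref{boundary}(4) would make $\bd T(X)_\delta$ locally connected at $p$, forcing $\im(\mc E)=\{p\}$ — a contradiction. Hence the adjoined arcs degenerate, so $\dm(\widehat Q_i)\to 0$, $\overline{\widehat Q_i}\to p\in\bd T(X)$, and (since the adjoined arcs lie in $\bd T(X)_\delta\subset T(X)_\delta$) one checks $\Sh_{\tU_Z}(Q_i)\subset\Sh_\tU(\widehat Q_i)$, so $\dm(\Sh_\tU(\widehat Q_i))\ge\e$.

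Next, passing to a subsequence I may assume the $\widehat Q_i$ form a chain; its impression is a subcontinuum of $\bd\tU=\bd T(X)$, and since the $\overline{\Sh_\tU(\widehat Q_i)}$ are nested compacta shrinking into this impression, for every $\rho>0$ we get $\Sh_\tU(\widehat Q_i)\subset B(T(X),\rho)$ for all large $i$, while still $\dm(\Sh_\tU(\widehat Q_i))\ge\e$. The decisive claim is that $\Sh_\tU(\widehat Q_i)\subset T(X)_\delta$ once $\rho$ is small enough; granting it, $\Sh_{\tU_Z}(Q_i)\subset\Sh_\tU(\widehat Q_i)\subset T(X)_\delta\cap\tU_Z=\0$, contradicting $\dm(\Sh_{\tU_Z}(Q_i))\ge\e$. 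To prove the claim I would pick, via Corollary~\ref{small}, a $\rho_*>0$ so that every $\kp$-chord contained in $B(T(X),\rho_*)$ has diameter $<\delta$ and hence lies in $\KP_\delta$, and take $i$ large with $\Sh_\tU(\widehat Q_i)\subset B(T(X),\rho_*)$. If some $w\in\Sh_\tU(\widehat Q_i)$ were not in $T(X)_\delta=T(X\cup\bigcup\KP_\delta)$, there would be an arc from $w$ to $\infty$ missing $X\cup\bigcup\KP_\delta$; its initial segment up to its first exit from $\Sh_\tU(\widehat Q_i)$ is an arc $\gamma_0\subset\overline{\Sh_\tU(\widehat Q_i)}\subset B(T(X),\rho_*)$ which ends on $\overline{Q_i}$ (it cannot end on $X$ or on an adjoined chord) and which avoids $\bigcup\KP_\delta$; choosing $w$ with $d(w,\overline{Q_i})\ge\e/2$ (possible since $\dm(\Sh_\tU(\widehat Q_i))\ge\e$ while $\dm(\overline{Q_i})\to 0$) makes $\dm(\gamma_0)\ge\e/2$. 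Thus $\gamma_0$ is an arc of diameter $\ge\e/2$, lying in the thin neighborhood $B(T(X),\rho_*)$, missing every $\kp$-chord contained in $B(T(X),\rho_*)$ — and I would derive a contradiction from the fact that, by the Kulkarni--Pinkall partition (Theorem~\ref{KPthrm}) together with Lemma~\ref{disjointCH} and Corollary~\ref{small}, such chords foliate $B(T(X),\rho_*)\sm X$ into pieces whose diameters tend to $0$ near $X$, so no such arc can exist.

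The hard part will be exactly this last step: showing that an arc hugging $T(X)$ inside a thin neighborhood while avoiding all the (necessarily small) $\KP_\delta$-chords lying in that neighborhood must itself be small. This is where the geometry of the Kulkarni--Pinkall partition has to be used in earnest — disjointness of the hulls (Lemma~\ref{disjointCH}), continuity and closedness of the chord family (Proposition~\ref{compactness}), and the quantitative smallness of chords near $T(X)$ (Corollary~\ref{small}) — to establish that a thin neighborhood of $T(X)$ is genuinely ``filled in'' by $\KP_\delta$-chords. The remainder of the argument is routine bookkeeping with prime ends and with Proposition~\ref{boundary}.
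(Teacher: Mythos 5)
Your outline follows the paper's strategy up to the decisive step: argue by contradiction from a prime end of the complement of $T(X)_\delta$ with non-degenerate impression, enlarge the crosscuts $Q_i$ of $T(X)_\delta$ to crosscuts $\widehat Q_i$ of $T(X)$ by following the $\KP_\delta$-chords through their endpoints, and then show that a suitable point of the shadow must already lie in $T(X)_\delta$. The gap is exactly at the step you yourself call ``the hard part''. The principle you propose to use there --- that the $\kp$-chords contained in $B(T(X),\rho_*)$ foliate $B(T(X),\rho_*)\sm X$ into pieces whose diameters tend to $0$ near $X$, so that no arc of diameter $\ge \e/2$ can hug $T(X)$ inside $B(T(X),\rho_*)$ while avoiding $\bigcup\KP_\delta$ --- is false, and it does not follow from Theorem~\ref{KPthrm}, Lemma~\ref{disjointCH} and Corollary~\ref{small}. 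If $X$ is a round closed disk there are no $\kp$-chords at all, and if $X$ is a straight segment every $\kp$-chord joins its two endpoints and has diameter at least the length of the segment; in either case a thin neighborhood of $T(X)$ contains no chords whatsoever, yet it contains arcs of diameter comparable to $\dm(X)$ running alongside $X$. Corollary~\ref{small} says that chords which happen to lie in a thin neighborhood are small; it does not say that points of a thin neighborhood lie in small hulls, and the latter is what your argument needs. (There is also a logical slip: having supposed that \emph{some} $w\in\Sh(\widehat Q_i)$ fails to be in $T(X)_\delta$, you are not free to ``choose'' $w$ with $d(w,\ol{Q_i})\ge\e/2$; one must instead restrict from the start to points of the shadow of $Q_i$ far from $Q_i$, which exist since $\dm(\Sh(Q_i))\ge\e$ while $\dm(Q_i)\to 0$. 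Similarly, your appeal to Proposition~\ref{boundary}(4) at an \emph{endpoint} $p$ of a limit chord gives only a one-sided extension of the Riemann map, not local connectedness at $p$.)

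The hypothesis that actually makes the final step work is not ``$w$ is close to $T(X)$'' but ``$w$ lies in the shadow of a crosscut of $T(X)$ of small diameter and is at distance at least $\e$ from that crosscut'', and this is precisely Proposition~\ref{smallgeometric} (used together with Proposition~\ref{smallcutoff}), which you never invoke. Under that hypothesis the maximal ball $B$ with $w\in\HCH(B\cap T(X))$ has radius less than $\e$, so $\dm(\HCH(B\cap T(X)))<2\e<\delta$; every chord in the boundary of this hull then lies in $\KP_\delta$, hence $w\in T(X\cup(\cup\KP_\delta))=T(X)_\delta$, which is the desired contradiction --- no arc $\gamma_0$ and no global statement about thin neighborhoods is needed. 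This is how the paper finishes: it fixes $z_i$ in the shadow of $A_i$ (your $Q_i$) with $d(z_i,y)>\e$, verifies by a ray-crossing parity argument that $z_i$ lies in the shadow of the enlarged crosscut $C_i$ (your $\widehat Q_i$), and applies Proposition~\ref{smallgeometric}. If you replace your last step by this, the remainder of your outline (the promotion of $Q_i$, with the chord endpoints chosen so that the adjoined arcs shrink) is essentially the argument given in the paper.
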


\begin{proof}
Suppose that $T(X)_\delta$ is not locally connected. Then $T(X)_\delta$ has a
non-trivial impression and there exist $0<\e<\delta/2$ and a chain $A_i$ of
crosscuts of $T(X)_\delta$ such that $\dm( \Sh(A_i))>\e$ for all $i$. We may
assume that $\lim A_i=y\in T(X)_\delta$.

By Proposition~\ref{boundary} (4) we may assume $y\in X$. Choose
$z_i\in\Sh(A_i)$ such that $ d(z_i,y)>\e$. We can enlarge the
crosscut $A_i$ of $T(X)_\delta$ to a crosscut $C_i$ of $T(X)$ as
follows. Suppose that $A_i$ joins the points $a^+_i$ and $a^-_i$ in
$T(X)_\delta$. If $a^+_i\in T(X)$, put $y^+_i=a^+_i$. Otherwise
$a^+_i$ is contained in a chord $\rg^+_i\in\KP_\delta$, with
endpoints in $T(X)$, which is contained in $T(X)_\delta$. Since
$\lim A_i=y$, we can select one of these endpoints and call it
$y^+_i$ such that $d(y^+_i,a^+_i)\to 0$. Define $\rg^-_i$ and
$y^-_i$ similarly. Then $\rg^+_i\cup A_i\cup \rg^-_i$ contains a
crosscut $C_i$ of $T(X)$ joining the points $y^+_i$ and $y^-_i$ such
that $\lim C_i=y$. We claim that $z_i\in\Sh(C_i)$. To see this note
that, since $z_i\in \Sh(A_i)$, there exists a half-ray $R_i\subset
\complex\setminus T(X)_\delta$ joining $z_i$ to infinity such that
$|R_i\cap A_i|$ is an odd number and each intersection is
transverse. Since $R_i\cap C_i=R_i\cap A_i$ it follows that
$z_i\in\Sh(C_i)$. Let $\HCH(B_i\cap X)$ be the unique hull of the
Kulkarni-Pinkall partition $\KPP$ which contains $z_i$. Since
$\dm(C_i)\to 0$ and $d(z_i,y)>\e$, it follows from
Proposition~\ref{smallgeometric} that $\dm(\HCH(B_i\cap
X))<2\e<\delta$. This contradicts the fact that
$z_i\in\complex\setminus T(X)_\delta$ and completes the proof.
\end{proof}

\part{Applications of basic theory}

\chapter{Description of main results of Part 2}\label{descr2}

We begin by describing the results obtained in Part 2. These results
are applications of the tools developed in Part 1. We will say that
a continuum $X$ is \emph{decomposable}\index{continuum!decomposable}
if there exist two proper subcontinua $A, B$ of $X$ such that
$X=A\cup B$. A continuum which is not decomposable, is called
\emph{indecomposable}.\index{continuum!indecomposable}

\section{Outchannels}

In Chapter~\ref{outcha} we will study outchannels. Outchannels were
introduced by Bell to establish that a minimal counterexample to the
Plane Fixed Point Problem must be an indecomposable continuum. In
Chapter~\ref{outcha} we will recover this result and strengthen it by
showing that the outchannel in a minimal counterexample to the Plane
Fixed Point Problem is unique: there exists exactly one prime end
$\mc{E}_t$ which corresponds to a dense channel with non zero
variation. It will follow that the variation of this channel must be
$-1$ while all other small crosscuts, which do not cross this channel
essentially, must have variation zero. Let us assume that $f:\C\to
\C$ with a forward invariant non-separating continuum $X$ presents a
(possibly existing) minimal counterexample to the Plane Fixed Point
Problem.

We construct a specific locally connected (but not invariant)
continuum $X'\supset X$ by adding small crosscuts to $X$. This will
be done in a careful way; we will only add Kulkarni-Pinkall crosscuts
from $\kp$. This construction is used to show that if there is a minimal
counterexample $(X, f)$ to the Plane Fixed Point Problem, then there
exists a continuum $Z$ such that the following facts hold.

\begin{enumerate}
\item $Z\supset X$;
\item there exists a one-to-one map $\vp:\reals \to Z$,
\item $\vp(\reals)$ is the set of accessible points of $Z$,
\item as $t\to\infty$, $\vp(t)$ and $\vp(-t)$ run along
opposite sides of the outchannel.
\end{enumerate}

Moreover, the same construction is important in the proof of the
uniqueness of the outchannel.

These ideas are also applied in \cite{blokoverto1}. There it was
shown that in certain cases a minimal subcontinuum $X$ without a
fixed point must be fully invariant.  As an important tool it was
shown in that paper that the map $f$ can be modified on $\C\sm X$ to
a map $g$  such that $g(R_t)=R_t$, $g$ maps points on $R_t$ closer to
infinity and $g$ locally interchanges the two sides of $R_t$. Here
$R_t$ is the conformal external ray which represents the prime end
corresponding to the outchannel. Note that if $X$ is fully invariant
then a prime end which corresponds to the outchannel has the property
that, in a defining sequence $\{C_i\}$ of crosscuts of the prime end
$f(C_i)$ separates $C_i$ from infinity in $U^\infty(X)$ (thus
justifying the name ``outchannel'').

Suppose that $f:\complex\to \complex$ is a perfect map, $X$ is a
continuum, $f$ has no fixed point in $T(X)$ and $X$  is minimal with
respect to $f(X)\subset T(X)$. Fix $\eta>0$ such that for each
$\kp$-chord $\rg\subset T(X)_\eta$, $\ol{\rg}\cap f(\ol{\rg})=\0$ and
$f$ is fixed point free on $T(X)_\eta$. In this case we will say that
$\eta$ \emph{defines variation near} \index{defines variation near
$X$} $X$ and that the triple $(f,X,\eta)$ satisfies the
\emph{standing hypothesis}. \index{standing
hypothesis}\index{fXeta@$(f,X,\eta)$}\label{standh} As usual, for a
continuum $X$ let $\tU=\rsphere\sm T(X)$.

\begin{defn} [Outchannel]\label{outch}
Suppose that the triple  $(X,f,\eta)$ satisfies the standing
hypothesis. An {\em outchannel} \index{outchannel} of the
non-separating plane continuum $T(X)$ is a prime end $\mc{E}_t$ of
$\tU$ such that for some chain $\{\rg_i\}$ of crosscuts defining
$\mc{E}_t$, $\var(f,\rg_i,T(X))\not= 0$ for every $i$. We call an
outchannel $\mc{E}_t$ of $T(X)$ a {\em geometric outchannel}
\index{outchannel!geometric} if and only if for sufficiently small $\delta$,
every chord in $\KP_\delta$, which crosses  $\mc{E}_t$ essentially,
has nonzero variation. We call a geometric outchannel {\em negative}
\index{geometric outchannel!negative}(respectively, {\em positive})
\index{geometric outchannel!positive} (starting at $\rg\in\kp$) if and only if
every $\kp$-chord  $\rh \subset T(X)_\eta \cap \ol{\Sh(\rg)}$, which
crosses $\mc{E}_t$ essentially, has negative (respectively, positive)
variation.
\end{defn}


\section{Fixed points in invariant continua}

In this Section we describe the results obtained in
Section~\ref{sec:fxpt} of Chapter~\ref{ch:fxpt}. The main result of
Section~\ref{sec:fxpt} solves the Plane Fixed Point Problem in the
affirmative for positively oriented maps of the plane. Namely, the
following theorem is proven.

\setcounter{chapter}{7}\setcounter{section}{1}\setcounter{thm}{2}

\begin{thm}
Suppose $f:\Complex\to\Complex$ is a positively oriented map and $X$ is a
continuum such that $f(X)\subset T(X)$. Then there exists a point $x_0\in T(X)$
such that $f(x_0)=x_0$.
\end{thm}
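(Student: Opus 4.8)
The plan is to argue by contradiction, using as inputs Corollary~\ref{posvaro} (non-negativity of variation on a crosscut mapped off itself, which is where positive orientation enters), the index--variation formula of Theorem~\ref{I=V+1}, and the fixed point criterion of Theorem~\ref{fpthm}. Suppose $f(X)\subset T(X)$ and yet $f$ has no fixed point in $T(X)$. The fixed point set of $f$ is closed and disjoint from the compact set $T(X)$, so I would first fix a closed neighborhood $\ol U$ of $T(X)$ containing no fixed point of $f$, and set $\rho=\min\{\,|z-f(z)|\mid z\in\ol U\,\}>0$.

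The key input is Corollary~\ref{posvaro}: if $A$ is a bumping arc of $X$ with $f(A)\cap A=\0$ and $f$ fixed point free on $T(X\cup A)$, then $\var(f,A)\ge 0$. I would obtain it by completing $A$ to a very tight bumping simple closed curve $S$ for $X$, so tight that $T(S)\subset\ol U$; then $f$ is fixed point free on $T(S)$ and $\ind(f,S)=0$ by Corollary~\ref{bumpingscc}. Choosing a junction $J_v$ with $v\in A$ whose legs leave $T(S)$, and invoking the invariance of variation and index under the homotopies described after Proposition~\ref{straightjunction} and in Theorem~\ref{fpfhomotopy2}, I would rewrite $\var(f,A,S)$ as a winding number of $f$ along $S$ about a suitable point $w\in T(S)\setminus f^{-1}(f(S))$ near $v$ --- the crossings contributed by the short links of $S$ other than $A$ can be homotoped away, since $f$ carries them near $f(X)\subset T(X)$ without creating fixed points. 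Positivity of orientation of $f$, namely $\degree(f_w)=\win(f,S,w)>0$ for such $w$, then forces $\var(f,A,S)\ge 0$; Proposition~\ref{varcross} makes the value independent of the completing curve.

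Granting this, I would choose finitely many points $x_0,x_1,\dots,x_n=x_0$ of $X$, in cyclic order, dense enough that there is a bumping simple closed curve $S\subset\ol U$ for $X$ through them with $T(S)\subset\ol U$ and each link $A_i=[x_i,x_{i+1}]$ of diameter $<\rho$ (such $S$ can be assembled from short $\kp$-chords of the Kulkarni--Pinkall partition of $\rsphere\setminus T(X)$; cf. Theorem~\ref{KPthrm} and Corollary~\ref{bumpingscc}, or constructed directly). For each $i$: since $\dm(A_i)<\rho\le|z-f(z)|$ for $z\in A_i\subset\ol U$, we have $f(z)\notin A_i$, hence $f(A_i)\cap A_i=\0$; moreover $T(X\cup A_i)\subset T(S)\subset\ol U$ carries no fixed point of $f$, and $f(x_i)\in T(X)\subset T(S)$. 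Thus Corollary~\ref{posvaro} applies to every $A_i$ and gives $\var(f,A_i,S)\ge 0$. Theorem~\ref{I=V+1} then yields
\[
\ind(f,S)=\sum_{i=0}^{n-1}\var(f,A_i,S)+1\ge 1,
\]
and, since $f$ is defined on $\C\supset T(S)$, Theorem~\ref{fpthm} produces a fixed point of $f$ in $T(S)\subset\ol U$, contradicting the choice of $\ol U$. Hence $f$ has a fixed point in $T(X)$.

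I expect the main obstacle to be Corollary~\ref{posvaro}: converting positive orientation, a hypothesis about winding numbers of $f$ restricted to simple closed curves, into a sign for the combinatorially defined variation of $f$ on a single crosscut. The delicate points are arranging the junction and the fixed-point-free homotopies so that the identification of $\var(f,A,S)$ with a winding number holds while that winding number is still governed by the orientation of $f$ (in particular passing to the nearby point $w\in T(S)\setminus f^{-1}(f(S))$, using that $\win(f,S,\cdot)$ is locally constant off $f(S)$), and controlling the tightness of $S$ so that $T(S)\subset\ol U$ and the extraneous links contribute nothing. Once Corollary~\ref{posvaro} is available, the remaining steps --- the neighborhood $\ol U$, the tight bumping curve through a dense finite subset of $X$, and the final application of Theorem~\ref{fpthm} --- are routine.
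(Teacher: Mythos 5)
Your proposal is correct and follows essentially the same route as the paper: assume $f$ is fixed point free on $T(X)$, build a tight bumping simple closed curve $S$ whose small links $A_i$ satisfy $f(A_i)\cap A_i=\0$, invoke Corollary~\ref{posvaro} (itself proved, as you sketch, by completing the crosscut to a curve on which variation becomes a winding number controlled by positive orientation), and then combine Theorem~\ref{I=V+1} with Theorem~\ref{fpthm} to get a contradiction. The only nitpick is notational: positive orientation gives $\deg(f_p)=\win(f,S,f(p))>0$ for $p\in T(S)\setminus f^{-1}(f(S))$, so the nonnegativity of $\win(f,S,v)$ for $v\notin f(S)$ comes from applying this to a preimage of $v$ in $T(S)$ (or from $v\notin T(f(S))$), exactly as in the paper's proof of Corollary~\ref{posvaro}.
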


\setcounter{chapter}{5}\setcounter{section}{2}\setcounter{thm}{10}

\section{Fixed points in non-invariant continua -- the case of dendrites}

As described in Chapter~\ref{intro}, in the rest of
Chapter~\ref{ch:fxpt} we want to extend Theorem~\ref{fixpoint} to at
least some non-invariant continua. We are motivated by the interval
case in which to conclude that there exists a fixed point in an
interval it is enough to know that the endpoints of the interval map
in opposite directions, and the invariantness of the interval itself
is not crucial.

In Section~\ref{sec:dendr} we extend, in the spirit of the interval case,
a well-known result according to which a map of a dendrite into itself has a
fixed point (Theorem~\ref{th:dendr}, see \cite{nadl92}). We show the existence
of fixed points in non-invariant dendrites and, with some additional
conditions, obtain also results related to the number of periodic points of
$f$. To state the precise results we need some definitions.

\begin{defn}[Boundary scrambling for dendrites]\index{boundary scrambling!for dendrites}\label{bouscr}
Suppose that $f$ maps a dendrite $D_1$ to a dendrite $D_2\supset D_1$. Put
$E=\ol{D_2\sm D_1}\cap D_1$ (observe that $E$ may be infinite). If for each
\emph{non-fixed} point $e\in E$, $f(e)$ is contained in a component of
$D_2\sm\{e\}$ which intersects $D_1$, then we say that $f$ has the
\emph{boundary scrambling property} or that it \emph{scrambles the boundary}.
Observe that if $D_1$ \emph{is} invariant then $f$ automatically scrambles the
boundary.
\end{defn}

The following theorem is the first result obtained in Section~\ref{sec:dendr}.



\setcounter{chapter}{7}\setcounter{section}{2}\setcounter{thm}{1}

\begin{thm}
Let $f:D_1\to D_2$ be a map, where $D_1$ and $D_2$ are dendrites and $D_1\subset D_2$.
The following claims hold.

\begin{enumerate}

\item
If $a, b\in D_1$ are such that $a$ separates $f(a)$ from $b$ and $b$ separates
$f(b)$ from $a$, then there exists a fixed point $c\in (a, b)$. Thus, if
$e_1\ne e_2\in E$ are such that each $f(e_i)$ belongs to a component of $D_2\sm \{e_i\}$
disjoint from $D_1$ then there is a fixed point $c\in (e_1, e_2)$.

\item
If $f$ scrambles the boundary, then $f$ has a fixed point.

\end{enumerate}

\end{thm}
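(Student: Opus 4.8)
The plan is to deduce both statements from the fixed point theorem for self-maps of a dendrite (Theorem~\ref{th:dendr}) by composing $f$ with a \emph{first-point retraction}. Recall that for a subcontinuum $E$ of a dendrite $D$ there is a continuous retraction $r\colon D\to E$ characterized by $[x,r(x)]\cap E=\{r(x)\}$, equivalently by $r(x)\in[x,z]$ for all $z\in E$. Everything hinges on the following observation. Let $E$ be a subcontinuum of $D_2$ (in the applications, either the arc $[a,b]$ or $D_1$), let $r\colon D_2\to E$ be the first-point retraction, and set $g=r\circ f|_E\colon E\to E$. Suppose $c\in E$ satisfies $g(c)=c$ but $f(c)\neq c$. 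Then $f(c)\notin E$ (else $r(f(c))=f(c)=c$), and the component $C$ of $D_2\setminus\{c\}$ containing $f(c)$ satisfies $\overline{C}\cap E=\{c\}$: if $z\in C\cap E$ with $z\neq c$, the arc $[f(c),z]$ lies in $C$ and so avoids $c$, contradicting $r(f(c))=c$. Since $C$ is open and $f$ is continuous, $f$ maps a neighborhood $N$ of $c$ in $E$ into $C$; for $x\in N$ every arc from $f(x)\in C$ to $E$ leaves $C$ through $c$, so $g(x)=c$. Thus $g\equiv c$ on $N$. Contrapositively: \emph{if $g(c)=c$ and $g$ is not constantly $c$ on any neighborhood of $c$ in $E$, then $f(c)=c$.}

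For part~(1), I would take $I=[a,b]\subset D_1\subset D_2$, let $r\colon D_2\to I$ be the first-point retraction, and put $g=r\circ f|_I$. Because ``$a$ separates $f(a)$ from $b$'' places $f(a)$ and $b$ in distinct components of $D_2\setminus\{a\}$, we have $f(a)\neq a$, and the arc $[f(a),b]$ passes through $a$; unique arcwise connectedness then gives $[f(a),a]\cap I=\{a\}$, i.e. $r(f(a))=a$, so $g(a)=a$. Symmetrically $f(b)\neq b$ and $g(b)=b$. The observation above, applied at $a$ and at $b$, shows $g\equiv a$ on some $[a,a')$ with $a'>a$ and $g\equiv b$ on some $(b',b]$ with $b'<b$. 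Fix a homeomorphism $I\cong[0,1]$, write $\le$ for the induced order, and let $c^*=\sup\{x\in I:\ g(y)\le y\ \text{for all}\ y\in[a,x]\}$. Then $c^*\ge a'>a$, while $g(y)=b>y$ on $(b',b)$ forces $c^*\le b'<b$. By continuity $g(c^*)\le c^*$; by maximality of $c^*$ there are $y_n\downarrow c^*$ with $g(y_n)>y_n>c^*$, so $g(c^*)=\lim_n g(y_n)\ge c^*$, whence $g(c^*)=c^*$. Since $g(y_n)\neq c^*$ and $y_n\to c^*$, $g$ is not constantly $c^*$ near $c^*$, so $f(c^*)=c^*$, and $c^*\in(a,b)$ as required. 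The second assertion of~(1) follows at once: if each $f(e_i)$ lies in a component of $D_2\setminus\{e_i\}$ disjoint from $D_1$, then since $e_{3-i}\in D_1$ that component is not $\{e_i\}$ and does not contain $e_{3-i}$, so $e_i$ separates $f(e_i)$ from $e_{3-i}$; now apply the first assertion with $a=e_1$, $b=e_2$.

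For part~(2), I would let $r\colon D_2\to D_1$ be the first-point retraction onto the subcontinuum $D_1$ and apply Theorem~\ref{th:dendr} to the self-map $r\circ f\colon D_1\to D_1$, obtaining $c\in D_1$ with $r(f(c))=c$. Suppose $f(c)\neq c$. Then $f(c)\notin D_1$, and $[f(c),c]\cap D_1=\{c\}$ gives $[f(c),c)\subset D_2\setminus D_1$, so $c\in\overline{D_2\setminus D_1}\cap D_1=E$. Since $f$ scrambles the boundary and the point $c\in E$ is not fixed, $f(c)$ lies in a component $C$ of $D_2\setminus\{c\}$ that meets $D_1$, say at $z\in C\cap D_1$; then $[f(c),z]\subset C$ avoids $c$, contradicting $r(f(c))=c$. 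Hence $f(c)=c$, a fixed point of $f$.

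The main thing to get right is the observation of the first paragraph, namely that when $g$ fixes $c$ but $f$ does not, the retraction collapses an entire neighborhood of $c$ onto $c$; given that, part~(2) is immediate and part~(1) is a one-dimensional intermediate-value argument, the only subtlety being to confirm that the auxiliary supremum $c^*$ lies strictly inside $(a,b)$ --- which is precisely what the separation hypotheses guarantee, and why the unavoidable fixed points $g(a)=a$, $g(b)=b$ cause no trouble. I do not expect any further essential difficulties beyond careful bookkeeping with one-sided neighborhoods at the endpoints.
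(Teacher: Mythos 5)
Your proof is correct. For part (2) you take essentially the paper's route: retract $f$ to $g=r\circ f$ using the natural first-point retraction $r\colon D_2\to D_1$, obtain a $g$-fixed point from Theorem~\ref{th:dendr}, and use the scrambling hypothesis to show that this point is in fact $f$-fixed; the paper only organizes this slightly differently (it first notes that, when $E$ contains no $f$-fixed point, no point of $E$ can be $g$-fixed, so the $g$-fixed point lies off $E$). For part (1), however, your argument is genuinely different from the paper's. The paper pulls back: it sets $a_0=a$ and inductively chooses preimages $a_{-n-1}\in(a_{-n},b)$ with $f(a_{-n-1})=a_{-n}$ (their existence, via a connectedness argument on the arc, is left implicit), so that the monotone sequence converges to a point $c\in(a,b)$ fixed by continuity, the hypothesis at $b$ ruling out $c=b$. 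You instead retract onto the arc $[a,b]$, check $g(a)=a$ and $g(b)=b$, prove the local-collapsing lemma (at a $g$-fixed point that is not $f$-fixed, $g$ is constant on a relative neighborhood), and then run a supremum/intermediate-value argument to produce a $g$-fixed point $c^*$ that is approached by points with $g(y)>y$, hence is not a collapse point, hence is $f$-fixed and lies in the open arc. Your approach buys a single lemma that drives both parts and a fully explicit existence argument with no appeal to exact preimages of $a$; the paper's pullback buys brevity at the cost of leaving the construction of the sequence $a_{-n}$ to the reader. Both arguments are valid.
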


\setcounter{chapter}{5}\setcounter{section}{3}\setcounter{thm}{1}

To give the next definition we recall that if $x\in Y$ then
the \emph{valence of $Y$ at $x$}, \index{valence} $\val_Y(x)$, \index{val@$\val_Y(x)$} is
defined as the number of connected components of $Y\sm \{x\}$, and $x$ is said
to be a \emph{cutpoint (of $Y$)} \index{cutpoint} if $\val_Y(x)>1$.

\begin{defn}[Weakly repelling periodic points]\index{periodic point!weakly repelling}\label{wkrep}
In the situation of Definition~\ref{bouscr}, let $a\in D_1$ be a fixed point and suppose that there exists a component $B$
of $D_1\sm \{a\}$ such that arbitrarily close to $a$ in $B$ there exist fixed
cutpoints of $D_1$ or points $x$ separating $a$ from $f(x)$. Then we say that
$a$ is a \emph{weakly repelling fixed point (of $f$ in $B$)}. A periodic point
$a\in D_1$ is said to be simply \emph{weakly repelling} \index{weakly repelling}if there exists $n$ and a
component $B$ of $D_1\sm \{a\}$ such that $a$ is a weakly repelling fixed point
of $f^n$ in $B$.
\end{defn}

We use the notions introduced in Definition~\ref{wkrep} to prove
Theorem~\ref{infprpt}.

\setcounter{chapter}{7}\setcounter{section}{2}\setcounter{thm}{5}

\begin{thm}\label{sumdend-1}
Suppose that $f:D\to D$ is continuous where $D$ is a dendrite and all its periodic points are weakly
repelling. Then $f$ has infinitely many periodic cutpoints.
\end{thm}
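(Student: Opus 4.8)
The plan is to argue by contradiction. Suppose $f$ has only finitely many periodic cutpoints and let $C$ be that finite set. Choose $N$ so that $g:=f^N$ fixes every point of $C$; then the fixed cutpoints of $g$ are exactly $C$, since a $g$-fixed cutpoint is an $f$-periodic cutpoint and, conversely, $C\subset\operatorname{Fix}(g)$ consists of cutpoints. By Theorem~\ref{th:dendr} the map $g$ has a fixed point, and I would use Theorem~\ref{th:dendr} together with the fixed point theorem for maps between dendrites that scramble the boundary (the second dendrite theorem quoted above) as the engine for producing new periodic points: whenever $Z\subset D$ is a subdendrite whose boundary points in $D$ are each either fixed by $g$ or mapped by $g$ into the component of $D\setminus\{e\}$ that meets $Z$, the theorem gives a fixed point of $g|_Z$ in $Z$, and such a point is $f$-periodic.

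The first step is to record the local picture at a weakly repelling periodic point. Let $a$ be $f$-periodic and weakly repelling, say a weakly repelling fixed point of $h:=f^m$ in a prong $B$ of $D\setminus\{a\}$. If arbitrarily close to $a$ in $B$ there are $h$-fixed cutpoints of $D$, these are $f$-periodic cutpoints converging to $a$, hence infinitely many, contradicting the finiteness of $C$. So there is a sequence $x_i\to a$ in $B$ with $x_i$ separating $a$ from $h(x_i)$; note each $x_i$ is a cutpoint and $h(x_i)\neq x_i$. Letting $D_i$ be the closure of the component of $D\setminus\{x_i\}$ containing $h(x_i)$, we have $a\notin D_i$, the unique boundary point $x_i$ of $D_i$ satisfies $h(x_i)\in D_i\setminus\{x_i\}$, so $h|_{D_i}$ scrambles the boundary of $D_i$ and $h$ has a fixed point $r_i\in D_i\setminus\{x_i\}$. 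This $r_i$ is $f$-periodic and distinct from $a$; if $r_i$ is a cutpoint of $D$ it is a periodic cutpoint (so $r_i\in C$), and if $r_i$ is an endpoint it is again weakly repelling, so the construction can be repeated at $r_i$.

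The heart of the proof is to iterate this, harvesting infinitely many \emph{distinct} periodic cutpoints. Starting from a periodic point $a_0$ (which exists by Theorem~\ref{th:dendr}) I would build periodic points $a_0,a_1,a_2,\dots$ and separating points $x_0,x_1,\dots$ with $x_n$ separating $a_n$ from $a_{n+1}$ and, crucially, chosen so that the $x_n$ form a monotone sequence in the tree order of $D$ (each $x_{n+1}$ separated from $a_n$ by $x_n$, with $d(a_n,x_n)$ controlled by picking the $x_i$ along a fixed arc issuing from $a_n$). Such a monotone sequence converges in $D$, and since the $a_n$ are pairwise separated by the $x_n$ they are infinitely many distinct periodic points. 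If infinitely many of them are cutpoints we are done; otherwise cofinitely many $a_n$ are periodic endpoints accumulating at a point $x^\ast$, and passing to a subsequence along which the power $m$ used in the construction is constant (pigeonhole), one checks in the limit that $x^\ast$ is $f$-periodic and that the configuration of the $x_n$ and $a_n$ around it forces $x^\ast$ (or a point produced by one further application of the boundary-scrambling theorem near $x^\ast$) to be a cutpoint with periodic cutpoints clustering at it — in either case contradicting the finiteness of $C$.

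The main obstacle is precisely this last passage: arranging the nested subdendrites $D_i$ carefully enough — choosing the $x_i$ along a common arc so that the pieces cut off shrink, and bookkeeping the powers of $f$ involved so that the relevant iterates stay comparable — to guarantee that the recursively produced periodic points are not merely $f$-periodic but genuinely cutpoints, and that infinitely many distinct ones occur. Once that geometric control is in place, the rest is routine manipulation with Theorem~\ref{th:dendr}, the boundary-scrambling fixed point theorem, and the elementary separation properties of dendrites.
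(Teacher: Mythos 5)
Your overall strategy --- contradiction, weak repulsion producing nearby separating points, and the boundary-scrambling fixed point theorem (Theorem~\ref{fxpt-dend-1}) manufacturing new fixed points --- uses the same ingredients as the paper, but the step you yourself flag as ``the main obstacle'' is a genuine gap, and the mechanisms you sketch to close it would fail. First, the fixed point $r_i$ that Theorem~\ref{fxpt-dend-1}(2) gives you lies somewhere in $D_i$, the closure of the component of $D\setminus\{x_i\}$ \emph{not} containing $a$; in the only problematic case, namely when $a$ is a periodic \emph{endpoint}, $D_i$ is all of $D$ except a small piece near $a$, so the theorem may simply return one of the finitely many known periodic cutpoints or a point you have already harvested. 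Nothing forces $r_i$ to be new, and monotonicity of the $x_n$ does not repair this: $a_{n+2}$ is only known to lie somewhere in the far-side piece cut off by $x_{n+1}$, not beyond $x_{n+1}$ in your chosen direction, so the $a_n$ need not be distinct and need not march along an arc. Second, the pigeonhole on the exponent fails: the $a_n$ are new periodic points of uncontrolled period, so the powers $m_n$ for which weak repulsion is known are in general unbounded and no subsequence with constant $m$ need exist; moreover a limit of periodic points of a dendrite map need not be periodic, so the assertion that $x^\ast$ is $f$-periodic is unsupported. These are not bookkeeping issues; they are exactly the content of the theorem.

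For comparison, the paper avoids any infinite local construction. It first proves (the pull-back argument with the sets $X_1,X_2,\dots$ in the proof of Theorem~\ref{infprpt}, combined with Lemma~\ref{wkrppow}) that a fixed \emph{endpoint} which is weakly repelling for some power $f^m$ is already weakly repelling for $f$ itself; together with Lemma~\ref{wkrppow} this lets one pass to a single power $f^N$ determined only by the finitely many periodic cutpoints, after which all relevant points are $f^N$-fixed and weakly repelling for $f^N$. It then cuts $D$ along the finite set $A$ of all these fixed cutpoints at once and applies Lemma~\ref{fxctpt} to the closure of a component $B$ of $D\setminus A$: either $\overline{B}$ contains a fixed cutpoint (which lies in $B$, hence is a periodic cutpoint of $D$ outside $A$ --- a contradiction), or some $a\in\partial B\subset A$ fails to repel towards $B$, in which case weak repulsion at $a$ holds towards a different component and one moves there. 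Since $A$ is finite and $D$ contains no simple closed curves, this walk never revisits a component and must terminate, ending at a component where Lemma~\ref{fxctpt} produces a fixed cutpoint outside $A$, the desired contradiction. It is this finite combinatorial walk across $D\setminus A$, together with the uniformization of the power $N$, that delivers the distinct periodic cutpoints your iteration cannot guarantee.
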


This theorem is applied in Theorem~\ref{lamwkrp} where it is shown that  if
$g:J\to J$ is a \emph{topological polynomial} on its dendritic Julia set (e.g.,
if $f$ is a complex polynomial with a dendritic Julia set) then it has
infinitely many periodic cutpoints.

\setcounter{chapter}{5}\setcounter{section}{3}\setcounter{thm}{2}
\section{Fixed points in non-invariant continua -- the planar case}

In parallel with the dendrite case, we want to extend
Theorem~\ref{fixpoint} to a larger class of maps of the plane and
non-invariant continua such that certain ``boundary'' conditions are
satisfied. This is accomplished in Section~\ref{sec:fxpt-noni}.

\begin{defn}\index{boundary scrambling!for planar continua}\label{scracon}
Suppose that $f:\C\to\C$ is a positively oriented map and $X\subset \C$ is a
non-separating continuum. Suppose that there exist $n\ge 0$ disjoint
non-separating continua $Z_i$ such that the following properties hold:

\begin{enumerate}

\item $f(X)\sm X\subset \cup_i Z_i$;

\item for all $i$,  $Z_i\cap X=K_i$ is a non-separating continuum;


\item for all $i$, $f(K_i)\cap [Z_i\sm K_i]=\0$.

\end{enumerate}

\noindent Then the map $f$ is said to \emph{scramble the boundary (of $X$}). If
instead of (3) we have

\begin{enumerate}

\item[(3a)] for all $i$, either $f(K_i)\subset K_i$, or
$f(K_i)\cap Z_i=\0$

\end{enumerate}

\noindent then we say that $f$  \emph{strongly scrambles the boundary (of
$X$)}; clearly, if $f$ strongly scrambles the boundary of $X$, then it
scrambles the boundary of $X$. In either case, the continua $K_i$ are called
\emph{exit continua (of $X$)}. \index{exit continuum}
\end{defn}

Observe that if in Definition~\ref{scracon}  $n=0$, then $X$ must be invariant
(i.e., $f(X)\subset X$).

\begin{rem}\label{zxgrow}
Since $Z_i$ and $Z_i\cap X=K_i\ne \0$ are non-separating continua and sets
$Z_i$ are pairwise disjoint, then $X\cup (\bigcup Z_i)$ is a non-separating
continuum. Loosely, scrambling the boundary means that $f(X)$ can only ``grow''
off $X$ \emph{within} the sets $Z_i$ and \emph{through} the sets $K_i\subset X$
while any set $K_i$ itself cannot be mapped outside $X$ within $Z_i$, with more
specific restrictions upon the dynamics of $K_i$'s in the case of strong
scrambling.
\end{rem}

The following theorem extends Theorem~\ref{fixpoint} onto some non-invariant continua.


\setcounter{chapter}{7}\setcounter{section}{3}\setcounter{thm}{2}

\begin{thm} 
In the situation of ~\ref{scracon},
if $f$ is a positively oriented map which strongly scrambles the boundary of $X$, then $f$ has a
fixed point in $X$.
\end{thm}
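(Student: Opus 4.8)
The plan is to reduce the statement to the invariant case already established in Theorem~\ref{fixpoint}, by collapsing each outgrowth $Z_i$ back onto the corresponding exit continuum $K_i$; this parallels the dendrite argument, where one composes $f$ with the natural retraction $D_2\to D_1$. If $n=0$ the continuum $X$ is invariant and Theorem~\ref{fixpoint} applies directly, so assume $n\ge 1$. Put $Y=X\cup\bigcup_{i} Z_i$; by Remark~\ref{zxgrow} this is a non-separating continuum, and since $f(X)\setminus X\subset\bigcup_i Z_i$ we have $f(X)\subset Y$, while the sets $K_i$ are pairwise disjoint because the $Z_i$ are.

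First I would construct a monotone perfect surjection $\hat r:\C\to\C$ with $\hat r|_{X}=\mathrm{id}$, $\hat r(Z_i)=K_i$ for every $i$, and $\hat r^{-1}(X)=Y$. To do this, fix for each $i$ a monotone retraction $r_i:Z_i\to K_i$ with non-separating point inverses, assemble the $r_i$ together with $\mathrm{id}_X$ into a monotone retraction of $Y$ onto $X$, and extend it over the plane by Moore's decomposition theorem (cf. the proof of Lemma~\ref{cacyclic}), the non-degenerate fibers being the sets $r_i^{-1}(z)\subset Z_i$, which form an upper semicontinuous decomposition of $\C$ into non-separating continua. Set $g=\hat r\circ f$. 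Then $g$ is perfect, and since $f$ is positively oriented and $\hat r$, being a monotone perfect surjection of the plane, is positively oriented as well (it preserves local orientation; see Theorem~\ref{orient} and its proof), $g$ is positively oriented. Moreover $g(X)=\hat r(f(X))\subset\hat r(Y)=\hat r(X)\cup\bigcup_i\hat r(Z_i)=X\cup\bigcup_i K_i=X=T(X)$. By Theorem~\ref{fixpoint} there is $x_0\in X$ with $g(x_0)=x_0$, i.e. $\hat r(f(x_0))=x_0$.

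Finally I would upgrade $\hat r(f(x_0))=x_0$ to $f(x_0)=x_0$, which is where the strong scrambling hypothesis enters. Write $y_0=f(x_0)$; since $x_0\in X$ we have $y_0\in f(X)\subset Y$, and $y_0\in\hat r^{-1}(x_0)$. If $x_0\notin\bigcup_i K_i$ then $\hat r^{-1}(x_0)\cap Y=\{x_0\}$, so $y_0=x_0$. If $x_0\in K_i$ for the (unique) index $i$, then $\hat r^{-1}(x_0)\cap Y\subset\{x_0\}\cup r_i^{-1}(x_0)\subset\{x_0\}\cup Z_i$; assuming $y_0\ne x_0$ we get $y_0\in Z_i$, and also $y_0\in f(K_i)$ because $x_0\in K_i$. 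By condition (3a) of Definition~\ref{scracon}, either $f(K_i)\cap Z_i=\emptyset$ --- impossible since $y_0\in f(K_i)\cap Z_i$ --- or $f(K_i)\subset K_i$, whence $y_0\in K_i\subset X$, so $\hat r(y_0)=y_0$ and therefore $x_0=\hat r(f(x_0))=y_0$. In all cases $f(x_0)=x_0\in X$, as desired.

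The step I expect to be the main obstacle is the construction of $\hat r$: given an arbitrary non-separating continuum $Z_i$ with non-separating subcontinuum $K_i$ (and $Z_i\cup X$ non-separating), one must produce a monotone retraction $Z_i\to K_i$ whose point inverses are non-separating continua, and then check that the assembled plane map $\hat r$ is genuinely positively oriented, so that $\hat r\circ f$ remains in the class covered by Theorem~\ref{fixpoint}. Once this geometric input is in place the rest of the argument is formal; an alternative, avoiding $\hat r$, would be to run the index-and-variation argument of Theorem~\ref{fixpoint} directly on a tight bumping curve around $X$ in a fixed-point-free neighborhood, the delicate point then being the behavior of the arcs lying over the regions where $f$ carries $X$ off itself into the $Z_i$.
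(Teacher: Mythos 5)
Your reduction to the invariant case hinges on the map $\hat r$, and that is precisely where the argument breaks down, in two places. First, a monotone retraction $r_i:Z_i\to K_i$ (equivalently, a monotone upper semicontinuous decomposition of $Z_i$ each of whose elements meets $K_i$ in exactly one point) need not exist for the continua allowed by Definition~\ref{scracon}. Take $X=[-1,0]\times[-1,1]$, $Z_1=\overline{\{(x,\sin(1/x)):0<x\le 1\}}$ and $K_1=Z_1\cap X=\{0\}\times[-1,1]$: any subcontinuum of $Z_1$ that contains a point of the curve $\{(x,\sin(1/x))\}$ and a point of $K_1$ must contain \emph{all} of $K_1$, so a monotone $r_1$ with $r_1|_{K_1}=\mathrm{id}$ would have all fibers equal to singletons of $K_1$, forcing $r_1^{-1}(K_1)=K_1\ne Z_1$. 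Nothing in the hypotheses rules out such $Z_i$, and you give no argument that the dynamics does. Second, even granting the $r_i$, the map you get from Moore's theorem is a quotient map onto an abstract plane followed by \emph{some} homeomorphism onto $\C$; arranging $\hat r|_X=\mathrm{id}$ amounts to extending the homeomorphism $(\pi|_X)^{-1}$ of $\pi(X)$ onto $X$ over the entire plane, which is a nontrivial extension problem (this is exactly the subject of \cite{ov09}) and is not addressed. The naive alternative --- $\hat r=\mathrm{id}$ off $\bigcup Z_i$ and $\hat r=r_i$ on $Z_i$ --- is not even continuous when $Z_i$ has empty interior, since points of $Z_i\sm K_i$ are then limits of points fixed by $\hat r$. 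Without $\hat r|_X=\mathrm{id}$ (or at least $\hat r(Y)=X$ with $X$ pointwise fixed) your composition $g=\hat r\circ f$ does not map $X$ into $T(X)$ and the final upgrade from $g(x_0)=x_0$ to $f(x_0)=x_0$ collapses. So this is a genuine gap, not just a technical obstacle to be smoothed over.

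The fallback you mention in your last sentence is in fact the paper's proof of Theorem~\ref{fixpt}, and the ``delicate point'' you flag is where all the work is. After disposing of the easy cases ($f(X)\subset X$ or some $f(K_i)\subset K_i$, both handled by Theorem~\ref{fixpoint}), one assumes $f|_X$ is fixed point free, chooses constants $\eta'<\eta<\delta<\e/3$ and a tight bumping simple closed curve $S$ with small links, and then concatenates each maximal string of links whose endpoints map into some $Z_i\sm X$ into a single link $Q'$ with endpoints mapping into $X$. The strong scrambling hypothesis (via $f(K_i)\cap Z_i=\0$ in the remaining case) together with the metric choices forces $Q'$ to stay at distance $>2\eta$ from $Z_i$ while $f(Q')$ stays within $2\eta$ of $Z_i$, so $f(Q')\cap Q'=\0$; then Lemma~\ref{posvar+} gives $\var(f,Q')\ge 0$ for every link, Theorem~\ref{I=V+1} gives $\ind(f,S)\ge 1$, and Theorem~\ref{fpthm} yields the contradiction. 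If you want a correct proof along your intended lines, it is this concatenation-and-variation argument, not a collapse of the $Z_i$, that has to be carried out.
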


\setcounter{chapter}{5}\setcounter{section}{4}\setcounter{thm}{2}

We specify the above theorem for positively oriented maps with isolated fixed
points as follows. Given a non-separating continuum $X\subset \C$, a positively
oriented map $f$ and a fixed point $p\in X$, we define what it means that $f$
\emph{repels outside $X$ at $p$} (see Definition~\ref{repout}; basically, it
means that there exists an invariant external ray to $X$ which lands at $p$ and
along which the points are repelled away from $p$ by $f$). We also need the next
definition which is closely related to that of the index of the map on a simple
closed curve.

\begin{defn}\label{indpt}
Suppose that $f:\C\to\C$ is a positively oriented map with isolated fixed points and
$x$ is a fixed point of $f$.
Then the \emph{local index of $f$ at $x$}, \index{local index} \index{index!local} denoted by
$\ind(f, x)$, \index{index@$\ind(f,x)$} is defined as $\ind(f,S)$
where $S$ is a small simple closed curve around $x$.
\end{defn}

Then we prove the following theorem.



\setcounter{chapter}{7}\setcounter{section}{4}\setcounter{thm}{7}

\begin{thm}
Suppose that $f:\C\to\C$ is a positively oriented map with isolated fixed points, and
$X\subset\C$ is a non-separating continuum or a point. Suppose that the conditions (1)-(3) in
\ref{scracon} are satisfied. Moreover, suppose that
the following conditions hold.

\begin{enumerate}

\item
For each fixed point $p\in X$ we have that $\ind(f, p)=1$ and $f$ repels
outside $X$ at $p$.

\item
The map $f$ scrambles the boundary of $X$. Moreover, for each $i$
either $f(K_i)\cap Z_i=\0$, or there exists a neighborhood $U_i$ of
$K_i$ with $f(U_i\cap X)\subset X$.

\end{enumerate}

Then $X$ is a point.
\end{thm}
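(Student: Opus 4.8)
The plan is to reduce the statement to Theorem~\ref{fixpoint} and its refinement for the non-invariant case (the "strong scrambling" theorem), and then use the extra hypotheses (1)--(2) to rule out every fixed point. First I would argue by contradiction: suppose $X$ is not a point. Because $f$ scrambles the boundary of $X$ in the weak sense, I would first like to upgrade the situation to one where the fixed point theorems of Section~\ref{sec:fxpt-noni} apply. The obstruction is that weak scrambling alone does not guarantee a fixed point; this is precisely why condition (2) adds, for each $i$ with $f(K_i)\cap Z_i\ne\0$, a neighborhood $U_i$ of $K_i$ with $f(U_i\cap X)\subset X$. I would use these neighborhoods to show that one may choose the exit continua slightly larger (replacing $K_i$ by $K_i$ together with a thin collar inside $X$, and correspondingly shrinking $Z_i$), so that the modified data satisfies condition (3a), i.e. $f$ \emph{strongly} scrambles the boundary of the (same) continuum $X$. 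Then the strong-scrambling theorem (the one with $\setcounter{chapter}{7}\setcounter{section}{3}\setcounter{thm}{2}$) yields a fixed point $p\in X$.

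Next I would analyze that fixed point $p$ using hypothesis (1): $\ind(f,p)=1$ and $f$ repels outside $X$ at $p$. The idea is the standard one from the interval case lifted to the plane: if $f$ repels outside $X$ at $p$, there is an invariant external ray $R_t$ of $X$ landing at $p$ along which points move away from $p$ (toward $\infty$). Using the tools of Part~1 --- variation on crosscuts, the relation $\ind = \var + 1$ (Theorem~\ref{I=V+1}), and the junction/essential-crossing machinery near $p$ --- I would compute the local index of $f$ at $p$ by running a small bumping curve around $p$ that crosses $R_t$. The repelling behavior along the invariant ray forces a negative contribution to variation on the crosscut straddling $R_t$ (points get "flipped out" along the channel, exactly as in the outchannel discussion with variation $-1$), while the remaining crosscuts, being small and off the channel, contribute nonnegative variation by Corollary~\ref{posvaro}. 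Summing, $\ind(f,p) = \var(f,S) + 1 \le 0$, contradicting $\ind(f,p)=1$.

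The main obstacle, I expect, is making the two competing index computations fit together cleanly on the \emph{same} small simple closed curve around $p$: one needs a bumping curve $S$ for a tiny piece of $X$ near $p$ that is simultaneously tight enough that all its links off the invariant ray have nonnegative variation (via the positively-oriented hypothesis and the crosscut version of Corollary~\ref{posvaro}), yet arranged so the single link crossing $R_t$ genuinely records the repulsion as strictly negative variation. This is where "repels outside $X$" must be used in its full strength — it is not enough that $R_t$ is invariant; one needs that nearby points on (and slightly off) $R_t$ are pushed strictly farther from $p$, so that the crosscut $f(C)$ essentially separates $C$ from $\infty$, giving $\var(f,C,X)<0$. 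Once that local picture is secured, the arithmetic $\ind(f,p)\le 0 < 1 = \ind(f,p)$ closes the argument, so no fixed point can exist, and hence — since the strong-scrambling theorem guarantees one must — $X$ is degenerate, i.e. a point.

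Some care is also needed at the boundary points of $X$ where several $K_i$ accumulate or where $p$ lies in some $K_i$; there one invokes condition (2) again (the neighborhood $U_i$ with $f(U_i\cap X)\subset X$) to guarantee that the relevant crosscuts near $p$ still have their endpoints mapping into $T(X)$, so that variation is defined along them and the computation above goes through. Assembling these pieces gives the theorem.
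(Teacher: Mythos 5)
Your second step contains a sign error that is fatal to the whole strategy. In the paper's setting, ``$f$ repels outside $X$ at $p$'' together with scrambling forces the crosscuts crossing the ray near $p$ to have \emph{positive} variation, not negative (this is exactly Lemma~\ref{dynatp} and Lemma~\ref{repel}: points on $R$ are pushed toward $\infty$, and $\var(f,C)>0$ for a suitable component $C$ of $S_j\sm X$ crossing $R$). Consequently there is no local contradiction to be had at a single fixed point: a fixed point with $\ind(f,p)=1$ at which $f$ repels outside $X$ is perfectly consistent --- indeed a repelling fixed point of a polynomial with a fixed external ray landing on it satisfies both hypotheses with $X=\{p\}$. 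This is also a structural sanity check against your argument: if your local computation were valid it would rule out \emph{every} fixed point satisfying hypothesis (1), hence show that no $X$ at all (not even a point) satisfies the hypotheses, which is false. The actual proof must use the non-degeneracy of $X$ globally, and that is what the paper does: assuming $X$ is not a point, it builds \emph{one} tight bumping simple closed curve $S$ around all of $X$ whose links are bumping arcs with endpoints mapping into $X$ and images off themselves (the neighborhoods $U_i$ from hypothesis (2) and the concatenation-of-links device from Theorem~\ref{fixpt} are used here, to make Lemma~\ref{posvar+} apply and give nonnegative variation on every link), and which contains near each fixed point $p_j$ a crosscut $Q(j,s(j))$ of positive variation coming from the repelling condition. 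Then $\ind(f,S)=\var(f,S)+1\ge m+1$, while the argument principle (Theorem~\ref{argupr}) together with $\ind(f,p_j)=1$ gives $\ind(f,S)=m$; the contradiction shows $X$ is a point. Note the proof never needs to know beforehand that fixed points exist ($m=0$ gives the contradiction $1\le\ind(f,S)=0$).

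Your first step is also unjustified: from $f(U_i\cap X)\subset X$ you can only conclude that an enlarged exit continuum $K_i'$ satisfies $f(K_i')\cap[Z_i'\sm K_i']=\0$, i.e.\ the \emph{weak} scrambling condition (3) again; strong scrambling (3a) requires $f(K_i')\subset K_i'$ or $f(K_i')\cap Z_i'=\0$, and nothing in the hypotheses lets you trap $f(K_i')$ inside $K_i'$ rather than elsewhere in $X$ (also, since $K_i=Z_i\cap X$ by definition, enlarging $K_i$ while shrinking $Z_i$ is not even consistent with condition (2) of Definition~\ref{scracon}). Fortunately this reduction is unnecessary: in the paper's argument the role of hypothesis (2) is precisely to control the construction of the bumping curve (keeping the images of the endpoints of the concatenated links in $X$), not to produce a fixed point first.
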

\setcounter{chapter}{5}\setcounter{section}{4}\setcounter{thm}{1}

\section{The polynomial case}\label{polycase}

Theorems~\ref{lamwkrp} and \ref{locrot} apply to polynomials acting
on the complex plane. These theorems allow us to obtain  corollaries
dealing with the existence of periodic points in certain parts of the
Julia set of a polynomial and with the degeneracy of certain continua
(e.g., impressions). To discuss this we need the following standard
notation.

Suppose that $P:\C\to\C$ is a complex polynomial of degree $d$. A
$P$-periodic point $a$ of period $n$ is called \emph{repelling}
\index{periodic point!repelling} if $|(P^n)'(a)|>1$,
\emph{parabolic} if $(P^n)'(a)$ is a root of unity\index{periodic
point!parabolic} (i.e., for an appropriate $k$ we will have
$[{(P^{n})'(a)}]^k=1$) and \emph{irrational neutral} if
$(p^n)'(a)=e^{2\pi \alpha i}$ with $\alpha$ irrational. The closure
of the union of all repelling periodic points of $P$ is called the
\emph{Julia set} of $P$ \index{Julia set} and is denoted by $J_P$
\index{JP@$J_P$}. Then the set $\iu(J_P)=\iu$ (i.e., the unbounded
component of $\C\sm J_P$) is called the \emph{basin of attraction of
infinity} and the set $K_P=\C\sm \iu=T(J_P)$ \index{KP@$K_P$}is
called the ``filled-in'' Julia set.

Components of $\C\sm J_P$ are called \emph{Fatou domains}.
\index{domain!Fatou} A Fatou domain is said to be \emph{attracting
(Siegel, respectively)}\index{domain!attracting}
\index{domain!Siegel} if it contains a periodic point which is
attracting (irrational neutral, respectively); an irrational neutral
periodic point like that is said to be a \emph{Siegel (periodic)
point}\index{periodic point!Siegel}. A bounded periodic Fatou domain
is said to be \emph{parabolic} \index{domain!parabolic} if it
contains no periodic points (in this case all its points converge to
the same parabolic periodic orbit which meets the boundary of the
domain). Finally, an irrational neutral periodic point which belongs
to $J_P$ is said to be a \emph{Cremer (periodic)
point}\index{periodic point!Cremer}.

The set $\iu$ is foliated by so-called \emph{(conformal) external rays} $R_\al$
\index{external ray!smooth}\index{conformal!external ray}
of arguments $\al\in \uc$. By \cite{douahubb85}, if the degree of $P$ is $d$
and $\sigma_d:\C\to\C$ is defined by $\sigma_d(z)=z^d$, then
$P(R_\al)=R_{\sigma_d(\al)}$. Denote by $C_*$ the set of all preimages of critical points in
$U^\infty(J_P)$ ($C_*=\0$ if $J_P$ is connected). If $J_P$ is connected,  each
$\al\in\uc$ corresponds to a unique external ray and all external rays are  smooth and
pairwise disjoint.   In general $R_\al$ is smooth and unique if and only if
$R_\al\cap C_*=\0$. Other external rays are one-sided limits of smooth rays; it follows
that they are non-smooth and there are at most countably many of them (in fact,
for each $\al\in\uc$ there exist at most two external rays $R_\al^\pm$ with
argument $\al$ and each is a one sided limit of smooth external rays, see
\cite{lepr} for further details).\index{external ray!non smooth}

It is known that two distinct external rays are not homotopic in the
complement of $K_P$ (with the landing point fixed  under the
homotopies). Given an external ray $R_\al$ of $K_P$, we denote by
$\Pi(R_\al)=\ol{R_\al}\sm R_\al$ the \emph{principal continuum of
$R_\al$}. Given a set $\RR$ of external rays, we extend the above
notation by setting $\Pi(\RR)=\bigcup_{R\in \RR} \Pi(R)$. Now we are
ready to give the following technical definition (see
Figure~\ref{puzzle-piece} for an illustration).

\begin{defn}[General puzzle-piece]\label{genpuz}
Let $P:\C\to\C$ be a polynomial. Let $X\subset K_P$ be  a
non-separating subcontinuum or a point such that the following holds.


\begin{enumerate}

\item 
There exists $m\ge 0$ and $m$
pairwise disjoint non-separating continua/points $E_1\subset X, \dots,
E_m\subset X$.
\item There exist $m$  finite sets of external rays $A_1=\{R_{a^1_1}, \dots, R_{a^1_{i_1}}\},
\dots, A_m=\{R_{a^m_1}, \dots, R_{a^m_{i_m}}\}$ 
 with $i_k\ge 2, 1\le k\le m$.

\item 
We have $\Pi(A_j)\subset E_j$ (so the set $E_j\cup
(\cup^{i_j}_{k=1} R_{a^j_k})=E'_j$ is closed and connected).

\item 
$X$ intersects a unique component $C_X$ of $\C\sm \cup E'_j$ \index{CX@$C_X$}.

\item For each Fatou domain $U$ either $U\cap X=\0$ or $U\subset X$.

\end{enumerate}

We call such $X$ a \emph{general puzzle-piece}\index{general puzzle-piece} and
call the continua $E_i$ the \emph{exit continua}\index{exit continuum!for a
general puzzle-piece} of $X$. For each $k$, the set $E'_k$ divides the plane
into $i_k$ open sets which we will call \emph{wedges (at $E_k$)}\index{wedge
(at an exit continuum)}; denote by $W_k$ the wedge which contains $X\sm E_k$
(it is well-defined by (4) above).
\end{defn}

Note that if $m=0$, $\bigcup E'_j=\0$ and $C_X=\C$; so, any
non-separating continuum in $K_P$ with the empty set of exit continua
satisfying (5) is a general puzzle-piece. Observe also, that there is
a natural situation in which general puzzle-pieces can occur. Suppose
that $J_P$ is connected, conditions (1) - (3) are satisfied, and all
continua $E_j$ are contained in $J_P$ while the continuum $X$ is not
yet defined. Suppose that there exists a component $C$ of $\C\sm
\bigcup E'_j$ such that the boundary of $C$ meets every $E_j, 1\le
j\le m$. Let $X=(C\cap K_P)\cup (\bigcup E_j)$. Then it is easy to
see that $X$ is a general puzzle-piece. However, our definition
allows for a wider variety of general puzzle-pieces (like, e.g.,
non-separating invariant subcontinua of $J_P$).

For convenience call a fixed point $x$ of a
polynomial $P$ \emph{non-rotational}\index{fixed point!non-rotational}
if there is a fixed external ray landing at $x$ (it follows that
each such point is either repelling or parabolic).
We are ready to state the main result of Section~\ref{complappl}.



\setcounter{chapter}{7}\setcounter{section}{5} \setcounter{thm}{1}

\begin{thm}
Let $P$ be a polynomial with filled-in Julia set $K_P$ and let $Y$
be a non-degenerate periodic component of $K_P$ such that $P^p(Y)=Y$.
Suppose that $X\subset Y$ is a non-degenerate general puzzle-piece
with $m\ge 0$ exit continua $E_1, \dots, E_m$ such that $P^p(X)\cap
C_X\subset X$ and either $P^p(E_i)\subset W_i$, or $E_i$ is a
$P^p$-fixed point.
Then at least one of the following claims holds:

\begin{enumerate}

\item $X$ contains a  $P^p$-invariant parabolic domain,

\item $X$ contains a $P^p$-fixed point  which is neither repelling nor
    parabolic, or

\item $X$ has an external ray $R$ landing at a repelling or parabolic
    $P^p$-fixed point such that $P^p(R)\cap R=\0$ (i.e., $P^p$ locally
    rotates at some parabolic or repelling $P^p$-fixed point).

\end{enumerate}

Equivalently, suppose that $Y$ is a non-degenerate periodic
component of $K_P$ such that $P^p(Y)=Y$, $X\subset Y$ is a general
puzzle-piece with $m\ge 0$ exit continua $E_1, \dots, E_m$ such that
$P^p(X)\cap C_X\subset X$ and either $P^p(E_i)\subset W_i$, or $E_i$
is a $P^p$-fixed point; if, moreover, $X$ contains only
non-rotational $P^p$-fixed points and does not contain
$P^p$-invariant parabolic domains, then it is degenerate.
\end{thm}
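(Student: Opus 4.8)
The plan is to reduce this statement, via the general machinery of Part~1 and the fixed-point results already quoted in Chapter~\ref{descr2}, to an application of Theorem~\ref{locrot} (the ``isolated-fixed-point'' version of the planar non-invariant fixed-point theorem). First I would dispose of trivial cases: if $P^p$ has a fixed point in $X$ that is attracting, Siegel, or Cremer, then alternative (2) holds and we are done; likewise if $X$ contains a $P^p$-invariant parabolic Fatou domain, alternative (1) holds. So assume for contradiction that $X$ is non-degenerate, every $P^p$-fixed point in $X$ is repelling or parabolic, none of them is locally rotated by $P^p$ (so every fixed ray landing on such a point is $P^p$-invariant and points along it are repelled, since a repelling or parabolic fixed point with a non-rotated landing ray must have a fixed, repelling ray by the standard local theory of polynomial dynamics), and there is no invariant parabolic domain in $X$. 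Under these assumptions I must contradict non-degeneracy, i.e.\ show $X$ is a point.

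The main step is to verify that the hypotheses of Theorem~\ref{locrot} are met with $f=P^p$ and the continuum $X$. I would take the exit continua $K_i$ of the ``scrambling'' setup to be the $E_i$ of the general puzzle-piece, and the continua $Z_i$ to be $E_i \cup (\bigcup_{k} R_{a^i_k}) \cup (\text{closure of the wedge complement})$ — more precisely, $Z_i = \ol{W_i^c}\cap K_P$-type sets chosen so that $Z_i\cap X = E_i$ and $P^p(X)\setminus X \subset \bigcup_i Z_i$; this last containment is exactly guaranteed by the assumption $P^p(X)\cap C_X\subset X$, since anything in $P^p(X)$ outside $X$ must have left the component $C_X$ through some $E'_j$ and hence lies in the corresponding $Z_j$. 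Condition (3)/(3a) of Definition~\ref{scracon} follows from the dichotomy in the hypothesis: either $P^p(E_i)\subset W_i$, in which case $P^p(E_i)\cap Z_i=\0$ (the wedge $W_i$ is on the ``$X$ side'', disjoint from the part of $Z_i$ outside $X$), or $E_i$ is a $P^p$-fixed point, in which case $P^p(E_i)=E_i\subset K_i$ and moreover we get the needed neighborhood $U_i$ with $P^p(U_i\cap X)\subset X$ from continuity together with $P^p(X)\cap C_X\subset X$. Then condition (1) of Theorem~\ref{locrot} — local index $1$ and ``repels outside $X$'' at every fixed point $p\in X$ — is precisely the content of our contradiction hypothesis: a repelling or parabolic fixed point that is not locally rotated has a fixed landing ray along which points are repelled (this is where one invokes the local normal form for polynomials near repelling/parabolic points and the description of accessible points / landing rays from Section~\ref{kpchords}), and such a point has local index $1$ (a non-rotational repelling or parabolic fixed point of a positively oriented map has winding number $1$ on a small circle). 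Having checked all hypotheses, Theorem~\ref{locrot} forces $X$ to be a point, contradicting non-degeneracy; this proves the first formulation, and the ``equivalently'' reformulation is just its contrapositive together with the observation that ``non-rotational'' fixed point means exactly ``repelling or parabolic with a fixed landing ray''.

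\textbf{Main obstacle.} The delicate point will be the correct choice of the continua $Z_i$ and the verification that $P^p(X)\setminus X\subset\bigcup Z_i$ and that the $Z_i$ are pairwise disjoint non-separating continua with $Z_i\cap X=E_i$ non-separating — i.e.\ translating the ``puzzle-piece'' geometry (components of $\C\setminus\bigcup E'_j$, wedges $W_k$) into the ``scrambling the boundary'' language of Definition~\ref{scracon}. One has to be careful that the external rays $R_{a^i_k}$ bounding the wedges may be non-smooth when $J_P$ is disconnected, and that the sets $Z_i$ chosen from the wedge complements actually meet $X$ only in $E_i$; ensuring non-separation of $Z_i$ may require intersecting with $K_P$ or with $T$ of an appropriate set and using that distinct external rays are not homotopic in $\C\setminus K_P$. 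The second, more dynamical, obstacle is the local analysis at parabolic fixed points: one must confirm that a parabolic $P^p$-fixed point on $\bd X$ which is not locally rotated and which is not ``swallowed'' by an invariant parabolic domain inside $X$ genuinely has a fixed repelling external ray landing on it with the right repulsion property — this is where the hypothesis excluding invariant parabolic domains is essential and where Lemma~\ref{pararepel}-type perturbation arguments (or the cited results \cite{douahubb85} on ray dynamics $P(R_\al)=R_{\sigma_d(\al)}$) enter. Everything else is bookkeeping with the tools of Part~1.
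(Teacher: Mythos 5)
Your overall strategy is the paper's: build exit continua from the $E_i$, check that $P^p$ scrambles the boundary of $X$ (the paper takes $Z_i$ to be the topological hull of $E_i$ together with the components of $P^p(X)\sm X$ meeting $E_i$, rather than your wedge-complement sets, but that is a workable variant), use the assumed failure of alternative (3) to get fixed landing rays at every fixed point of $X$, and conclude degeneracy from the non-invariant fixed point machinery (the paper goes through Corollary~\ref{locrot1} and Lemma~\ref{repel} rather than Theorem~\ref{locrot} directly). However, your main line contains a step that fails: you assert that a non-rotational repelling \emph{or parabolic} fixed point has local index $1$ and that $f$ repels outside $X$ there. For a parabolic fixed point whose landing rays are all fixed one is forced into the case $(P^p)'(q)=1$, and then $\ind(P^p,q)$ equals the multiplicity of $z-P^p(z)$ at $q$, which is at least $2$; moreover $q$ is not topologically repelling (it has attracting petals). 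This is exactly the point the paper makes in the discussion preceding Lemma~\ref{pararepel}: neither Theorem~\ref{locrot} nor Corollary~\ref{locrot1} can be applied ``as is'' to $P^p$ when $X$ contains parabolic fixed points, so condition (1h) cannot be verified for the polynomial itself.

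The missing idea is how Lemma~\ref{pararepel} is actually used. Because alternative (1) is assumed to fail, the invariant parabolic Fatou domains attached to such points are disjoint from $X$, and one replaces $P^p$ by the positively oriented (no longer holomorphic) map $g$ of Lemma~\ref{pararepel}, which agrees with $P^p$ outside those domains --- hence on $X$ --- and for which every such parabolic point becomes a topologically repelling fixed point of local index $1$. The scrambling and fixed-ray conditions are then verified for $g$, Corollary~\ref{locrot1} is applied to $g$, and since $g|_X=P^p|_X$ the degeneracy of $X$ follows. You do flag the parabolic case as an obstacle and even name Lemma~\ref{pararepel}, but you present it as a way to produce a repelling ray (or suggest the Douady--Hubbard ray dynamics would suffice), which does not repair the index and topological-repulsion failure; without the surgery replacing $P^p$ by $g$, the verification of the hypotheses of Theorem~\ref{locrot}/Corollary~\ref{locrot1} in your argument is invalid at parabolic fixed points.
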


\setcounter{chapter}{5}\setcounter{section}{0} \setcounter{thm}{17}

We also prove in Corollary~\ref{degimpr} that an impression of an invariant
external ray, to the filled in Julia set, which contains only repelling or parabolic periodic points is
degenerate.

\chapter{Outchannels and their properties}\label{outcha}

\section{Outchannels} \label{out}
In this section we will always let $f:\C\to\C$ be a continuous
function. Suppose that $X$ is a minimal continuum such that
$f(X)\subset T(X)$ and $f$ has no fixed point in $T(X)$.  We show
that $X$ has at least one \textit{negative outchannel}. We will
always assume that $(f,X,\eta)$ satisfies the standing hypothesis
(see Definition~\ref{outch} and the paragraph preceding \ref{outch})
and see Section~\ref{auxcont} for the notation $T(X)_\delta^\pm$).
In particular, $f$ is fixed point free on $T(X)_\eta$. Note that for
each $\kp$-chord $\rg$ in $T(X)_\eta$,
$\var(f,\rg,T(X))=\var(f,\rg)$ is defined.

\begin{lem} \label{localarcs}
Suppose that $(f,X,\eta)$ satisfy the standing hypothesis and $\delta\le\eta$.
Let $Z\in\{T(X)^+_\delta,T(X)^-_\delta\}$. Fix a Riemann map
$\varphi:\disk^\infty\to\rsphere\sm Z$ such that $\varphi(\infty)=\infty$.
Suppose $R_t$ lands at $x\in\bd Z$. Then there is an open interval
$M\subset\bd\disk^\infty$ containing $t$ such that $\varphi$ can be extended
continuously over $M$. \end{lem}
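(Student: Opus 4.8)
The statement to prove is Lemma~\ref{localarcs}: for $Z\in\{T(X)^+_\delta,T(X)^-_\delta\}$ and a Riemann map $\varphi:\disk^\infty\to\rsphere\sm Z$ fixing $\infty$, if an external ray $R_t$ of $Z$ lands at $x\in\bd Z$, then $\varphi$ extends continuously over an open interval $M\subset\uc$ containing $t$. The idea is that although $Z$ need not be locally connected everywhere (unlike $T(X)_\delta$, which is locally connected by Proposition~\ref{LC}), the only way $R_t$ can land at a point of $\bd Z$ is if that landing point is ``nice'' in the sense of Proposition~\ref{boundary}(4): either $x\in T(X)$ or $x$ lies interior to a $\kp$-chord $\rg\in\mc{W}$ (where $\mc{W}\in\{\KP^+_\delta,\KP^-_\delta\}$), or $x\in\ol{\rg}\sm\rg$. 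I would handle these cases separately, in each case producing local connectivity of $\bd Z$ near $x$ and then invoking the proof of Carath\'eodory's theorem to get the continuous extension over a neighborhood in $\uc$.

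First I would argue that $x$ must be an accessible point of $Z$: since $R_t$ lands at $x$, the ray itself (a subarc of it together with $x$) is an arc in $(\rsphere\sm Z)\cup\{x\}$ with $x$ as an endpoint, so $x$ is accessible from $\tU' = \rsphere\sm Z$. By Proposition~\ref{boundary}(3), $x$ is therefore either a point of $T(X)$ or a point interior to some chord $\rg\in\mc{W}$. If $x$ is interior to $\rg\in\mc{W}$, then Proposition~\ref{boundary}(4) directly gives that $\rg\subset\bd Z$ and $\bd Z$ is locally connected at each point of $\rg$; hence by the proof of Carath\'eodory's theorem (see \cite{pomm92}) $\varphi$ extends continuously to an open interval in $\uc$ about $t$, and we are done. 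If instead $x\in\ol{\rg}\sm\rg$ for some $\rg\in\mc{W}$, the last sentence of Proposition~\ref{boundary}(4) gives a continuous extension over a half-open interval $J\subset\uc$ with endpoint $t$; I would then need to also extend to the other side of $t$, which reduces to showing that the other ``prime end side'' of $x$ is also locally connected — but $x$ is an endpoint of $\rg$ lying in $T(X)$, so this side is covered by the case $x\in T(X)$, and one glues the two half-intervals.

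The remaining (and main) case is $x\in T(X)\cap\bd Z$. Here I would show $\bd Z$ is locally connected at $x$, which by the Carath\'eodory argument again yields the extension over an open interval about $t$. Suppose not; then there is a chain $\{A_i\}$ of crosscuts of $Z$ with $\dm(\Sh(A_i))>\e>0$ for all $i$ and $\lim A_i = x$, exactly as in the proof of Proposition~\ref{LC}. I would mimic that proof: enlarge each crosscut $A_i$ of $Z$ to a crosscut $C_i$ of $T(X)$ by attaching the (pieces of) $\kp$-chords $\rg^\pm_i\in\mc{W}$ containing the endpoints of $A_i$ (if an endpoint is already in $T(X)$, do nothing on that side), choosing the endpoints $y^\pm_i\in T(X)$ of these chords so that $d(y^\pm_i, A_i)\to 0$; then $\lim C_i = x$, and $\dm(C_i)\to 0$. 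Pick $z_i\in\Sh(A_i)$ with $d(z_i,x)>\e$; a transversality/parity argument (an odd number of transverse intersections of a ray from $z_i$ to $\infty$ with $A_i$, and $R_i\cap C_i = R_i\cap A_i$) shows $z_i\in\Sh(C_i)$. Let $\HCH(B_i\cap X)$ be the unique Kulkarni--Pinkall hull containing $z_i$; since $\dm(C_i)\to 0$ and $d(z_i,x)>\e$, Proposition~\ref{smallgeometric} forces $\dm(\HCH(B_i\cap X))<2\e<\delta$ for large $i$. But $\dm(\HCH(B_i\cap X))<\delta$ means $B_i$ contributes $\kp$-chords of diameter $<\delta$; those chords either lie in $\mc{W}$ (hence in $Z$) — contradicting $z_i\notin Z$ together with $z_i$ being in the hull's interior near such a chord — or one must examine whether $z_i\in Z$, and in all subcases one derives a contradiction with $z_i\in\complex\sm Z$. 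I expect the bookkeeping in this last step — matching the $\kp$-chord enlargement to the sign condition defining $\mc{W}$, and making sure the enlarged crosscut $C_i$ genuinely is a crosscut of $T(X)$ with $z_i$ in its shadow — to be the main obstacle, but it is essentially a transcription of the Proposition~\ref{LC} argument with $T(X)_\delta$ replaced by $T(X)^\pm_\delta$. Finally, having local connectivity of $\bd Z$ at $x$ in every case, I would conclude via the standard Carath\'eodory extension argument that $\varphi$ extends continuously over an open interval $M\ni t$ in $\uc$.
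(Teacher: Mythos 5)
The opening reductions you make (accessibility of $x$, the cases where $x$ is interior to a chord of $\mc{W}$ or an endpoint of one, handled by Proposition~\ref{boundary}(3),(4)) are fine and agree with the paper, which also reduces at once to $x\in X$. The gap is in your main case $x\in T(X)\cap\bd Z$, and it is not a bookkeeping issue: the transcription of the proof of Proposition~\ref{LC} breaks down exactly at the step you wave through. In that proof the contradiction is that the small Kulkarni--Pinkall hull containing $z_i$ has all its boundary chords of diameter less than $\delta$, hence lies in $T(X)_\delta$, contradicting $z_i\notin T(X)_\delta$; membership in $T(X)_\delta$ is decided by diameter alone. For $Z=T(X)^{\pm}_\delta$ membership of a chord in $\mc{W}$ is decided by the \emph{sign of its variation}, so a small hull near $x$ whose boundary chords have variation of the wrong sign is simply not contained in $Z$, and $z_i\in\complex\sm Z$ yields no contradiction. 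Indeed the intermediate goal itself is unattainable: by Theorem~\ref{nicebd} the boundary of $T(X)^{\pm}_\delta$ is in general \emph{not} locally connected (it has channels, and $\varphi$ is genuinely discontinuous at the corresponding arguments), and landing of the single ray $R_t$ at $x$ does not by itself force even continuity of $\varphi$ at $t$ (the impression of $\mc{E}_t$ could a priori be nondegenerate while its principal set is the point $x$). So no purely topological argument from the hypotheses you actually use can prove the lemma.

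What is missing is the dynamical input, and this is the heart of the paper's proof. Since $f$ is fixed point free on $T(X)_\eta$ and $f(x)\in T(X)$, one chooses a neighborhood $W$ of $x$ and a junction $J_x$ with $f(W)\cap[W\cup J_x]=\0$; Proposition~\ref{var0} (whose hypotheses are met because the small $\kp$-chords near $x$ can be joined to the landing ray by a null sequence of arcs) then forces $\var(f,\rg)=0$ for all sufficiently small $\kp$-chords $\rg$ near $x$ on the relevant side. Chords of zero variation lie in \emph{both} $\KP^+_\delta$ and $\KP^-_\delta$, hence in $\mc{W}$, and this is what rules out channels of $Z$ accumulating at $x$ from that side. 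Concretely, the paper argues one side of $t$ at a time: either some chord of $\mc{W}$ already has $x$ as an endpoint at the appropriate prime-end argument of $T(X)$, in which case closedness of $\KP^-_\delta$ (Proposition~\ref{compactness}) produces an ``outermost'' such chord lying in $\bd Z$ and Proposition~\ref{boundary}(4) gives the one-sided extension; or else one first shows, again via Proposition~\ref{var0}, that $x\in\bd T(X)_\nu$ for some $\nu\le\delta$, invokes Proposition~\ref{LC} for the auxiliary continuum $T(X)_\nu$ (where local connectedness \emph{is} available), and then uses the variation-zero conclusion to see that a small boundary arc of $T(X)_\nu$ ending at $x$ consists of chords belonging to $\KP^-_\delta$, hence survives into $\bd T(X)^-_\delta$. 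Your proposal never invokes the standing hypothesis or variation beyond the definition of $Z$, which is why it cannot close; the fix is not more careful shadow bookkeeping but the use of Proposition~\ref{var0} together with Propositions~\ref{compactness}, \ref{LC} and \ref{boundary}(4) along the lines above.
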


\begin{proof}
Suppose that $Z=T(X)_\delta^-$ and $R_t$ lands on $x\in\partial Z$.
By proposition~\ref{boundary} we may assume that $x\in X$.  Note
first that the family of chords in $\kp^-_\delta$ form a closed
subset of the hyperspace of $\complex\sm X$, by
Proposition~\ref{compactness}. By symmetry, it suffices to show that
we can extend $\psi$ over an interval $[t',t]\subset \uc$ for
$t'<t$.

Let $\phi:\disk^\infty\to \complex\sm T(X)$ be the Riemann map for
$T(X)$. Then there exists $s\in \uc$ so that the external ray $R_s$
of $\complex\sm T(X)$ lands at $x$. Suppose first that there exists
a chord $\rg\in \kp^-_\delta$ such that $G=\vp^{-1}(\rg)$ has
endpoints $s'$ and $s$ with $s'<s$. Since $\kp^-_\delta$ is closed,
there exists a minimal $s"\le s'<s$ such that there exists a chord
$\rh\in\kp^-_\delta$ so that $H=\vp^{-1}(\rh)$ has endpoints $s"$
and $s$. Then $\rh\subset \partial Z$ and $\phi$ can be extended
over an interval $[t',t]$ for some $t'<t$, by
Proposition~\ref{boundary} (4).

Suppose next that no such chord $\rg$ exists. Choose a junction $J_x$ for
$T(X)^-_\delta$ and a neighborhood $W$ of $x$ such that $f(W)\cap [W\cup
J_x]=\0$. We will first show that there exists $\nu\le \delta$ such that $x\in
\partial T(X)_\nu$.   For suppose that this is not
the case. Then there exists a sequence $\rg_i\in\KP$ of chords such
that $x\in\Sh(\rg_{i+1})\subset\Sh(\rg_i)$, $\lim \rg_i=x$ and
$\var(f,\rg_i)>0$ for all $i$. This contradicts
Proposition~\ref{var0}. Hence $x\in\partial T(X)_\nu$ for some
$\nu>0$.  We may assume that $\nu$ is so small that any chord of
$\kp_\nu$ with endpoint $x$ is contained in $W$.

By Proposition~\ref{LC}, the boundary of $T(X)_\nu$ is a simple
closed curve $S$ which must contain $x$. If there exists a chord
$\rh\in\kp_\nu$  with endpoint $x$ such that $H$  has endpoints $s'$
and $s$ with $s'<s$ then,  since $\rh\subset W$, $f(\rh)\cap
J_x=\0$, $\var(f,\rh)=0$ and $\rh\in \kp^-_\delta$, a contradiction.
Similarly, all chords $\rh$  close to $x$ in $S$ so that $H$ has
endpoints less than $s$ and which are contained in $W$ have
$\var(f,\rh)=0$ by Proposition~\ref{var0}. Hence a small interval
$[x',x]\subset S$, in the counterclockwise order on $S$ is contained
in $T(X)^-_\nu$. It now follows easily that a similar arc exists in
the boundary of $T(X)^-_\delta$ and the desired result follows.
\end{proof}

By a \emph{narrow strip} we mean the image of an embedding
$h:\{(x,y)\in\Complex\mid x\ge 0 \text{ and } -1< y< 1\}\to
\Complex$ such that $h$ has a continuous extension over the closure
of its domain and $\lim_{x\to\infty}\dm (h(\{x\}\times [-1,1]))=0$.

\begin{figure}

\includegraphics{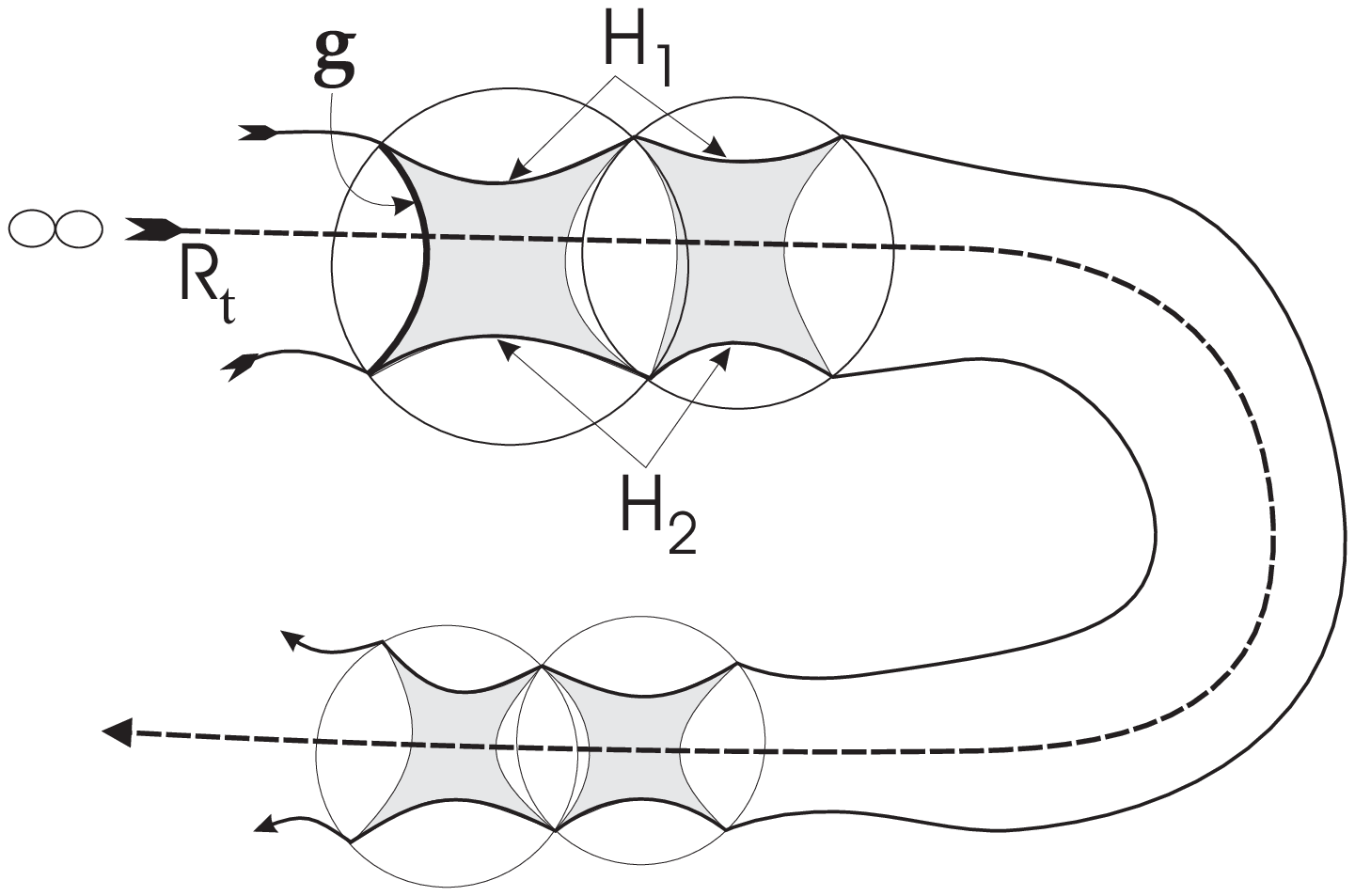}

\caption{The strip $\fS$ from Lemma~\ref{geomoutchannel}} \label{strip}

\end{figure}

\begin{lem} \label{geomoutchannel}
Suppose that $(f,X,\eta)$ satisfy the standing hypothesis. If there
is a chord $\rg\subset T(X)_\eta$ of $T(X)$ of negative
(respectively, positive) variation, such that there is no fixed
point in $T(T(X)\cup \rg)$, then there is a negative (respectively,
positive) geometric outchannel $\mc{E}_t$ of $T(X)$ starting at
$\rg$.

Moreover, if $\mc{E}_t$ is a positive (negative) geometric outchannel starting
at the $\kp$-chord $\rg$, and $\fS=\bigcup \{\HCH(B\cap T(X)) \mid \HCH(B\cap
T(X))\subset T(X)_\eta\cap \ol{\Sh(\rg)} \text{ and a chord in } \HCH(B\cap
T(X)) \text{ crosses } R_t \text{ essentially}\}.$ Then $\fS$ is an infinite
narrow strip in the plane whose remainder is contained in $T(X)$ and which is
bordered by a $\kp$-chord and  two halflines $H_1$ and $H_2$ (see
figure~\ref{strip}).
\end{lem}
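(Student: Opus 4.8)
\emph{Strategy and the first assertion.} The plan is to prove the first claim by building, inside $\Sh(\rg)$, a nested chain of $\kp$-chords of negative variation and passing to the prime end it defines, and then to read off the strip $\fS$ from the Kulkarni--Pinkall hulls that $R_t$ meets in an essential chord. Assume $\rg\subset T(X)_\eta$ is a $\kp$-chord with $\var(f,\rg)<0$ and $f$ has no fixed point in $T(T(X)\cup\rg)$; the positive case is symmetric. I would construct recursively $\kp$-chords $\rg=\rg_0,\rg_1,\dots$ with $\ol{\Sh(\rg_{i+1})}\subset\Sh(\rg_i)$, $\ol{\rg_{i+1}}\cap\ol{\rg_i}=\0$, $\dm(\rg_i)\to 0$ and $\var(f,\rg_i)<0$ for all $i$. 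For the recursive step: pick $\nu_{i+1}>0$ so small that the portion of the simple closed curve $\bd T(X)_{\nu_{i+1}}$ (Proposition~\ref{LC}) lying in $\Sh(\rg_i)$ consists of $\kp$-chords of diameter $<1/(i+1)$ together with subarcs of $\bd T(X)$ and lies so deep in $\Sh(\rg_i)$ that it can be completed by arcs of $\bd T(X)$ to a bumping arc $A$ with the endpoints of $\rg_i$ for which $\rg_i\cup T(X)$ separates $A\setminus X$ from $\infty$. By Proposition~\ref{crossunder}, $\var(f,\rg_i)=\sum_j\var(f,Q_j)$ over the $\kp$-chords $Q_j$ of $A\setminus X$; since this sum is negative, some $Q_j$ has negative variation, and I take $\rg_{i+1}$ to be such a $Q_j$ whose closure misses $\ol{\rg_i}$ (possible because $\rg_i$ has positive distance to $X$). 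Since $\dm(\rg_i)\to 0$, the chain $\{\rg_i\}$ defines a prime end $\mc{E}_t$ of $\tU$, and as $\var(f,\rg_i)\ne0$ for every $i$, $\mc{E}_t$ is an outchannel with $R_t$ crossing each $\rg_i$ essentially.

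\emph{Negativity and the geometric property.} To obtain that $\mc{E}_t$ can be chosen to be a geometric outchannel that is negative and starts at $\rg$, I would show that every $\kp$-chord $\rh\subset T(X)_\eta\cap\ol{\Sh(\rg)}$ crossing $\mc{E}_t$ essentially has negative variation. Since $R_t$ crosses $\rh$ essentially, $\Sh(\rh)$ is comparable in the shadow order with the shadows $\Sh(\rg_i)$, hence bracketed, $\ol{\Sh(\rg_{i+1})}\subset\Sh(\rh)\subset\Sh(\rg_i)$ for some $i$; in particular $\Sh(\rh)\subset\Sh(\rg)$, so $f$ is fixed point free on $T(T(X)\cup\rh)$ and $\var(f,\rh)$ is defined. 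Building a bumping arc through $\rh$ deep inside $\Sh(\rg_i)$ out of $\bd T(X)_\nu$ for small $\nu$ and applying Propositions~\ref{crossunder} and \ref{varcross}, one writes $\var(f,\rg_i)$ as a finite sum of $\var(f,\rh)$ together with variations of small $\kp$-chords lying between $\rg_i$ and $\rh$ and near $X$; those of the latter that do not themselves cross $\mc{E}_t$ essentially contribute $0$ by Proposition~\ref{var0}, and by first replacing $\rg$, if necessary, by a $\kp$-chord of negative variation deeper in $\Sh(\rg)$ whose shadow is minimal, one can arrange that none of them does, whence $\var(f,\rh)=\var(f,\rg_i)<0$. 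I expect this variation accounting --- choosing the auxiliary bumping arcs so that additivity isolates $\var(f,\rh)$ with the correct sign, and disposing of the zero-variation debris along the channel --- to be the principal difficulty of the proof.

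\emph{The strip $\fS$ (the ``Moreover'' part).} Now let $\mc{E}_t$ be a positive geometric outchannel starting at $\rg$ (the negative case is identical). I would first identify $\fS$ with the union of the $\KPP$-elements through which $R_t$ passes between its outermost essential crossing in $T(X)_\eta\cap\ol{\Sh(\rg)}$ and $\pr(\mc{E}_t)$: by Lemma~\ref{jorg}, $R_t$ meets every maximal ball, hence every Kulkarni--Pinkall hull, in a connected set, so $R_t$ traverses a sequence of hulls monotone in the shadow order; a boundary $\kp$-chord through which $R_t$ enters a gap necessarily cuts off the remaining tail of $R_t$, hence crosses $R_t$ essentially, and the same holds for the standalone $\kp$-chords $R_t$ crosses in the foliated stretches (Lemma~\ref{lenseunion}) between consecutive gaps --- so exactly these hulls meet $R_t$ in an essential chord, and being deeper than $\rg$ they lie in $\ol{\Sh(\rg)}$. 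Consecutive hulls share a $\kp$-chord or abut across a foliated region, so $\fS$ is connected, has nonempty interior, and is bordered on its outer end by a single $\kp$-chord; it contains infinitely many distinct hulls (one between each $\rg_i$ and $\rg_{i+1}$), which lie in a bounded region and are essentially disjoint, so their diameters tend to $0$ by Proposition~\ref{compactness} and Corollary~\ref{small} and they approach $X$ by Proposition~\ref{smallgeometric}. Concatenating within each hull the part of its boundary to the left of $R_t$ yields a one-ended embedded arc $H_1$ accumulating on $\pr(\mc{E}_t)\subset\bd T(X)$, the parts to the right yield $H_2$, and $H_1\cap H_2\subset X$ since distinct $\kp$-chords meet only in $X$; straightening the $\kp$-chord foliation of $\fS$ then produces the embedding $h$ of the standard half-strip with $\dm(h(\{x\}\times[-1,1]))\to 0$, exhibiting $\fS$ as an infinite narrow strip whose remainder $\bigcap_i\ol{\Sh(\rg_i)}\cap\bd T(X)$ is contained in $T(X)$.
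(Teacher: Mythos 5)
Your first paragraph is essentially the paper's own first step (take the Carath\'eodory loop $\bd T(X)_\delta$, extract from $\ol{\Sh(\rg)}$ a bumping arc with the endpoints of $\rg$, and use Proposition~\ref{crossunder} to find a deeper $\kp$-chord of negative variation), and your treatment of the strip is at about the level of detail the paper itself gives. The genuine gap is exactly where you predicted the difficulty would be: the claim that \emph{every} $\kp$-chord $\rh\subset T(X)_\eta\cap\ol{\Sh(\rg)}$ crossing $\mc{E}_t$ essentially has negative variation. Your accounting writes $\var(f,\rg_i)$ as $\var(f,\rh)$ plus the variations of the remaining small chords and then disposes of the latter ``by Proposition~\ref{var0}.'' That proposition only applies when the external ray \emph{lands} at a point $x$ and the crosscuts in question converge to $x$ along a null sequence of arcs joining them to the ray; here $R_t$ does not land (its principal set turns out to be all of $X$), and the debris chords between $\rg_i$ and $\rh$ do not converge to any landing point. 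The statement that small chords which do not cross the channel essentially have variation $0$ is, in substance, part of the much later Unique Outchannel Theorem~\ref{outchannel}, whose proof depends on the present lemma, so you cannot invoke it here; and the proposed fix of ``replacing $\rg$ by a chord whose shadow is minimal'' is vacuous, since shadows can always be shrunk. Note also that even granting the debris claim, your conclusion $\var(f,\rh)=\var(f,\rg_i)$ is stronger than anything available (several chords crossing $R_t$ essentially may lie between $\rg_i$ and $\rg_{i+1}$ and share the negative total).

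The paper closes this gap by a different mechanism, which your proposal lacks: stability of variation under limits of chords. If a chord $\rg''$ of negative variation is a limit of chords from the side of $X$, then for large $i$ \emph{every} chord separating $\rg_i$ from $\rg''$ in $U^\iy(X)$ has the \emph{same} variation as $\rg''$ (perturb the junction slightly; cf.\ the last statement of Proposition~\ref{compactness}); if instead $\rg''$ is isolated on the side closest to $X$, it lies on the boundary of a gap $\HCH(B\cap T(X))$ and Proposition~\ref{crossunder} applied inside that gap produces a strictly deeper chord of negative variation. These two facts together yield a maximal family of $\kp$-chords, all of negative variation, totally ordered by separation in $U^\iy(X)$, and it is this maximal family (not a single recursively chosen sequence) that determines the geometric outchannel and gives control over all essentially crossing chords in $\ol{\Sh(\rg)}$. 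To repair your argument you would need to replace the Proposition~\ref{var0} step by this limit-stability argument and the maximality argument, i.e.\ essentially adopt the paper's route for the middle step.
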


\begin{proof}
Without loss of generality, assume $\var(f,\rg,T(X))=\var(f,\rg)<0$.
If $\rg$ is such that for any chord $\rh\subset T(X\cup\rg)$,
$\rh\subset T(X)_\eta$,  put $\rg'=\rg$. Otherwise consider the
boundary of $T(X)_\delta$ ($\delta<\eta$) which is locally connected
by Proposition~\ref{LC} and, hence, a Carath\'eodory loop. Then a
continuous extension $g:\uc\to\partial T(X)_\delta$ of the Riemann
map $\phi:\disk^\infty\to \rsphere\sm T(X)_\delta$ exists. Whence
the boundary of $T(X)_\delta$ contains a sub-path $A=g([a,b])$,
which is contained in $\ol{\Sh(\rg)}$, whose endpoints coincide with
the endpoints of $\rg$. Note that for each component $C$ of $A\sm
X$, $\var(f,C)$ is defined. Then it follows from
Proposition~\ref{crossunder}, applied to a Carath\'eodory path, that
there exists a component $C=\rg'$ such that $\var(f, \rg')<0$. Note
that $\rg'$ is a $\kp$-chord contained in the boundary of
$T(X)_\delta$.  By taking $\delta$ sufficiently small we can assume
that for any chord $\rh\subset  \ol{\Sh(\rg')}$, $\rh\subset
T(X)_\eta$.

To see that a geometric outchannel, starting with $\rg'$ exists,
note that for any chord $\rg''\subset \ol{\Sh(\rg')}$ with
$\var(f,\rg'',X)<0$, if $\rg''=\lim \rg_i$, then there exists $i$
such that for any chord $\rh$ which separates $\rg_i$ and $\rg''$ in
$U^\infty(X)$, $\var(f, \rh,X)=\var(f,\rg'',X)<0$. This follows
since $f(\rg)$ is close to $f(\rh)$ and, hence, crosses a junction
$J_v$ in the same way (we can slightly change the junction $J_v$
with $v\in\rg''$ to a junction with vertex in $\rh$ without changing
the crossings of the images of the crosscuts with the junction). If
$\rg''$ is isolated on the side closest to $X$, then $\rg''\subset
\HCH(B\cap T(X))$, where $\HCH(B\cap T(X))$ is a gap, such that
$\rg''$ separates $\HCH(B\cap T(X))\sm\rg''$ from infinity in
$U^\infty(X)$. Again by Proposition~\ref{crossunder}, there exists
$\rh\ne\rg''$ in $\HCH(B\cap T(X))$ such that $\rg''$ separates
$\rh$ from infinity in $U^\infty(X)$ and $\var(f,\rh,X)<0$.  It
follows from these two facts that there exists a maximal family of
$\kp$ crosscuts, all of which have negative variation and are such
that that for any three members of the family, one separates the
other two in $U^\infty(X)$. Hence this maximal family determines a
geometric outchannel. Each chord $\rh$ in this family corresponds to
a unique maximal ball $B_\rh$. It is now not difficult to see that
the union of all the sets $\HCH(B_\rh \cap T(X))$ is a narrow strip.
\end{proof}

\subsection{Invariant Channel in $X$}
We are now in a position to prove Bell's principal result on any possible
counter-example to the fixed point property, under our standing hypothesis.

\begin{lem} \label{invchannel}
Suppose $\mc{E}_t$ is a geometric outchannel of $T(X)$ under
$f$. Then the principal continuum $\pr(\mc{E}_t)$ of $\mc{E}_t$ is
invariant under $f$. So $\pr(\mc{E}_t)=X$. \end{lem}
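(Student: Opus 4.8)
\emph{Plan.} The plan is to prove that the principal continuum $P:=\pr(\mc{E}_t)$ is forward invariant, i.e.\ $f(P)\subseteq P$, and then invoke the minimality of $X$. By Proposition~\ref{trans}, $P=\cl{R_t}\sm R_t$, where $R_t$ is the external ray of $\tU=\rsphere\sm T(X)$ determined by $\mc{E}_t$. Since $R_t\subseteq\tU$ is disjoint from $T(X)$ while $P\subseteq\bd T(X)\subseteq X$, we have $\cl{R_t}\cap T(X)=P$; moreover $f(z)\in T(X)$ for every $z\in P\subseteq X$. Hence it suffices to show, for each $z\in P$, that $f(z)\in\cl{R_t}$, since then $f(z)\in\cl{R_t}\cap T(X)=P$. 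Granting $f(P)\subseteq P$, the set $P$ is a nonempty subcontinuum of $X$ with $f(P)\subseteq P\subseteq T(P)\subseteq T(X)$ and with $f$ fixed point free on $T(P)$, so the minimality of $X$ gives $P=X$.

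It is convenient first to record that $\mc{E}_t$ is a \emph{channel}, i.e.\ $P$ is nondegenerate. If instead $P=\{x\}$, then $R_t$ lands on $x$. By Proposition~\ref{trans} there are crosscuts $Q_r\to x$ with $R_t\cap Q_r$ a single point and with $R_t$ crossing $Q_r$ essentially; by Proposition~\ref{crossing} and the remark following it we may replace the $Q_r$ by $\kp$-chords $\rg_i\to x$, each crossing $R_t$ essentially (hence meeting $R_t$). Fix a small $\delta\le\eta$ witnessing that $\mc{E}_t$ is a geometric outchannel (Definition~\ref{outch}). For $i$ large these $\rg_i$ lie in $\KP_\delta$ and cross $\mc{E}_t$ essentially, so they have nonzero variation; but Proposition~\ref{var0} (applied with short joining arcs near the points of $\rg_i\cap R_t$) gives $\var(f,\rg_i,T(X))=0$ for $i$ large, a contradiction. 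So $P$ is nondegenerate, and in particular Lemma~\ref{chordlimit} applies to $\mc{E}_t$.

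Now fix $z\in P$, keep the $\delta$ above, and suppose for contradiction that $f(z)\notin\cl{R_t}$. Choose $\rho>0$ with $B(f(z),2\rho)\cap\cl{R_t}=\0$ and a neighborhood $W$ of $z$ so small that $f(\cl W)\subseteq B(f(z),\rho)$; then $f(\cl W)$ is disjoint from the $\rho$-neighborhood $B(\cl{R_t},\rho)$ of $\cl{R_t}$. Using Lemma~\ref{chordlimit} together with Proposition~\ref{crossing} (exactly as in the proof of Lemma~\ref{chordlimit}), pick a chain $\{\rg_i\}$ of $\kp$-chords from $\KP_\delta$ defining $\mc{E}_t$, with $\rg_i\to z$, with $R_t$ crossing each $\rg_i$ essentially, and with $\cl{\rg_i}\subseteq W$ for $i$ large; by the standing hypothesis $\var(f,\rg_i,T(X))$ is then defined. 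Pick $v_i\in\rg_i\cap R_t$ and route a junction $J_{v_i}$ at $v_i$ exactly as in the proof of Proposition~\ref{var0}: its legs remain in $\tU$ (so $J_{v_i}\cap T(X)\subseteq\{v_i\}$), run out to $\infty$ along $R_t$ — equivalently along the narrow strip $\fS$ of Lemma~\ref{geomoutchannel} — and stay inside $B(\cl{R_t},\rho)$. Then $f(\cl{\rg_i})\subseteq f(\cl W)$ misses $J_{v_i}$, so $f(\rg_i)$ meets no leg of $J_{v_i}$ and therefore $\var(f,\rg_i,T(X))=0$. But $\rg_i\in\KP_\delta$ crosses $\mc{E}_t$ essentially, so its variation is nonzero, a contradiction. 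Hence $f(z)\in\cl{R_t}$, so $f(z)\in P$; this proves $f(P)\subseteq P$, and by the first paragraph $P=X$.

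The main obstacle is the junction construction in the third paragraph: one must run the three legs of $J_{v_i}$ from a point of the tiny crosscut $\rg_i$ out to $\infty$ while keeping them inside $\tU$, inside $B(\cl{R_t},\rho)$ (so that the image $f(\rg_i)$, which is confined to the small set $f(\cl W)$ far from $\cl{R_t}$, cannot cross them), and pairwise separated away from $v_i$ so that $J_{v_i}$ is a genuine junction in the sense of Definition~\ref{junction}. When the ray lands this is precisely the routing in the proof of Proposition~\ref{var0}; here $R_t$ has nondegenerate principal set, so one follows the walls of the narrow strip $\fS$ of Lemma~\ref{geomoutchannel}, whose defining chords shrink to $0$ down the channel, keeping the legs near $\pr(\mc{E}_t)$ at the remainder. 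Checking that this routing respects the junction axioms and the separation from $f(\cl W)$, and that variation is unchanged under the implicit homotopies, is the only delicate point; the passage to $P=X$ via minimality is then immediate.
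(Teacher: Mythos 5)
Your main argument is essentially the paper's own proof, phrased contrapositively: by Lemma~\ref{chordlimit} you take $\kp$-chords from $\KP_\delta$ defining $\mc{E}_t$ and converging to $z\in\pr(\mc{E}_t)$, use a junction that parallels $R_t$ to show that nonzero variation forces $f$ of these chords to meet a neighborhood of $\ol{R_t}$, conclude $f(z)\in\ol{R_t}\cap T(X)=\pr(\mc{E}_t)$, and finish by minimality. That part, including your handling of the junction routing along the tail of $R_t$ (which is exactly the device in Proposition~\ref{var0} and in the paper's phrase ``a junction which parallels $R_t$''), is fine and no less rigorous than the original.

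The one step I would not accept as written is your nondegeneracy argument. You invoke ``Proposition~\ref{crossing} and the remark following it'' to replace the crosscuts $Q_r$ of Proposition~\ref{trans} by $\kp$-chords crossing $R_t$ essentially and converging to the landing point $x$. But that remark is stated (and is only valid) for rays with \emph{non-trivial} principal continuum: its construction takes $A$ and $B$ to be the closures of the sets in $T(X)$ accessible from the two endpoints of the crosscut by small arcs missing $R_t$, and when $R_t$ lands at $x$ these two sets may both contain $x$, so the disjointness hypothesis of Proposition~\ref{crossing} can fail; moreover the conclusion that some chord in the boundary of the resulting gap crosses $R_t$ essentially genuinely uses more than the landing picture provides (if $T(X)$ is locally ``ball-like'' near $x$, the relevant Kulkarni--Pinkall element can be a gap with no boundary chords near $x$ at all). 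So the existence of small $\kp$-chords in $\KP_\delta$ crossing $R_t$ essentially near a landing point is not justified by what you cite. The intended conclusion is nevertheless true and has a cleaner proof that avoids $\kp$-chords entirely: if $\pr(\mc{E}_t)=\{x\}$, then every defining null-chain of $\mc{E}_t$ converges to $x$ (any convergent subsequence of such a chain is again a defining chain, hence its limit lies in the principal set), in particular the chain $\{\rg_i\}$ witnessing that $\mc{E}_t$ is an outchannel converges to $x$ and meets $R_t$; Proposition~\ref{var0} then gives $\var(f,\rg_i,T(X))=0$ for large $i$, contradicting $\var(f,\rg_i,T(X))\ne 0$. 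With that substitution (or with the degenerate case dismissed this way), your proof is complete and coincides in substance with the paper's.
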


\begin{proof}
Let $x\in\pr(\mc{E}_t)$. Then for some chain $\{\rg_i\}_{i=1}^\infty$ of
crosscuts defining $\mc{E}_t$ selected from $\KP_\delta$, we may suppose
$\rg_i\to x\in \bd T(X)$ (by Lemma~\ref{chordlimit}) and
$\var(f,\rg_i,X)\not=0$ for each $i$. The external ray $R_t$ meets all $\rg_i$
and there is, for each $i$, a junction from $\rg_i$  which ``parallels" $R_t$.
Since $\var(f,\rg_i,X)\not=0$, each $f(\rg_i)$ intersects $R_t$. Since
$\dm(f(\rg_i))\to 0$, we have $f(\rg_i)\to f(x)$ and $f(x)\in\pr(\mc{E}_t)$. We
conclude that $\pr({\mc{E}_t})$ is invariant.\end{proof}

\begin{thm} [Dense channel, Bell] \label{densechannel}\index{channel!dense}
If $(X,f,\eta)$ satisfy our standing hypothesis then $T(X)$
contains a negative geometric outchannel; hence, $\bd \tU=\bd T(X)=X=f(X)$ is
an indecomposable continuum.
\end{thm}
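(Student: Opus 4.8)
Theorem (Dense channel, Bell) asserts two things: (i) under the standing hypothesis $T(X)$ has a negative geometric outchannel, and (ii) consequently $\partial\tU = \partial T(X) = X = f(X)$ and $X$ is indecomposable. Here is how I would organize the argument.

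\textbf{Step 1: Produce a negative geometric outchannel.} The plan is to combine the index machinery with the Kulkarni--Pinkall partition. Choose $\eta$ as in the standing hypothesis, so that $f$ is fixed point free on $T(X)_\eta$ and every $\kp$-chord $\rg\subset T(X)_\eta$ has a well-defined variation. By Proposition~\ref{LC}, for $\delta<\eta$ the boundary $S$ of $T(X)_\delta$ is a simple closed curve, in fact a Carath\'eodory loop, every component of $S\setminus X$ being a $\kp$-chord of $T(X)$; moreover $f$ has no fixed point in $T(S)$, so by Corollary~\ref{bumpingscc} (via Theorem~\ref{fpthm}) we have $\ind(f,S)=0$. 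Now $S\setminus X$ is a countable union of $\kp$-chords $\{Q_i\}$; using the allowable-partition version of Theorem~\ref{I=V+1} for Carath\'eodory loops (Theorem~\ref{CaraVI}, together with the infinite-partition extension), $\ind(f,S) = \sum_i \var(f,Q_i) + 1 = 0$, so $\sum_i \var(f,Q_i) = -1$. In particular some $\kp$-chord $\rg$ has $\var(f,\rg)<0$, and since $f$ is fixed point free on $T(T(X)\cup\rg)\subset T(X)_\eta$, Lemma~\ref{geomoutchannel} applies and yields a negative geometric outchannel $\mc{E}_t$ of $T(X)$ starting at $\rg$ (together with the narrow strip $\fS$).

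\textbf{Step 2: The principal continuum is all of $X$.} By Lemma~\ref{invchannel}, the principal set $\pr(\mc{E}_t)$ of this geometric outchannel is $f$-invariant, hence a subcontinuum $Y=\pr(\mc{E}_t)\subset X$ with $f(Y)\subset T(Y)$. But $X$ was chosen minimal with respect to the property ``$f(X)\subset T(X)$ and $f$ fixed point free on $T(X)$''; since $f$ is fixed point free on all of $T(X)$, it is in particular fixed point free on $T(Y)$, so minimality forces $Y=X$. Thus $\pr(\mc{E}_t)=X$, i.e. $\mc{E}_t$ is a \emph{dense} channel, and since $\pr(\mc{E}_t)\subset\partial\tU = \partial T(X)$ always, while trivially $\partial T(X)\subset X$, we get $\partial T(X)=X$. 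Combined with $f(X)\subset T(X)$ and the fact that $X=\partial T(X)$ is where everything lives, one checks $f(X)=X$ (this also follows from Theorem~\ref{densechannel}'s own statement as used elsewhere, or directly: if $f(X)\subsetneq X$ one could again contradict minimality by replacing $X$ with a smaller hull).

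\textbf{Step 3: Indecomposability.} Here I would use the standard fact that a non-separating plane continuum whose complementary domain $\tU$ admits a prime end whose principal continuum is the \emph{entire} boundary must be indecomposable. Concretely: if $X = A\cup B$ with $A,B$ proper subcontinua, then each of $A,B$ is non-degenerate (since $X$ is non-degenerate — if $X$ were a point it would have a fixed point) and $\partial T(X)\subset A\cup B$; a prime end $\mc{E}_t$ whose principal set meets both $A\setminus B$ and $B\setminus A$ forces the defining crosscuts $\rg_i\to x$ to accumulate on both $A$ and $B$ for arbitrarily small diameter, which is impossible once $\dm(\rg_i)$ is smaller than $\operatorname{dist}$ from a point of $A\setminus B$ to $B$. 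Since $\pr(\mc{E}_t)=X=A\cup B$ meets both $A\setminus B$ and $B\setminus A$, this contradiction shows no such decomposition exists, so $X$ is indecomposable.

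\textbf{Main obstacle.} The delicate point is Step~1: justifying that the index-equals-variation-plus-one identity applies to the Carath\'eodory loop $\partial T(X)_\delta$ with its \emph{countably infinite} partition into $\kp$-chords, and that the sum $\sum_i\var(f,Q_i)$ converges (indeed has only finitely many nonzero terms). This is exactly what the development through Section~\ref{Cloops} and the closedness/smallness results (Propositions~\ref{compactness}, \ref{var0}, \ref{LC}) are set up to give: for $\delta$ small, $\kp$-chords near $X$ have small diameter and, by Proposition~\ref{var0}, those paralleling a landing ray have zero variation, so only finitely many $Q_i$ contribute. Assembling these pieces carefully — rather than the formal index computation itself — is where the real work lies.
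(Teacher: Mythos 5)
Your Steps 1 and 2 are essentially the paper's own argument: $\bd T(X)_\eta$ is a Carath\'eodory loop by Proposition~\ref{LC}, its index is $0$ since $f$ is fixed point free on $T(X)_\eta$, Theorem~\ref{CaraVI} (the Carath\'eodory-loop form of Theorem~\ref{I=V+1}) gives total variation $-1$, summability yields a $\kp$-chord of negative variation, Lemma~\ref{geomoutchannel} produces the negative geometric outchannel, and Lemma~\ref{invchannel} plus minimality give $\pr(\mc{E}_t)=X$ and hence $\bd \tU=\bd T(X)=X=f(X)$. One small inaccuracy: you assert that $\bd T(X)_\delta$ is a simple closed curve, but at this stage it is only known to be a Carath\'eodory loop; that it is a simple closed curve is Theorem~\ref{nicebd}, whose proof uses the indecomposability you are in the middle of establishing. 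This does not damage your argument, since you only use the Carath\'eodory-loop version of index equals variation plus one.

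The genuine gap is Step 3. The paper obtains indecomposability by citing Rutt's theorem \cite{rutt35}; you invoke the same standard fact, but the ``concrete'' argument you offer for it fails. The principal continuum is the union, over all equivalent defining chains, of their limit points (equivalently the radial cluster set $\ol{R_t}\sm R_t$, Proposition~\ref{trans}). That $\pr(\mc{E}_t)$ meets both $A\sm B$ and $B\sm A$ only means that \emph{some} defining chain converges to a point $a\in A\sm B$ while \emph{another, equivalent} chain converges to a point $b\in B\sm A$; interleaving them produces a chain whose crosscuts alternate between a small neighborhood of $a$ and a small neighborhood of $b$. Each individual crosscut is small and lies near only one of the two sets, so nothing forces a crosscut of diameter less than $d(a,B)$ to meet both $A$ and $B$, and no contradiction appears at the level you describe --- indeed this alternating picture is exactly what happens in the dense channel of an indecomposable continuum, so any correct proof must exploit the decomposition $X=A\cup B$ in a more substantial way than a diameter count. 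The statement you need is precisely Rutt's theorem, which the paper cites rather than reproves; either cite it as such or supply an actual proof, but the one-line distance argument is not one.
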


\begin{proof}
By Lemma~\ref{LC} $\bd T(X)_\eta$ is a Carath\'eodory loop. Since $f$ is fixed
point free on $T(X)_\eta$, $\ind(f,\bd T(X)_\eta)=0$. Consequently, by
Theorem~\ref{I=V+1} for Carath\'eodory loops, $\var(f,\bd T(X)_\eta)=-1$. By
the summability of variation on $\bd T(X)_\eta$, it follows that on some chord
$\rg\subset \bd T(X)_\eta$, $\var(f,\rg,T(X)) <0$. By
Lemma~\ref{geomoutchannel}, there is a negative geometric outchannel $\mc{E}_t$
starting at $\rg$.

Since $\pr(\mc{E}_t)$ is invariant under $f$ by
Lemma~\ref{invchannel}, it follows that $\pr(\mc{E}_t)$ is an
invariant subcontinuum of $\bd \tU\subset\bd T(X)\subset X$. So by
the minimality condition in our Standing Hypothesis, $\pr(\mc{E}_t)$
is dense in $X$.  It then follows from a theorem of Rutt
\cite{rutt35} that $X$ is an indecomposable continuum.
\end{proof}

\begin{thm}\label{nicebd} Assume that
 $(X,f,\eta)$ satisfy  our standing hypothesis and $\delta\le
\eta$. Then the boundary of $T(X)_\delta$ is a simple closed curve.
The set of accessible points in the boundary of each of
$T(X)^+_\delta$ and $T(X)^-_\delta$ is an at most countable union of
pairwise disjoint continuous one-to-one images of $\real$.
\end{thm}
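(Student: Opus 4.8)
The two assertions concern the auxiliary continua $T(X)_\delta$ and $T(X)^{\pm}_\delta$ of Section~\ref{auxcont}, and the inputs I would use are Propositions~\ref{LC}, \ref{boundary}, \ref{var0}, Lemma~\ref{localarcs}, and Theorem~\ref{densechannel}, by which $X=\bd T(X)$ is a non-degenerate indecomposable continuum and therefore has no cut point. Each assertion is handled through the boundary extension of the Riemann map of the relevant complementary domain.

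\emph{The boundary of $T(X)_\delta$.} By Proposition~\ref{LC}, $T(X)_\delta$ is a locally connected non-separating plane continuum, so $S:=\bd T(X)_\delta$ is a Carath\'eodory loop: the continuous extension $g:\uc\to S$ of a Riemann map $\varphi:\disk^\infty\to\rsphere\sm T(X)_\delta$ is onto $S$, and it remains only to show $g$ is one-to-one. First one observes that $T(X)_\delta$ has no cut point. An interior point of a $\kp$-chord $\rg\subset S$ is not a cut point, since deleting it leaves $\ol{\Sh(\rg)}$ connected and still joined to $X$; and a point $p\in X$ is not a cut point, since $X\sm\{p\}$ is connected while every shadow $\ol{\Sh(\rg)}$ used to form $T(X)_\delta$ meets $X$ in a subcontinuum through the \emph{two distinct} endpoints of $\rg$, hence remains attached to $X\sm\{p\}$. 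Now suppose $g(a)=g(b)=x$ with $a\ne b$; then $x$ is the common principal point of two inequivalent prime ends of $\rsphere\sm T(X)_\delta$. Joining $\infty$ to $x$ inside $\rsphere\sm T(X)_\delta$ along the two corresponding external rays produces a simple closed curve $J$ with $J\cap T(X)_\delta=\{x\}$, and inequivalence of the prime ends forces each complementary disk of $J$ to meet $S\sm\{x\}$; so $S\sm\{x\}$ is disconnected and $x$ is a cut point of $S$, hence of $T(X)_\delta$ — a contradiction. Thus $g$ is a homeomorphism and $S$ is a simple closed curve. (Alternatively, a non-separating Peano continuum without cut points is a closed $2$-cell.)

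\emph{Accessible points of $T(X)^{\pm}_\delta$.} Fix $Z\in\{T(X)^{+}_\delta,T(X)^{-}_\delta\}$, a Riemann map $\varphi:\disk^\infty\to\rsphere\sm Z$ with $\varphi(\infty)=\infty$, and let $W\subset\uc$ be the set of $t$ over a neighborhood of which $\varphi$ extends continuously; write $\bar\varphi$ for this extension. By Lemma~\ref{localarcs} $W$ is open, and since $\KP^{\pm}_\delta$ omits chords of one sign one shows (using Theorem~\ref{densechannel}) that $\bd Z$ is not locally connected, so $W\subsetneq\uc$ and $W=\bigsqcup_\alpha I_\alpha$ is an at most countable disjoint union of open arcs, each homeomorphic to $\real$. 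The accessible points of $\bd Z$ are precisely the landing points of external rays, and $R_t$ lands at $\bar\varphi(t)$ exactly when $t\in W$ (one implication is Lemma~\ref{localarcs}, the other is immediate from the existence of the radial limit); hence the accessible set equals $\bar\varphi(W)=\bigcup_\alpha\bar\varphi(I_\alpha)$.

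\emph{One-to-oneness, disjointness, and the main obstacle.} The crux is the following dichotomy: if $u\ne u'$ and $\bar\varphi(u)=\bar\varphi(u')=x$, then $\bar\varphi\equiv x$ on one of the two subarcs of $\uc$ bounded by $\{u,u'\}$. Its proof repeats the argument above: $x$ must lie in $X$ (if $x$ were interior to a chord of $\bd Z$, Proposition~\ref{boundary}(4) makes $\rsphere\sm Z$ a half-plane near $x$, forcing a single prime end), and $Z\sm\{x\}$ is connected for the same reason as before; so $Z\sm\{x\}$ lies in one complementary disk of the simple closed curve $\ol{R_u}\cup\ol{R_{u'}}$, the other disk is $\varphi$ of a sector of $\disk^\infty$ and meets $\bd Z$ only in $x$, whence $\bar\varphi$ collapses the corresponding boundary arc to $\{x\}$. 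From the dichotomy, each fibre $\bar\varphi^{-1}(x)\cap W$ is a compact subinterval of a single $I_\alpha$ — compactness of the fibre is the remaining technical point, which I would settle via Proposition~\ref{var0} (an accumulation of landing rays at one point of $X$ would force a sequence of $\kp$-chords in $\KP^{\pm}_\delta$ shrinking to that point, contradicting local constancy of variation) — so collapsing the fibres makes $\bar\varphi|_{I_\alpha}$ descend to a continuous injection of $I_\alpha/\!\!\sim\;\cong\real$ onto $\bar\varphi(I_\alpha)$, while the dichotomy also yields $\bar\varphi(I_\alpha)\cap\bar\varphi(I_\beta)=\0$ for $\alpha\ne\beta$. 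This exhibits the accessible set of $\bd Z$ as an at most countable union of pairwise disjoint one-to-one continuous images of $\real$, as required. I expect the Jordan-curve dichotomy — equivalently, that no two inequivalent prime ends of these complementary domains share a principal point — to be the main obstacle; it is precisely here that the indecomposability of $X$ and the property that every $\kp$-chord has both endpoints in $X$ are used.
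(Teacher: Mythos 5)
Your first half is essentially the paper's own argument: Proposition~\ref{LC} gives that $\bd T(X)_\delta$ is a Carath\'eodory loop, Theorem~\ref{densechannel} gives that $X$ is indecomposable and hence has no cut point, one checks that $T(X)_\delta$ then has no cut point, and a Carath\'eodory loop without cut points is a simple closed curve; you only supply details the paper leaves implicit, and that part is sound. The skeleton of your second half (Riemann map, Lemma~\ref{localarcs}, the open set $W$ with countably many arc components whose images are exactly the accessible points) also matches the paper, but its execution has two genuine problems.

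First, your claim that $\bd Z$ fails to be locally connected for \emph{both} choices of $Z$ ``using Theorem~\ref{densechannel}'' is incorrect for $Z=T(X)^-_\delta$: the outchannel produced by that theorem is \emph{negative}, its defining chords lie in $\KP^-_\delta$ and are therefore filled into $T(X)^-_\delta$, so they obstruct nothing there. Only a positive outchannel would prevent $\bd T(X)^-_\delta$ from being locally connected, and no such channel is available at this stage (indeed, by the later Theorem~\ref{outchannel} there is none, and $\bd T(X)^-_\delta=\bd T(X)_\delta$ is a simple closed curve). The paper invokes the negative outchannel only to rule out $\cup\mc{J}=\uc$ for $T(X)^+_\delta$, and it is careful later (Proposition~\ref{smallarc}, Theorem~\ref{outchannel}) to add the hypothesis that $\bd T(X)^-_\delta$ is not a simple closed curve; your blanket assertion cannot be proved.

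Second, and more seriously, the ``dichotomy plus fibre-collapsing'' scheme is a wrong turn. The collapsing branch of your dichotomy must be \emph{excluded}, not accommodated: if the boundary extension were constant on a nondegenerate subarc of $\uc$, the radial limits of the Riemann map would be constant on a set of positive measure, forcing $\varphi$ to be constant (F.\ and M.\ Riesz). Hence two distinct angles of $W$ with the same image would make that image a cut point of $Z$, which is impossible since $X$ — and therefore $Z=T(X\cup(\cup\KP^{\pm}_\delta))$, whose added chords have both endpoints in $X$ — has no cut points. This is exactly the paper's one-line step ``since $X$ has no cutpoints the extension is one-to-one over $\cup\mc{J}$'', and it immediately makes each component $I_\alpha$ map by a continuous injection onto its image, with distinct components having disjoint images. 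As you have written it, the argument does not close: your claim that each fibre is a compact subinterval of a single $I_\alpha$ does not follow from the dichotomy (two angles in different components could a priori share an image, and the sketch via Proposition~\ref{var0} does not rule this out), disjointness of $\bar\varphi(I_\alpha)$ and $\bar\varphi(I_\beta)$ is not a consequence of the dichotomy either, and the identification of $I_\alpha/\sim$ with $\real$ requires the fibre decomposition to be monotone and upper semicontinuous, which you never verify. Replace all of this machinery by the injectivity statement above and your proof reduces to the paper's.
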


\begin{proof}
By Theorem~\ref{densechannel}, $X$ is indecomposable, so it has no cut points.
By Proposition~\ref{LC}, $\bd T(X)_\delta$ is a Carath\'eodory loop. Since $X$
has no cut points, neither does $T(X)_\delta$.  A Carath\'eodory loop without
cut points is a simple closed curve.

Let $Z\in\{T(X)_\delta^+,T(X)_\delta^-\}$ with $\delta\le \eta$.  Fix a Riemann
map $\phi:\disk^\infty\to\rsphere\sm Z$ such that $\phi(\infty)=\infty$.
Corresponding to the choice of $Z$, let
$\mc{W}\in\{\KP^+_\delta,\KP^-_\delta\}$.  Apply Lemma~\ref{localarcs} and find
the maximal collection $\mc{J}$ of disjoint open subarcs of $\bd\disk^\infty$
over which $\phi$ can be extended continuously. The collection $\mc{J}$ is
countable. Since $X$ has no cutpoints the extension is one-to-one over $\cup\mc
J$. Since angles that correspond to accessible points are dense  in
$\bd\disk^\infty$, so is $\cup\mc{J}$. If $Z=T(X)_\delta^+$, then it is
possible that $\cup\mc{J}$ is all of $\bd\disk^\infty$ except one point, but it
cannot be all of $\bd\disk^\infty$ since there is at least one negative
geometric outchannel by Theorem~\ref{densechannel}.
\end{proof}

Theorem~\ref{nicebd} still leaves open the  possibility that
$Z\in\{T(X)_\delta^+,T(X)_\delta^-\}$ has a very complicated boundary. The set
$C=\bd\disk^\infty\setminus\cup\mc J$ is compact and zero-dimensional. Note
that $\phi$ is discontinuous at points in $C$. We may call $C$ the set of
outchannels of $Z$. In principle, there could be an uncountable set of
outchannels, each dense in $X$. The one-to-one continuous images of half lines
in  $\real$ lying in $\bd Z$ are the ``sides" of the outchannels. If two
elements $J_1$ and $J_2$ of the collection $\mc{J}$ happen to share a common
endpoint $t$, then the prime end $\mc{E}_t$ is an outchannel in $Z$, dense in
$X$, with images of half lines $\phi(J_1)$ and $\phi(J_2)$ as its sides.  It
seems possible that an endpoint $t$ of $J\in\mc{J}$ might have a sequence of
elements $J_i$ from $\mc{J}$ converging to it.  Then the outchannel $\mc{E}_t$
would have only one (continuous) ``side." Such exotic possibilities are
eliminated in the next section.

In the proposition below we summarize several of the results in this
section and show that an arc component $K$  of the set of accessible
points of the boundary of $T(X)_{\delta}^-$ is efficient in
connecting close points in $K$. Note that it will follow later from
Theorem~\ref{outchannel} that there are no chords of positive
variation. Hence $T(X)^-_\delta=T(X)_\delta$ which is  always a
simple closed curve.

\begin{prop}\label{smallarc}
Suppose that $(X,f,\eta)$ satisfy  our standing hypothesis, that the
boundary of $T(X)_\delta^-$ is not a simple closed curve,
$\delta\le\eta$ and that $K$ is an  arc component of the boundary of
$T(X)_\delta^-$ so that $K$ contains an accessible point. Let
$\varphi:\disk^\infty\to \rsphere\sm T(X)_\delta^-$ be a conformal
map such that $\varphi(\infty)=\infty$. Then:

\begin{enumerate}
\item \label{extpsi}
$\varphi$ extends continuously and injectively to a map
$\tilde{\varphi}:\tilde{\disk}^{\infty}\to \tilde{U}^{\infty}$,
where $\tilde{\disk}^{\infty}\sm \disk^\infty$ is a dense and open
subset of $\uc$ which contains $K$ in its image.  Let
$\tilde{\varphi}^{-1}(K)=(t',t)\subset S^1$ with $t'<t$ in the
counterclockwise order on $\uc$. Hence $\tilde{\varphi}$ induces an
order $<$ on $K$. If $x<y\in K$, we denote by $\langle x,y\rangle $
the subarc of $K$ from $x$ to $y$ and by $\langle x,\infty\rangle
=\cup_{y>x} \langle x,y\rangle $.

\item
$\mc{E}_t$ and $\mc{E}_{t'}$ are positive geometric outchannels of $T(X)$.

\item
Let $R_t$ be the external ray of $T(X)_\delta^-$ with argument $t$.
There exists $s\in R_t$, $B\in\B^\infty$ and $\rg\in\kp$ such that
$s\in \rg\subset \HCH(B\cap X)$ and $s$ is the last point of $R_t$
in $\HCH(B\cap X)$ (from $\infty$), $\rg$ crosses $R_t$ essentially
and for each $B' \in \B^\infty$ with $\HCH(B'\cap X)\sm X\subset
\Sh(\rg)$, $\dm(B')<\delta$.

\item\label{conhul}
There exists $\hx\in K$ such that if $B'\in\B^\infty$ with
$\Int(B')\subset \Sh(\rg)$, then $\HCH(B'\cap X)\cap \langle
\hx,\infty\rangle $ is a compact ordered subset of $K$ so that if
$C$ is $\KP$-crosscut in the boundary of $\HCH(B'\cap X)$ with both
endpoint in $K$, then $C\subset K$.

\item \label{esscross}
Let $\B^\infty_t\subset\B^\infty$ be the collection of all $B\in\B^\infty$ such
that $ R_t$ crosses a chord in the boundary of $\HCH(B\cap X)$ essentially and
$\Int(B)\subset\Sh(\rg)$. Then $\fS=\bigcup_{B\in \B^\infty_t}  \HCH(B\cap X)$
is a narrow strip \index{narrow strip} in the plane, bordered by two halflines
$H_1$ and $H_2$, which compactifies on $X$ and one of $H_1$ or $H_2$ contains
the set $\langle \hx',\infty\rangle $ for some $\hx'\in K$.\\
In particular,  if $\max(\hx,\hat{x}')<p<q$ and $\dm(\langle
p,q\rangle )
>2\delta$, then there exists a chord $\rg\in\KP$ such that one
endpoint of $\rg$ is in $\langle p,q\rangle $ and $\rg$  crosses
$R_t$ essentially.
\end{enumerate}

An analogous conclusion holds for $T(X)^+_\delta$ since its boundary
cannot be a simple closed curve (clearly $T(X)_\delta$ must contain
a crosscut of negative variation).
\end{prop}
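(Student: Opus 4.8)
The plan is to assemble the five claims from the machinery already developed in this section, treating them as a coherent package describing how the boundary of $T(X)^-_\delta$ looks near an arc component $K$ carrying an accessible point. First I would establish (\ref{extpsi}). Since $\bd T(X)^-_\delta$ is assumed not to be a simple closed curve, Theorem~\ref{nicebd} already tells us that the set of accessible points of $\bd T(X)^-_\delta$ is an at most countable union of pairwise disjoint one-to-one continuous images of $\real$; each such image is, by definition, an arc component, so $K$ is one of them. Applying Lemma~\ref{localarcs} at each accessible point of $K$ and taking the union of the resulting open intervals in $\uc$ over which $\varphi$ extends continuously gives a maximal open interval $(t',t)$ whose image under the extension $\tilde\varphi$ is $K$; the injectivity of the extension over $\cup\mc J$ was noted in the proof of Theorem~\ref{nicebd} (it uses that $X$, being indecomposable by Theorem~\ref{densechannel}, has no cutpoints, hence neither does $T(X)^-_\delta$). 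This produces the order $<$ on $K$ and fixes the notation $\langle x,y\rangle$, $\langle x,\infty\rangle$.

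Next, for (2): the endpoints $t,t'$ of the interval $(t',t)$ lie in the zero-dimensional set $C=\uc\sm\cup\mc J$ of outchannels of $T(X)^-_\delta$; I would argue that each of $\mc E_t$, $\mc E_{t'}$ is in fact a \emph{geometric} outchannel of $T(X)$ — using Lemma~\ref{chordlimit} to select a defining chain of $\kp$-chords converging to a point of $\bd T(X)$, and observing that a chord in $\KP^-_\delta$ cannot lie in the boundary of $T(X)^-_\delta$ unless nearby chords on the side away from $\Sh$ have nonnegative (hence, by the side structure, positive) variation; this forces the essential-crossing chords to have \emph{positive} variation, so the outchannel is positive as claimed. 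For (3) I would run the argument from the proof of Lemma~\ref{geomoutchannel}: $R_t$ eventually enters $\ECH(T(X))$ (Proposition~\ref{pomm92}), and walking inward along $R_t$ through the Kulkarni–Pinkall partition $\KPP$ of $\tU$, the last partition element $\HCH(B\cap X)$ meeting $R_t$ before $R_t$ reaches a small neighborhood of $X$ supplies the ball $B$ and the $\kp$-chord $\rg$; shrinking $\delta$ and invoking Proposition~\ref{smallgeometric} (together with Corollary~\ref{small}) gives the diameter bound $\dm(B')<\delta$ for all $B'$ with hull inside $\Sh(\rg)$.

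For (\ref{conhul}) and (\ref{esscross}) the idea is that, inside $\Sh(\rg)$, all $\kp$-chords are short (by (3) and Proposition~\ref{smallgeometric}) and all carry variation of a fixed sign along the outchannel (by the stability-of-variation argument in Lemma~\ref{geomoutchannel}, slightly perturbing junctions), so the union $\fS=\bigcup_{B\in\B^\infty_t}\HCH(B\cap X)$ of the essential-crossing hulls is exactly the narrow strip produced at the end of the proof of Lemma~\ref{geomoutchannel}: bounded by $\rg$ and two halflines $H_1,H_2$, compactifying on $X$, with remainder in $X$. Choosing $\hx$ (resp.\ $\hx'$) far enough out along $K$ that $\langle\hx,\infty\rangle$ (resp.\ $\langle\hx',\infty\rangle$) lies in this strip — specifically on one of $H_1,H_2$ — gives the ordered-subset statement: any $\HCH(B'\cap X)$ with $\Int(B')\subset\Sh(\rg)$ meets $\langle\hx,\infty\rangle$ in a compact set, and since $K$ is a single arc component, a $\KP$-crosscut in $\bd\HCH(B'\cap X)$ with both endpoints on $K$ must lie in $K$. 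The final ``in particular'' clause then follows by a counting/diameter argument: if $\max(\hx,\hx')<p<q$ and $\dm\langle p,q\rangle>2\delta$, the short hulls ($\dm<2\delta$ by (3)) cannot all be avoided, so some $\kp$-chord has an endpoint in $\langle p,q\rangle$ and crosses $R_t$ essentially. The analogous statement for $T(X)^+_\delta$ is immediate since, as remarked after Theorem~\ref{nicebd}, $\bd T(X)^+_\delta$ cannot be a simple closed curve because $T(X)_\delta$ must carry a crosscut of negative variation (Theorem~\ref{densechannel}).

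The main obstacle I anticipate is (\ref{conhul})–(\ref{esscross}): one must rule out ``exotic'' branching of $K$ — e.g.\ a sequence of arc components $J_i$ accumulating onto $K$, or a hull $\HCH(B'\cap X)$ whose boundary chords wander off $K$ — and show that, far enough out, $K$ coincides with one of the two boundary halflines of the narrow strip. This requires combining the local connectivity of $\bd T(X)^-_\delta$ along chords (Proposition~\ref{boundary}(4)) with the uniform smallness of hulls inside $\Sh(\rg)$ and the constancy of variation-sign, and is exactly the point where the Kulkarni–Pinkall geometry is doing real work; the other parts are bookkeeping on top of Lemmas~\ref{localarcs} and \ref{geomoutchannel}.
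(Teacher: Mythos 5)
Your handling of (1)--(3) essentially matches the paper's proof (Theorem~\ref{nicebd} together with Lemma~\ref{localarcs} for (1); for (2) the clean argument, which you only gesture at, is that by Lemma~\ref{localarcs} the ray $R_t$ cannot land, so $\mc{E}_t$ admits a defining chain of $\KP_\delta$-chords, and any such chord $\rg_i$ with $\var(f,\rg_i)\le 0$ would lie in $\KP^-_\delta$ and hence in $T(X)_\delta^-$, contradicting the definition of $t$; thus all defining chords have positive variation and Lemma~\ref{geomoutchannel} applies). The genuine gap is in (4) and (5). You assert that ``since $K$ is a single arc component, a $\KP$-crosscut in $\partial \HCH(B'\cap X)$ with both endpoints on $K$ must lie in $K$,'' and that $\hx'$ can simply be chosen ``far enough out'' so that $\langle \hx',\infty\rangle$ lies on one of the bounding halflines $H_1,H_2$ of the strip. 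These two statements are precisely the content of (4) and (5); they do not follow from local connectivity along chords plus smallness of hulls, and you acknowledge them as ``the main obstacle'' without resolving them, so the argument is circular at exactly the point where work is required.

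The missing idea is the variation dichotomy combined with the density of geometric outchannels. Given a $\kp$-chord $\rh\subset \Sh(\rg)$ whose endpoints $a<b$ lie on $\langle \hx,\infty\rangle$ (for instance the chord subtending a bounded component $\langle a,b\rangle$ of $\langle \hx,\infty\rangle \sm \HCH(B'\cap X)$, or a boundary chord of $\HCH(B'\cap X)$ with both endpoints in $K$), either $\var(f,\rh)\le 0$, in which case $\rh$ (being of diameter less than $\delta$ by (3)) belongs to $\KP^-_\delta$, hence $\rh\subset T(X)_\delta^-$ and, having its endpoints on $K$, is a boundary arc of $T(X)_\delta^-$ contained in $K$, so no harm is done; or $\var(f,\rh)>0$, in which case Lemma~\ref{geomoutchannel} produces a geometric outchannel starting at $\rh$, whose principal continuum is all of $X$ by Lemma~\ref{invchannel} and minimality, and this dense channel, confined to $\Sh(\rh)$, disconnects the subarc $\langle a,b\rangle$ of $K$ between $a$ and $b$ --- a contradiction. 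This contradiction is what proves (4), and the same dichotomy applied to chords of $H_1$ joining two points of $\langle \hx,\infty\rangle$ is what shows $\langle \hx',\infty\rangle\subset H_1$ in (5) (the paper chooses $\hx'$ so that every point of $\langle \hx',\infty\rangle$ is accessible from $\Sh(\rg)$, not so that it already lies on $H_1$). Without this argument your proposal restates the claims rather than proving them.
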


\begin{proof}
By Proposition~\ref{boundary} and Theorem~\ref{nicebd}, and its proof,
$\varphi$ extends continuously and injectively to a map
$\tilde{\varphi}:\tilde{\disk}^{\infty}\to \tilde{U}^{\infty}$ and
(\ref{extpsi}) holds.

By Lemma~\ref{localarcs}, the external ray $R_t$ does not land. Hence there
exist a  chain $\rg_i$ of $\KP_\delta$ chords which define the prime end
$\mc{E}_t$. If for any $i$ $\var(f,\rg_i)\le 0$, then $\rg_i\subset
T(X)_\delta^-$ a contradiction with the definition of $t$. Hence
$\var(f,\rg_i)>0$ for all $i$ sufficiently small and $\mc{E}_t$ is a positive
geometric outchannel by the proof of Lemma~\ref{geomoutchannel}. Hence (2)
holds.

The proof of (3) is straightforward and is left to the reader.

Suppose that the endpoints of $\rg$ are $e$ and $f$ with $f\in K$.
Choose $\hx>f$ in $K$ so that $\hx$ is the endpoint of a $\KP$
crosscut  which is contained in $\HCH(B\cap X)$ with $B\subset
\sh(\rg)$.  Let $B'\in\B^\infty$ with $\Int(B')\subset\Sh(\rg)$,
$\hx\not\in B'$ and $\langle \hx,\infty\rangle \sm \HCH(B'\cap X)$
not connected.  Suppose $\langle a, b\rangle$ is a bounded component
of $\langle \hx,\infty\rangle\sm \HCH(B'\cap X)$ with endpoints in
$B'$. Note that there must exist a chord $\rh\in\kp$ with endpoints
$a$ and $b$.  If $\var(f,\rh)\le 0$ we are done. Hence
$\var(f,\rh)>0$. By Lemma~\ref{geomoutchannel}, there is a geometric
outchannel $\mc{E}_{t"}$ starting at $\rh$. This outchannel
disconnects the arc $\langle a,b\rangle $ between $a$ and $b$, a
contradiction. Hence (4) holds.

Next choose $\hat{x}'\in K$ such that each point of  $\langle
\hat{x}',\infty\rangle $ is accessible from $\Sh(\rg)$. Then each
subarc $\langle p,q\rangle $ of $\langle \hat{x},\infty\rangle $ of
diameter bigger than $2\delta$ cannot be contained in a single
element of the $\KPP$ partition. Hence there exists a $\kp$-chord
$\rg$ which crosses $R_t$ essentially and has one endpoint in
$\langle p,q\rangle $.

Note that for each chord $\rh\subset \Sh(\rg)$ which crosses $R_t$
essentially, $\var(f,\rh)>0$. By Lemma~\ref{geomoutchannel},
$\bigcup_{B\in \B^\infty_t} \HCH(B\cap X)$ is a  strip in the plane,
bordered by two halflines $H_1$, $H_2$, which compactify on $X$.
These two halflines, consist of chords in $\kp_\delta$ and points in
$X$,   one of which, say $H_1$ meets $\langle \hx',\infty\rangle $.
If $\langle \hx',\infty\rangle $ is not contained in $H_1$ then, as
in the proof of (4), there exists a chord $\rh\subset H_1$ with
$\var(f,\rh)>0$ joining two points of $x,y\in\langle
\hx,\infty\rangle $. As above this leads to a contradiction
and the
proof is complete.
\end{proof}

\section{Uniqueness of the Outchannel}

Theorem~\ref{densechannel} asserts the existence of at least one negative
geometric outchannel which is dense in $X$.  We show below that there is
exactly one geometric outchannel, and that its variation is $-1$.  Of course,
$X$ could have other dense channels, but they are ``neutral" as far as
variation is concerned.

\begin{thm} [Unique Outchannel] \label{outchannel}\index{outchannel!uniqueness}
If $(X,f,\eta)$ satisfy the standing hypothesis then there exists a
unique geometric outchannel $\mc{E}_t$ for $X$, which is dense in $X=\bd T(X)$.
Moreover, for any sufficiently small chord $\rg$ in any chain defining
$\mc{E}_t$, $\var(f,\rg,X)=-1$, and for any sufficiently small chord $\rg'$ not
crossing $R_t$ essentially, $\var(f,\rg',X)=0$.
\end{thm}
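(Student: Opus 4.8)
The plan is to argue by contradiction, supposing there are two distinct geometric outchannels, and to derive a contradiction from the Lollipop Lemma (Theorem~\ref{lollipop}) together with the strip structure provided by Lemma~\ref{geomoutchannel} and Proposition~\ref{smallarc}. By Theorem~\ref{densechannel} there is at least one negative geometric outchannel $\mc{E}_t$, dense in $X$. Fix a small $\delta\le\eta$. The first step is to record the basic ``parity'' fact: by Theorem~\ref{I=V+1} for Carath\'eodory loops (Theorem~\ref{CaraVI}) applied to the Carath\'eodory loop $\bd T(X)_\delta$, we have $\var(f,\bd T(X)_\delta)=\ind(f,\bd T(X)_\delta)-1=-1$ since $f$ is fixed point free on $T(X)_\delta$. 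Thus the crossings coming from $\kp$-chords along the loop must sum to $-1$. If there were a \emph{second} geometric outchannel, say starting at a $\kp$-chord $\rh$ disjoint from the strip $\fS$ of $\mc{E}_t$ given by Lemma~\ref{geomoutchannel}, then one of two things happens: either $\rh$ has negative variation, or positive variation. The heart of the argument is to show that a second negative outchannel is impossible (two disjoint negative strips would force $\var(f,\bd T(X)_\delta)\le -2$, contradicting the computed value $-1$), and that a positive geometric outchannel is also impossible.

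Here is how I would rule out a positive geometric outchannel. Suppose $\mc{E}_s$ is a positive geometric outchannel starting at $\rg$ with $\var(f,\rg)>0$, and consider the narrow strip $\fS'$ it bounds, with bordering halflines $H_1,H_2$ which compactify on $X$ (Lemma~\ref{geomoutchannel}, and the strip analysis in Proposition~\ref{smallarc}). Pick a large crosscut $\rg''$ deep in this strip with $\var(f,\rg'')>0$ and complete it, together with an arc $I$ running out along (a tail of) the external ray $R_s$ and a junction $J_{a_0}$ parallel to $R_s$, to the configuration of the Lollipop Lemma: $S$ is a tight bumping simple closed curve around $X$ inside $T(X)_\delta$, partitioned into links $A_i$ of defined variation, with one endpoint $a_0$ and $a_{n+1}$ chosen at the feet of $\rg''$, and $I$ the arc $\ol{\rg''}$ (or a slight push-off into $\tU$). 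Since $f$ is fixed point free on $T(S)$, Corollary~\ref{corlol} then yields a link $A_i$ on the appropriate side with $\var(f,A_i,S)<0$. But by Proposition~\ref{var0} every sufficiently small crosscut \emph{not} crossing $R_s$ essentially has variation $0$, and by the definition of a positive geometric outchannel every $\kp$-chord crossing $R_s$ essentially has \emph{positive} variation; one checks (using Proposition~\ref{crossunder} to pass between a link and the $\kp$-chords refining it) that no link on that side can have negative variation — a contradiction. The same Lollipop argument, applied across two putative disjoint negative strips, shows that each side of $S$ ``cut off'' by an arc through one outchannel must already carry all the negative variation, which is incompatible with a second disjoint channel of nonzero variation; this forces uniqueness.

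Once uniqueness is established, the value $\var(f,\rg,X)=-1$ for small chords $\rg$ in a defining chain of $\mc{E}_t$ follows by summing: the total $\var(f,\bd T(X)_\delta)=-1$, all links of $\bd T(X)_\delta$ not crossing $R_t$ essentially have variation $0$ by Proposition~\ref{var0} (since the only channel is $\mc{E}_t$, accessible points elsewhere give landing rays and hence zero variation on nearby chords), and by Proposition~\ref{crossunder} the variation on the portion of $\bd T(X)_\delta$ crossing the channel equals the variation on any single small $\kp$-chord crossing $R_t$ essentially; hence that common value is $-1$. For a small chord $\rg'$ not crossing $R_t$ essentially, $\var(f,\rg',X)=0$ again by Proposition~\ref{var0} together with uniqueness (it cannot be the seed of another geometric outchannel, by Lemma~\ref{geomoutchannel} and the uniqueness just proved, so its variation is forced to vanish). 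The main obstacle, I expect, is the bookkeeping in the Lollipop step: correctly setting up the arc $I$ and junction $J_{a_0}$ so that $f(I)\cap(I\cup J_{a_0})=\0$ (using that $R_s$ is fixed-point-free-adjacent and the strip is narrow so $f$ moves points on $I$ away), and matching the links of $S$ to $\kp$-chords of known variation via Proposition~\ref{crossunder}, so that Corollary~\ref{corlol} genuinely contradicts the sign information coming from the outchannel's definition.
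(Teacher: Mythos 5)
There is a genuine gap in the step that rules out a positive geometric outchannel, and it is the heart of the theorem. You set up the Lollipop Lemma with $I=\ol{\rg''}$ a positive-variation chord and then claim a contradiction because ``no link on that side can have negative variation,'' citing Proposition~\ref{var0} for links not crossing $R_s$ essentially and the definition of a positive outchannel for links that do. But Proposition~\ref{var0} does not say that every small crosscut not crossing $R_s$ essentially has variation $0$: it only applies to crosscuts converging to the landing point of a ray that actually lands, joined to that ray by a null sequence of arcs. More fatally, by Theorem~\ref{densechannel} there already exists a negative geometric outchannel which is \emph{dense} in $X$, so on any side of your curve there are small $\kp$-chords of negative variation; Corollary~\ref{corlol} producing a negative link is therefore no contradiction at all. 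For the same reason your first dichotomy is circular: the claim that a second negative channel would force $\var(f,\bd T(X)_\delta)\le -2$ presupposes that no chords of positive variation occur in the loop, which is exactly what has to be proved first (the loop $\bd T(X)_\delta$ is built from \emph{all} chords in $\KP_\delta$, of either sign).

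The paper's proof goes the other way around the Lollipop conclusion. Assuming a positive outchannel exists, it works inside $T(X)_\delta^-$, whose boundary then fails to be a simple closed curve and contains an arc component $K$ (a side of the positive channel) consisting, by construction, only of points of $X$ and chords of variation $\le 0$. Using the junction at a point $a_0\in K\cap X$, a $\kp$-chord $\rg$ of positive variation crossing $R_t$, and a bumping arc $A$ made disjoint from an auxiliary arc $I$, the Lollipop Lemma and Corollary~\ref{corlol} are used \emph{constructively}, twice, to produce two distinct chords $\rg_1,\rg_2\subset K$ with negative variation (the key claim $f(z)\in R$ is verified by a junction argument, not by asserting the absence of negative links). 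Then a simple closed curve $S''=I'\cup\langle a_0,y'\rangle$ is assembled from a subarc of $K$ containing both $\rg_1,\rg_2$ and a small arc $I'$ with $\var(f,I')=0$; since $\langle a_0,y'\rangle\subset K$ carries no positive-variation chords, $\var(f,S'')\le -2$, so $\ind(f,S'')\le -1$, contradicting Theorem~\ref{fpthm} since $f$ is fixed point free on $T(S'')$. Only after positive outchannels are excluded does the summation argument you sketch (Carath\'eodory loop, index $0$, variation $-1$, all chords $\le 0$) yield uniqueness and the values $-1$ and $0$; your final paragraph is essentially right, but it rests on the missing exclusion of positive variation.
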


\begin{figure}\label{f:outcha}

\includegraphics{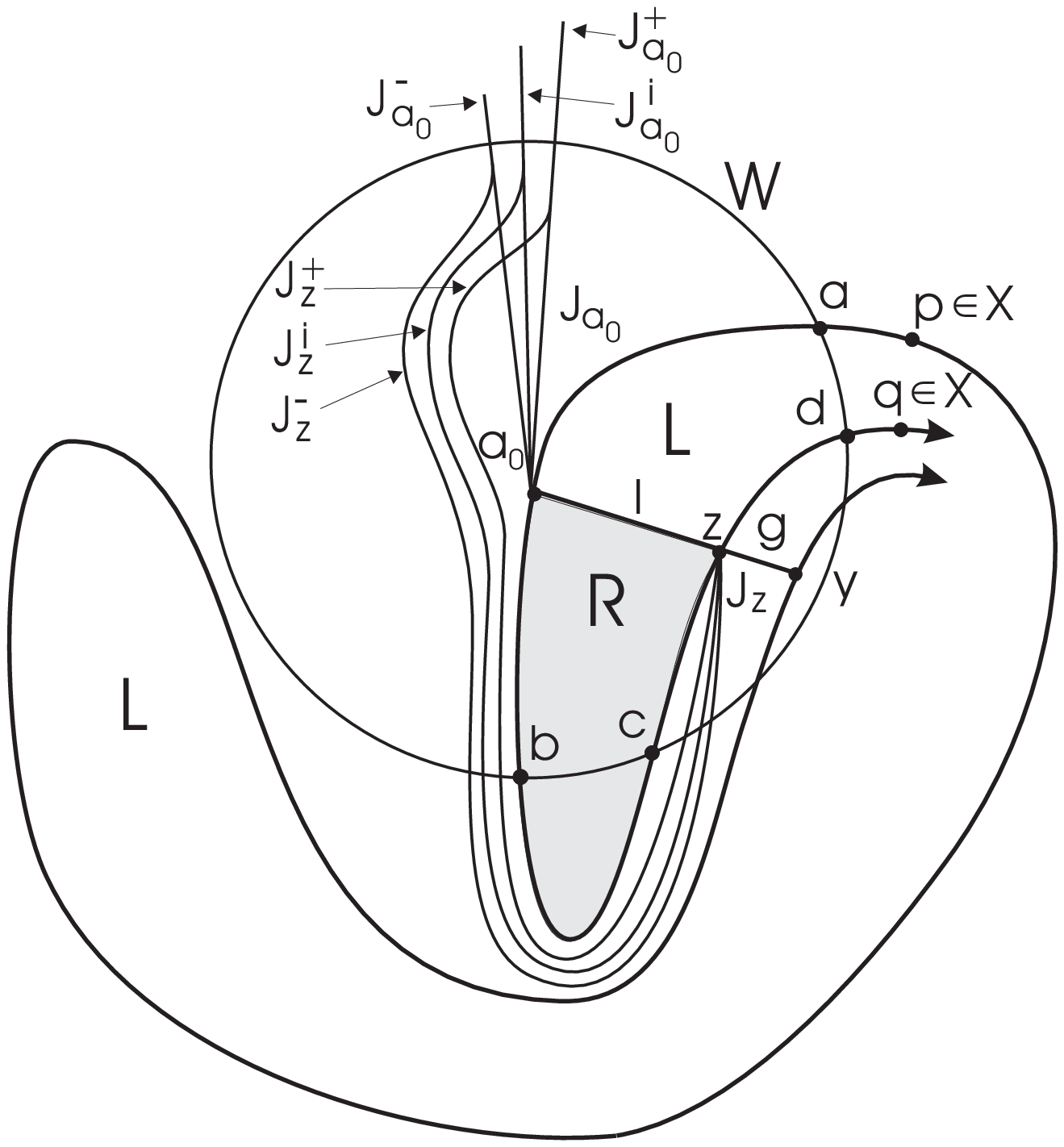}

\caption{Uniqueness of the negative outchannel.} \label{outpic}

\end{figure}

\begin{proof}
Suppose by way of contradiction that $X$ has a positive outchannel. Let
$0<\delta\le\eta$ such that  if $M\subset T(B(T(X),2\delta))$ with
$\dm(M)<2\delta$, then $f(M)\cap M=\0$. Since $X$ has a positive outchannel,
$\bd T(X)_{\delta}^-$ is not a simple closed curve. By Theorem~\ref{nicebd}
$\bd T(X)_{\delta}^-$ contains an arc component $K$ which is the one-to-one
continuous image of $\real$. Note that each point of $K$ is accessible.

Let $\varphi:\disk^\infty\to V^\infty=\complex\setminus
T(X)_{\delta}^-$ a conformal map.  By Proposition~\ref{smallarc},
$\varphi$ extends continuously and injectively to a map
$\tilde{\varphi}:\tilde{\disk}^{\infty}\to \tilde{V}^{\infty}$,
where $\tilde{\disk}^{\infty}\sm \disk^\infty$ is a dense and open
subset of $\uc$ which contains $K$ in its image. Then
$\tilde{\varphi}^{-1}(K)=(t',t)\subset \uc$ is an open arc with
$t'<t$ in the counterclockwise order on $\uc$ (it could be that
$(t',t)=\uc\sm\{t\}$ and $t=t'$). By abuse of notation, let $<$
denote the order in $K$ induced by $\tilde{\varphi}$ and for $x<y$
in $K$, denote the arc in $K$ with endpoints $x$ and $y$ by $\langle
x,y\rangle$. For $x\in K$, let $\langle x,\infty\rangle =\cup_{y>x}
\langle x,y\rangle $

Let $\mc{E}_t$ be the prime-end corresponding to $t$. By
Proposition~\ref{smallarc}, $\pr(\mc{E}_t)$ is a positive geometric
outchannel and, hence, by Lemma~\ref{invchannel}, $\pr(\mc{E}_t)=X$.
Let $R_t=\varphi(re^{it})$, $r>1$,  be the external conformal ray
corresponding to the prime-end $\mc{E}_t$ of $T(X)^-_\delta$. Since
$\cl{R_t}\sm R_t=X$ and  the small chords $\rg_x$ which define
$\pr({\mc{E}}_t)$ have at least one endpoint in $K$, cross $R_t$
essentially at $x$ and have diameter going to zero if the endpoint
in $K$ moves in the positive direction along $K$,  it follows that
$X=\ol{R_t}\sm R_t=\linebreak \ol{\langle x,\infty\rangle }\ \sm
\langle x,\infty\rangle $.

By Proposition~\ref{smallarc}, there is  $s\in R_t$ such that if
$B\in\B^\infty$ such that $\HCH(B\cap X)\cap [(X,s)$-end of
$R_t]\ne\0$, then $\dm(B)<\delta/2$.  Let
$\B^\infty_t=\{B\in\B^\infty\mid \HCH(B\cap X) \text{ contains a
chord } \rg \text{ such that } \rg \text{ crosses the } (X,s)
\text{-end of }  R_t \text{ essentially}\}$.

By Proposition~\ref{smallarc} there exists  $\hx\in K\cap X$ such
that for each arc $A\subset \langle \hx,\infty\rangle $ with
diameter $>2\delta$, there is a $\kp$-chord $\rg$ which contains a
point of $A$ as an endpoint and crosses $R_t$ essentially.

Let $a_0\in K\cap X$ so that $a_0>\hx$ and $J_{a_0}$ is a junction
of $T(X)_\delta^-$. Let $W$ be an open disk, with simple closed
curve boundary, about $a_0$ such that $\dm(W)<\delta/4$ and
$f(\ol{W})\cap [\ol{W}\cup J_{a_0}]=\0$. Let $a<a_0<b$ in
$K\cap\partial W$ such that $\langle a,b\rangle $ is the component
of $K\cap\ol{W}$ which contains $a_0$. We may suppose that $\langle
b,\infty\rangle \cap W$ is contained in one component of
$W\sm\langle a,b\rangle $ since one side of $K$ is accessible from
$\complex\sm T(X)^-_\delta$ and $a_0\in X$. If $a\in X$, let $p=a$.
If not, then there exists a $\kp$-chord $\rh\subset K$ such that
$a\in\rh$. Let $p$ be the endpoint of $\rh$ such that $p<a$. See Figure~\ref{f:outcha}.

Since $X\subset \ol{\langle x,\infty\rangle }$ there are components
of $\langle b,\infty\rangle \cap W$ which are arbitrarily close to
$a_0$. Choose $b<c<d$ in $K$ so that the $\langle c,d\rangle $ is
the closure of a component of $W\cap\langle b,\infty\rangle $ such
that:

\begin{enumerate}
\item $a$ and $d$ lie in the same component of $\partial W\sm\{b,c\}$.

\item
There exists $z\in\langle c,d\rangle \cap X\cap W$ and an arc
$I\subset \{a_0,z\}\cup [W\sm\langle p,d\rangle ]$ joining $a_0$ to
$z$.

\item
There is a $\kp$-chord $\rg\subset W$ with $z$ and $y$ as endpoints which
crosses $R_t$ essentially. Hence, $\var(f,\rg)>0$.

\item $\dm(f(\rg))<d(J^+_{a_0}\sm W,J^i_{a_0}\sm W)$.
\end{enumerate}

Conditions (1) and (2) follow because $J_{a_0}$ is a connected and
closed set from $a_0$ to $\infty$ in $\{a_0\}\cup[\complex\sm
T(X)_\delta^-]$ and the ray $\langle b,\infty\rangle $ approaches
both $a_0$ and $p$. Conditions (3) and (4) follow from
Proposition~\ref{smallarc}. If $d\in X$, put $q=d$. Otherwise, let
$q\in\langle d,\infty\rangle $ such that there is a $\kp$-chord
$\rh\subset K$ containing $d$ with endpoint $q$.

By  Corollary~\ref{bumpingscc}, there exists a bumping arc $A'$ of
$T(X)$ from $p$ to $q$ such that variation is defined on each
component of $A'\sm X$, $S'=A'\cup \langle p,q\rangle $ is a simple
closed curve with $T(X)\subset T(S')$ and $f$ is fixed point free on
$T(S')$. Since $\ol{\rg}\cap X=\{z,y\}$, we may assume that $A'\cap
\ol{\rg}=\{y\}$. Let $C$ be the arc in $\partial W$ from $a$ to $d$
disjoint from $b$. The arc $A'$ may enter $W$ and intersect $I$
several times. However, in this case $A'$ must enter $W$ through
$C$.  Since we  want to apply the Lollipop lemma, we will modify the
arc $A'$ to a new arc $A$ which is disjoint from $I$.

Let $A$ be the set of points in $A'\cup C$ accessible from $\infty$
in $\complex\sm [S'\cup C]$. Then $A$ is a bumping arc from $p$ to
$q$, $A\cap I=\0$, $\var(f,A)$ is defined,  $S=A\cup \langle
p,q\rangle $ is a simple closed curve with $T(X)\subset T(S)$ and
$f$ is fixed point free on $T(S)$. Note that $y\in A$. Then the
Lollipop lemma applies to $S$ with $R=T(\langle a_0,z\rangle \cup
I)$ and $L=T(I\cup \langle z,q\rangle \cup A\cup \langle
p,a_0\rangle )$.

\smallskip
Claim: $f(z)\in R$.  Hence by Corollary~\ref{corlol}, $\langle
a_0,z\rangle $ contains a chord $\rg_1$ with $\var(f,\rg_1)<0$.

\smallskip
\noindent Proof of Claim. Note that the positive direction along
$\rg$ is from $z$ to $y$.  Since $z,y\in X$, $\{f(z),f(y)\}\subset
X\subset T(S)=R\cup L$. Choose a junction $J_z$ such that
$J_{a_0}\sm W\subset J_z$ and $J_z$ runs close to $\langle
a_0,z\rangle$ on its way to $\rg$. In particular we may assume that
$J_z\cap R=\{z\}$. Since $\rg$ crosses $R_t$ essentially,
$\var(f,\rg)>0$. For $*\in\{-,i,+\}$, let $C^*_z$ be the union of
components of $J^*_z\sm W$ which are disjoint from $J^*_{a_0}$. Then
$C^i_z$ separates $R\cup C^+_z$ from $L\cup C^-_z$ in $\complex\sm
W$ (see figure~\ref{outpic}).  Since $f(\rg)\cap J_{a_0}=\0$, if
$f(z)\not\in R$, $\var(f,\rg)\le 0$, a contradiction. Hence $f(z)\in
R$ (and, in fact, $f(y)\in L$) as desired.

Since  $f(z)\in R$, $\langle a_0,z\rangle $ contains a chord $\rg_1$
with $\var(f,\rg_1)<0$. Repeating the same argument, replacing $a_0$
by $z$ we obtain a second chord $\rg_2$ contained in $\langle
z,\infty\rangle $ such that $\var(f,\rg_2)<0$.

We will now show that the existence of two distinct chords $\rg_1$
and $\rg_2$ in $K$ with variation $<0$ on each leads to a
contradiction. Recall that $a_0\in\ol{\langle b,\infty\rangle }$.
Hence we can find $y'\in\,\langle b,\infty\rangle $ with $y'\in X$
such that $\rg_1\cup\rg_2\subset\  \langle a_0,y'\rangle $ and there
exists a small arc $I'\subset W$  such that $I'\cap \langle
a_0,y'\rangle =\{a_0,y'\}$. Since $f(I')\cap J_{a_0}=\0$,
$\var(f,I')=0$. We may also assume that $f$ is fixed point free on
$T(S'')$, where $S''=I'\cup \langle a_0,y'\rangle $. Since $\langle
a_0,y'\rangle $ contains both $\rg_1$, $\rg_2$ and no chords of
positive variation, $\var(f,\langle a_0,y'\rangle )\le -2$ and
$\var(f,S'')\le -2$. Then $\ind(f,S'')=\var(f,S'')+1\le -1$ a
contradiction with Theorem~\ref{fpthm}. Hence $X$ has no positive
geometric outchannel.

By Theorems~\ref{densechannel} and ~\ref{I=V+1}, $X$ has exactly one
negative outchannel and its variation is $-1$.
\end{proof}

Note that the following Theorem  follows from Lemma~\ref{smallarc} and
Theorem~\ref{outchannel}.

\begin{thm}
Suppose that $X$ is a minimal counterexample to the Plane  Fixed
Point Problem. Then there exists $\delta>0$ such that the continuum
$Y=T(X)_\delta^+$ is a non-separating continuum, $f$ is fixed point
free on $Y$ and all accessible points of $Y$  are contained in one
arc component $K$ of the boundary of $Y$.  In other words, $Y$ is
homeomorphic to a disk with exactly one  channel removed which
corresponds to the unique geometric outchannel of variation $-1$ of
$X$. This channel compactifies on $X$.  The sides of this channel
are halflines  consisting entirely of chords of zero variation and
points in $X$. There exist arbitrarily small homeomorphisms of tails
of these halflines to a tail of $R_t$ which is the external ray
corresponding to this channel.
\end{thm}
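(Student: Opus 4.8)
The plan is to read the statement off Theorem~\ref{outchannel} (unique outchannel), Lemma~\ref{geomoutchannel} (the narrow strip attached to a geometric outchannel), Proposition~\ref{smallarc}, Proposition~\ref{boundary} and Proposition~\ref{LC}. First I would fix the data. By Theorem~\ref{densechannel}, $X=\bd T(X)$ is indecomposable; by Theorem~\ref{outchannel} there is a unique geometric outchannel $\mc{E}_t$, dense in $X$, with external ray $R_t$, such that every sufficiently small $\kp$-chord crossing $R_t$ essentially has variation $-1$ and every sufficiently small $\kp$-chord not crossing $R_t$ essentially has variation $0$. Choose $\delta\le\eta$ small enough that this dichotomy holds for every chord of $\KP_\delta$ and that Lemma~\ref{geomoutchannel}, Proposition~\ref{smallarc} and Corollary~\ref{small} apply. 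Then $\KP^-_\delta=\KP_\delta$, so $T(X)^-_\delta=T(X)_\delta$, whose boundary is a simple closed curve by Proposition~\ref{LC}; and $\KP^+_\delta$ is exactly the set of chords of $\KP_\delta$ of variation $0$, i.e. those not crossing $R_t$ essentially. Put $Y=T(X)^+_\delta$. By Proposition~\ref{boundary}(1) $Y$ is a non-separating plane continuum (non-degenerate since $X\subseteq Y$), and since $\delta\le\eta$ we have $Y=T(X\cup\bigcup\KP^+_\delta)\subseteq T(X\cup\bigcup\KP_\eta)=T(X)_\eta$, so $f$ is fixed point free on $Y$ by the standing hypothesis.

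The heart of the argument is that the accessible points of $\bd Y$ form a single arc component $K$. The chords of $\KP_\delta\setminus\KP^+_\delta$ are exactly those of variation $-1$, and by Lemma~\ref{geomoutchannel} these all lie in one infinite narrow strip $\fS$ associated with $\mc{E}_t$, bordered by a single $\kp$-chord and by two halflines $H_1,H_2$ consisting of $\kp$-chords of variation $0$ and points of $X$, whose remainder is contained in $X$. Thus $\rsphere\setminus Y$ is obtained from the complement of $T(X)_\delta$ by adjoining the one narrow strip $\fS$; since $\bd T(X)_\delta$ is a simple closed curve, $\bd Y$ is locally connected except along $\fS$, and $\fS$ contributes exactly one prime end of $\rsphere\setminus Y$ with non-degenerate principal set, corresponding to $\mc{E}_t$. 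Fixing a Riemann map $\varphi:\disk^\infty\to\rsphere\setminus Y$ with $\varphi(\infty)=\infty$ and invoking Theorem~\ref{nicebd} and its proof together with the ``analogous conclusion for $T(X)^+_\delta$'' in Proposition~\ref{smallarc}, $\varphi$ extends continuously and injectively over $\uc$ minus a compact zero-dimensional set $C$ ($Y$, like $X$, having no cut points), and the accessible points of $\bd Y$ are exactly $\tilde\varphi(\uc\setminus C)$; here $C\ne\0$ (else $\bd Y$ would be a Carath\'eodory loop without cut points, hence a simple closed curve, contradicting the presence of the channel), and $C$ is a single point because the only prime end of $\rsphere\setminus Y$ with non-degenerate principal set is $\mc{E}_t$, which Theorem~\ref{outchannel} makes unique. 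Hence $\uc\setminus C$ is a single open arc, so $K:=\tilde\varphi(\uc\setminus C)$ is a single arc component of accessible points, a one-to-one continuous image of $\reals$ containing every accessible point of $Y$.

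It then remains to identify the geometry. Letting the argument tend to the exceptional point of $C$ from the two sides, $\tilde\varphi$ runs to infinity along $H_1$ and $H_2$ respectively; these are the two ``sides'' of the removed channel, each a halfline made of variation-$0$ $\kp$-chords and points of $X$, each compactifying on $X$ (its remainder lies in $X$), so $Y$ is, up to homeomorphism, a disk with exactly one channel removed, the channel being that of the unique outchannel $\mc{E}_t$, which is dense in $X=\bd T(X)$ and has variation $-1$. For the last assertion I would use the essential-crossing description of $\fS$ from Lemma~\ref{geomoutchannel} and Proposition~\ref{smallarc}: far out along $H_i$ each bounding $\kp$-chord $\rg_x$ of $\fS$ meets $R_t$ essentially, and $\dm(\rg_x)\to 0$ by Corollary~\ref{small}; sending a point of $H_i$ to the essential crossing point of $R_t$ on the $\kp$-chord of $\fS$ through it therefore gives, on a sufficiently far tail of $H_i$, a homeomorphism onto a tail of $R_t$ that moves every point by less than any prescribed $\e$.

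I expect the main obstacle to be the proof that $C$ is a single point, i.e. that $\bd Y$ has exactly one non-locally-connected channel. Non-emptiness is immediate from Carath\'eodory's theorem, but ``at most one'' must be extracted by combining Theorem~\ref{outchannel} (a unique geometric outchannel, all other small chords of variation $0$), Lemma~\ref{geomoutchannel} (the variation-$(-1)$ chords assemble into a single narrow strip compactifying on $X$) and Proposition~\ref{LC} ($\bd T(X)_\delta$ is a simple closed curve, so $\bd Y$ is well behaved away from that strip), and by checking carefully that every prime end of $\rsphere\setminus Y$ at which $\varphi$ is discontinuous arises only by running down $\fS$ toward $X$. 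Once that is in place, everything else is bookkeeping with the quoted results.
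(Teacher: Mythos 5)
Your proposal is correct and follows the paper's intended route: the paper gives no detailed argument, merely noting that the theorem ``follows from Lemma~\ref{smallarc} and Theorem~\ref{outchannel},'' and your fleshing-out via Theorem~\ref{outchannel} (all small chords have variation $-1$ or $0$, so $T(X)^-_\delta=T(X)_\delta$ and $\KP^+_\delta$ omits exactly the strip of the unique outchannel), Proposition~\ref{smallarc} applied to $T(X)^+_\delta$, Lemma~\ref{geomoutchannel}, Theorem~\ref{nicebd} and Proposition~\ref{LC} is exactly the bookkeeping the authors intend. The one place to be slightly more careful is the final claim, where ``send a point of $H_i$ to the crossing point of $R_t$ on the chord through it'' should be replaced by a monotone choice along the ordered family of strip chords (points of $H_i\cap X$ and multiple intersections of a chord with $R_t$ make the naive assignment ill-defined), but smallness still comes from Corollary~\ref{small} as you say.
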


\chapter{Fixed points}\label{ch:fxpt}

In this chapter we study fixed points in invariant and non-invariant
continua under positively oriented maps. We also obtain corollaries
dealing with complex polynomials (the applications of these
corollaries to complex dynamics are described in
Chapter~\ref{intro}.)

\section{Fixed points in invariant continua}\label{sec:fxpt}

In this section we will consider a positively oriented map of the
plane. As we shall see below, a  straight forward application of the
tools developed above will give us the desired fixed point result.
We will often assume, by way of contradiction, that $f:\Complex \to
\Complex$ is a positively oriented map, $X$ is a  plane continuum
such that $f(X)\subset T(X)$ and $T(X)$ contains no fixed points of
$f$.

\begin{lem}\label{closed}
Let $f:\Complex\to\Complex$ be a map and $X\subset\complex$ a continuum such
that $f(X)\subset T(X)$. Suppose $C=(a,b)$ is a crosscut of the continuum
$T(X)$. Let  $v\in (a,b)$ be a point and $J_v$ be a junction such that $J_v\cap
(X\cup C)=\{v\}$. Then there exists an arc $I$ such that $S=I\cup C$ is a
simple closed curve, $T(X)\subset T(S)$ and $f(I)\cap J_v= \0$.
\end{lem}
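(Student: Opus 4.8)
The plan is to reduce everything to two short observations and one standard completion argument. First I would record where the junction sits: since $v\in(a,b)\subseteq C$ and $C$ is a crosscut of $T(X)$, we have $v\notin T(X)$, so $v$ lies in the unbounded complementary component of $\C\setminus X$ (which equals $\C\setminus T(X)$). The set $J_v$ is closed and connected (a union of three closed rays meeting at $v$), and the hypothesis $J_v\cap(X\cup C)=\{v\}$ forces $J_v\cap X=\emptyset$; hence the connected set $J_v$, which meets $\C\setminus T(X)$ at $v$ and misses $X\supseteq\bd T(X)$, lies entirely in $\C\setminus T(X)$. Thus $J_v\cap T(X)=\emptyset$.

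Next, because $T(X)$ is connected and disjoint from $J_v$, it lies in a single component $\Omega$ of $\C\setminus J_v$; this $\Omega$ is open and $\Omega\cap J_v=\emptyset$. Since $f(X)\subseteq T(X)\subseteq\Omega$, the set $N:=f^{-1}(\Omega)$ is an open neighborhood of $X$. So it suffices to complete $C$ to a bumping simple closed curve $S=I\cup C$ for $X$ (see Definition~\ref{bumping}) whose complementary arc $I$ (with $I\cap C=\{a,b\}$) lies inside $N$: then $I\subseteq N=f^{-1}(\Omega)$ — its interior because $I\setminus\{a,b\}\subseteq N$, its endpoints because $a,b\in\bd T(X)\subseteq X\subseteq N$ — so $f(I)\subseteq\Omega$ and therefore $f(I)\cap J_v=\emptyset$, while $T(X)\subseteq T(S)$ holds automatically for a bumping simple closed curve.

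The remaining point, which I expect to be the only (routine) obstacle, is that $C$ admits such a tight completion. Here I would invoke the Schoenflies Theorem (assumed throughout the paper): complete $C$ to some bumping simple closed curve for $X$, and then tighten its complementary arc toward $X$ — keeping the endpoints $a,b$ fixed and keeping the arc's interior in the unbounded complementary component of $T(X)\cup C$, so that it continues to meet $C$ only at $a,b$ and still encloses $T(X)$ — until it lies in the prescribed neighborhood $N$ of $X$. This is possible because $a,b$ are accessible points of $\bd T(X)$ and $N$ is a neighborhood of the continuum $\bd T(X)\subseteq X$. Taking $I$ to be this tightened arc, $S=I\cup C$ has all the required properties.
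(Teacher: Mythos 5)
Your proof is correct and follows essentially the same route as the paper's (very terse) argument: the paper simply observes that since $f(X)\subset T(X)$ and $J_v\cap X=\0$, an arc $I$ with endpoints $a,b$ chosen sufficiently close to $T(X)$ completes $C$ to a bumping simple closed curve with $f(I)\cap J_v=\0$. Your packaging of the continuity step via the open set $N=f^{-1}(\Omega)$, and your justification that $J_v\cap T(X)=\0$, are just a more detailed write-up of the same idea, with the tight completion of the crosscut treated as routine in both accounts.
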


\begin{proof}
Since $f(X)\subset T(X)$ and $J_v\cap X=0$, it is clear that there exists an
arc $I$ with endpoints $a$ and $b$ sufficiently close to  $T(X)$ such that
$I\cup C$ is a simple closed curve, $T(X)\subset T(I\cup C)$ and $f(I)\cap
J_v=\0$. This completes the proof.
\end{proof}

\begin{cor}\label{posvaro}
Suppose $X\subset\complex$ is a continuum, $f:\Complex\to \Complex$ a
positively oriented map such that $f(X)\subset T(X)$. Then for each crosscut
$C$  of $T(X)$ such that $f(\cl C)\cap \cl C=\0$, $\var(f,C)\geq 0$
\end{cor}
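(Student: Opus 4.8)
\textbf{Proof strategy for Corollary~\ref{posvaro}.}
The plan is to reduce the computation of $\var(f,C)$ to a winding number of $f$ on a simple closed curve that is tight around $T(X)$, and then to use positive orientation of $f$ to conclude that this winding number is nonnegative. First I would fix the crosscut $C=(a,b)$ and, since $f(\cl C)\cap \cl C=\0$, choose a vertex $v\in(a,b)$ together with a junction $J_v$ satisfying $J_v\cap(X\cup C)=\{v\}$ and, additionally, $J_v\cap f(\cl C)=\0$; this last condition can be arranged because $f(\cl C)$ is a compact set disjoint from $\cl C$, so $J_v$ can be routed away from it (using the freedom in Definition~\ref{junction} and the Junction Straightening Proposition~\ref{straightjunction} to know the choice does not affect variation). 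Then I would apply Lemma~\ref{closed} to obtain an arc $I$ with the same endpoints as $C$ such that $S=I\cup C$ is a simple closed curve, $T(X)\subset T(S)$, and $f(I)\cap J_v=\0$.

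Next I would invoke the remark immediately following Definition~\ref{vararc}: if $\al:S\to\complex$ is any map with $\al|_C=f|_C$ and $\al(S\sm(a,b))\cap J_v=\0$, then $\var(f,C,S)=\win(\al,S,v)$. Taking $\al=f|_S$, the hypothesis $f(I)\cap J_v=\0$ together with the (arranged) fact that $J_v$ meets $f(\cl C)$ only possibly — actually not at all by our routing — gives exactly $\var(f,C)=\var(f,C,S)=\win(f,S,v)$. So it remains to show $\win(f|_S,S,v)\ge 0$. Here is where positive orientation enters: $v$ lies in $T(S)$ (indeed on $S$, but we can push it slightly inside, or note $v\in C\subset S=\bd T(S)$; to be safe I would instead pick the base point of the winding number to be a point $p\in T(S)\setminus f^{-1}(f(S))$ close to $v$, using that $f$ is fixed point free on $T(S)$ — guaranteed since $f(\cl C)\cap\cl C=\0$ forces, after shrinking $I$, $f$ to have no fixed point on $T(S)$, cf. the construction in Corollary~\ref{bumpingscc}). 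Actually the cleanest route: choose $S$ tight enough that $f$ is fixed point free on $T(S)$, let $p\in T(S)$ be a point not in $f^{-1}(f(S))$; by definition of positively oriented map, $\degree(f_p)=\win(f,S,f(p))>0$. Then one must relate $\win(f|_S,S,v)$ to $\win(f,S,f(p))$ — these count revolutions about different points — but since $v$ can be taken inside $T(S)$ and $f(p)\in T(f(S))$ by orientedness, and the winding number of $f|_S$ is constant on each component of the complement of $f(S)$, I would argue $v$ and $f(p)$ lie in the same component, or directly that $\win(f|_S,S,v)\ge 0$ because any nonzero winding number of a positively oriented map has a definite sign.

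The main obstacle I anticipate is the bookkeeping in the last step: matching the base point $v$ used to compute variation with the base point $f(p)$ natural to the definition of positive orientation, and ensuring the winding number is genuinely $\ge 0$ rather than merely nonzero. I expect this is handled by observing that $v\in T(S)$ and $f|_S$ is homotopic in $\complex\setminus\{v\}$ to a map into $T(S)$ only when variation is $0$; in general one uses that for a positively oriented map, on the simple closed curve $S$ with $v\in T(S)\setminus f(S)$, $\win(f,S,v)=\degree(f_v)$ if we interpret $f_v(x)=(f(x)-v)/|f(x)-v|$ — but positive orientation as stated is about $f(p)$, not an arbitrary $v$. I would resolve this by choosing $I$ so tight that $v$ itself is not in $f(S)$ and then noting $v\in T(S)$; the degree $\win(f,S,v)$ then equals $\win(f,S,f(p))$ for $p$ with $f(p)$ in the same complementary component of $f(S)$ as $v$, and since $T(X)\subset T(S)$ with $f(X)\subset T(X)$, the point $v\in C$ is accessible from inside, so such a $p$ exists. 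Thus $\var(f,C)=\win(f,S,v)=\degree(f_p)>0\ge 0$ in the nondegenerate case, and $\var(f,C)=0\ge 0$ otherwise, completing the proof.
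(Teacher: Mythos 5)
Your overall route is the paper's: complete $C$ to a simple closed curve $S=I\cup C$ via Lemma~\ref{closed} with $f(I)\cap J_v=\0$, identify $\var(f,C)$ with $\win(f,S,v)$ by the remark following Definition~\ref{vararc}, and then invoke positive orientation. However, two of your auxiliary claims are false, and the final step is justified incorrectly. You cannot, in general, arrange $J_v\cap f(\cl C)=\0$: the junction is constrained to meet $X\cup C$ only at $v$ and must run to infinity, and the crossings of such a junction by $f(C)$ are exactly what variation counts. If a junction disjoint from $f(\cl C)$ always existed, Proposition~\ref{straightjunction} would force $\var(f,C)=0$ for every crosscut mapped off itself, collapsing the theory (no crosscuts of variation $-1$ as in Theorem~\ref{outchannel} could exist). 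Luckily you never really use this claim; what you need is only $v\notin f(S)$, and that is automatic on the $f(\cl C)$ side (since $v\in C$ and $f(\cl C)\cap\cl C=\0$) and is arranged on the $f(I)$ side by taking $I$ close to $X$ (the paper's move), or simply because $v\in J_v$ and $f(I)\cap J_v=\0$. Likewise, $f$ need not be fixed point free on $T(S)$: $T(X)\subset T(S)$, and nothing in the hypotheses excludes fixed points in $T(X)$ (Theorem~\ref{fixpoint} in fact guarantees one); again, you do not actually need this.

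The genuine gap is the bridge from positive orientation to $\win(f,S,v)\ge 0$. Positive orientation gives $\degree(f_p)=\win(f,S,f(p))>0$ only for $p\in T(S)\sm f^{-1}(f(S))$, i.e. it controls the winding number about points of $f(T(S))\sm f(S)$, whereas you must bound it about the specific point $v$. Your justification that a suitable $p$ exists because ``$v\in C$ is accessible from inside'' is a non sequitur: accessibility of $v$ from $T(S)$ says nothing about whether $v$ lies in $f(T(S))$, nor about which component of $\C\sm f(S)$ contains it. The correct argument is a dichotomy: if $v\in f(T(S))$, pick $p\in T(S)$ with $f(p)=v$; since $v\notin f(S)$ we have $p\notin f^{-1}(f(S))$, so $\win(f,S,v)=\degree(f_p)>0$; if $v\notin f(T(S))$, then $f|_{T(S)}$ maps the disk $T(S)$ into $\C\sm\{v\}$, so $f|_S$ is null homotopic in $\C\sm\{v\}$ and $\win(f,S,v)=0$. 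With that replacement your argument closes and agrees with the paper's proof.
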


\begin{proof}
Suppose that $C=(a,b)$ is a crosscut of $T(X)$ such that $f(\cl C)\cap \cl
C=\0$ and $\var(f,C)\not=0$. Choose a junction $J_v$ such that $J_v\cap (X\cup
C)=\set{v}$ and $v\in C\setminus X$. By Lemma~\ref{closed}, there exists an arc
$I$ such that $S=I\cup C$ is a simple closed curve and $f(I)\cap J_v=\0$.
Moreover, by choosing $I$ sufficiently close to $X$, we may assume that
$v\in\Complex\setminus f(S)$. Hence $\var(f,C)=\text{Win}(f,S,v)\geq 0$ by the
remark following Definition~\ref{vararc}.
\end{proof}

\begin{thm}\label{fixpoint}\index{fixed point!for positively oriented maps}
Suppose $f:\Complex\to\Complex$ is a positively oriented map and $X$ is a
continuum such that $f(X)\subset T(X)$. Then there exists a point $x_0\in T(X)$
such that $f(x_0)=x_0$.
\end{thm}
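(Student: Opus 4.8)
The plan is to argue by contradiction: assume $f$ has no fixed point in $T(X)$, and produce a simple closed curve $S$ around $X$ on which the index of $f$ is nonzero, contradicting Corollary~\ref{bumpingscc} (or directly Theorem~\ref{fpthm}). First I would observe that since $T(X)$ is compact and $f$ is continuous and fixed point free on $T(X)$, there is an open neighborhood $U$ of $T(X)$ whose closure $\ol U$ contains no fixed points of $f$; moreover by uniform continuity we may choose $U$ so small that $d(z,f(z))$ is bounded below by some $\rho>0$ on $\ol U$, and (shrinking further) so that $f(\ol U)$ stays within a controlled distance of $T(X)$. This is the set-up paralleling the sketch given just before Theorem~\ref{th:dendr} in the introduction.

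Next I would construct the approximating curve. Using the Kulkarni--Pinkall machinery (Theorem~\ref{KPthrm}, or the Euclidean version in Theorem~\ref{Hypmain}), or more elementarily just by a direct geometric construction, I would build a bumping simple closed curve $S$ for $X$ with $T(X)\subset T(S)\subset U$, such that $S\cap X$ is a finite set $\{a_0<a_1<\dots<a_n=a_0\}$ in the circular order on $S$ and each link $A_i=[a_i,a_{i+1}]$ of $S$ has diameter so small that $f(A_i)\cap A_i=\emptyset$. The latter is achievable because $f$ is fixed point free and uniformly continuous on the compact set $\ol U$: if $\mathrm{diam}(A_i)$ together with $\mathrm{diam}(f(A_i))$ is smaller than $\rho$, then $f(A_i)$ cannot meet $A_i$. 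Since $A_i$ is a link of a bumping curve, its endpoints $a_i,a_{i+1}$ lie in $X\subset T(X)\subset T(S)$, so $f(a_i)\in T(X)\subset T(S)$; hence $\var(f,A_i,S)$ is defined for every $i$.

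Now I would invoke the two main tools. First, Corollary~\ref{posvaro}: each $A_i$ is (the closure of a union of) crosscuts $C$ of $T(X)$ with $f(\ol C)\cap \ol C=\emptyset$, so by Proposition~\ref{crossunder} and Corollary~\ref{posvaro} we get $\var(f,A_i,S)\ge 0$ for each $i$ --- this is exactly where positive orientation of $f$ is used (via the winding-number computation $\var(f,C)=\win(f,S,v)\ge 0$). Second, Theorem~\ref{I=V+1} (Index $=$ Variation $+1$): since $f$ is fixed point free on $S$ and $F=\{a_0,\dots,a_n\}$ is a partition of $S$ into links with $f(F)\subset T(S)$ and $f(A_i)\cap A_i=\emptyset$, we have
$$\ind(f,S)=\sum_{i=0}^{n-1}\var(f,A_i,S)+1\ge 0+1=1.$$
In particular $\ind(f,S)\ne 0$. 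But $f$ is defined on all of $T(S)$ and has no fixed point there (as $T(S)\subset \ol U$), so Theorem~\ref{fpthm} forces a fixed point in $T(S)$, a contradiction. Hence $T(X)$ contains a fixed point of $f$.

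\textbf{Main obstacle.} The one genuinely delicate point is the construction of the bumping simple closed curve $S$ whose links are simultaneously (i) small enough that $f(A_i)\cap A_i=\emptyset$, (ii) have endpoints in $X$ so that variation is defined, and (iii) contained in the fixed-point-free neighborhood $U$ --- and then verifying that each link really is a finite union of crosscuts to which Corollary~\ref{posvaro}/Proposition~\ref{crossunder} applies. The cleanest route is to take $S$ to consist of points of $X$ together with $\kp$-chords of small diameter from the Kulkarni--Pinkall partition (so that Proposition~\ref{crossunder} applies verbatim to each link), choosing the diameter threshold after $\rho$ and $U$ are fixed; Corollary~\ref{small} guarantees the needed smallness of chords near $X$. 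Everything else is a direct application of the already-established theorems Index=Variation+1, the positive-variation corollary, and the index fixed-point theorem.
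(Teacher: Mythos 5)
Your proposal is correct and follows essentially the same route as the paper: assume $f|_{T(X)}$ is fixed point free, build a tight bumping simple closed curve $S$ around $X$ whose links are small crosscuts with $f(\ol{C_i})\cap\ol{C_i}=\emptyset$ and endpoints mapping into $T(S)$, apply Corollary~\ref{posvaro} to get nonnegative variation on each link, then Theorem~\ref{I=V+1} to conclude $\ind(f,S)\ge 1$, contradicting Theorem~\ref{fpthm}. The extra scaffolding you add (the explicit fixed-point-free neighborhood $\ol U$ with $d(z,f(z))\ge\rho$, and the option of using Kulkarni--Pinkall chords plus Proposition~\ref{crossunder} for the links) is a harmless elaboration of what the paper compresses into the phrase ``sufficiently small crosscut.''
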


\begin{proof}
Suppose  we are given a continuum $X$ and $f:\Complex\to \Complex$  a
positively oriented map such that $f(X)\subset T(X)$. Assume that $f|_{T(X)}$
is fixed point free. Choose a simple closed curve $S$ such that $X\subset T(S)$
and points $a_0<a_1<\ldots<a_n$ in $S\cap X$ such that for each $i$
$C_i=(a_i,a_{i+1})$ is a sufficiently small crosscut of $X$, $f(\ol{C_i})\cap
\ol{C_i}=\0$ and  $f|_{{T(S)}}$ is fixed point free. By Corollary~\ref{posvaro},
$\var(f,C_i)\geq 0$ for each $i$. Hence by Theorem~\ref{I=V+1}, $\ind(f,S)=\sum
\var(f,C_i) +1 \geq 1$. This contradiction with Theorem~\ref{fpthm} completes
the proof.
\end{proof}

\begin{cor} \index{point of period two!for oriented maps}
Suppose $f:\Complex\to\Complex$  is a perfect, oriented map and $X$ is a
continuum such that $f(X)\subset T(X)$. Then there exists a point $x_0\in
T(X)$ of period at most  2.
\end{cor}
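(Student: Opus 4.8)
The plan is to derive this corollary from Theorem~\ref{fixpoint} by passing from an oriented map to a positively oriented one through composition with itself. The key observation is that although an oriented map $f$ need not be positively or negatively oriented, by Theorem~\ref{orient} it is always \emph{either} positively \emph{or} negatively oriented (statement (\ref{pnorient}) of that theorem, which holds since $f$ is a perfect surjection — note $f$ is automatically surjective being oriented, hence confluent). In the positively oriented case there is nothing to do: Theorem~\ref{fixpoint} applies directly to give a fixed point $x_0\in T(X)$, which is in particular a point of period at most $2$. So the substance is the negatively oriented case.

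First I would check that the composition $g=f\circ f=f^2$ is positively oriented. This should follow from the ``sign'' description of orientation: the monotone-light factorization machinery of Lemma~\ref{cacyclic} writes $f=g_0\circ h$ with $h$ monotone (acyclic fibers) and $g_0$ light-open, and the proof of Theorem~\ref{orient} shows the orientation type of $f$ is the common orientation type of the approximating homeomorphisms $k_i$ of the near-homeomorphism $k=j^{-1}\circ h$ composed with the analytic (hence positively oriented) map $g_0\circ j$. Composing two negatively oriented maps, the approximating homeomorphisms multiply to orientation-preserving ones, so $f^2$ is positively oriented. Alternatively, and perhaps more cleanly, one argues directly from the definition: for a simple closed curve $S$ and $p\in T(S)\setminus (f^2)^{-1}(f^2(S))$, the degree of $(f^2)_p$ on $S$ is a composition-type product of local degrees of $f$, each negative, hence positive; since $f$ is oriented, $f(T(S))\subset T(f(S))$ keeps the relevant points inside the hulls so the winding-number bookkeeping goes through. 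Either way, $f^2$ is positively oriented and perfect (composition of perfect maps is perfect).

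Next I would set up the continuum for $f^2$. We are given $f(X)\subset T(X)$. Since $f$ is oriented, Theorem~\ref{orient} gives $\partial f(X)\subset f(\partial X)$, and more to the point $f(T(X))\subset T(f(X))\subset T(X)$ because $T(X)$ is non-separating and $f$ maps its hull into the hull of the image (this is exactly statement (\ref{iorient}) of Theorem~\ref{orient} applied with the curve $S$ approximating $\partial T(X)$, or directly: $f(X)\subset T(X)$ implies $T(f(X))\subset T(X)$ implies $f(T(X))\subset T(T(X))=T(X)$ using that oriented maps send hulls into hulls). Hence $f^2(X)=f(f(X))\subset f(T(X))\subset T(X)$. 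Now apply Theorem~\ref{fixpoint} to the positively oriented map $f^2$ and the continuum $X$: there exists $x_0\in T(X)$ with $f^2(x_0)=x_0$. This $x_0$ has period at most $2$ under $f$, which is the conclusion.

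The main obstacle is the first half of the second paragraph: verifying carefully that the composition of two negatively oriented perfect maps is positively oriented. One must be a little careful because the local-degree/winding-number multiplicativity is not completely formal for maps that are neither light nor open; the safe route is to invoke the structural description from the proof of Theorem~\ref{orient} (factor each $f$ through an analytic map precomposed with a near-homeomorphism, approximate the near-homeomorphisms by genuine homeomorphisms of consistent orientation, and use that orientation type is detected on these approximations), so that $f^2$ inherits the orientation type of $k_i^{(1)}\circ k_i^{(2)}$ composed with analytic pieces, all of which is positively oriented. A secondary, minor point to nail down is the inclusion $f^2(X)\subset T(X)$, but this is immediate from the ``Maximum Modulus'' part of Theorem~\ref{orient} as indicated above. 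No estimates or index computations beyond what Theorem~\ref{fixpoint} already supplies are needed.
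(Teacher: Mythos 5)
Your proposal is correct and follows essentially the same route as the paper: invoke Theorem~\ref{orient} to conclude $f$ is positively or negatively oriented, observe that in either case $f^2$ is positively oriented (and perfect), and apply Theorem~\ref{fixpoint} to $f^2$ to get a fixed point of $f^2$ in $T(X)$. The paper's proof is exactly this, stated tersely; your extra care about why a composition of two negatively oriented maps is positively oriented and why $f^2(X)\subset T(X)$ just fills in details the paper leaves implicit.
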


\begin{proof}
By Theorem~\ref{orient}, $f$ is either positively or negatively
oriented. In either case, the second iterate $f^2$ is positively
oriented and must have a fixed point in $T(X)$ by
Theorem~\ref{fixpoint}.
\end{proof}

\section{Dendrites}\label{sec:dendr}

Here we generalize Theorem~\ref{th:dendr} on the existence of fixed points in
invariant dendrites to non-invariant dendrites. We also show that in certain
cases the dendrite must contain infinitely many periodic cutpoints. Given two
points $a, b$ of a dendrite we denote by $[a, b], (a, b], [a, b), (a, b)$ the
unique closed, semi-open and open arcs connecting $a$ and $b$ in the dendrite.
Unless specified otherwise, the situation considered in this subsection is as
follows: $D_1\subset D_2$ are dendrites and $f:D_1\to D_2$ is a continuous map.
Set $E=\ol{D_2\sm D_1}\cap D_1$. In other words, $E$ consists of points at
which $D_2$ ``grows'' out of $D_1$. Observe that more than one component of
$D_2\sm D_1$ may ``grow'' out of a point $e\in E$. We assume that $D_1$ is
non-degenerate.

As an important tool we will need the following retraction closely related to
the described above situation.

\begin{defn}\label{retr}
For each $x\in D_2$ there exists a unique arc (possibly a point) $[x,d_x]$ such
that $[x,d_x]\cap D_1=\{d_x\}$. Hence there exists a natural monotone
retraction $r:D_2\to D_1$ defined by $r(x)=d_x$, and the map $g=g_f=r\circ
f:D_1\to D_1$  is a continuous map of $D_1$ into itself. We call the map $r$
the \emph{natural retraction (of $D_2$ onto $D_1$)}\index{natural retraction of
dendrites} and the map $g$ the \emph{retracted (version of) $f$}.
\end{defn}

The map $g$ is designed to make $D_1$ invariant so that
Theorem~\ref{th:dendr} applies to $g$ and allows us to conclude that
there are $g$-fixed points. Theorem~\ref{fxpt-dend-1} extends the
result for $\R$ claiming that if there are points $a<b$ in $\R$
mapped by $f$ in different directions, then there exists a fixed
point $c\in (a, b)$ (see Introduction, Subsection 1.1). Let us recall
the notion of the boundary scrambling property which is first
introduced in Definition~\ref{bouscr}.

\setcounter{chapter}{5}\setcounter{section}{3}\setcounter{thm}{0}

\begin{defn}[Boundary scrambling for dendrites]
Suppose that $f$ maps a dendrite $D_1$ to a dendrite $D_2\supset D_1$. Put
$E=\ol{D_2\sm D_1}\cap D_1$ (observe that $E$ may be infinite). If for each
\emph{non-fixed} point $e\in E$, $f(e)$ is contained in a component of
$D_2\sm\{e\}$ which intersects $D_1$, then we say that $f$ has the
\emph{boundary scrambling property} or that it \emph{scrambles the boundary}.
Observe that if $D_1$ \emph{is} invariant then $f$ automatically scrambles the
boundary.
\end{defn}

We are ready to prove the following theorem.

\setcounter{chapter}{7}\setcounter{section}{2}\setcounter{thm}{1}

\begin{thm}\label{fxpt-dend-1}
The following claims hold.

\begin{enumerate}

\item
If $a, b\in D_1$ are such that $a$ separates $f(a)$ from $b$ and $b$
separates $f(b)$ from $a$, then there exists a fixed point $c\in (a,
b)$. Thus, if $e_1\ne e_2\in E$ are such that $f(e_i)$ belongs to a
component of $D_2\sm \{e_i\}$ disjoint from $D_1$ then there is a
fixed point $c\in (e_1, e_2)$.

\item
If $f$ scrambles the boundary, then $f$ has a fixed point in $D_1$.

\end{enumerate}

\end{thm}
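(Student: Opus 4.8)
The plan is to prove part (1) by a one‑dimensional intermediate‑value argument carried out on the arc $[a,b]$, and to deduce part (2) directly from the retracted map $g=g_f$ of Definition~\ref{retr} together with Theorem~\ref{th:dendr}.

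For part (1): since $D_1$ is a dendrite and $a,b\in D_1$, the arc $I=[a,b]$ lies in $D_1$. Let $\rho\colon D_2\to I$ be the natural (first‑point, monotone, continuous) retraction of the dendrite $D_2$ onto $I$, and set $\psi=\rho\circ f|_I\colon I\to I$. First I would record that the separation hypotheses force $\psi$ to collapse neighbourhoods of the endpoints: if $C_a$ is the component of $D_2\sm\{a\}$ containing $f(a)$, then $C_a$ is open (dendrites are locally connected) and disjoint from $I\sm\{a\}=(a,b]$, so every arc from a point of $C_a$ to $I$ first meets $I$ at $a$, i.e. $\rho(C_a)=\{a\}$; by continuity $f$ carries a neighbourhood of $a$ in $I$ into $C_a$, so $\psi\equiv a$ near $a$, and symmetrically $\psi\equiv b$ near $b$. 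Parametrising $I$ from $a$ to $b$, this says $\psi(x)<x$ just above $a$ and $\psi(x)>x$ just below $b$, so the closed set $\{x\in I:\psi(x)=x\}$ meets $(a,b)$; let $c=\inf\{x\in(a,b]:\psi(x)=x\}$. Then $c\in(a,b)$, $\psi(c)=c$, and, since $\psi(x)-x$ is continuous and nonvanishing on $(a,c)$ and negative near $a$, we have $\psi(x)<x$ for all $x\in(a,c)$.

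It remains to upgrade $\psi(c)=c$ to $f(c)=c$. Suppose $f(c)\ne c$. If $f(c)\in I$ then $\rho(f(c))=f(c)=c$, a contradiction; hence $f(c)\notin I$, and $\rho(f(c))=c$ forces $f(c)$ to lie in a component $C$ of $D_2\sm\{c\}$ disjoint from $I\sm\{c\}$, with $C$ open. For $x\in(a,c)$ sufficiently close to $c$ we then have $f(x)\in C$ by continuity. Since $c$ lies on the $b$‑side of $x$ in $I$ and the only way out of $C$ is through $c$, one checks that $C$ is contained in the component of $D_2\sm\{x\}$ containing $(x,b]$ and that the arc from $f(x)$ to $I$ first meets $I$ at $c$; hence $\psi(x)=\rho(f(x))=c>x$, contradicting $\psi(x)<x$. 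Therefore $f(c)=c$ with $c\in(a,b)$, which is the first assertion of part (1). The ``Thus'' clause is then immediate: if $e_1\ne e_2\in E$ and $f(e_i)$ lies in a component of $D_2\sm\{e_i\}$ disjoint from $D_1$, then since $e_j\in D_1\sm\{e_i\}$ for $j\ne i$, that component does not contain $e_j$, so $e_i$ separates $f(e_i)$ from $e_j$; applying the first part with $a=e_1$, $b=e_2$ gives a fixed point in $(e_1,e_2)$.

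For part (2): apply Theorem~\ref{th:dendr} to $g=g_f=r\circ f\colon D_1\to D_1$ to obtain $x_0$ with $r(f(x_0))=x_0$. If $f(x_0)\in D_1$ then $r(f(x_0))=f(x_0)$, so $f(x_0)=x_0$ and we are done. Otherwise $[f(x_0),x_0]\cap D_1=\{x_0\}$ shows $x_0\in\ol{D_2\sm D_1}\cap D_1=E$ and $f(x_0)\ne x_0$; moreover $r(f(x_0))=x_0$ forces $f(x_0)$ to lie in a component of $D_2\sm\{x_0\}$ disjoint from $D_1$, contradicting the hypothesis that $f$ scrambles the boundary. Hence $f(x_0)=x_0\in D_1$. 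I expect the only genuinely delicate point to be the verification in part (1) that a ``fake'' fixed point $c$ of $\psi$ (one with $f(c)\notin I$) propagates to nearby points of $(a,c)$ and pins $\psi(x)=c$ there; everything else is bookkeeping with standard facts about dendrites (components of $D\sm\{p\}$ are open, arcs joining points in distinct such components pass through $p$, and first‑point retractions onto subcontinua are continuous).
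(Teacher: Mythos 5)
Your proposal is correct. Part (2) is essentially the paper's own argument: both take the $g$-fixed point supplied by Theorem~\ref{th:dendr} for the retracted map $g=r\circ f$ of Definition~\ref{retr} and observe that a $g$-fixed point which is not $f$-fixed would have to lie in $E$ with its $f$-image in a component of $D_2\sm\{x_0\}$ missing $D_1$, contradicting boundary scrambling (the paper phrases this as ``points of $E$ are not $g$-fixed,'' but the content is the same). Part (1), however, is where you genuinely diverge. The paper produces the fixed point as the limit of a backward orbit: starting from $a_0=a$ it chooses points $a_{-1},a_{-2},\dots$ in $(a,b)$ with $f(a_{-n-1})=a_{-n}$ and $a_{-n-1}$ separating $a_{-n}$ from $b$, and takes $c=\lim a_{-n}$; this is short but leaves the existence of the successive preimages to the reader. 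You instead retract $D_2$ onto the arc $I=[a,b]$, apply the classical intermediate value theorem to $\psi=\rho\circ f|_I$ (using that the separation hypotheses force $\psi\equiv a$ near $a$ and $\psi\equiv b$ near $b$), and then carry out the one delicate step of upgrading the first $\psi$-fixed point $c$ to an $f$-fixed point by showing that $f(c)\nin I$ would pin $\psi\equiv c$ on nearby points of $(a,c)$, contradicting $\psi<\mathrm{id}$ there. That upgrade argument is sound (the component of $D_2\sm\{c\}$ containing $f(c)$ is open and misses $I\sm\{c\}$, so nearby images retract to $c$), and your route has the advantage of being fully self-contained and of making the reduction to the interval case explicit, at the cost of being longer than the paper's backward-orbit construction; the deduction of the ``Thus'' clause and the handling of part (2) agree with the paper.
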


Observe, that the fixed points found in (1) are cutpoints of $D_1$
(and hence of $D_2$).

\begin{proof}
(1) Set $a_0=a$. Then we find a sequence of points $a_{-1}, a_{-2},
\dots$ in $(a, b)$ such that $f(a_{-n-1})=a_{-n}$ and $a_{-n-1}$
separates $a_{-n}$ from $b$. Clearly, $\lim_{n\to \iy} a_{-n}=c\in
(a, b)$ is a fixed point as desired (by the assumptions $c$ cannot be
equal to $b$). If there are two points $e_1\ne e_2\in E$ such that
$f(e_i)$ belongs to a component of $D_2\sm \{e_i\}$ disjoint from
$D_1$ then the above applies to them.

(2) Assume that there are no $f$-fixed points $e\in E$. By Theorem~\ref{th:dendr}
$g_f=g$ has a fixed point $p\in D_1$. It follows from the fact that $f$
scrambles the boundary that points of $E$ are not $g$-fixed. Hence $p\notin E$.

In general, a $g$-fixed point is not necessarily an $f$-fixed point. In fact,
it follows from the construction that if $f(x)\ne g(x)$, then $f$ maps $x$ to a
point belonging to a component of $D_2\sm D_1$ which ``grows'' out of $D_1$ at
$r\circ f(x)=g(x)\in E$. Thus, since $g(p)=p$ but $p\nin E$, then
$g(p)=f(p)=p$.
\end{proof}

\begin{rem} It follows from Theorem~\ref{fxpt-dend-1} that the only behavior
of points in $E$ which does not force the existence of a fixed point in $D_1$
is when one point $e\in E$ maps into a component of $D_2\sm \{e\}$ disjoint
from $D_1$ whereas any other point $e'\in E$ maps into the component of $D_2\sm
\{e'\}$ which is not disjoint from $D_1$.
\end{rem}


Now we suggest conditions under which a map of a dendrite has
infinitely many periodic cutpoints; the result will then apply in
cases related to complex dynamics. Let us recall the notion of a
weakly repelling periodic point which is first introduced in
Definition~\ref{wkrep}.

\setcounter{chapter}{5}\setcounter{section}{3}\setcounter{thm}{1}

\begin{defn}[Weakly repelling periodic points]
In the situation of Definition~\ref{bouscr}, let $a\in D_1$ be a
fixed point and suppose that there exists a component $B$ of $D_1\sm
\{a\}$ such that arbitrarily close to $a$ in $B$ there exist fixed
cutpoints of $D_1$ or points $x$ separating $a$ from $f(x)$. Then we
say that $a$ is a \emph{weakly repelling fixed point (of $f$ in
$B$)}. A periodic point $a\in D_1$ is said to be simply \emph{weakly
repelling} \index{weakly repelling}if there exists $n$ and a
component $B$ of $D_1\sm \{a\}$ such that $a$ is a weakly repelling
fixed point of $f^n$ in $B$.
\end{defn}

\setcounter{chapter}{7}\setcounter{section}{2}\setcounter{thm}{3}

Now we can prove Lemma~\ref{wkrppow}.

\begin{lem}\label{wkrppow}
Let $a$ be a fixed point of $f$ and $B$ be a component of $D_1\sm \{a\}$. Then
the following two claims are equivalent:

\begin{enumerate}

\item $a$ is a weakly repelling fixed point for $f$ in $B$;

\item either there exists a sequence of fixed cutpoints of $f|_B$,
converging to $a$, or, otherwise, there exists a point $y\in B$ which separates
$a$ from $f(y)$ such that there are no fixed cutpoints in the component of
$B\sm \{y\}$ containing $a$ in its closure (in the latter case for any $z\in
(a, y]$ the point $z$ separates $f(z)$ from $a$ and each backward orbit of $y$
in $(a, y]$ converges to $a$).

\end{enumerate}

In particular, if $a$ is a weakly repelling fixed point for $f$ in $B$ then $a$
is a weakly repelling fixed point for $f^n$ in $B$ for any $n\ge 1$.
\end{lem}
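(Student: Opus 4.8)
The plan is to prove the equivalence of (1) and (2) first, and then read off the ``in particular'' directly from (2). The implication (2)$\Rightarrow$(1) is immediate from the definition of weakly repelling: in the first case of (2) the fixed cutpoints of $f$ accumulating at $a$ are the required witnesses, and in the second case the points $z\in(a,y]$, which lie arbitrarily close to $a$ and each of which separates $a$ from $f(z)$, serve as witnesses. So the content is in (1)$\Rightarrow$(2) and in extracting consequences from the normal form in (2).

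For (1)$\Rightarrow$(2) I would split according to whether the set $Q$ of fixed cutpoints of $f$ lying in $B$ accumulates at $a$. If it does, metrizability of $D_1$ yields a sequence in $Q$ converging to $a$, which is the first alternative of (2). If it does not, I would fix a connected open neighbourhood $U$ of $a$ in $D_1$ with $U\cap\ol{Q}=\0$, and use weak repelling together with local connectedness of the dendrite to choose a point $y\in B\cap U$, as close to $a$ as needed, such that $y$ separates $a$ from $f(y)$, the arc $[a,y]$ lies in $U$, and the component $B_a$ of $B\sm\{y\}$ having $a$ in its closure also lies in $U$ (here one uses that the ``side branches'' off a short arc at $a$ have small diameter, a standard consequence of local connectedness). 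Then $B_a$ contains no fixed cutpoint of $f$; since every point of $(a,y)$ is a cutpoint of $D_1$ and $y\neq f(y)$, this means $f$ has no fixed point in $(a,y]$.

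It then remains to verify the two assertions in the parenthetical part of (2). For ``every $z\in(a,y]$ separates $f(z)$ from $a$'', I would introduce the first-point retraction $r\colon D_2\to[a,y]$ onto the arc $[a,y]$ and the continuous map $\rho=r\circ f|_{[a,y]}$ of $[a,y]$ into itself, which satisfies $\rho(a)=a$ and $\rho(y)=y$. Identifying $[a,y]$ with $[0,1]$ with $a=0$, one checks that $z$ separates $f(z)$ from $a$ exactly when $\rho(z)\ge z$; if this failed at some $z_0$, continuity of $\rho$ between $z_0$ and $y$ would produce a point $z^*\in(a,y]$ with $\rho(z^*)=z^*$, and since $f$ has no fixed point in $(a,y]$ this forces $f(z^*)$ into a side branch of $D_2$ attached at $z^*$, which is then seen to be incompatible with the ``inward'' behaviour of $f$ just below $z^*$ (using that the closure of that side branch is a closed set missing the $a$-side of any $z<z^*$), a contradiction. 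For the backward orbit claim, note $f([a,y])$ is a continuum containing $a$ and $f(y)$, hence contains $[a,f(y)]\supseteq[a,y]$, so inductively $f^k([a,y])\supseteq[a,y]$; this lets one build preimages $y_{-1}\in(a,y]$, $y_{-2}\in(a,y_{-1}]$, and so on, and the absence of fixed points in $(a,y]$ makes $(y_{-k})$ strictly decrease in the order of $[a,y]$, with limit $c$ satisfying $f(c)=c$, hence $c=a$.

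Finally, for the ``in particular'' I would use the equivalence just established. If $a$ is weakly repelling for $f$ in $B$ via the first alternative of (2), the fixed cutpoints of $f$ converging to $a$ are fixed cutpoints of $f^n$, so $a$ is weakly repelling for $f^n$ in $B$. If via the second alternative, take the backward orbit $(y_{-k})$ above (which exists and converges to $a$); fixing $n\ge1$, for each $m$ the point $y_{-nm}$ lies in $(a,y_{-n(m-1)})$, hence is a cutpoint of $D_1$ separating $a$ from $f^n(y_{-nm})=y_{-n(m-1)}$, and $y_{-nm}\to a$ as $m\to\infty$; so $a$ is weakly repelling for $f^n$ in $B$. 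The step I expect to be the main obstacle is the verification, in (1)$\Rightarrow$(2), that every $z\in(a,y]$ separates $f(z)$ from $a$ --- i.e. ruling out the ``crossing'' configuration for $\rho$; this is precisely where the planar/dendrite topology, the careful choice of $y$, and the absence of fixed points in $(a,y]$ must be combined, whereas everything else is essentially formal once this is in hand.
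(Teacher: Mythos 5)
Your proof of the separation property and of the backward-orbit claim (the parenthetical in (2)), and hence of (2)$\Rightarrow$(1), is correct, though it takes a different route from the paper: where you run a first-point-retraction/IVT ``crossing'' argument on $[a,y]$ and rule out the crossing point $z^*$ by the open-side-branch continuity trick, the paper simply applies Theorem 7.2.2(1) to the pair $(z,y)$ --- if $z$ separated $f(z)$ from $y$, that theorem would give a fixed cutpoint in $(z,y)$, contradicting the choice of $y$. Both work; yours is more self-contained, the paper's is shorter. Your treatment of the ``in particular'' clause via the backward orbit agrees with the paper's.

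The genuine gap is in your (1)$\Rightarrow$(2) step, at the sentence where you ``choose a point $y\in B\cap U$ \dots such that $y$ separates $a$ from $f(y)$ \dots and the component $B_a$ of $B\sm\{y\}$ having $a$ in its closure also lies in $U$.'' The justification you give (side branches off a short arc at $a$ are small) does not support this: $B_a$ consists of $(a,y)$ together with \emph{all} components of $D_1\sm\{w\}$, $w\in(a,y)$, missing $a$ and $y$, and these need not be small even when $y$ is very close to $a$. The definition of weakly repelling only supplies points $y$ arbitrarily close to $a$ with $y$ separating $a$ from $f(y)$; such a $y$ may sit on a tiny side twig attached at a point $w$ of a long arc of $B$, and then $B_a(y)$ contains essentially all of $B$ beyond $w$ --- in particular it can contain fixed cutpoints far from $a$, so the chosen $y$ need not witness the second alternative of (2). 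What is true, and what you would need to prove, is that \emph{some} suitably placed witness exists; this can be extracted from a variant of your own crossing argument (with $r$ the first-point retraction onto $[a,y]$, show that the set of $z\in(a,y]$ with $r(f(z))\ge z$ has $a$ in its closure, since a point of this set that is isolated from below leads to the same open-side-branch contradiction, while points of it are witnesses because $[a,y]\subset U$ excludes fixed cutpoints there), after which one picks a witness on $[a,y]$ below the finitely many large branches hanging off that fixed arc (branches attached at distinct points of a fixed arc are pairwise disjoint, hence form a null family). Without some such argument the selection of $y$ is unjustified; to be fair, the paper's own proof of this direction is equally terse at exactly this point, but your stated reason, taken literally, is not enough.
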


\begin{proof}
Let us show that (2) implies (1). We may assume that there exists a point $y\in
B$ which separates $a$ from $f(y)$ such that there are no fixed cutpoints in
the component $W$ of $B\sm \{y\}$ containing $a$ in its closure. Choose a point
$z\in (a, y)$. Since there are no fixed cutpoints of $f$ in $W$,
Theorem~\ref{fxpt-dend-1}(1) implies that $f(z)$ cannot be separated from $y$
by $z$. Hence $f(z)$ is separated from $a$ by $z$, and $a$ is weakly repelling
for $f$ in $B$. Moreover, we can take preimages of $y$ in $(a, y]$, then take
their preimages even closer to $a$, inductively. Any so constructed backward orbit of
$y$ in $(a, y]$ converges to $a$ because it converges to a fixed point of $f$
in $[a, y]$ and $a$ is the only such fixed point.

Now, suppose that (1) holds. We may assume that there exists a neighborhood $U$
of $a$ in $B$ such that there are no fixed cutpoints of $f$ in $U$. If $a$ is
weakly repelling in $B$ for $f$, we can choose a point $y\in U$ so that $y$
separates $a$ from $f(y)$ as desired.

It remains to prove the last claim of the lemma. Indeed, we may
assume that there is no sequence of $f^n$-fixed cutpoints in $B$
converging to $a$. Choose a neighborhood $U$ of $a$ which contains
no $f^n$-fixed cutpoints in $U\cap B$. By (2) we can choose a point
$y\in U\cap B$ such that $y$ separates $a$ from $f(y)$ so that there
is a sequence of preimages of $y$ under  $f$ which converges to $a$
monotonically. Choosing the $n$-th preimage $z$ we will see that $z$
separates $a$ from $f^n(z)$ with other parts of the second set of
conditions of (2) also fulfilled. By the  above  $a$ is weakly
repelling for $f^n$ in $B$ as desired.
\end{proof}

Let $B$ be a component of $D_1\sm \{a\}$ where $a$ is fixed. Suppose that $a$
is a weakly repelling fixed point for $f$ in $B$ which is not a limit of fixed
cutpoints of $f$ in $B$. Since the set of all vertices of $D_2$ together with
their images under $f$ and powers of $f$ is countable (see Theorem 10.23
\cite{nadl92}), we can choose $y$ from Lemma~\ref{wkrppow} so that $y$ and all
cutpoints $x$ in its backward orbit have $\val_{D_2}(x)=2$. From now on to
each fixed point $a$ which is weakly repelling for $f$ in a component $B$ of
$D_1\sm \{a\}$, but is not a limit point of fixed cutpoints in $B$, we
associate a point $x_a\in B$ of valence $2$ in $D_2$ separating $a$ from
$f(x_a)$ and such that all cutpoints in the backward orbit of $x_a$ are of
valence $2$ in $D_2$. We also associate to $a$ a semi-neighborhood $U_a$ of $a$
in $\ol{B}$ which is the component of $\ol{B}\sm \{x_a\}$ containing $a$. We
choose $x_a$ so close to $a$ that the diameter of $U_a$ is less than one third
of the diameter of $B$.

The next lemma shows that in some cases a fixed point $p$ from
Theorem~\ref{fxpt-dend-1}(2) can be chosen to be a cutpoint of $D_1$. Recall
that an endpoint of a continuum $X$ is a point $a$ such that the number
$\val_X(a)$ of components of $X\sm \{a\}$ equals $1$.

\begin{lem}\label{fxctpt}
Suppose that $f$ scrambles the boundary.  Then either there is a
fixed point of $f$ which is a cutpoint of $D_1$, or, otherwise,
there exists a fixed endpoint $a$ of $D_1$ such that if  $C_a$ is
the component of $D_2\sm \{a\}$ containing $D_1\sm \{a\}$, then $a$
is not weakly repelling for $f$ in $C_a$.
\end{lem}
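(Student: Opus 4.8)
The plan is to run the retraction machinery and then analyze where the resulting fixed point can sit. First I would invoke Theorem~\ref{th:dendr} applied to the retracted map $g=g_f=r\circ f:D_1\to D_1$ to get a fixed point set $\mathrm{Fix}(g)\neq\0$; as in the proof of Theorem~\ref{fxpt-dend-1}(2), since $f$ scrambles the boundary, no point of $E$ is $g$-fixed, and any $g$-fixed point $p\notin E$ is in fact an honest $f$-fixed point (because $f(x)\neq g(x)$ forces $g(x)=r(f(x))\in E$). So we may assume $\mathrm{Fix}(f)$ consists entirely of $f$-fixed points none of which lie in $E$, and the lemma is trivially true unless every $f$-fixed point is an endpoint of $D_1$. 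Thus assume every fixed point of $f$ is an endpoint of $D_1$; I must produce a fixed endpoint $a$ with the stated non-repelling property. If there are two distinct fixed endpoints, I would derive a contradiction (or directly locate the desired $a$) using Theorem~\ref{fxpt-dend-1}(1): for distinct endpoints $a_1,a_2$, the arc $(a_1,a_2)$ contains interior points of $D_1$, and if both endpoints were ``repelling'' in the direction of the component of $D_1\sm\{a_i\}$ then pushing points along $(a_1,a_2)$ as in the proof of Theorem~\ref{fxpt-dend-1}(1) would give a fixed \emph{cutpoint} $c\in(a_1,a_2)$, contrary to assumption.

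The heart of the argument is the case of a single fixed endpoint $a$, where I must show $a$ is not weakly repelling for $f$ in $C_a$ (the component of $D_2\sm\{a\}$ carrying $D_1\sm\{a\}$; note $D_1\sm\{a\}$ is connected since $a$ is an endpoint of $D_1$). Suppose toward a contradiction that $a$ \emph{is} weakly repelling for $f$ in $C_a$. By Lemma~\ref{wkrppow} there are two sub-cases. In the first, there is a sequence of $f$-fixed cutpoints of $f|_B$ (here $B=D_1\sm\{a\}$) converging to $a$ — but those are fixed cutpoints of $D_1$, contradicting our standing assumption that all fixed points are endpoints. In the second sub-case, there is a point $y\in B$ separating $a$ from $f(y)$ with no fixed cutpoints in the component $W$ of $B\sm\{y\}$ whose closure contains $a$; then every $z\in(a,y]$ separates $f(z)$ from $a$. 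Now I would use the retraction $g$ again, restricted to the subdendrite $D_1\sm W$ (or rather to $\ol{D_1\sm W}$, which is invariant-enough after retracting): apply Theorem~\ref{th:dendr} to $g$ restricted to the dendrite $\ol{B\sm W}\cup\{\text{the rest of }D_1\}$ — more carefully, to $D_1$ minus the open semi-neighborhood $W$ of $a$, which is a subdendrite whose only ``boundary growth'' point relative to $D_1$ is $y$, and $g$ maps it into itself because $y$ separates $f(z)$ from $a$ forces $g$ to push points in $(a,y]$ toward $y$. Theorem~\ref{th:dendr} then yields a $g$-fixed point $q$ in this smaller dendrite; scrambling excludes $q\in E$, so $q$ is $f$-fixed, and $q\neq a$ since $q$ lies outside the semi-neighborhood $W\ni$ (points near $a$). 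But then $q$ is a fixed point of $f$ lying in the interior part of $D_1$ — a cutpoint of $D_1$ — again contradicting our assumption. This contradiction shows $a$ is not weakly repelling for $f$ in $C_a$, completing the proof.

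The main obstacle I anticipate is making the ``restrict $g$ to a subdendrite near $y$'' step airtight: I need the subdendrite $D_1'=\ol{D_1\sm W}$ (equivalently, the component of $D_1\sm\{x_a\}$-type construction, cf.\ the associated point $x_a$ and semi-neighborhood $U_a$ discussed just before this lemma) to be genuinely $g$-invariant, which requires knowing that for $z$ in a neighborhood of $y$ inside $D_1'$ the retracted image $g(z)$ does not jump back into $W$. This follows from the monotonicity/backward-orbit statement in Lemma~\ref{wkrppow}(2) — each $z\in(a,y]$ has $z$ separating $f(z)$ from $a$, hence $g(z)=r(f(z))$ cannot lie strictly between $a$ and $z$ — but one must combine this with continuity of $g$ and the structure of the dendrite near $y$ to cover points of $D_1'$ off the arc $(a,y]$, possibly shrinking $y$ toward $a$ first. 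A secondary technical point is the two-fixed-endpoints case: one should check that the ``push along $(a_1,a_2)$'' argument of Theorem~\ref{fxpt-dend-1}(1) really applies, i.e.\ that at least one of the two endpoints fails the non-repelling condition forces the separation hypothesis of that theorem at both ends; if instead one of them already satisfies the conclusion, we are done immediately, so the contradiction is only needed when \emph{both} are repelling.
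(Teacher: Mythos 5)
You have the right overall strategy (reduce to the case where every fixed point is an endpoint, then cut off the repelling direction and run the dendrite fixed--point machinery on what is left), but two steps fail as written. The first is your disposal of the case of two or more fixed endpoints via Theorem~\ref{fxpt-dend-1}(1). Weak repulsion at $a_1$ and $a_2$ only gives, through Lemma~\ref{wkrppow}(2) (the fixed-cutpoint alternative being excluded), arcs $(a_1,y_1]$ and $(a_2,y_2]$ on which each point $z$ separates $f(z)$ from the corresponding endpoint, and nothing forces these arcs to point away from $[a_1,a_2]$. If $y_1,y_2\in(a_1,a_2)$ and you pick $z_1\in(a_1,y_1]$, $z_2\in(a_2,y_2]$ of valence two in $D_2$, then $D_2\sm\{z_1\}$ has exactly two components and $f(z_1)$ lies in the one containing $z_2$ (it is merely off the $a_1$-side), so $z_1$ does \emph{not} separate $f(z_1)$ from $z_2$, and symmetrically at $z_2$; the hypotheses of Theorem~\ref{fxpt-dend-1}(1) are simply unavailable, and ``pushing along $(a_1,a_2)$'' yields at best a fixed point of the retraction onto that arc, not of $f$. (There may also be infinitely many fixed endpoints.) This is precisely where the paper works harder: assuming all fixed points are weakly repelling endpoints, it uses the valence-two points $x_a$ and semi-neighborhoods $U_a$ introduced just before the lemma, shows any two such neighborhoods are nested or disjoint, extracts a finite disjoint subcover of the compact fixed-point set, cuts $D_1$ at the points $x_{a_i}$, and applies Theorem~\ref{fxpt-dend-1}(2) to the central component $Q$, on which $f$ is fixed point free yet still scrambles the boundary.

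The second gap is the invariance step in your single-endpoint case, which you flagged yourself: for $z\in D_1'=\ol{D_1\sm W}$ far from $y$ there is no reason why $f(z)\notin W$, and if $f(z)\in W$ then $g(z)=f(z)\notin D_1'$; Lemma~\ref{wkrppow}(2) controls only points of $(a,y]$, so shrinking $y$ or invoking continuity cannot save $g$-invariance of $D_1'$. The repair is not to restrict $g$ but to retract onto $D_1'$ itself (equivalently, to apply Theorem~\ref{fxpt-dend-1}(2) to $f|_{D_1'}$): if $q$ is fixed under $r'\circ f$, where $r'$ is the natural retraction of $D_2$ onto $D_1'$, and $f(q)\ne q$, then $f(q)$ lies in a component $Z$ of $D_2\sm\{q\}$ with $Z\cap D_1'=\0$; since $W\cup\{a\}$ is connected and accumulates on $y$, such a $Z$ can meet $D_1$ only if $q=y$ and $Z$ is the $a$-side component, which is ruled out because $y$ separates $a$ from $f(y)$; hence $Z\cap D_1=\0$, so $q\in E$ and scrambling is violated. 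Therefore $q$ is a genuine $f$-fixed point in $D_1'$, contradicting that $a$ was the only fixed point. Carried out this way your second case becomes the paper's argument in miniature; as written, however, both the multi-endpoint reduction and the invariance step are genuine holes.
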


\begin{proof}
Suppose that $f$ has no fixed cutpoints. By Theorem~\ref{fxpt-dend-1}(2), the
set of fixed points of $f$ is not empty. Hence we may assume that \emph{all}
fixed points of $f$ are endpoints of $D_1$ and, by way of contradiction,  $f$
is weakly repelling at any such fixed point $a$ in the component of $D_2\sm
\{a\}$ containing $D_1\sm\{a\}$. Suppose $a$ and $b$ are distinct fixed points of $f$. Let us
show that either $U_a\subset U_b$, or $U_b\subset U_a$, or $U_a\cap U_b=\0$.
Set $\dia(D_1)=\e$.

Recall that $x_a, x_b$ are cutpoints of $D_2$ of valence $2$. Now,
first we assume that $b\in U_a$. If $x_b\nin U_a$, then $U_a\subset
U_b$ as desired. Suppose that $x_b\in U_a$. We will show that
$x_b\in (b, x_a]$. Indeed, otherwise $U_b$ would contain the
component $Q$ of $D_1\sm \{x_a\}$, not containing $a$. However, by
the choice of the size of $U_a$ we see that $\dia(Q)\ge 2\e/3$ and
therefore $\dia(U_b)\ge 2\e/3$, a contradiction with the choice of
the size of $U_b$. Hence $x_b\in (b, x_a]$ which implies that
$U_b\subset U_a$. Now assume that $b\nin U_a$ and $a\nin U_b$. Then
it follows that $x_a, x_b\in [a, b]$ and that the order of points in
$[b, a]$ is $b, x_b, x_a, a$ which implies that $U_b\cap U_a=\0$.

Consider an open covering of the set of all fixed points $a\in D_1$ by their
neighborhoods $U_a$ and choose a finite subcover. By the above we may assume
that it consists of pairwise disjoint sets $U_{a_1}, \dots, U_{a_k}$.
Consider the component $Q$ of $D_1\sm\{x_{a_1},\dots,x_{a_k}\}$
whose endpoints are the points $x_{a_1},\dots,x_{a_k}$ and perhaps
some endpoints of $D_1$. Then $f|_Q$ is fixed point free. On the
other hand, $f|_{D_1}$ scrambles the boundary, and hence it is easy
to see that $f|_Q$, with $Q$ considered as a subdendrite of $D_2$,
scrambles the boundary too, a contradiction with
Theorem~\ref{fxpt-dend-1}.
\end{proof}

Lemma~\ref{fxctpt} is helpful in the next theorem.

\begin{thm}\label{infprpt}Let $D$ be a dendrite.
Suppose that $f:D\to D$ is continuous and all its periodic points are weakly repelling.
Then $f$ has infinitely many periodic cutpoints.
\end{thm}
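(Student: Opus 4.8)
The plan is to argue by contradiction: assume $f:D\to D$ has only finitely many periodic cutpoints, and derive a contradiction with the hypothesis that every periodic point of $f$ is weakly repelling. Since by Theorem~\ref{th:dendr} (applied to $f$, $f^2$, $f^3$, \dots) the map $f$ has fixed points and periodic points of every period, the real content is to show that the \emph{cutpoint} periodic points cannot all be accounted for by finitely many orbits once weak repelling is assumed. First I would reduce to the fixed-point level: if $f$ has only finitely many periodic cutpoints, then for a suitable power $N$ the map $g=f^N$ has the property that each of its periodic cutpoints is actually $g$-fixed, and (by Lemma~\ref{wkrppow}, last sentence) each periodic point that is weakly repelling for $f$ is weakly repelling for $g$; so it suffices to derive a contradiction assuming $g=f^N$ has finitely many \emph{fixed} cutpoints, all of whose fixed points are weakly repelling, and $g$ still has infinitely many periodic points (periodic cutpoints of high $g$-period).

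The key step is a pruning/retraction argument in the spirit of Lemma~\ref{fxctpt}. Let $c_1,\dots,c_k$ be the fixed cutpoints of $g$. If $k=0$ then, since $g$ maps $D$ into itself, $g$ trivially scrambles the boundary of $D$ (here $D_1=D_2=D$, so $E=\0$), and Lemma~\ref{fxctpt} forces a fixed endpoint $a$ at which $g$ is \emph{not} weakly repelling in the component of $D\sm\{a\}$ containing $D\sm\{a\}$ — contradicting that every fixed point of $g$ is weakly repelling. So $k\ge 1$. Now for each fixed cutpoint $c_j$ and each component $B$ of $D\sm\{c_j\}$ on which $c_j$ is weakly repelling but not a limit of fixed cutpoints, I would invoke the construction following Lemma~\ref{wkrppow}: choose the associated point $x_{c_j}=x_{c_j,B}$ of valence $2$ in $D$ (here $D_2=D$) separating $c_j$ from $g(x_{c_j})$, with the semi-neighborhood $U_{c_j,B}$ of diameter less than one third of $\operatorname{diam} B$, and such that all cutpoints in the backward orbit of $x_{c_j}$ have valence $2$. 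Cut $D$ along the finite set of all these points $x_{c_j,B}$ together with the $c_j$'s, and let $Q$ be the closure of the component of the complement that is ``far'' from all the $U_{c_j,B}$'s and contains no fixed cutpoint of $g$; as in the proof of Lemma~\ref{fxctpt}, $Q$ (viewed as a subdendrite of $D=D_2$) inherits the boundary-scrambling property from $g|_D$ — each boundary point $x_{c_j,B}$ of $Q$ is mapped by $g$ into the component of $D\sm\{x_{c_j,B}\}$ that meets $Q$, precisely because $x_{c_j,B}$ separates $c_j$ (hence $Q$) from $g(x_{c_j,B})$, i.e. $g(x_{c_j,B})$ lies on the $c_j$-side, which is the $Q$-side. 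Wait — I must double-check the direction here: $g(x_{c_j,B})$ is separated from $c_j$ by $x_{c_j,B}$, so it lies \emph{away} from $c_j$, hence away from $Q$. This is the opposite of scrambling, so the naive choice of $Q$ fails; instead I would take $Q$ to be the component \emph{containing} all the $c_j$'s minus the $U_{c_j,B}$'s, so that the $Q$-side of $x_{c_j,B}$ is the $c_j$-side and $g(x_{c_j,B})$ does land on the non-$Q$ side — no; that is still not scrambling. The correct move, matching Lemma~\ref{fxctpt} verbatim, is that $g|_Q$ scrambles the boundary of $Q$ because each $x_{c_j,B}\in E_Q$ has $g(x_{c_j,B})$ in the component of $D\sm\{x_{c_j,B}\}$ \emph{not} containing the interior of $Q$, which contradicts Theorem~\ref{fxpt-dend-1}(2) only if that were scrambling; re-reading Lemma~\ref{fxctpt}, the point is rather that $g|_Q$ is fixed-point free yet scrambles — so $Q$ should be chosen so that the $x_{c_j,B}$'s are its \emph{boundary in $D_2$}, and scrambling holds since $g$ moves each $x_{c_j,B}$ \emph{toward} $Q$ (the $c_j$-side), which is indeed the $Q$-side when $Q$ is the piece containing the $c_j$'s. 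I will adopt that choice and then Theorem~\ref{fxpt-dend-1}(2) gives a fixed point of $g$ in $Q$, contradicting that $Q$ contains no fixed cutpoint and (after also handling the finitely many weakly-repelling-at-an-endpoint or limit-of-fixed-cutpoints cases as in Lemma~\ref{fxctpt}) no fixed endpoint of $D$ either.

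Once this contradiction is obtained for $g=f^N$, it follows that $g$ — and hence $f$ — has infinitely many periodic cutpoints. Actually the cleanest packaging is: assume $f$ has finitely many periodic cutpoints; let $N$ be a common multiple of all their periods so that $g=f^N$ fixes each of them; then apply Lemma~\ref{fxctpt} and the pruning argument above to $g$ (all of whose periodic points are weakly repelling by Lemma~\ref{wkrppow}) to produce a $g$-fixed cutpoint of $D$ lying \emph{outside} the chosen finite set, which is a $g$-periodic, hence $f$-periodic, cutpoint not among our original list — contradiction.

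\textbf{Main obstacle.} The delicate point — as my wavering above indicates — is verifying that the sub-dendrite $Q$ obtained by pruning away the semi-neighborhoods $U_{c_j,B}$ genuinely inherits the boundary-scrambling property with respect to $D_2=D$, i.e. that for each $x_{c_j,B}\in E_Q=\overline{D\sm Q}\cap Q$ the image $g(x_{c_j,B})$ lies in a component of $D\sm\{x_{c_j,B}\}$ meeting $Q$; this is exactly where the defining property of $x_{c_j,B}$ (that it separates $c_j$ from $g(x_{c_j,B})$) must be combined correctly with the choice of which side to keep, and where one must also absorb the exceptional fixed points that are endpoints of $D$ or limits of fixed cutpoints. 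Getting these cases organized so that Theorem~\ref{fxpt-dend-1}(2) applies to $g|_Q$ and yields a $g$-fixed cutpoint strictly interior to $Q$ (hence new) is the crux; everything else is a routine application of Theorem~\ref{th:dendr}, Lemma~\ref{wkrppow}, and the countability of the vertex set of $D_2$.
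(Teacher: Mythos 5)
You have the right ingredients (argue by contradiction, pass to a power of $f$, use Lemma~\ref{fxctpt}), but there are two genuine gaps. First, the reduction to $g=f^N$ does not give you what you need at periodic \emph{endpoints}. The hypothesis (Definition~\ref{wkrep}) only says each periodic point is weakly repelling for \emph{some} power $f^m$, and Lemma~\ref{wkrppow} only propagates weak repulsion upward, from $f^m$ to $f^{mk}$; it never lets you pass from $f^m$ to $f^N$ unless $m$ divides $N$. You can choose $N$ as a common multiple of the relevant powers for the finitely many periodic cutpoints, but there may be infinitely many periodic endpoints with unrelated powers $m$, and your contradiction with the second alternative of Lemma~\ref{fxctpt} (including your $k=0$ case) needs those endpoints to be weakly repelling for $g=f^N$ itself. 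The paper devotes the first part of its proof to exactly this: using the finiteness of periodic cutpoints to pick $x$ so that the component of $D\sm\{x\}$ containing a fixed endpoint $b$ has no periodic cutpoints, and then a pull-back argument along $(b,x]$ (taking $m$ successive preimages mapped towards $b$) to show that weak repulsion for $f^m$ forces weak repulsion for $f$, hence for every power. Nothing in your proposal replaces this step.

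Second, your pruning construction with a single application of Lemma~\ref{fxctpt} cannot work as stated, and not only because of the side-of-the-cut confusion you flag. The hypothesis gives weak repulsion at a fixed cutpoint $a$ only in \emph{some} component of $D\sm\{a\}$; for whichever piece $Q$ you keep, a boundary point $a$ may fail to repel into $Q$, and then neither alternative of Lemma~\ref{fxctpt} is contradicted for that piece, so no new fixed cutpoint appears. (Note also that if you cut along the \emph{fixed} cutpoints themselves, scrambling is automatic, since Definition~\ref{bouscr} constrains only non-fixed boundary points; the valence-two points $x_a$ and the sets $U_a$ belong to the internal proof of Lemma~\ref{fxctpt} and need not be redone at the level of the theorem.) What is actually needed, and what the paper supplies, is an iteration: decompose $D$ along the finite set $A$ of $g$-fixed cutpoints and apply Lemma~\ref{fxctpt} to the closure of a component $B$ of $D\sm A$ (here $\bd B\subset A$ consists of fixed points, so scrambling holds trivially); if the second alternative occurs at some $a\in\bd B$, then the weak repulsion at $a$ must take place in a component of $D\sm\{a\}$ on the other side, so one moves to the adjacent component of $D\sm A$ there; since consecutive pieces have disjoint closures and $A$ is finite, this chase terminates at a component all of whose boundary fixed points repel inward and whose interior fixed endpoints repel by the first step, and there Lemma~\ref{fxctpt} yields a $g$-fixed cutpoint outside $A$, the desired contradiction. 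Your proposal is missing both the endpoint upgrade and this chasing argument, and you yourself identify the latter as unresolved.
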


\begin{proof}
By way of contradiction we assume that there are finitely many
periodic cutpoints of $f$. Let us show that each endpoint $b$ of $D$
with $f(b)=b$ is a weakly repelling fixed point for $f$. Since the
only component of $D\sm \{b\}$ is $D\sm \{b\}$, we will not be
mentioning this component anymore. By the assumptions of the Theorem
$b$ is weakly repelling for some power $f^m$ with $m\ge 1$. Then by
Lemma~\ref{wkrppow} and by the assumption we can choose a point
$x\ne b$ such that (1) $x$ is not a vertex or endpoint of $D$, (2)
for each point $z\in (b, x]$ we have that $z$ separates $b$ from
$f^m(z)$, and (3) the component $U$ of $D\sm \{x\}$ containing $b$,
contains no periodic cutpoints.

On the other hand, by way of contradiction we assume that $b$ is not
weakly repelling for $f$. Then, again by Lemma~\ref{wkrppow}, no
point $z\in (b, x]$ is such that $z$ separates $b$ from $f(z)$. The
idea is to use this in order to find a point $y\in (b, x]$ which
\emph{does not} separate $b$ from $f^m(y)$, a contradiction. To find
$y$ we apply the following construction. First, observe that there
exists a point $d_1\in (b, x)$ such that $f([b, x])\supset [b,
f(x)]\supset [b, d_1]$. Let $X_1=\{z\in [b, x] \mid f(z)\in [b,x]\}$
be the set of points mapped into $[b, x]$ by $f$. Then
$f(X_1)\supset [b, d_1]$ and all points of $X_1$ map towards $b$ on
$[b, x]$.
We can apply the same observation to $(b, d_1]$ instead of $(b, x]$.
In this way we obtain a point $d_2\in [b, d_1)$ and a set
$X_2=\{z\in [b,d_1] \mid f(z)\in X_1\}$ such that $[b, d_2]\subset
f^2(X_2)\subset [b, d_1]$ and all points of $f(X_2)$ are mapped
towards $b$ by $f$. Repeating this argument, we will find points of
$(b, x]$ mapped towards $b$ and staying on $(b, x]$ for $m$ steps in
a row. This contradicts the previous paragraph and proves that if
$b$ is weakly repelling for $f^m$, then it is weakly repelling for
$f$. Now by Lemma~\ref{wkrppow} $b$ is weakly repelling for $f^n$
for all $n\ge 1$.

Let $f$ have finitely many periodic cutpoints $a^1, \dots, a^k$ of
$f$. For each $a^i$ there exists $N_i$ such that $a^i$ is fixed for
$f^{N_i}$ and there exists a component $B^i$ of $D\sm \{a^i\}$ such
that $a^i$ is weakly repelling for $f^{N_i}$ in $B^i$. Set
$N=N_1\cdot \dots \cdot N_k$. Then it follows from
Lemma~\ref{wkrppow} that each $a^i$ is fixed for $f^N$ and weakly
repelling for $f^N$ in $B^i$. Observe that, as we showed above, the
endpoints of $D$ which are fixed under $f^N$ are in fact weakly
repelling for $f^N$. Without loss of generality we may use $f$ for
$f^N$ in the rest of the proof.

Let $A=\cup^k_{i=1} a^i$ and let $B$ be a component of $D\sm A$. Then $\ol{B}$
is a subdendrite of $D$ to which the above tools apply: $D$ plays the role of
$D_2$, $\ol{B}$ plays the role of $D_1$, and $E$ is exactly the boundary
$\bd B$ of $B$ (by the construction $\bd B\subset A$). Suppose that each
point $a\in \bd B$ is weakly repelling \emph{in $B$}. Then all fixed points of
$f$ in $B$ are endpoints of $B$, and all of them are weakly repelling for $f$.
Thus, by Lemma~\ref{fxctpt} there exists a fixed cutpoint in $B$, a
contradiction. Hence for some $a\in \bd B$ we have that $a$ is \emph{not}
weakly repelling in $\ol{B}\sm\{a\}$. By the assumption there exists a component, say,
$C'$, of $D\sm \{a\}$ disjoint from $B$ such that $a$ \emph{is} weakly repelling in $C'$. Let $C$
be the component of $D\sm A$ non-disjoint from $C'$ with $a\in \bd C$.

We can now apply the same argument to $C$. If all boundary points of $C$ are
weakly repelling for $f$ \emph{in $C$}, then by Lemma~\ref{fxctpt} $C$ will
contain a fixed cutpoint, a contradiction. Hence there exists a point $d\in A$
such that $d$ is \emph{not} weakly repelling for $f$ in $C$ and a component $F$
of $D\sm A$ whose closure meets $\ol{C}$ at $d$, and $d$ \emph{is} weakly
repelling in $F$. Note that $\ol{B}\cap \ol{F}=\0$. Clearly, after finitely many steps this process will have to
end (recall, that $D$  is a dendrite), ultimately leading to a component $Z$ of
$D\sm A$ such that all fixed points of $f$ in $\ol{Z}$ are endpoints of $\ol{Z}$ at which
$f$ is weakly repelling. Again, Lemma~\ref{fxctpt} applies to $\ol{Z}$ and there
exists a fixed cutpoint in $\ol{Z}$, a contradiction.
\end{proof}

An important application of Theorem~\ref{infprpt} is to dendritic
\emph{topological Julia sets}. They can be defined as follows.
Consider an equivalence relation $\sim$ on the unit circle
$\ucirc\subset\C$. Equivalence classes of $\sim$ will be called
\emph{($\sim$-)classes} and will be denoted by boldface letters. A
$\sim$-class consisting of two points is called a \emph{leaf}
\index{leaf}; a class consisting of at least three points is called
a \emph{gap} \index{gap}.
Fix an integer $d>1$ and define the map $\si_d:\uc\to\uc$ by
$\si_d(z)=z^d$, where $z$ is a complex number with $|z|=1$. Then the
equivalence $\sim$ is said to be a \emph{($d$-)invariant
lamination}\index{lamination!invariant} (this is more restrictive
than Thurston's definition in \cite{thur85}) if:

\noindent (E1) $\sim$ is \emph{closed}: the graph of $\sim$ is a closed subset of
$\ucirc \times \ucirc$;

\noindent (E2) $\sim$ defines a \emph{lamination}\index{lamination}, i.e., it is \emph{unlinked}:\index{unlinked}
if $\g_1$ and $\g_2$ are distinct $\sim$-classes, then their convex hulls
$\ch(\g_1), \ch(\g_2)$ in the unit disk $\disk$ are disjoint,

\noindent (D1) $\sim$ is \emph{forward invariant}: for a $\sim$-class $\g$, the set
$\si_d(\g)$ is a $\sim$-class too

\noindent which implies that

\noindent (D2) $\sim$ is \emph{backward invariant}: for a $\sim$-class $\g$, its
preimage $\si_d^{-1}(\g)=\{x\in \ucirc: \si_d(x)\in \g\}$ is a union of
$\sim$-classes;

\noindent (D3) for any gap $\g$, the map $\si_d|_{\g}: \g\to \si_d(\g)$ is a
\emph{map with positive orientation}, i.e., for every connected
component $(s, t)$ of $\ucirc\setminus \g$ the arc $(\si_d(s), \si_d(t))$ is a
connected component of $\ucirc\setminus \si_d(\g)$.

The lamination in which all points of $\uc$ are equivalent is said to be
\emph{degenerate}.\index{lamination!degenerate} It is easy to see that if a forward invariant lamination
$\sim$ has a $\sim$-class with non-empty interior then $\sim$ is degenerate. Hence
equivalence classes of any non-degenerate forward invariant lamination are
totally disconnected.

Let $\sim$ define an invariant lamination. A $\sim$-class $\g$ is periodic if $\si^n(\g)=\g$ for some $n\ge 1$.
Let $p: \ucirc\to J_\sim=\ucirc/\sim$ be the quotient map of $\ucirc$ onto its
quotient space $J_\sim$.  We can extend the equivalence relation $\sim$
to an equivalence relation $\approx$ of the entire plane by defining $x\approx y$ if either $x$ and $y$ are contained in the
convex hull of one equivalence class of $\sim$, or $x=y$. Then the quotient map $m:\C\to\C/\approx$ is a monotone map
whose point inverses are convex continua or points. Note that $p(\ucirc)=\ucirc/\sim=m(\ol{\disk})=\ol{\disk}/\approx$.
Let $f_\sim:J_\sim \to J_\sim$ be the  map induced by
$\sigma_d$. We call $J_\sim$ a \emph{topological Julia set} \index{topological!Julia set} and the induced map
$f_\sim$ a \emph{topological polynomial}.\index{topological!polynomial}
Recall that a \emph{branched covering map $f:X\to Y$}\index{branched covering map} is a finite-to-one
and open map for which  there exists a finite set $F\subset Y$ such that $f|_{X\setminus f^{-1}(F)}$
is a covering map.  Note that $f_\sim$ is a branched
covering map, and in particular, $f_\sim$ has finitely many critical points
(i.e., points where $f$ is not locally one-to-one). It
is easy to see that if $\g$ is a $\sim$-class then $\val_{J_\sim}(p(\g))=|\g|$
where by $|A|$ we denote the cardinality of a set $A$.

\begin{thm}\label{lamwkrp}
Suppose that the topological Julia set $J_\sim$ is a dendrite and
$f_\sim:J_\sim\to J_\sim$ is a topological polynomial. Then all periodic points
of $f_\sim$ are weakly repelling and $f_\sim$ has infinitely many periodic
cutpoints.
\end{thm}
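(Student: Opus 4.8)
The plan is to prove that every periodic point of $f_\sim$ is weakly repelling; Theorem~\ref{infprpt}, applied to $D=J_\sim$ (a dendrite) and the continuous map $f_\sim$, then immediately yields infinitely many periodic cutpoints. So everything reduces to the weak-repelling statement.

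Fix a periodic point $x$ of $f_\sim$ of period $n$ and let $\g=p^{-1}(x)$ be the corresponding $\sim$-class. From $f_\sim\circ p=p\circ\si_d$ one gets $\si_d^n(\g)=\g$, and since $J_\sim$ is a dendrite, $\g$ is totally disconnected and, by the standard structure theory of topological polynomials on dendrites, $\val_{J_\sim}(x)=|\g|$ is finite. Hence $\si_d^n$ permutes the finitely many components (``gaps at $\g$'') of $\ucirc\setminus\g$, respecting by (D3) the cyclic order of their endpoints; replacing $n$ by a suitable multiple $m$, I obtain a component $I=(s,t)$ of $\ucirc\setminus\g$ with $\si_d^m(I)=I$ and, by (D3), $\si_d^m(s)=s$. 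Let $B$ be the component of $J_\sim\setminus\{x\}$ containing the connected set $p(I)$; note $x\in\ol B$, since $s\in\ol I$ and $p(s)=x$. I will use two standard consequences of the non-crossing condition (E2): if $w\in I$ has nondegenerate class $\g_w=p^{-1}(p(w))$ and two points $w',w''\in\ucirc$ lie in different components of $\ucirc\setminus\g_w$, then $p(w)$ separates $p(w')$ from $p(w'')$ in $J_\sim$; and if $p$ is injective on a subarc of $\ucirc$, then it maps that subarc homeomorphically onto an arc in $J_\sim$, whose interior points are cutpoints of $J_\sim$.

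The engine is that $\si_d^m$ is uniformly expanding with factor $d^m>1$: near the fixed point $s$ it is, in the universal cover, the affine map $u\mapsto s+d^m(u-s)$, so for every $u\in I$ sufficiently close to $s$ one has $s\prec u\prec\si_d^m(u)$ along $\ol I$, with $\si_d^m(u)\in I$. Now distinguish two cases. \emph{Case 1:} there is $\e>0$ such that every point of $(s,s+\e)\cap I$ is a singleton class. Then $p$ is injective on $(s,s+\e)$, hence maps it onto an arc $A$ with $x\in\ol A$; for $u\in(s,s+\e)$ near $s$ both $p(u)$ and $f_\sim^m(p(u))=p(\si_d^m(u))$ lie on $A$ with $p(u)$ strictly between $x$ and $p(\si_d^m(u))$, so $p(u)\in B$ is a cutpoint of $J_\sim$ separating $x$ from $f_\sim^m(p(u))$; letting $u\to s$ produces such cutpoints arbitrarily close to $x$ in $B$. \emph{Case 2:} nondegenerate classes $\g_k$ have points of $I$ accumulating at $s$; let $a_k$ be the point of $\g_k\cap I$ nearest $s$, so $a_k\to s$ and $v_k:=p(a_k)\in B$ is a cutpoint with $v_k\to x$. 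For large $k$, expansion gives $\si_d^m(a_k)$ strictly beyond $a_k$ along $I$; if $\si_d^m(a_k)\in\g_k$ then $v_k$ is an $f_\sim^m$-fixed cutpoint, and otherwise the component of $\ucirc\setminus\g_k$ containing $s$ is bounded on the $I$-side by $a_k$, so $s$ and $\si_d^m(a_k)$ lie in different components of $\ucirc\setminus\g_k$ and $v_k$ separates $x$ from $f_\sim^m(v_k)$. In either case, arbitrarily close to $x$ in $B$ there are $f_\sim^m$-fixed cutpoints of $J_\sim$ or points separating $x$ from their $f_\sim^m$-image, so by Definition~\ref{wkrep} $x$ is a weakly repelling fixed point of $f_\sim^m$ in $B$, hence a weakly repelling periodic point. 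Applying Theorem~\ref{infprpt} finishes the proof.

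The main obstacle is not the expansion itself but the passage between the circle and the dendrite: verifying that $p(I)$ lies in a single arm at $x$, that the ``push away from $s$'' produced by $\si_d^m$ translates into genuine separation in $J_\sim$, and handling the endpoint-versus-cutpoint alternative (the reason for the case split). The finiteness of $\val_{J_\sim}(x)$ — equivalently, the existence of a periodic gap of $\g$ — is the one genuinely structural input that must be extracted from the theory of invariant laminations; the rest is bookkeeping.
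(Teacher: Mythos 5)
Your overall strategy is the paper's: reduce everything to showing that periodic points are weakly repelling and then invoke Theorem~\ref{infprpt}, using the expansion of $\si_d$ near a periodic class. But the execution has a genuine gap at the crucial separation step. In your Case 2 you take an arbitrary sequence of nondegenerate classes $\g_k\subset I$ accumulating at $s$, let $a_k$ be the point of $\g_k$ nearest $s$, and claim that if $\si_d^m(a_k)\notin\g_k$ then $s$ and $\si_d^m(a_k)$ lie in different components of $\ucirc\sm\g_k$. That dichotomy is false: since $\g_k\subset I$, the component of $\ucirc\sm\g_k$ containing $s$ is not just the arc $(s,a_k)$ --- it wraps around through $\ucirc\sm I$ and re-enters $I$ from the $t$ side, up to the point of $\g_k$ farthest from $s$. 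If $\g_k$ is short compared with its distance to $s$ (far endpoint within a factor $d^m$ of the near endpoint), then $\si_d^m(a_k)$ overshoots the whole class and lands in that same component, so $v_k=p(a_k)$ is neither $f_\sim^m$-fixed nor separates $x$ from $f_\sim^m(v_k)$, and your argument yields nothing for such $k$; nothing in your setup rules out that \emph{every} nondegenerate class near $s$ is of this kind. (Note also that your Case 1 is vacuous: a singleton class has $\val_{J_\sim}=1$, i.e.\ gives an endpoint of $J_\sim$, while interior points of the arc $p([s,s+\e'])$ would be cutpoints; so all the weight really falls on the flawed Case 2.) A second, smaller issue is that you assert finiteness of the periodic class $\g$ (hence of $\val_{J_\sim}(x)$) as ``standard structure theory''; inside this paper that is not available, and the existence of a periodic complementary arc is exactly what must be proved.

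The paper's proof repairs both points. The existence of a periodic chord in $\bd\,\ch(\g)$ (equivalently, a periodic complementary arc $I=(s,t)$, with \emph{both} endpoints fixed under a power of $\si_d$) is obtained by a pigeonhole/length argument: only finitely many boundary chords have length at least $1/(2(d+1))$, and shorter ones have their length multiplied by $d$, so some boundary chord is periodic --- no finiteness of $\g$ is needed. Then, instead of arbitrary classes near $s$, one uses valence-$2$ points $y_k\to x$ on a dendrite arc $[x,y]$ with $y\in p(I)$: their classes are leaves which separate $\g$ (hence both $s$ and $t$) from the class of $y$, so their two endpoints converge to $s$ and to $t$ respectively. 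Because the expansion fixes \emph{both} $s$ and $t$, it pushes both endpoints of such a leaf strictly into $I$, so the image class lies on the far side of the leaf from $\g$, and $y_k$ separates $x$ from $f_\sim^{m}(y_k)$. This ``straddling'' structure is exactly what your one-sided argument at $s$ lacks, and it is the missing ingredient needed to make your Case 2 work.
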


\begin{proof}
Suppose that $x$ is an $f_\sim$-fixed point and set $\g=p^{-1}(x)$. Then
$\si_d(\g)=\g$. Suppose first, that $x$ is an endpoint of $J_\sim$. Then $\g$ is
a singleton. Choose $y\ne x\in J_\sim$. Then the unique arc $[x, y]\subset
J_\sim$ contains points $y_k\to x$ of valence $2$ because there are no more
than countably many vertices of $J_\sim$ (see Theorem 10.23 in \cite{nadl92}).
It follows that $\sim$-classes $p^{-1}(y_k)$ are leaves separating $\g$ from
the rest of the circle and repelled from $\g$ under the action of $\si_d$
which is expanding. Hence
$f_\sim(y_i)$ is separated from $x$ by $y_i$ and so $x$ is weakly repelling.

Suppose that $x$ is not an endpoint. Choose a \emph{very small}
connected neighborhood $U$ of $x$. It is easy to see that each
component $A$ of $U\sm \{x\}$ corresponds to a single non-degenerate
chord $\ell_A$ in the boundary of the Euclidean convex hull,
$\ch(\g)=G$, of $\g$. Recall that $\uc=\reals\sm \mathbb{Z}$ and
that the endpoints $a_A$ and $b_A$ of $\ell_A$ are points in $\uc$.
Denote by $\si_d(\ell_A)$ the chord with endpoints $\si_d(a_A)$ and
$\si_d(b_A)$ and by $|\ell_A|=\min\{|a_A-b_A|, 1-|a_A-b_A|\}$, the
\emph{length of $\ell_A$}. Since $f_\sim$ is a branched covering
map, for each component $A$ of $U\sm \{x\}$ there exists a unique
component $B=h(A)$ of $U\sm \{x\}$ such that $f_\sim(A)\cap B\ne
\0$. This defines a map $h$ from the set $\A$ of all components of
$U\sm \{x\}$ to itself. It follows that for each chord
$\ell_A\subset\bd G$, $\si_d(\ell_A)$ is a non-degenerate chord in
$\bd G$.

Suppose that there exist $\ell_A\subset\bd G$ and $n>0$ such that
$\si_d^n(\ell_A)=\ell_A$. Then it follows that the endpoints of $\ell_A$ are
fixed under $\si_d^{2n}$. Connect $x$ to a point $y\in A$ with the arc $[x,
y]$, and choose, as in the first paragraph, a sequence of points $y_k\in [x,
y], y_k\to x$ of valence $2$. Then again by the expanding properties of
$\si_d^{2n}$ it follows that $f_\sim(y_i)$ is separated from $x$ by $y_i$ and so
$x$ is weakly repelling (for $f^{2n}_\sim$ in $A$).

It remains to show that there must exist a component $A$ of
$U\sm\{x\}$ with $\si_d^n(\ell_A)=\ell_A$ for some $n>0$. Clearly
$\bd G$ can contain at most finitely many chords $\ell_A$ such that
its length $\Le(\ell_A)\ge 1/(2(d+1))$. If $\Le(\ell)<1/(2(d+1))$,
then $\Le(\si_d(\ell))=d \cdot \Le(\ell)$  (i.e. $\si_d$ expands the
length of small leaves by the factor $d$).

Since the family of chords in the boundary of $G$ is forward
invariant and for each chord $\ell_A$ with $\Le(\ell_A)<1/(2(d+1))$,
$\Le(\si_d(\ell_A))=d \cdot \Le(\ell_A)$, such a periodic chord must
exist (since if this is not the case there must exist an infinite
number of distinct leaves in the boundary of $G$ of length bigger
than $1/(2(d+1))$, a contradiction.

Hence all periodic points of $f_\sim$ are weakly repelling and by
Theorem~\ref{infprpt} $f_\sim$ has infinitely many periodic cutpoints.
\end{proof}

\section[Non-invariant continua and positively oriented maps]
{Non-invariant continua and positively oriented maps of the plane}\label{sec:fxpt-noni}

In this subsection we will extend Theorem~\ref{fixpoint} and obtain
a general fixed point theorem which shows that if a non-separating
plane continuum, not necessarily invariant, maps in an appropriate
way, then it contains a fixed point. However we begin with
Lemma~\ref{posvar} which gives a sufficient condition for the
non-negativity of the variation of an arc.

\begin{lem}\label{posvar}
Let $f:\C\to\C$ be positively oriented, $X$ a continuum and
$C=[a,b]$ a bumping arc of $X$ such that $f(a), f(b)\in X$ and
$f(C)\cap C=\0$. Let $v\in C\sm X$, and let $J_v$ be a junction at
$v\in C$ defined as in Definition~\ref{vararc}. If there exists a
continuum $K$ disjoint from $J_v$ such that\, $C$ is a bumping arc of
$K$ and $f(K)\cap J_v\subset \{v\}$ (e.g., $K$ can be a subcontinuum
of $X$ with $a, b\in K$), then $\var(f,C)\geq 0$.
\end{lem}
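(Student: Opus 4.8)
The plan is to mimic the proof of Corollary~\ref{posvaro}, the only role of positive orientation there being to pin down the sign of a winding number. I would first record the easy consequences of the hypotheses. Since $v\in J_v$ and $K\cap J_v=\0$ we have $v\notin K$, so $a\ne v\ne b$; hence $f(a),f(b)\in f(K)$, and since $f(a),f(b)\in X$ while $v\notin X$ and $f(K)\cap J_v\subseteq\{v\}$, it follows that $f(a),f(b)\notin J_v$. Also $J_v\cap(K\cup C)=\{v\}$, which is the analogue of the hypothesis of Lemma~\ref{closed}. Finally, because $C$ is a bumping arc of $K$ it lies on a bumping simple closed curve $S'$ for $K$ with $T(K)\subseteq T(S')$, so $v\in C\subseteq S'=\bd T(S')$ and $v\notin K\supseteq\bd T(K)$ force $v\notin T(K)$; since a ray emanating from $v$ cannot cross $\bd T(K)\subseteq K$, this gives $J_v\cap T(K)=\0$.

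The main step — playing the role of Lemma~\ref{closed} — is to complete $C$ to a simple closed curve $S=I\cup C$, with the arc $I$ running close to $K$ and disjoint from $J_v$, such that $T(K)\subseteq T(S)$ and $f(I)\cap J_v=\0$. Disjointness of $I$ from $J_v$ is available since $\mathrm{dist}(K,J_v)>0$ and $a,b\notin J_v$; that $f(I)\cap J_v=\0$ follows by uniform continuity of $f$ from $f(K)\cap J_v\subseteq\{v\}$, with the one possible point of contact $v\in f(K)$ removed by a small perturbation of $I$ or, equivalently, of the junction (permitted by the stability of variation under junction isotopies recorded after Proposition~\ref{straightjunction}). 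I expect this construction to be the only genuine obstacle, as it is the sole place the geometric hypotheses on $K$ and $J_v$ enter.

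Given such an $S$: $f(C)\cap C=\0$ and $v\in C$ give $v\notin f(C)$, while $f(I)\cap J_v=\0$ gives $v\notin f(I)$, so $v\notin f(S)$ and $\win(f,S,v)$ is defined. Since $f(S\setminus(a,b))=f(I)\cup\{f(a),f(b)\}$ is disjoint from $J_v$, the remark following Definition~\ref{vararc} yields $\var(f,C)=\var(f,C,S)=\win(f,S,v)$, the crossing count being determined by $f|_C$ and $J_v$ alone. Finally $\win(f,S,v)\ge 0$: if $v\notin f(T(S))$ then, $T(S)$ being a closed disk by the Schoenflies theorem, $f|_S$ extends over a disk into $\complex\setminus\{v\}$ and $\win(f,S,v)=0$; if $v\in f(T(S))$, write $v=f(p)$ with $p\in T(S)$, so that $v\notin f(S)$ makes $p\in T(S)\setminus f^{-1}(f(S))$ and the positive orientation of $f$ gives $\win(f,S,v)=\degree(f_p)>0$. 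In either case $\var(f,C)=\win(f,S,v)\ge 0$, as required.
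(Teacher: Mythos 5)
Your proposal is correct and follows essentially the same route as the paper: complete $C$ by an arc $I$ running close to $K$ to a simple closed curve $S$ with $f(I)\cap J_v=\0$, identify $\var(f,C)$ with $\win(f,S,v)$ via the remark after Definition~\ref{vararc}, use positive orientation to get non-negativity, and dispose of the case $f(K)\cap J_v=\{v\}$ by a small perturbation of the junction (or of $I$). The only difference is that you spell out the sign argument (the dichotomy $v\notin f(T(S))$ versus $v=f(p)$ with $p\in T(S)\setminus f^{-1}(f(S))$) which the paper leaves as ``easy to see.''
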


\begin{proof}
Consider two cases. First, let $f(K)\cap J_v=\0$. Choose an arc $I$
very close to $K$ so that $S=I\cup C$ is a bumping simple closed curve around
$K$ and by continuity $f(I)\cap J_v=\0$. Then $v\nin f(S)$. Moreover,
since $f(I)$ is disjoint from $J_v$, 
it is easy to see that $\var(f,C)=\text{win}(f,S,v)\geq 0$ (recall that $f$ is
positively oriented). Suppose now that $f(K)\cap J_v=\{v\}$. Then we can
perturb the junction $J_v$ slightly in a small neighborhood of $v$, obtaining a
new junction $J_d$ such that intersections of $f(C)$ with $J_v$ and $J_d$ are
the same (and, hence, yield the same variation) and $f(K)\cap J_d=\0$. Now
proceed as in the first case.
\end{proof}

Let us recall the notion of (strongly) scrambling the boundary of a
planar continuum introduced in Definition~\ref{scracon} (it extends
the notion of scrambling the boundary from maps of dendrites to maps
of the plane).

\setcounter{chapter}{5}\setcounter{section}{4}\setcounter{thm}{0}

\begin{defn}
Suppose that $f:\C\to\C$ is a positively oriented map and $X\subset \C$ is a
non-separating continuum. Suppose that there exist $n\ge 0$ disjoint
non-separating continua $Z_i$ such that the following properties hold:

\begin{enumerate}

\item $f(X)\sm X\subset \cup_i Z_i$;

\item for all $i$,  $Z_i\cap X=K_i$ is a non-separating continuum;


\item for all $i$, $f(K_i)\cap [Z_i\sm K_i]=\0$.

\end{enumerate}

\noindent Then the map $f$ is said to \emph{scramble the boundary (of $X$}). If
instead of (3) we have

\begin{enumerate}

\item[(3a)] for all $i$, either $f(K_i)\subset K_i$, or
$f(K_i)\cap Z_i=\0$

\end{enumerate}

\noindent then we say that $f$  \emph{strongly scrambles the boundary
(of $X$)}; clearly, if $f$ strongly scrambles the boundary of $X$,
then it scrambles the boundary of $X$. In either case, the continua
$K_i$ are called \emph{exit continua (of $X$)}.
\end{defn}

\setcounter{chapter}{7}\setcounter{section}{3}\setcounter{thm}{1}

We will always use the same notation (for $X$, $Z_i$ and $K_i$)
introduced in Definition~\ref{scracon} unless explicitly stated
otherwise. Let us make a few remarks. First, even though we use the
notion only for  positively oriented maps $f$, the definitions can be
given for all continuous functions. Also, observe, that in the
situation of Definition~\ref{scracon} if $X$ is invariant then $f$
automatically strongly scrambles the boundary because the set of exit
continua can be taken to be empty. We will also agree that the choice
of the sets $Z_i$ is optimal in the sense that if $(f(X)\sm X)\cap
Z_i=\0$ for some $i$, then the set $Z_i$ will be removed from the
list. In particular, all continua $Z_i$ contain points from $f(X)\sm
X$ and hence all continua $K_i$ have points from $\ol{f(X)\sm X}\cap
X$.

By Remark~\ref{zxgrow} $X\cup (\bigcup Z_i)$ is a non-separating continuum.
Suppose that we are in the situation of the previous section, $D_1\subset D_2$
are dendrites, $E=\ol{D_2\sm D_1}\cap D_1=\{z_1, \dots, z_l\}$ is finite, and
$f:D_1\to D_2$ scrambles the boundary in the sense of the previous section. For
each $z_i$ consider the union of all components of $D_2\sm D_1$ whose closures
contain $z_i$, and denote by $Z_i$ the closure of their union. In other words,
$Z_i$ is the closed connected piece of $D_2$ which ``grows'' out of $D_1$ at
$z_i$. Then $Z_i\cap D_1=\{z_i\}$, each $Z_i$ is a dendrite itself, and $f$
strongly scrambles the boundary in the sense of the new definition too. This
explains why we use similar terminology in both cases.

From now on we fix a positively oriented map $f$. Even though some
of the main applications of the results are to polynomial maps, this
generality is well justified because in some arguments (e.g., when
dealing with parabolic points) we have to locally perturb our map to
make sure that the \emph{local index} at a parabolic fixed  point
equals $1$, and this leads to the loss of analytic properties of the
map (see Lemma~\ref{pararepel}). Let us now prove the following
technical lemma.

\begin{lem}\label{posvar+}
Suppose that $f$ is positively oriented, scrambles the boundary of
$X$, $Q$ is a bumping arc of $X$ such that its endpoints map back
into $X$ and $f(Q)\cap Q=\0$. Then $\var(f, Q)\ge 0$.
\end{lem}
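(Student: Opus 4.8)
The plan is to reduce Lemma~\ref{posvar+} to Lemma~\ref{posvar} by producing, for the bumping arc $Q$, a continuum $K$ playing the role required there: a continuum disjoint from a suitable junction $J_v$, having $Q$ as a bumping arc, and with $f(K)\cap J_v\subset\{v\}$. The natural candidate is not $X$ itself (since we are not assuming $f(X)\subset X$, so $f(X)$ may cross any junction at $v\in Q\sm X$), but rather a subcontinuum of $X$ together with, if necessary, the exit continua $Z_i$ that $f$ may push across the junction. More precisely, first I would fix $v\in Q\sm X$ and a junction $J_v$ with $J_v\cap(X\cup Q)=\{v\}$, as in Definition~\ref{vararc}, chosen generically so that $J_v$ avoids the (finitely many) continua $Z_i$ except possibly touching $X$ at points far from $v$; since the $Z_i$ are pairwise disjoint non-separating continua meeting $X$ in the continua $K_i$, and $J_v$ emanates from a point of $Q$ outside $X$, we can route the three legs of $J_v$ through the unbounded complementary region of $T(X\cup\bigcup Z_i)$ so that $J_v\cap(X\cup\bigcup Z_i)=\{v\}$ if $v\notin X$, or at worst $J_v$ meets $X\cup\bigcup Z_i$ only near $v$.

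Next I would analyze where $f(K_i)$ can go. Because $f$ scrambles the boundary of $X$, condition (3) of Definition~\ref{scracon} gives $f(K_i)\cap[Z_i\sm K_i]=\0$, and condition (1) gives $f(X)\sm X\subset\bigcup Z_i$. The key point is that $f(X)\subset X\cup\bigcup Z_i$, so $f(X)$ is contained in the non-separating continuum $Y:=X\cup\bigcup Z_i$ (non-separating by Remark~\ref{zxgrow}). Thus the hypothesis ``$f(X)\subset T(X)$'' of the invariant theory is replaced by ``$f(X)\subset T(Y)=Y$'', with $Q$ still a bumping arc of $X$ but now also realizable, after a small adjustment of its interior, as a bumping arc of $Y$ with the same endpoints (its endpoints lie in $X\subset Y$, and $f(Q)\cap Q=\0$ is preserved). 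Then I would apply Lemma~\ref{posvar} with $K=Y$: indeed $Q$ is a bumping arc of $Y$, $Y$ is disjoint from the legs of $J_v$ except at $v$, and $f(Y)\subset Y$ so $f(Y)\cap J_v\subset Y\cap J_v\subset\{v\}$. Lemma~\ref{posvar} then yields $\var(f,Q)\ge 0$. The only subtlety is that Lemma~\ref{posvar} is stated for a bumping arc of $K$ with $f(K)\cap J_v\subset\{v\}$, which is exactly what we have arranged; and variation of $f$ on $Q$ computed with respect to the simple closed curve completing $Q$ around $Y$ agrees with the variation computed around $X$, by Proposition~\ref{varcross} (independence of the completing curve), since $\ol Q\cap f(\ol Q)=\0$.

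The main obstacle I anticipate is the junction routing: one must verify carefully that the junction $J_v$ based at a point $v\in Q\sm X$ can be chosen to miss the entire continuum $Y=X\cup\bigcup Z_i$ (except at $v$), rather than just $X$. Since the $Z_i$ attach to $X$ along the continua $K_i$ and there are only finitely many of them, and since $Q$ is a bumping arc of $X$ so that $Q\sm X$ lies in the complement of $T(X)$, one can first slightly enlarge $Q$ to a bumping arc $Q'$ of $Y$ (pushing the interior of $Q$ outward past any $Z_i$ it might cross, keeping $f(Q')\cap Q'=\0$ by continuity and the hypothesis $f(Q)\cap Q=\0$, shrinking if necessary), and then choose $v\in Q'\sm Y$ with a junction $J_v$ meeting $Y$ only at $v$; Lemma~\ref{posvar} applied to $Q'$ and $K=Y$ gives $\var(f,Q')\ge 0$, and $\var(f,Q')=\var(f,Q)$ because $Q$ and $Q'$ have the same endpoints in $X$, bound the same region up to sets disjoint from the relevant junction, and variation is a homotopy invariant in the complement of $J_v$ (see the remarks following Proposition~\ref{straightjunction} and Proposition~\ref{crossunder}). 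Once this topological setup is in place, the inequality is immediate from the positively oriented hypothesis via the winding-number computation in Lemma~\ref{posvar}.
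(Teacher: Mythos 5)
There is a genuine gap, and it occurs exactly where the scrambling hypothesis has to do real work. Your reduction to Lemma~\ref{posvar} with $K=Y:=X\cup\bigcup Z_i$ rests on the claim ``$f(Y)\subset Y$, so $f(Y)\cap J_v\subset\{v\}$.'' But scrambling the boundary only gives $f(X)\sm X\subset \bigcup Z_i$, i.e.\ $f(X)\subset Y$, together with $f(K_i)\cap[Z_i\sm K_i]=\0$; nothing whatsoever is assumed about $f(Z_i\sm X)$, so $f(Y)\subset Y$ is unjustified and Lemma~\ref{posvar} cannot be applied with $K=Y$. (Had you kept $K=X$, the same junction $J_v$ with $J_v\cap Y=\{v\}$ and $v\in Q\sm \bigcup Z_i$ would give $f(X)\cap J_v\subset\{v\}$ from $f(X)\subset Y$ alone -- this is in fact the paper's argument when $Q\sm\bigcup Z_i\ne\0$.)

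The second, more serious, problem is the case $Q\sm\bigcup Z_i=\0$, i.e.\ $Q\subset Z_i$ for some $i$, which your routing/deformation scheme does not handle. Here every point of $Q$ lies in $Z_i\subset Y$, and the endpoints of $Q$ lie in $K_i=Z_i\cap X$ and may be interior to $T(Y)$, hence not accessible from $\C\sm Y$; so there need not exist any bumping arc $Q'$ of $Y$ with the same endpoints, and no junction from a point of $Q$ meeting $Y$ only at its vertex. Moreover, even when such a $Q'$ exists, neither $\var(f,Q')=\var(f,Q)$ nor $f(Q')\cap Q'=\0$ follows from what you cite: Proposition~\ref{varcross} keeps the arc fixed and only varies the completing curve, Proposition~\ref{crossunder} requires $f$ to be fixed point free on $T(X\cup Q)$ (no such hypothesis is available in Lemma~\ref{posvar+}) plus a separation condition your outward push violates, and ``pushing past $Z_i$'' is not a small perturbation, so continuity gives nothing. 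The paper's proof of this case is different in an essential way: since $Q\cap X\subset K_i$, both endpoints of $Q$ lie in $K_i$, and one applies Lemma~\ref{posvar} with $K=K_i$, choosing $J_v$ to avoid $X$ and all $Z_j$ with $j\ne i$ (it is allowed to meet $Z_i$); then condition (3) of Definition~\ref{scracon}, $f(K_i)\cap[Z_i\sm K_i]=\0$, combined with $f(K_i)\subset f(X)\subset X\cup\bigcup_j Z_j$, yields $f(K_i)\cap J_v\subset\{v\}$. Your proposal never uses condition (3) in this decisive way, and without it the lemma does not follow.
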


\begin{proof}
Suppose first that $Q\sm \bigcup Z_i\ne \0$ and choose $v\in Q\sm
\bigcup Z_i$. Since $v\in Q\sm \bigcup Z_i$ and $X\cup (\bigcup
Z_i)$ is non-separating, there exists a junction $J_v$, with $v\in
Q$, such that $J_v\cap[X\cup Q\cup (\bigcup Z_i)]= \{v\}$ and,
hence, $J_v\cap f(X)\subset\{v\}$. Now the desired result follows
from Lemma~\ref{posvar}.

Suppose now that $Q\sm \bigcup Z_i=\0$. Then $Q\subset Z_i$ for some
$i$ and so $Q\cap X\subset K_i$. In particular, both endpoints of
$Q$ belong to $K_i$. Choose a point $v\in Q$. Then again there is a
junction connecting $v$ and infinity outside $X$ (except possibly
for $v$). Since all sets $Z_j, j\ne i$ are positively distant from
$v$ and $X\cup (\bigcup_{i\ne j} Z_i)$ is non-separating, the
junction $J_v$ can be chosen to avoid all sets $Z_j, j\ne i$. Now,
by part (3) of Definition~\ref{scracon}, $f(K_i)\cap J_v\subset
\{v\}$, hence by Lemma~\ref{posvar} $\var(f,Q)\ge 0$.
\end{proof}

Lemma~\ref{posvar+} is applied in Theorem~\ref{fixpt} in which we
show that a map which strongly scrambles the boundary has fixed
points. In fact, Lemma~\ref{posvar+} is a major technical tool in
our other results too. Indeed, suppose that a positively oriented
map $f$ scrambles the boundary of $X$. If we can construct a bumping
simple closed curve $S$ around $X$ which has a partition into
bumping arcs (\emph{links of $S$}) whose endpoints map into $X$ (or
at least into $T(S)$) and whose images are disjoint from themselves,
then Lemma~\ref{posvar+} would imply that the variation of $S$ is
non-negative. By Theorem~\ref{I=V+1} this would imply that the index
of $S$ is positive. Hence by Theorem~\ref{fpthm} there are fixed
points in $T(S)$. Choosing $S$ to be sufficiently tight around $X$
we see that there are fixed points in $X$. Thus, the construction of
a tight bumping simple closed curve $S$ with a partition satisfying
the above listed properties becomes a major task.

For the sake of convenience we now sketch the proof of Theorem~\ref{fixpt}
which allows us to emphasize the main ideas rather than details. The main steps
in constructing $S$ are as follows. First we assume by way of contradiction
that $f$ has no fixed points in $X$. By Theorem~\ref{fixpoint} then
$f(X)\not\subset X$ and $f(K_i)\not\subset K_i$ for any $i$. By the definition
of strong scrambling then $f(K_i)$ is ``far away'' from $Z_i$ for any $i$.
Choose a tight bumping simple closed curve $S$ around $X$ with very small
links. We need to construct a partition of $S$ into bumping arcs whose
endpoints map into $X$ (or at least into $T(S)$) and whose images are disjoint
from themselves. Since there are no fixed points in $X$, we may assume that all
links of $S$ move off themselves. However some of them may have endpoint(s)
mapping outside $X$ which prevents the corresponding partition from being the one
we are looking for. So, we enlarge these links by consecutive concatenating
them to each other until the images of the endpoints of these concatenations
are inside $X$ \emph{and these concatenations still map off themselves} (the
latter needs to be proven which is a big part of the proof of
Theorem~\ref{fixpt}).

The bumping simple closed curve $S$ then remains as before, but the partition
changes because we enlarge some links. Still, the construction shows that the
new partition is satisfactory, and since $S$ can be chosen arbitrarily tight,
this implies the existence of a fixed point in $X$ as explained before. Thus, a
new development is that we are able to construct a partition of $S$ which has
all the above listed necessary properties having possibly \emph{very long}
links.

To achieve the goal of replacing some links in $S$ by their concatenations we
consider the links with at least one endpoint mapped outside $X$ in detail
(indeed, Lemma~\ref{posvar+} already applies to all other links) and use the
fact that $f$ strongly scrambles the boundary. The idea is to consider
consecutive links of $S$ with endpoints mapped into $Z_i\sm X$. Their
concatenation is a connected piece of $S$ with endpoints (and a lot of other
points) belonging to $X$ and mapping into one $Z_i$. If we begin the
concatenation right before the images of links enter $Z_i\sm X$ and stop it
right after the images of the links exit $Z_i\sm X$ we will have one condition
of Lemma~\ref{posvar+} satisfied because the endpoints of the thus constructed
new ``big'' concatenation link $T$ of $S$ map into $X$.

We need to verify that $T$ moves off itself under $f$. This is easy to see for
the end-links of $T$: each end-link has the image ``crossing'' into $X$ from
$Z_i\sm X$, hence the images of end-links are close to $K_i$. However the set
$K_i$ is mapped ``far away'' from $Z_i$ by the definition of strong scrambling
and because none of the $K_j$'s is invariant by the assumption. This implies that
the end-links themselves must be far away from $K_i$ (and hence from $Z_i$). If
now we move from link to link inside $T$ we see that those links cannot
approach $Z_i$ too closely because if they do, they will have to ``be close to
$K_i$'', and their images will have to be close to the image of $K_i$ which is
far away from $Z_i$, a contradiction with the fact that all links in $T$ have
endpoints which map  into $Z_i\sm X$. In other words, the dynamics of $K_i$
prevents the new bigger links from getting even close to $Z_i$ under $f$ which
shows that $T$ moves off itself as desired (after all, the images of new bigger
links are close to the set $Z_i\sm X$).

Given a compact set $K$ denote by $B(K, \e)$ the open set of all points whose
distance to $K$ is less than $\e$. By $d(\cdot, \cdot)$ we denote the distance
between two points or sets.

\begin{thm} \label{fixpt}Suppose $f:\C\to\C$ is
 positively oriented, $X$ is a non-separating continuum and $f$ strongly scrambles
  the boundary of $X$. Then
$f$ has a fixed point in $X$.
\end{thm}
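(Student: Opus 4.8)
The plan is to argue by contradiction, following closely the strategy sketched in the paragraphs preceding the statement. Assume $f$ has no fixed point in $X$. By Theorem~\ref{fixpoint} we cannot have $f(X)\subset X$, so the collection of exit continua $K_i$ is nonempty; moreover no $K_i$ can be invariant (if $f(K_i)\subset K_i$ then, since $K_i$ is itself a non-separating continuum with $f(K_i)\subset T(K_i)=K_i$, Theorem~\ref{fixpoint} would give a fixed point in $K_i\subset X$). Hence by the strong scrambling hypothesis (3a) we get $f(K_i)\cap Z_i=\0$ for every $i$, so there is a single $\rho>0$ with $d(f(K_i),Z_i)\ge 3\rho$ for all $i$ (finitely many $i$, each $K_i$ compact). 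Fix also $\e_0$ so small that $f$ has no fixed point in $\ol{B(X,\e_0)}$ and $f(\ol{B(X,\e_0)})$ stays within, say, $\rho$ of $f(X)$; by uniform continuity choose $\e\ll\e_0$ so that $\dm(S)<\e \Rightarrow \dm(f(S))<\rho$.

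\textbf{Construction of the curve and the partition.} Next I would take a bumping simple closed curve $S$ for $X$ inside $B(X,\e)$ whose links are so short that each link $A$ satisfies $f(A)\cap A=\0$ (possible since there are no fixed points near $X$) and $\dm(f(A))<\rho$. Call a link \emph{bad} if one of its endpoints maps outside $X$; since $f(X)\sm X\subset\bigcup Z_i$ and links are tiny, a bad link has an endpoint mapping into some $Z_i\sm X$, and by continuity that link's image lies within $\rho$ of $Z_i$. Now group consecutive bad links associated to the \emph{same} $Z_i$: starting just before the images enter $Z_i\sm X$ and ending just after they exit, concatenate them into a single ``big link'' $T$. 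Its endpoints then map into $X$ (they are the transition points where the image crosses from $Z_i\sm X$ back into $X$, hence lie near $K_i$, hence in $X$ after a further tiny adjustment of $S$). The key claim is $f(T)\cap T=\0$: the two end-links of $T$ have images near $K_i$, so the end-links themselves must be within $\rho$ of $K_i$, hence (as $d(f(K_i),Z_i)\ge 3\rho$) their images are far from $Z_i$; and an interior link of $T$ that came $\rho$-close to $Z_i$ would be $O(\rho)$-close to $K_i$, forcing its image $O(\rho)$-close to $f(K_i)$, which is $\ge 3\rho$ from $Z_i$ — contradicting that its endpoints map into $Z_i\sm X$. So all of $T$ stays away from $Z_i$ while $f(T)$ stays near $Z_i$, giving $f(T)\cap T=\0$. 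After replacing each maximal bad block by such a $T$, every link of the new partition (the remaining short good links, and the big links $T$) has endpoints mapping into $X$ and image disjoint from itself.

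\textbf{Conclusion via variation and index.} With this partition in hand, Lemma~\ref{posvar+} applies to every link (good or big): each is a bumping arc of $X$ with endpoints mapping back into $X$ and image disjoint from itself, so its variation is $\ge 0$. By Theorem~\ref{I=V+1}, $\ind(f,S)=\var(f,S)+1=\sum(\text{link variations})+1\ge 1$. By Theorem~\ref{fpthm} (applicable since, shrinking $\e$, $f$ is fixed point free on $T(S)$ — use Corollary~\ref{bumpingscc}), $f$ has a fixed point in $T(S)\subset B(X,\e)$. Letting $\e\to0$ and using compactness of $X$ and continuity of $f$, we obtain a fixed point of $f$ in $X$, contradicting our assumption.

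\textbf{Main obstacle.} The delicate part is the construction and verification that the ``big links'' $T$ satisfy $f(T)\cap T=\0$: one must show the concatenation process terminates correctly (the image genuinely re-enters $X$), that the endpoints of $T$ can be arranged to land in $X$ rather than merely in $Z_i\sm X$, and — most importantly — that no interior link of $T$ drifts $\rho$-close to $Z_i$. This last point is exactly where strong scrambling (via the uniform gap $d(f(K_i),Z_i)\ge 3\rho$) is essential, and making the $\e$–$\rho$ bookkeeping uniform over all $i$ and all stages of concatenation is the technical heart of the argument. Everything else is a routine assembly of Lemma~\ref{posvar+}, Theorem~\ref{I=V+1}, and Theorem~\ref{fpthm}.
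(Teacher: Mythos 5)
Your strategy coincides with the paper's own proof: argue by contradiction, note via Theorem~\ref{fixpoint} that neither $X$ nor any $K_i$ can be invariant so that strong scrambling gives $f(K_i)\cap Z_i=\0$ for all $i$, take a tight bumping simple closed curve with small links, concatenate consecutive links whose endpoints map into the same $Z_i\sm X$ into one big link, and finish with Lemma~\ref{posvar+}, Theorem~\ref{I=V+1} and Theorem~\ref{fpthm}. However, there is one genuine missing ingredient, which you flag in your ``main obstacle'' paragraph but never supply: you must prove that the concatenation has somewhere to start and stop, i.e.\ that there exists at least one partition point $a_0\in S\cap X$ with $f(a_0)\in X$ and in fact uniformly far from every $Z_i$. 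A priori it could happen that \emph{every} point of $S\cap X$ maps into a small neighborhood of some $Z_i$ (nothing in the definition of strong scrambling rules out, say, $f(K_i)$ landing inside some other $Z_j$), in which case your blocks would wrap all the way around $S$, the ``big link'' would not be an arc, and no admissible partition results. The paper devotes Claim~1 of its proof to exactly this point: for $m\ge 2$ the sets $f^{-1}\bigl(\ol{B(Z_i,3\eta)}\bigr)\cap \bd X$ are pairwise disjoint, nonempty (this uses $\bd f(X)\subset f(\bd X)$ from Theorem~\ref{orient}) and compact, hence cannot cover the continuum $\bd X$; for $m=1$ one instead uses $f(K_1)\cap Z_1=\0$ and continuity, so points of $K_1\cap\bd X$ already map far from $Z_1$; one then chooses $a_0$ from the $\eta'$-net $S\cap X$ of $\bd X$ near such a boundary point. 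This connectedness argument is the idea your sketch lacks, and without it the construction does not get off the ground.

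Two smaller points. Your sentence about the end-links of $T$ has the implication inverted (``images near $K_i$, so the end-links themselves must be within $\rho$ of $K_i$, hence their images are far from $Z_i$'' is not a valid chain); the correct statement, which you do give for the interior links, is that any link of $T$ that comes close to $Z_i$ must be close to $K_i=Z_i\cap X$, hence has image close to $f(K_i)$ and therefore far from $Z_i$, contradicting that its endpoints map into $Z_i\sm X$. Also, the step from ``$\rho$-close to $Z_i$'' to ``$O(\rho)$-close to $K_i$'' is not a linear estimate but a compactness modulus, which is precisely why the paper sets up the hierarchy of constants $\eta'<\eta<\da<\e/3$ before choosing $S$. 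Finally, the limit $\e\to 0$ at the end is unnecessary: once $S$ is chosen with $f|_{T(S)}$ fixed point free (Corollary~\ref{bumpingscc}), the inequality $\ind(f,S)\ge 1$ already contradicts Theorem~\ref{fpthm} directly.
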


\begin{proof}
If $f(X)\subset X$ then the result follows from
Theorem~\ref{fixpoint}. Similarly, if there exists $i$ such that
$f(K_i)\subset K_i$, then $f$ has a fixed point in $K_i\subset X$
and we are also done. Hence we may assume that $f(X)\sm X\ne\0$,
there are $m>0$ sets $Z_i, i=1, \dots, m$, $(f(X)\sm X)\cap Z_i\ne
\0$ for any $i$, and $f(K_i)\cap Z_i=\0$ for all $i$ (making these
claims we rely upon the fact that $f$ strongly scrambles the
boundary). Suppose that $f|_X$ is fixed point free. Then there
exists $\e>0$ such that for all $x\in X$, $d(x,f(x))>\e$. We may
assume that $2\e<\min\{d(Z_i,Z_j)\mid i\ne j\}$. We now choose
constants $\eta', \eta, \da$ and a bumping simple closed curve $S$
(whose initial links are crosscuts) of $X$ so that the following
holds.

\begin{enumerate}

\item $0<\eta'<\eta<\da<\e/3$.

\item
For each $x\in X\cap B(K_i,3\da)$ we have $d(f(x), Z_i)>3\da$.

\item
For each $x\in X\sm B(K_i,3\da)$ we have $d(x,Z_i)>3\eta$.

\item \label{44} For each $i$ there is a point $x_i\in X$ with $f(x_i)=z_i\in Z_i$ and $d(z_i, X)>3\eta$.
Since by Theorem~\ref{orient} \ $\partial f(X)\subset f(\partial X)$
and $X$ is non-separating, we may assume that $x_i\in\partial X$.

\item $X\subset T(S)$ and $A=X\cap S=\{a_0<\dots<a_n<a_{n+1}=a_0\}$ with
    points of $A$ numbered in the positive circular order around $S$.

\item $f|_{T(S)}$ is fixed point free.

\item For any $Q_i=(a_i, a_{i+1})\subset S$,
    $\dia(\ol{Q_i})+\dia(f(\ol{Q_i}))<\eta$.

\item For any $x, y\in X$ with $d(x, y)<\eta'$ we have $d(f(x),
    f(y))<\eta$.

\item $A$ is an $\eta'$-net in $\bd X$ (i.e., the Hausdorff distance between
$A$ and $\partial X$ is less than $\eta'$).

\end{enumerate}

Observe that $\ol{Q_i}$ is contained in the closed ball centered at $a_i$ of
radius $\dia(\ol{Q_i})$ and $f(\ol{Q_i})$ is contained in the closed ball centered
at $f(a_i)$ of radius $\dia(f(\ol{Q_i}))$; hence by (7) and since $d(x,
f(x))>\e$ for all $x\in X$ we see that $\ol{Q_i}\cap f(\ol{Q_i})=\0$ for every
$i$ (we rely on the triangle inequality here too).

\begin{figure}
\begin{picture}(307,231)
\put(0,0){\scalebox{0.5}{\includegraphics{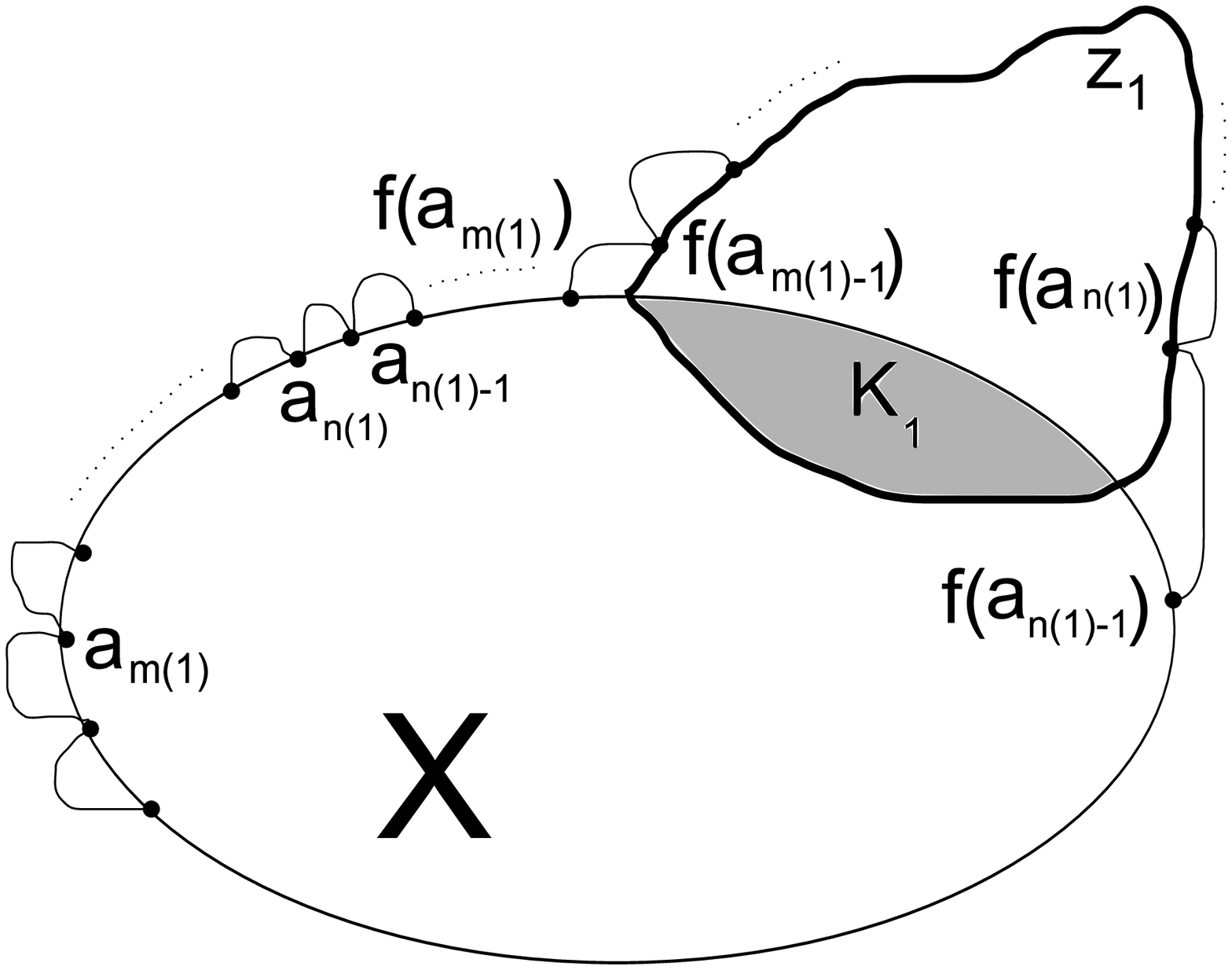}}}
\end{picture}
\caption{Replacing the links $[a_{n(1)-1}, a_{n(1)}],$ $\dots,$ $[a_{m(1)-1},
a_{m(1)}]$ by a single link $[a_{n(1)-1}, a_{m(1)}]$.}
\end{figure}

\noindent \textbf{Claim 1.} \emph{There exists a point $a_j\in A$ such that
$f(a_j)\in X\sm \bigcup_i \ol{B(Z_i, \eta)}$.}

\noindent \emph{Proof of Claim 1}. Set $\ol{B(Z_i, 3\eta)}=T_i$. We
will show that there exists a point $x\in \bd X$ with $f(x)\in X\sm
\bigcup T_i$. Indeed, suppose first that $m=1$. Then by (2) and the
assumption that $f(K_i)\cap Z_i=\0$ for each $i$ we have
$f(K_1)\subset X\sm T_1$, and we can choose any point of $K_1\cap
\bd X$ as $x$. Now, suppose that $m\ge 2$. Observe that by the
choice of $\e$ and by (1) the compacta $T_i$ are pairwise disjoint.
 By (\ref{44}) for each $i$ there
are points $x_i$ in $\bd X$ such that $f(x_i)\in Z_i\subset T_i$.
Since the sets $f^{-1}(T_i)\cap X$ are pairwise disjoint non-empty
compacta we see that the set $V=\bd X\sm \bigcup f^{-1}(T_i)$ is
non-empty (because $\bd X$ is a continuum). Now we can choose any
point of $V$ as $x$.

Notice now that by the choice of $A$ (see (9)) we can find a point $a_j$ such
that $d(a_j, x)<\eta'$ which by (8) implies that $d(f(a_j), f(x))<\eta$ and
hence $f(a_j)\in X\sm (\bigcup \ol{B(Z_i, \eta)})$ as desired.\qed

By Claim 1, we assume without loss of generality, that $f(a_0)\in X\sm
\bigcup \ol{B(Z_i, \eta)}$. Now, by (4) there exists a point $x_1$ such that
$f(x_1)=z_1$ is more than $3\eta$-distant from $X$. We can find $a_l\in A$ such
that $d(a_l, x_1)<\eta'$ and hence by (8) and by the triangle inequality $f(a_l)$
is at least $2\eta$-distant from $X$. On the other hand, $f(a_0)\in X$. By the
choice of $a_0$, we can find minimal $n(1)<m(1)$ from $\{0, 1, \dots, m+1\}$ such that
the following claims hold. Without loss of generality, $n(1)=1$.

\begin{enumerate}

\item $f(a_{n(1)-1})\in X$.

\item $f(a_r)\in f(X)\sm X$ for all $r$ with $n(1)\le r\le m(1)-1$
(and so, since $\dia(f(Q_u))<\e/3$ for any $u$ and
$d(Z_s,Z_t)>2\e$ for all $s\ne t$, there exists $i(1)\in\{1,\dots,n\}$ with
$f(a_r)\in Z_{i(1)}$ for all $n(1)\le r\le m(1)-1$).

\item $f(a_{m(1)})\in X$.

\end{enumerate}

Consider the arc $Q'=[a_{n(1)-1},a_{m(1)}]\subset S$ and show that
$f(Q')\cap Q'=\0$. As we walk along $Q'$ and mark the $f$-images of
points $a_{n(1)-1}, a_{n(1)}, \dots, a_{m(1)}$, we begin in $X$ at
$f(a_{n(1)-1})$, then enter $Z_{i(1)}\sm X$ and walk inside it, and then exit
$Z_{i(1)}\sm X$ at $f(a_{m(1)})\in X$. Since every step in this walk is rather
short (by (7) $\dia(Q_i)+\dia(f(Q_i))<\eta$), we see that $d(f(a_{n(1)-1}),
Z_{i(1)}\sm X)<\eta$ and $d(f(a_{m(1)}), Z_{i(1)}\sm X)<\eta$. On the other
hand for each $r, n(1)\le r\le m(1)-1$, we have $f(a_r)\in Z_{i(1)}\sm X$.
Thus, $d(f(a_r), Z_{i(1)})<\eta$ for each $n(1)-1\le r\le m(1)$. Since by (7)
for each link $Q$ of $S$ we have $\dia(Q)+\dia(f(Q))<\eta$, we now see by the
triangle inequality that $d(f(Q'), Z_{i(1)})<2\eta$.

This implies that for $n(1)-1\le r\le m(1)$, $d(a_r, K_{i(1)})>3\delta$
(because otherwise by (2) $f(a_r)$ would be farther away from $Z_{i(1)}$ than
$3\delta>\eta$, a contradiction) and so $d(a_r, Z_{i(1)})>3\eta$ (because
$a_r\in X\sm B(K_{i(1)}, 3\delta)$ and by (3)). Since by (7) for each link of
$S$ we have $\dia(Q)+\dia(f(Q))<\eta$, then $d(Q', Z_{i(1)})>2\eta$.

Therefore $f(Q')\cap Q'=\0$. This allows us to replace the original
division of $S$ into links $Q_0, \dots, Q_{m(1)-1}$ by a new one in which $Q'$ plays
the role of a new link; in other words, we simply delete the points
$\{a_{n(1)},\dots, a_{m(1)-1}\}$ from $A$. Thus, $Q'$ is a bumping arc whose
endpoints map back into the continuum $X$ and such that $f(Q')\cap Q'=\0$.
Therefore $Q'$ satisfies the conditions of Lemma~\ref{posvar+}, and so
$\var(f,Q')\ge 0$. Observe also that for $Q'$ the associated continuum $Z_{i(1)}$
is well-defined because the distance between distinct continua $Z_i$ is greater
than $2\e$. Replace the string of links $\{Q_0,\dots,Q_{m(1)-1}\}$ in $S$ by the single link
$Q'=Q'_0$ which has as endpoints $a_{n(1)-1}$ and $a_{m(1)}$. Continuing in the same manner and moving along $S$, in the end we
obtain a finite set $A'=\{a_0=a'_0<a'_1<\dots<a'_k\}\subset A$ such that for
each $i$ we have $f(a'_i)\in X\subset T(S)$ and for each arc
$Q'_i=[a'_i,a'_{i+1}]$ we have $f(Q'_i)\cap Q'_i=\0$. In other words, we will
construct a partition of $S$ satisfying all the required properties: its links
are bumping arcs whose endpoints map back into $X$ and whose images are disjoint
from themselves. As outlined after Lemma~\ref{posvar+}, this yields a
contradiction. More precisely, by Theorem~\ref{I=V+1}, $\ind(f, S)=\sum
\var(f,Q'_i)+1$, and since by Lemma~\ref{posvar+}, $\var(f,Q'_i)\ge 0$ for
all $i$, $\ind(f, S)\ge 1$ contradicting the fact that $f$ is fixed point free
in $T(S)$ (see Theorem~\ref{fpthm}).
\end{proof}

\section{Maps with isolated fixed points}\label{sec:fxpt-noniso}

In this section we assume that all maps $f:\complex\to\complex$ are
positively oriented maps with isolated fixed points.

\begin{defn}\label{crit}
Given a map $f:X\to Y$ we say that $c\in X$ is a \emph{critical point of $f$}
\index{critical point}
if for each neighborhood $U$ of $c$, there exist $x_1\ne x_2\in U$ such that
$f(x_1)=f(x_2)$. Hence, if $x$ is not a critical point of $f$, then $f$ is
locally one-to-one near $x$.
\end{defn}

If a point $x$ belongs to a non-degenerate continuum collapsed to a point under $f$ then $x$ is critical;
also any point which is an accumulation point of collapsing continua is
critical. However in these cases the map near $x$ may be monotone. A more
interesting case is when the map near $x$ is not monotone; then $x$ is a
\emph{branchpoint}\index{branchpoint of $f$}  of $f$ and it is critical even if there are no collapsing
continua close by. One can define the \emph{local degree} \index{local degree}$\deg_f(a)$
\index{degree@$\deg_f(a)$} as the
number of components of $f^{-1}(y)$ non-disjoint from a small neighborhood of
$a$ ($y$ then should be chosen close to $f(a)$). It is well-known that for a
positively oriented map $f$ and a point $a$ which is a component of
$f^{-1}(f(a))$ the local degree $\deg_f(a)$ equals the winding number $\win(f,
S, f(a))$ for any small simple closed curve $S$ around $a$. Then branchpoints are
exactly the points at which the local degree is greater than $1$. Notice that
since we do not assume any smoothness, a \emph{critical} point may well be both
fixed (periodic) and topologically repelling in the sense that some small neighborhoods of
$c=f(c)$ map over themselves by $f$.

Let us recall the notion of a local index of a map at a point which
is first introduced in Definition~\ref{indpt}.

\setcounter{chapter}{5}\setcounter{section}{4}\setcounter{thm}{2}

\begin{defn}
Suppose that $f:\C\to\C$ is a positively oriented map with isolated fixed points and
$x$ is a fixed point of $f$.
Then the \emph{local index of $f$ at $x$}, denoted by $\ind(f, x)$,
is defined as $\ind(f,S)$
where $S$ is a small simple closed curve around $x$.
\end{defn}

\setcounter{chapter}{7}\setcounter{section}{4}\setcounter{thm}{1}

It is easy to see that, since $f$ is positively oriented and has isolated fixed
points,  the local index is well-defined, i.e. does not depend on the
choice of $S$. By modifying a translation map one can give an example of a
homeomorphism of the plane which has exactly one fixed point $x$ with local
index $0$. Still in some cases the local index at a fixed point must be
positive.

\begin{defn}\label{toprepat}Let $f:\C\to\C$ be a map.
A fixed point $x$ is said to be \emph{topologically repelling} \index{topologically!repelling} if there exists
a sequence of simple closed curves $S_j\to \{x\}$ such that $x\in
\Int(T(S_j))\subset T(S_j)\subset \Int(T(f(S_j))$. A fixed point $x$ is said to
be \emph{topologically attracting}\index{topologically!attracting} if there exists a sequence of simple closed
curves $S_j\to \{x\}$ not containing $x$ and such that $x\in
\Int(T(f(S_j))\subset T(f(S_j))\subset \Int(T(S_j))$.
\end{defn}

\begin{lem}\label{ind1}Let $f:\C\to\C$ be a positively oriented map
with isolated fixed points. If $a$ is a topologically repelling
fixed point then we have that $\ind(f, a)=\deg_f(a)\ge 1$. If
however $a$ is a topologically attracting fixed point then $\ind(f,
a)=1$.
\end{lem}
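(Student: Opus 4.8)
The plan is to compute the local index in each case by constructing an explicit fixed-point-free homotopy of $f|_{S_j}$ to a map whose index on $S_j$ is easy to evaluate, using the approximating curves $S_j$ supplied by the hypothesis and the stability results of Section~\ref{compofind} (Theorems~\ref{fpfhomotopy}, \ref{fpfhomotopy2} and Corollary~\ref{mapinhull}). First I would fix $j$ large enough that $T(S_j)$ contains no fixed point of $f$ other than $a$ itself, and in fact, since the fixed points are isolated, that $f$ has no fixed point on $S_j$ at all; then $\ind(f,a)=\ind(f,S_j)$ by definition of the local index and its independence of the curve chosen.

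For the \emph{topologically attracting} case, the hypothesis gives $x\in\Int(T(f(S_j)))\subset T(f(S_j))\subset\Int(T(S_j))$ with $x\notin S_j$; in particular $f(S_j)\subset\Int(T(S_j))\subset T(S_j)$, so $f$ maps $S_j$ into $T(S_j)$ and has no fixed point on $S_j$. Then by Corollary~\ref{mapinhull} applied to $f|_{S_j}:S_j\to T(S_j)$ (with an orientation preserving parameterization $g:\uc\to S_j$), we get $\ind(f,S_j)=1$, hence $\ind(f,a)=1$. This case is essentially immediate once the inclusions are unpacked.

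For the \emph{topologically repelling} case the hypothesis gives $x\in\Int(T(S_j))\subset T(S_j)\subset\Int(T(f(S_j)))$, so now $S_j$ lies \emph{inside} $T(f(S_j))$ rather than the other way around, and the previous trick does not apply directly. The idea is to instead identify $\ind(f,S_j)$ with the local degree $\deg_f(a)$, i.e.\ with $\win(f,S_j,f(a))=\win(f,S_j,a)$ (using $f(a)=a$). Concretely: since $a\in\Int(T(S_j))$ and $f$ is fixed point free on $S_j$, the index $\ind(f,S_j)=\dg(v)$ where $v(t)=\frac{f(g(t))-g(t)}{|f(g(t))-g(t)|}$; I would like to replace the ``moving basepoint'' $g(t)$ by the fixed point $a$, i.e.\ show $v$ is homotopic (through maps $\uc\to\uc$) to $t\mapsto\frac{f(g(t))-a}{|f(g(t))-a|}$. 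For this it suffices that the straight-line homotopy $H_s(t)=f(g(t))-[(1-s)g(t)+sa]$ never vanishes, which holds provided the segment from $g(t)$ to $a$ avoids $f(g(t))$ for every $t$; this can be arranged because $S_j$ and hence $T(S_j)$ shrinks to $\{a\}$ while $f(S_j)$ stays in a definite position relative to $S_j$ (indeed $T(S_j)\subset\Int(T(f(S_j)))$ keeps $f(g(t))$ away from the small set $T(S_j)$ once we also use that $a$ is isolated as a fixed point, so $|f(z)-z|$ is bounded below on $S_j$ and comparable to $|f(z)-a|$ for $z\in S_j$ when $S_j$ is small). Granting this homotopy, $\ind(f,S_j)=\win(f,S_j,a)=\deg_f(a)$, which is the local degree, and since $a$ is a component of $f^{-1}(f(a))$ this winding number is $\ge 1$ for a positively oriented $f$ (each such small loop about $a$ winds around $f(a)=a$ at least once); hence $\ind(f,a)=\deg_f(a)\ge 1$.

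The main obstacle I anticipate is the justification that the basepoint-switching homotopy $H_s$ is fixed-point free for $S_j$ sufficiently small — that is, controlling the position of $f(g(t))$ relative to the segment joining $g(t)$ to $a$. The clean way to handle this is to use that $a$ is an isolated fixed point: choose a neighborhood $U$ of $a$ with $U\cap\mathrm{Fix}(f)=\{a\}$ and with $f(\partial U)$ disjoint from $\overline{U}$ would be too strong, so instead I would use the quantitative fact that on a small circle $S_j\subset U$ we have $\min_{z\in S_j}|f(z)-z|>0$ and, shrinking $S_j$, that $\dm(T(S_j))$ and $\dm(\{z-a:z\in S_j\})$ are both $o(1)$ while $|f(z)-a|\ge |f(z)-z|-|z-a|$ stays bounded below; these estimates force $f(g(t))$ to lie outside the tiny convex set $T(S_j)\ni a, g(t)$, which is exactly what is needed for $H_s(t)\ne 0$. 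Once this lemma-level estimate is in place, the rest is a direct application of Theorem~\ref{fpfhomotopy2} and the definition of $\deg_f(a)$ recalled just before the statement.
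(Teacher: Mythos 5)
Your overall route coincides with the paper's: for a repelling point you switch the base point of the displacement vector from the moving point $g(t)$ to the fixed point $a$, identify the resulting winding number $\win(f,S_j,a)$ with $\deg_f(a)$, and get positivity from positive orientation; for an attracting point you reduce to index $1$. The attracting half is correct, and invoking Corollary~\ref{mapinhull} there is a perfectly good substitute for the paper's ``the argument is similar'' (the definition gives $f(S_j)\subset T(f(S_j))\subset \Int(T(S_j))$, so $f$ maps $S_j$ into $T(S_j)$ and the index is $1$).

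The gap is in your justification of the base-point–switching homotopy in the repelling case, which is exactly the step the paper asserts in one line. The quantitative argument you sketch does not work: for a fixed $j$ the minimum of $|f(z)-z|$ on $S_j$ is positive, but there is no reason for it to dominate $\dm(T(S_j))$, and it typically does not — already for $f(z)=a+\lambda(z-a)$ with $1<\lambda<2$ one has $|f(z)-z|=(\lambda-1)|z-a|<|z-a|$ on every curve around $a$, and your bound $|f(z)-a|\ge |f(z)-z|-|z-a|$ has a negative right-hand side, so these inequalities force nothing; moreover $T(S_j)$ need not be convex, and the curves $S_j$ supplied by Definition~\ref{toprepat} need not be circles (you cannot replace them by round circles, since the inclusion $T(S)\subset\Int(T(f(S)))$ is only guaranteed for the given $S_j$). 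What the homotopy $H_s(t)=f(g(t))-[(1-s)g(t)+sa]$ actually requires is the topological statement that $f(g(t))$ misses the segment $[g(t),a]$ for every $t$ (or, if you contract $S_j$ to $a$ inside $T(S_j)$ rather than along straight lines, that $f(S_j)\cap T(S_j)=\0$), and this has to be extracted from the repelling inclusion $T(S_j)\subset\Int(T(f(S_j)))$ together with the choice of $j$ so large that $T(S_j)$ contains no fixed point and no point of $f^{-1}(a)$ other than $a$; note that the inclusion for a single curve does not by itself prevent $f(S_j)$ from dipping back into $T(S_j)$ (an inward ``finger'' in $f(S_j)$ is compatible with $T(S_j)\subset\Int(T(f(S_j)))$, and on such a curve the two winding numbers can genuinely differ), so this is where the real content of the proof lies, and metric smallness of $S_j$ cannot supply it. As written, your treatment of the acknowledged main obstacle would fail, so the repelling case is not yet proved.
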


\begin{proof}
Consider the case of the repelling fixed point $a$. Then it follows that, as
$x$ runs along a small simple closed curve $S$ with $a\in T(S)$, the vector
from $x$ to $f(x)$ produces the same winding number as the vector from $a$ to
$f(x)$. As we remarked before, it is well-known that this winding number equals
$\deg_f(a)$; on the other hand, $\ind(f, S)>0$ since $f$ is positively oriented
and has isolated fixed points. The argument for an attracting fixed point is
similar.
\end{proof}

If however a fixed point $x$ is neither topologically repelling nor
topologically attracting, then $\ind(f, x)$ could be greater than $1$ even in
the non-critical case. Indeed, by definition $\ind(f, x)$ coincides with
the winding number of $f(z)-z$ on a small simple closed curve $S$ around $x$ with
respect to the origin. If, e.g., $f$ is rational and $f'(x)\ne 1$ then this
implies that $\ind(f, x)=1$. However if $f'(x)=1$ then $\ind(f, x)$ is the
multiplicity at $x$ (i.e., the local degree of the map $f(z)-z$ at $x$). This
is related to the following useful theorem. It is a version of the more general
topological \emph{argument principle}.

\begin{thm}\label{argupr}
Suppose that $f$ is positively oriented and has isolated fixed points. Then for
any simple closed curve $S\subset\C$, which contains no fixed points of $f$,
its index equals the sum of local indices
taken over all fixed points in $T(S)$. 
In particular if for each fixed point
$p\in T(S)$ we have that $\ind(f, x)=1$ then $\ind(f, S)$ equals the number
$n(f, S)$ of fixed points inside $T(S)$.
\end{thm}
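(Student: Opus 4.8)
\textbf{Proof proposal for Theorem~\ref{argupr} (topological argument principle).}

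The plan is to reduce the statement to Theorem~\ref{I=V+1} together with the additivity of index under gluing along arcs, i.e.\ Proposition~\ref{fracindex}. First I would reduce to the case where $f$ has exactly one fixed point inside $T(S)$: since fixed points are isolated and $T(S)$ is compact, there are only finitely many fixed points $p_1,\dots,p_k$ of $f$ inside $T(S)$, and I would choose pairwise disjoint small simple closed curves $S_j\subset \Int(T(S))$ with $p_j\in \Int(T(S_j))$ and such that the $T(S_j)$ are pairwise disjoint and contain no other fixed point. Then $\ind(f,S_j)=\ind(f,p_j)$ by definition of the local index. The task becomes to show $\ind(f,S)=\sum_{j=1}^k \ind(f,S_j)$.

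For the additivity step I would connect the curves. Choose disjoint arcs $\gamma_0,\gamma_1,\dots,\gamma_k$ in $\Int(T(S))$ avoiding all fixed points, where $\gamma_0$ joins $S$ to $S_1$ and $\gamma_j$ joins $S_j$ to $S_{j+1}$ ($1\le j\le k-1$), so that the union $S\cup S_1\cup\dots\cup S_k\cup\gamma_0\cup\dots\cup\gamma_{k-1}$ bounds, in the region $\Omega=T(S)\setminus \bigcup_j \Int(T(S_j))$, a region whose boundary traversal is a single closed loop $\Sigma$ that runs along $S$ positively, runs down and back along each connecting arc, and runs along each $S_j$ negatively. Since $f$ is fixed point free on the closure of $\Omega$ (a non-separating... actually multiply-connected compact set, but $f$ has no fixed points there), one can homotope $f|_\Sigma$ appropriately; the contributions of the doubled arcs $\gamma_j$ cancel because each arc is traversed once in each direction and $f$ has no fixed points on it (here the cancellation is exactly the cocycle/additivity property of fractional index, Proposition~\ref{fracindex}, applied repeatedly). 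This gives $\ind(f,\Sigma)=\ind(f,S)-\sum_j\ind(f,S_j)$. On the other hand $f$ is fixed point free on the contractible-after-filling region, so by a homotopy argument as in Theorem~\ref{fpthm} (contract $\Sigma$ inside the fixed-point-free set to a point) we get $\ind(f,\Sigma)=0$, hence $\ind(f,S)=\sum_j\ind(f,S_j)=\sum_j\ind(f,p_j)$, which is the first assertion.

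The second assertion is then immediate: if every fixed point $p\in T(S)$ has $\ind(f,p)=1$, then $\ind(f,S)=\sum_{j=1}^k 1 = k = n(f,S)$.

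The main obstacle, and the step I would be most careful with, is making the arc-cancellation argument rigorous in the multiply-connected setting: the region $\Omega$ between $S$ and the small curves $S_j$ is not simply connected, so one cannot directly invoke Theorem~\ref{fpthm}. The clean way around this is to do the cancellation one curve at a time by an inductive "keyhole" construction --- join $S$ to $S_1$ by an arc $\gamma$, form the simple closed curve $S'$ that goes around $S$, down $\gamma$, around $S_1$ in the negative direction, and back up $\gamma$; then $T(S')$ together with $T(S_1)$ fills $T(S)$ minus the other small disks, and $\ind(f,S)=\ind(f,S')+\ind(f,S_1)$ follows from fractional-index additivity (the two passes along $\gamma$ contribute $\ind(f,\gamma)$ and $-\ind(f,\gamma)$ since $f$ is fixed point free on $\gamma$). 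Iterating, after $k$ steps one reaches a simple closed curve bounding a disk in which $f$ is fixed point free, whose index is $0$ by Theorem~\ref{fpthm}, and summing the telescoped identities yields $\ind(f,S)=\sum_j\ind(f,S_j)$. This inductive reduction keeps every curve involved a genuine simple closed curve, so all the tools from Chapter~\ref{c:tools} apply verbatim.
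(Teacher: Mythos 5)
Your proposal is correct in outline, but there is nothing in the paper to compare it with: Theorem~\ref{argupr} is stated there without proof, as a known special case of the topological argument principle (the surrounding text only discusses how it interacts with Lemma~\ref{ind1} and Theorem~\ref{fpthm}). So what you have written is a self-contained derivation from the paper's toolkit, and it works, modulo three small repairs. First, the ``keyhole'' curve as you literally describe it --- down $\gamma$, around $S_1$ backwards, back up $\gamma$ --- is not a simple closed curve; either thicken $\gamma$ to a thin corridor $N(\gamma)$ (free of fixed points, by compactness and isolation) so that the boundary of $T(S)\sm\bigl(T(S_1)\cup N(\gamma)\bigr)$ is genuinely simple, and then identify its index with $\ind(f,S)-\ind(f,S_1)$ via the concatenated loop and a fixed-point-free homotopy (Proposition~\ref{fracindex} plus Theorem~\ref{fpfhomotopy}), or work directly with the non-simple loop, which is legitimate because $\ind(f,g)$ is defined in the paper for arbitrary maps $g:\uc\to\C$, not only embeddings; the cancellation along the two passes of $\gamma$ is then exact, not approximate. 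Second, your sentence that $T(S')$ together with $T(S_1)$ fills $T(S)$ minus the other small disks is backwards: the other small disks lie \emph{inside} $T(S')$, which is precisely what the induction needs ($T(S')$ is a disk containing $p_2,\dots,p_k$ and excluding $p_1$), so the slip is harmless but should be fixed. Third, $\ind(f,S_j)=\ind(f,p_j)$ is not purely ``by definition'': it uses well-definedness of the local index (independence of the choice of small curve around $p_j$), which the paper asserts and which follows from the same annulus/fixed-point-free homotopy argument; it is worth one line. With these repairs, the telescoping $\ind(f,S)=\sum_j\ind(f,S_j)$, the vanishing of the index of the final curve by (the contrapositive of) Theorem~\ref{fpthm}, and the deduction of the second assertion are all fine.
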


Theorem~\ref{argupr} implies Theorem~\ref{fpthm} for positively oriented maps
with isolated fixed points (indeed, if $\ind(f, S)$ $\ne$ $0$ then by
Theorem~\ref{argupr} there must exist fixed points in $T(S)$), and actually
provides more information. By the above analysis, Lemma~\ref{ind1} and
Theorem~\ref{argupr}, $\ind(f, S)$ equals the number $n(f, S)$ of fixed points
inside $T(S)$ if all $f$-fixed points in $T(S)$ are either topologically
attracting, or such that $f$ has a complex derivative $f'$ at $x$, and
$f'(x)\ne 1$; if $f$-fixed points can also be topologically repelling, then
$\ind(f, S)\ge n(f, S)$

In the spirit of the previous parts of the paper, we are still concerned with
finding $f$-fixed points inside non-invariant continua of which $f$ (strongly)
scrambles the boundary. However we now specify the types of fixed points we are
looking for. Thus, the main result of this subsection proves the existence of
specific fixed points in non-degenerate continua satisfying the appropriate
boundary conditions and shows that in some cases such continua must be
degenerate. It is in this form that we apply the result later  in this subsection.

Recall that an essential crossing of an external ray $R$ and a
crosscut $Q$ was defined in Definition~\ref{essential}; there an
external ray $R_t$ is said to \emph{cross} a crosscut $Q$
\emph{essentially} if and only if there exists $x\in R_t$ such that
the bounded component of $R_t\sm \{x\}$ is contained in the bounded
complementary domain of $T(X)\cup Q$. The fact that a crosscut
crosses a ray essentially can be similarly restated in the language
of the uniformization plane (i.e., if the ray and the crosscut are
replaced by their counterparts in the uniformization plane while
$X$ is replaced by the unit disk in the uniformization plane).

For the next definition we need to make an observation. Suppose
that $f:\C\to\C$ is a map and $D$ is a closed Jordan disk with
interior non-disjoint from a continuum  $X$ such that $f(D\cap
X)\subset X$ and $f(\ol{\bd D\sm X})\cap D=\0$. Suppose in addition
that $|\partial D\cap X|\ge 2$. Then the closure of any component
$Q$ of $\bd D\sm X$  is a bumping arc whose endpoints map back into
$X$ and such that $f(Q)\cap Q=\0$ (indeed, $f(\ol{\bd D\sm X})\cap
D=\0$ implies that $f(Q)\cap Q=\0$). Thus, $\var(f,Q)$ is
well-defined.

\begin{defn}\label{repout}Let $f$ be a positively oriented map and $X$ a continuum.
If $f(p)=p$ and $p\in \bd X$ then we say that $f$ \emph{repels
outside $X$ at $p$} \index{repels outside $X$ at $p$} provided there
exists a ray $R\subset \C\sm X$ from $\infty$ which lands on $p$ and
a sequence of simple closed curves $S^j$ bounding closed disks $D^j$
such that $D^1\supset D^2\supset\dots, $ $p\in\Int(D^j)$, $\cap
D^j=\{p\}$, $f(D^1\cap X)\subset X$, $f(\ol{S^j\sm X})\cap D^j=\0$
and for each $j$ there exists a component $Q^j$ of $S^j\sm X$ such
that $Q^j\cap R\ne\0$ and $\var(f,Q^j)\ne 0$.
\end{defn}



Definition~\ref{repout} gives some information about dynamics around $p$.

\begin{lem}\label{dynatp}
Suppose that $f:\C\to\C$ is a positively oriented map, $X$ a non-separating continuum,
$p\in\partial X$ such that $f(p)=p$ and  $f$ repels outside $X$ at $p$. If $R$ is
the ray from the Definition~\ref{repout} then $f(Q^j)\cap R\ne 0$. Moreover, if
$f$ scrambles the boundary of $X$, then $\var(f, Q^j)>0$.
\end{lem}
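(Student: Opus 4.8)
\textbf{Proof plan for Lemma~\ref{dynatp}.}

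The plan is to exploit the geometry of the nested disks $D^j$ together with the crosscuts $Q^j$ supplied by Definition~\ref{repout}, and to compute $\var(f,Q^j)$ using the ray $R$ as the ``center line'' of a junction, much as in the proof of Lemma~\ref{invchannel}. First I would fix $j$ and recall what Definition~\ref{repout} gives us: $Q^j$ is a component of $S^j\sm X$ whose closure is a bumping arc of $X$ with endpoints in $X$, $f(\ol{S^j\sm X})\cap D^j=\0$ (so in particular $f(\ol{Q^j})\cap \ol{Q^j}=\0$ and $\var(f,Q^j)$ is defined), $Q^j\cap R\ne\0$, and $\var(f,Q^j)\ne 0$. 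Since $R$ lands at $p$, runs in $\C\sm X$, and $Q^j$ meets $R$, the ray $R$ crosses the crosscut $Q^j$ essentially (for $j$ large, $Q^j$ together with $X$ separates the tail of $R$ near $p$ from $\infty$; this is the standard picture, and it is exactly the setup under which a junction can be built running alongside $R$).

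Next I would show $f(Q^j)\cap R\ne\0$. The idea is the one already used in Lemma~\ref{invchannel}: build a junction $J_v$ with vertex $v\in Q^j$ such that one leg of $J_v$ follows $R$ out to $\infty$, while the other two legs stay in $\C\sm T(X)$ close to $Q^j$ and then follow $R$ as well; then $J_v\cap (X\cup Q^j)=\{v\}$, and $\var(f,Q^j,X)$ may be computed by counting crossings of $f(Q^j)$ with $J_v$. If $f(Q^j)$ were disjoint from $R$, then, since the legs of $J_v$ other than the portion along $R$ can be taken arbitrarily close to $Q^j\cup R$ and $f(\ol{Q^j})$ is a small set disjoint from $\ol{Q^j}$ and (for $j$ large) from a neighborhood of the relevant legs of the junction, $f(Q^j)$ would meet at most one ray of $J_v$; hence it could not produce any crossing of the required ``$J^+_v$ followed by $J^i_v$'' or ``$J^i_v$ followed by $J^+_v$'' type, forcing $\var(f,Q^j)=0$, contradicting Definition~\ref{repout}. (More carefully: if $f(Q^j)$ misses $R$ entirely, one perturbs the junction so that $J^+_v$ and $J^i_v$ both lie along $R$ and the third leg is pushed off, and then $f(Q^j)\cap J_v=\0$, so $\var(f,Q^j)=0$.) Therefore $f(Q^j)\cap R\ne\0$.

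For the second assertion, assume in addition that $f$ scrambles the boundary of $X$. Then $\ol{Q^j}$ is a bumping arc of $X$ with both endpoints mapping into $X$ and $f(\ol{Q^j})\cap\ol{Q^j}=\0$, so Lemma~\ref{posvar+} applies and gives $\var(f,Q^j)\ge 0$. Combined with the hypothesis $\var(f,Q^j)\ne0$ from Definition~\ref{repout}, this yields $\var(f,Q^j)>0$, as desired.

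The main obstacle I anticipate is the first step: making rigorous the claim that $f(Q^j)$ must actually hit the ray $R$ (rather than some other part of the junction), i.e., controlling the three legs of the junction $J_v$ near the vertex and along $R$ so that a nonzero crossing count genuinely forces an intersection of $f(Q^j)$ with $R$ itself. This requires choosing $j$ large enough that $Q^j$ and $f(Q^j)$ are small and close to $p$, using $\cap_j D^j=\{p\}$ and $f(p)=p$, and then invoking the stability of variation under the permissible isotopies of the junction (the remarks following Proposition~\ref{straightjunction} and the stability statements in Section~\ref{compofvar}) to reduce to a configuration in which all crossings are crossings with the portion of $J_v$ lying along $R$. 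Once the junction is set up correctly, the crossing bookkeeping is routine.
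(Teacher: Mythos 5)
Your proposal is correct and follows essentially the paper's own route: the paper likewise takes the last point $z\in R\cap Q^j$ (so $(z,\infty)_R\cap Q^j=\0$) and builds a junction whose middle leg is the subray $[z,\infty]_R$ with the other two legs close alongside, so that $f(Q^j)\cap R=\0$ would force $\var(f,Q^j)=0$, contradicting Definition~\ref{repout}; and the second claim is obtained exactly as you do, from Lemma~\ref{posvar+} combined with $\var(f,Q^j)\ne 0$. The only difference is cosmetic: placing the junction's vertex at that last intersection point $z$ and letting the $J^i$-leg be $R$ itself avoids the perturbation bookkeeping you flag as the main obstacle, since every counted crossing involves $J^i$.
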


Thus, even though $R$ above may be non-invariant, there are crosscuts approaching $p$
which are mapped by $f$ ``along $R$ farther away from $p$''.

\begin{proof}
Take $z\in R\cap Q^j$ so that $(z, \iy)_R\cap Q^j=\0$. Choose a junction with
$[z, \iy]_R$ (the subray of $R$ running from $z$ to infinity)
playing the role of $R_i$ and two other rays close to $[z, \iy]_R$
on both sides. Then $f(Q^j)\cap R=\0$ implies $\var(f, Q^j)=0$, a
contradiction. The second claim follows by Lemma~\ref{posvar+}.
\end{proof}

We will use the following version of uniformization. Let $X$ be a
non-separating continuum and $\vp:\disk_\iy\to\C\sm X$  an onto
conformal map such that $\vp(\iy)=\iy$ (here
$\disk_\iy=\C\sm\ol{\disk}$ is the complement of the closed unit
disk). Thus, we choose the uniformization, under which the
\emph{complement} $\C\sm \ol{\disk}$ of the closed unit disk
corresponds to the \emph{complement} $\C\sm X$ of $X$. Of course,
the same can be considered on the two-dimensional sphere $\sphere$
which is sometimes more convenient. Notice, that since $\ol{\disk}$
is a non-separating continuum in $\C$, we can use for it the usual
terminology (crosscuts, shadows, etc). Also recall that the shadow
of a crosscut $C$ of a nonseparating continuum  $X$ is the bounded
component of $\complex\sm [X\cup C]$ (and \emph{not} its closure).

Then, given a crosscut $C$ of $X$ with endpoints $x, y$, we can associate to
its endpoints external angles as follows. It is well known \cite{miln00} that
$\vp^{-1}(C)$ is a crosscut of the closed unit disk with endpoints $\al,\be$.
It follows that  we can extend $\vp$ by defining $\vp(\al)=x$ and
$\vp(\be)=y$.  Note that this extension is not necessarily continuous. In this
case we say that $\al$ \emph{corresponds} to $x$ and $\be$ \emph{corresponds}
to $y$. There is a unique arc $I\subset \uc$ with endpoints $\al, \be,$
contained in the shadow of $\vp^{-1}(C)$. Assuming that the positive
orientation on $I$ is from $\al$ to $\be$, we choose the appropriate
orientation of $C$ (i.e., in this case from $x$ to $y$) and call such an
oriented $C$ \emph{positively oriented}. \index{positively oriented arc}

Observe that in this situation if $D$ is a disk around a point $x\in X$ then
components of $\bd D\sm X$ are crosscuts of $X$ whose $\vp$-preimages are
crosscuts of $\ol{\disk}$ in the uniformization plane. However, these
preimage-crosscuts in  $\disk_\iy$ may be located all over $\uc$.

The next theorem is the main result of this subsection.

\begin{thm}\label{locrot}
Suppose that $f$ is a  positively oriented map of the plane with only isolated fixed points,
$X\subset\C$ is a non-separating continuum or a point, and the following
conditions hold.

\begin{enumerate}


\item[(1h)]\label{fV}
For each fixed point $p\in X$ we have that $x\in \bd X$, $\ind(f, p)=1$ and $f$ repels
outside $X$ at $p$.

\item[(2h)]\label{Sc} The map $f$ scrambles the boundary of
    $X$. Moreover, using the notation from
    Definition~\ref{scracon} it can be said that for each $i$
    either $f(K_i)\cap Z_i=\0$, or there exists a neighborhood
    $U_i$ of $K_i$ with $f(U_i\cap X)\subset X$.

\end{enumerate}

Then $X$ is a point.
\end{thm}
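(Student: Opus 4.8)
The plan is to argue by contradiction: assume $X$ is a non-degenerate non-separating continuum satisfying (1h) and (2h), and derive a contradiction by constructing a tight bumping simple closed curve $S$ around $X$, partitioned into links, on which the index of $f$ can be computed via variation and is forced to be strictly larger than the number of fixed points of $f$ inside $T(S)$, contradicting the topological argument principle (Theorem~\ref{argupr}). This parallels the scheme already used for Theorem~\ref{fixpt}, but now we must account for the finitely many fixed points of $f$ lying in $X$ (which exist by Theorem~\ref{fixpt}, since $f$ scrambles the boundary of $X$; each is isolated by hypothesis and lies in $\bd X$ by (1h)).

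First I would fix a small $\e>0$ so that the fixed points $p_1,\dots,p_s\in X$ are pairwise at distance $>3\e$, and choose, using (1h), for each $p_k$ a small closed disk $D^{j(k)}_k$ from Definition~\ref{repout} together with the crosscut $Q_k$ of $X$ with $Q_k\cap R_k\neq\0$ and $\var(f,Q_k)\neq 0$; by Lemma~\ref{dynatp} (and since $f$ scrambles the boundary) in fact $\var(f,Q_k)>0$. The disks are chosen small enough to be disjoint and to miss all exit continua $K_i$ except possibly the ones containing the relevant $p_k$. Next I would build a bumping simple closed curve $S$ of $X$ whose links are very small crosscuts, arranged so that: near each $p_k$ the curve $S$ incorporates the crosscut $Q_k$ as one of its links (and otherwise stays close to $\bd X$ there, inside $D_k^{j(k)}$); away from the $p_k$ the links move off themselves because $f$ has no other fixed points near $\bd X$; and the partition points of $S\cap X$ form a fine net in $\bd X$. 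Then, exactly as in the proof of Theorem~\ref{fixpt}, I would enlarge the links whose endpoints map outside $X$: consecutive links whose images wander into some $Z_i\sm X$ get concatenated into a single long link $Q'$ with endpoints mapping back into $X$; using the strong-scrambling-type condition in (2h) (either $f(K_i)\cap Z_i=\0$, so $K_i$ and hence these links are far from $Z_i$, or $f(U_i\cap X)\subset X$, which prevents such wandering links from occurring near $K_i$ at all) one shows $f(Q')\cap Q'=\0$, so Lemma~\ref{posvar+} applies and $\var(f,Q')\ge 0$. The links $Q_k$ themselves must be left untouched; one checks that the concatenation procedure never absorbs a $Q_k$, because near $p_k$ the curve $S$ lies inside $D^{j(k)}_k$ and $f(D^{j(k)}_k\cap X)\subset X$ keeps the neighboring links' images inside $X$.

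Now the final partition of $S$ consists of: the special links $Q_k$, each with $\var(f,Q_k)\ge 1$ (strictly positive, by Lemma~\ref{dynatp}); and all other links $Q'_\ell$, each with $\var(f,Q'_\ell)\ge 0$ by Lemma~\ref{posvar+}. By Theorem~\ref{I=V+1}, $\ind(f,S)=\sum_k\var(f,Q_k)+\sum_\ell\var(f,Q'_\ell)+1\ge s+1$. On the other hand, $S$ is tight around $X$, so every fixed point of $f$ in $T(S)$ lies in $X$ (choosing $S$ inside a small enough neighborhood of $X$ containing no other fixed points — possible since fixed points are isolated), hence there are exactly $s$ of them, and each has local index $1$ by (1h); by Theorem~\ref{argupr}, $\ind(f,S)=s$. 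This is the contradiction $s+1\le\ind(f,S)=s$, unless $s=0$; but $s=0$ contradicts Theorem~\ref{fixpt}. Therefore the assumption that $X$ is non-degenerate is untenable, and $X$ is a point.

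The main obstacle I anticipate is the careful construction of $S$ so that the special crosscuts $Q_k$ of positive variation survive as individual links while simultaneously all remaining links can be massaged (by the concatenation argument) into bumping arcs with images disjoint from themselves; one must verify that the ``repelling outside $X$'' disks $D^{j(k)}_k$ can be chosen coherently with the global bumping curve and the net of partition points, and that the concatenation procedure of Theorem~\ref{fixpt} does not interfere with the $Q_k$'s — i.e., that the enlargement of links with endpoints mapping outside $X$ stays away from the disks $D^{j(k)}_k$. Handling the two alternatives in (2h) for each exit continuum $K_i$ (and the interplay when some $p_k\in K_i$) is where the bookkeeping is most delicate. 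The second, more routine obstacle is confirming that $\var(f,Q_k)>0$ rather than merely $\neq 0$, which is precisely the content of Lemma~\ref{dynatp} combined with the scrambling hypothesis, so this should go through directly.
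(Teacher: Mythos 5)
Your proposal follows essentially the same route as the paper's proof: around each of the $m$ fixed points you insert a crosscut of positive variation (via Definition~\ref{repout} and Lemma~\ref{dynatp}) into a tight bumping simple closed curve, concatenate the remaining links as in Theorem~\ref{fixpt} so that Lemma~\ref{posvar+} gives nonnegative variation, and then contrast $\ind(f,S)\ge m+1$ from Theorem~\ref{I=V+1} with $\ind(f,S)=m$ from Theorem~\ref{argupr}. One small correction: the case $s=0$ needs no appeal to Theorem~\ref{fixpt} (whose hypothesis is \emph{strong} scrambling, which (2h) does not supply), since the inequality $s+1\le\ind(f,S)=s$ is already contradictory at $s=0$ — with no fixed points in $T(S)$ Theorem~\ref{argupr} forces $\ind(f,S)=0$ while the variation count gives $\ind(f,S)\ge 1$.
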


\begin{proof}
Suppose that $X$ is not a point. Since $f$ has isolated fixed points, there
exists a simply connected neighborhood $V$ of $X$ such that all fixed points
$\{p_1,\dots,p_m\}$ of $f|_{\ol{V}}$ belong to $X$. The idea of the proof is to
construct a tight bumping simple closed curve $S$ such that $X\subset
T(S)\subset V$ and $\var(f, S)\ge m$. Hence $\ind(f, S)=\var(f, S)+1\ge m+1$
while by Theorem~\ref{argupr} our assumptions imply that $\ind(f, S)=m$, a
contradiction.

First we need to make a few choices of neighborhoods and constants; we assume
that there are $n$ exit continua $K_1, \dots, K_n$. We also assume that they
are numbered so that for $1\le i\le n_1$ we have $f(K_i)\cap Z_i=\0$ and for
$n_1<i\le n$ we have $f(K_i)\cap Z_i\ne \0$. Choose for each $i$ a small
neighborhood $U_i$ of $K_i$ as follows.

\begin{center} \emph{CHOOSING NEIGHBORHOODS $U_i$ OF EXIT CONTINUA $K_i$} \end{center}

\begin{enumerate}

\item By assumption (2) of the theorem we may assume that
$f(U_i\cap X)\subset X$ for each $i$ with $n_1<i\le n$.

\item By continuity we may assume that $d(U_i\cup Z_i,f(U_i))>0$ for $1\le i\le n_1$.

\item We may assume that $T(X\cup \bigcup \ol{U_i})\subset V$ and
$\ol{U_i}\cap\ol{U_k}=\0$ for all $i\ne k$.

\item We may assume that every fixed point of $f$
contained in $\ol{U_i}$ is contained in $K_i$.

\end{enumerate}

Let $\{p_1,\dots,p_t\}$ be all fixed points of $f$ in $X\sm
\bigcup_i K_i$ and let $\{p_{t+1},$ $\dots,$ $p_m\}$ be all the
fixed points contained in $\bigcup K_i$. Observe that then by part
(4) of the choice of neighborhoods $U_i$ we have $p_i\in X\sm
\ol{\bigcup U_s}$ if $1\le i\le t$. Also, it follows that for each
$j, t+1\le j\le m,$ there exists a unique $r_j, 1\le r_j\le n,$ such
that $p_j\in K_{r_j}$. For each fixed point $p_j\in X$ we rely upon
Definition~\ref{repout} and, as specified in that definition, choose
a ray $R_j\subset \C\sm X$ landing on $p_j$. Now we choose closed
disks $D_j$ around each $p_j$ from Definition~\ref{repout} so that,
in addition to properties from Definition~\ref{repout} (listed below
as (6), (9) and (10)), they satisfy the following  conditions.

\begin{figure}
\begin{picture}(307,231)
\put(100,0){\scalebox{0.3}{\includegraphics{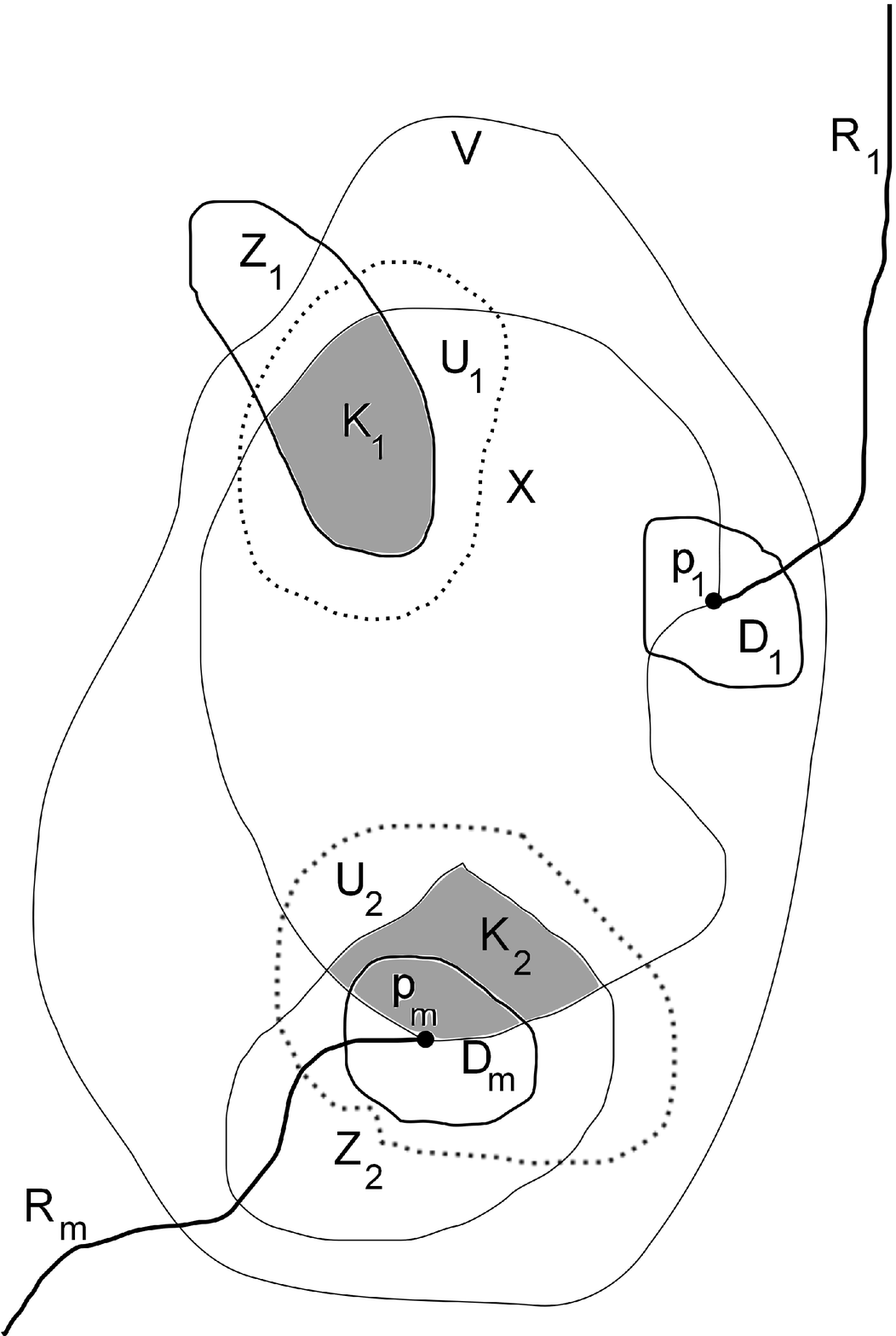}}}
\end{picture}
\caption{Illustration to the proof of Theorem~\ref{locrot}.}
\end{figure}

\begin{center} \emph{THE CHOICE OF CLOSED DISKS $D_j$} \end{center}

\begin{enumerate}\setcounter{enumi}{4}
\item $D_j\cap R_i=\0$ for all $i\ne j$.

\item $f(\ol{S_j\sm X})\cap D_j=\0$.

\item $T(X\cup \bigcup_j D_j)\subset V$.

\item $[D_j\cup f(D_j)]\cap [D_k\cup f(D_k)]=\0$ for all $j\ne k$.

\item  $f(D_j\cap X)\subset X$ (this is possible because $f$ repels outside
    $X$ at each fixed point of $f$ and by Definition~\ref{repout}).


\item Denote by $Q(j,s)$ all components of $S_j\sm X$; then there exists
  a component,  $Q(j,s(j))$,  of $S_j\sm X$, with $\var(f, Q(j,s(j)))>0$ and
    $Q(j,s(j))\cap R_j\ne\0$ (this is possible by Definition~\ref{repout}
    and Lemma~\ref{dynatp}).


\item $[D_j\cup f(D_j)]\cap \bigcup \ol{U_i}=\0$ for all $1\le j\le t$.

\item If $t<j\le m$ then $[D_j\cup f(D_j)]\subset U_{r_j}$.

\end{enumerate}

Let us use our standard uniformization $\vp:\disk_\iy\to\C\sm X$ described
before the statement of the theorem. It serves as an important tool; in
particular it allows us to pull crosscuts from the plane containing $X$ to
$\disk_\iy$ and introduce the appropriate orientation on all these crosscuts.

\smallskip

\noindent \textbf{Claim A.} \emph{Suppose that $W\subset D_j$ is a Jordan disk
around $p_j$ (e.g., $W$ may coincide with $D_j$) and $C$ is a crosscut which is
a component of $\bd W\sm X$. Then the shadow of $Q(i, s(i)), i\ne j$ is not
contained in the shadow of $C$ (thus, the shadows of $Q(j,s(j))$ and $Q(i,
s(i))$) are disjoint). Moreover, if $W$ is sufficiently small, then the shadow
of $Q(j, s(j))$ is not contained in the shadow of $C$ either.}

\smallskip

\noindent \emph{Proof of Claim A.} By condition (8) from the choice
of the disks $D_j$ those disks are pairwise disjoint. Hence all the
crosscuts $Q(r, t)$ are pairwise disjoint, and $C\cap Q(i,
s(i))=\0$. If the shadow of $Q(i, s(i))$ is contained in the shadow
of $C$, then the ray $R_i$ intersects $C$, contradicting condition
(5) from the choice of the disks $D_j$. Hence $\sh(Q(j,s(j))\cap
\sh(Q(i,s(i))=\0$ for $i\ne j$ (otherwise, because the crosscuts are
pairwise disjoint, one of the shadows would contain the other one,
impossible by the just proven). Now, if $\sh(Q(j, s(j))\subset
\sh(C)$, then, in $\disk_\iy$, $\vp^{-1}(C)$ \emph{shields}
$\vp(Q(j, s(j))$ from infinity and must be, together with $C$, of a
bounded away from zero size. Hence, if $W$ is very small, this
cannot happen. \qed

\smallskip

Now we define another collection of disks around the points $p_j$. By Claim A for
each $j$ we choose a small Jordan disk $D'_j$ from Definition~\ref{repout}
around $p_j$ so that no shadow $\sh(Q(i, s(i))$ is contained in the shadow of
any crosscut $C$ which is a component of $(\bd D'_j)\sm X$. In particular, for
each such $C$, $\ol{f(C)}\cap \ol{C}=\0$. Let us now choose a few constants.

\begin{center} \emph{THE CHOICE OF CONSTANTS $\eta<\da<\e$} \end{center}

\begin{enumerate}\setcounter{enumi}{12}

\item Choose $\e>0$ such that for all $x\in X\sm \bigcup D'_j$, $d(x,f(x))>3\e$
and for each crosscut $C$ of $X$ of diameter less than $\e$ with at least one
endpoint outside of $\bigcup D_j$ we have that $f(C)$ is
disjoint from $C$ (observe that outside any given neighborhood of $\{p_1,
\dots, p_m\}$ all points of $X$ move under $f$ by a bounded away from zero distance).

\item Choose $\da>0$ so that the following several inequalities hold:

\begin{enumerate}

\item $3\da<\e$,

\item $3\da<d(Z_i,Z_j)$ for all $i\ne j$,

\item \label{eU} $3\da<d(Z_i, [X\cup f(X)]\sm [Z_i\cup U_i])$ for each $i$,

\item $3\da<d(K_i,\C\sm U_i)$ for each $i$,

\item if $f(K_i)\cap Z_i=\0$, then $3\da<d(f(U_i),Z_i\cup U_i)$.

\end{enumerate}

\item By continuity choose $\eta>0$ such that for each set $H\subset V$ of
    diameter less than $\eta$ we have $\dia(H)+\dia(f(H))<\da$ and that
    $d(D'_i, D'_j)>\eta, i\ne j$.

\end{enumerate}

By (11) and (13) above, if a set $H\subset V$ is of diameter at most
$\eta$ and $H\not\subset \bigcup D'_i$, then $f(H)\cap H=\0$.
Indeed, otherwise let $x\in H\sm \bigcup D'_i$ and $y\in H$ be such
that $f(y)\in H$. Then by (13) $d(x, f(x))>3\e$ while  by (15) and
the triangle inequality $d(x, f(x))\le d(x,
f(y))+d(f(y),f(x))<\da<\e/3$, a contradiction.

\smallskip

Consider the family $\mathcal{E}_X$ of all components of the sets
$(\bd D'_i)\sm X$, and the crosscuts $Q(i, s(i))$. By condition (8)
from the choice of the closed disks $D_j$, the disks $D_j$ are
pairwise disjoint; hence, the crosscuts in $\mathcal{E}_X$ are
pairwise disjoint.

Let $T$ be the topological hull  $T=T(X\cup (\bigcup D'_j) \cup
\bigcup Q(i, s(i)))$. Then $T$ is a non-separating continuum. Call
$C\in \mathcal{E}_X$ an \emph{unshielded (crosscut of $X$)} if it is
a part of $\bd T$ and denote the family of all such crosscuts by
$\mathcal{E}^u_X$. By Claim A all crosscuts $Q(i, s(i))$ are
unshielded. Call $\vp$-preimages of unshielded crosscuts
\emph{unshielded (crosscuts of $\ol{\disk}$)} and denote their
family by $\mathcal{E}^u_\disk$. Clearly, any two unshielded
crosscuts have disjoint shadows.

For each $C\in\mathcal{E}_X^u$, let $C_\disk=\vp^{-1}(C)$. Note that there are at most
finitely many crosscuts $C\in\mathcal{E}^u_X$ with $\dia(C)\ge \eta/30$. Let
$C^1,\dots,C^q$ be the collection of all crosscuts $Q(i,s(i))$ and all
crosscuts in $\mathcal{E}^u_X$ with diameters at least $\eta/30$. By definition~\ref{repout},
$f(\ol{C^j})\cap\ol{C^j}=\0$ for each $j$. Then the crosscuts
$C^j_\disk=\vp^{-1}(C^j)$ are all pairwise disjoint and have disjoint shadows.
Hence we may assume that, if the endpoints of $C^j_\disk$ are $\al_j,\be_j$,
then $\al_1<\be_1<\dots<\al_q<\be_q<\al_{q+1}=\al_1$ in the positive circular
order around $\uc$.

For each $i$, $1\le i\le q$, choose a finite chain of crosscuts
$F^i_j$ in $\disk_\iy$ with endpoints $\ga_j,\ga_{j+1}$ where
$\be_i=\ga_1<\ga_2<\dots<\ga_k=\al_{i+1}$ so that all closures of crosscuts from
the collection $\{C^1_\disk,\dots,C^q_\disk\}\cup  \{F^i_j\}_{i,j}$,
$1\le j\le k(i)$ (except for the adjacent crosscuts which share
endpoints) and their shadows are pairwise disjoint (this can be easily done,
e.g. because accessible points on the boundary of $X$ are dense),
$\vp(F^i_j)=G^i_j$ is a crosscut of $X$ and $\dia(G^i_j)<\eta/30$
for all $i,j$. In addition we may assume that non-adjacent $G^i_j$
have disjoint sets of endpoints. Let $Y=T(\bigcup C^i_\disk\cup
\bigcup F^i_j)$. Then $Y$ is a Jordan disk whose boundary is a
simple closed curve $\widehat S''\subset \ol{\disk_\iy}$. Let
$S''=\vp(\widehat S'')$. The set $S''\cap X$ is finite. It
partitions $S''$ into links which include all $C^i$'s. However, some
links of the form $G^i_j$ may be very close to a fixed point of $f$
and may not move off themselves under $f$. Hence we modify $S''$ as
follows.

\smallskip
\noindent \textbf{Claim B.} \emph{There exists a bumping simple closed curve $S$ such that:
\begin{enumerate}
\item[(1b)]  $
\cup_{j=1}^q  C^j
\subset S$.
\item[(2b)]  all components of $S\sm [X\cup\bigcup_i  D'_i\cup \bigcup C^i]$ are of diameter less than $\eta$,
\item[(3b)] for each $i$ components of $S\cap \Int(D'_i)$ are
so small that they stay far away from the fixed points and are moved
off themselves by $f$.
\end{enumerate}
}

Let $Z=T(S''\cup \bigcup D'_j)$. Then $Z$ is a Jordan disk whose
boundary is a simple closed curve $S'$, and all crosscuts $C^i$ are
still contained in $S'$. We modify $S'$, keeping all $C^i$'s but
changing $S'\sm \bigcup C^i$ so that the resulting bumping simple
closed curve $S$ can be partitioned into finitely many links each
of which does not go deep into the interior of $\bigcup D'_j$ and,
hence,  moves off itself under $f$.

Consider the crosscuts $\mathcal{E}^u_\disk$ in $\disk_\infty$. If
the chain $\{F^i_1,\dots,F^i_{k(i)}\}$ intersects a crosscut $Q\in
\mathcal{E}^u_\disk$ let $p_Q$ and $r_Q$ be the first and last point
of intersection of the arc $\cup_i F^i_j$ and $Q$. Then $p_Q\ne
r_Q$. If $\vp([p_Q,r_Q])$ is small then move forward along $S''$.
Otherwise suppose that the endpoints of $\vp(Q)$ are $a_Q$ and $b_Q$
and assume that $a_Q<b_Q$ in the positive order around $X$. Suppose
that $p_Q\in  F^i_j$ which has endpoints  $\gamma_j$ and
$\gamma_{j+1}$ and $r_Q\in F^i_k$ which has endpoints $\gamma_k$ and
$\gamma_{k+1}$ and $\gamma_{j +1}\le \gamma_k$. Replace the subarc
from $\gamma_j$ up to $\gamma_{k+1}$ in $S''$ by an arc joining the
same endpoints whose $\vp$-image is very close to $\vp(Q)$. Moving
forward along $S''$ in the positive direction and making finitely
often similar modifications, we obtain the desired simple closed
curve $S$. This completes the proof of Claim B. \hfill$\qed$

We want to compute the variation of $S$. Each link $Q(j,s(j))$ contributes at
least $1$ towards $\var(f, S)$, and we want to show that all other links have
non-negative variation. To do so we want to apply Lemma~\ref{posvar+}. Hence we
need to verify that all links of $S$ are bumping arcs whose endpoints map back
into $X$ such that their images are disjoint from themselves. By Claim B, all
links of $S$ move off themselves. However some links of $S$ may have endpoints
mapped off $X$. To ensure that for our bumping simple closed curve endpoints $e$ of
its links map back into $X$ we have to replace some of the finite chains of
links of $S$ by one link which is their concatenation (this is similar to what
was done in Theorem~\ref{fixpt}). Then we will have to check if the new
``bigger'' links still have images disjoint from themselves.

Denote by $A$ the initial partition of $S$ into links which are
called $A$-links.
\smallskip

\noindent \textbf{Claim C.} \emph{There exists a partition $A'$ of $S$ whose
links are bumping arcs with endpoints mapped back into $X$ and whose images are
disjoint from themselves. Moreover, $A'$-links are concatenations of $A$-links
of $S$ such that all $A$-links of $S$ of type $Q(i, t)$ remain $A'$-links of
$S$.}

\smallskip

\noindent \emph{Proof of Claim C.} Suppose that $X\cap
S=A=\{a_0<a_1<\dots<a_n\}$ and $a_0\in A$ is such that $f(a_0)\in X$
(by arguments similar to those in Theorem~\ref{fixpt} one can show
that we can make this assumption without loss of generality). We
only need to enlarge and concatenate links of $S$ with at least one
endpoint of the link mapped outside $X$. Therefore all links of $S$
of the form $Q(j, s)$ do not come under this category of links
because by Definition~\ref{repout} their endpoints do map into $X$.
Other links, however, may have endpoints mapped outside $X$.
Observe, that by the construction all those other links are less
than $\eta$ in diameter and hence have the property (15) of the
choice of the constants.

Let $t'$ be minimal such that $f(a_{t'})\not\in X$ and $t''>t'$ be  minimal
such that $f(a_{t''})\in X$. Then $f(a_{t'})\in Z_i$ for some $i$. Since every
component of $[a_{t'},a_{t''}]\sm X$ has  image of diameter less than $\da$
(which is less than the distance between any two sets $Z_r, Z_l$), $f(a_t)\in
Z_i\sm X$ for all $t'\le t<t''$. On the other hand, for $t'\le t<t''$, $a_t\not
\in U_i$. To see this, note that if $f(K_i)\cap Z_i=\0$, then by the above made
choices $f(U_i)\cap Z_i=\0$, and if $f(K_i)\cap Z_i\ne\0$, then $f(U_i\cap
X)\subset X$ by the assumption. Thus, all points $a_t, t'\le t<t''$ are in
$X\sm U_i$ while all their images $f(a_t)$ are in $Z_i\sm X$, which by the
property (\ref{eU}) of the constant $\da$ implies that these two finite sets of
points are at least $3\da$ distant.

Now, by the proven above, all the $A$-links of $S$ in the arc
$[a_{t'-1}, a_{t''}]$ are of diameters less than $\eta$. Hence it
follows from the properties (15) and (\ref{eU}) of the constant
$\da$ that $f([a_{t'-1},a_{t''}])\cap [a_{t'-1},a_{t''}]=\0$ and we
can remove the points $a_t$, for $t'\le t<t''$ from the partition
$A$ of $S$. By continuing in the same fashion we obtain a subset
$A'\subset A$ such that for the closure of each component $C$ of
$S\sm A'$, $f(C)\cap C=\0$ and for both endpoints $a$ and $a'$ of
$C$, $\{f(a),f(a')\}\subset X$. Moreover, as was observed above, the
enlarging of links of $S$ does not concern any links of the original
bumping simple closed curve of the form $Q(j, s)$, in particular for
each $j$, $Q(j,j(s))$ is an $A'$-link of $S$. \hfill \qed

Now we apply a version of the standard argument from the proof of
Theorem~\ref{fixpoint}; here, instead of Theorem~\ref{fpthm} we use the fact
that $f$ satisfies the argument principle. Indeed, by Theorem~\ref{I=V+1} and
Lemma~\ref{posvar+}, $\ind(f, S)\ge \sum_j \var(f, Q(j,j(s)))+1\ge m+1$
contradicting Theorem~\ref{argupr}.
\end{proof}

It is possible to use  a different approach  in Definition~\ref{repout} and
Theorem~\ref{locrot}. Namely, a version of Definition~\ref{repout} could define
repelling outside $X$ at a fixed point $p$ as the existence of a  family of
closed disks $D^j$ with similar properties \emph{except}  now we would require
the existence of at least $\ind(f, p)$ pairwise non-homotopic  rays outside $X$
from $\iy$ to $p$ (landing on $p$) and the existence of the same number of
components of $S^j\sm X$ non-disjoint from corresponding rays and with non-zero
variation on each such component.

Then a version of Theorem~\ref{locrot} would state that if a positively
oriented map with isolated fixed points $f$ repels outside $X$ at each of its
fixed points, and the condition (2h) of Theorem~\ref{locrot} is satisfied, then
$X$ must be a point. The proof of this version of Theorem~\ref{locrot} is
almost  the same, except for a bit heavier notation needed (now that we have
not one, but $\ind(f, p)$ crosscuts with positive variation around each fixed
point in $X$. Since for our applications Theorem~\ref{locrot} suffices we
restricted ourselves to this case.

Theorem~\ref{locrot} implies the following.

\begin{cor}\label{degenerate}
Suppose that $f$ is a  positively oriented map with isolated fixed points, and
$X\subset\C$ is a non-separating and non-degenerate continuum satisfying
condition (2h) 
stated in Theorem~\ref{locrot} and such that all fixed points in
$X$ belong to $\bd X$. Then either $f$ does not repel outside $X$
at one of its fixed points, or the local index at one of its fixed
points is not equal to $1$.
\end{cor}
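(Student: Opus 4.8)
The plan is to observe that this corollary is, up to logical rearrangement, the contrapositive of Theorem~\ref{locrot}, so the argument is short. First I would recall precisely what Theorem~\ref{locrot} gives: if $f$ is a positively oriented map of the plane with isolated fixed points, $X\subset\C$ is a non-separating continuum or a point, condition (1h) holds (for each fixed point $p\in X$ one has $p\in\bd X$, $\ind(f,p)=1$, and $f$ repels outside $X$ at $p$), and condition (2h) holds, then $X$ is a point.

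Next I would argue by contradiction. Suppose $X$ is non-separating and non-degenerate, condition (2h) holds, all fixed points of $f$ in $X$ lie in $\bd X$, and yet \emph{both} alternatives in the conclusion of the corollary fail; that is, $f$ repels outside $X$ at every fixed point of $f$ in $X$, and $\ind(f,p)=1$ for every such fixed point $p$. I would then check that condition (1h) of Theorem~\ref{locrot} is satisfied verbatim: the requirement $p\in\bd X$ is part of the hypothesis of the corollary, the index requirement $\ind(f,p)=1$ is exactly the negation of the second alternative, and the ``repels outside $X$'' requirement is exactly the negation of the first alternative. Since (2h) is assumed, Theorem~\ref{locrot} applies and forces $X$ to be a point, contradicting non-degeneracy. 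Hence at least one of the two alternatives must hold, which is precisely the statement of the corollary.

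There is essentially no analytic obstacle here; the only thing to verify with care is the purely formal point that the hypotheses of the corollary, together with the negations of its two alternative conclusions, reassemble exactly into hypotheses (1h) and (2h) of Theorem~\ref{locrot}. Once this is confirmed, the contradiction is immediate. I would also remark in passing that the corollary holds, with the identical proof, for the strengthened version of Theorem~\ref{locrot} discussed after its proof (where ``repels outside $X$'' at $p$ is understood via $\ind(f,p)$ pairwise non-homotopic external rays landing at $p$), since that version likewise concludes that $X$ is degenerate.
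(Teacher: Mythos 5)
Your proof is correct and matches the paper's intent exactly: the paper states Corollary~\ref{degenerate} as an immediate consequence of Theorem~\ref{locrot} with no written argument, and your contrapositive reassembly of the hypotheses plus the negated alternatives into conditions (1h) and (2h) is precisely that implication. Nothing further is needed.
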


Lemma~\ref{repel} gives a verifiable sufficient condition for a
fixed point $a$ belonging to a locally invariant continuum $X$ to be such that
the map $f$ repels outside $X$. We will  apply the lemma in the next section.

\begin{lem}\label{repel}
Suppose that $f$ is a positively oriented map, $X\subset\C$ is a non-separating continuum or a point
and $p\in \bd X$ is a fixed point of $f$ such that:

\begin{enumerate}

\item[(i)]
there exists a neighborhood $U$ of $p$ such that $f|_{\ol{U}}$ is
one-to-one and $f(\ol{U}\cap X)\subset  X$,

\item[(ii)]
there exists a ray $R\subset\sphere\sm X$ from infinity such that
$\ol{R}=R\cup \{p\}$, $f|_R:R\to R$ is a homeomorphism and for each $x\in R$,
$f(x)$ separates $x$ from $\infty$ in $R$,

\item[(iii)]
there exists a nested sequence of closed disks $D_j\subset U$ with
boundaries $S_j$ containing $p$ in their interiors such that $\bigcap
D_j=\{p\}$ and $f(\ol{S_j\sm X})\cap D_j=\0$.

\end{enumerate}

Then for a sufficiently large $j$ there exists a component $C$ of $S_j\sm X$
with $C\cap R\ne\0$ and $\var(f, C)>0$, so that $f$ repels outside $X$ at $p$.
\end{lem}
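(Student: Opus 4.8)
The plan is to use the hypotheses to manufacture, for all sufficiently large $j$, a crosscut $C$ of $X$ contained in $S_j\setminus X$ that meets $R$ and on which the variation is strictly positive; this is exactly what is needed to exhibit a repelling outchannel in the sense of Definition~\ref{repout}. The key observation driving the argument is that since $f|_R:R\to R$ is a homeomorphism pushing every point of $R$ toward $p$ (i.e., $f(x)$ separates $x$ from $\infty$ in $R$), the ray $R$ behaves like the central line of a channel along which points are repelled, and the junction we use to compute variation can be built around a tail of $R$.

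First I would fix $j$ large enough that $D_j\subset U$ and so that, by (iii), $f(\ol{S_j\setminus X})\cap D_j=\0$; in particular each component $Q$ of $S_j\setminus X$ is a crosscut of $X$ with $\ol{f(Q)}\cap\ol{Q}=\0$, so $\var(f,Q)$ is defined (using Proposition~\ref{varcross} for independence of the completing curve). Since $p\in\bd X$ and $R$ lands at $p$, for large $j$ the disk $D_j$ is small enough that $R$ enters $\Int(D_j)$ and the tail of $R$ near $p$ must cross $\partial D_j=S_j$; because $R\cap X=\0$, it crosses $S_j$ inside some component $C$ of $S_j\setminus X$, and this $C$ meets $R$. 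Next I would compute $\var(f,C)$ by choosing a convenient junction: let $z\in C\cap R$ be the last point of $R\cap C$ before $R$ runs to $\infty$, and take the junction $J_z$ whose three legs start at $z$, with one leg running along the tail $[z,\infty]_R$ of $R$ to infinity and the other two running close to that tail on either side. Using (i) ($f$ is one-to-one and maps $\ol U\cap X$ into $X$) together with (ii), the image $f(C)$ is pushed along $R$ strictly toward $p$ (hence toward $X$ and away from the tail $[z,\infty]_R$ except near $z$), so $f(C)$ crosses $J_z$ — and it does so in a way that registers a nonzero count; combined with Corollary~\ref{posvaro} (or Lemma~\ref{posvar+}, since $C$ is a bumping arc of $X$ with endpoints mapping into $X$ and $f(C)\cap C=\0$), which forces $\var(f,C)\ge 0$, we conclude $\var(f,C)>0$.

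The routine verification that $f$ repels outside $X$ at $p$ then just consists of packaging: the nested disks $D_1\supset D_2\supset\cdots$ with $\bigcap D_j=\{p\}$, their boundary curves $S^j=S_j$, the ray $R$, and the crosscuts $Q^j=C$ (one for each large $j$) satisfy all the bulleted requirements of Definition~\ref{repout}, namely $f(D_1\cap X)\subset X$ (from (i), shrinking $D_1$ into $U$), $f(\ol{S^j\setminus X})\cap D^j=\0$ (from (iii)), and $Q^j\cap R\ne\0$ with $\var(f,Q^j)\ne 0$ (from the previous paragraph). The main obstacle I anticipate is the careful bookkeeping in the variation computation: showing that with the chosen junction $J_z$ the image $f(C)$ genuinely produces a crossing of sign $+1$ rather than cancelling out. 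This is where hypothesis (ii) — that $f$ acts on $R$ as a fixed-point-free homeomorphism pushing everything toward $p$ — must be used essentially, via the argument already employed in the proof of Lemma~\ref{dynatp}: if $f(C)$ failed to cross the tail $[z,\infty]_R$, then choosing $z$ with $(z,\infty)_R\cap C=\0$ would give $\var(f,C)=0$, contradicting that the crossing count is forced to be positive once we know it is nonzero and nonnegative; so the combinatorics of the junction, not any new idea, is the content here.
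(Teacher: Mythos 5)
Your outline reproduces the overall shape of the argument (find a crosscut $C\subset S_j\sm X$ meeting $R$, compute its variation with a junction built along a tail of $R$, then package the data as in Definition~\ref{repout}), but both halves of the key inequality $\var(f,C)>0$ have genuine gaps. For nonnegativity you invoke Corollary~\ref{posvaro} or Lemma~\ref{posvar+}, and neither applies here: Corollary~\ref{posvaro} requires $f(X)\subset T(X)$ and Lemma~\ref{posvar+} requires $f$ to scramble the boundary of $X$, whereas in Lemma~\ref{repel} the only hypothesis of that kind is the purely local condition $f(\ol{U}\cap X)\subset X$; indeed the paper stresses that this lemma establishes $\var(f,C)>0$ \emph{without} any scrambling assumption. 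The tool that does apply is Lemma~\ref{posvar}, and the nontrivial step is manufacturing the auxiliary continuum $K$ it requires: in the paper one takes $W$ to be the component of $U\sm(C\cup R)$ whose boundary contains $(\bd U)\sm\{q\}$ (with $(\bd U)\cap R=\{q\}$), joins the endpoints $u,v$ of $C$ by an arc $K\subset W$ disjoint from $C\cup R$, and uses the injectivity of $f|_{\ol{U}}$ together with $f(R)=R$ to get $f(K)\cap R=\0$; only then does Lemma~\ref{posvar} yield $\var(f,C)\ge 0$. Nothing in your sketch plays the role of $K$.

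For the nonvanishing part your reasoning is circular: you deduce that the crossing count is nonzero from the assertion that it ``is forced to be positive once we know it is nonzero and nonnegative,'' but nonzero is precisely what must be proved, and Lemma~\ref{dynatp} runs in the opposite direction (it \emph{assumes} $\var\ne 0$ and concludes $f(Q^j)\cap R\ne\0$). The actual content is a parity count. One first needs $C$ to be a component of $S_j\sm X$ that $R$ crosses \emph{essentially} (the paper gets this from \cite{bo06}; merely noting that $R$ must hit $S_j$ on its way to $p$ is not enough), so that after a transversality adjustment $|R\cap C|$ is odd; injectivity of $f$ on $\ol{U}$ plus $f|_R$ being a homeomorphism of $R$ give $|f(C)\cap R|=|f(C\cap R)|$ odd; hypothesis (iii) makes $f(\ol{C})$ disjoint from $D_j\supset[p,z]_R$; and a separate Jordan--curve parity argument (Claim A in the paper, using that $f(u),f(v)\in X$ lie in the unbounded complementary domain of $[z,w]_C\cup[z,w]_R$) shows $|f(C)\cap[z,w]_R|$ is even. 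Only the combination gives $|f(C)\cap[w,\iy]_R|$ odd and hence $\var(f,C)\ne 0$. Your appeal to ``$f(C)$ is pushed along $R$ toward $p$, so it crosses $J_z$ with a nonzero count'' skips exactly this counting, so the central claim is unsupported as written.
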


Observe that here we show that $\var(f, C)>0$ without any ``scrambling''
assumptions on $f$.

\begin{proof}
Choose a Jordan disk $U$ as in (i) so that $(\bd U)\cap R=\{q\}$ is
a point and $X\sm \ol{U}\ne \0$. Choose $j$ so that $D_j\cup
f(D_j)\subset U$. By \cite{bo06} there is a component $C$ of $(\bd
D_j)\sm X$ such that $R$ crosses $C$ essentially (see
Definition~\ref{essential}). Slightly adjusting $D_j$, we may
assume that $R\cap \bd D_j$ is finite and each intersection is
transversal. Since $R$ crosses $C$ essentially, $|R\cap C|$ is odd;
since $f$ is one-to-one on $\ol{U}$, $|f(C)\cap R|$ is odd as well.

Let $u, v$ be the endpoints of $C$. Observe, that $C$ can be
included in a simple closed curve $S$ around $X$ so that $X\subset
T(S)$. Since by (i) $f(\ol{U}\cap X)\subset  X$, we see that
$f(u)\in T(S), f(v)\in T(S)$ and variation $\var(f, C)>0$ is
well-defined (see Definition~\ref{vararc}).

\begin{figure}
\begin{picture}(307,231)
\put(75,0){\scalebox{0.3}{\includegraphics{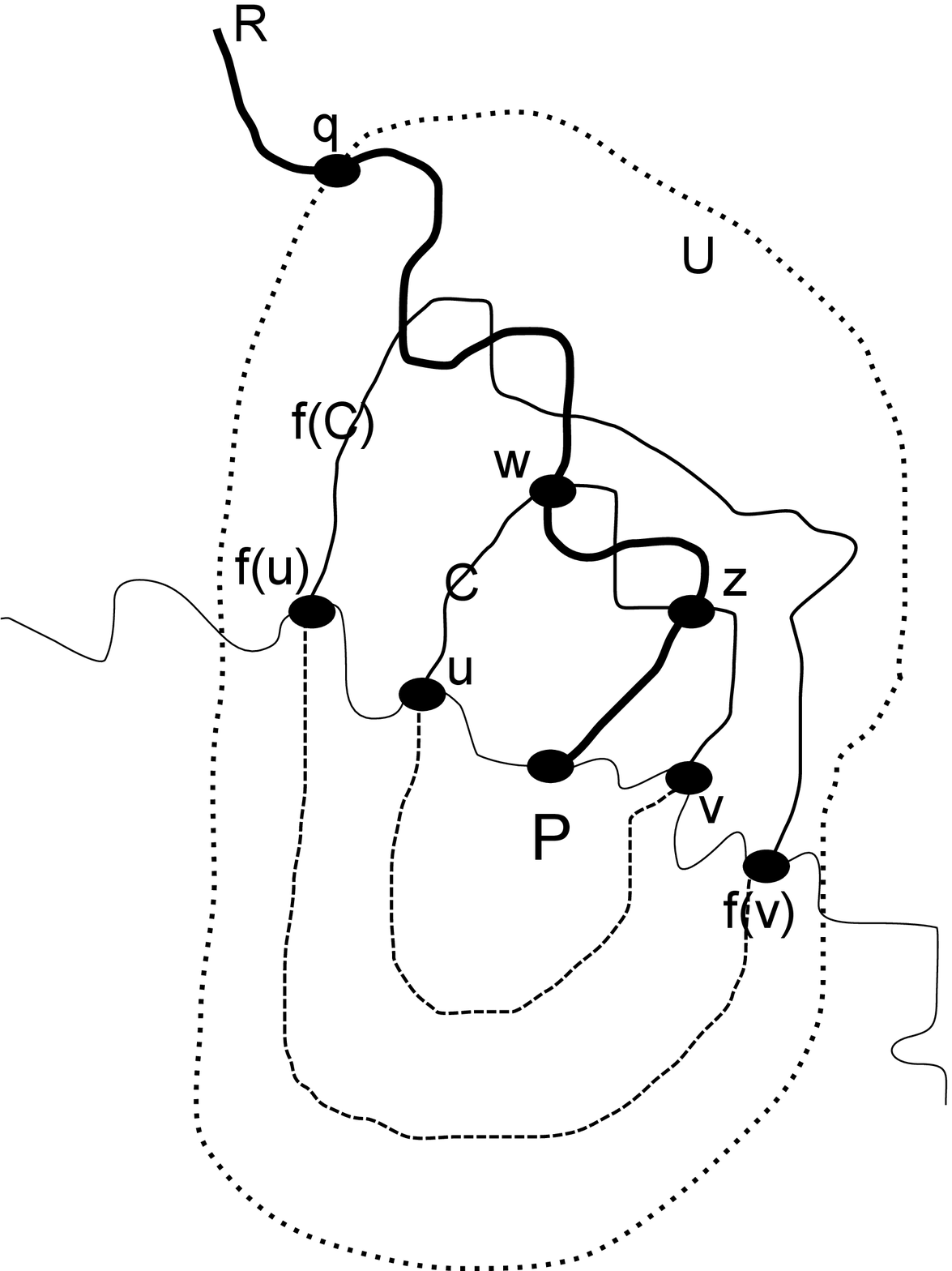}}}
\end{picture}
\caption{Illustration to the proof of Lemma~\ref{repel}.}
\end{figure}

Let us move along $R$ from infinity to $p$ and denote by $w$ be the
first point of $C\cap R$ which we meet and by $z$ the last point.
Then by (ii), $f(C)\cap [p, z]_R=\0$. Also, by (i) $|f(C)\cap
R|=|f(C\cap R)|$ is odd. Since $X\sm D_j\ne \0$ and $X\cap ([z,
w]_C\cup [z, w]_R)=\0$, $X$ is contained in the unbounded component
$V^\iy$ of the complement to $[z, w]_C\cup [z, w]_R$. Since $u$ and
$v$ belong to $U\cap X$, their images $f(u)$ and $f(v)$ belong to
$X$ as well. Hence $f(u), f(v)\in V^\iy$.

\smallskip

\noindent \textbf{Claim A.} \emph{$|f(C)\cap [z, w]_R|$ is even.}

\smallskip

\noindent \emph{Proof of Claim A}.\, A complementary domain $O$ of
$[z, w]_C\cup [z, w]_R$ is called \emph{even/odd} if there is an
arc $J$ from infinity to a point in $O$ so that $J\cap C\cap R=\0$,
$J\cap ([z, w]_C\cup [z, w]_R)$ is finite, every intersection is
transversal and $|J\cap ([z, w]_C\cup [z, w]_R)|$ is even/odd,
respectively. By \cite{ot82} the notion of an even/odd domain is
independent of $J$, well-defined and each complementary domain of
$[z, w]_C\cup [z, w]_R$ is either even or odd. Since by (iii)
$f(\ol{C})\cap \ol{C}=\0, f(\ol{C})$ can only intersect $C\cup R$
at points of $R$. Also, whenever $f(\ol{C})$ meets $[z, w]_R$, it
crosses from an even to an odd domain or vice versa. Since both
$f(u)$ and $f(v)$ are in the unbounded (and hence even) domain of
$\C\sm [f(C)\cup [z,w]_R$], $|f(C)\cap [z, w]_R|$ is even as
desired. \qed

Observe that $f(\ol{C})$ is outside $D_j$ and hence is disjoint from
$[p, z]_R\subset D_j$. Since $|f(C)\cap R|$ is odd and $|f(C)\cap
[p, w]_R|=|f(C)\cap [z, w]_R|$ is even, $|f(C)\cap [w, \iy]_R|$ is
odd. Since every intersection is transversal, we can replace $[w,
\iy]_R$ by a junction $J_w$ such that every intersection point of
$f(C)\cap [w, \iy]_R$ contributes exactly $+1$ or $-1$ to the count.
Hence ${\mathrm{var}}(f, C)\ne 0$.

We prove next that ${\mathrm{var}}(f, C)>0$. Note that $[p, q]_R\subset \ol{U}$
is an arc which meets $\bd U$ only at $q$. Let $W$ be the component of $U\sm
(C\cup R)$ whose boundary contains $(\bd U)\sm q$. Note that $u$ and $v$ are
accessible points in $\bd W$. Hence points $u$ and $v$ can be connected with
an arc $K$ inside $W$ disjoint from $R\cup C$. Then, since $f|_U$ is a
homeomorphism, 
$f(K)\cap R=\0$. By Lemma~\ref{posvar} this implies that
${\mathrm{var}}(f, C)\ge 0$ and by the previous paragraph then
${\mathrm{var}}(f, C)>0$ (basically, we simply choose a junction
$J'_w$ close to $[w, \iy]_R$ such that $f(K)\cap J'_w=\0$ and
conclude that ${\mathrm{var}}(f, C)=\win(f, C\cup K, w)>0$ since $f$
is a positively oriented map).
\end{proof}

It is now easy to see that the following corollary holds.

\begin{cor}\label{locrot1}
Suppose that $X\subset\C$ is a non-separating continuum or a point
and $f:\C\to\C$ is a positively oriented map with isolated fixed
points, and the following conditions hold.

\begin{enumerate}

\item[(a)]
Each fixed point $p\in X$ is topologically repelling, belongs to
$\bd X$, and has a neighborhood $U_p$ such that $f(U_p\cap X)\subset
X$ and $f|_{U_p}$ is a homeomorphism.

\item[(b)] For each fixed point $p\in X$, there exists a ray
    $R\subset\sphere\sm X$ from infinity landing on $p$,
    $f|_R:R\to R$ is a homeomorphism and for each $x\in R$,
    $f(x)$ separates $x$ from $\infty$ in $R$.

\item[(c)]
The map $f$ scrambles the boundary of $X$. Moreover for every $i$
either $f(K_i)\cap Z_i=\0$ or there exists a neighborhood $U_i$ of
$K_i$ with $f(U_i\cap X)\subset X$.

\end{enumerate}

Then $X$ is a (fixed) point.
\end{cor}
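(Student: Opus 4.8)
The plan is to obtain Corollary~\ref{locrot1} directly from Theorem~\ref{locrot} by checking that its two hypotheses (1h) and (2h) hold under the assumptions (a)--(c). Hypothesis (2h) is literally condition (c), so no work is needed there. If $X$ is a single point the statement is vacuous, so I will assume $X$ is non-degenerate and concentrate on verifying (1h): every fixed point $p\in X$ lies in $\bd X$, has local index $\ind(f,p)=1$, and is a point at which $f$ repels outside $X$.

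The local index is immediate. By (a) each fixed point $p\in X$ lies in $\bd X$ and has a neighborhood $U_p$ on which $f$ is a homeomorphism, so the local degree $\deg_f(p)$ equals $1$; since $p$ is topologically repelling, Lemma~\ref{ind1} gives $\ind(f,p)=\deg_f(p)=1$, as required.

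The substantive step is to produce, for each fixed point $p\in X$, the data witnessing that $f$ repels outside $X$ at $p$, and for this I would invoke Lemma~\ref{repel}. Condition (i) of that lemma follows from (a) after shrinking $U_p$ to a Jordan neighborhood $U$ with $\ol{U}\subset U_p$; condition (ii) is exactly (b). For condition (iii) I would use Definition~\ref{toprepat}: since $p$ is topologically repelling there is a sequence of simple closed curves $S_j\to\{p\}$ with $p\in\Int(T(S_j))\subset T(S_j)\subset\Int(T(f(S_j)))$. Because $S_j\to\{p\}$ one has $T(S_j)\to\{p\}$, and also $f(S_j)\to\{p\}$ hence $T(f(S_j))\to\{p\}$ (the topological hull of a set contained in a small closed ball about $p$ is contained in that ball), so for $j$ large both $T(S_j)$ and $T(f(S_j))$ lie in $U$. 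As $f|_U$ is a homeomorphism onto its image, $f$ maps the closed disk $T(S_j)$ onto a closed topological disk with boundary $f(S_j)$, so $f(T(S_j))=T(f(S_j))$. Setting $D_j:=T(S_j)$ we get $D_j\subset\Int(f(D_j))$, whence $f(S_j)=\bd f(D_j)$ is disjoint from $D_j$; in particular $f(\ol{S_j\sm X})\cap D_j=\0$, which is condition (iii) (the nesting and $\bigcap D_j=\{p\}$ being immediate from $S_j\to\{p\}$). Lemma~\ref{repel} then gives that $f$ repels outside $X$ at $p$.

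With (1h) and (2h) verified, Theorem~\ref{locrot} applies and $X$ is a point. The only mildly delicate point is the verification of condition (iii) of Lemma~\ref{repel}, where one must pass from ``topologically repelling'' to ``$f(\ol{S_j\sm X})\cap D_j=\0$''; the hypothesis in (a) that $f$ is a homeomorphism near $p$ is exactly what makes this go through, since it lets one identify $f(T(S_j))$ with $T(f(S_j))$ and read off that the image curve $f(S_j)$ encloses, rather than meets, $D_j$. Everything else is bookkeeping, including the routine reduction that removes the degenerate case $X=\{p\}$.
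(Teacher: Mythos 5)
Your proposal is correct and follows essentially the same route as the paper: verify hypothesis (2h) of Theorem~\ref{locrot} as literally condition (c), verify (1h) via Lemma~\ref{ind1} (index $+1$ at a topologically repelling fixed point of local degree one) and Lemma~\ref{repel} (with (i) from (a), (ii) from (b), (iii) from topological repulsion), and then invoke Theorem~\ref{locrot}. Your only departure is that you spell out the passage from Definition~\ref{toprepat} to condition (iii) of Lemma~\ref{repel} --- using that $f$ is a homeomorphism near $p$ so that $f(S_j)$ bounds $T(f(S_j))$ and hence misses $D_j$ --- a step the paper asserts without detail, so this is a harmless (indeed welcome) refinement of the same argument.
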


\begin{proof}
Let us apply Theorem~\ref{locrot}. To do so, we verify its
conditions. The facts that $X\subset\C$ is a non-separating
continuum or a point and $f:\C\to\C$ is a positively oriented map
with isolated fixed points are clearly satisfied. To verify
condition (1h) of Theorem~\ref{locrot}, suppose that $p\in X$ is a
fixed point. Then by (a) above $p\in \bd X$. Moreover, $p$ is
topologically repelling, and so by Lemma~\ref{ind1} the index at
$p$ is $+1$.

It remains to verify that $f$ repels outside $X$ at $p$. To do so we
apply Lemma~\ref{repel}.  Since $p$ is topologically repelling,
there exists a nested sequence of closed disks $D_j\subset U$ with
boundaries $S_j$ containing $p$ in their interiors, with $\bigcap
D_j=\{p\}$ and $f(\ol{S^j\sm X})\cap D^j=\0$. Hence the condition
(iii) of Lemma~\ref{repel} is satisfied. The condition (i) of
Lemma~\ref{repel} immediately follows from (a) above; the condition
(ii) of Lemma~\ref{repel} immediately follows from (b) above. Hence
by Lemma~\ref{repel} the map $f$ repels outside $X$ at $p$.
Therefore the condition (1h) of Theorem~\ref{locrot} is satisfied.
Condition (2h) of Theorem~\ref{locrot} is also satisfied (it simply
coincides with condition (c) of our corollary), hence by
Theorem~\ref{locrot} $X$ is a point.
\end{proof}

\section{Applications to complex dynamics}\label{complappl}

We begin by introducing a few facts concerning local dynamics at parabolic and
repelling periodic points of a polynomial which were not necessary for stating
the results of this section in Chapter~\ref{descr2} but are needed for the
proofs. A nice description of this can be found in \cite{miln00} (\cite{carlgame93}
can also serve as a good source here).

Let $P:\C\to\C$ be a complex polynomial, $J_P$ its Julia set ($J_P$
is the closure of the set of repelling periodic points of $P$) and
$K_P=T(J_P)$ the ``filled-in'' Julia set. Recall That $\si_d: \ucirc
\to \ucirc$ is defined by $\si_d(\al)=d\al\mod 1$, where
$\ucirc=\reals/\zed$ is parameterized by $[0,1)$. (This map is
conjugate to the map $z\to z^d$ restricted to the unit circle in the
complex plane.) If $p$ is a periodic point of $P$ of period $n$ and
$(P^n)'(p)=re^{2\pi i \al}$ with $r\ge 0$, then $p$ is
\emph{repelling} if $r>1$, \emph{parabolic} if $r=1$ and
$\al\in\mathbb{Q}$, \emph{irrational neutral} if $r=1$ and
$\al\in\R\sm\mathbb{Q}$ and \emph{attracting} if $r<1$. If $p$ is a
repelling or parabolic fixed point in a non-degenerate component
$Y$ of $K_P$, then by \cite{douahubb85, lepr} there exist $1\le k<\infty$ 
external rays $R_{\al(i)}$ such that $\si_d|_{\{\al(1),\dots,\al(k)\}}:
\{\al(1),\dots,\al(k)\}\to \{\al(1),\dots,\al(k)\}$ is a permutation, 
all $\al(i)$ are of the same minimal period under $\si_d$, for
each $j$ the ray $R_{\al(j)}$ lands on $p$, and no other external rays land on $p$.

Components of $\C\sm J_P$ are called \emph{Fatou domains}. There are
three types of bounded Fatou domains $U$. A Fatou domain $U$ is
called an \emph{attracting domain} if it contains an attracting
periodic point, a \emph{Siegel domain} if it contains an irrational
neutral periodic point and a \emph{parabolic domain} if it is
periodic but contains no periodic points. In the latter case there
always exists a parabolic periodic point on the boundary of the
parabolic Fatou domain. An irrational neutral periodic point inside
a Siegel domain is called a \emph{Siegel (periodic) point}; an
irrational neutral periodic point in $J_P$ is called a \emph{Cremer
(periodic) point}.

Any two distinct parabolic Fatou domains which contain the same
parabolic periodic point in their boundaries are separated by two
external rays which land at this parabolic point. It is also known
that points inside these parabolic domains are attracted by the orbit
of this parabolic periodic point while points on the external rays
landing at points of this orbit are repelled to infinity. Suppose
that $p$ is a parabolic fixed point, $P'(p)=e^{2\pi i \frac{r}{q}}$,
$r,q\in\mathbb{Z}$, $R_0,\dots, R_{m-1}$ are all external rays
landing at $p$ and $U_0,\dots, U_{k-1}$ are all Fatou domains which
contain $p$ in their boundaries. Moreover, suppose that both rays and
domains are numbered according to the positive circular order around
$p$. Then combinatorially one can think of the local action of $P$ on
rays and domains at $p$ as a rotation by $r/q$. This means that
$P(U_j)=U_{(j+r)\mod k}$ and all rays between $U_i$ and $U_{i+1}$ are
mapped by $P$ in an order preserving way onto all rays between
$U_{(i+r)\mod k}$ and $U_{(i+1+r)\mod k}$).

Before we continue we want to recall the notion of a general
puzzle-piece which is first introduced in Definition~\ref{genpuz}.

\setcounter{chapter}{5}\setcounter{section}{5} \setcounter{thm}{0}

\begin{defn}[General puzzle-piece]
Let $P:\C\to\C$ be a polynomial. Let $X\subset K_P$ be  a
non-separating subcontinuum or a point such that the following holds.


\begin{enumerate}

\item 
There exists $m\ge 0$ and $m$
pairwise disjoint non-separating continua/points $E_1\subset X, \dots,
E_m\subset X$.
\item There exist $m$ finite sets of external rays $A_1=\{R_{a^1_1}, \dots, R_{a^1_{i_1}}\},
\dots, A_m=\{R_{a^m_1}, \dots, R_{a^m_{i_m}}\}$ 
with $i_k\ge 2, 1\le k\le m$.

\item 
We have $\Pi(A_j)\subset E_j$ (so the set $E_j\cup
(\cup^{i_j}_{k=1} R_{a^j_k})=E'_j$ is closed and connected).

\item 
$X$ intersects a unique component $C_X$ of $\C\sm \cup E'_j$
\index{CX@$C_X$}.

\item For each Fatou domain $U$ either $U\cap X=\0$ or $U\subset X$.

\end{enumerate}

We call such $X$ with the continua $E_i$ and the external rays
$R_{\al^k_i}$ a \emph{general puzzle-piece} and call the continua
$E_i$  \emph{exit continua} of $X$. For each $k$, the set $E'_k$
divides the plane into $i_k$ open sets which we will call
\emph{wedges (at $E_k$)}; denote by $W_k$ the wedge which contains
$X\sm E_k$ (it is well-defined by (4) above).
\end{defn}

\setcounter{chapter}{7}\setcounter{section}{5} \setcounter{thm}{0}


\begin{figure}

\begin{center}

\includegraphics[width=4truein, height=4truein]{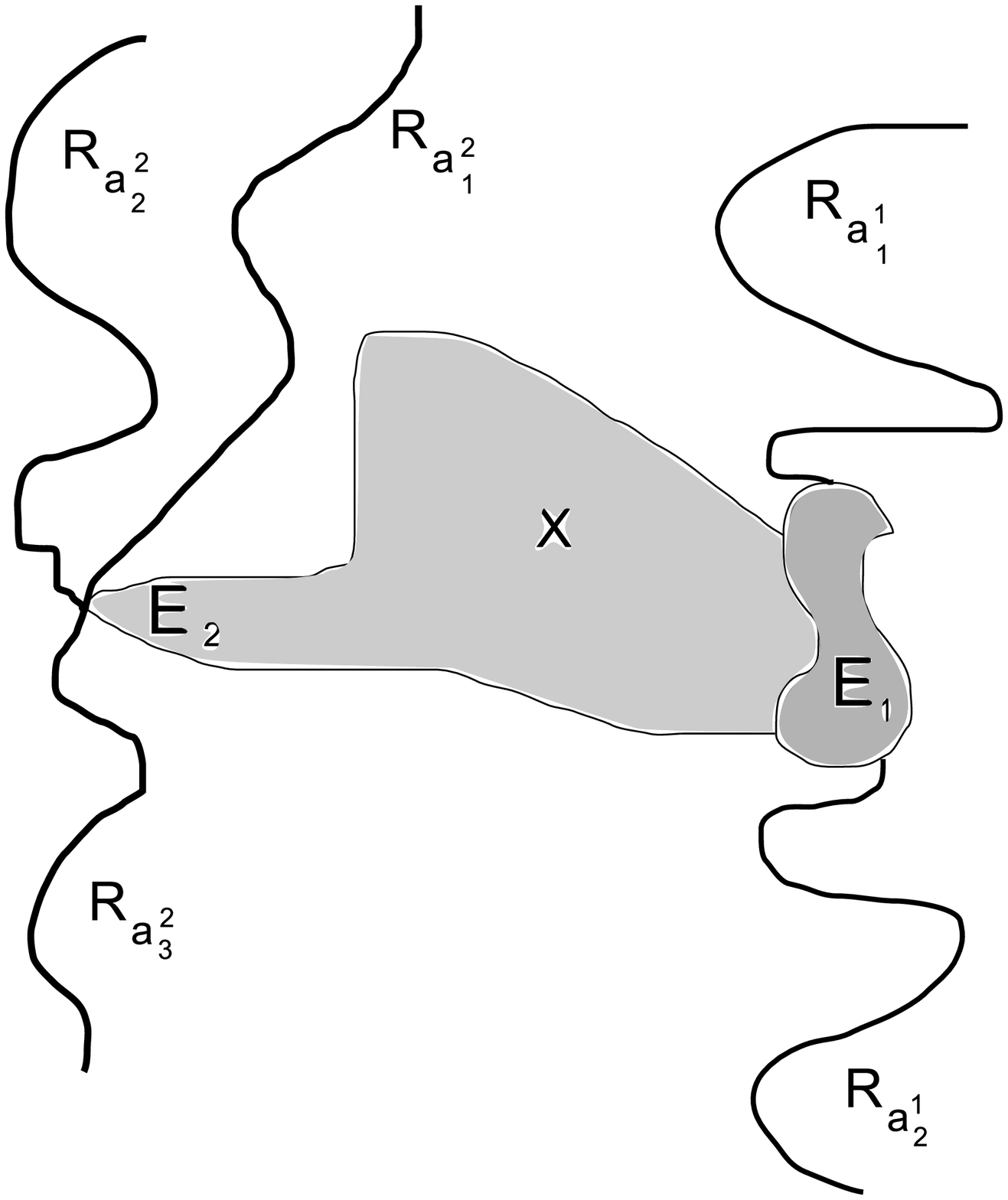}



\caption{A general puzzle-piece} \label{puzzle-piece}

\end{center}

\end{figure}

Let us now see whether condition (1) of Theorem~\ref{locrot} applies
to the polynomial $P$ with fixed parabolic points in a general
puzzle-piece $X$ such that $P(X)\cap C_X\subset X$ (loosely, it
means that $P(X)$ does not ``grow'' at points of $C_X$). We need to
check that at a fixed parabolic point $p\in X$ the map $P$ repels
outside $X$ and the local index is $1$.

For convenience, consider $p\in C_X$. Let $L$ be a cycle of
parabolic domains containing $p$ in their boundaries. To make a more
complete picture, we first observe that either $L\subset X$ or all
domains in $L$ are disjoint from $X$. Indeed, suppose that one of
the domains in $L$ is contained in $X$. Then, as there is no
``growth'' of $P(X)$ at $p$, we see that all domains of $L$ are in
$X$. Otherwise by condition (5) from Definition~\ref{genpuz} all
domains in $L$ are disjoint from $X$.

Suppose first that $P'(p)\ne 1$. There must exist small disks $D$
such that components of $(\bd D)\sm X$ map outside $D$. Let $Q$ be a
crosscut which is a component  of $(\bd D)\sm X$. Choose an external
ray $R$ landing at $p$ and crossing $Q$ essentially. By the analysis
of dynamics around $p$ (the fact that $P$ locally ``rotates'', see,
e.g., \cite{miln00}) and since $P(X)\cap C_X\subset X$, we conclude
that $P(Q)\cap R=\0$. However, then $\var(f, Q)=0$, a contradiction
with the existence of a crosscut of positive variation in $(\bd
D)\sm X$.

On the other hand, if $P'(p)=1$ then, as was explained right after
the proof of Lemma~\ref{ind1}, $\ind(P, p)>1$ (again, see, e.g.,
\cite{miln00}) which contradicts the condition (1) of
Theorem~\ref{locrot} as well. Hence in the parabolic case
Theorem~\ref{locrot} cannot be applied ``as is'' to the polynomial
$P$. Moreover, since at parabolic points the map is not
topologically repelling, neither is Corollary~\ref{locrot1}
applicable in this case.

The idea allowing us to still deal with parabolic points is that we
can change $P$ inside the parabolic domains in question without
compromising the rest of the arguments and modifying these parabolic
points to topologically repelling periodic points. The thus
constructed new map $g$ will no longer be holomorphic but will
satisfy the conditions of Corollary~\ref{locrot1}. We formalize this
idea in the following lemma.

\begin{lem}\label{pararepel}
Suppose that $\{p_j\}, 1\le j\le m,$ are all parabolic fixed points of a
polynomial $P$ with $P'(p_j)=1$. Then there exists a positively oriented map
$g_P=g$ which coincides with $P$ outside the invariant parabolic Fatou domains,
is locally one-to-one at each $p_j$ (hence $p_j$ is not a critical point of $g$)
and is such that all the points $p_j$ are topologically repelling
fixed points of $g$. In particular, $\ind(g, p_j)=+1$ for all $j,\
1\le j\le m$.
\end{lem}

\begin{proof}
Let us consider a fixed parabolic point $p=p_j$. Let $F_i$ be the
invariant Fatou domains containing $p$ in their boundaries $B_i$. By
a nice recent result of Yin and Roesch \cite{roesyin08}, the boundary
$B_i$ of each $F_i$ is a simple closed curve and $P|_{B_i}$ is
conjugate to the map $z\to z^{d(i)}$ for some integer $d(i)\ge 2$.
Let $\psi:F_i\to\disk$ be a conformal isomorphism. Since $B_i$
is a simple closed curve,  $\psi$ extends to a homeomorphism on
$\ol{\disk}$. Since $f|_{B_i}$ is conjugate to the map $z\to
z^{d(i)}$, it now follows that the map $P|_{\ol{F_i}}$ can be
replaced by a map topologically conjugate by $\psi$ to the map
$g_i(z)=z^{d(i)}$ on the closed unit disk $\ol{\disk}$ which agrees
with $f$ on $B_i$. Let $g$ be the map defined by $g(z)=P(z)$ for each
$z\in\C\sm\bigcup F_i$ and $g(z)=g_i(z)$ when $z\in F_i$. Then $g$ is
clearly a positively oriented map.

Since by the analysis of the dynamics around parabolic points \cite{miln00} $P$
repels points away from $p$\, outside parabolic domains $F_i$, we conclude, by
the construction, that $p$ is a topologically repelling fixed point of $g$.
Clearly, $\deg_g(p)=1$, hence by Lemma~\ref{ind1} $\ind(g, p)=\deg_g(p)=1$ as
desired. Continuing in this fashion, we can change $P$ on all invariant
parabolic domains with fixed points $p_j$, $1\le j\le m$, in their boundaries to
a map $g$ which satisfies the requirements of the lemma.
\end{proof}

We use Lemma~\ref{pararepel} in the proof of the
Theorem~\ref{pointdyn}. Recall, that a fixed point $x$ of a
polynomial $P$ is said to be \emph{non-rotational} if there is a fixed external ray
landing at $x$ (it follows that each such point is either repelling
or parabolic).

\begin{thm}\label{pointdyn}
Let $P$ be a polynomial with filled-in Julia set $K_P$ and let $Y$
be a non-degenerate periodic component of $K_P$ such that $P^p(Y)=Y$.
Suppose that $X\subset Y$ is a non-degenerate general puzzle-piece
with $m\ge 0$ exit continua $E_1, \dots, E_m$ such that $P^p(X)\cap
C_X\subset X$ and either $P^p(E_i)\subset W_i$, or $E_i$ is a
$P^p$-fixed point.
Then at least one of the following claims holds:

\begin{enumerate}

\item $X$ contains a  $P^p$-invariant parabolic domain,

\item $X$ contains a $P^p$-fixed point  which is neither repelling nor
    parabolic, or

\item $X$ has an external ray $R$ landing at a repelling or parabolic
    $P^p$-fixed point such that $P^p(R)\cap R=\0$ (i.e., $P^p$ locally
    rotates at some parabolic or repelling $P^p$-fixed point).

\end{enumerate}

Equivalently, suppose that $Y$ is a non-degenerate periodic
component of $K_P$ such that $P^p(Y)=Y$, $X\subset Y$ is a general
puzzle-piece with $m\ge 0$ exit continua $E_1, \dots, E_m$ such that
$P^p(X)\cap C_X\subset X$ and either $P^p(E_i)\subset W_i$, or $E_i$
is a $P^p$-fixed point; if, moreover, $X$ contains only
non-rotational $P^p$-fixed points and does not contain
$P^p$-invariant parabolic domains, then it is degenerate.
\end{thm}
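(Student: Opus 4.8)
The plan is to deduce Theorem~\ref{pointdyn} from Theorem~\ref{locrot} (equivalently Corollary~\ref{locrot1}) after modifying $P^p$ inside any invariant parabolic domains via Lemma~\ref{pararepel}, and to phrase the conclusion in the contrapositive form given in the second ("equivalently") version of the statement. So I would assume that $X$ contains no $P^p$-invariant parabolic domain, that every $P^p$-fixed point in $X$ is non-rotational (hence repelling or parabolic, with a fixed external ray landing at it), and then derive that $X$ must be degenerate. Replacing $P$ by $P^p$ (still a polynomial, still positively oriented, with $J_{P^p}=J_P$ and the same filled Julia set), I may assume $p=1$ and work with $P$ itself; I also note $P$ has isolated fixed points since a polynomial has finitely many fixed points.

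\textbf{Step 1: reduce to a map with only repelling fixed points in $X$ and index $1$ at each.} The fixed points of $P$ in $X$ are finitely many, each repelling or parabolic with a fixed ray landing. For a parabolic fixed point $q\in X$ with $P'(q)=e^{2\pi i r/s}$, $r/s\ne 0$, the local-rotation analysis recalled before Lemma~\ref{pararepel} shows that any crosscut $Q$ of a small disk $D$ about $q$ crossed essentially by a fixed ray $R$ has $P(Q)\cap R=\emptyset$ (using $P(X)\cap C_X\subset X$ so that nothing "grows" at $q$), forcing $\var(P,Q)=0$; this is exactly the obstruction to "repels outside $X$", and in fact it says $q$ is rotational — contradicting our standing assumption that all $P$-fixed points in $X$ are non-rotational. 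Hence every parabolic fixed point $q\in X$ has $P'(q)=1$. Now apply Lemma~\ref{pararepel}: since by hypothesis no invariant parabolic domain is contained in $X$, and (by condition (5) of Definition~\ref{genpuz} together with $P(X)\cap C_X\subset X$, as in the discussion preceding Lemma~\ref{pararepel}) each cycle of parabolic domains at such a $q$ is either contained in $X$ or disjoint from $X$, every invariant parabolic domain touching $X$ is disjoint from $X$. Therefore the surgery of Lemma~\ref{pararepel} produces a positively oriented map $g$ with isolated fixed points that agrees with $P$ on a neighborhood of $X$ (in particular on $X$ and on all the external rays and exit continua involved), at which every former parabolic fixed point in $X$ has become topologically repelling with $\ind(g,q)=1$; repelling fixed points of $P$ in $X$ are already topologically repelling for $g=P$ near them, with $\ind(g,q)=\deg_g(q)\ge 1$, and since a repelling fixed point of a polynomial is non-critical, $\ind(g,q)=1$. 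Thus for $g$, every fixed point in $X$ is topologically repelling, lies in $\bd X$ (fixed rays land on it, so it is accessible hence boundary), has $\ind(g,\cdot)=1$, and has a neighborhood on which $g$ is a local homeomorphism with $g(\text{nbhd}\cap X)\subset X$ — the latter because $P(X)\cap C_X\subset X$ locally, and on any exit continuum $E_i$ that is a $P^p$-fixed point we may shrink. Condition (b) of Corollary~\ref{locrot1} (a fixed ray $R\subset \sphere\sm X$ with $g|_R$ a homeomorphism repelling towards $\infty$) holds for repelling/parabolic fixed points with landing fixed rays by the standard local dynamics recalled at the start of Section~\ref{complappl}.

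\textbf{Step 2: verify the scrambling condition.} Here I need that $g$ (equivalently $P^p$, since $g=P^p$ off the parabolic domains, which are in $C_X$ and disjoint from $X$) scrambles the boundary of $X$ in the sense of Definition~\ref{scracon}, with the extra clause of (2h)/(c). The exit continua $K_i$ of Definition~\ref{scracon} are taken to be the $E_i$ of the puzzle-piece; the continua $Z_i$ are the closures of the pieces of $P^p(X)$ that stick out past $E_i$, which by the puzzle-piece hypothesis lie in the wedge $W_i$, so $Z_i\subset \ol{W_i}$ and $Z_i\cap X=E_i$ (using $P^p(X)\cap C_X\subset X$, so growth occurs only through the exit continua). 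Condition (3a) (strong scrambling) or at least condition (3) with the refinement in (2h) holds: if $E_i$ is a $P^p$-fixed point then $P^p(E_i)=E_i\subset K_i$, and a neighborhood argument handles it; if $P^p(E_i)\subset W_i$ then $f(K_i)\cap Z_i$ may be nonempty but lies in $W_i$, and I take $U_i$ a small neighborhood of $E_i$ — here I must check $f(U_i\cap X)\subset X$, which again follows since near $E_i$ the only growth of $P^p(X)$ is into $W_i$ through $E_i$ while $U_i\cap X$ maps within $K_P$ and, by $P^p(X)\cap C_X\subset X$, stays in $X$. With (1h) and (2h) of Theorem~\ref{locrot} (or (a),(b),(c) of Corollary~\ref{locrot1}) verified for $g$, I conclude $X$ is a point, contradicting non-degeneracy. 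This proves the contrapositive form, hence the stated trichotomy.

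\textbf{Main obstacle.} The delicate point is Step 1's reconciliation of the three competing descriptions of a fixed point $q\in X$: the dynamical (repelling/parabolic), the index-theoretic ($\ind=1$), and the "repels outside $X$" condition of Definition~\ref{repout}. For genuinely rotational parabolic points the hypothesis excludes them, but I must be careful that the surgery of Lemma~\ref{pararepel} (a) only touches parabolic domains disjoint from $X$, (b) leaves $g$ positively oriented and equal to $P^p$ on a full neighborhood of $X\cup(\bigcup R_j)\cup(\bigcup E'_j)$ so that all variation computations and the puzzle-piece combinatorics are unaffected, and (c) makes each former parabolic fixed point in $\bd X$ topologically repelling in the strong sense demanded by Lemma~\ref{repel}(iii), i.e. with a nested disk sequence $D_j\to\{q\}$ and $g(\ol{S_j\setminus X})\cap D_j=\emptyset$. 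Establishing (c) uniformly — that the repelling nested disks at $q$ can be chosen inside the neighborhood $U$ of Lemma~\ref{repel}(i) and compatibly with the landing fixed ray of (ii) — is where the bulk of the careful work lies; the rest is bookkeeping with Definition~\ref{scracon} and an application of Corollary~\ref{locrot1}.
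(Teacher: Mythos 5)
Your overall route is the same as the paper's: pass to the contrapositive form, take $p=1$, use Lemma~\ref{pararepel} to replace $P$ by a positively oriented map $g$ (noting, as you do, that failure of (3) forces multiplier $1$ at parabolic fixed points in $X$ and failure of (1) makes the modified parabolic domains disjoint from $X$, so $g|_X=P|_X$), and then feed the data into Corollary~\ref{locrot1} with the exit continua $E_i$ and the growth continua $Z_i$ to conclude degeneracy. Step 1 is essentially the paper's argument, including the use of topological repelling, local injectivity, and the fixed external rays for condition (b).

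There is, however, a concrete error in your Step 2 that makes the verification of the scrambling condition fail as written. You assert that the pieces of $P(X)\sm X$ growing through $E_i$ lie in the wedge $W_i$, ``so $Z_i\subset \ol{W_i}$.'' The geometry is the opposite: since $C_X\subset W_i$ and $P(X)\cap C_X\subset X$, any component of $P(X)\sm X$ whose closure meets $E_i$ must lie in one of the wedges at $E_i$ \emph{other than} $W_i$, so $Z_i\sm E_i$ is disjoint from $W_i$. This reversal is not cosmetic: it is exactly what makes the case $P(E_i)\subset W_i$ work, because then $f(K_i)\cap Z_i=\0$ (the first alternative of condition (c) of Corollary~\ref{locrot1}) comes for free, with no neighborhood needed. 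Instead, you try to handle that case via the neighborhood clause, claiming a neighborhood $U_i$ of a possibly non-fixed $E_i$ with $f(U_i\cap X)\subset X$ ``since $U_i\cap X$ maps within $K_P$ and, by $P(X)\cap C_X\subset X$, stays in $X$''; this does not follow --- points of $X$ near a non-fixed exit continuum may perfectly well map into the growth $P(X)\sm X$, and the hypothesis $P(X)\cap C_X\subset X$ only controls images landing in $C_X$. The neighborhood alternative is available (and is used in the paper) precisely in the other case, when $E_k=\{y\}$ is a fixed point: there one invokes the two \emph{fixed} external rays landing at $y$ that bound $W_k$ (fixed because (3) fails) together with $g(X)\cap C_X\subset X$ to get $g(U_k\cap X)\subset X$; this ray-bounding argument is also the missing detail in your verification of condition (a) at a fixed point which is itself an exit continuum. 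A minor further inaccuracy: $g$ does not agree with $P$ on a full neighborhood of $X$ (the modified parabolic domains have the parabolic point of $\bd X$ in their closures); only $g|_X=P|_X$ and agreement off the invariant parabolic domains hold, but that is all the argument needs.
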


\begin{proof}
We may assume that $p=1$ and $P(Y)=Y$. We show that if none of the
conclusions (1)-(3) hold, then Corollary~\ref{locrot1} applies and
therefore $X$ must be a point, contradicting the assumption that
$X$ is non-degenerate. However, if $X$ contains parabolic points,
we first use Lemma~\ref{pararepel} and replace $P$ by the
positively oriented map $g$ constructed in that lemma. If
conclusion (1) does not hold, then $X$ contains no invariant
parabolic domains, and so $P|_{X}=g|_{X}$. Let us now check
conditions of Corollary~\ref{locrot1}.

First we check condition (a) of Corollary~\ref{locrot1}. Indeed,
consider a fixed point $y\in X$. Then $y$ is a repelling or
parabolic fixed point of $P$ (this is because we assume that claim
(2) of Theorem~\ref{pointdyn} does not hold and hence all
$P^p$-fixed points in $X$ are repelling or parabolic). By
Lemma~\ref{pararepel} this implies that $y$ is a topologically
repelling fixed point of $g$. Let us show that there exists a small
neighborhood $U$ of $y$ such that $g(U\cap X)\subset X$. This is
clear if $y\in C_X$ because $g(X)\cap C_X\subset X$ by our
assumptions. Assume now that $y\nin C_X$ which means that
$\{y\}=E_k$ is one of the exit-continua of $X$. Observe that there
a few \emph{fixed} external rays of $P$ landing at $y$ (the rays
are fixed because we assume that the conclusion (3) of
Theorem~\ref{pointdyn} does not hold and hence all rays which land
at $y$ must be fixed). Choose the two rays $R_1, R_2$ which land at
$y$ and form the boundary of the wedge $W_k$ at $y$ which contains
$X$. Since $g(X)\cap C_X\subset X$ by our assumptions, this implies
that in a small neighborhood $U_k$ of $E_k$ the intersection
$U_k\cap X$ maps (by $g$ or $P$) into $X$ as desired. This
completes the verification of condition (a) of Corollary~\ref{locrot1}.

Condition (b) of Corollary~\ref{locrot1} (i.e., the existence of a
fixed external ray landing at each fixed point in $X$) follows
immediately from the assumption that claim (3) of
Theorem~\ref{pointdyn} above does not hold.

Let us now check condition (c) of Corollary~\ref{locrot1}. Set
$g(X)\sm X=P(X)\sm X=H$. We may assume that $H\ne \0$ and we can
think of $g(X)=P(X)$ as a continuum which ``grows'' out of $X$. In
particular, $m\ge 1$. Fix $k$, $1\le k\le m$. Since $g(X)\cap
C_X\subset X$, any component of $H$ whose closure intersects $E_k$
must be contained in one of the wedges at $E_k$ (such wedges are
defined in Definition~\ref{locrot1}), but not in $W_k$. Let $Z_k$
be the topological hull of the union of all components of $H$ which
meet $E_k$ together with $E_k$. Then $Z_k$ is a non-separating
continuum. Since either $g(E_i)\subset W_i$ or $E_i$ is a fixed
point, the map $g$ scrambles the boundary of $X$ (see
Definition~\ref{scracon}). Moreover, if $E_k$ is mapped into $W_k$
then clearly $g(E_k)\cap Z_k=\0$ (because $Z_k\sm E_k$ is contained
in the other wedges at $E_k$ and is disjoint from $W_k$). On the
other hand, consider a fixed point $y\in X$ such that $E_k=\{y\}$.
Then condition (c) of Corollary~\ref{locrot1} follows from (a)
which has already been verified. Hence, by Corollary~\ref{locrot1}
we conclude, that $X$ is a point, a contradiction.

\end{proof}

Notice that if $X$ is a general puzzle-piece with $m=0$ in
Theorem~\ref{pointdyn}, then $C_X=\C$. Hence in this case $P(X)\cap
C_X\subset X$ implies $P(X)\subset X$ and $X$ is invariant. Thus, a
non-separating invariant continuum $X\subset K_P$ is a general
puzzle-piece if and only if for every Fatou domain $U$ of $P$ either
$U\cap X=\0$, or $U\subset X$.

The proof of the next corollary is left to the reader.

\begin{cor}\label{deg-gen}
Suppose that $Y$ is a non-degenerate periodic component of $K_P$
with $P^p(Y)=Y$, $X\subset Y$ is a general puzzle-piece which is
either invariant or has $m>0$ exit continua $E_1, \dots, E_m$ such
that $X\cap C_X=K_P\cap C_X$, and either $P^{pn}(E_i)\subset W_i$
for all $n>0$, or $E_i$ is a periodic point. If, moreover, $X$
contains no periodic parabolic domains, no attracting, Cremer, or
Siegel periodic points, and at most finitely many periodic points
with more than one external ray landing at them, then $X$ is a
point.
\end{cor}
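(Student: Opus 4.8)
The plan is to deduce Corollary~\ref{deg-gen} from Theorem~\ref{pointdyn} by passing to a suitable iterate of $P$. Assume, aiming at a contradiction, that $X$ is non-degenerate. Let $q_1,\dots,q_s$ be the finitely many $P$-periodic points lying in $X$ at which more than one external ray lands; by hypothesis each $q_j$ is neither attracting, Cremer, nor Siegel, so each $q_j$ is repelling or parabolic, and by the structure theorem cited earlier (\cite{douahubb85,lepr}) the external rays landing at $q_j$ form a single $\sigma_d^{n_j}$-cycle (where $n_j$ is the $P$-period of $q_j$), all of one common minimal $\sigma_d$-period $\mu_j<\infty$. Also record the $P$-periods $n(E_i)$ of those exit continua $E_i$ that happen to be periodic points. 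Then I would take $N$ to be a common multiple of $p$, of all the $n_j$ and $\mu_j$, and of all the $n(E_i)$. The decisive feature of this choice is that $\sigma_d^{N}$ fixes every angle of an external ray landing at some $q_j$, hence $P^N$ fixes each such ray together with $q_j$; and $P^N$ fixes every periodic exit continuum.

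Next I would verify that the map $P^N$, the component $Y$ (now $P^N$-invariant, of period one, in $K_P=K_{P^N}$), and the puzzle-piece $X\subset Y$ satisfy the hypotheses of Theorem~\ref{pointdyn}. The general puzzle-piece data — the $E_i$, the ray-families $A_j$, the wedges $W_i$ — are static and the external rays are unchanged, so $X$ is a general puzzle-piece for $P^N$ as well. Since $K_P$ is fully invariant under $P$, $P^N(X)\subset K_P$: in the invariant case $C_X=\C$ and $P^N(X)\subset X$, while if $m>0$ then $P^N(X)\cap C_X\subset K_P\cap C_X=X\cap C_X\subset X$. For each exit continuum: if $P^{pn}(E_i)\subset W_i$ for all $n>0$ then in particular $P^N(E_i)\subset W_i$ (take $n=N/p$), and if $E_i$ is a periodic point then, by the choice of $N$, $E_i$ is a $P^N$-fixed point. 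Thus Theorem~\ref{pointdyn} is applicable to $P^N$ and $X$.

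Finally I would check the hypotheses of the contrapositive form of Theorem~\ref{pointdyn}: that $X$ contains only non-rotational $P^N$-fixed points and no $P^N$-invariant parabolic domain. A $P^N$-invariant parabolic domain contained in $X$ would be a periodic parabolic Fatou domain of $P$ inside $X$, excluded by hypothesis. For a $P^N$-fixed point $a\in X$, writing $n$ for its exact $P$-period and using $(P^N)'(a)=[(P^n)'(a)]^{N/n}$ together with the absence of attracting, Cremer and Siegel points in $X$, one sees that $a$ must be repelling or parabolic; and it remains to produce a $P^N$-invariant external ray landing at $a$. The rays landing at $a$ form one $\sigma_d^{n}$-cycle of equal minimal period: if exactly one ray lands at $a$, it is fixed by every iterate fixing $a$, in particular by $P^N$; if at least two rays land at $a$, then $a$ is one of the $q_j$, and the choice of $N$ forces $\sigma_d^{N}$ to fix each of those angles. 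Either way $a$ is non-rotational, so by Theorem~\ref{pointdyn} the continuum $X$ is degenerate, contradicting our assumption. The step requiring the most care is the bookkeeping of periods in the definition of $N$ together with the clean passage between $P$- and $P^N$-dynamical notions (multipliers, parabolic domains, landing rays); once that is set up, everything else is a direct transcription of Theorem~\ref{pointdyn}.
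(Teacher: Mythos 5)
Your argument is correct and is exactly the intended deduction: the paper leaves this corollary to the reader as an immediate application of Theorem~\ref{pointdyn}, and your passage to an iterate $P^N$, with $N$ a common multiple of $p$, of the periods of the finitely many multi-ray periodic points and of their ray-angles, and of the periods of the point exit continua, is the natural way to carry it out, with all hypotheses of Theorem~\ref{pointdyn} (invariance relative to $C_X$, the exit-continuum conditions, absence of $P^N$-invariant parabolic domains, and non-rotationality of every $P^N$-fixed point) verified as you do. The only blemish is the parenthetical claim that the rays landing at $q_j$ form a single $\sigma_d^{n_j}$-cycle — for degree $d>2$ they may split into several cycles — but you only use the fact that all these angles share one finite minimal period, which is what \cite{douahubb85,lepr} actually provide, so the proof stands as written.
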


Theorem~\ref{pointdyn} is useful in proving the degeneracy of
certain impressions and establishing local connectedness of the
Julia set at some points (see \cite{bco08}). Recall that the
\emph{impression} $\imp(\al)$ of an angle in the connected case can
be defined as the intersection of the closures of all shadows
$\ol{\sh(C)}$ of all crosscuts $C$ such that $R_\al$ crosses $C$
essentially. Corollary~\ref{degimpr} is proved for a somewhat larger
class of subcontinua of $J_P$ which includes impressions as an
important particular case.

Consider a repelling or parabolic periodic or preperiodic point $x$
and all external rays landing at $x$. Then the union of two such
rays and $x$ is said to be a \emph{legal cut} of the plane. Also,
suppose that $x, y\in \bd U$ are two periodic or preperiodic points
in the boundary of an attracting or parabolic Fatou domain $U$. By
\cite{roesyin08} there exists an arc $A\subset U$ connecting $x$ and
$y$. The union of $A$ and two external rays landing at $x$ and $y$
is also a \emph{legal cut} of the plane. Finally, call a continuum
$Q$ \emph{periodic} if for some $n>0$ we have $P^n(Q)\subset Q$.

\begin{cor}\label{degimpr}

Let $P:\C\to\C$ be a complex polynomial and $Q\subset J_P$ be a
periodic continuum such that for every legal cut $C$ the set $Q\sm
C$ is contained in one component of $\C\sm C$. Suppose that $T(Q)$
contains no Siegel or Cremer points. Then $Q$ is degenerate. In
particular, if $J_P$ is connected and $Q$ is a periodic impression
such that $T(Q)$ contains no Cremer or Siegel points, then $Q$ is a
point.
\end{cor}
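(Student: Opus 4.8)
\textbf{Proof proposal for Corollary~\ref{degimpr}.}
The plan is to reduce the statement to Theorem~\ref{pointdyn} by building a suitable general puzzle-piece $X$ containing $Q$. First I would replace $P$ by an iterate so that $Q$ is actually invariant (using that $Q$ is periodic: $P^n(Q)\subset Q$ for some $n$, and one may pass to $P^n$; any parabolic or repelling point of $P^n$ is such a point of $P$, and Siegel/Cremer points are preserved). So assume $P(Q)\subset Q$. Let $Y$ be the periodic component of $K_P$ containing $Q$; if $Y$ is degenerate we are done, so assume $Y$ is non-degenerate with $P^p(Y)=Y$ for the appropriate $p$ (again pass to an iterate so $P(Y)=Y$). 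The continuum $Q$ lies in $J_P\subset K_P$, but $T(Q)$ may contain Fatou domains; the hypothesis is about $T(Q)$, and I would work with the non-separating continuum $X$ obtained by taking $T(Q)$ together with every Fatou domain it meets (since $T(Q)$ contains no Siegel or Cremer points, the only periodic Fatou domains it can meet are attracting or parabolic, and $T(Q)\subset Y$). Thus $X$ is a non-separating subcontinuum of $Y$ satisfying condition (5) of Definition~\ref{genpuz}; I would then set $m=0$, so there are no exit continua, $C_X=\C$, and the condition $P(X)\cap C_X\subset X$ becomes $P(X)\subset X$.

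The main work is verifying $P(X)\subset X$ and that $X$ has no fixed points of the forbidden types. For invariance: $P(T(Q))$ is a non-separating continuum containing $P(Q)\subset Q$; one needs that it cannot grow outside $X$. Here is where the ``legal cut'' hypothesis on $Q$ enters. Suppose $P(T(Q))\not\subset X$. Then, tracking the boundary behaviour via the Maximum Modulus Theorem~\ref{orient} ($\partial P(X)\subset P(\partial X)$ for a non-separating continuum $X$, applied carefully since $P$ is positively oriented hence oriented), the growth must occur through $\partial X$, i.e.\ through $\partial T(Q)\subset J_P$ or through boundaries of Fatou domains meeting $T(Q)$. Each boundary point of $X$ accessible from outside is either a point of $J_P$ or on the boundary of an attracting/parabolic Fatou domain in $X$; in the first case a repelling or (pre)periodic point of $J_P$ on $\partial T(Q)$ which separates $Q$ would give a legal cut $C$ with $Q\smallsetminus C$ not in one component, contradicting the hypothesis; in the second case the arc $A\subset U$ guaranteed by Roesch–Yin \cite{roesyin08} joining two such points, together with the two external rays landing at them, is a legal cut, again contradicting the hypothesis. (This is the step I expect to be the main obstacle: making the separation argument rigorous, i.e.\ showing that if $P(X)$ grows off $X$ one can always extract an honest legal cut $C$ with $Q$ straddling $C$, which requires some care about accessibility of periodic points on $\partial X$ and about the density of (pre)periodic external rays landing on $\partial T(Q)$.) Granting this, $P(X)\subset X$, so $X$ is an invariant general puzzle-piece with $m=0$.

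Now apply Theorem~\ref{pointdyn} to $X$ (with $m=0$, so the hypotheses $P^p(E_i)\subset W_i$ are vacuous). The conclusion is that one of (1)--(3) holds. By hypothesis $T(Q)$, and hence $X$, contains no $P$-invariant parabolic domain? — careful: $X$ by construction may contain an attracting or parabolic domain. So I would instead invoke Corollary~\ref{deg-gen} (whose proof is left to the reader in the excerpt, so it is available): its hypotheses are exactly that $X$ contains no periodic parabolic domains, no attracting, Cremer, or Siegel periodic points, and at most finitely many periodic points with more than one external ray landing at them. The Siegel/Cremer exclusion is our hypothesis on $T(Q)$. For attracting and parabolic domains: if $X$ contained such a domain $U$, its immediate basin has a periodic point or a parabolic point on its boundary; but more to the point, such a $U\subset T(Q)$ would force $\partial U$ (a simple closed curve by \cite{roesyin08}) to lie in $J_P$ and supply two boundary points joined by an arc in $U$ — a legal cut straddled by $Q$ unless $Q\subset \overline U\cap J_P=\partial U$; if $Q\subset\partial U$ then, since $P|_{\partial U}$ is conjugate to $z\mapsto z^d$ and $Q$ must avoid every legal cut through repelling periodic points of $\partial U$, $Q$ is forced to be a single point. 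The finiteness of periodic points with $\ge 2$ rays landing on them follows because each such point together with two of its rays is a legal cut, so all of $Q$ lies on one side; standard combinatorial bounds on external rays landing at periodic points (via \cite{douahubb85}) give finiteness on $\partial T(Q)$. Hence Corollary~\ref{deg-gen} applies and $X$, therefore $Q$, is a point. Finally, for the last sentence: when $J_P$ is connected and $Q$ is a periodic impression $\imp(\alpha)$, the crosscut/shadow characterization of $\imp(\alpha)$ recalled just before the statement shows $Q\smallsetminus C$ lies in one component of $\C\smallsetminus C$ for every legal cut $C$ (a legal cut is in particular a crosscut-type object not crossing $R_\alpha$ essentially once $\alpha$ is not an angle of one of its rays, and the periodic impression sits on one side), so the general case applies and $\imp(\alpha)$ is degenerate.
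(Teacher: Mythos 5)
There is a genuine gap at the heart of your reduction. After excluding attracting and parabolic Fatou domains, you invoke Corollary~\ref{deg-gen}, whose hypotheses include that $X$ contains \emph{at most finitely many periodic points with more than one external ray landing at them}; your justification of this ("standard combinatorial bounds on external rays landing at periodic points give finiteness on $\partial T(Q)$") is not an argument. No such general bound exists: for instance, dendritic Julia sets contain infinitely many periodic cutpoints (Theorem~\ref{lamwkrp}), each with several rays landing, so finiteness can only come from the legal-cut hypothesis on $Q$, and you never extract it --- a priori $Q$ could contain infinitely many multi-ray periodic points none of which it straddles, and ruling this out is not visibly easier than the corollary itself. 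What the paper does instead, and what your proposal is missing, is the short key argument that every \emph{fixed} repelling or parabolic point $p'\in Q$ is non-rotational: if a ray $R_\be$ lands at $p'$ and is not fixed, then $R_\be\cup P(R_\be)$ is a legal cut, and the local rotational dynamics at $p'$ together with the invariance of $Q$ force $Q$ to have points on both sides of it, contradicting the hypothesis. With non-rotationality of all fixed points, no invariant parabolic domains, and no attracting/Cremer/Siegel fixed points, the ``equivalently'' clause of Theorem~\ref{pointdyn} (applied to the invariant general puzzle-piece $X=T(Q)$ with $m=0$) gives degeneracy directly; you never rule out conclusion (3) of that theorem, which is exactly the rotation alternative.

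Two secondary points. First, the step you single out as ``the main obstacle'' is not one: any Fatou domain meeting $T(Q)$ is disjoint from $Q$, hence lies in a single bounded complementary component of $Q$ and so already inside $T(Q)$ --- your $X$ is just $T(Q)$ --- and $P(T(Q))\subset T(Q)$ is immediate from $P(Q)\subset Q$, since $\bd P(T(Q))\subset P(\bd T(Q))\subset P(Q)\subset Q$ by Theorem~\ref{orient}; the legal-cut argument you sketch for invariance is therefore unnecessary, and as stated (extracting a cut straddled by $Q$ from hypothetical ``growth'') it is not a proof. Second, in excluding an attracting or parabolic domain $U\subset T(Q)$ you assert that $Q$ straddles the legal cut built from a Roesch--Yin arc in $U$; this needs the shielding argument the paper uses (if $Q$ lay on one side, the part of $U$ on the other side of the cut would lie in the unbounded complementary component of $Q$, contradicting $U\subset T(Q)$, and points of $\bd U$ interior to $T(Q)$ would be shielded from the basin of infinity), and the paper's cleaner two-step version first shows $|Q\cap\ol{U}|\le 1$ for \emph{every} attracting/parabolic domain, which also disposes of attracting fixed points. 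These items are repairable; the missing non-rotationality/finiteness step is the real gap.
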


\begin{proof}
By considering an appropriate power of $P$ we may assume that $Q$ is
invariant and non-degenerate. Clearly this implies that $T(Q)$ is
invariant too. Suppose that $p'\in Q$ is a fixed point of $P$ and
$R_\ba$ is an external ray landing at $p'$. Then $P(R_\ba)$ also
lands on $p'$. If $R_\be$ is not fixed, then $C=R_\be\cup P(R_\be)$
is a legal cut. The local dynamics at $p'$ and the fact that $Q$ is
invariant imply now that $Q$ has points on either side of $C$, a
contradiction with the assumptions on $Q$. Hence each fixed
repelling or parabolic point in $Q$ is non-rotational.

Let us show that $Q$ can only intersect the closure of a parabolic
or attracting Fatou domain $U$ at one point. Indeed, suppose
otherwise and let $x, y\in \bd U\cap Q, x\ne y$. Then there exists
an arc $I\subset \bd U$ with endpoints $x, y$, contained in $Q$
because otherwise $Q$ will ``shield'' some points of $\bd U$ from
infinity contradicting the fact that all points of $\bd U$ belong to
the closure of the basin of attraction of infinity. By
\cite{roesyin08} we can find, say, periodic points $u, v\in I$ and
include them in a legal cut $T$ which will separate some points of
$I$ (and hence of $Q$) from other points of $I$, contradiction with
our assumptions. Hence $T(Q)$ cannot contain an attracting or
parabolic domain $U$ since otherwise, by the above, $Q$ must shield
part of $\partial U$ from the basin of attraction of infinity, a
contradiction. This implies that $T(Q)$ cannot contain parabolic
domains or attracting points.   By the assumption $T(Q)$ does not
contain Cremer or Siegel points either. Hence by
Corollary~\ref{deg-gen} $Q$ is a point as desired.
\end{proof}

In the particular case in the end of the statement we assume that
$J_P$ is connected; the same result in fact holds for \emph{all}
Julia sets but will require the introduction of the notion of the
impression for disconnected Julia sets which we   avoid here for the
sake of simplicity. The verification of the fact that impressions
satisfy the conditions of the corollary is straightforward and
therefore is left to the reader. In particular, suppose $R_\al$ is a
periodic external ray and the topological hull $T(\imp(\al))$ of the
impression of $\al$ contains only repelling or parabolic periodic
points. Then, by Corollary~\ref{degimpr}, $\imp(\al)$ is degenerate.

Note that the assumptions of Corollary~\ref{degimpr} are equivalent
to the following. Suppose that $Q\subset J_P$ is a periodic
continuum such that for every legal cut $C$ the set $Q\sm C$ is
contained in one component of $\C\sm C$. As in the proof of
Corollary~\ref{degimpr}, this implies that $Q$ can only intersect
the boundaries of attracting or parabolic domains at no more than
one point. To make the conclusion of the corollary, we need to check
that $T(Q)$ contains no Siegel or Cremer points. We claim that this
is equivalent to the following:

\begin{enumerate}

\item $Q$ contains no Cremer point;

\item if the boundary of
a Siegel disk is decomposable, then $Q$ is disjoint from it;

\item if the boundary of a Siegel disk is indecomposable
(it is not known if such Siegel disks exist), then $Q$ intersects it
in at most one point.

\end{enumerate}

Indeed if (1) - (3) above are satisfied then, by an argument similar
to the proof of Corollary~\ref{degimpr},   $T(Q)$ contains no Cremer
or Siegel points. Now, suppose that $T(Q)$ contains no Cremer or
Siegel points. Then by Corollary~\ref{degimpr} $Q=\{q\}$ is a point.
Hence (1) and (3) hold trivially. If $B$ is the decomposable
boundary of a Siegel disk and $q\in B$, then we may assume that $B$
and $Q$ are invariant. It is known \cite{rog92a,rog92b} that there
exists a monotone map $p:B\to\ucirc$ and an induced map
$g:\ucirc\to\ucirc$ which is an irrational rotation. Since $Q$ is
invariant, $Q=B$, a contradiction. Hence (2) holds as well.


\backmatter

-----------------------------------------------------------------------------
%

\bibliographystyle{amsalpha}




\begin{theindex}

  \item accessible point, 29
  \item acyclic, 30
  \item allowable partition, 28

  \indexspace

  \item $\B$, 35
  \item $\B^\infty$, 35
  \item boundary scrambling
    \subitem for dendrites, 50
    \subitem for planar continua, 51
  \item branched covering map, 68
  \item branchpoint of $f$, 75
  \item bumping
    \subitem arc, 25
    \subitem simple closed curve, 25

  \indexspace

  \item $C(a,b)$, 38
  \item Carath\'eodory Loop, 27
  \item chain of crosscuts, 28
    \subitem equivalent, 28
  \item channel, 29
    \subitem dense, 29, 57
  \item completing a bumping arc, 25
  \item $\complex$, 1
  \item $\rsphere$, 1
  \item conformal
    \subitem external ray, 53
  \item continuum
    \subitem decomposable, 49
    \subitem indecomposable, 49
  \item $\ECH(K)$, 35
  \item $\HCH(B\cap K)$, 35
  \item convex hull
    \subitem Euclidean, 35
    \subitem hyperbolic, 35
  \item counterclockwise order
    \subitem on an arc in a simple closed curve, 13
  \item critical point, 74
  \item crosscut, 25
    \subitem shadow, 26
  \item cutpoint, 51
  \item $C_X$, 53, 85

  \indexspace

  \item $\Disk$, 38
  \item defines variation near $X$, 50
  \item degree, 13
  \item $\deg_f(a)$, 75
  \item $\dg(g)$, 13
  \item $\dg(f_p)$, 16
  \item $\partial$ boundary operator, 2
  \item $\disk^\infty$, 27
  \item dendrite, 6
  \item domain
    \subitem attracting, 52
    \subitem Fatou, 52
    \subitem parabolic, 52
    \subitem Siegel, 52

  \indexspace

  \item embedding
    \subitem orientation preserving, 19
  \item essential crossing, 29
  \item $\mc{E}_t$, 29
  \item exit continuum, 51
    \subitem for a general puzzle-piece, 53
  \item external ray, 29
    \subitem end of, 29
    \subitem essential crossing, 29
    \subitem landing point, 29
    \subitem non smooth, 53
    \subitem smooth, 53

  \indexspace

  \item fixed point
    \subitem for positively oriented maps, 63
    \subitem non-rotational, 53
  \item $(f,X,\eta)$, 50

  \indexspace

  \item $\rG$, 38
  \item $\fg$, 38
  \item $\rg$, 35
  \item $\Gamma$, 38
  \item gap, 37, 68
  \item general puzzle-piece, 53
  \item geodesic
    \subitem hyperbolic, 38
  \item geometric outchannel
    \subitem negative, 50
    \subitem positive, 50

  \indexspace

  \item hull
    \subitem hyperbolic, 37
    \subitem topological, 1
  \item hyperbolic
    \subitem geodesic, 35
    \subitem halfplane, 35
  \item hyperbolic chord, 38

  \indexspace

  \item $id$ identity map, 13
  \item $\im(\mc{E}_t)$, 29
  \item impression, 29
  \item index, 13
    \subitem fractional, 13
    \subitem I=V+1 for Carath\'eodory Loops, 28
    \subitem Index=Variation+1 Theorem, 20
    \subitem local, 52
  \item $\ind(f,x)$, 52
  \item $\ind(f,A)$, 19
  \item $\ind(f,g)$, 13
  \item $\ind(f,g|_{[a,b]})$, 13
  \item $\ind(f,S)$, 19

  \indexspace

  \item $J_P$, 52
  \item Julia set, 52
  \item junction, 14
  \item J{\o}rgensen Lemma, 39

  \indexspace

  \item $K_P$, 52
  \item $\KP$, 37
  \item $\KP_\delta$, 41
  \item $\kp$-chord, 37
  \item $\KP^{\pm}_\delta$, 43
  \item $\KPP$, 37
  \item $\KPP_\delta$, 41
  \item Kulkarni-Pinkall
    \subitem Lemma, 36
    \subitem Partition, 37

  \indexspace

  \item lamination, 68
    \subitem degenerate, 68
    \subitem invariant, 68
  \item landing point, 29
  \item leaf, 68
  \item link, 25
  \item local degree, 75
  \item local index, 52
  \item Lollipop Lemma, 23

  \indexspace

  \item map, 13
    \subitem confluent, 16
    \subitem light, 16
    \subitem monotone, 16
    \subitem negatively oriented, 16
    \subitem on circle of prime ends, 32
    \subitem oriented, 16
    \subitem perfect, 16
    \subitem positively oriented, 16
  \item maximal ball, 35
  \item Maximum Modulus Theorem, 31
  \item monotone-light decomposition of a map, 16

  \indexspace

  \item narrow strip, 59
  \item natural retraction of dendrites, 64
  \item non-separating, xi

  \indexspace

  \item order
    \subitem on subarc of simple closed curve, 13
  \item orientation preserving
    \subitem embedding, 19
  \item outchannel, 50
    \subitem geometric, 50
    \subitem uniqueness, 59

  \indexspace

  \item periodic point
    \subitem Cremer, 53
    \subitem parabolic, 52
    \subitem repelling, 52
    \subitem Siegel, 52
    \subitem weakly repelling, 51
  \item point of period two
    \subitem for oriented maps, 64
  \item positively oriented arc, 77
  \item $\pr(\mc{E}_t)$, 29
  \item prime end, 28
    \subitem channel, 29
    \subitem impression, 29
    \subitem principal continuum, 29
  \item principal continuum, 29

  \indexspace

  \item $\real$, 1
  \item repels outside $X$ at $p$, 76
  \item $R_t$, 29

  \indexspace

  \item shadow, 26
  \item $\Sh(A)$, 26
  \item smallest ball, 35
  \item standing hypothesis, 50

  \indexspace

  \item topological
    \subitem Julia set, 68
    \subitem polynomial, 68
  \item topological hull, 1
  \item topologically
    \subitem attracting, 75
    \subitem repelling, 75
  \item $T(X)$, 1
  \item $T(X)_\delta$, 43

  \indexspace

  \item $U^\infty$, 1
  \item unlinked, 68

  \indexspace

  \item $\val_Y(x)$, 51
  \item valence, 51
  \item $\var(f,A)$, 27
  \item variation
    \subitem for crosscuts, 25
    \subitem of a simple closed curve, 15
    \subitem on an arc, 14
    \subitem on finite union of arcs, 15
  \item $\var(f,A,S)$, 14
  \item $\var(f,S)$, 15

  \indexspace

  \item weakly repelling, 51, 65
  \item wedge (at an exit continuum), 53
  \item $\win(g,\uc,w)$, 13

\end{theindex}

\end{document}